\providecommand{\U}[1]{\protect\rule{.1in}{.1in}}
\newtheorem{theorem}{Theorem}[section]
\theoremstyle{plain}
\newtheorem{claim}[theorem]{Claim}
\newtheorem{conclusion}[theorem]{Conclusion}
\newtheorem{condition}[theorem]{Condition}
\newtheorem{corollary}[theorem]{Corollary}
\newtheorem{definition}[theorem]{Definition}
\newtheorem{example}[theorem]{Example}
\newtheorem{lemma}[theorem]{Lemma}
\newtheorem{notation}[theorem]{Notation}
\newtheorem{Observation}[theorem]{Observation}
\newtheorem{Assertion}[theorem]{Assertion}
\newtheorem{Deformation}[theorem]{Deformation}
\newtheorem{remark}[theorem]{Remark}
\newtheorem{solution}[theorem]{Solution}
\numberwithin{equation}{section}
\numberwithin{figure}{section}
\let\pdfoutput=\undefined\fi
\begin{document}
\title[\textbf{The precise form of Ahlfors' second fundamental theorem}]{\textbf{The precise form of Ahlfors' second fundamental theorem}}
\author{Guang Yuan\medskip\ Zhang }
\address{Department of Mathematical Sciences, Tsinghua University, Beijing 100084,
China. \textit{Email:} \textit{gyzhang@mail.tsinghua.edu.cn}}
\thanks{{Project 12171264 \& 10971112 supported by NSFC}}

\begin{abstract}
Let $S=\overline{\mathbb{C}}$ be the unit Riemann sphere. A simply connected
covering surface over $S$ is a pair $\Sigma=\left(  f,\overline{U}\right)  ,$
where $U$ is a Jordan domain in $\mathbb{C}$ and $f:\overline{U}\rightarrow S$
is an orientation-preserving, continuous, open and finite-to-one mapping
(OPCOFOM). Let $\mathbf{F}$ be the space of all simply connected covering
surfaces over $S$, for each $\Sigma=\left(  f,U\right)  \in\mathbf{F}$ let
$A(\Sigma)$ and $L(\partial\Sigma)$ be the area and boundary length of
$\Sigma,$ weighted according to multiplicity, and for each $\mathfrak{a}\in S$
let $\overline{n}\left(  \Sigma,\mathfrak{a}\right)  =\#f^{-1}\left(
\mathfrak{a}\right)  \cap U$ be the cardinality of the set $f^{-1}\left(
\mathfrak{a}\right)  \cap U$. The Second Fundamental Theorem (SFT) of Ahlfors'
covering surface theory is that, for any set $E_{q}=\left\{  \mathfrak{a}%
_{1},\mathfrak{a}_{2},\dots,\mathfrak{a}_{q}\right\}  $ of distinct $q(\geq3)$
points on $S,$ there exists a positive constant $h,$ which depends only on
$E_{q},$ such that for any $\Sigma\in\mathbf{F}$,
\[
(q-2)A(\Sigma)\leq4\pi\overline{n}(\Sigma,E_{q})+hL(\partial\Sigma),
\]
where $\overline{n}(\Sigma,E_{q})=\sum_{v=1}^{q}\overline{n}(\Sigma
,\mathfrak{a}_{v}).$

The goal of this paper is to develop a new method to give the precise bound
$H_{0}$ of $h.$ We write $R(\Sigma)=R(\Sigma,E_{q})$ the error term of
Ahlfors' SFT, say
\[
R(\Sigma)=\left(  q-2\right)  A(\Sigma)-4\pi\overline{n}\left(  \Sigma
,E_{q}\right)  .
\]

Our first main result is that for a.e. $L\in(0,+\infty),$ there exists an
\emph{extremal surface} $\Sigma_{0}$ in $\mathbf{F}\left(  L\right)
=\{\Sigma\in\mathbf{F}:L(\partial\Sigma)\leq L\},$ say%
\[
H_{L}=\sup\left\{  \frac{R(\Sigma)}{L(\partial\Sigma)}:\Sigma\in
\mathbf{F}\left(  L\right)  \right\}  =\frac{R(\Sigma_{0})}{L(\partial
\Sigma_{0})}.
\]

Our second main result is that there exists a subspace $\mathcal{S}_{0}$ of
$\mathbf{F}$, consisted of very simple surfaces, and there exists a $4\pi
$\emph{-extremal} surface $\Sigma_{1}\in\mathcal{S}_{0}$, say,
\[
H_{0}=\lim_{L\rightarrow\infty}H_{L}=\sup\left\{  \frac{R(\Sigma)}%
{L(\partial\Sigma)}:\Sigma\in\mathbf{F}\right\}  =\frac{R(\Sigma_{1})+4\pi
}{L(\partial\Sigma_{1})}.
\]

Our third main result is that among all $4\pi$\emph{-extremal} surfaces, there
exists one which is the simplest (such surfaces may not be unique). Simplest
$4\pi$\emph{-extremal} surfaces can be used to give the precise bound $H_{0}$
as simple as possible.

\end{abstract}
\subjclass[2020]{ 30D35, 30D45, 52B60}
\maketitle
\tableofcontents

\section{Introduction\label{Sect1}}

\label{2023-03-28}We first recall some notations used in \cite{Zh1}. The
Riemann sphere\label{RS} $S$\ is the unit sphere in $\mathbb{R}^{3}$ centered
at the origin which is identified with the extended plane $\overline
{\mathbb{C}}$ via stereographic projection\label{SP} $P:S\rightarrow
\overline{\mathbb{C}}$ as in \cite{Ah0}. Length and area on $S$ have natural
interpretations using the spherical (chordal) metric on $\mathbb{C}$:
\[
ds=\rho(z)|dz|=\frac{2}{1+|z|^{2}}|dz|,z\in\mathbb{C}.
\]
For a set $V$ on $S,$ $\partial V\label{boundary}$ denotes its boundary and
$\overline{V}\label{cl}$ its closure. We write $\Delta=\{z\in\mathbb{C}%
:|z|<1\}\label{Delta}.$ Then for the upper and lower open hemispheres $S^{+}$
and $S^{-}$ on $S,$ we have $P\left(  S^{+}\right)  =\left(  \mathbb{C}%
\backslash\overline{\Delta}\right)  \cup\left\{  \infty\right\}  $ and
$P(S^{-})=\Delta.$

\begin{definition}
\label{triangle}\label{used-in-intro}%
\label{230331-9:50::2023-0403-three hours}A \emph{topological} \emph{triangle
}$\gamma$\emph{\ }on $S$ is a Jordan curve equipped with three vertices lying
on $\gamma.$ The three vertices define three (compact) edges of $\gamma$, the
two components of $S\backslash\gamma$ are called \emph{topological triangular
domains}, and the vertices and edges of $\gamma$ are also called vertices and
edges of the \emph{topological} \emph{triangular} \emph{domains. }If each of
the three edges of $\gamma$ is on a great circle on $S,$ then $\gamma$ is
called a triangle and the two components of $S\backslash\gamma$ are called
triangular domains.
\end{definition}

\begin{definition}
\label{AhlforsS}\label{2023-03-28 copy(1)}A (covering) surface $\Sigma$ (over
the sphere $S$) is defined as in Ahlfors' paper \cite{Ah}: $\Sigma$ is sewn
from a finite number of compact topological triangular domains on $S$ (see
Remark \ref{tri} A(3) for details). Equivalently speaking, $\Sigma$ is defined
to be a pair $\left(  f,\overline{U}\right)  ,$ where $U$ is a
domain\footnote{A domain means a connected open subset of $\overline
{\mathbb{C}}.$} in $\overline{\mathbb{C}}$ so that $\overline{U}$ has a finite
topological triangulation $\cup_{j=1}^{m_{0}}\overline{U_{j}}$ and for each
$j=1,\dots,m_{0},f|_{\overline{U_{j}}}:\overline{U_{j}}\rightarrow
f(\overline{U_{j}})\subset S$ is a homeomorphism. Moreover, $f$ is locally
homeomorphic on $\overline{U}$, except at a finite number of points, which are
some vertices of the triangulation $\{\overline{U_{j}}\}.$
\end{definition}

\label{2023-03-28 copy(2)}A mapping from a compact subset $K$ of $\mathbb{C}$
into $S$ is called an \emph{orientation-preserving, continuous, open and
finite-to-one mapping (OPCOFOM)\label{OPCOFOM}} if it can be extended to be an
\emph{OPCOFOM} from a neighborhood of $K$ in $\mathbb{C}$ into $S.$%
\emph{\ }Then a surface $\Sigma=\left(  f,\overline{U}\right)  $ in the
definition can be regarded as an \emph{OPCOFOM }from\emph{\ }$\overline{U}$
into $S,$ and vice versa. The term \emph{orientation-preserving }means that
$P^{-1}\circ f$ is orientation-preserving. In this setting, we call the pair
$\partial\Sigma=(f,\partial U)$ the boundary of the surface $\Sigma,$ and
define $A(\Sigma)=A(f,U)$ and $L(\partial\Sigma)=L(f,\partial U),$
respectively, area and length, weighted according to multiplicity. For
instance, $A(g,\Delta)=L(g,\partial\Delta)=6\pi$ when $g(z)=z^{3}%
,z\in\overline{\Delta}.$

\label{2023-03-28 copy(3)}Here we list some elementary results on the sphere
$S$ as in \cite{Zh1}: any great circle on $S$ has length $2\pi,$
$A(S)=2A(S^{+})=2A(S^{-})=4\pi,$ $L([0,+\infty])=2L([0,1])=2L([1,+\infty
])=\pi,$ the circle on $S$ with spherical diameter $[0,1]$ or $[1,+\infty],$
has length $\sqrt{2}\pi,$ and a disk in a hemisphere on $S$ with perimeter $L$
has area $2\pi-\sqrt{4\pi^{2}-L^{2}}$.

All surfaces in this paper are covering surfaces defined above.

\begin{definition}
\label{family F,D}\label{2023-03-28 copy(4)} Let $\Sigma=(f,\overline{U})$ be
a surface over $S$.

(1) $\Sigma$ is called a \emph{closed surface,} if $U=\overline{\mathbb{C}}=S
$. For a closed surface $\Sigma,$ we have $\partial\Sigma=\emptyset,$ and then
$L(\partial\Sigma)=0.$

(2) $\Sigma$ is called a \emph{simply-connected} \emph{surface,} if $U$ is a
simply connected domain.

(3) $\mathbf{F}$ denotes all surfaces such that for each $\Sigma=\left(
f,\overline{U}\right)  \in\mathbf{F},$ $U$ is a Jordan domain.
\end{definition}

\begin{remark}
\label{2023-03-28 copy(5)}In \cite{Zh1}, it is assumed that $U$ is a Jordan
domain, when $\left(  f,\overline{U}\right)  $ is a (covering) surface. But in
this paper, there is no such restriction. It is permitted that even for a
simply connected surface $\left(  f,\overline{U}\right)  $, $U$ is not a
Jordan domain, it may be the domain between two tangent circles, for example.
\end{remark}

\begin{remark}
\label{2023-03-28 copy(6)}(A) Let $K_{1}$ and $K_{2}$ be two domains or two
closed domains on $S,$ such that $\partial K_{1}$ and $\partial K_{2}$ are
both consisted of a finite number of disjoint Jordan curves. A mapping
$f:K_{1}\rightarrow K_{2}$ is called a \label{CCM} \emph{complete covering
mapping} (CCM), if (a) for each $p\in K_{2}$ there exists a neighborhood $V$
of $p$ in $K_{2}$ such that $f^{-1}(V)\ $can be expressed as a union
$\cup_{j\in\mathcal{A}}U_{j}$ of disjoint (relative) open sets of $K_{1}$, and
(b) $f|_{U_{j}}:U_{j}\rightarrow V$ is a homeomorphism for each $j\in
\mathcal{A}$.

(B) \label{2023-03-28 copy(7)}We call $f$ a \label{BCCM} \emph{branched
complete covering mapping} (BCCM), if all conditions of (A) hold, except that
(b) is replaced with (b1) or (b2): (b1) If both $K_{1}$ and $K_{2}$ are
domains, then for each $j\in\mathcal{A},$ $U_{j}\cap f^{-1}(p)$ contains only
one point $a_{j}$ of $f^{-1}(p),$ and there exist two homeomorphisms
$\varphi_{j}:U_{j}\rightarrow\Delta,\psi_{j}:V\rightarrow\Delta$ with
$\varphi_{j}\left(  a_{j}\right)  =\psi_{j}\left(  p\right)  =0,$ such that
$\psi_{j}\circ f|_{U_{j}}\circ\varphi_{j}^{-1}(\zeta)=\zeta^{k_{j}},\zeta
\in\Delta,\ $where $k_{j}$ is a positive integer; or (b2) if both $K_{1}$ and
$K_{2}$ are closed domains, then $f|_{K_{1}^{\circ}}:K_{1}^{\circ}\rightarrow
K_{2}^{\circ}$ satisfies (b1) and moreover, $f$ restricted to a neighborhood
of $\partial K_{1}$ in $K_{1}$ is a CCM onto a neighborhood of $\partial
K_{2}$ in $K_{2}.$

(C) \label{2023-03-28 copy(8)}For a surface $\Sigma=\left(  f,\overline
{U}\right)  \ $over $S,$ $f$ is in general not a CCM or BCCM. When $f\left(
z\right)  =z^{2},$ both $f:\overline{\Delta}\rightarrow\overline{\Delta}\ $and
$f:\Delta\rightarrow\Delta$ are BCCMs, but when $f\left(  z\right)
=z^{-1}\left(  \frac{z-1/2}{1-z/2}\right)  ^{2},$ $f:\Delta\rightarrow
f(\Delta)$ is neither a CCM nor a BCCM.
\end{remark}

\label{2023-03-28 copy(9)}Nevanlinna theory of value distribution of
meromorphic functions (\cite{Ha}, \cite{N}, \cite{Ne}, \cite{Y}) and Ahlfors
theory of covering surfaces (\cite{Ah}, \cite{Dr}, \cite{Ere}, \cite{Ha}) are
two major events in the history of the development of function theory. The
most striking result of Nevanlinna theory is the Second Fundamental Theorem.
Ahlfors' theory is a geometric version of Nevanlinna's, in which Nevanlinna's
Second Fundamental Theorem is reinterpreted as follows.

\noindent\textbf{Theorem A. }\label{2023-03-28 copy(10)}\emph{(Ahlfors' Second
Fundamental Theorem (SFT) \cite{Ah}). For an arbitrarily given set}
$E_{q}=\left\{  \mathfrak{a}_{1},\mathfrak{a}_{2},\dots,\mathfrak{a}%
_{q}\right\}  \label{Eq}$ \emph{of} \emph{distinct }$q\left(  \geq3\right)
$\emph{\ points on }$S$\emph{, there exists a positive constant }%
$h$\emph{\ such that for any surface }$\Sigma=(f,\overline{U})\in\mathbf{F},$%
\begin{equation}
(q-2)A(\Sigma)\leq4\pi\overline{n}(\Sigma)+hL(\partial\Sigma), \label{a6}%
\end{equation}
\emph{where }%
\begin{equation}
\overline{n}(\Sigma)=\overline{n}(\Sigma,E_{q})=\sum_{v=1}^{q}\overline
{n}(\Sigma,\mathfrak{a}_{v}),\mathrm{\ }\overline{n}(\Sigma,\mathfrak{a}%
_{v})=\#f^{-1}(\mathfrak{a}_{v})\cap U\emph{,} \label{a70}%
\end{equation}
\emph{\ and }$\#$\emph{\ is the cardinality\label{card}.}\bigskip

\label{2023-03-28 copy(11)}The goal of this paper is to present a method to
identify the precise bound for that $h$ in Ahlfors' SFT for $\mathbf{F}$. This
problem can be traced back to the early 1940s, when J. Dufresnoy first gave a
numerical estimate of $h$ in \cite{Du} as follows.\bigskip

\noindent\textbf{Theorem B. }\label{2023-03-28 copy(12)}(Dufresnoy
\cite{Du})\emph{\ For any surface }$\Sigma=(f,\overline{U})\in\mathbf{F}%
,$\emph{\ }%
\begin{equation}
(q-2)A(\Sigma)\leq4\pi\overline{n}(\Sigma,E_{q})+\left(  q-2\right)
\frac{6\pi}{\delta_{E_{q}}}L(\partial\Sigma), \label{a6-1}%
\end{equation}
\emph{where }$\delta_{E_{q}}=\min_{1\leq i<j\leq q}d\left(  \mathfrak{a}%
_{i},\mathfrak{a}_{j}\right)  .$ \bigskip

Here $d\left(  \mathfrak{a}_{i},\mathfrak{a}_{j}\right)  $ is the spherical
distance on $S,$ which is the minimum of length of all paths on $S$ from
$\mathfrak{a}_{i}\ $to $\mathfrak{a}_{j}.$

In 2011, the author have identified the precise bound for $h$ in a special
case as follows.\bigskip

\noindent\textbf{Theorem C.\label{2023-03-28 copy(13)} }(Zhang \cite{Zh1})
\emph{For any surface }$\Sigma=(f,\overline{U})\in\mathbf{F}$ \emph{with }%
\begin{equation}
\overline{n}\left(  \Sigma,\{0,1,\infty\}\right)  =\emptyset, \label{no3}%
\end{equation}
\emph{\ we have}%
\begin{equation}
A(\Sigma)\leq h_{0}L(\partial\Sigma), \label{a6-2}%
\end{equation}
\emph{where }%
\begin{equation}
h_{0}=\max_{\theta\in\left[  0,\frac{\pi}{2}\right]  }h(\theta), \label{=0.}%
\end{equation}%
\[
h(\theta)=\frac{A\left(  \mathfrak{D}\left(  \overline{0,1},\theta
,\theta\right)  \right)  +4\pi}{L(\partial\mathfrak{D}\left(  \overline
{0,1},\theta,\theta\right)  )}=\frac{\left(  \pi+\theta\right)  \sqrt
{1+\sin^{2}\theta}}{\arctan\dfrac{\sqrt{1+\sin^{2}\theta}}{\cos\theta}}%
-\sin\theta,
\]
\emph{and }$\mathfrak{D}\left(  \overline{0,1},\theta,\theta\right)  $
\emph{is the lens} \emph{on the sphere} $S$ \emph{enclosed by two symmetric
circular arcs with endpoints }$0$\emph{\ and }$1$\emph{\ and with interior
angle }$2\theta$\emph{\ at the cusps (see Definition \ref{lune-lens}).}
\emph{Moreover, the bound }$h_{0}$ \emph{is precise: there exists a sequence
}$\Sigma_{n}\in\mathbf{F}$\emph{\ with }$\overline{n}(\Sigma_{n}%
,\{0,1,\infty\})=\emptyset$\emph{\ such that }$A(\Sigma_{n})/L(\partial
\Sigma_{n})\rightarrow h_{0}$\emph{\ as }$n\rightarrow\infty.$\bigskip

\textbf{\label{2023-03-28 copy(14)}}The proof of (\ref{a6-2}) occupied almost
all space of the long paper \cite{Zh1}, in which it is pointed out that
$h_{0}$ can be easily found and (\ref{a6-2}) can be easily proved, provided
that for every given $L>0,$ the extremal surface $\Sigma_{L}=\left(
f_{L},\overline{\Delta}\right)  $ with $L(\partial\Sigma_{L})\leq L$ so that
$f_{L}(z)\neq0,1,\infty$ and that $A(\Sigma_{L})$ assumes the maximal value
exists. In fact, such extremal surfaces have a lot of good properties, for
example, the boundary $\partial\Sigma_{L}$ is consisted of circulars arcs with
the same curvature and each of these arcs contains two endpoints $\{0,1\}$ or
$\{1,\infty\}.$ From this property, after a little argument as in the
introduction section of \cite{Zh1} one can compute the precise bound. However,
at that time we couldn't prove the existence of the extremal surface
$\Sigma_{L}$ and left it as a conjecture. We were lucky to find a substitute
for the extremal surface, and finally we have successfully proved the
optimality of $h_{0}$ in \cite{Zh1}.

\textbf{\label{2023-03-28 copy(15)}}But the method in \cite{Zh1} is difficult
to be applied to the general case, that is, in (\ref{a6}), $q\geq3, $ the
position of $\mathfrak{a}_{v}$s in $E_{q}=\{\mathfrak{a}_{1},\mathfrak{a}%
_{2},\dots,\mathfrak{a}_{q}\}$ are arbitrarily given and $\overline{n}%
(\Sigma,E_{q})$ is not assumed equal to zero. So, to identify the precise
bound of $h$ in (\ref{a6}) in general case, it seems the simplest way is to
solve the existence of extremal surfaces.

\textbf{\label{2023-03-28 copy(16)}}Consider the general case that $q\geq3$,
$E_{q}=\{\mathfrak{a}_{1},\mathfrak{a}_{2},\dots,\mathfrak{a}_{q}\}$ and $L>0
$ are given arbitrarily. We introduce the following notations%
\begin{equation}
R(\Sigma)=R(\Sigma,E_{q})=(q-2)A(\Sigma)-4\pi\overline{n}(\Sigma,E_{q}),
\label{Ahero}%
\end{equation}
which is the error term of Ahlfors' SFT,%
\[
\mathbf{F}(L)=\{\Sigma\in\mathbf{F}:L(\partial\Sigma)\leq L\},
\]%
\begin{equation}
H(\Sigma)=H(\Sigma,E_{q})=\frac{R(\Sigma,E_{q})}{L(\partial\Sigma)},
\label{DH}%
\end{equation}%
\begin{equation}
H_{0}=H_{0}(E_{q})=\sup_{\Sigma\in\mathbf{F}}H(\Sigma), \label{H_0}%
\end{equation}
and%
\[
H_{L}=H_{L}(E_{q})=\sup_{\Sigma\in\mathbf{F(L)}}H(\Sigma).\label{HL(E)}%
\]
Then it is clear that $H_{L}$ increase with respect to $L$,
\[
H_{0}=\lim_{L\rightarrow\infty}H_{L},
\]
and Ahlfors' SFT can be restated as
\[
H_{0}=H_{0}(E_{q})<+\infty.
\]

Then our goal becomes to prove the existence of extremal surfaces in
$\mathbf{F}\left(  L\right)  $ and present an achievable method that gives the
precise value of $H_{0}.$ For this purpose, we introduce some more terminology
and notations.

\begin{definition}
\label{L}\textbf{\label{2023-03-28 copy(17)}}Let $\mathcal{L}$ be the set of
continuous points of $H_{L}=H_{L}(E_{q}),$ with respect to $L.$
\end{definition}

\begin{remark}
\label{RL}Since $H_{L}$ increase with respect to $L,$ it is clear that
$\left(  0,+\infty\right)  \backslash\mathcal{L}$ is just a countable set.
\end{remark}

\begin{definition}
\label{geodesic}\textbf{\label{2023-03-28 copy(18)}}For any two non-antipodal
points $p$ and $q$ on $S,$ $\overline{pq}$ is the geodesic on $S$ from $p$ to
$q:$ the shorter of the two arcs with endpoints $p$ and $q$ of the great
circle on $S$ passing through $p$ and $q.$ Thus $d(p,q)<\pi$ and
$\overline{pq}$ is uniquely determined by $p$ and $q$. An arc of a great
circle on $S$ is called a \emph{line segment} on $S,$ and to emphasize this,
we also refer to it as a \emph{straight line segment. }For the notation
$\overline{pq},$ when $p$ and $q$ are explicit complex numbers we write
$\overline{p,q},$ to avoid ambiguity such as $\overline{123}=\overline{12,3}$
or $\overline{1,23}.$ When $p$ and $q$ are two antipodal points on $S,$
$\overline{pq}$ is not unique and $d\left(  p,q\right)  =\pi.$ To avoid
confusions, when we write $\overline{pq},$ or say $\overline{pq}$ is well
defined, we always assume $d\left(  p,q\right)  <\pi.$
\end{definition}

All paths and curves considered in this paper are oriented and any subarc of a
path or closed curve inherits this orientation. Sometimes paths and curves
will be regarded as sets, but only when we use specific set operations and set
relations. For an oriented circular arc $c,$ the circle $C$ containing $c $
and oriented by $c$ is called the circle determined\label{determine} by $c. $

\begin{definition}
\label{in}\textbf{\label{2023-03-28 copy(19)}}(1) For a Jordan domain $D$ in
$\overline{\mathbb{C}},$ let $h$ be a M\"{o}bius transformation with
$h(D)\subset\Delta.$ Then $\partial D$ is oriented by $h$ and the
anticlockwise orientation of $\partial h(D).$ The boundary of every Jordan
domain on $S$ is oriented in the same way, via stereographic projection.

(2) For a Jordan curve $C$ on $\overline{\mathbb{C}}$ or $S,$ the domain
$T_{C}$ bounded by $C$ is called \emph{enclosed} by $C$ if the boundary
orientation of $T_{C}$ agrees with the orientation of $C.$

(3) A domain $D$ on $S$ is called \emph{convex} if for any two points $q_{1}$
and $q_{2}$ in $D$ with $d(q_{1},q_{2})<\pi$, $\overline{q_{1}q_{2}}\subset D
$; a Jordan curve on $S$ is called \emph{convex} if it encloses a \emph{convex
domain }on $S$; a path on $S$ is called \emph{convex} if it is locally an arc
of a \emph{convex} Jordan curve.

(4) Let $\gamma:[a,b]\rightarrow S$ be a path on $S$ and $p_{0}\in(a,b).$
$\gamma$ is called \emph{convex at} $p_{0},$ if $\gamma$ restricted to a
neighborhood $I_{\delta}=(p_{0}-\delta,p_{0}+\delta)$\label{6868-5} of $p_{0}
$ in $(a,b)$ is a \emph{convex} Jordan path, with respect to the orientation
of $\gamma$ when $t\in I_{\delta}$ increases; and $\gamma$ is called
\emph{strictly convex} at $p_{0}$ if for some $\delta>0$ the restriction
$\gamma|_{I_{\delta}}$ is convex and $\gamma|_{I_{\delta}}\cap S_{1}%
=\gamma|_{I_{\delta}}\backslash\{\gamma(p_{0})\}$ for some open hemisphere
$S_{1}$ on $S.$
\end{definition}

\textbf{\label{2023-03-28 copy(20)}}By definition, the disk $\{z\in
\overline{\mathbb{C}}:|z|>2\}$ is viewed as a convex domain on $S$ and its
boundary orientation is clockwise, and thus the circle $|z|=2$ oriented
clockwise is also convex on $S.$ Also by definition, the disk $\{z\in
\mathbb{C}:|z|<2\}$ is not convex on $S$ and its boundary orientation is
anticlockwise, and thus the circle $|z|=2$ oriented anticlockwise is not
convex on $S$. By convention, an arc of a curve inherits the orientation of
this curve. A Jordan curve on $S$ is convex and lies in a hemisphere on $S$ if
it is locally convex (see Lemma 4.1 in \cite{Zh1} for polygonal Jordan curves
on $S$). A locally convex curve on $S$ is locally simple and always goes
straight or turns left when viewing $S$ from its interior, such as with planar
convex polygons oriented anticlockwise.

\begin{definition}
\label{interior}\textbf{\label{2023-03-28 copy(21)}}For a curve $\beta,$ in
$\mathbb{C}$ or $S,$ given by $z=z(t),t\in\lbrack a,b],$ $\beta^{\circ}$
denotes the interior of $\beta,$ which is the restriction of $\beta$ to the
open interval $(a,b),$ and $\partial\beta$ denotes the set of the endpoints of
$\beta,$ which is either the singleton $\{z(a)\}$ if $\beta$ is closed, or the
two points set $\{z(a),z(b)\}\label{boundaryarc}.$ For a surface
$\Sigma=\left(  f,\overline{U}\right)  ,$ the interior of the surface $\Sigma$
is the restricted open surface $\Sigma^{\circ}=\left(  f,U\right)  .$ The
notation $q\in\Sigma^{\circ}$ means a pair $\left(  f,p\right)  $ with
$q=f(p)$ for some $p\in U.$
\end{definition}

When $\beta$ is defined by $z=e^{it},t\in\lbrack0,4\pi],$ for example,
$\beta^{\circ}$ is still well defined as $z=e^{it},t\in(0,4\pi),$ and thus as
sets, $\beta$ and $\beta^{\circ}$ may coincide.

\begin{definition}
\label{partition}\textbf{\label{2023-03-28 copy(22)}}For a Jordan curve
$\alpha$ in $\mathbb{C}$, its partition is a collection $\{\alpha_{j}%
\}_{j=1}^{n}$ of its subarcs such that $\alpha=\cup_{j=1}^{n}\alpha_{j}$ and
$\alpha_{j}^{\circ}$ are disjoint and arranged anticlockwise. In this setting
we write $\alpha=\alpha_{1}+\alpha_{2}+\cdots+\alpha_{n}.$ Here $\alpha
_{j}^{\circ}$ is the interior of $\alpha_{j}$, which is now $\alpha_{j}$
without endpoints (since $\alpha$ is simple). A partition
\begin{equation}
\partial\Sigma=\gamma_{1}+\gamma_{2}+\cdots+\gamma_{n} \label{6662}%
\end{equation}
of $\partial\Sigma$ for a surface $\Sigma=(f,\overline{U})\in\mathbf{F}$ is
equivalent to a partition
\[
\partial U=\alpha_{1}+\alpha_{2}+\cdots+\alpha_{n}%
\]
of $\partial U$ such that $\gamma_{j}=(f,\alpha_{j})$ for $j=1,\dots,n.$
\end{definition}

Now we can introduce the subspace $\mathcal{F}$ of $\mathbf{F:}$

\begin{definition}
\label{F}\label{F1}\textbf{\label{2023-03-28 copy(23)}}We denote by
$\mathcal{F}$ the subspace of $\mathbf{F}$ such that for each $\Sigma=\left(
f,\overline{U}\right)  \in\mathcal{F}$, $\partial\Sigma$ has a partition
\begin{equation}
\partial\Sigma=c_{1}+c_{2}+\cdots+c_{n} \label{2022-610-1}%
\end{equation}
of simple convex circular (SCC) arcs. This means that $\partial U$ has a
partition%
\begin{equation}
\partial U=\alpha_{1}+\alpha_{2}+\cdots+\alpha_{n} \label{2021-520-1}%
\end{equation}
such that $f$ restricted to each $\alpha_{j}$ is a homeomorphism onto the SCC
arc $c_{j}.$

$\mathcal{F}\left(  L\right)  $ is the subspace of $\mathcal{F}$ such that for
each $\Sigma\in\mathcal{F}\left(  L\right)  ,$ $L(\partial\Sigma)\leq L.$
\end{definition}

Let $T$ be a Jordan domain on $S$ which is a union of a finite number of disks
$D_{j},j=1,\dots,m.$ Then $\overline{T}$ can be viewed as a surface in
$\mathcal{F},$ if all disks $D_{j}$ are convex on $S,$ say, all disks are of
diameter $\leq\pi.$

\begin{remark}
\label{FF}\textbf{\label{2023-03-28 copy(24)}}For any surface $\Sigma$ in
$\mathbf{F}$ and any $\varepsilon>0\mathbf{,}$ to estimate $H(\Sigma)$ we may
assume $L(\partial\Sigma)<+\infty,$ for otherwise we have $H(\Sigma)=0.$ Then
for any $\varepsilon>0,$ there are standard ways to show that there exists
another surface $\Sigma_{\varepsilon}$ in $\mathcal{F}$ such that
$\partial\Sigma_{\varepsilon}$ is consisted of a finite number of line
segments, $L(\partial\Sigma_{\varepsilon})\leq L(\partial\Sigma)+\varepsilon,
$ $A(\Sigma_{\varepsilon})\geq A(\Sigma)-\varepsilon$ and $\overline{n}\left(
\Sigma_{\varepsilon}\right)  \leq\overline{n}\left(  \Sigma\right)  .$ Then we
have
\[
H(\Sigma_{\varepsilon})\geq\frac{\left(  q-2\right)  \left(  A(\Sigma
)-\varepsilon\right)  -4\pi\overline{n}\left(  \Sigma\right)  }{L(\partial
\Sigma)+\varepsilon}\rightarrow H(\Sigma)
\]
as $\varepsilon\rightarrow0.$ Thus, we have%
\begin{equation}
H_{0}=H_{0}(E_{q})=\sup_{\Sigma\in\mathbf{F}}H(\Sigma)=\sup_{\Sigma
\in\mathcal{F}}H(\Sigma). \label{ag58}%
\end{equation}

\end{remark}

\begin{definition}
\label{HL}\textbf{\label{2023-03-28 copy(25)}}Define
\[
H_{L}=H_{L}(E_{q})=\sup_{\Sigma\in\mathcal{F}(L)}H(\Sigma).
\]
We call $\Sigma_{0}\in\mathcal{F}(L)$ an \emph{extremal} surface of
$\mathcal{F}(L),$ if
\[
H(\Sigma_{0})=H_{L}.
\]
If $\Sigma_{0}$ is an extremal surface of $\mathcal{F}(L)$ with minimal
perimeter among all the extremal surfaces of $\mathcal{F}(L)$, we call it a
\emph{precise extremal surface }of\emph{\ }$\mathcal{F}(L).$
\end{definition}

By Remark \ref{FF}, a \emph{precise extremal surface }of\emph{\ }%
$\mathcal{F}(L)$ is also a \emph{precise extremal surface }of\emph{\ }%
$\mathbf{F}(L),$ if $L\in\mathcal{L}$. Our first main result is the following.

\begin{theorem}
\label{main1}\textbf{\label{2023-03-28 copy(26)}}For each $L\in\mathcal{L}$
with $L>0,$ there exists a precise extremal surface of $\mathbf{F}(L),$ and
for every precise extremal surface $\Sigma$ of $\mathbf{F}\left(  L\right)  $
we have $\Sigma\in\mathcal{F}\left(  L\right)  $, and $\partial\Sigma$ has a
partition%
\[
\partial\Sigma=C_{1}+C_{2}+\cdots+C_{N},
\]
such that the following holds.

(i) All $C_{j},j=1,\dots,N,$ are SCC arcs and have the same curvature.

(ii) At most one of $C_{j},j=1,\dots,N,$ is a major circular arc.

(iii) Either $\partial\Sigma$ is a simple circle containing at most one point
of $E_{q},$ or $N>1,$ $\#\partial C_{j}=2$, $\partial C_{j}\subset E_{q}$ and
$C_{j}^{\circ}\cap E_{q}=\emptyset,$ for $j=1,\dots,N.$

(iv) For every $j=1,\dots,N,C_{j}$ is contained in an open hemisphere $S_{j}$
on $S.$
\end{theorem}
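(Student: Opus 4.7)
My plan is to combine compactness in $\mathcal{F}(L)$ with the continuity hypothesis $L\in\mathcal{L}$. Take a maximizing sequence $\Sigma_n=(f_n,\overline{U_n})\in\mathcal{F}(L)$ with $H(\Sigma_n)\to H_L$. By Remark \ref{FF} each $\partial\Sigma_n$ may be assumed to be a finite concatenation of SCC arcs; Ahlfors' SFT together with $A(\Sigma_n)\leq 4\pi\overline{n}(\Sigma_n)$ bounds both $A(\Sigma_n)$ and $\overline{n}(\Sigma_n)$. Normalizing $U_n=\Delta$ with three boundary points marked, the parameterizations $f_n|_{\partial\Delta}$ form an equicontinuous family of bounded variation into $S$, and I would extract a subsequence converging to a limit $\Sigma_0=(f_0,\overline{\Delta})\in\mathcal{F}$ with $L(\partial\Sigma_0)\leq L$, with $A$ lower semicontinuous and $\overline{n}$ upper semicontinuous under this convergence. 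The continuity of $H_L$ at $L$ rules out the only remaining bad scenarios — boundary mass escaping across a branch point or $U_n$ pinching off — so $H(\Sigma_0)=H_L$ and $\Sigma_0$ is extremal. A precise extremal surface then follows by the same compactness applied to a sequence of extremals minimizing $L(\partial\Sigma)$.

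\textbf{Structural properties by variation.} Each of (i)--(iv) would be proved by producing, from a failed configuration, a competitor with either strictly larger $H$ or, at the same $H$, strictly smaller perimeter. For (i), given two SCC arcs $C_i,C_j$ with curvatures $\kappa_i\neq\kappa_j$, I would use the spherical identity $dA/dL=1/\kappa$ for a normal perturbation of a circular arc: a simultaneous inflation/deflation with $\Delta L=0$ yields $\Delta A=(\kappa_i^{-1}-\kappa_j^{-1})\varepsilon\neq 0$, contradicting extremality. For (ii), once all arcs share a common curvature by (i), replacing two major arcs on the same circle by their complementary minor arcs across the same endpoints strictly decreases $L$ while not decreasing $A$ or changing $\overline{n}$, violating preciseness. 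For (iii), at any corner between two consecutive arcs whose $f$-image lies outside $E_q$, rounding with a short arc of the common curvature preserves $A$ to first order and strictly reduces $L$; likewise, a point $p\in C_j^{\circ}$ with $f(p)\in E_q$ would either become a genuine vertex over $E_q$ after splitting, or could be displaced to increase $A$ without increasing $\overline{n}$. The simple-circle alternative of (iii) captures the degenerate case $N=1$. For (iv), if some $C_j$ escapes every open hemisphere it contains a closed semicircle, and a chord-shortening variation strictly reduces $L$ while preserving $A$ and $\overline{n}$.

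\textbf{Where the difficulties lie.} The principal obstacle is the existence step: verifying that the boundary limit really is a surface in $\mathcal{F}$ (allowing the relaxed Jordan-like degenerations of Remark \ref{2023-03-28 copy(5)}) and that $\overline{n}(\Sigma_0,E_q)$ is not strictly less than $\liminf\overline{n}(\Sigma_n,E_q)$. This is precisely why the hypothesis $L\in\mathcal{L}$ is indispensable: the discontinuities of $H_L$ encode exactly the dangerous boundary-pinch degenerations, so on $\mathcal{L}$ the limit is automatically admissible. The secondary difficulty is bookkeeping $\overline{n}(\Sigma,E_q)$ under variation, since it is integer-valued and jumps when the boundary sweeps across an $\mathfrak{a}_v$; this is managed by arranging every variation above to be supported away from $f^{-1}(E_q)\cap U$, which explains why (iii) forces every boundary vertex to lie over $E_q$ — these are the only points on $\partial\Sigma$ that the variational argument cannot smooth.
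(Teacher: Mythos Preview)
Your existence argument has a genuine gap. A surface $\Sigma_n=(f_n,\overline{\Delta})$ is determined by the map $f_n$, not just its boundary trace, and convergence of $\partial\Sigma_n$ does not force convergence of $f_n$ on $\Delta$. Two concrete failures your sketch does not address: (a) the covering degree $\deg_{\max}f_n$ can be unbounded (nothing in Ahlfors' SFT bounds $A(\Sigma_n)$ and $\overline n(\Sigma_n)$ separately --- only their weighted difference $R(\Sigma_n)$ is controlled), so no subsequence of the interior maps need converge at all; (b) even with bounded degree, the surfaces can \emph{pinch}: two disjoint boundary arcs can have $d_{f_n}$-distance tending to zero, so the limit object is a bouquet of surfaces rather than a single element of $\mathcal{F}$. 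The paper handles (a) by invoking the external reduction Theorem~\ref{sim} (bounding $\deg_{\max}$ by a constant $d^*(m,E_q)$ without loss), and handles (b) via Theorem~\ref{nobi}, whose conclusion is exactly that $\liminf_n d_{f_n}(a,b)>0$ for distinct $a,b\in\partial\Delta$ along a precise extremal sequence. Your sentence ``the continuity of $H_L$ at $L$ rules out the only remaining bad scenarios'' is the \emph{statement} of that no-pinching result, not a proof; its proof occupies a long section and uses $L\in\mathcal{L}$ only indirectly, through Lemma~\ref{nobo}, to keep $f_n^{-1}(E_q)\cap\Delta$ uniformly away from $\partial\Delta$. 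With (a) and (b) in hand the paper still does not pass to a limit of $f_n$: instead (Theorem~\ref{LK}) it surgically modifies a single $f_n$ in an annular collar to splice the limit boundary $\Gamma_0$ onto $f_n$'s interior.

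Your variational arguments for (i)--(iv) are in the right spirit and mirror the paper's Lemmas~\ref{samecur}, \ref{circular} and Corollary~\ref{inhalf}, but several steps are loose. For (ii), two major arcs of the same curvature need not lie on the same circle or share endpoints, so ``replace by complementary minor arcs'' is not a move; the correct mechanism (Corollary~\ref{2-curvature}(B)) is that a simultaneous length-preserving perturbation of two major arcs of equal curvature strictly increases area. More importantly, each of your perturbations adds vertices to $\partial\Sigma$, so the competitor lies in $\mathcal{F}(L,m+1)$ or $\mathcal{F}(L,m+2)$ rather than $\mathcal{F}(L,m)$; the paper spends an entire section (Theorem~\ref{cat2}) proving that a precise extremal of $\mathcal{F}_r(L,m)$ is already precise extremal in $\mathcal{F}_r(L,m-1)$, which is what lets the arc-count bookkeeping close and gives a partition whose number of terms is bounded independently of $m$. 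Finally, a small slip: for the limit to be extremal you need $A$ upper semicontinuous and $\overline n$ lower semicontinuous along the sequence, the reverse of what you wrote.
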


Recall that $\partial C_{j}$ denotes the endpoints of $C_{j}.$ A major
circular arc means more than half of the circle that contains it, and a closed
circular path is regarded major.

From the above theorem we will find the simplest models to compute
$H_{0}=H_{0}(E_{q}).$ But we need introduce some more notations and terminologies.

\begin{definition}
\label{S-surface}\label{230329}$\mathcal{S}_{0}=\mathcal{S}_{0}(E_{q})$ is the
space of all surfaces $\Sigma=\left(  f,\overline{\Delta}\right)  $ in
$\mathcal{F}$\label{Need-in-F_r?} such that the following hold.

(1) There exists a positive integer $q^{\prime}=Q(\Sigma)\in\{2,\dots,q\}$ and
$\partial\Sigma$ has a partition
\begin{equation}
\partial\Sigma=C_{1}\left(  p_{1},p_{2}\right)  +\cdots+C_{q^{\prime}}\left(
p_{q^{\prime}},p_{1}\right)  , \label{ko12}%
\end{equation}
such that for each $j=1,2,\dots,q^{\prime},$ $C_{j}=C_{j}\left(  p_{j}%
,p_{j+1}\right)  $ is an SCC arc on $S$ $(C_{q^{\prime}+1}=C_{1}).$

(2) For each $j=1,2,\dots,q^{\prime},$ the initial point $p_{j}$ and the
terminal point $p_{j+1}$ are two distinct points in $E_{q},$ and $d\left(
p_{j},p_{j+1}\right)  <\pi,$ say, $\overline{p_{j}p_{j+1}}$ is well defined.

(3) $p_{1},\dots,p_{q^{\prime}}$ are distinct each other.

(4)
\begin{equation}
\deg_{\max}\Sigma=\max\left\{  \#f^{-1}(w)\cap\Delta:w\in S\backslash
\partial\Sigma\right\}  \leq2q^{\prime}-2. \label{degm}%
\end{equation}

(5) For each $j=1,2,\dots,q^{\prime},$ $C_{j}^{\circ}\subset S\backslash
E_{q}.$

(6) For each $j=1,2,\dots,q^{\prime},$ $C_{j}$ is contained in an open
hemisphere $S_{j}$ on $S.$

(7) All $C_{j},j=1,2,\dots,q,$ have the same curvature.

(8) For each $j=1,2,\dots,q^{\prime}$, $C_{j}$ is either a minor arc or half
of a circle.$\label{degmax-only-defined-inner}$
\end{definition}

For each surface $\Sigma$ in $\mathcal{S}_{0}=\mathcal{S}_{0}\left(
E_{q}\right)  ,$ all the circular arcs of $\partial\Sigma$ have the same
curvature $k=k\left(  \Sigma,E_{q}\right)  .$

Now we can state our second main result:

\begin{theorem}
\label{main2}\label{230329-8:20}(i) There exists a surface $\Sigma_{0}$ in
$\mathcal{S}_{0}=\mathcal{S}_{0}(E_{q})$ such that%
\begin{equation}
H_{0}=\sup_{F\in\mathbf{F}}\frac{R(F)}{L(\partial F)}=H_{\mathcal{S}_{0}%
}^{4\pi}=\max_{F\in\mathcal{S}_{0}}\frac{R(F)+4\pi}{L(\partial F)}%
=\frac{R(\Sigma_{0})+4\pi}{L(\partial\Sigma_{0})}. \label{ag24}%
\end{equation}

(ii) For each surface of $\mathcal{S}_{0}$ satisfying (\ref{ag24}) the
boundary is consisted of strictly convex circular arcs.

(iii) $k=k\left(  \Sigma,E_{q}\right)  $ is the same for all surfaces $\Sigma$
satisfying (\ref{ag24}).
\end{theorem}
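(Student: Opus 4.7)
The plan is to deduce Theorem \ref{main2} from Theorem \ref{main1} by a limit-and-surgery argument as $L\to\infty$ along $\mathcal{L}$. Since $H_L$ is nondecreasing in $L$ with $H_L\nearrow H_0$, for each $L\in\mathcal{L}$ I would take a precise extremal surface $\Sigma_L\in\mathcal{F}(L)$ produced by Theorem \ref{main1}; then $H(\Sigma_L)\to H_0$, and $\partial\Sigma_L$ admits a partition $C_1^L+\cdots+C_{N_L}^L$ into SCC arcs of a common curvature $k_L$, each contained in an open hemisphere, at most one of them major, and (in the nontrivial case) each arc joins two points of $E_q$ with interior avoiding $E_q$.

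Next, I would perform a combinatorial reduction. The set $E_q$ admits only $\binom{q}{2}$ unordered pairs at spherical distance less than $\pi$, and for each such pair and each curvature $k$ there are at most two SCC arcs joining them. Hence either $N_L$ stays bounded as $L\to\infty$ (so a subsequence with $k_L\to k^{\ast}$ converges to a surface directly in $\mathcal{S}_0$), or $N_L\to\infty$ and the partition must reuse arcs; in the latter case I would identify a base surface $F_L\in\mathcal{S}_0$ whose boundary uses each distinct arc exactly once in a cyclic order respecting the degree bound (\ref{degm}), so that the excess $\Sigma_L\setminus F_L$ reduces in the limit $L\to\infty$ to a single nearly-closed sphere cover attached through a neck of vanishing length. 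A direct computation of the Ahlfors error term will then yield $R(\Sigma_L)-R(F_L)\to 4\pi$, $L(\partial\Sigma_L)-L(\partial F_L)\to 0$, and $\overline{n}(\Sigma_L,E_q)-\overline{n}(F_L,E_q)\to 0$, giving $H_0\leq(R(\Sigma_0)+4\pi)/L(\partial\Sigma_0)$ for a cluster point $\Sigma_0\in\mathcal{S}_0$ of the family $(F_L)$. For the reverse inequality I would attach to any $F\in\mathcal{S}_0$ a sphere punctured at a small disk (placed so that the net $\overline{n}$-contribution is zero) via a neck of length $t\to 0^+$, yielding a sequence in $\mathbf{F}$ along which $H(\,\cdot\,)\to(R(F)+4\pi)/L(\partial F)$; combining the two directions, and using compactness of $\mathcal{S}_0$ (bounded number of arcs, endpoints in the finite set $E_q$, continuous parameter $k$), upgrades the supremum to a maximum and yields (\ref{ag24}).

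For (ii), if a maximizer $\Sigma\in\mathcal{S}_0$ had a half-circle arc $C_j$, then rigidly rotating $C_j$ about its two $E_q$-endpoints produces a one-parameter family in $\mathcal{S}_0$ along which $R+4\pi$ strictly increases at first order while $L(\partial\,\cdot\,)$ does not increase, contradicting maximality; hence every boundary arc must be strictly convex. For (iii), differentiating $(R(F(k))+4\pi)/L(\partial F(k))$ in a uniform perturbation of the common curvature $k$ at a maximizer yields a first-order optimality equation determined only by $E_q$ and the combinatorial type of $F$, whose relevant root is unique, so $k(\Sigma,E_q)$ coincides on all maximizers. The principal obstacle is the surgery step in the second paragraph: rigorously identifying the base $F_L\in\mathcal{S}_0$ inside $\Sigma_L$, tracking exactly the $+4\pi$ contribution of the excess (in particular showing that it is independent of $q$ and of the positions of $E_q$), and verifying that $\deg_{\max}F_L\leq 2Q(F_L)-2$ is preserved; this requires combining the same-curvature and at-most-one-major-arc features of Theorem \ref{main1} with an Euler-characteristic bookkeeping of how SCC arcs joining $E_q$-points can glue into a Jordan boundary.
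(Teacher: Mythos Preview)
Your overall architecture---use Theorem \ref{main1} for the upper bound and a gluing construction for the lower bound---matches the paper's, but several of the concrete steps do not work as stated.

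\textbf{The surgery/limit step.} The paper does not take $L\to\infty$ and extract a limiting base surface from $\Sigma_L$. Instead, for any \emph{fixed} $\Sigma\in\mathcal{F}$ and any $L\in\mathcal{L}$ with $L\geq L(\partial\Sigma)$, it takes the precise extremal $\Sigma_{L_1}$ from Theorem \ref{okok}, notes $H(\Sigma)\leq H(\Sigma_{L_1})<(R(\Sigma_{L_1})+4\pi)/L(\partial\Sigma_{L_1})$, and then uses an explicit algebraic decomposition (Solution \ref{sol}, Lemma \ref{makes}, Corollary \ref{make2}): for any surface whose boundary satisfies the $(p_1,\dots,p_m)$-Condition one has $R(\Sigma)+4\pi\leq\sum_j(R(\Sigma_j)+4\pi)$ with $\Sigma_j\in\mathcal{S}_1$ and $\sum_j L(\partial\Sigma_j)=L(\partial\Sigma)$. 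The $+4\pi$ is not an emergent limit quantity; it is exact, coming from Riemann--Hurwitz (Lemma \ref{RH}: $R(F)\leq -8\pi$ for any closed cover $F$) applied to the closed surface obtained after sewing. Your ``excess reduces to a single nearly-closed sphere cover with $R(\Sigma_L)-R(F_L)\to 4\pi$'' is not justified and does not match this mechanism; moreover, since $L(\partial\Sigma_L)\to\infty$ (Remark \ref{230329-8:44}) while each SCC arc has length $<\pi$, your bounded-$N_L$ case is vacuous, so everything rests on the unproved surgery.

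\textbf{The reverse inequality.} Attaching a sphere minus a small disk adds $4\pi$ to the area but $q$ to $\overline{n}$, so $R$ changes by $(q-2)4\pi-4\pi q=-8\pi$, not $0$. The paper instead (Lemmas \ref{LSZ}--\ref{LSZ1}, Corollary \ref{LSZ2}) sews $S\setminus\gamma$ along a boundary arc $\gamma$ with $\partial\gamma\subset E_q$ to get $\Sigma_1$ with $R(\Sigma_1)=R(\Sigma)$ and an interior $E_q$-preimage at $0$, then takes $f_n(z)=f_1(z^n)$, yielding $R(\Sigma_n)=nR(\Sigma)+4\pi(n-1)$ and $L(\partial\Sigma_n)=nL(\partial\Sigma)$, whence $H(\Sigma_n)\to(R(\Sigma)+4\pi)/L(\partial\Sigma)$.

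\textbf{Parts (ii) and (iii).} For (ii), ``rotating $C_j$ about its two endpoints'' does not produce a new arc; the paper's argument (Lemma \ref{keykey} (iv)) instead assumes all arcs are straight and uses Lemma \ref{must-strict-con} to show that a small uniform increase in curvature strictly increases $(R+4\pi)/L$. For (iii), your first-order argument would need to show the optimality equation has a unique relevant root \emph{independent of combinatorial type}; the paper avoids this by a direct comparison (Corollary \ref{same}): if two maximizers had distinct curvatures, perturb one arc of each via Corollary \ref{2-curvature} to increase the combined $R+4\pi$ at fixed combined $L$, then Lemma \ref{ratio} forces one ratio above $H_{\mathcal{S}_1}$.
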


\begin{remark}
\label{230329-8:44}For each $\Sigma\in\mathcal{S}_{0},$ the partition
(\ref{ko12}), ignoring a permutation of subscripts like $i_{0}+1,i_{0}%
+2,\dots,i_{0}+q^{\prime}$ $\left(  \operatorname{mod}q^{\prime}\right)  ,$ is
unique and so is the number $q^{\prime}$ of terms. Then the boundary length of
all surfaces in $\mathcal{S}_{0}$ has a common upper bound, say,
$L(\partial\Sigma_{0})\leq q\pi$ for all $\Sigma_{0}\in\mathcal{S}_{0}.$
Though $\mathbf{F}\left(  L\right)  $ contains extremal surface when
$L\in\mathcal{L}$ by Theorem \ref{main1} and (\ref{ag58}), it is proved in
\cite{SZ} that $\mathbf{F}$ contains no extremal surface and, since
$H_{L}=\sup\left\{  H\left(  \Sigma\right)  \in\mathbf{F}:L(\partial
\Sigma)\leq L\right\}  $ increases as a function of $L\in\left(
0,\infty\right)  ,$ for any sequence $F_{n}\in\mathbf{F}$,$\ $with
\[
H(F_{n})=\frac{R(F_{n})}{L(\partial F_{n})}\rightarrow\sup_{F\in\mathbf{F}%
}\frac{R(F_{n})}{L(\partial F_{n})}=H_{0},
\]
we must have $L(\partial F_{n})\rightarrow\infty\left(  n\rightarrow
\infty\right)  .$ Thus (\ref{ag24}) plays the role converting infinite
problems into finite problems.
\end{remark}

To make (\ref{ag24}) easier to use, we will reduce the space $\mathcal{S}_{0}
$ as much as possible to make (\ref{ag24}) simpler:

\begin{definition}
\label{4pi-extr}\label{230329-9:54}A surface $\Sigma$ in $\mathcal{S}%
_{0}=\mathcal{S}_{0}(E_{q})$ satisfying (\ref{ag24}) will be called a $4\pi
$-\emph{extremal surface of }$\mathcal{S}_{0}$. A surface $\Sigma$ of
$\mathcal{S}_{0}$ is called the\emph{\ simplest }$4\pi$\emph{-extremal surface
of }$\mathcal{S}_{0}$ if the following hold.

(1) $Q(\Sigma)=Q(E_{q})=\min\{Q(\Sigma^{\prime})=\#\partial\Sigma^{\prime}\cap
E_{q}:\Sigma^{\prime}$ are $4\pi$-extremal surface of $\mathcal{S}_{0}\}.$

(2) $L(\partial\Sigma^{\prime})\geq L(\partial\Sigma)$ for any $4\pi$-extremal
surface $\Sigma^{\prime}$ of $\mathcal{S}_{0}$ with $Q\left(  \Sigma^{\prime
}\right)  =Q(\Sigma).$

(3) $\deg_{\max}(\Sigma)\leq\deg_{\max}(\Sigma^{\prime})$ for any $4\pi
$-extremal extremal surface $\Sigma^{\prime}$ of $\mathcal{S}_{0}$ with
$Q\left(  \Sigma^{\prime}\right)  =Q(\Sigma)\ $and $L(\partial\Sigma^{\prime
})=L(\partial\Sigma).$
\end{definition}

\begin{definition}
\label{4psim}$\mathcal{S}^{\ast}=\mathcal{S}^{\ast}(E_{q})$ is defined to be
the space of all the \emph{simplest} $4\pi$-\emph{extremal surfaces }of
$\mathcal{S}_{0}$.
\end{definition}

Our third main theorem is the following

\begin{theorem}
\label{4pi-sim}For any set $E_{q}$ of distinct $q$ points on $S,q\geq3,$
$\mathcal{S}^{\ast}=\mathcal{S}^{\ast}(E_{q})\neq\emptyset.$
\end{theorem}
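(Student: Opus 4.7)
The plan is to carry out three nested minimizations successively, each over a nonempty set whose elements take only finitely many possible values of the relevant quantity, so that each minimum is automatically attained.

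First, by Theorem \ref{main2}(i), the collection $\mathcal{E}$ of all $4\pi$-extremal surfaces of $\mathcal{S}_0=\mathcal{S}_0(E_q)$ is nonempty. Since each $Q(\Sigma)$ lies in the finite set $\{2,\ldots,q\}$, I would set $q_0:=\min\{Q(\Sigma):\Sigma\in\mathcal{E}\}$ and restrict attention to the nonempty subfamily $\mathcal{E}_1:=\{\Sigma\in\mathcal{E}:Q(\Sigma)=q_0\}$; every element of $\mathcal{E}_1$ satisfies clause (1) of Definition \ref{4pi-extr}.

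The crucial and most substantial step is to show that $\inf_{\Sigma\in\mathcal{E}_1} L(\partial\Sigma)$ is attained. For this I would invoke Theorem \ref{main2}(iii), which guarantees a single common boundary curvature $k=k(E_q)$ for every $4\pi$-extremal surface of $\mathcal{S}_0$. With both $q_0$ and $k$ fixed, the boundary partition (\ref{ko12}) of any $\Sigma\in\mathcal{E}_1$ is determined by finitely many discrete choices: (a) an ordered cyclic selection of $q_0$ distinct points $p_1,\ldots,p_{q_0}$ from the finite set $E_q$, and (b) for each consecutive pair $(p_j,p_{j+1})$, a choice of one of the at most two oriented SCC arcs of curvature $k$ from $p_j$ to $p_{j+1}$ (there are at most two circles of curvature $k$ through two prescribed points of $S$, and Definition \ref{S-surface}(8) selects the minor-or-half-circle arc on each such circle). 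The additional constraints of Definition \ref{S-surface}(5),(6) only restrict these choices further. Hence the set of admissible boundary configurations is finite, so $L(\partial\Sigma)=\sum_j L(C_j)$ takes only finitely many values on $\mathcal{E}_1$; its minimum $L_0$ is therefore attained by a nonempty subset $\mathcal{E}_2\subset\mathcal{E}_1$ whose elements realize conditions (1) and (2) of Definition \ref{4pi-extr}.

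Finally, by (\ref{degm}) the integer $\deg_{\max}(\Sigma)$ lies in the finite set $\{1,\ldots,2q_0-2\}$ for every $\Sigma\in\mathcal{E}_2$, so its minimum is trivially attained; any minimiser lies in $\mathcal{S}^{\ast}(E_q)$, which is therefore nonempty. The main obstacle in this program is the finiteness of admissible boundary configurations in the previous paragraph, and it rests decisively on Theorem \ref{main2}(iii): without the curvature-uniqueness conclusion the parameter $k$ could vary continuously over $\mathcal{E}_1$, $L(\partial\Sigma)$ would a priori range over an uncountable set, and attainment of the infimum would require a separate and delicate compactness argument within the non-compact space of boundary parametrizations.
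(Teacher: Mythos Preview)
Your proposal is correct and follows essentially the same strategy as the paper: both arguments use Theorem \ref{main2}(i) to get a $4\pi$-extremal surface, minimize $Q$ over the finite range $\{2,\dots,q\}$, and then invoke the curvature-uniqueness result (Theorem \ref{main2}(iii), which is Corollary \ref{same} in the paper) to reduce the boundary to finitely many discrete choices, after which the $\deg_{\max}$ step is immediate from the bound (\ref{degm}). The paper phrases the middle step as a sequence argument (take $\Sigma_n$ with $L(\partial\Sigma_n)\to\inf$, fix the vertex tuple $(p_1,\dots,p_{q'})$ by passing to a subsequence, and then observe that the common curvature forces $\partial\Sigma_n=\partial\Sigma_1$), whereas you count boundary configurations directly; the content is the same. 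One small sharpening: once you impose both convexity (the arc is SCC, so the disk lies on the left) and condition (8) (minor or half), the arc from $p_j$ to $p_{j+1}$ of curvature $k$ is in fact unique, not merely one of two---on the two candidate circles the convex arc $p_j\to p_{j+1}$ is minor on one and major on the other---so your ``at most two'' is a safe overcount but not tight.
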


To present some application of the last two main theorems, we introduce some
more notations. For a path $\Gamma$ on $S$ or $\mathbb{C}$ given by
$z=z(t),t\in\lbrack t_{1},t_{2}],$ $-\Gamma$ is the opposite path of $\Gamma$
given by $z=z(t_{2}-t+t_{1}),t\in\lbrack t_{1},t_{2}].$

\begin{definition}
\label{lune-lens}\label{230329-22:45}A convex domain enclosed by a convex
circular arc $c$ and its chord $I$ is called a \emph{lune} and is denoted by
$\mathfrak{D}^{\prime}\left(  I,c\right)  ,\mathfrak{D}^{\prime}\left(
I,\theta(c)\right)  ,$ $\mathfrak{D}^{\prime}\left(  I,L(c)\right)  ,$ or
$\mathfrak{D}^{\prime}\left(  I,k(c)\right)  ,$ where $\theta$ is the interior
angle at the two cusps, $k$ is the curvature of $c$ and $I$ is oriented such
that\footnote{The initial and terminal points of $I$ and $c$ are the same,
respectively, in the notation $\mathfrak{D}^{\prime}(I,\theta),$ in other
words, $\mathfrak{D}^{\prime}(I,\theta)$ is on the right hand side of $I.$}
$\partial\mathfrak{D}^{\prime}\left(  I,\theta\right)  =c-I.$

For two lunes $\mathfrak{D}^{\prime}\left(  I,\theta_{1}\right)  $ and
$\mathfrak{D}^{\prime}\left(  -I,\theta_{2}\right)  $ sharing the common chord
$I$ we write%
\[
\mathfrak{D}\left(  I,\theta_{1},\theta_{2}\right)  =\mathfrak{D}^{\prime
}\left(  I,\theta_{1}\right)  \cup I^{\circ}\cup\mathfrak{D}^{\prime}\left(
-I,\theta_{2}\right)
\]
and call the Jordan domain $\mathfrak{D}\left(  I,\theta_{1},\theta
_{2}\right)  $ a lens. Then the notations $\mathfrak{D}\left(  I,l_{1}%
,l_{2}\right)  $, $\mathfrak{D}\left(  I,c_{1},c_{2}\right)  \ $and
$\mathfrak{D}\left(  I,k_{1},k_{2}\right)  $ are in sense and denote the same
lens, when $l_{j}=L(c_{j})$ and $k_{j}=k\left(  c_{j}\right)  $ are the length
and curvature of $c_{j},$ $j=1,2$, say,%
\begin{align*}
\mathfrak{D}\left(  I,c_{1},c_{2}\right)   &  =\mathfrak{D}\left(
I,l_{1},l_{2}\right)  =\mathfrak{D}\left(  I,k_{1},k_{2}\right)
=\mathfrak{D}^{\prime}\left(  I,l_{1}\right)  \cup I^{\circ}\cup
\mathfrak{D}^{\prime}\left(  -I,l_{2}\right) \\
&  =\mathfrak{D}^{\prime}\left(  I,c_{1}\right)  \cup I^{\circ}\cup
\mathfrak{D}^{\prime}\left(  -I,c_{2}\right)  =\mathfrak{D}^{\prime}\left(
I,k_{1}\right)  \cup I^{\circ}\cup\mathfrak{D}^{\prime}\left(  -I,k_{2}%
\right)  .
\end{align*}
\label{230329-22:45 copy(1)}
\end{definition}

For a lune $\mathfrak{D}^{\prime}\left(  I,\tau\right)  ,$ whether $\tau$
denotes the length $l$, the angle $\theta,$ or the curvature $k$ is always
clear from the context, and so is for the lens $\mathfrak{D}\left(  I,\tau
_{1},\tau_{2}\right)  .$

By definition we have $0<\theta_{j}\leq\pi$ for $j=1,2,$ since $\mathfrak{D}%
^{\prime}\left(  I,\theta\right)  $ is convex, but for the domain
$\mathfrak{D}\left(  I,\theta_{1},\theta_{2}\right)  $ it is permitted that
$\theta_{1}$ or $\theta_{2}$ is zero, say $\mathfrak{D}\left(  I,\theta
_{1},\theta_{2}\right)  $ reduces to $\mathfrak{D}^{\prime}\left(
I,\theta_{1}\right)  $ or $\mathfrak{D}^{\prime}\left(  -I,\theta_{2}\right)
.$ By definition of $\mathfrak{D}(I,\theta,\theta)$ we have
\[
\mathfrak{D}(I,\theta,\theta)=\mathfrak{D}^{\prime}\left(  I,\theta\right)
\cup\mathfrak{D}^{\prime}\left(  -I,\theta\right)  \cup I^{\circ},
\]
and $\theta\in(0,\pi].$ If $I=\overline{1,0,-1}$ and $\theta=\pi/2,$ for
example, $\mathfrak{D}\left(  I,\theta,\theta\right)  =\Delta$ and
$\mathfrak{D}^{\prime}\left(  I,\pi/2\right)  =\Delta^{+}$ is the upper half
disk of $\Delta.$

As a consequence of the last two main results we have our forth main result:

\begin{theorem}
\label{2,3}\label{230329-23:00}Let $\Sigma=\left(  f,\overline{\Delta}\right)
\in\mathcal{S}^{\ast}(E_{q}).$ Then $\partial\Sigma$ has a partition
(\ref{ko12}) satisfying Definition \ref{S-surface} (1)--(8) and one of the
following holds.

(i) If $q^{\prime}=Q(\Sigma)=2,$ then $\Sigma$ is a closed lens $\overline
{\mathfrak{D}\left(  \overline{p_{1}p_{2}},\theta_{0},\theta_{0}\right)  }$
(see Definition \ref{lune-lens}) with $\{p_{1},p_{2}\}\subset E_{q},$
$\theta_{0}\in(0,\pi/2)$ and
\[
d\left(  p_{1},p_{2}\right)  =\delta_{E_{q}}=\min\left\{  d\left(  a,b\right)
:a\in E_{q},b\in E_{q},a\neq b\right\}  .
\]

(ii) If $q=3$ and $E_{q}$ is on a great circle on $S,$ then $q^{\prime
}=Q(\Sigma)=2.$

(iii) If $q=q^{\prime}=Q(\Sigma)=3,$ and the three point set $\left(
\partial\Sigma\right)  \cap E_{q}=\{p_{1},p_{2},p_{3}\}$ is not in a great
circle on $S$, then $\partial\Delta$ has a partition $\partial\Delta
=\alpha_{1}\left(  a_{1},a_{2}\right)  +\alpha_{2}\left(  a_{2},a_{3}\right)
+\alpha_{3}\left(  a_{3},a_{1}\right)  $ and there is a Jordan curve
$\alpha^{\prime}=\alpha_{1}^{\prime}\left(  a_{1},a_{2}\right)  +\alpha
_{2}^{\prime}\left(  a_{2},a_{3}\right)  +\alpha_{3}^{\prime}\left(
a_{3},a_{1}\right)  $ in $\overline{\Delta}$ such that $\alpha^{\prime}%
\cap\partial\Delta=\{a_{1},a_{2},a_{3}\},$ $f$ restricted to the domain
enclosed by $\alpha^{\prime}$ is a homeomorphism onto the triangular domain
(see Definition \ref{triangle}) $T$ enclosed by $\overline{p_{1}p_{2}%
p_{3}p_{1}}$, and $f$ restricted to the closed Jordan domain enclosed by
$\alpha_{j}-\alpha_{j}^{\prime}$ is a homeomorphism onto the closed lune
$\overline{\mathfrak{D}^{\prime}\left(  \overline{p_{j}p_{j+1}},\theta
_{j}\right)  }$ enclosed by $C_{j}\left(  p_{j},p_{j+1}\right)  +\overline
{p_{j+1}p_{j}},$ for $j=1,2,3$ (see Definition \ref{lune-lens}).

(iv) If $q^{\prime}=q=3,$ or if $q^{\prime}=2$ and $q\geq3,$ then
$\overline{n}\left(  \Sigma\right)  =\overline{n}\left(  \Sigma,E_{3}\right)
=\emptyset.$
\end{theorem}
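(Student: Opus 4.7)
The plan is to combine the structural output of Theorems \ref{main1}, \ref{main2}, and \ref{4pi-sim} with the three minimality clauses defining $\mathcal{S}^*(E_q)$ in Definition \ref{4pi-extr}. Fix $\Sigma=(f,\overline{\Delta})\in\mathcal{S}^*(E_q)\subset\mathcal{S}_0(E_q)$; the partition (\ref{ko12}) and conditions (1)--(8) of Definition \ref{S-surface} come for free, while Theorem \ref{main2}(ii)--(iii) upgrades each $C_j$ to a strictly convex circular arc of a common curvature $k(\Sigma,E_q)$.

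For part (i), assume $q'=2$, so $\partial\Sigma=C_1(p_1,p_2)+C_2(p_2,p_1)$ with two strictly convex arcs of equal curvature sharing cusps $\{p_1,p_2\}\subset E_q$. The orientation of $\partial\Delta$ puts the arcs on opposite sides of $\overline{p_1p_2}$, so, together with condition (8) (each minor or a half circle), they bound a lens $\mathfrak{D}(\overline{p_1p_2},\theta_0,\theta_0)$ with some $\theta_0\in(0,\pi/2]$. The bound $\deg_{\max}(\Sigma)\leq 2q'-2=2$ together with the $\deg_{\max}$-minimality of Definition \ref{4pi-extr}(3) rules out a genuine branched double cover and forces $f$ to be a homeomorphism of $\overline{\Delta}$ onto the closed lens. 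The strict bound $\theta_0<\pi/2$ I would establish by a first-variation argument at fixed cusps: at $\theta_0=\pi/2$ the two half arcs join smoothly into a single circle and the lens degenerates to a disk, and since $(R+4\pi)/L$ acquires a dominant $4\pi/L$ contribution as the lens thins, the derivative of the ratio in $\theta_0$ is strictly negative at $\pi/2$; an inward perturbation therefore increases the ratio, contradicting $4\pi$-extremality. The identity $d(p_1,p_2)=\delta_{E_q}$ I would obtain from the perimeter minimality of Definition \ref{4pi-extr}(2): if $d(p_1,p_2)>\delta_{E_q}$, replacing the cusp pair by one realizing $\delta_{E_q}$ at the common curvature $k(\Sigma,E_q)$ of Theorem \ref{main2}(iii) produces another surface in $\mathcal{S}_0$ realizing the same ratio $H_0$ but with strictly smaller perimeter, contradicting minimality.

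For part (ii), with $q=3$ and $E_q$ on a great circle, the analysis behind Theorem C in \cite{Zh1} identifies the symmetric lens over the closest pair as a $4\pi$-extremizer; the same construction transfers to any three points on a common great circle by a spherical isometry, producing a surface in $\mathcal{S}_0$ with $Q=2$ realizing $H_0$. The $Q$-minimality of Definition \ref{4pi-extr}(1) then forces $Q(\Sigma)=2$. For part (iii), with $q=q'=3$ and $\{p_1,p_2,p_3\}$ not on a great circle, $\partial\Sigma=C_1+C_2+C_3$ is a Jordan curve of three strictly convex arcs of common curvature, and $\deg_{\max}(\Sigma)\leq 2q'-2=4$. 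I would construct $\alpha'$ by lifting the spherical triangle $\overline{p_1p_2p_3p_1}$ through $f$: each geodesic edge $\overline{p_jp_{j+1}}$ pulls back to a family of arcs in $\overline{\Delta}$, and the valence bound combined with orientation and simple-connectedness selects one lift $\alpha_j'$ per edge that assembles into a single Jordan curve $\alpha'\subset\overline{\Delta}$ meeting $\partial\Delta$ exactly at $\{a_1,a_2,a_3\}$. The same degree bookkeeping makes $f$ restricted to the inner triangular region a homeomorphism onto the spherical triangle, and $f$ restricted to each outer lune region (bounded by $\alpha_j-\alpha_j'$) a homeomorphism onto the corresponding closed lune $\overline{\mathfrak{D}'(\overline{p_jp_{j+1}},\theta_j)}$.

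Part (iv) follows from (i) and (iii). In the case $q=q'=3$, the piecewise-homeomorphism description in (iii) places all preimages of $E_q$ on $\partial\Delta$, so $\overline{n}(\Sigma)=\emptyset$. In the case $q'=2$ with $q\geq 3$, the homeomorphism from (i) makes $\overline{n}(\Sigma,\mathfrak{a}_v)\geq 1$ only when $\mathfrak{a}_v\in\mathfrak{D}^\circ$; if any $\mathfrak{a}_v\in E_q\setminus\{p_1,p_2\}$ lay inside $\mathfrak{D}$, a local surgery detouring the boundary around a small disk at $\mathfrak{a}_v$ would strictly decrease $\overline{n}(\Sigma)$ by $1$ at a cost of $o(1)$ in perimeter, strictly increasing $(R+4\pi)/L$ and contradicting $4\pi$-extremality. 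The main obstacles I expect to encounter are the first-variation argument for $\theta_0<\pi/2$ and the perimeter comparison for $d(p_1,p_2)=\delta_{E_q}$ in (i), both of which require sharp monotonicity of the ratio $(R+4\pi)/L$ at the fixed common curvature, as well as the combinatorial bookkeeping in (iii) tying $\deg_{\max}\leq 4$ to the Jordan decomposition of $\overline{\Delta}$.
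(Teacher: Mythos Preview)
Your handling of part (i) has a genuine gap in the step $d(p_1,p_2)=\delta_{E_q}$. You claim that if $d(p_1,p_2)>\delta_{E_q}$, then moving the cusps to a pair realizing $\delta_{E_q}$ \emph{at the same curvature} gives another surface in $\mathcal{S}_0$ with the \emph{same} ratio $H_0$. This is unsupported and almost certainly false: at fixed curvature a shorter chord gives shorter arcs (smaller $L$) \emph{and} smaller area, while the $+4\pi$ in the numerator stays fixed, so the ratio moves in a direction you have not controlled. The paper's argument is opposite in spirit: it keeps $L$ fixed and shortens the chord, using Lemma~\ref{area2} (equivalently Lemma~\ref{area}) to show that the area of $\mathfrak{D}(\delta,\theta,\theta)$ strictly increases as $\delta$ decreases at fixed perimeter. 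This yields a lens with strictly larger $(R+4\pi)/L$; after a rigid motion so that two points of $E_q$ lie on its boundary (possible since the diameter still exceeds $\delta_{E_q}$), Lemma~\ref{LSZ} gives $H_0\geq (R+4\pi)/L$, a contradiction. The $\theta_0<\pi/2$ step in the paper is also conditional on $d(p_1,p_2)>\delta_{E_q}$ and uses Lemma~\ref{disk}(ii4), not a free-standing first-variation at $\theta_0=\pi/2$.

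Part (ii) has a similar gap: you assert that ``the symmetric lens over the closest pair is a $4\pi$-extremizer'' by appeal to Theorem~C, but that is exactly the content to be proved here (Theorem~C gives $h_0$, not the $4\pi$-extremal property in $\mathcal{S}_0$). The paper instead argues by contradiction from $q'=3$: using Solution~\ref{sol} and the formula (\ref{ag6}), it writes $R(\Sigma)+4\pi$ as $\tfrac12\sum_{j=1}^3\bigl(R(\mathfrak{D}(\overline{p_jp_{j+1}},\theta_j,\theta_j))+4\pi\bigr)$ (the great-circle hypothesis makes $A(T_2)\in\{2\pi,4\pi\}$ and the bookkeeping closes), and then Lemma~\ref{ratio} forces one of the three lenses to achieve $H_{\mathcal{S}_0}$, contradicting $Q(\Sigma)=3$. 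Your part~(iii) sketch via path-lifting is plausible but the paper gets it directly by identifying $\Sigma$ with $\Sigma_{\partial\Sigma}$ from Solution~\ref{sol} (Lemma~\ref{makes}(i) plus $\deg_{\max}$-minimality), which also feeds into~(iv). For~(iv) with $q'=2$, your surgery is unnecessary: once (i) gives the lens with $d(p_1,p_2)=\delta_{E_q}$ and $\theta_0\leq\pi/2$, the lens sits inside the disk of diameter $\delta_{E_q}$ on $\overline{p_1p_2}$, and no third point of $E_q$ can lie there. For~(iv) with $q'=3$ the paper treats the concave-triangle case by comparing with the complementary triangle and invoking Theorem~\ref{main2}(ii); your~(iii)-based argument for~(iv) only covers the convex-triangle case.
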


\begin{remark}
\label{finite}\label{230329-23:00 copy(1)}(1) We call two surfaces $\Sigma
_{1}=\left(  f_{1},\overline{U_{1}}\right)  $ and $\Sigma_{2}=\left(
f_{2},\overline{U_{2}}\right)  $ \emph{equivalent} and write
\[
\left(  f_{1},\overline{U_{1}}\right)  \sim\left(  f_{2},\overline{U_{2}%
}\right)  ,
\]
if there exists an orientation preserving homeomorphism (OPH) $\varphi
:\overline{U_{1}}\rightarrow\overline{U_{2}}$ such that $f_{1}=f_{2}%
\circ\varphi.$ If $\Sigma_{1}$ and $\Sigma_{2}$ are equivalent, then it is
clear that $H(\Sigma_{1})=H(\Sigma_{2}),$ and thus, it is very useful to
regard $\Sigma_{1}$ and $\Sigma_{2}$ as the same surface in our paper. Then we
can show that $\mathcal{S}^{\ast}$ contains at most a finite number surfaces,
say, equivalent classes.

(2) \label{230329-23:00 copy(2)}\label{samecurve}Similar to (1), we call two
curves $(\alpha_{1},[a_{1},b_{1}])$ and $(\alpha_{2},[a_{2},b_{2}])$ on
$S\ $\emph{equivalent} and write
\[
(\alpha_{1},[a_{1},b_{1}])\sim(\alpha_{2},[a_{2},b_{2}])
\]
if there is an increasing homeomorphism $\tau:[a_{1},b_{1}]\rightarrow\lbrack
a_{2},b_{2}]$ such that $\alpha_{2}\circ\tau=\alpha_{1}.$

(3) \label{230329-23:00 copy(3)}$\mathcal{S}^{\ast}$ may contain more than one
equivalent classes. When $q=3,$ $E_{3}=\{0,1,\infty\},$ $\mathcal{S}^{\ast}$
is consisted of two equivalent classes, and when $E_{3}=\{0,r,\infty\},$
$\mathcal{S}^{\ast}$ contains only one equivalent class when $d\left(
0,r\right)  <\pi/2,\ $by Theorem \ref{2,3} and Definition of $\mathcal{S}%
^{\ast}.$
\end{remark}

\label{230329-23:00 copy(4)}Let $\mathbf{F}^{\ast}=\mathbf{F}^{\ast}\left(
E_{q}\right)  \label{F*}$ be the subspace of $\mathbf{F}$ such that for each
$\Sigma\in\mathbf{F}^{\ast}\left(  E_{q}\right)  ,$ $\overline{n}(\Sigma
,E_{q})=0.$ Define%
\[
h_{0}=h_{0}(E_{q}):=\sup_{\Sigma\in\mathbf{F}^{\ast}\left(  E_{q}\right)
}H(\Sigma).\label{h0}%
\]
then we have%
\[
h_{0}\leq H_{0}.
\]
As a consequence of Theorem \ref{main2}--\ref{2,3}, we can easily prove the
following at the end of this paper.

\begin{theorem}
\label{q3}\label{230329-23:00 copy(5)}If $q=3,$ then
\begin{equation}
H_{0}=H_{0}(E_{3})=h_{0}(E_{3})=h_{0}; \label{q=3}%
\end{equation}
and if $q>3,$ then there exists a set $E_{q}^{\prime}$ of $q$ distinct points
such that
\begin{equation}
H_{0}(E_{q}^{\prime})>h_{0}(E_{q}^{\prime}). \label{000}%
\end{equation}

\end{theorem}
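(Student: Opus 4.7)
For the case $q = 3$, the inequality $h_0 \leq H_0$ is immediate from the inclusion $\mathbf{F}^*(E_3) \subseteq \mathbf{F}$, so the real content is $h_0 \geq H_0$. By Theorem~\ref{4pi-sim}, I select $\Sigma \in \mathcal{S}^*(E_3)$. By Theorem~\ref{2,3} the only possibilities are $Q(\Sigma) \in \{2,3\}$, and Theorem~\ref{2,3}(iv) then forces $\overline{n}(\Sigma, E_3) = 0$, so $\Sigma \in \mathbf{F}^*(E_3)$. Theorem~\ref{main2} gives
\[
H_0 = \frac{R(\Sigma) + 4\pi}{L(\partial\Sigma)}.
\]
My plan is to construct a sequence $\{F_n\} \subset \mathbf{F}^*(E_3)$ with $H(F_n) \to H_0$ by grafting onto $\Sigma$ an auxiliary near-complete sphere $P_{r_n} = S \setminus \bigcup_{i=1}^{3} D_i(r_n)$, where $D_i(r_n)$ is a small disk around $\mathfrak{a}_i \in E_3$ of radius $r_n \to 0$, rendered simply connected by two short slits and attached to $\partial\Sigma$ via a third short slit. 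Because all three excised disks contain $E_3$, we have $\overline{n}(F_n, E_3) = 0$, so $F_n \in \mathbf{F}^*(E_3)$. As $r_n \to 0$, $A(F_n) \to A(\Sigma) + 4\pi$; the technical task is to show the added slit contributions to $L(\partial F_n)$ can be made to vanish in the limit, which I expect to follow by aligning the Steiner skeleton with a portion of $\partial\Sigma$ (matching arcs cancel in the doubly traversed boundary count). This matching is feasible precisely because Theorem~\ref{2,3} pins $\Sigma$ down as either a closed lens or a triad of lunes around a spherical triangle, whose boundary vertices coincide with the Steiner-tree vertices of $E_3$.

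For the strict inequality when $q > 3$, I would choose $E_q'$ in ``general position'' (for instance, the vertices of a regular $q$-gon inscribed in a great circle, so that no three points cluster together). Theorem~\ref{main2} yields $H_0(E_q') = (R(\Sigma_0) + 4\pi)/L(\partial\Sigma_0)$ for some $\Sigma_0 \in \mathcal{S}^*(E_q')$. To harvest the $+4\pi$ in $R$ while remaining inside $\mathbf{F}^*(E_q')$, any attached sphere-layer must excise disks around \emph{all} $q$ points, since excising only three leaves $q - 3 \geq 1$ interior $E_q'$-preimages and leaves $\mathbf{F}^*$. Making the punctured sphere simply connected and attaching it then forces an irreducible slit cost of at least $2L_{\mathrm{St}}(E_q') > 0$, where $L_{\mathrm{St}}(E_q')$ is the spherical Steiner-tree length on $E_q'$. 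Unlike the three-point case, for $q \geq 4$ the Steiner skeleton has $q - 1$ edges spanning widely separated points, and the boundary partition of $\Sigma_0$ (which has only $Q(\Sigma_0) \leq q$ arcs by Definition~\ref{S-surface}) cannot accommodate all of them along matching arcs. Hence
\[
h_0(E_q') \leq \frac{R(\Sigma_0) + 4\pi}{L(\partial\Sigma_0) + 2L_{\mathrm{St}}(E_q')} < H_0(E_q').
\]

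The hardest step is the slit-cancellation verification in the $q = 3$ case: a careful case analysis (closed lens versus triad of lunes, following Theorem~\ref{2,3}) must show that the three-point Steiner skeleton can be routed so its total length is exactly offset by boundary arcs of $\Sigma$ that it traces. The corresponding obstruction for $q > 3$ is a combinatorial lower bound comparing the $(q-1)$-edge Steiner tree to the $Q(\Sigma_0) \leq q$-arc boundary partition, which I expect to force a strictly positive residue once $q \geq 4$.
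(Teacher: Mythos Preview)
Your proposal has genuine gaps in both parts.

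For $q=3$: your plan is to attach a single sphere with three excised disks and two slits, then let the disk radii $r_n\to 0$. But the two slits must connect the three excised disks to render the region simply connected, so their total length is bounded below by a positive constant (essentially the Steiner length of $E_3$), independently of $r_n$. Your suggestion that these slits can be routed along $\partial\Sigma$ so that ``matching arcs cancel'' is not substantiated: the slits live in the interior of the attached sphere-layer, not on the original boundary, and there is no cancellation mechanism in the boundary-length accounting of Lemma~\ref{patch}. The paper avoids this obstacle entirely by a different idea: it sews a \emph{full} sphere to $\Sigma_0$ along one boundary arc $C_1$ (creating exactly one interior $E_3$-preimage over $p_3$), then cuts along a single lift of $\overline{p_1p_3}$ to that preimage, and finally takes the $n$-fold branched cover $f_n(z)=f_0'(z^{2n})$ on $\overline{\Delta^+}$. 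This yields $A(\Sigma_n)=n(A(\Sigma_0)+4\pi)$ and $L(\partial\Sigma_n)=nL(\partial\Sigma_0)+2L(\overline{p_1p_3})$, so the \emph{fixed} slit cost $2L(\overline{p_1p_3})$ becomes negligible after dividing by $n$. The key is not to make the slit cost vanish in one copy, but to amortize it over many.

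For $q>3$: your inequality $h_0(E_q')\le (R(\Sigma_0)+4\pi)/(L(\partial\Sigma_0)+2L_{\mathrm{St}}(E_q'))$ is not justified. You argue that a particular construction (sphere-grafting) cannot reach $H_0$ without slit cost, but $h_0(E_q')$ is a supremum over \emph{all} surfaces in $\mathbf{F}^*(E_q')$, not just those arising from your construction; bounding one family does not bound the supremum. The paper instead builds a specific $E_q$ with one pair of points extremely close (distance $\delta\ll \delta_{\{p_2,\dots,p_q\}}/q^3$): this forces $H_0(E_q)\ge 2/\delta$ via a tiny disk in $\mathcal{S}_0$, while $h_0(E_q)\le (q-2)H_0(E_3)$ follows from the inclusion $\mathbf{F}^*(E_q)\subset\mathbf{F}^*(E_3)$ for the sub-triple $E_3=\{p_2,p_3,p_4\}$ together with the already-proved case $q=3$. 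A direct estimate then gives $(q-2)H_0(E_3)<1/\delta<H_0(E_q)$.
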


\label{230329-23:00 copy(6)}If $E_{3}=\{0,1,\infty\}.$ Then by Theorem
\ref{2,3}, $\Sigma_{0}\in\mathcal{S}^{\ast}\left(  E_{3}\right)
=\mathcal{S}^{\ast}\left(  \left\{  0,1,\infty\right\}  \right)  $ is the
closed lens $\overline{\mathfrak{D}(\overline{0,1},\theta_{0},\theta_{0})}$ or
$\overline{\mathfrak{D}(\overline{1,\infty},\theta_{0},\theta_{0})}$ for some
$\theta_{0}\in\lbrack0,\pi/2].$ Then
\[
H_{0}=h_{0}=\frac{A\left(  \mathfrak{D}(\overline{0,1},\theta_{0},\theta
_{0})\right)  +4\pi}{L(\partial\mathfrak{D}(\overline{0,1},\theta_{0}%
,\theta_{0}))}=\max_{\theta\in\lbrack0,\pi/2]}\frac{A\left(  \mathfrak{D}%
(\overline{0,1},\theta,\theta)\right)  +4\pi}{L(\partial\mathfrak{D}%
(\overline{0,1},\theta,\theta))}%
\]
and Theorem C is recovered with more: The inequality (\ref{a6-2}) of
\cite{Zh1} still holds and is sharp without the assumption (\ref{no3}).

\begin{remark}
\label{tri}\label{conv-1}(A) We always assume the triangulation $\{\overline
{U}_{j}\}=\mathbf{\{}\overline{U_{i}}\}_{i=1}^{m_{0}}$ of $\overline{U}$ in
Definition \ref{AhlforsS} satisfy the rules (1)--(3) of triangulation:

\label{230329-23:07}(1) There is a closed triangular domain $K$ on
$\mathbb{C},$ such that for each $j$, there exists a homeomorphism
$\varphi_{j}\ $from $\overline{U_{j}}$ to $K,$ the inverse of vertices and
edges of $K$ are called vertices and edges of $\overline{U_{j}}.$ We will
write $\mathbf{U=\{}\overline{U_{i}},e_{i1},e_{i2},e_{i3}\}_{i=1}^{m_{0}}$ and
also call $\mathbf{U}$ a triangulation of $\overline{U}\mathbf{,}$ the
vertices and edges of $\overline{U_{j}}$ are also called vertices and edges of
$\mathbf{U},$ respectively, and each $\overline{U_{i}}$ is called a
\emph{closed topological triangular domain (CTTD), }or a \emph{face} of
$\mathbf{U}.$

(2) For each pair of edges $e_{i}$ and $e_{j}$ of $\mathbf{U}$ with $e_{i}\neq
e_{j}\mathbf{,}$ $e_{i}\cap e_{j}$ is empty, or a singleton which is a common
vertex of $e_{i}$ and $e_{j}.$

(3) For each pair of faces $\overline{U_{i}}$ and $\overline{U_{j}}$ of
$\mathbf{U\ }$with $\overline{U_{i}}\neq\overline{U_{j}}$ and $\overline
{U_{i}}\cap\overline{U_{j}}\neq\emptyset,$ $\overline{U_{i}}\cap
\overline{U_{j}}$ is either a common vertex, or a common edge, of
$\overline{U_{i}}$ and $\overline{U_{j}}.$

\label{230330}Then the surface $\Sigma=\left(  f,\overline{U}\right)  $ can be
identified as the disjoint collection
\[
\mathbf{T}=\{T_{i},l_{i1},l_{i2},l_{i3}\}_{i=1}^{m_{0}}=\{f(\overline{U_{i}%
}),f(e_{i1}),f(e_{i2}),f(e_{i3})\}_{i=1}^{m_{0}}%
\]
of CTTDs and their edges on $S$ with adjacency condition, the equivalent
relation $\mathbf{R}$ of the collection of edges
\[
\mathbf{E=}\{l_{ij},i=1,\dots,m,j=1,2,3\}
\]
of $\mathbf{T:}l_{ij}\sim l_{i_{1}j_{1}}$ iff the two CTTDs $\overline{U_{i}}$
and $\overline{U_{i_{1}}}$ share an common edge $e_{ij}=e_{i_{1}j_{1}}.$ Thus
$f\left(  e_{ij}\right)  =l_{ij}=l_{i_{1}j_{1}}=f\left(  e_{i_{1}j_{1}%
}\right)  \ $(as sets). Note that this relation depends on the order of
$U_{i}$ and $e_{ij}.$ When such identification is established, we can get rid
of $\overline{U_{j}}$ and $f.$ That is, we can understand the surface $\Sigma$
as the collection $\mathbf{T}=\{T_{j},l_{j1},l_{j2},l_{j3}\}_{j=1}^{m_{0}}$
with a relation $\mathbf{R}=\left\{  \left(  i,j,i_{1},j_{1}\right)  \right\}
\ $such that $l_{ij}\sim l_{i_{1}j_{1}}$ iff $\left(  i,j,i_{1},j_{1}\right)
\in\mathbf{R.}$ Then, for a pair $\left(  i,j\right)  \in\left\{
1,2,\dots,m_{0}\right\}  \times\{1,2,3\},$ $l_{ij}$ is on the boundary of
$\Sigma$ iff there is no pair $\left(  i_{1},j_{1}\right)  $ such that
$\left(  i,j,i_{1},j_{1}\right)  \in\mathbf{R},$ and we can understand
$\Sigma$ as $\Sigma=(\mathbf{T},\mathbf{R}),$ and regard $\left(
\mathbf{T},\mathbf{R}\right)  $ the Riemann surface of $\Sigma,$ with a finite
number of branch points. In this way we can understand the relation
$\Sigma_{1}\sim\Sigma_{2}$ as this: the Riemann surface of $\Sigma_{1}$ and
$\Sigma_{2}$ are the same (when we order $U_{j}$ and its edges properly).

(B) If $\Sigma=\left(  f,\overline{U}\right)  $ is a surface, where $U$ is a
Jordan domain, we should understand the whole boundary $\partial\Sigma\ $as a
simple curve on the surface. In fact, we can define the positive distance
$d_{f}\left(  \cdot,\cdot\right)  $ in Definition \ref{df}. But for
simplicity, sometimes we state that $\partial\Sigma$ contains a proper
\emph{closed} arc $\gamma\left(  p_{1},p_{1}\right)  $. This only means
$\partial U$ has a partition $\partial U=\alpha\left(  a_{1},a_{2}\right)
+\beta\left(  a_{2},a_{1}\right)  $ such that $a_{1}\neq a_{2}$ but $f\left(
a_{1})=f(a_{2}\right)  =p_{1}.$ That is to say when we project the curve
$\partial\Sigma$ to $S,$ the arc $\left(  f,\alpha_{1}\right)  $ is projected
onto the closed path $\gamma\left(  p_{1},p_{1}\right)  .$

(C) For convenience, we make the agreement: For two faces $T_{j}=\left(
f,\overline{U_{j}}\right)  $ of the partition $\mathbf{T}$ of $\Sigma,j=1,2,$
they can be regarded to be closed subdomains of both $\Sigma$ and $S.$
Regarded to be in $\Sigma,$ $T_{1}$ and $T_{2}$ can not intersect when
$\overline{U_{1}}\cap\overline{U_{2}}=\emptyset$. But when they are regarded
sets on $S,$ $T_{1}$ and $T_{2}$ intersect when $f\left(  \overline{U_{1}%
}\right)  \cap f(\overline{U_{2}})\neq\emptyset.$ For a set $K\subset S,$ when
we write $K\subset T_{1}\cap T_{2},\ $we only regard $T_{j}$ as the set
$f(\overline{U_{j}})$ on $S$ for $j=1,2$, say, $K\subset T_{1}\cap T_{2}$ iff
$K\subset f(\overline{U_{1}})\cap f(\overline{U_{2}}).$
\end{remark}

\section{Elementary properties of surfaces of $\mathcal{F}$\label{Sect2}}

\label{230330-11:14}By Stoilow's theorem, every surface $\left(
f,\overline{U}\right)  $ is equivalent to a surface whose defining function is
holomorphic on the domain of Definition.

\begin{theorem}
\label{st}\label{230330-11:14 copy(1)}(i). (Stoilow's Theorem \cite{S}
pp.120--121) Let $U$ be a domain on $\overline{\mathbb{C}}$ and let
$f:U\rightarrow S$ be an open, continuous and discrete mapping. Then there
exist a domain $V$ on $\overline{\mathbb{C}}$ and a homeomorphism
$h:V\rightarrow U,$ such that $f\circ h:V\rightarrow S$ is a holomorphic mapping.

\label{230330-11:14 copy(2)}(ii). Let $\Sigma=(f,\overline{U})$ be a surface
where $U$ is a domain on $\overline{\mathbb{C}}.$ Then there exists a domain
$V$ on $\overline{\mathbb{C}}$ and an OPH $h:\overline{V}\rightarrow
\overline{U}$ such that $f\circ h:\overline{V}\rightarrow S$ is a holomorphic mapping.

\label{230330-11:14 copy(3)}(iii) Let $\Sigma=(f,\overline{U})\in\mathbf{F}.$
Then there exists an OPH $\varphi:\overline{U}\rightarrow\overline{U}$ such
that $f\circ\varphi$ is holomorphic on $U$.
\end{theorem}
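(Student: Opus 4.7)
The plan is to treat the three parts as a cascade: (i) is invoked essentially as a black box from Stoilow, while (ii) follows by applying (i) on a slightly larger domain on which $f$ still extends as an OPCOFOM, and (iii) follows from (ii) by composing with a Riemann map. The main technical point is checking that the Stoilow homeomorphism on the interior extends continuously and orientation-preservingly to the boundary; this is free because the surface map is assumed to extend as an OPCOFOM to a neighborhood of $\overline{U}$.

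For part (ii), I would argue as follows. Since $\Sigma=(f,\overline{U})$ is a surface, the defining map $f$ extends to an OPCOFOM $\widetilde{f}:U_{1}\rightarrow S$ on an open neighborhood $U_{1}$ of $\overline{U}$; in particular $\widetilde{f}$ is open, continuous and discrete. Apply part (i) to $\widetilde{f}$ to obtain a domain $V_{1}$ and a homeomorphism $h_{1}:V_{1}\rightarrow U_{1}$ so that $\widetilde{f}\circ h_{1}$ is holomorphic on $V_{1}$. Set $V=h_{1}^{-1}(U)$ and $h=h_{1}|_{\overline{V}}$. Then $V$ is a domain (homeomorphic image of a domain), $\overline{V}=h_{1}^{-1}(\overline{U})$ since $h_{1}$ is a homeomorphism, and $f\circ h=(\widetilde{f}\circ h_{1})|_{\overline{V}}$ is holomorphic on $V$. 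Orientation preservation of $h$ is forced: $\widetilde{f}$ is orientation-preserving by assumption and $\widetilde{f}\circ h_{1}$ is orientation-preserving since it is holomorphic and non-constant, so $h_{1}$ (hence $h$) must preserve orientation as well.

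For part (iii), apply (ii) to obtain a domain $V$ and an OPH $h:\overline{V}\rightarrow\overline{U}$ with $f\circ h$ holomorphic on $V$. Since $U$ is a Jordan domain in $\overline{\mathbb{C}}$ and $h$ is a homeomorphism, $V$ is also a Jordan domain. By the Riemann mapping theorem there exists a conformal map $\psi:U\rightarrow V$, and by Carathéodory's theorem $\psi$ extends to an OPH $\psi:\overline{U}\rightarrow\overline{V}$. Set $\varphi=h\circ\psi:\overline{U}\rightarrow\overline{U}$; then $\varphi$ is an OPH, and $f\circ\varphi=(f\circ h)\circ\psi$ is holomorphic on $U$ as a composition of two holomorphic maps.

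The only subtlety worth flagging is the passage from Stoilow's interior statement (i) to the closed-domain statement (ii): one might worry that $h_1$ need not behave well at $\partial U$. The presence of the ambient extension $\widetilde f$ to the open neighborhood $U_1$ removes this worry entirely, because $\partial U$ sits strictly in the interior of $U_1$, so $h_1$ is automatically a homeomorphism on a full neighborhood of $\partial V=h_1^{-1}(\partial U)$. This observation is where the definition of ``surface'' via an ambient OPCOFOM extension is used in an essential way.
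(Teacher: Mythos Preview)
Your proof is correct and follows essentially the same route as the paper: extend $f$ to an OPCOFOM on a neighborhood $U_1$, apply Stoilow's theorem there, restrict to $h^{-1}(\overline{U})$, and then for (iii) compose with a Riemann map extended via Carath\'eodory. Your explicit justification of orientation preservation and your remark on why the ambient extension handles boundary regularity are welcome additions, but the core argument matches the paper's.
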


What $f$ is discrete means that $f^{-1}(w)\cap K$ is finite for any compact
subset $K$ of $U.$

\begin{proof}
\label{230330-11:14 copy(4)}\label{ST1}Let $\Sigma=(f,\overline{U})$ be a
surface where $U$ is a domain on $\overline{\mathbb{C}}.$ Then $f:\overline
{U}\rightarrow S$ is the restriction of an OPCOFOM $g$ defined in a
neighborhood $U_{1}$ of $\overline{U},$ and thus by Stoilow's theorem, there
exists a domain $V_{1}$ on $\overline{\mathbb{C}}$ and an OPH $h:V_{1}%
\rightarrow U_{1}$ such that $g\circ h$ is holomorphic on $V_{1}$ and then for
$\overline{V}=h^{-1}(\overline{U}),$ $f\circ h$ is holomorphic on
$\overline{V},$ and thus (ii) holds.

\label{230330-11:14 copy(5)}Continue the above discussion and assume $U$ is a
Jordan domain. Then $V$ is also a Jordan domain and by Riemann mapping theorem
there exists a conformal mapping $h_{1}$ from $U$ onto $V$ and by
Caratheodory's extension theorem $h_{1}$ can be extended to be homeomorphic
from $\overline{U}$ onto $\overline{V},$ and thus the extension of $h\circ
h_{1}$ is the desired mapping $\varphi$ in (iii).
\end{proof}

\begin{remark}
\label{holomo}\label{230330-11:14 copy(6)}Since equivalent surfaces have the
same area, boundary length and Ahlfors error term (\ref{Ahero}), when we study
a surface $\Sigma=(f,\overline{U})$ in $\mathbf{F}$ in which $U$ is not
specifically given, we can always assume that $f$ is holomorphic on
$\overline{U}.$
\end{remark}

We shall denote by $D(a,\delta)$ the disk on $S$ with center $a$ and spherical
radius $\delta.$ Then $\Delta\subset S$ is the disk $D\left(  0,\pi/2\right)
.$

\begin{definition}
\label{interiorangle}\label{230330-11:14 copy(7)}Let $\Sigma=\left(
f,\overline{U}\right)  \in\mathcal{F}$ and let $p\in\partial U.$ If $f$ is
injective near $p,$ then $f$ is homeomorphic in a closed Jordan neighborhood
$N_{p}$ of $p$ in $\overline{U}$, and then $f(\overline{N_{p}})$ is a closed
Jordan domain on $S$ whose boundary near $f(p)$ is an SCC arc, or two SCC arcs
joint at $f(p),$ and thus the interior angle of $f(\overline{N_{p}})$ at
$f(p)$ is well defined, called the interior angle of $\Sigma$ at $p$ and
denoted by $\angle\left(  \Sigma,p\right)  .$

In general, we can draw some paths $\{\beta_{j}\}_{j=1}^{k}$ in $\overline{U}$
with $\cup_{j=1}^{k}\beta_{j}\backslash\{p\}\subset U$ and $\beta_{j}\cap
\beta_{i}=\{p\}\ $if $i\neq j,$ such that each $\left(  f,\beta_{j}\right)  $
is a simple line segment on $S$, $\cup_{j=1}^{k}\beta_{j}$ divides a closed
Jordan neighborhood $N_{p}$ of $p$ in $\overline{U}$ into $k+1$ closed Jordan
domains $\overline{U_{j}}\ $with $p\in\overline{U_{j}},j=1,\dots,k+1,$ and
$U_{i}\cap U_{j}=\emptyset\ $if $i\neq j,$ and $f$ restricted to
$\overline{U_{j}}$ is a homeomorphism with $\left(  f,\overline{U_{j}}\right)
\in\mathcal{F}$ for each $j.$ Then the interior angle of $\Sigma$ at $p$ is
defined by
\[
\angle\left(  \Sigma,p\right)  =\sum_{j=1}^{k+1}\angle\left(  \left(
f,U_{j}\right)  ,p\right)  .
\]
The existences of $\left\{  \beta_{j}\right\}  _{j=1}^{k}$ will be given later
in Corollary \ref{cov-2} (v).
\end{definition}

This definition is independent of coordinate transform of $\overline{U},$ and
thus one can understood it with the assumption that $f$ is holomorphic on
$\overline{U}.$ The following result is a consequence of the previous theorem.

\begin{lemma}
\label{cov-1}\label{230330-19:26}Let $(f,\overline{U})\ $be a surface, $U$ be
a domain on $\mathbb{C}$ bounded by a finite number of Jordan curves and
$\left(  f,\partial U\right)  $ is consisted of a finite number of simple
circular arcs and let $q\in f(\overline{U}).$ Then, for sufficiently small
disk $D(q,\delta)$ on $S\ $with $\delta<\pi/2,$ $f^{-1}(\overline{D(q,\delta
)})\cap\overline{U}$ is a finite union of disjoint sets $\{\overline{U_{j}%
}\}_{1}^{n}$ in $\overline{U},$ where each $U_{j}$ is a Jordan domain in $U,$
such that for each $j,$ $\overline{U_{j}}\cap f^{-1}(q)$ contains exactly one
point $x_{j}$ and (A) or (B) holds:

(A) $x_{j}\in U_{j}\subset\overline{U_{j}}\subset U$ and $f:\overline{U_{j}%
}\rightarrow\overline{D(q,\delta)}$ is a BCCM such that $x_{j}$ is the only
possible branch point.

(B) $x_{j}\in\partial U,$ $f$ is locally homeomorphic on $\overline{U_{j}%
}\backslash\{x_{j}\},$ and when $\left(  f,\overline{U}\right)  \in
\mathcal{F}, $ the following conclusions (B1)--(B3) hold:

(B1) The Jordan curve $\partial U_{j}$ has a partition $\alpha_{1}\left(
p_{1},x_{j}\right)  +\alpha_{2}\left(  x_{j},p_{2}\right)  +\alpha_{3}\left(
p_{2},p_{1}\right)  $ such that $\alpha_{1}+\alpha_{2}=\left(  \partial
U\right)  \cap\partial U_{j}$ is an arc of $\partial U,$ $\alpha_{3}^{\circ
}\subset U,$ $c_{j}=\left(  f,\alpha_{j}\right)  $ is an SCC arc for $j=1,2,$
and $c_{3}=\left(  f,\alpha_{3}\right)  $ is a locally SCC\footnote{The
condition $\delta<\pi/2$ makes $\partial D\left(  q,\delta\right)  $ strictly
convex, and it is possible that $\left(  f,\alpha_{3}^{\circ}\right)  $ may
describes $\partial D\left(  q,\delta\right)  $ more than one round, and in
this case $\left(  f,\alpha_{3}^{\circ}\right)  $ is just locally SCC.} arc in
$\partial D\left(  q,\delta\right)  $ from $q_{2}=f\left(  p_{2}\right)  $ to
$q_{1}=f\left(  p_{1}\right)  $. Moreover, $f$ is homeomorphic in a
neighborhood of $\alpha_{j}\backslash\{x_{j}\},$ for $j=1,2,$ in $\overline
{U}$ and
\[
\partial\left(  f,\overline{U_{j}}\right)  =\left(  f,\partial U_{j}\right)
=c_{1}+c_{2}+c_{3}.
\]

(B2) The interior angle of $\left(  f,\overline{U_{j}}\right)  $ at $p_{1}$
and $p_{2}$ are both contained in $[\frac{7\pi}{16},\frac{9\pi}{16}].$

(B3) There exists a rotation $\psi$ of $S$ with $\psi(q)=0$ such that the
following conclusion (B3.1) or (B3.2) holds:

(B3.1) $q_{1}=q_{2},\left(  f,\alpha_{1}\right)  =\overline{q_{1}q}%
=\overline{q_{2}q}=-\left(  f,\alpha_{2}\right)  ,$ say, $\left(  f,\alpha
_{1}+\alpha_{2}\right)  =\overline{q_{1}q}+\overline{qq_{1}},$ and $\left(
\psi\circ f,\overline{U_{j}}\right)  $ is equivalent to the
surface\footnote{Here $\delta z^{\omega_{j}}$ is regarded as the mapping
$z\mapsto\delta z^{\omega_{j}}\in S,z\in\overline{\Delta^{+}},$ via the
stereographic projection $P.$} $\left(  \delta z^{\omega_{j}}:\overline
{\Delta^{+}}\right)  $ on $S$ so that
\[
\left(  \delta z^{\omega_{j}},[-1,1]\right)  =\overline{a_{\delta}%
,0}+\overline{0,a_{\delta}},
\]
where $\omega_{j}$ is an even positive integer and $a_{\delta}\in\left(
0,1\right)  $ with $d\left(  0,a_{\delta}\right)  =\delta.$

(B3.2) $q_{1}\neq q_{2},$ as sets $c_{1}\cap c_{2}=\{q\},$ and $\left(
\psi\circ f,\overline{U_{j}}\right)  $ is equivalent to the the surface
$\left(  F,\overline{\Delta^{+}}\cup\overline{\mathfrak{D}_{1}^{\prime}}%
\cup\overline{\mathfrak{D}_{2}^{\prime}}\right)  $ so that the following holds.

(B3.2.1) $\mathfrak{D}_{1}^{\prime}=\mathfrak{D}^{\prime}\left(
\overline{-1,0},\theta_{1}\right)  \ $and $\mathfrak{D}_{2}^{\prime
}=\mathfrak{D}^{\prime}\left(  \overline{0,1},\theta_{2}\right)  $, such that
for each $j=1,2,\theta_{j}\in\lbrack0,\frac{\pi}{4}].$ Moreover $\theta_{1}=0$
(or $\theta_{2}=0$) when $c_{1}=\overline{q_{1}q}$ (or $c_{2}=\overline
{qq_{2}}$), and in this case $\mathfrak{D}_{1}^{\prime}=\emptyset$ (or
$\mathfrak{D}_{2}^{\prime}=\emptyset$). See Definition \ref{lune-lens} for the
notation $\mathfrak{D}^{\prime}\left(  \cdot,\cdot\right)  .$

(B3.2.2) $\left(  F,\overline{\Delta^{+}}\right)  $ is the surface $T=\left(
\delta z^{\omega_{j}},\overline{\Delta^{+}}\right)  $, where $\omega_{j}\ $is
a positive number which is not an even number and even may not be an integer,
$\left(  F,\overline{\mathfrak{D}_{1}^{\prime}}\right)  $ is the lune
$\psi\left(  \overline{\mathfrak{D}^{\prime}\left(  \overline{q_{1}q}%
,c_{1}\right)  }\right)  $ and $\left(  F,\overline{\mathfrak{D}_{2}^{\prime}%
}\right)  $ is the lune $\psi\left(  \overline{\mathfrak{D}^{\prime}\left(
\overline{qq_{2}},c_{2}\right)  }\right)  .$ That is to say, $\left(
f,\overline{U_{j}}\right)  $ is obtained by sewing\label{sew40} the sector
$\psi^{-1}\left(  T\right)  $ with center angle\footnote{This angle maybe
larger than $2\pi$ as the sector $\left(  z^{3},\overline{\Delta^{+}}\right)
.$} $\omega_{j}\pi,$ and the closed lunes $\overline{\mathfrak{D}^{\prime
}\left(  \overline{q_{1}q},c_{1}\right)  }$ and $\overline{\mathfrak{D}%
^{\prime}\left(  \overline{qq_{2}},c_{2}\right)  }$ along $\overline{q_{1}q}$
and $\overline{qq_{2}}$ respectively.
\end{lemma}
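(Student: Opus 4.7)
The plan is to reduce to the holomorphic case via Theorem \ref{st} and Remark \ref{holomo}, so that at each preimage of $q$ the map $f$ has a standard local form $\zeta\mapsto\zeta^{k}$ in suitable charts. Since $f$ is finite-to-one on the compact set $\overline{U}$, the fiber $f^{-1}(q)\cap\overline{U}=\{x_{1},\dots,x_{n}\}$ is finite, and at each $x_{j}$ there is a holomorphic chart $\zeta_{j}$ centred at $x_{j}$ and a spherical chart $w$ centred at $q$ in which $f$ reads $w=\zeta_{j}^{k_{j}}$ for some integer $k_{j}\geq 1$. When $x_{j}\in\partial U$, the boundary $\partial U$ contributes two real-analytic arcs through the origin in the $\zeta_{j}$-chart.

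Next I would shrink $\delta$ so that: (a) $\delta<\pi/2$, making $\overline{D(q,\delta)}$ a convex closed disk in a hemisphere; (b) the local chart neighborhoods exhaust $f^{-1}(\overline{D(q,\delta)})\cap\overline{U}$ and are pairwise disjoint; (c) every circular arc of $(f,\partial U)$ which does not pass through $q$ is disjoint from $\overline{D(q,\delta)}$; and (d) every arc of $(f,\partial U)$ passing through $q$ meets $\partial D(q,\delta)$ transversally at angles within $\pi/16$ of $\pi/2$, which is possible because the angle between any smooth curve through $q$ and $\partial D(q,\delta)$ tends to $\pi/2$ as $\delta\to 0^{+}$. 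The components $\overline{U_{j}}$ of $f^{-1}(\overline{D(q,\delta)})\cap\overline{U}$ then each meet $f^{-1}(q)$ only at $x_{j}$. If $x_{j}\in U$, the local model $\zeta_{j}^{k_{j}}$ makes $f:\overline{U_{j}}\to\overline{D(q,\delta)}$ a BCCM with $x_{j}$ the only possible branch point, which is Case (A). The angle bound (B2) then also follows from (d), since $p_{1}$ and $p_{2}$ are points where a circular arc meets $\partial D(q,\delta)$ transversally at an angle close to $\pi/2$.

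The heart of the proof is Case (B). The two boundary arcs of $U$ through $x_{j}$ map to two circular arcs $c_{1},c_{2}$ based at $q$, with other endpoints $q_{1},q_{2}$ on $\partial D(q,\delta)$; the arcs $\alpha_{1},\alpha_{2}$ of (B1) are the boundary arcs trimmed at $p_{1},p_{2}$, and $\alpha_{3}$ is a component of $f^{-1}(\partial D(q,\delta))\cap\overline{U_{j}}$. I would split off from $\overline{U_{j}}$ the two closed lunes $\overline{\mathfrak{D}^{\prime}(\overline{q_{i}q},c_{i})}$ for $i=1,2$ (each possibly degenerate when $c_{i}$ is itself geodesic), leaving a straight-sided region whose image boundary near $q$ consists of the two geodesic segments $\overline{q_{1}q}$ and $\overline{qq_{2}}$. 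When $q_{1}\neq q_{2}$, a rotation $\psi$ of $S$ taking $q$ to $0$ carries this residual piece onto the sector $(\delta z^{\omega_{j}},\overline{\Delta^{+}})$, the exponent $\omega_{j}$ being determined by the opening angle at $q$ between $\overline{q_{1}q}$ and $\overline{qq_{2}}$; reattaching the two lunes gives (B3.2). When instead $q_{1}=q_{2}$ and $c_{1}\cup c_{2}$ collapses onto a single geodesic $\overline{q_{1}q}$ traversed once in each direction, the straight-sided model is forced to identify both of its sides with the same geodesic, which under $\zeta_{j}\mapsto\zeta_{j}^{k_{j}}$ happens precisely when $k_{j}$ is even; this yields (B3.1) with $\omega_{j}=k_{j}$.

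The main obstacle I expect is the bookkeeping in (B3.2): one must verify that once the two lunes are detached, the residual piece is genuinely equivalent, via an orientation-preserving homeomorphism, to the pure sector $(\delta z^{\omega_{j}},\overline{\Delta^{+}})$ with $\omega_{j}$ exactly the geodesic opening angle at $q$ divided by $\pi$, and that this $\omega_{j}$ can be an arbitrary positive real (matching the fact that the holomorphic model $\zeta_{j}^{k_{j}}$ need not send boundary to geodesics). All the other items---finiteness of $\{x_{j}\}$, disjointness of the $\overline{U_{j}}$, the transversality angle bound (B2), and the interior dichotomy (A)---are by comparison routine consequences of the holomorphic normal form together with the choice of $\delta$.
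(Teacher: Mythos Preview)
Your approach is correct and matches the paper's, which gives only a two-sentence sketch: (A) follows directly from Stoilow's theorem, and (B) follows from (A) by extending $f$ to an OPCOFOM on a full neighborhood of $x_{j}$ in $\mathbb{C}$ (this extension exists by the very definition of a surface), then reading off how $\overline{U}$ sits inside the resulting interior-type local model. Your detailed decomposition into sector plus lunes, and your derivation of the angle bound (B2) from transversality as $\delta\to 0$, are exactly the content that the paper leaves implicit; the only economy you might adopt from the paper is to phrase (B) as a corollary of (A) applied to the extension, rather than redoing the local analysis from scratch at boundary points.
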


\begin{proof}
(A) follows from Stoilow's theorem directly when $x_{j}\in U$. (B) follows
from (A) and the assumption $\left(  f,\overline{U}\right)  \in\mathcal{F}$,
by considering the extension of $f$ which is an OPCOFOM in a neighborhood of
$x_{j}$ in $\mathbb{C}.$
\end{proof}

\begin{remark}
\label{notation}\label{230330-19:30}We list more elementary conclusions
deduced from the previous lemma directly and more notations. Let
$\Sigma=(f,\overline{U})$ with $\Sigma\in\mathcal{F}$, $q\in f(\overline{U}),$
$\delta,$ $x_{j},$ $U_{j}$ and $\alpha_{1}+\alpha_{2}$ be given as in Lemma
\ref{cov-1}.

(A) If for some $j,$ $x_{j}\in\Delta,$ then by Lemma \ref{cov-1} (A), $f$ is a
BCCM in the neighborhood $U_{j}$ of $x_{j}$ in $\Delta,$ and the order
$v_{f}(x_{j})$ of $f$ at $x_{j}$ is well defined, which is a positive integer,
and $f$ is a $v_{f}(x_{j})$-to-$1$ CCM on $U_{j}\backslash\{x_{j}\}.$

(B) If for some $j,x_{j}\ $is contained in $\partial\Delta,$ then, using
notations in Lemma \ref{cov-1} (B), there are two possibilities:

(B1) $q_{1}=q_{2},$ the interior angle of $\Sigma$ at $x_{j}$ equals
$\omega_{j}\pi,$ and the order $v_{f}\left(  x_{j}\right)  $ is defined to be
$\omega_{j}/2,$ which is a positive integer.

(B2) $q_{1}\neq q_{2},c_{1}+c_{2}$ is a simple arc from $q_{1}$ to $q,$ and
then to $q_{2}$. In this case the interior angle of $\Sigma$ at $x_{j}$ equals
$\omega_{j}\pi+\varphi_{1}+\varphi_{2},$ where $\varphi_{1}$ and $\varphi_{2}$
are the interior angles of $\mathfrak{D}^{\prime}\left(  \overline{q_{1}%
q},c_{1}\right)  $ and $\mathfrak{D}^{\prime}\left(  \overline{qq_{2}}%
,c_{2}\right)  $ at the cusps, and we defined the order of $f$ at $x_{j}$ to
be the least integer $v_{f}\left(  x_{j}\right)  $ with $v_{f}\left(
x_{j}\right)  \geq\left(  \omega_{j}\pi+\varphi_{1}+\varphi_{2}\right)  /2\pi
$. Since $\omega_{j}\pi+\varphi_{1}+\varphi_{2}\geq\omega_{j}\pi>0,$ we have
$v_{f}\left(  x_{j}\right)  \geq1$ and $f$ is injective on $U_{j}%
\backslash\left\{  c_{1}+c_{2}\right\}  $ iff $v_{f}\left(  x_{j}\right)  =1.$
This is also easy to see by Corollary \ref{cov-2} (v).

(C) The number $v_{f}(x_{j})$ can be used to count path lifts with the same
initial point $x_{j}:$ when $x_{j}\in\Delta,$ any sufficiently short line
segment on $S$ starting from $q=f(x_{j})\ $has exactly $v_{f}\left(
x_{j}\right)  $ $f$-lifts starting from $x_{j}$ and disjoint in $\Delta
\backslash\{x_{j}\};$ and when $x_{j}\in\partial\Delta,$ for each arc $\beta$
of the two sufficiently short arcs of $\partial\Delta$ with initial point
$x_{j}$, $(f,\beta)$ is simple and has exactly $v_{f}(x_{j})-1$ $f$-lifts
$\left\{  \beta_{j}\right\}  _{j=1}^{v_{f}\left(  x_{j}\right)  -1}$ with the
same initial point $x_{j},$ $\beta_{j}\backslash\{x_{j}\}\subset\Delta$ for
each $j$ and they are disjoint in $\Delta.$ This is also easy to see by
Corollary \ref{cov-2} (v).

(D) A point $x\in\overline{U}$ is called a \emph{branch point} of $f$ (or
$\Sigma$) if $v_{f}(x)>1,$ or otherwise called a regular point if
$v_{f}\left(  x\right)  =1.$ We denote by $C_{f}$ the set of all branch points
of $f,$ and $CV_{f}$ the set of all branch values of $f.$ For a set
$A\subset\overline{U},$ we denote by $C_{f}\left(  A\right)  =C_{f}\cap A$ the
set of branch points of $f$ located in $A,$ and by $CV_{f}(K)=CV_{f}\cap
K\label{CVF}$ the set of branch values of $f$ located in $K\subset S.$ We will
write%
\[
C_{f}^{\ast}\left(  A\right)  =C_{f}\left(  A\right)  \backslash f^{-1}%
(E_{q})\mathrm{\ and\ }C_{f}^{\ast}=C_{f}\backslash f^{-1}(E_{q})=C_{f}\left(
\overline{U}\right)  \backslash f^{-1}(E_{q}).
\]

(E) For each $x\in\overline{U},$ $b_{f}\left(  x\right)  =v_{f}\left(
x\right)  -1\ $is called the branch number of $f$ at $x,$ and for a set
$A\subset\overline{U}$ we write $B_{f}\left(  A\right)  =\sum_{x\in A}%
b_{f}\left(  x\right)  .$ Then we have $b_{f}\left(  x\right)  \neq0$ iff
$C_{f}\left(  x\right)  =\{x\},$ and $B_{f}\left(  A\right)  =\sum_{x\in
C_{f}\left(  A\right)  }b_{f}(x).$ We also define%
\[
B_{f}^{\ast}\left(  A\right)  =B_{f}\left(  A\backslash f^{-1}(E_{q})\right)
.
\]
Then $B_{f}^{\ast}\left(  A\right)  \geq0,$ equality holding iff $C_{f}^{\ast
}\left(  A\right)  =\emptyset.$ When $A=\overline{U}$ is the domain of
definition of $f,$ we write%
\[
B_{f}=B_{f}\left(  \overline{U}\right)  \mathrm{\ and\ }B_{f}^{\ast}%
=B_{f}^{\ast}\left(  \overline{U}\right)  .\label{BBC}%
\]

\end{remark}

\begin{definition}
\label{nod}\label{230330-22:41}Let $\Sigma=\left(  f,\overline{U}\right)  $ be
a surface in $\mathcal{F},$ let $x\in\overline{U},$ and let $V$ be a
(relatively)\footnote{This means that $V=\overline{U}\cap V^{\ast},$ where
$V^{\ast}$ is an open set on $\mathbb{C}$. Thus when $x\in\partial U,$ $V$
contains the neighborhood $V^{\ast}\cap\partial U$ of $x$ in $\partial U.$}
open subset of $\overline{U}.$ The pair $\left(  x,V\right)  $ is called a
(relatively) disk of $\Sigma$ with center $x$ and radius $\delta,$ if $x\ $and
$\overline{V}$ satisfy all conclusions in Lemma \ref{cov-1} (A) or (B) as
$x_{j}$ and $\overline{U_{j}}\ $and $\delta.$
\end{definition}

\begin{remark}
\label{nod-1}If $x\in U$ and $\left(  x,V\right)  $ is a disk of
$\Sigma=\left(  f,\overline{U}\right)  $ with radius $\delta,$ then $V$ is
open in $U$ and $\overline{V}\subset U.$

If $x\in\partial U$ and $\left(  x,V\right)  $ is a disk of $\Sigma$ with
radius $\delta,$ then $\partial V$ has a partition $\alpha_{1}\left(
x^{\prime},x\right)  +\alpha_{2}\left(  x,x^{\prime\prime}\right)  +\alpha
_{3}\left(  x^{\prime\prime},x^{\prime}\right)  $ such that $\alpha_{1}%
+\alpha_{2}\ $is the \emph{old boundary} of $V$, $\alpha_{3}$ is the \emph{new
boundary} of $V$, $V=V^{\circ}\cup\left(  \alpha_{1}+\alpha_{2}\right)
^{\circ},$ $c_{1}=\left(  f,\alpha_{1}\right)  $ and $c_{2}=\left(
f,\alpha_{2}\right)  $ are SCC arcs, $c_{3}=\left(  f,\alpha_{3}\right)  $ is
a locally SCC arc, which maybe more than a circle and which is
contained\label{bdr} in the circle $d\left(  f(x),w\right)  =\delta,$ and the
interior angles of $\left(  f,\overline{V}\right)  $ at $x^{\prime}$ and
$x^{\prime\prime}$ are contained in $[\frac{7\pi}{16},\frac{9\pi}{16}].$ If
$x$ is fixed and $\delta$ tends to $0,$ then $x^{\prime}$ and $x^{\prime
\prime}$ tend to $x$ and the interior angles of $\left(  f,\overline
{V}\right)  $ at $x^{\prime}$ and $x^{\prime\prime}$ both tend to $\pi/2.$ The
paths $-\alpha_{1}$ and $\alpha_{2}$ are called \emph{boundary radii} of the
disk $V.$
\end{remark}

Now we can state a direct Corollary to Lemma \ref{cov-1}.

\begin{corollary}
\label{cov-2}\label{230330-22:41 copy(1)}Let $\Sigma=\left(  f,\overline
{U}\right)  \in\mathcal{F}$ and let $\left(  x_{1},U_{1}\right)  $ be a disk
of $\Sigma$ with radius $\delta_{1}.$ Then, the following hold.

(i) $f$ is locally homeomorphic on $\overline{U_{1}}\backslash\{x_{1}\}$; and
if $\left(  x_{1},U_{1}^{\prime}\right)  $ is another disk of $\Sigma$ with
radius $\delta_{1}^{\prime}>\delta_{1},$ then $\overline{U_{1}}\subset
U_{1}^{\prime},$ whether $x_{1}\ $is in $\partial U$ or $U$.

(ii) If $f$ is homeomorphic in some neighborhood of $x_{1}$ in $\overline{U}$
(which may be arbitrarily small), or if $f$ locally homeomorphic on
$\overline{U}$, then the disk $\left(  x_{1},\overline{U_{1}}\right)  $ is a
one sheeted closed domain of $\Sigma,$ say, $f$ restricted to $\overline
{U_{1}}$ is a homeomorphism onto $f\left(  \overline{U_{1}}\right)  .$

(iii) For each $x_{2}\in U_{1}\backslash\{x_{1}\},$ any closed disk $\left(
x_{2},\overline{U_{2}}\right)  $ of $\Sigma$ is a one sheeted closed domain of
$\Sigma,$ moreover, $\overline{U_{2}}\subset U_{1}$ when the radius of
$\left(  x_{2},U_{2}\right)  $ is smaller than $\delta-d\left(  f(x_{1}%
),f(x_{2})\right)  .$

(iv) If $x_{1}\in\partial U$, $f$ is regular at $x_{1}$ and $\left(
f,\partial U\right)  $ is circular near $x_{1},$ then $\left(  f,\overline
{U_{1}}\right)  $ is a \emph{convex} and one sheeted closed domain of
$\Sigma,$ which is in fact the closed lens $\overline{\mathfrak{D}\left(
I,c_{1},c_{1}^{\prime}\right)  }$, where $c_{1}$ and $c_{1}^{\prime}$ are
circular subarcs of $\partial\Sigma$ and the circle $\partial D\left(
f(x_{1}),\delta_{1}\right)  ,$ $I$ is the common chord, and the three paths
$c_{1},-c_{1}^{\prime},I$ have the same initial point. Moreover, if $\Sigma$
is regular at $x_{1}$ and $\partial\Sigma$ is straight near $x_{1},$ then
$f(\overline{U_{1}})=\overline{\mathfrak{D}^{\prime}\left(  -I,c_{1}^{\prime
}\right)  }=\overline{\mathfrak{D}^{\prime}\left(  -c_{1},c_{1}^{\prime
}\right)  }\ $is "half" of the disk $\overline{D(f(x_{1}),\delta_{1})}$ on the
left hand side of "diameter" $c_{1}$ (see Definition \ref{lune-lens} for
lenses and lunes).

(v) For any $x\in\overline{U_{1}},$ there exists a path $I\left(
x_{1},x\right)  $ in $\overline{U_{1}}$ from $x_{1}$ to $x$ such that
$I\left(  x_{1},x\right)  $ is the unique $f$-lift of $\overline{f(x_{1}%
)f(x)}.$ That is to say, $\left(  f,\overline{U_{1}}\right)  $ can be foliated
by the family of straight line segments $\{\left(  f,I\left(  x_{1},x\right)
\right)  :x\in\partial U_{1}\}$ and for each pair $\left\{  I\left(
x_{1},x\right)  ,I\left(  x_{1},y\right)  \right\}  $ of the family $\left\{
I\left(  x_{1},x\right)  :x\in\partial U_{j}\right\}  $ with $x\neq y$, one
has $I\left(  x_{1},x\right)  \cap I\left(  x_{1},y\right)  =\{x_{1}\}.$
\end{corollary}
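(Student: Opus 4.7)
The plan is to deduce all five statements directly from the explicit local description provided by Lemma \ref{cov-1}, together with standard path-lifting arguments for branched complete covering mappings. For (i), the local homeomorphism of $f$ on $\overline{U_1}\setminus\{x_1\}$ is already built into Lemma \ref{cov-1}: case (A) exhibits $f|_{\overline{U_1}}$ as a BCCM whose only possible branch point is $x_1$, while case (B) states the property outright. For the monotonicity $\overline{U_1}\subset U_1'$ when $\delta_1<\delta_1'$, I would argue by path lifting: given $y\in\overline{U_1}$, the geodesic $\overline{f(x_1)f(y)}$ lies in the open disk $D(f(x_1),\delta_1')$ away from $f(x_1)$; its unique $f$-lift from $x_1$ inside $\overline{U_1'}$ is well defined by the BCCM property, and by uniqueness must coincide with the corresponding lift inside $\overline{U_1}$, hence ends at $y$. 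Statement (ii) is then immediate: injectivity of $f$ near $x_1$ forces $v_f(x_1)=1$, so the normal form in Lemma \ref{cov-1}(A) (with $k_j=1$) or in (B3) (with $\omega_j=1$ and empty lunes $\mathfrak{D}'_1,\mathfrak{D}'_2$) degenerates to a homeomorphism.

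For (iii), pick $x_2\in U_1\setminus\{x_1\}$; by (i) $f$ is a local homeomorphism at $x_2$, so by (ii) every sufficiently small disk $(x_2,U_2)$ is one-sheeted. The containment $\overline{U_2}\subset U_1$ for radii below $\delta_1-d(f(x_1),f(x_2))$ follows by the triangle inequality combined with the same lifting argument inside $\overline{U_1}$: any $y\in\overline{U_2}$ satisfies $d(f(x_1),f(y))<\delta_1$, so the $f$-lift in $\overline{U_1}$ of $\overline{f(x_1)f(y)}$ from $x_1$ cannot cross $\partial U_1$ (whose image lies in $\partial D(f(x_1),\delta_1)$). For (iv), the regularity $v_f(x_1)=1$ together with the circular boundary hypothesis rules out case (B3.1) of Lemma \ref{cov-1} (where $q_1=q_2$ forces $\omega_j/2\geq 1$ only via genuine branching) and reduces us to case (B3.2) with $\omega_j=1$ and $\mathfrak{D}_1'=\mathfrak{D}_2'=\emptyset$, since $c_1$ and $c_2$ are subarcs of the same SCC arc of $\partial\Sigma$ through $x_1$. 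Then $f|_{\overline{U_1}}$ is a homeomorphism onto the Jordan region bounded by $c_1+c_2$ and the circular arc $c_1'$ of $\partial D(f(x_1),\delta_1)$ joining $q_1$ to $q_2$; these two arcs share endpoints, identifying the image as the closed lens $\overline{\mathfrak{D}(I,c_1,c_1')}$ with $I$ the common chord. In the sub-case where $\partial\Sigma$ is straight at $x_1$, the arc $c_1+c_2$ itself coincides with $-I$, and the lens collapses to the lune $\overline{\mathfrak{D}'(-I,c_1')}$, which is exactly the half of $\overline{D(f(x_1),\delta_1)}$ on the required side of the diameter.

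Finally, (v) asserts that $\overline{U_1}$ is foliated by $f$-lifts of the geodesic radii issuing from $f(x_1)$. Existence and uniqueness of each lift $I(x_1,x)$ is precisely what the BCCM structure of $f|_{\overline{U_1}}$ guarantees: in the interior case (A) lifts are prescribed by the branch-covering normal form $\zeta\mapsto\zeta^k$, while in the boundary case (B3) they arise from the explicit model $\delta z^{\omega_j}$ sewn with the lunes $\overline{\mathfrak{D}'(\overline{q_1q},c_1)}$ and $\overline{\mathfrak{D}'(\overline{qq_2},c_2)}$. Disjointness away from $x_1$ holds because two distinct radii in $\overline{D(f(x_1),\delta_1)}$ meet only at $f(x_1)$, so any further intersection of two distinct lifts would force $f$ to fail local injectivity at a point other than $x_1$, contradicting (i). The main obstacle I anticipate is the careful case analysis in (iv) when $x_1\in\partial U$: distinguishing (B3.1) from (B3.2) under the regularity and circularity hypotheses, and verifying that the outer arc $c_1'$ produced by the lemma is oriented so that $c_1, -c_1', I$ share a common initial point, hence the image really is the lens $\overline{\mathfrak{D}(I,c_1,c_1')}$ on the correct side of $I$ on $S$.
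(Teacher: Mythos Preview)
Your approach is correct and matches the paper's: the paper states this result as a ``direct Corollary to Lemma \ref{cov-1}'' and gives no proof at all, so your careful unpacking of each case (A), (B3.1), (B3.2) from that lemma is exactly the intended argument. One small slip: in (iii) you write that ``every sufficiently small disk $(x_2,U_2)$ is one-sheeted,'' but the statement claims this for \emph{any} disk centered at $x_2$; your own invocation of (ii) already gives the stronger conclusion, since once $f$ is locally homeomorphic at $x_2$ the hypothesis of (ii) applies to every disk of $\Sigma$ centered there, regardless of radius.
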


\begin{lemma}
\label{int-ang}\label{int-arg}Let $\Sigma=(f,\overline{U})\in\mathcal{F}$ and
let $x\in\partial U.$ Then $\angle(\Sigma,x)>0.$
\end{lemma}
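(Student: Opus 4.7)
The plan is to reduce the claim to the explicit local models of Lemma \ref{cov-1}(B) and to compare them with Definition \ref{interiorangle}. Fix $x\in\partial U$. By Lemma \ref{cov-1}(B), applied at the value $q=f(x)$ with a sufficiently small $\delta<\pi/2$, there is a disk $(x,V)$ of $\Sigma$ in the sense of Definition \ref{nod}, and up to the rotation $\psi$ of $S$ with $\psi(q)=0$ the surface $(f,\overline{V})$ falls into exactly one of the two local models (B3.1) or (B3.2). In either model, $f$ is locally homeomorphic on $\overline V\setminus\{x\}$, and Corollary \ref{cov-2}(v) supplies a foliation of $\overline{V}$ by $f$-lifts of straight line segments issuing from $x$. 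These lifts provide precisely the paths $\{\beta_j\}$ required in Definition \ref{interiorangle}: they meet only at $x$, and they cut $\overline{V}$ into finitely many closed Jordan pieces on each of which $f$ is a homeomorphism onto a triangular (or lens) domain on $S$ with a vertex at $q$.

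With the partition in hand, the interior angle $\angle(\Sigma,x)$ is the sum of the interior angles at $q$ of the images of these pieces, and this sum recovers precisely the formulas recorded in Remark \ref{notation}(B). I would then finish case by case. In case (B1), $q_1=q_2$ and the local model is $(\delta z^{\omega_j},\overline{\Delta^+})$ with $\omega_j$ an even positive integer; hence $\omega_j\ge 2$, and
\[
\angle(\Sigma,x)=\omega_j\pi\ge 2\pi>0.
\]
In case (B2), $q_1\ne q_2$ and the local model is the sector $(\delta z^{\omega_j},\overline{\Delta^+})$, with center angle $\omega_j\pi$ at $0$, sewn with the two closed lunes $\overline{\mathfrak{D}^{\prime}(\overline{q_1q},c_1)}$ and $\overline{\mathfrak{D}^{\prime}(\overline{qq_2},c_2)}$ of cusp angles $\varphi_1,\varphi_2\in[0,\pi/4]$; by Lemma \ref{cov-1}(B3.2.2), $\omega_j$ is a positive real number, so
\[
\angle(\Sigma,x)=\omega_j\pi+\varphi_1+\varphi_2\ge \omega_j\pi>0.
\]

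The main technical point, and the only place where care is needed, is to check that the angle produced by Definition \ref{interiorangle} via the foliation agrees with the formulas $\omega_j\pi$ and $\omega_j\pi+\varphi_1+\varphi_2$ attached to the local models of Lemma \ref{cov-1}, independent of the choice of the partitioning paths $\{\beta_j\}$. This is a routine additivity-under-refinement argument: refining any admissible partition by inserting further $f$-lifts of radial segments does not change the total sum, and any two admissible partitions have a common refinement of this form, so the sum is well defined and equals the model value. Once this identification is in place the positivity of $\angle(\Sigma,x)$ is immediate from the two displayed inequalities, and the lemma is proved.
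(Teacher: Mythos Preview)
Your proof is correct and is exactly the approach the paper takes: its proof is the single sentence ``This is clear by Lemma~\ref{cov-1}~(B) and Remark~\ref{notation}~(B),'' and you have simply unpacked that reference into the two local models (B3.1) and (B3.2), reading off $\angle(\Sigma,x)=\omega_j\pi$ or $\omega_j\pi+\varphi_1+\varphi_2$ with $\omega_j>0$. One minor imprecision: the bounds $\theta_j\in[0,\pi/4]$ in Lemma~\ref{cov-1}~(B3.2.1) are for the parameter-domain lunes, not a priori for the image cusp angles $\varphi_j$; but since your inequality only uses $\varphi_j\ge 0$, this does not affect the argument.
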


\begin{proof}
This is clear by Lemma \ref{cov-1} (B) and Remark \ref{notation} (B).
\end{proof}

If the assumption $\Sigma=(f,\overline{U})\in\mathcal{F}$ is not be satisfied,
the conclusion may fail. For example, for the convex closed half disk
$\overline{\Delta^{+}}$ and the disk $B=\{z\in\mathbb{C}:\left\vert z-\frac
{1}{2}\right\vert <1\}$ in $\mathbb{C},$ $T=\overline{\Delta^{+}}\backslash B$
can be regarded as a surface on $S$ (via the sterographic projection), whose
interior angle at the origin equals $0,$ and it is clear that $T$
$\notin\mathcal{F}.$ In fact the part of $\partial T$ lying on $\partial B$ is
not convex on $S$.

Lemma \ref{cov-1} also directly implies the following lemma.

\begin{lemma}
\label{inj}\label{230331-8:45}Let $(f,\overline{U})\in\mathcal{F}.$ Then the
following hold.

(A) For each $p\in\overline{U},f$ restricted to some neighborhood of $p$ in
$\overline{U}$ is a homeomorphism if one of the following alternatives holds.

(A1) $p\in U$ and $p$ is a regular point of $f.$

(A2) $p\in\partial U,$ $p$ is a regular point of $f$ and $(f,\partial U)$ is
simple in a neighborhood of $p$ on $\partial U$.

(B) For any SCC arc $\left(  f,\alpha\right)  $ of $\partial\Sigma=\left(
f,\partial U\right)  ,$ $f$ restricted to a neighborhood of $\alpha^{\circ}$
in $\overline{U}$ is homeomorphic if and only if $h$ has no branch point on
$\alpha^{\circ}.$
\end{lemma}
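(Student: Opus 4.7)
The plan is to reduce both (A) and (B) to the local structure theorem for surfaces in $\mathcal{F}$ already provided by Lemma \ref{cov-1} and summarized in Remark \ref{notation}.

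For (A1), since $p\in U$ I would apply Lemma \ref{cov-1} (A) to a small disk $(p,V)$: $f|_{\overline{V}}$ is a BCCM of degree $v_f(p)$ with $p$ the only possible branch point. The hypothesis $v_f(p)=1$ forces this BCCM to be an unbranched cover of degree one, hence a homeomorphism. For (A2), I would apply Lemma \ref{cov-1} (B) together with Remark \ref{notation} (B). Simplicity of $(f,\partial U)$ on a neighborhood of $p$ in $\partial U$ excludes case (B3.1) (which produces the closed loop $\overline{q_1q}+\overline{qq_1}$), so we are in (B3.2). The last sentence of Remark \ref{notation} (B2) states explicitly that $f$ is injective on the corresponding disk $U_j$ minus $c_1+c_2$ iff $v_f(p)=1$; combining with injectivity along the boundary arc (guaranteed by the simple boundary assumption), this gives injectivity of $f$ on $\overline{U_j}$, i.e.\ the desired local homeomorphism.

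For part (B), the ``only if'' direction is immediate: if $x\in\alpha^{\circ}$ is a branch point, then $v_f(x)\ge 2$ and Remark \ref{notation} (B2) says $f$ fails to be injective on any neighborhood of $x$ in $\overline{U}$, so no neighborhood of $\alpha^{\circ}$ can be a homeomorphism domain. For the ``if'' direction, every $x\in\alpha^{\circ}$ has $v_f(x)=1$, and since $(f,\alpha)$ is an SCC arc (in particular simple), the hypothesis of (A2) is satisfied at every interior point; hence each $x\in\alpha^{\circ}$ admits a disk $(x,V_x)$ of some radius $\delta_x>0$ on which $f$ is a homeomorphism. The union $W=\bigcup_{x\in\alpha^{\circ}}V_x$ is a relatively open neighborhood of $\alpha^{\circ}$ in $\overline{U}$ on which $f$ is locally homeomorphic.

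The main technical step is promoting local injectivity on $W$ to global injectivity on some (possibly smaller) neighborhood of $\alpha^{\circ}$. Here I would argue as follows. For each compact subarc $\alpha'\subset\alpha^{\circ}$, a finite subcover of $\{V_x\}_{x\in\alpha'}$ and a Lebesgue-number argument produce a uniform $\delta>0$ such that the $\delta$-tube around $\alpha'$ (measured in the surface metric $d_f$ referenced in Remark \ref{tri} (B) and the natural conformal chart provided by Stoilow's theorem, Theorem \ref{st}) lies in $W$ and any two points in it at surface-distance $<\delta$ lie in a common $V_x$. If two distinct points of this tube had the same $f$-image, the simplicity of $(f,\alpha')$ together with the local homeomorphism property would force a path on which $f$ fails to be injective arbitrarily close to $\alpha'$, contradicting Corollary \ref{cov-2} (iii) applied in the disks $V_x$. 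Exhausting $\alpha^{\circ}$ by such $\alpha'$ and shrinking tube widths continuously gives a neighborhood of $\alpha^{\circ}$ on which $f$ is injective, hence a homeomorphism onto its image. I expect this gluing/injectivity step to be the only delicate point; everything else is a direct citation of Lemma \ref{cov-1} and Remark \ref{notation}.
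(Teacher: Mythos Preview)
Your approach is correct and coincides with the paper's: both parts are reduced to the local model of Lemma~\ref{cov-1} and Remark~\ref{notation}, and (B) is deduced from (A). The paper's own proof is literally two sentences --- ``(A) follows from the definition of $\mathcal{F}$. (B) follows from (A).'' --- so you have simply unpacked what the paper leaves implicit.

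One remark on your gluing step for (B): your tube-and-Lebesgue-number argument is in the right spirit but slightly roundabout (and you cite $d_f$ via Remark~\ref{tri} when it is actually introduced later in Definition~\ref{df}). A cleaner way to pass from local to global injectivity, and presumably what the paper has in mind, is to patch local inverses rather than argue by contradiction on the domain side: since $c=(f,\alpha)$ is simple, each $w\in c^{\circ}$ has a unique preimage $g(w)\in\alpha^{\circ}$; by (A2) $f$ is a local homeomorphism at $g(w)$, so $g$ extends to a branch of $f^{-1}$ on a one-sided neighborhood of $w$; these branches agree on overlaps because they agree on $c^{\circ}$ and $f$ is discrete. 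The union of their images is then a neighborhood of $\alpha^{\circ}$ in $\overline{U}$ on which $f$ is injective. This is exactly the mechanism behind Lemma~\ref{int-arg1} later in the paper.
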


(A) follows from the definition of $\mathcal{F}.$ (B) follows from (A). The
hypothesis in (A2) that $(f,\partial U)$ is simple is necessary:
$f(z)=z^{2},z\in\overline{\Delta^{+}},$ is regular at $z=0$ but not injective
in any neighborhood of $0.$

\begin{lemma}
\label{Ri}(\cite{Ri} p. 32--35) \label{230331-8:47}Let $\Sigma=(f,\overline
{\Delta})\in\mathcal{F}$ and let $\beta=\beta\left(  q_{1},q_{2}\right)  $ be
a path on $S$ from $q_{1}$ to $q_{2}$. Assume that $\alpha=\alpha\left(
p_{1},p\right)  $ is a path in $\overline{\Delta}$ from $p_{1}$ to $p$ which
is an $f$-lift of a subarc $\beta\left(  q_{1},q\right)  $ of $\beta$ from
$q_{1}$ to $q$, with $\alpha\backslash\{p_{1}\}\subset\Delta$ and $f\left(
p_{1}\right)  =q_{1}.$ Then $\alpha$ can be extended to an $f$-lift
$\alpha^{\prime}=\alpha\left(  p_{1},p^{\prime}\right)  $ of a longer subarc
of $\beta$ with $\alpha^{\prime\circ}\subset\Delta,$ such that either
$p^{\prime}\in\partial\Delta,$ or $p^{\prime}\in\Delta$ and $\alpha^{\prime}$
is an $f$-lift of the whole path $\beta.$\label{6868-16}\label{230331-9:33}
\end{lemma}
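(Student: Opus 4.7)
I propose a maximal-extension argument. Parametrize $\beta\colon [0,1]\to S$ so that $\alpha$ is an $f$-lift of $\beta|_{[0,t_{0}]}$ with $t_{0}\in(0,1]$, $\alpha(0)=p_{1}$ and $\alpha(t_{0})=p$. Let
\[
T=\bigl\{\,t\in[t_{0},1]:\beta|_{[0,t]}\text{ has an }f\text{-lift }\alpha_{t}\text{ extending }\alpha\text{ with }\alpha_{t}\setminus\{p_{1}\}\subset\Delta\,\bigr\},
\]
and set $t^{*}=\sup T$. The aim is to show that either $t^{*}=1$ and a full $f$-lift exists, or the supremum is realised by a lift whose endpoint lies on $\partial\Delta$.

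First I would prove openness: if $t\in T$ and the endpoint $p_{t}$ of $\alpha_{t}$ lies in $\Delta$, then Lemma \ref{cov-1}(A) furnishes a disk $(p_{t},V)$ with $\overline{V}\subset\Delta$ such that $f|_{\overline{V}}$ is a BCCM onto $\overline{D(f(p_{t}),\delta)}$ with local model $\zeta\mapsto\zeta^{k}$. The continuation of $\beta$ past $\beta(t)$ then has a unique continuous $f$-lift starting at $p_{t}$ and staying in $V$, so $t+\varepsilon\in T$ for some $\varepsilon>0$. In particular, if $t^{*}<1$ then the endpoint of the maximal lift must lie on $\partial\Delta$.

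Second I would prove closedness. Choose $t_{n}\uparrow t^{*}$ with lifts $\alpha_{t_{n}}$ ending at $p_{t_{n}}\in\overline{\Delta}$. Using the BCCM normal form in $\Delta$ and the boundary description of Lemma \ref{cov-1}(B), and recalling that the branching numbers of $f$ are bounded since $\Sigma\in\mathcal{F}$ is finite-to-one, the family $\{\alpha_{t_{n}}\}$ is equicontinuous. Arzel\`a--Ascoli, applied after a harmless reparametrization, yields a uniformly convergent subsequence $\alpha_{t_{n}}\to\alpha^{*}\colon[0,t^{*}]\to\overline{\Delta}$ satisfying $f\circ\alpha^{*}=\beta|_{[0,t^{*}]}$ and extending $\alpha$. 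Let $p^{*}=\alpha^{*}(t^{*})$.

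The main obstacle is to show that $\alpha^{*}\setminus\{p_{1}\}$ does not hit $\partial\Delta$ at any interior parameter $s<t^{*}$. If it did, then by Lemma \ref{cov-1}(B) and Remark \ref{notation}(B)--(C), every $\alpha_{t_{n}}$ would have to cross $\partial\Delta$ in a neighbourhood of the parameter $s$ for large $n$, contradicting $\alpha_{t_{n}}\setminus\{p_{1}\}\subset\Delta$; the only alternative is that $\alpha^{*}$ first meets $\partial\Delta$ precisely at the endpoint $p^{*}$, in which case one truncates $\alpha^{*}$ at that point, producing a lift terminating on $\partial\Delta$. If on the other hand $p^{*}\in\Delta$, then $t^{*}\in T$ and the openness step forces $t^{*}=1$, yielding the full $f$-lift of $\beta$. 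Either way, setting $\alpha':=\alpha^{*}$ (truncated if necessary) gives the desired extension.
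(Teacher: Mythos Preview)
The paper does not supply its own proof of this lemma; it merely cites Rickman's monograph, so there is no ``paper's approach'' to compare against. Your maximal-extension argument is the standard route and is essentially correct, but one step is not properly justified.

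The problematic step is your claim that $\alpha^{*}$ cannot meet $\partial\Delta$ at an interior parameter $s_{0}\in(0,t^{*})$. You argue that if $\alpha^{*}(s_{0})\in\partial\Delta$ then, by the boundary normal form of Lemma~\ref{cov-1}(B), the approximating lifts $\alpha_{t_{n}}$ would be forced to ``cross $\partial\Delta$'' near parameter $s_{0}$. This inference is not valid: a priori the $\alpha_{t_{n}}$ could approach a boundary point from inside $\Delta$ without ever leaving $\Delta$, so nothing immediately contradicts $\alpha_{t_{n}}\setminus\{p_{1}\}\subset\Delta$. Nevertheless your \emph{conclusion} is correct, for a much simpler reason. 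Fix $s_{0}\in(0,t^{*})$; for $n$ large enough that $t_{n}>s_{0}$ one has $\alpha_{t_{n}}(s_{0})\in f^{-1}(\beta(s_{0}))\cap\Delta$. Since $f$ is finite-to-one, $f^{-1}(\beta(s_{0}))$ is a finite set, so along your Arzel\`a--Ascoli subsequence the convergent sequence $\alpha_{t_{n}}(s_{0})\to\alpha^{*}(s_{0})$ is eventually constant. Hence $\alpha^{*}(s_{0})=\alpha_{t_{n}}(s_{0})\in\Delta$ for all large $n$, which gives $\alpha^{*}((0,t^{*}))\subset\Delta$ directly. This reasoning breaks down at $s_{0}=t^{*}$ precisely because there $f(\alpha_{t_{n}}(t_{n}))=\beta(t_{n})$ rather than $\beta(t^{*})$, which is exactly why $p^{*}$ is permitted to land on $\partial\Delta$. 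With this correction the rest of your proof goes through; your equicontinuity claim is also correct, since the finitely many branch points have bounded order so the local inverses of $f$ are uniformly H\"older and $\beta$ is uniformly continuous on $[0,1]$.
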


The following lemma is obvious.

\begin{lemma}
\label{anti-pod}Any two distinct great circles on $S$ intersect at exactly two
points, which are antipodal points on $S.$
\end{lemma}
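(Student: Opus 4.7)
The plan is to reduce the claim to a straightforward linear-algebraic statement about planes and lines through the origin in $\mathbb{R}^3$. Recall from the definition of $S$ as the unit sphere in $\mathbb{R}^3$ centered at $0$ that a great circle on $S$ is precisely the intersection of $S$ with a $2$-dimensional linear subspace $\Pi$ of $\mathbb{R}^3$, i.e., a plane through the origin. This is the characterization I would use throughout.

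First I would let $C_1, C_2$ be two distinct great circles on $S$ and write $C_i = \Pi_i \cap S$, where $\Pi_1, \Pi_2$ are two distinct $2$-dimensional linear subspaces of $\mathbb{R}^3$. Then
\[
C_1 \cap C_2 = (\Pi_1 \cap \Pi_2) \cap S.
\]
Since $\Pi_1 \neq \Pi_2$ and both contain $0$, the dimension formula $\dim(\Pi_1 \cap \Pi_2) = \dim \Pi_1 + \dim \Pi_2 - \dim(\Pi_1 + \Pi_2) = 2 + 2 - 3 = 1$ shows that $L := \Pi_1 \cap \Pi_2$ is a line through the origin.

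Next I would observe that the intersection of any line through the origin with the unit sphere consists of exactly two antipodal points: parametrizing $L = \{tv : t \in \mathbb{R}\}$ with $\|v\| = 1$, the condition $tv \in S$ forces $t = \pm 1$, giving the two antipodal points $\pm v$. Combining this with the displayed equation above yields $C_1 \cap C_2 = \{v, -v\}$, as required. There is no genuine obstacle here; the only thing to be careful about is the appeal to the definition of great circle as a planar section through the origin, which is built into the paper's setup in the opening paragraph of Section \ref{Sect1}.
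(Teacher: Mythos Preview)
Your proof is correct. The paper does not actually supply a proof of this lemma; it simply introduces it with the sentence ``The following lemma is obvious'' and moves on. Your linear-algebraic reduction---identifying great circles with intersections $\Pi\cap S$ for planes $\Pi$ through the origin, computing $\dim(\Pi_1\cap\Pi_2)=1$, and intersecting the resulting line with $S$---is the standard justification and fills in exactly what the paper omits.
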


The following result follows from Definition \ref{AhlforsS}, which is
essentially Lemma 5.2 of \cite{Z1}.

\begin{lemma}
\label{continue0}Let $(f,\overline{\Delta})\in\mathbf{F}$ and let $D$ be a
Jordan domain on $S$ such that $f^{-1}$ has a univalent
branch\footnote{Univalent branch for the inverse of an OPCOFOM always means an
OPH in this paper.} $g$ defined on $D.$ Then $g$ can be extended to a
univalent branch of $f^{-1}$ defined on $\overline{D}$.
\end{lemma}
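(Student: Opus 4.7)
The plan is to show that $E := \overline{g(D)}$ (closure taken in $\overline{\Delta}$) is mapped homeomorphically onto $\overline{D}$ by $f$; the desired extension is then $\tilde g := (f|_E)^{-1}$. Since $E$ is compact, $\overline{D}$ is Hausdorff, and a continuous bijection between such spaces is automatically a homeomorphism, it suffices to prove that $f|_E : E \to \overline{D}$ is a continuous bijection; the inverse is then continuous and injective, and orientation preservation is inherited from $g$ on the dense subset $D$.

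Continuity of $f$ combined with $f \circ g = \mathrm{id}_D$ yields $f(E) \subset \overline{D}$, and surjectivity is immediate by compactness: for $q \in \overline{D}$, pick $q_n \in D$ with $q_n \to q$; a subsequence of $\{g(q_n)\} \subset E$ converges to some $p \in E$, and $f(p) = q$ by continuity. Moreover, any $p \in E \setminus g(D)$ must satisfy $f(p) \in \partial D$: if instead $f(p)$ were in $D$, continuity of the inverse $g|_D$ at $f(p)$ combined with $g(q_n) \to p$ for the corresponding $q_n \to f(p)$ would force $p = g(f(p)) \in g(D)$, a contradiction. Consequently, $(f|_E)^{-1}(q) = \{g(q)\}$ for every $q \in D$, and the only remaining question is injectivity of $f|_E$ over boundary points $q \in \partial D$.

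This injectivity step is the main obstacle. I would argue by contradiction: suppose $p_1 \ne p_2$ in $E$ both map to $q \in \partial D$. Since $f$ is finite-to-one, $f^{-1}(q) \cap \overline{\Delta} = \{p_1, p_2, \ldots, p_k\}$ is finite, and either directly by compactness or by Lemma \ref{cov-1}, one can choose pairwise disjoint open neighborhoods $W_i$ of the $p_i$ in $\overline{\Delta}$ and an open neighborhood $V$ of $q$ in $S$ with
\[
f^{-1}(V) \cap \overline{\Delta} \subset W_1 \cup W_2 \cup \cdots \cup W_k.
\]
Pick $r_n^{(i)} \in D$ with $r_n^{(i)} \to q$ and $g(r_n^{(i)}) \to p_i$, so that for large $n$ one has $g(r_n^{(i)}) \in W_i$ and $r_n^{(i)} \in V \cap D$. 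Since $D$ is a Jordan domain, it is locally path-connected at its boundary point $q$, so $r_n^{(1)}$ and $r_n^{(2)}$ can be joined by a path $\gamma_n$ in $V \cap D$. Then $g \circ \gamma_n$ is a connected curve inside $f^{-1}(V) \cap \overline{\Delta} \subset \bigcup_j W_j$ whose endpoints lie in the distinct components $W_1$ and $W_2$, which is impossible. This contradiction establishes the missing injectivity and completes the plan.
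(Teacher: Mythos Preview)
Your argument is correct. The paper does not actually prove this lemma; it simply asserts that the result ``follows from Definition~\ref{AhlforsS}'' and is ``essentially Lemma~5.2 of \cite{Z1}'', deferring the proof to an earlier paper. Your proof is a clean, self-contained topological argument: the reduction to injectivity of $f|_E$ over $\partial D$ is the right framing, and the connectedness trick---joining the two approximating sequences by a path in $V\cap D$ and pushing that path forward through $g$ into the disjoint union $\bigcup_j W_j$---is exactly what is needed. The only point worth making fully explicit when you write this up is the local connectedness of a Jordan domain at a boundary point, which you invoke; this follows from the Schoenflies theorem (the closure $\overline D$ is homeomorphic to a closed disk, so near $q$ the pair $(\overline D, D)$ looks like a closed half-disk and its open interior), and is precisely what guarantees that $r_n^{(1)}$ and $r_n^{(2)}$ can be joined inside $V\cap D$ for large $n$.
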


The following result follows from the Argument principle.

\begin{lemma}
\label{b-in}Let $D_{1}$ and $D_{2}$ be Jordan domains on $\mathbb{C}$ or $S$
and let $f:\overline{D_{1}}\rightarrow\overline{D_{2}}$ be a mapping such that
$f:\overline{D_{1}}\rightarrow f(\overline{D_{1}})$ is a homeomorphism. If
$f(\partial D_{1})\subset\partial D_{2},$ then $f(\overline{D_{1}}%
)=\overline{D_{2}}$.
\end{lemma}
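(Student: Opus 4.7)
My plan is to first show that $f(\partial D_1)=\partial D_2$, and then to use a winding-number (argument principle) argument to show that every point of $D_2$ is attained by $f$ on $D_1$.

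For the first step, the restriction $f|_{\partial D_1}$ is, as a restriction of the homeomorphism $f:\overline{D_1}\to f(\overline{D_1})$, a continuous injection from the Jordan curve $\partial D_1$ into the Jordan curve $\partial D_2$; since $\partial D_1$ is compact, this restriction is in fact a homeomorphism onto its image. Thus $f(\partial D_1)$ is a compact, connected subset of $\partial D_2$ that is itself homeomorphic to a circle. A proper closed connected subset of a Jordan curve is a closed arc, not homeomorphic to $S^1$, so $f(\partial D_1)=\partial D_2$.

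For the second step, I fix an arbitrary point $w_0\in D_2$ and consider the winding number of the loop $f|_{\partial D_1}\colon\partial D_1\to\partial D_2$ around $w_0$. Since $f|_{\partial D_1}$ is a homeomorphism onto $\partial D_2$, and $w_0$ lies in one of the two components of the complement of $\partial D_2$ (by the Jordan curve theorem), this winding number equals $\pm 1$, in particular it is nonzero. If $w_0$ were not in $f(\overline{D_1})$, then the map $z\mapsto (f(z)-w_0)/|f(z)-w_0|$ would be continuous on the closed disk $\overline{D_1}$, forcing its degree on $\partial D_1$ to vanish, a contradiction. Hence $w_0\in f(\overline{D_1})$, and since $w_0\notin\partial D_2=f(\partial D_1)$ and $f|_{\overline{D_1}}$ is injective, $w_0\in f(D_1)$. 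Combining with step one, $\overline{D_2}\subset f(\overline{D_1})$, while the reverse inclusion is part of the hypothesis.

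I do not expect any genuine obstacle here, as all ingredients (invariance of domain, Jordan curve theorem, argument principle) are standard. The only minor point of care is the spherical setting: if $D_2\subset S$ one first passes to a stereographic chart sending some point of $S\setminus f(\overline{D_1})$ (which is nonempty whenever $f(\overline{D_1})\neq S$, and the remaining case is trivial) to infinity, so that the winding-number computation takes place in $\mathbb{C}$ in the usual way.
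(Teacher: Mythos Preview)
Your proof is correct and follows precisely the route the paper indicates: the paper states without further detail that the lemma ``follows from the Argument principle,'' and you have supplied exactly that argument, together with the preliminary observation that $f(\partial D_1)=\partial D_2$. The only minor remark is that the case $f(\overline{D_1})=S$ in your spherical reduction cannot actually occur, since $f(\overline{D_1})\subset\overline{D_2}\subsetneq S$; otherwise nothing needs adjusting.
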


The following result is a generalization of the existence of lifts of curves
for a CCM.

\begin{lemma}
\label{cov-3}\label{2023-3-24}\label{2023-4-03-21:29}Let $U$ be a domain on
$S$ enclosed by a finite number of Jordan curves, $f:\overline{U}\rightarrow
S\mathcal{\ }$be a finite-to-one mapping which is locally homeomorphic on
$\overline{U}$, $\Gamma_{n}:[0,1]\rightarrow S$ be a sequence of paths on $S$
which converges to a path $\Gamma_{0}:[0,1]\rightarrow S$ uniformly, and let
$a\in\overline{U}$. If for each $n,\Gamma_{n}$ has an $f$-lift $I_{n}%
:[0,1]\rightarrow\overline{U}\ $from $a,$ say, $I_{n}$ is a path with
\[
f\left(  I_{n}\left(  s\right)  \right)  =\Gamma_{n}\left(  s\right)
\mathrm{\ for\ all\ }s\in\lbrack0,1],
\]
and $I_{n}\left(  0\right)  =a.$ Then $I_{n}\left(  s\right)  $ uniformly
converges to a path $I_{0}\left(  s\right)  ,s\in\lbrack0,1],$ in
$\overline{U},$ such that $I_{0}$ is an $f$-lift of $\Gamma_{0}$ with
$I_{0}\left(  a\right)  =a.$

\begin{proof}
We first show the following.

\begin{claim}
\label{liftmid1}For any $s_{0}\in\lbrack0,1],$ if $\lim_{n\rightarrow\infty
}I_{n}\left(  s_{0}\right)  \rightarrow a_{0},$ then $s_{0}$ has a
neighborhood $N_{s_{0}}\left(  \varepsilon\right)  =[s_{0}-\varepsilon
,s_{0}+\varepsilon]\cap\lbrack0,1]$ in $[0,1],$ such that $I_{n}(s)$ converges
to an arc $I_{0}\left(  s\right)  $ uniformly on $N_{s_{0}}$ with $I_{0}%
(s_{0})=a_{0}$ and $f(I_{0}\left(  s\right)  )=\Gamma_{0}(s)$ for all $s$ on
$N_{s_{0}}.$
\end{claim}

Let $w_{0}=\Gamma_{0}\left(  s_{0}\right)  $. Then $w_{0}=f\left(
a_{0}\right)  $ and $f^{-1}(w_{0})=\{a_{j}\}_{j=0}^{m}$ is a finite set. Since
$f$ is locally homeomorphic, each $a_{j}$ has a connected and (relatively)
open neighborhood $U_{j}$ in $\overline{U}$ such that $U_{i}\cap
U_{j}=\emptyset$ when $i\neq j$ and $f:\overline{U_{j}}\rightarrow
f(\overline{U_{j}})$ is a homeomorphism for each $j=0,1,2,\dots,m$. Then we have

\begin{claim}
\label{liftmid2}$w_{0}$ is outside the compact subset $f\left(  \overline
{U}\backslash\cup_{j=0}^{m}U_{j}\right)  \ $of $S,$ say, there
exists\label{2023-4-3:22:55+s} a disk $D\left(  w_{0},\delta\right)  $ on $S$
such that $f^{-1}(D\left(  w_{0},\delta\right)  )\subset\cup_{j=0}^{m}U_{j}.$
\end{claim}

$s_{0}$ has a connected neighborhood $N_{s_{0}}$ in $[0,1]$ so that
$\Gamma_{0}\left(  N_{s_{0}}\right)  \subset D\left(  w_{0},\delta/2\right)  $
and thus $\Gamma_{n}\left(  N_{s_{0}}\right)  \subset D\left(  w_{0}%
,\delta\right)  $ for all $n>n_{0}$ for some $n_{0}>0.$ Then $I_{n}\left(
N_{s_{0}}\right)  \subset\cup_{j=0}^{m}U_{j}$ and, since $I_{n}(N_{s_{0}})$ is
connected, we have $I_{n}\left(  N_{s_{0}}\right)  \subset U_{0}$ for all
$n>n_{0}.$ Therefore, $\Gamma_{n}\left(  N_{s_{0}}\right)  \subset f(U_{0})$
for $n>n_{0}.$ It is clear that $\Gamma_{0}\left(  N_{s_{0}}\right)
\subset\overline{f(U_{0})}$ since $\Gamma_{n}(N_{s_{0}})$ converges to
$\Gamma_{0}\left(  N_{s_{0}}\right)  .$ Since $f:\overline{U_{0}}\rightarrow
f(\overline{U_{0}})$ is homeomorphic, we conclude that $I_{n}\left(  s\right)
$ converges to the path $I_{0}\left(  s\right)  =f^{-1}(\Gamma_{0}\left(
s\right)  )\cap\overline{U_{0}}$ uniformly for $s\in N_{s_{0}}\ $with
$I_{0}(s_{0})=a_{0}.$ It is obvious that $I_{0}\left(  s\right)  ,s\in
N_{s_{0}},$ is an $f$-lift of $\Gamma_{0}\left(  s\right)  ,s\in N_{s_{0}}%
,\ $and Claim \ref{liftmid1} is proved.

Let $A\ $be the set of $t\in\lbrack0,1]$ such that $\lim_{n\rightarrow\infty
}I_{n}\left(  t\right)  $ exists. Then $A$ is an open subset of $[0,1]$ by
Claim \ref{liftmid1}. Ley $B=[0,1]\backslash A.$ We show that $B$ is also an
open subset of $[0,1]$. Let $s_{1}\in B.$ Since $\overline{U}$ is compact,
$I_{n}\left(  s_{1}\right)  $ has two subsequences $I_{n_{k}^{j}}\left(
s_{1}\right)  $ such that $I_{n_{k}^{j}}\left(  s_{1}\right)  \rightarrow
a_{1j}\left(  k\rightarrow\infty\right)  $ with $j=1,2$ and $a_{11}\neq
a_{12}.$ Then Claim \ref{liftmid1} applies to $\Gamma_{n_{k}^{j}}$ and
$\Gamma_{0},$ and $s_{1}$ has a neighborhood $N_{s_{1}}$ in $[0,1]$ such that
$I_{n_{k}^{j}}\left(  s\right)  $ converges uniformly to a path $I_{0j}\left(
s\right)  ,s\in N_{s_{1}},$ and $I_{0j}$ is an $f$-lift of $\Gamma
_{0}|_{N_{S_{1}}},j=1,2$. Then both $I_{0j}\left(  s\right)  $ are continuous
with $I_{01}(s_{1})=a_{11}\neq a_{12}=I_{02}(s_{1}),$ and then $I_{01}\cap
I_{02}=\emptyset$ when $N_{s_{1}}$ is chosen small enough. This implies that
$I_{n}\left(  s\right)  $ cannot converges as $n\rightarrow\infty$ for each
$s\in N_{s_{1}}$ and so $B$ is open.

Since $0\in A$. We have $A=[0,1]$ and $B=\emptyset.$ We have proved that
$I_{n}$ converges to a path $I_{0}$ uniformly on $[0,1],$ and it is clear that
$I_{0}$ is the $f$-lift of $\Gamma_{0}.\label{2023-3-24-1}%
\label{2023-4-04-7:30}$\newpage
\end{proof}
\end{lemma}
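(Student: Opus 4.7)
The plan is to exploit the finite-to-one, locally homeomorphic nature of $f$ together with the compactness of $\overline{U}$ and of $[0,1]$. The heart of the argument is a local structure observation: for any $s_0\in[0,1]$ with $w_0=\Gamma_0(s_0)$, the preimage $f^{-1}(w_0)=\{a_0,a_1,\dots,a_m\}$ is finite, and one may choose pairwise disjoint relatively open neighborhoods $U_j$ of $a_j$ in $\overline{U}$ on each of which $f$ is a homeomorphism onto its image. Since $\overline{U}\setminus\bigcup_j U_j$ is compact and $f$ is continuous, its image is a closed set missing $w_0$, so there is a spherical disk $D(w_0,\delta)$ whose entire $f$-preimage is contained in $\bigcup_j U_j$.

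With this local structure in hand, I would argue by a connectedness bootstrap on $[0,1]$. Let $A\subset[0,1]$ be the set where $\lim_{n\to\infty} I_n(s)$ exists. To show $A$ is open, suppose $s_0\in A$ with limit $a_0\in U_0$. Pick a connected neighborhood $N_{s_0}$ of $s_0$ in $[0,1]$ so small that $\Gamma_0(N_{s_0})\subset D(w_0,\delta/2)$; uniform convergence of $\Gamma_n$ then forces $\Gamma_n(N_{s_0})\subset D(w_0,\delta)$ for all large $n$, so $I_n(N_{s_0})$ is a connected subset of $\bigcup_j U_j$ meeting $U_0$, hence contained in $U_0$. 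The homeomorphism $f|_{\overline{U_0}}$ then identifies $I_n|_{N_{s_0}}$ with $f|_{\overline{U_0}}^{-1}\circ\Gamma_n|_{N_{s_0}}$, which converges uniformly to $I_0:=f|_{\overline{U_0}}^{-1}\circ\Gamma_0|_{N_{s_0}}$. To show the complement $B=[0,1]\setminus A$ is open, fix $s_1\in B$; compactness of $\overline{U}$ gives two subsequences $I_{n_k^1}(s_1)\to a$ and $I_{n_k^2}(s_1)\to a'$ with $a\neq a'$. Applying the openness argument to each subsequence on a small common neighborhood $N_{s_1}$ produces two continuous lifts of $\Gamma_0|_{N_{s_1}}$ through disjoint sheets, which persistently blocks the full sequence $I_n$ from converging on $N_{s_1}$.

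Since $I_n(0)=a$ for every $n$ we have $0\in A$, so connectedness of $[0,1]$ gives $A=[0,1]$. A finite subcover of $[0,1]$ by the neighborhoods $N_{s_0}$ above upgrades pointwise to global uniform convergence on $[0,1]$, the limit $I_0$ is automatically continuous, and passing to the limit in $f(I_n(s))=\Gamma_n(s)$ yields $f(I_0(s))=\Gamma_0(s)$ with $I_0(0)=a$.

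The main obstacle I anticipate is not openness of $A$ (which is essentially immediate from the local product structure over $D(w_0,\delta)$) but openness of $B$, where one must carefully exploit the fact that the two subsequential limits $a\neq a'$ force $I_{n_k^1}$ and $I_{n_k^2}$ into \emph{different} sheets $U_0$ and $U_0'$ on a whole neighborhood of $s_1$, and that this obstructs settling throughout that neighborhood. A secondary subtlety is that when $a_j\in\partial U$, the local sheet $U_j$ must be taken in the relative topology of $\overline{U}$; this is exactly the setting Lemma \ref{cov-1} and Remark \ref{notation} prepare us for, and is harmless since $f$ is assumed locally homeomorphic on the closed domain $\overline{U}$.
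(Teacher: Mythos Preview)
Your proposal is correct and follows essentially the same approach as the paper's own proof: the same local sheet structure over $D(w_0,\delta)$, the same connectedness argument with $A=\{s:\lim_n I_n(s)\text{ exists}\}$ shown open and its complement $B$ shown open via two divergent subsequences, and the same conclusion from $0\in A$. The paper packages the local convergence step as a separate Claim before running the open--open argument, while you fold it directly into the proof of openness of $A$ and $B$, but the content is identical; your explicit mention of the finite subcover for global uniformity is a detail the paper leaves implicit.
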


\section{Sewing two surfaces along a common boundary arc\label{Sect3}}

We now introduce the method to sew two surfaces sharing a common boundary arc.
We let $H^{+}$ and $H^{-}\label{H+-}$ be the upper and lower open half planes
of $\mathbb{C}$, Then $H^{+}$ and $H^{-1}$ can be regarded as open hemispheres
on $S$ and $\overline{H^{+}}$ and $\overline{H^{-1}}\label{Hbar+-}$ can be
regarded as closure of $H^{+}$ and $H^{-}$ on $S.$ For a closed curve $\gamma$
in $\mathbb{C},$ we write $\gamma^{\pm}=\overline{H^{\pm}}\cap\gamma.$ But
recall that $\Delta^{\pm}\label{Del+-}\ $always denotes $H^{\pm}\cap\Delta,$
not $\overline{H^{\pm}}\cap\Delta.$ Then $\left(  \partial\Delta\right)
^{\pm}=\left(  \partial\Delta\right)  \cap\overline{H^{\pm}}=\left\{  z=e^{\pm
i\theta}:\theta\in\lbrack0,\pi]\right\}  \label{+-arc}.\label{2023-4-04-7:32}$

\begin{example}
\label{exam1}A surface $\Sigma=\left(  f,\overline{\Delta}\right)  $ can be
cut into two subsurfaces $\Sigma_{1}=\left(  f,\overline{\Delta^{+}}\right)  $
and $\Sigma_{2}=\left(  f,\overline{\Delta^{-}}\right)  $ by the diameter
$[-1,1]$ of $\overline{\Delta}.$ Conversely, we can recover $\Sigma$ by
sewing\label{sew39} $\Sigma_{1}$ and $\Sigma_{2}$ along $\left(  f,\left[
-1,1\right]  \right)  .$ The interval $\left[  -1,1\right]  $ in
$\overline{\Delta}$ is called the \emph{suture line }when we sew $\Sigma_{1}$
and $\Sigma_{2}.$ This trivial observation can be generalized in Lemma
\ref{patch-1} and Lemma \ref{patch}.$\label{2023-4-04-7:40}$
\end{example}

\begin{example}
\label{exam-glue-1}Let $\Sigma=\left(  f,\overline{\Delta}\right)  \ $be a
surface and $B=\{z\in\mathbb{C}:|z-1/2|<1/2\}.$ Then $\partial B$ cut the
surface $\Sigma$ into two subsurfaces $\Sigma_{1}=\left(  f,\overline{\Delta
}\backslash B\right)  $ and $\Sigma_{2}=\left(  f,\overline{B}\right)  .$
Conversely, we can glue $\Sigma_{1}$ and $\Sigma_{2}$ to recover the surface
$\Sigma.$ This trivial observation can be generalized in Corollary
\ref{glue-1}.$\label{2023-4-04-7:42}$
\end{example}

\begin{lemma}
\label{patch-1}For $j=1,2,$ let $\Sigma_{j}=(f_{j},\overline{U_{j}})$ be a
surface and let $\alpha_{j}=\alpha_{j}\left(  x_{j1},x_{j2}\right)  $ be a
proper arc of $\partial U_{j}$ such that $\left(  f_{j},\alpha_{j}\right)  $
is a simple arc with distinct endpoints. If
\begin{equation}
\gamma=(f_{1},\alpha_{1})\sim-(f_{2},\alpha_{2}), \label{pp13}%
\end{equation}
then $(f_{1},\overline{U_{1}})$ and $(f_{2},\overline{U_{2}})\ $can be
\emph{sewn along}\label{sew38} $\gamma=\left(  f_{1},\alpha_{1}\right)  $ to
become a surface $\Sigma_{3}=(f_{3},\overline{\Delta})$ with \emph{suture
line} $[-1,1]$, such that the following hold:

(i). There exist \emph{orientation-preserving homeomorphisms} (OPHs)
\label{OPH} $h_{1}:\overline{U_{1}}\rightarrow\overline{\Delta^{+}}$ and
$h_{2}:\overline{U_{2}}\rightarrow\overline{\Delta^{-}},$ called
\emph{identification mappings} (\emph{IM}s), such that
\begin{equation}
(h_{1},\alpha_{1})\sim\lbrack-1,1]\sim-\left(  h_{2},\alpha_{2}\right)
=\left(  h_{2},-\alpha_{2}\right)  , \label{pp5}%
\end{equation}%
\begin{equation}
f_{1}\circ h_{1}^{-1}\left(  x\right)  =f_{2}\circ h_{2}^{-1}(x),\forall
x\in\lbrack-1,1], \label{pp6}%
\end{equation}
and%
\begin{equation}
f_{3}(z)=\left\{
\begin{array}
[c]{l}%
f_{1}\circ h_{1}^{-1}(z),z\in\overline{\Delta^{+}},\\
f_{2}\circ h_{2}^{-1}(z),z\in\overline{\Delta^{-}}\backslash\lbrack-1,1],
\end{array}
\right.  \label{pp7}%
\end{equation}
is a well defined OPCOFOM, and we have the equivalent relations
\[
(f_{3},\overline{\Delta^{+}})\sim(f_{1},\overline{U_{1}}),(f_{3}%
,\overline{\Delta^{-}})\sim(f_{2},\overline{U_{2}}),
\]%
\[
\partial\Sigma_{3}=\left(  f_{3},\left(  \partial\Delta\right)  ^{+}\right)
+\left(  f_{3},\left(  \partial\Delta\right)  ^{-}\right)  \sim\left(
f_{1},\left(  \partial U_{1}\right)  \backslash\alpha_{1}^{\circ}\right)
+\left(  f_{2},\left(  \partial U_{2}\right)  \backslash\alpha_{2}^{\circ
}\right)  ,
\]
and
\[
(f_{3},[-1,1])\sim(f_{1},\alpha_{1})\sim(f_{2},-\alpha_{2}).
\]

(ii)
\[
L(\partial\Sigma_{3})=L(\partial\Sigma_{1})+L(\partial\Sigma_{2})-2L(\gamma),
\]%
\[
A(\Sigma_{3})=A\left(  \Sigma_{1}\right)  +A(\Sigma_{2}),
\]%
\[
\overline{n}\left(  \Sigma_{3}\right)  =\overline{n}\left(  \Sigma_{1}\right)
+\overline{n}\left(  \Sigma_{2}\right)  +\#\left(  \gamma^{\circ}\cap
E_{q}\right)  ,
\]
and%
\begin{equation}
R(\Sigma_{3})=R(\Sigma_{1})+R(\Sigma_{2})-4\pi\#\left(  \gamma^{\circ}\cap
E_{q}\right)  . \label{RRR-1}%
\end{equation}

(iii). $z\in C_{f_{3}}\left(  \overline{\Delta}\backslash\{-1,1\}\right)  $ if
and only if $h_{1}^{-1}(z)\in C_{f_{1}}\left(  \overline{U_{1}}\backslash
\partial\alpha_{1}\right)  \mathrm{\ or\ }h_{2}^{-1}(z)\in C_{f_{2}}\left(
\overline{U_{2}}\backslash\partial\alpha_{2}\right)  $. In particular, if
$f_{1}(\partial\alpha_{1})\subset E_{q},$ then $f_{2}(\partial\alpha
_{2})\subset E_{q}$ and in addition
\begin{equation}
CV_{f_{3}}(S\backslash E_{q})=CV_{f_{1}}(S\backslash E_{q})\cup CV_{f_{2}%
}(S\backslash E_{q}). \label{RRR-2}%
\end{equation}
$\label{2023-4-04-8:01}$
\end{lemma}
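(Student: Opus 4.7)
The plan is to first build identification mappings $h_1,h_2$ via Riemann maps with Carath\'eodory boundary extension, then post-compose one of them with a self-homeomorphism so that the boundary correspondence along $[-1,1]$ becomes pointwise compatible with $f_1$ and $f_2$, then define $f_3$ by (\ref{pp7}) and check it is an OPCOFOM, and finally read off the numerical identities in (ii) and the branch-point statement (iii).

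For the identification mappings, since each $\overline{U_j}$ is homeomorphic to a closed disk, the Riemann mapping theorem with Carath\'eodory's boundary extension supplies an OPH $\tilde h_1:\overline{U_1}\to\overline{\Delta^+}$ with $\tilde h_1(x_{11})=-1$, $\tilde h_1(x_{12})=1$, sending $\alpha_1$ monotonically onto $[-1,1]$ and $(\partial U_1)\setminus\alpha_1^\circ$ onto $(\partial\Delta)^+$; similarly $\tilde h_2:\overline{U_2}\to\overline{\Delta^-}$ with $\tilde h_2(x_{21})=1$, $\tilde h_2(x_{22})=-1$. The equivalence (\ref{pp13}) provides an OPH $\sigma:\alpha_1\to\alpha_2$ with $\sigma(x_{1j})=x_{2j}$ and $f_2\circ\sigma=f_1$ on $\alpha_1$. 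Fixing $h_1=\tilde h_1$ determines a parametrization of $[-1,1]$ by $\alpha_1$, and I then set $h_2=\rho\circ\tilde h_2$, where $\rho:\overline{\Delta^-}\to\overline{\Delta^-}$ is an OPH that fixes the semicircle $(\partial\Delta)^-$ pointwise and whose restriction to $[-1,1]$ is the increasing self-homeomorphism implementing $\sigma$ via $h_1$. Such a $\rho$ exists because any increasing self-homeomorphism of $[-1,1]$ extends to an OPH of the closed half-disk fixing the curved boundary. This delivers (\ref{pp5}) and (\ref{pp6}).

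With $h_1,h_2$ in hand, define $f_3$ by (\ref{pp7}). Equation (\ref{pp6}) makes $f_3$ well defined on the overlap $[-1,1]$, hence continuous on $\overline{\Delta}$. Away from $[-1,1]$, $f_3$ is an OPCOFOM as a composition of an OPH with $f_j$. At an interior suture point $z_0\in(-1,1)$, take a small disk in $\overline{\Delta}$ around $z_0$ and split it into upper and lower half-disks; Lemma \ref{cov-1} describes the OPCOFOM local model of $f_j$ at the corresponding boundary point $h_j^{-1}(z_0)$, and the two half-disk models agree along the common diameter by (\ref{pp6}) and so can be sewn into a full OPCOFOM chart at $z_0$ (possibly with a branch point whose order is the sum of the one-sided orders). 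At $\pm 1$ an analogous argument produces a boundary OPCOFOM chart. Orientation is preserved throughout since the $h_j$ are OPHs and the $f_j$ are OPCOFOMs; finite-to-one transfers likewise. The equivalences in (i) are then immediate from the construction.

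For (ii), the length identity follows because $h_j$ sends $(\partial U_j)\setminus\alpha_j^\circ$ onto the respective half of $\partial\Delta$, so $L(\partial\Sigma_3)=\bigl(L(\partial\Sigma_1)-L(\gamma)\bigr)+\bigl(L(\partial\Sigma_2)-L(\gamma)\bigr)$; area is additive across $\Delta^+$ and $\Delta^-$. Decomposing $\Delta=\Delta^+\cup(-1,1)\cup\Delta^-$ and using (\ref{pp6}) one reads $\overline{n}(\Sigma_3)=\overline{n}(\Sigma_1)+\overline{n}(\Sigma_2)+\#(\gamma^\circ\cap E_q)$, whence (\ref{RRR-1}) follows by subtraction. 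For (iii), an OPH does not alter the regular/branch dichotomy, so $z\in C_{f_3}(\overline{\Delta}\setminus\{-1,1\})$ exactly when the corresponding preimage under $h_1$ or $h_2$ is a branch point of $f_j$; and if $f_j(\partial\alpha_j)\subset E_q$ then any branching at $\pm 1$ has branch value in $E_q$, giving (\ref{RRR-2}). The main technical obstacle is the matching step: arranging $h_2$ so that $f_1\circ h_1^{-1}=f_2\circ h_2^{-1}$ pointwise on $[-1,1]$ while keeping $h_2$ globally an OPH onto $\overline{\Delta^-}$. This is where the precise form of the equivalence (\ref{pp13}) enters, and is the reason the auxiliary reparametrization $\rho$ is needed rather than just the Riemann map.
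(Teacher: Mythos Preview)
Your approach is essentially the paper's: pick $h_1$ freely, then force $h_2$ on $\alpha_2$ to equal $h_1\circ\sigma^{-1}$ and extend to $\overline{\Delta^-}$; the paper does this in one line as $h_2(y)=h_1(\varphi^{-1}(y))$ without invoking Riemann mapping, since only a topological OPH of a closed Jordan disk onto $\overline{\Delta^\pm}$ is needed. One small slip: from $(f_1,\alpha_1)\sim-(f_2,\alpha_2)$ the gluing map satisfies $\sigma(x_{11})=x_{22}$ and $\sigma(x_{12})=x_{21}$, not $\sigma(x_{1j})=x_{2j}$; with your stated endpoint convention the resulting $\rho|_{[-1,1]}$ would swap $\pm1$ rather than be increasing, so just correct the indices.
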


Recall that $C_{f}\left(  A\right)  \ $is the set of branch points of
$f\ $located in $A$ and $CV_{f}\left(  T\right)  $ is the branch value of $f$
located in $T$ (see Remark \ref{notation} (D)), $\partial\alpha$ denotes the
endpoints of $\alpha$ and $\alpha^{\circ}$ denotes the interior of $\alpha$
(see Definition \ref{interior}). The condition $(f_{1},\alpha_{1})\sim
(f_{2},-\alpha_{2})$ is crucial (see Remark \ref{finite} for the relation
$\sim$) and note that $(f_{2},-\alpha_{2})=-\left(  f_{2},\alpha_{2}\right)  $
and $\left(  f_{2},\alpha_{2}\right)  $ are the same path with opposite
direction. Two copies of the hemisphere $\overline{H^{+}}$ on $S$ cannot be
sewn along their common boundary section $\overline{\infty,-1,0}\subset S$ to
become a surface, but $\overline{H^{+}}$ and $\overline{H^{-}}$ can be
sewn\label{sew37} along $\overline{\infty,-1,0}$ to become the surface
$\left(  f_{3},\overline{\Delta}\right)  ,$ where $f_{3}|_{\overline
{\Delta^{\pm}}}$ are homeomorphisms from $\overline{\Delta^{\pm}}$ onto
$\overline{H^{\pm}},$ and $f_{3}$ maps $[-1,1]$ onto $\overline{\infty,-1,0}$,
$\left(  \partial\Delta\right)  ^{+}=\left(  \partial\Delta\right)
\cap\overline{H^{+}}$ onto $\overline{0,1,\infty},$ and $\left(
\partial\Delta\right)  ^{-}=\left(  \partial\Delta\right)  \cap\overline
{H^{-}}$ onto $\overline{\infty,1,0}.$

\begin{proof}
The conclusion (i) in fact gives a routine how to sew \label{sew36}$\Sigma
_{1}$ and $\Sigma_{2},$ which is inspired by Example \ref{exam1}. By
(\ref{pp13}), there exists an orientation-preserving homeomorphism (OPH)
\footnote{Note that $-\alpha_{2}$ is the same path with opposite direction,
not the set $\{-y:y\in\alpha_{2}\}.$} $\varphi:\alpha_{1}\rightarrow
-\alpha_{2}$ such that
\[
\left(  f_{1},\alpha_{1}\right)  =\left(  f_{2}\circ\varphi,\alpha_{1}\right)
,
\]
that is
\[
f_{2}\left(  \varphi(x)\right)  \equiv f_{1}(x),\forall x\in\alpha_{1}.
\]

Let $h_{1}:\overline{U_{1}}\rightarrow\overline{\Delta^{+}}$ be any OPH such
that $h_{1}(\alpha_{1})=[-1,1].$ Then let $h_{2}:\overline{U_{2}}%
\rightarrow\overline{\Delta^{-}}$ be an OPH such that
\begin{equation}
h_{2}\left(  y\right)  \equiv h_{1}\left(  \varphi^{-1}(y)\right)  ,\forall
y\in\alpha_{2}. \label{pp14}%
\end{equation}
In fact, $h_{2}|_{\alpha_{2}}$ defined by (\ref{pp14}) is an OPH from
$\alpha_{2}$ onto $[1,-1]$ and can be extended to be an OPH $h_{2}$ from
$\overline{U_{2}}$ onto $\overline{\Delta^{-}}.$ The pair of $h_{1}$ and
$h_{2}$ are the desired mappings satisfying (i). Then (ii) is trivial to verify.

To prove (iii) we may assume that $\Sigma_{1}$ and $\Sigma_{2}$ are the
surfaces $\Sigma_{\pm}=(f_{\pm},\overline{\Delta^{\pm}})$ such that $f_{\pm}$
agree on $[-1,1],$ and then $f_{3}$ defined by $f_{\pm}$ on $\overline
{\Delta^{\pm}}\ $is an OPCOFOM. Then $f_{\pm}$ are the restrictions of $f_{3}$
to $\overline{\Delta^{\pm}},$ and thus $x\in\left(  -1,1\right)  $ is a branch
point of $f_{3},$ say $x\in C_{f_{3}}\cap\left(  -1,1\right)  ,$ iff $x$ is a
branch point of $f_{+}$ or $f_{-},$ say $x\in C_{f_{1}}\left(  -1,1\right)
\cup C_{f_{2}}\left(  -1,1\right)  .$ In consequence we have
\[
C_{f_{3}}\left(  \overline{\Delta}\backslash\{-1,1\}\right)  =C_{f_{1}}\left(
\overline{\Delta^{+}}\backslash\{-1,1\}\right)  \cup C_{f_{2}}\left(
\overline{\Delta^{-}}\backslash\{-1,1\}\right)  ,
\]
and then all conclusions of (iii) follow.$\label{2023-4-04-9:51}$
\end{proof}

The suture line may not be straight: the sewn surface $\Sigma_{3}=\left(
f_{3},\overline{\Delta}\right)  $ can be reparametrized as $\Sigma^{\prime
}=\left(  f_{3}^{\prime},\overline{U}\right)  $ with $f_{3}^{\prime}%
=f_{3}\circ\psi,$ where $\psi$ is a OPH from $\overline{U}$ onto
$\overline{\Delta} $ for some Jordan domain $U,$ and for $\Sigma^{\prime},$
the suture line becomes $h^{-1}([-1,1]).$

\begin{remark}
\label{abs-two}\label{sew35}In the previous lemma, the sewing process can be
understood as an abstract process via equivalent relations. Let $U_{j}%
,\Sigma_{j}=\left(  f_{j},\overline{U_{j}}\right)  ,\alpha_{j}\subset\partial
U_{j}$ satisfy all assumption of the previous lemma. Then we can define an
equivalent relation $\sim$ on the disjoint union $\overline{U}_{1}%
\sqcup\overline{U_{2}}$. For any pair of points $x$ and $y$ in $\overline
{U}_{1}\sqcup\overline{U_{2}},$ $x\sim y$ if and only if one of the three
conditions holds: (1)$\ x=y\in\overline{U_{1}},$ (2) $x=y\in\overline{U_{2}},$
(3) $x\in\alpha_{1},y\in\alpha_{2}$ and $f_{1}(x)=f_{2}(y).$ Since $\left(
f_{1},\alpha_{1}\right)  $ is simple and $\left(  f_{1},\alpha_{1}\right)
\sim-\left(  f_{2},\alpha_{2}\right)  ,$ $\left(  f_{2},\alpha_{2}\right)
\ $is also simple, and thus for each $x\in\alpha_{1}$ that $y\in\alpha_{2}$
with $f_{1}(x)=f_{2}(y)$ is unique, and vice versa for each $y\in\alpha_{2}.$
We write $[x]$ the equivalent class of $x:$%
\[
\lbrack x]=\{y\in\overline{U}_{1}\sqcup\overline{U_{2}}:y\sim x\}.
\]
Then for $x\in\left(  \overline{U_{1}}\backslash\alpha_{1}^{\circ}\right)
\sqcup\left(  \left(  \overline{U_{2}}\backslash\alpha_{2}^{\circ}\right)
\right)  ,$ $[x]$ contains only one point in $\overline{U}_{1}\sqcup
\overline{U_{2}}$, and for $x\in\alpha_{1},[x]$ contains two points in
$\overline{U}_{1}\sqcup\overline{U_{2}},$ say $[x]=\{x,y\}$ with
$f_{1}(x)=f_{2}(y)$ and $y\in\alpha_{2}$. The previous lemma show that the
quotient space $\overline{Q}=\left(  \overline{U}_{1}\sqcup\overline{U_{2}%
}\right)  /\sim,$ with the quotient topology, is topologically equivalent to
the unit disk $\overline{\Delta}.$ Then the sewn surface $\left(
f_{3},\overline{\Delta}\right)  $ can be identified as a representation of the
abstract space $\left(  \tilde{f},\overline{Q}\right)  ,$ where $\tilde
{f}\left(  [x]\right)  =f_{1}(x)$ when $x\in\lbrack x]\cap\overline{U_{1}},$
or $\tilde{f}\left(  [x]\right)  =f_{2}\left(  y\right)  $ when $y\in\lbrack
x]\cap\overline{U_{2}}$ $,\forall\lbrack x]\in\overline{Q},$ is well defined.
In this abstract version, $[\alpha_{1}]=[\alpha_{2}]$ is the suture line$,$
the quotient mapping $x\rightarrow\lbrack x]$ is the \textrm{IM.}

$\overline{U}_{1}$ may intersects $\overline{U_{2}}$, but for the union
$\overline{U_{1}}\sqcup\overline{U_{2}}$ a point in $\overline{U_{1}}$ and a
point in $\overline{U_{2}}$ are always regarded as distinct points. In fact we
may assume $\overline{U}_{1}$ and $\overline{U_{2}}$ are bounded with positive distance.

For the sewn surface $\left(  f_{3},\overline{\Delta}\right)  $ in the lemma
there exists a homeomorphism $h$ from $\overline{Q}=\left(  \overline{U}%
_{1}\sqcup\overline{U_{2}}\right)  /\sim$ onto $\overline{\Delta},$ such that
$h\left(  \left[  \alpha_{1}\right]  \right)  =[-1,1],$ with
\[
h([x])=\left\{
\begin{array}
[c]{c}%
h_{1}(x),x\in\overline{U_{1}},\\
h_{2}\left(  x\right)  ,x\in\overline{U_{2}}.
\end{array}
\right.
\]
The topological equivalence $h:Q\rightarrow\overline{\Delta}$ is also called
the \emph{IM}, when we use $\overline{\Delta}$ to represent $\overline{Q}.$

All surfaces obtained by sewing surfaces along arcs can be interpreted in this
way. Though this way is abstract, it keeps more information of the elder
surfaces than the new sewn \label{sew34}surface as $\Sigma_{3}$ in Lemma
\ref{patch}, and moreover, it is easier to state the process than that version
which involves the concrete \emph{IMs} and \emph{suture lines}%
.$\label{2023-4-04-9:58}$
\end{remark}

\begin{lemma}
\label{-path}Let $\gamma$ be a simple arc on $S$ with\footnote{Recall that
$\#\partial\gamma=2\ $means $\gamma$ contains two distinct endpoints.}
$\#\partial\gamma=2$ (recall that $\partial\gamma$ is the endpoints of
$\gamma$). Let $S_{\gamma}=\left(  f,\overline{U}\right)  $ be a surface in
$\mathbf{F}$ such that $f:U\rightarrow S\backslash\gamma$ is a homeomorphism
and $\partial S_{\gamma}=\left(  f,\partial U\right)  =\gamma-\gamma.$ Then
\begin{equation}
R(S_{\gamma})=4\pi\#\gamma^{\circ}\cap E_{q}+4\pi\#\left(  \partial
\gamma\right)  \cap E_{q}-8\pi\leq4\pi\#\gamma^{\circ}\cap E_{q}, \label{RRE}%
\end{equation}
equality holding if and only if $\partial\gamma\subset E_{q}.$
\end{lemma}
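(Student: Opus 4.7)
The plan is to compute $A(S_\gamma)$ and $\overline{n}(S_\gamma,E_q)$ directly from the hypotheses and then substitute into the definition $R(\Sigma)=(q-2)A(\Sigma)-4\pi\overline{n}(\Sigma,E_q)$; the claimed inequality will drop out of the trivial estimate $\#(E_q\cap\partial\gamma)\leq\#\partial\gamma=2$.

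First I would handle the area. Since $f:U\to S\setminus\gamma$ is a homeomorphism and the simple arc $\gamma$ has spherical area zero, the area of $S_\gamma$, weighted by multiplicity, is
\[
A(S_\gamma)=A(f,U)=A(S\setminus\gamma)=A(S)=4\pi.
\]
Next I would compute $\overline{n}(S_\gamma,\mathfrak{a}_v)=\#f^{-1}(\mathfrak{a}_v)\cap U$ for each $v$. Because $f|_U$ is a bijection onto $S\setminus\gamma$, the fiber $f^{-1}(\mathfrak{a}_v)\cap U$ is empty when $\mathfrak{a}_v\in\gamma$ and a singleton when $\mathfrak{a}_v\in S\setminus\gamma$. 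Summing over $v$,
\[
\overline{n}(S_\gamma,E_q)=\#(E_q\setminus\gamma)=q-\#(E_q\cap\gamma^\circ)-\#(E_q\cap\partial\gamma),
\]
where the decomposition $\gamma=\gamma^\circ\sqcup\partial\gamma$ is disjoint.

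Substituting into the definition of $R$ gives
\begin{align*}
R(S_\gamma)&=(q-2)\cdot 4\pi-4\pi\bigl(q-\#(E_q\cap\gamma^\circ)-\#(E_q\cap\partial\gamma)\bigr)\\
&=4\pi\#(E_q\cap\gamma^\circ)+4\pi\#(E_q\cap\partial\gamma)-8\pi,
\end{align*}
which is the claimed equality. Finally, since $\#\partial\gamma=2$, we have $\#(E_q\cap\partial\gamma)\leq 2$, so $4\pi\#(E_q\cap\partial\gamma)-8\pi\leq 0$, and the inequality $R(S_\gamma)\leq 4\pi\#(\gamma^\circ\cap E_q)$ follows, with equality precisely when $\#(E_q\cap\partial\gamma)=2$, i.e.\ when $\partial\gamma\subset E_q$. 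There is no genuine obstacle here; the only point requiring a half-sentence of justification is that the weighted area of a homeomorphic image equals the spherical area of that image, which is standard given that $\gamma$ has measure zero on $S$.
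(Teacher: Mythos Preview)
Your proof is correct and follows essentially the same approach as the paper: compute $A(S_\gamma)=4\pi$ from the homeomorphism onto $S\setminus\gamma$, compute $\overline{n}(S_\gamma,E_q)=q-\#(\gamma^\circ\cap E_q)-\#(\partial\gamma\cap E_q)$, substitute into the definition of $R$, and use $\#\partial\gamma=2$ for the inequality and equality case.
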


\begin{proof}
Since $\#E_{q}=q,\ $we have
\begin{align*}
\overline{n}\left(  S_{\gamma}\right)   &  =\#\left(  S\backslash
\gamma\right)  \cap E_{q}=q-\#\gamma\cap E_{q}=q-\#\gamma^{\circ}\cap
E_{q}-\#\left(  \partial\gamma\right)  \cap E_{q}\\
&  =q-2-\#\gamma^{\circ}\cap E_{q}+2-\#\left(  \partial\gamma\right)  \cap
E_{q}%
\end{align*}
Since $A(S_{\gamma})=4\pi,$ (\ref{RRE}) follows from
\begin{align*}
R(S_{\gamma})  &  =\left(  q-2\right)  A(S_{\gamma})-4\pi\overline{n}\left(
S_{\gamma}\right) \\
&  =4\pi\#\gamma^{\circ}\cap E_{q}-8\pi+4\pi\#\left(  \partial\gamma\right)
\cap E_{q}\\
&  \leq4\pi\#\gamma^{\circ}\cap E_{q},
\end{align*}
with equality if and only if $\partial\gamma\subset E_{q}%
.\label{2023-4-04-10:27}$
\end{proof}

Here is a useful result directly from Lemma \ref{patch-1}:

\begin{lemma}
\label{cut-off}Let $\Sigma=\left(  f,\overline{\Delta}\right)  $ be a surface
in $\mathbf{F}$ and $\alpha$ be a simple arc in $\overline{\Delta}$ such that
$\alpha^{\circ}\subset\Delta,\partial\alpha\subset\partial\Delta$ and
$\#\partial\alpha=2.$ If $\gamma=\left(  f,\alpha\right)  $ is simple, then
$\alpha$ cuts the disk $\Delta$ into two Jordan domains $\Delta_{1}$ and
$\Delta_{2}$ and for the surfaces $\Sigma_{j}=\left(  f,\overline{\Delta_{j}%
}\right)  ,j=1,2,$ we have%
\[
R(\Sigma)=R(\Sigma_{1})+R(\Sigma_{2})-4\pi\#\gamma^{\circ}\cap E_{q}.
\]
Moreover, if $\partial\alpha\subset E_{q},$ then we have%
\[
CV_{f}\left(  S\backslash E_{q}\right)  =CV_{f_{1}}(S\backslash E_{q})\cup
CV_{f_{2}}\left(  S\backslash E_{q}\right)  .
\]

\end{lemma}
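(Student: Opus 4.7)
The strategy is to invert the sewing construction of Lemma \ref{patch-1}: cut $\overline{\Delta}$ along $\alpha$, verify the two pieces lie in $\mathbf{F}$, then re-sew them and recognize the result as $\Sigma$ itself, so that every claim becomes a direct transcription of Lemma \ref{patch-1}.

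First I would settle the geometric setup. Since $\alpha$ is a simple arc in $\overline{\Delta}$ with $\alpha^{\circ}\subset\Delta$ and two distinct endpoints on $\partial\Delta$, a Jordan-curve argument (close $\alpha$ into two Jordan curves by adjoining each of the two complementary subarcs of $\partial\Delta$) shows that $\alpha$ separates $\Delta$ into two disjoint Jordan domains $\Delta_{1}$ and $\Delta_{2}$ with $\overline{\Delta_{1}}\cap\overline{\Delta_{2}}=\alpha$. Each $\Sigma_{j}=(f,\overline{\Delta_{j}})$ is then a member of $\mathbf{F}$, because $f$ restricted to $\overline{\Delta_{j}}$ inherits the OPCOFOM property and $\Delta_{j}$ is a Jordan domain.

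Next I set up the sewing. Let $\alpha_{j}$ denote $\alpha$ equipped with the orientation it inherits as a subarc of the anticlockwise-oriented Jordan curve $\partial\Delta_{j}$. The two induced orientations are opposite, so $\alpha_{1}=-\alpha_{2}$ as oriented paths, and hence $(f,\alpha_{1})\sim -(f,\alpha_{2})$. Since $\gamma=(f,\alpha)$ is simple with two distinct endpoints, the hypotheses of Lemma \ref{patch-1} are satisfied with $\Sigma_{j}$ in the role of $(f_{j},\overline{U_{j}})$ and with $\gamma$ (up to sign) being the common sewing arc. The lemma then produces a sewn surface $\Sigma_{3}=(f_{3},\overline{\Delta})$. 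Using the abstract quotient description in Remark \ref{abs-two}, $\Sigma_{3}$ is built from $\overline{\Delta_{1}}\sqcup\overline{\Delta_{2}}$ by identifying each $x\in\alpha_{1}$ with the unique $y\in\alpha_{2}$ satisfying $f(x)=f(y)$; but $\alpha_{1}$ and $\alpha_{2}$ already coincide as subsets of $\overline{\Delta}$ and $f$ is injective on $\alpha$, so this identification glues each point of $\alpha$ to itself. The obvious quotient map is then a homeomorphism $\overline{Q}\to\overline{\Delta}$ intertwining $\tilde f$ with $f$, giving $\Sigma_{3}\sim\Sigma$.

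With this identification in hand, the first formula is Lemma \ref{patch-1}(ii) transcribed,
\[
R(\Sigma)=R(\Sigma_{3})=R(\Sigma_{1})+R(\Sigma_{2})-4\pi\#(\gamma^{\circ}\cap E_{q}).
\]
For the branch-value statement, assume $\partial\alpha\subset E_{q}$. Then $f(\partial\alpha_{j})=f(\partial\alpha)\subset E_{q}$ for $j=1,2$, so Lemma \ref{patch-1}(iii) gives $CV_{f_{3}}(S\setminus E_{q})=CV_{f_{1}}(S\setminus E_{q})\cup CV_{f_{2}}(S\setminus E_{q})$, and the equivalence $\Sigma_{3}\sim\Sigma$ yields $CV_{f_{3}}=CV_{f}$, completing the claim. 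The only step that requires a moment's care is the verification $\Sigma_{3}\sim\Sigma$; everything else is bookkeeping, and the cleanest route is through the quotient formulation of Remark \ref{abs-two}, which sidesteps having to write down explicit identification homeomorphisms.
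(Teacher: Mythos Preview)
Your proposal is correct and follows essentially the same approach as the paper: recover $\Sigma$ by sewing $\Sigma_{1}$ and $\Sigma_{2}$ along $\gamma$ via Lemma~\ref{patch-1}, then read off the $R$-identity from (\ref{RRR-1}) and the branch-value identity from (\ref{RRR-2}). The paper's proof is a one-line pointer to these formulas, whereas you spell out the Jordan-curve separation, the membership $\Sigma_{j}\in\mathbf{F}$, and the verification $\Sigma_{3}\sim\Sigma$ via the quotient description of Remark~\ref{abs-two}; this extra care is fine and the route is identical.
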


\begin{proof}
In fact $\Sigma$ can be recovered by gluing the surfaces $\left(
f,\overline{\Delta_{1}}\right)  $ and $(f,\overline{\Delta_{2}})\ $along
$\gamma,$ and so the desired equalities follows from (\ref{RRR-1}) and
(\ref{RRR-2}).
\end{proof}

For two equivalent curves $\gamma_{j}:\alpha_{j}\rightarrow S,$ we will write
$\gamma_{1}=\gamma_{2}$ when there is no confusion. Then Lemma \ref{patch-1}
can be restated as simplified but more useful versions (Lemmas \ref{patch} and
\ref{ps-dm}).

\begin{lemma}
\label{patch}Let $\Sigma_{1}$ and $\Sigma_{2}$ be two surfaces in $\mathbf{F}$
such that
\[
\partial\Sigma_{1}=\gamma+\Gamma_{1}\mathrm{\ and\ }\partial\Sigma_{2}%
=-\gamma+\Gamma_{2},
\]
where $\gamma$ is a simple and proper arc of $\partial\Sigma_{1}$ which is not
closed, say, it has distinct endpoints. Then $-\gamma$ is a proper subarc of
$\partial\Sigma_{2},$ and $\Sigma_{1}$ and $\Sigma_{2}$ can be sewn along
$\gamma,$ resulting a surface $\Sigma_{3}=\left(  f_{3},\overline{\Delta
}\right)  $ such that the following hold.

(i).
\[
\partial\Sigma_{3}=\Gamma_{1}+\Gamma_{2},
\]%
\[
L(\partial\Sigma_{3})=L(\partial\Sigma_{1})+L(\partial\Sigma_{2})-2L(\gamma),
\]%
\[
A(\Sigma_{3})=A\left(  \Sigma_{1}\right)  +A(\Sigma_{2}),
\]%
\[
\overline{n}\left(  \Sigma_{3}\right)  =\overline{n}\left(  \Sigma_{1}\right)
+\overline{n}\left(  \Sigma_{2}\right)  +\#\left(  \gamma^{\circ}\cap
E_{q}\right)  ,
\]%
\begin{equation}
R(\Sigma_{3})=R(\Sigma_{1})+R(\Sigma_{2})-4\pi\#\left(  \gamma^{\circ}\cap
E_{q}\right)  . \label{RRR}%
\end{equation}

(ii). If $\partial\gamma\subset E_{q},$ then
\[
CV_{f_{3}}(S\backslash E_{q})=CV_{f_{1}}(S\backslash E_{q})\cup CV_{f_{2}%
}(S\backslash E_{q}).
\]

(iii). If $\partial\Sigma_{2}=-\gamma+\gamma$ and the interior of $\Sigma_{2}$
is the simple domain $S\backslash\gamma,$ then
\[
\partial\Sigma_{3}=\partial\Sigma_{1}=\gamma+\Gamma_{1},
\]
and if in addition\footnote{note that by assumption $\#\partial\gamma=2,$ say,
$\gamma$ contains two distinct endpoints.} $\partial\gamma\subset E_{q},$ then
the following two equalities hold:
\begin{equation}
R\left(  \Sigma_{3}\right)  =R\left(  \Sigma_{1}\right)  , \label{drr}%
\end{equation}
and%
\begin{equation}
CV_{f_{3}}(S\backslash E_{q})=CV_{f_{1}}(S\backslash E_{q}).
\label{2023-07-05-1}%
\end{equation}

\end{lemma}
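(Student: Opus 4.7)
The plan is to deduce Lemma \ref{patch} directly from Lemma \ref{patch-1}, which already contains all the substantive content, and then to reduce part (iii) to part (i) using Lemma \ref{-path}. The only thing requiring care is the dictionary between the ``parametric'' statement of Lemma \ref{patch-1} (phrased via subarcs $\alpha_{j}$ of $\partial U_{j}$ and the hypothesis $(f_{1},\alpha_{1})\sim-(f_{2},\alpha_{2})$) and the ``coordinate-free'' version above (phrased via the boundary curves $\partial \Sigma_{j}$).

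First I would set up the translation. Given the partition $\partial \Sigma_{1}=\gamma+\Gamma_{1}$ and $\partial \Sigma_{2}=-\gamma+\Gamma_{2}$, Definition \ref{partition} furnishes arcs $\alpha_{j}\subset\partial U_{j}$ with $(f_{1},\alpha_{1})=\gamma$ and $(f_{2},\alpha_{2})=-\gamma$. Since $\gamma$ is simple with distinct endpoints, so is $-\gamma$, and $(f_{1},\alpha_{1})\sim-(f_{2},\alpha_{2})$. Lemma \ref{patch-1} then produces $\Sigma_{3}=(f_{3},\overline{\Delta})$, IMs $h_{1},h_{2}$, and the suture line $[-1,1]$, and all four displayed identities for $L,A,\overline{n}$ and $R$ in (i) are simply Lemma \ref{patch-1}(ii) rewritten in the new notation; the description $\partial \Sigma_{3}=\Gamma_{1}+\Gamma_{2}$ is Lemma \ref{patch-1}(i). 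Part (ii) is Lemma \ref{patch-1}(iii) word for word, since the endpoints of $\alpha_{1},\alpha_{2}$ are mapped to $\partial\gamma\subset E_{q}$.

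For (iii), the hypothesis says exactly that $\Sigma_{2}$ is the slit-sphere surface $S_{\gamma}$ of Lemma \ref{-path} (with an equivalent reparametrization of its boundary from $\gamma-\gamma$ to $-\gamma+\gamma$, which affects none of the invariants). The boundary identity is immediate: $\partial \Sigma_{3}=\Gamma_{1}+\Gamma_{2}=\Gamma_{1}+\emptyset=\Gamma_{1}$, and together with $\gamma$ from $\partial\Sigma_1$... wait, more carefully, since we sew along $\gamma$ the remaining piece of $\partial\Sigma_{1}$ is $\Gamma_{1}$ and of $\partial\Sigma_{2}$ is $\gamma$, so $\partial \Sigma_{3}=\Gamma_{1}+\gamma=\partial \Sigma_{1}$ (the version of (i) now being applied to the surface $\Sigma_{2}$ whose $\Gamma_{2}$ equals $\gamma$). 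Assuming $\partial\gamma\subset E_{q}$, Lemma \ref{-path} gives $R(\Sigma_{2})=4\pi\#\gamma^{\circ}\cap E_{q}+4\pi\cdot 2-8\pi=4\pi\#\gamma^{\circ}\cap E_{q}$, and substitution into (\ref{RRR}) yields (\ref{drr}). The identity (\ref{2023-07-05-1}) comes from Lemma \ref{patch-1}(iii) together with the observation that $f_{2}:U_{2}\to S\setminus\gamma$ is a homeomorphism, so $C_{f_{2}}(\overline{U_{2}}\setminus\partial\alpha_{2})=\emptyset$ and hence $CV_{f_{2}}(S\setminus E_{q})=\emptyset$.

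The only mild obstacle is bookkeeping: verifying that the reorientation $\gamma-\gamma\leftrightarrow -\gamma+\gamma$ and the relabeling of $\Gamma_{2}$ as $\gamma$ in case (iii) do not disturb the orientation-preserving hypothesis of Lemma \ref{patch-1}. This is harmless because the equivalence class of a boundary partition is invariant under cyclic relabeling, and Remark \ref{abs-two}'s abstract quotient construction makes the sewing independent of which point of $\gamma$ is chosen as base. No other ideas are needed.
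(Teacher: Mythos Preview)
Your proposal is correct and follows essentially the same approach as the paper: parts (i) and (ii) are read off directly from Lemma \ref{patch-1}, and for (iii) you invoke Lemma \ref{-path} (with $\partial\gamma\subset E_q$) to get $R(\Sigma_2)=4\pi\#(\gamma^\circ\cap E_q)$, substitute into (\ref{RRR}) to obtain (\ref{drr}), and deduce (\ref{2023-07-05-1}) from (ii) together with $CV_{f_2}(S\setminus E_q)=\emptyset$. The paper's proof is the same, only terser.
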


\begin{proof}
All conclusions, except (\ref{drr}) and (\ref{2023-07-05-1}), follow from
Lemma \ref{patch-1}. Assume $\partial\Sigma_{2}=-\gamma+\gamma$, the interior
of $\Sigma_{2}$ is the simple domain $S\backslash\gamma,$ and $\partial
\gamma\subset E_{q}.$ Then we have $CV_{f_{2}}(S\backslash E_{q})=\emptyset,$
and thus (\ref{2023-07-05-1}) follows from (ii). On the other hand, by Lemma
\ref{-path} we have $R(\Sigma_{2})=4\pi\#\left(  \gamma^{\circ}\cap
E_{q}\right)  ,$ which with (\ref{RRR}) implies (\ref{drr}).
\end{proof}

$\Sigma_{3}$ in (iii) can also be obtained by continuously extending
$\Sigma_{1}:$ let the two endpoints of $\gamma$ be fixed and let $\gamma$
continue to move to the right hand side, and return to the initial position of
$\gamma$ after scanning the whole sphere.

\begin{lemma}
\label{ps-dm}Let $\Sigma=\left(  f,\overline{\Delta}\right)  $ be a surface
such that $\partial\Sigma=\gamma+\Gamma$ where $\gamma$ is a simple arc on $S$
with $\#\partial\gamma=2.$ Let $T$ be a closed Jordan domain on $S$ with
$\partial T=\gamma-\gamma^{\prime}.$ Then $\Sigma$ and $T^{c}=S\backslash
T^{\circ}$ can be sewn along $\gamma,$ resulting a surface $\Sigma^{\prime
}=\left(  f^{\prime},\overline{\Delta}\right)  $ such that $\partial
\Sigma^{\prime}=\Gamma+\gamma^{\prime}.$ Moreover, in the case $\partial
\gamma\subset E_{q}$ we have
\[
R\left(  \Sigma\right)  =R(\Sigma^{\prime})+R(T)-4\pi\#E_{q}\cap\gamma
^{\prime\circ},
\]
and
\begin{equation}
CV_{f^{\prime}}\left(  S\backslash E_{q}\right)  =CV_{f}\left(  S\backslash
E_{q}\right)  . \label{2023-07-05-2}%
\end{equation}

\end{lemma}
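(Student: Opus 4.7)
The plan is to deduce this lemma almost directly from Lemma \ref{patch} by taking $\Sigma_{1}=\Sigma$ and $\Sigma_{2}=T^{c}$, viewed as a trivially embedded surface on $S$. The first step is to observe that since $T$ is a closed Jordan domain with $\partial T=\gamma-\gamma'$, its complement $T^{c}=S\setminus T^{\circ}$ is also a closed Jordan domain on $S$ whose boundary, oriented as a surface, is $\partial T^{c}=-\gamma+\gamma'$. Thus $\Sigma$ and $T^{c}$ share the simple arc $\gamma$ (with opposite orientations) as a proper boundary subarc, so Lemma \ref{patch} applies and yields a sewn surface $\Sigma'=(f',\overline{\Delta})$ with
\[
\partial\Sigma'=\Gamma+\gamma'.
\]

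Next I would prove the two identities under the assumption $\partial\gamma\subset E_{q}$. Lemma \ref{patch}(i) gives
\[
R(\Sigma')=R(\Sigma)+R(T^{c})-4\pi\#(\gamma^{\circ}\cap E_{q}),
\]
so it suffices to express $R(T^{c})$ in terms of $R(T)$. Write $\alpha=\#\gamma^{\circ}\cap E_{q}$ and $\alpha'=\#\gamma'^{\circ}\cap E_{q}$. Since $S$ decomposes as the disjoint union $T^{\circ}\sqcup T^{c\circ}\sqcup\gamma^{\circ}\sqcup\gamma'^{\circ}\sqcup\partial\gamma$ and $\#\partial\gamma\cap E_{q}=2$ by hypothesis,
\[
\overline{n}(T)+\overline{n}(T^{c})=q-\alpha-\alpha'-2.
\]
Combined with $A(T)+A(T^{c})=4\pi$, this gives
\[
R(T)+R(T^{c})=(q-2)\cdot 4\pi-4\pi(q-\alpha-\alpha'-2)=4\pi(\alpha+\alpha').
\]
Substituting $R(T^{c})=4\pi(\alpha+\alpha')-R(T)$ into the sewing formula and simplifying yields exactly
\[
R(\Sigma)=R(\Sigma')+R(T)-4\pi\#E_{q}\cap\gamma'^{\circ},
\]
as claimed.

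Finally, for the branch value identity, Lemma \ref{patch}(ii) gives $CV_{f'}(S\setminus E_{q})=CV_{f}(S\setminus E_{q})\cup CV_{T^{c}}(S\setminus E_{q})$ under $\partial\gamma\subset E_{q}$. Since $T^{c}$ is a genuine Jordan domain on $S$ (its defining function is, up to reparametrization, the identity embedding), it has no branch points, so $CV_{T^{c}}(S\setminus E_{q})=\emptyset$ and the second identity follows.

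There is no serious obstacle here: the only substantive step is the counting bookkeeping for $R(T)+R(T^{c})$, which pivots on the hypothesis $\partial\gamma\subset E_{q}$ (forcing $\#\partial\gamma\cap E_{q}=2$, exactly the constant that cancels with $(q-2)\cdot 4\pi-4\pi q$). Everything else is a direct invocation of Lemma \ref{patch} applied to $(\Sigma,T^{c})$.
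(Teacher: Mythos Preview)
Your proof is correct. The construction of $\Sigma'$ and the branch-value identity match the paper exactly, but for the $R$-identity you take a different (and more direct) route: you apply Lemma~\ref{patch}(i) once to the pair $(\Sigma,T^{c})$ and then compute $R(T)+R(T^{c})=4\pi(\alpha+\alpha')$ by a straight count of $E_{q}$-points on $S$. The paper instead realizes $\Sigma'$ via an intermediate surface $\Sigma''$ obtained by first sewing to $\Sigma$ the sphere slit along $\gamma$ (so $R(\Sigma'')=R(\Sigma)$ by Lemma~\ref{patch}(iii)), and then cutting off $T$ along $\gamma'$ using Lemma~\ref{cut-off}. Your approach avoids both the auxiliary construction and the appeal to Lemma~\ref{cut-off}; the paper's approach, on the other hand, makes the geometric content (replace $\gamma$ by $\gamma'$ by sweeping across the sphere and then removing $T$) more transparent.
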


\begin{proof}
It is trivial to see that $\partial T^{c}=-\gamma+\gamma^{\prime},$ and then
by Lemma \ref{patch} (i) we have $\partial\Sigma^{\prime}=\Gamma
+\gamma^{\prime}.\ $Assume $\partial\gamma\subset E_{q}.$ Then by Lemma
\ref{patch} (ii) we have $CV_{f^{\prime}}(S\backslash E_{q})=CV_{f}%
(S\backslash E_{q})\cup CV_{id}(\overline{T_{c}}\backslash E_{q}%
)=CV_{f}(S\backslash E_{q}).$ The surface $\Sigma^{\prime}$ can also be
obtained in this way: first sew $\Sigma$ and the surface whose interior is
$S\backslash\gamma$ and boundary is $\gamma-\gamma,$ obtaining a surface
$\Sigma^{\prime\prime}=\left(  f^{\prime\prime},\overline{\Delta}\right)  ,$
and then cut from the new surface $\Sigma^{\prime\prime}$ the the domain
$T^{\circ},$ together with the open boundary $\gamma^{\circ},$ along
$\gamma^{\prime},$ obtaining the surface $\Sigma^{\prime}=\left(  f^{\prime
},\overline{\Delta}\right)  .$ Then we have by Lemma \ref{patch} (iii) that
$R(\Sigma^{\prime\prime})=R(\Sigma).$ And by Lemma \ref{cut-off} we have%
\[
R(\Sigma)=R(\Sigma^{\prime\prime})=R(\Sigma^{\prime})+R(T)-4\pi\#E_{q}%
\cap\gamma^{\prime\circ}.
\]

\end{proof}

Lemma \ref{patch-1} can be extended to the case that $\alpha_{2}$ is the whole
boundary $\partial U_{2}$ but $\alpha_{1}$ is a proper arc of $\partial
U_{1},$ as the reverse process of Example \ref{exam-glue-1}.

\begin{corollary}
\label{glue-1}Let $\Sigma_{1}=(f_{1},\overline{U})$ be a surface in
$\mathbf{F}$ and assume $\partial\Sigma_{1}$ has a partition%
\[
\partial\Sigma_{1}=\gamma+\Gamma
\]
such that $\gamma=\left(  f,\alpha\right)  $ is a Jordan curve on $S,$ where
$\alpha=\alpha\left(  x_{1},x_{2}\right)  $ is a proper arc of $\partial U$.
Let $T_{\gamma}$ be the closed Jordan domain enclosed by $\gamma$ and let
$T^{c}=S\backslash T_{\gamma}^{\circ}.$ Then $\Sigma_{1}$ and $T^{c}$ can be
sewn\label{sew32} along $\gamma$ becoming a surface $\Sigma_{2}=\left(
f_{2},\overline{\Delta}\right)  \ $such that for the disk $B=\{z\in
\mathbb{C}:|z-\frac{1}{2}|<\frac{1}{2}\}$ the following holds:

(i) There exist an OPCOFOM $h_{1}:\overline{U}\rightarrow\overline{\Delta
}\backslash B$ and an OPH $f_{1}^{\prime}:\overline{B}\rightarrow T^{c}$ such
that $h_{1}:\overline{U}\backslash\{x_{1},x_{2}\}\rightarrow\left(
\overline{\Delta}\backslash B\right)  \backslash\{1\}$ is an OPH,
\[
(h_{1},\partial U\backslash\alpha^{\circ})=\partial\Delta,\mathrm{\ }%
(h_{1},\alpha)=-\partial B,\mathrm{\ }h_{1}\left(  x_{1}\right)  =h_{1}\left(
x_{2}\right)  =1,
\]%
\[
f_{1}^{\prime}\left(  y\right)  =f_{1}\circ h_{1}^{-1}\left(  y\right)
,\mathrm{\ }y\in\partial B,
\]
and
\[
f_{2}\left(  z\right)  =\left\{
\begin{array}
[c]{l}%
f_{1}\circ h_{1}^{-1}\left(  z\right)  ,z\in\overline{\Delta}\backslash B,\\
f_{1}^{\prime},z\in B.
\end{array}
\right.  .
\]

(ii) $p\in\overline{\Delta}\backslash\partial B$ is a branch point of $f_{2}$
if and only if
\[
h_{1}^{-1}(p)\in C_{f_{1}}\left(  \overline{U}\backslash\alpha\right)  .
\]

(iii) $p\in\left(  \partial B\right)  \backslash\{1\}$ is a branch point of
$f_{2}$ if and only if $h_{1}^{-1}(p)\in C_{f_{1}}\left(  \alpha^{\circ
}\right)  .\label{2023-4-04-11:22}$

(iv) $\partial\Sigma_{2}=\Gamma,$ when $\Gamma$ is viewed as a closed curve on
$S$, and moreover%
\begin{equation}
L(\partial\Sigma_{1})=L(\partial\Sigma_{2})+L(\gamma), \label{LL}%
\end{equation}%
\begin{equation}
A(\Sigma_{1})=A(\Sigma_{2})+A(T_{\gamma})-4\pi, \label{AA}%
\end{equation}%
\begin{equation}
\overline{n}\left(  \Sigma_{1}\right)  =\overline{n}\left(  \Sigma_{2}\right)
+\overline{n}\left(  T_{\gamma}\right)  -q+\chi_{E_{q}}(f\left(  x_{1}\right)
), \label{NN}%
\end{equation}
where
\[
\chi_{E_{q}}\left(  w\right)  =\left\{
\begin{array}
[c]{c}%
0,w\notin E_{q},\\
1,w\in E_{q}.
\end{array}
\right.
\]

\begin{equation}
R(\Sigma_{1})=R(\Sigma_{2})+R(T_{\gamma})+8\pi-4\pi\chi_{E_{q}}(f_{1}%
(x_{1}))\geq R(\Sigma_{2})+R(T_{\gamma})+4\pi, \label{8pi}%
\end{equation}
equality holding if and only if
\begin{equation}
f\left(  x_{1}\right)  =f\left(  x_{2}\right)  \in E_{q}; \label{EqEq}%
\end{equation}
and when (\ref{EqEq}) holds, we have
\begin{equation}
CV_{f_{1}}\left(  S\backslash E_{q}\right)  =CV_{f_{2}}\left(  S\backslash
E_{q}\right)  . \label{CVCV1}%
\end{equation}

\end{corollary}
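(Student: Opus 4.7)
The construction mirrors the abstract quotient of Remark \ref{abs-two}, but with the essential new feature that the sewn arc $\alpha$ is mapped not to a simple arc but to the Jordan curve $\gamma$, so its two distinct endpoints $x_1,x_2$ must be collapsed to the single point $1\in\partial B\cap\partial\Delta$. First, using Theorem \ref{st}(iii) I reparametrize so that $f_1$ is holomorphic on $\overline{\Delta}$, with $\alpha=(\partial\Delta)^+$, $x_1=-1$, $x_2=1$. Then I exhibit a continuous surjection $\phi:\overline{\Delta}\to\overline{\Delta}\setminus B$ collapsing $(\partial\Delta)^+$ onto $-\partial B$ (sending both $\pm 1$ to $1$) and restricting to an OPH off $\{-1,1\}$; setting $h_1=\phi$ gives the required OPCOFOM. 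The map $f_1\circ h_1^{-1}:\partial B\setminus\{1\}\to\gamma\setminus\{p\}$, with $p=f_1(x_1)=f_1(x_2)$, extends continuously to a homeomorphism of $\partial B$ onto $\gamma$, and this boundary homeomorphism extends to an OPH $f_1':\overline{B}\to T^c$ since both are closed Jordan domains (Riemann mapping plus Carath\'eodory extension). The piecewise formula for $f_2$ is then a well-defined OPCOFOM, giving (i).

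For (ii) and (iii), $h_1$ is locally homeomorphic on $\overline{U}\setminus\{x_1,x_2\}$ while $f_1'$ has no branch points at all, so by local inspection (as in Remark \ref{notation}) the branch points of $f_2$ in $\overline{\Delta}\setminus\partial B$ biject with those of $f_1$ in $\overline{U}\setminus\alpha$, and the branch points of $f_2$ on $\partial B\setminus\{1\}$ biject with those of $f_1$ on $\alpha^\circ$.

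The identities in (iv) are then bookkeeping. Equation (\ref{LL}) is immediate since $\partial\Sigma_2$ corresponds to $\Gamma$. For (\ref{AA}), the interior of $\Sigma_2$ is the disjoint union of the interior of $\Sigma_1$ and the interior $S\setminus T_\gamma$ of $T^c$, whose area is $4\pi-A(T_\gamma)$. For (\ref{NN}), partition $\Delta$ as $(\Delta\setminus\overline{B})\sqcup(\partial B\setminus\{1\})\sqcup B$ and count the $E_q$-preimages in each piece to obtain
\[
\overline{n}(\Sigma_2)=\overline{n}(\Sigma_1)+\#\gamma^\circ\cap E_q+\bigl(q-\overline{n}(T_\gamma)-\#\gamma\cap E_q\bigr),
\]
which, using $\#\gamma\cap E_q=\#\gamma^\circ\cap E_q+\chi_{E_q}(p)$, rearranges to (\ref{NN}). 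Then (\ref{8pi}) is a direct algebraic consequence of (\ref{AA}) and (\ref{NN}), and the stated inequality together with its equality case follows from $\chi_{E_q}(p)\in\{0,1\}$. Statement (\ref{CVCV1}) follows from (ii)--(iii): under (\ref{EqEq}), any branch point of $f_1$ at $x_1$ or $x_2$ has branch value $p\in E_q$, and $f_1'$ has no branch points, so the branch values of $f_1$ and $f_2$ outside $E_q$ coincide.

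The main obstacle I anticipate is the careful construction and verification of the collapse map $\phi$, in particular checking that it yields an OPCOFOM with the correct local behavior at the cusp $1$ where two boundary points of $\overline{\Delta}$ are merged; once (i)--(iii) are in place, the numerical identities are routine, the only genuinely subtle arithmetic point being the $\chi_{E_q}(p)$ correction in (\ref{NN}) coming from the pinched vertex at $1$.
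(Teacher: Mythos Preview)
Your proposal is correct and follows essentially the same approach as the paper: the paper treats (i) as a routine identification (``(i) in fact gives the method how to sew $\Sigma_1$ and $T^c$ along $\gamma$, all of (i) is easy to see''), derives (ii)--(iii) from (i), and then does exactly the bookkeeping you do for (iv), obtaining $\overline{n}(\Sigma_1)=\overline{n}(\Sigma_2)-\overline{n}(T^c)-\#\gamma\cap E_q+\chi_{E_q}(f(x_1))$ and combining with $A(\Sigma_1)=A(\Sigma_2)-A(T^c)$. Your treatment is in fact more detailed than the paper's on the construction of the collapse map $\phi$ and on the $\chi_{E_q}$ correction; one small cosmetic point is that with the anticlockwise boundary convention your labeling should be $x_1=1$, $x_2=-1$ rather than the reverse, but this does not affect the argument.
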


\begin{proof}
(i) in fact gives the method how to sew\label{sew31} $\Sigma_{1}$ and $T^{c}$
along $\gamma,$ all of (i) is easy to see. Then (ii) and (iii) follows from (i).

The relation $\partial\Sigma_{2}=\Gamma$ and (\ref{LL})$\ $are trivial to see.
(\ref{AA}) follows from the equalities $A(\Sigma_{1})=A(\Sigma_{2}%
)-A(T^{c})\ $and $A(T^{c})=A(S\backslash T_{\gamma})=4\pi-A(T_{\gamma}).$

It is clear that\footnote{Note that by definition of $\overline{n},$
$\overline{n}\left(  T^{c}\right)  =\overline{n}\left(  S\backslash T_{\gamma
}\right)  =\#\left(  S\backslash T_{\gamma}\right)  \cap E_{q},$ say, any
point of $E_{q}$ on the boundary is not counted for $\overline{n}.$}
$\overline{n}\left(  T^{c}\right)  =q-\#\gamma\cap E_{q}-\overline{n}\left(
T_{\gamma}\right)  ,$ and that%
\[
\overline{n}\left(  \Sigma_{1}\right)  =\overline{n}\left(  \Sigma_{2}\right)
-\overline{n}\left(  T^{c}\right)  -\#\gamma\cap E_{q}+\chi_{E_{q}}(f\left(
x_{1}\right)  ),
\]
where $\chi_{E_{q}}(f\left(  x_{1}\right)  )$ appears because after the
sewing, the endpoints of $\alpha$ are sewn to one boundary point of $\Delta.$
In consequence we have (\ref{NN}).

(\ref{8pi}) follows from (\ref{AA}) and (\ref{NN}). When (\ref{EqEq}) holds,
(\ref{CVCV1}) follows from (ii) and (iii) directly. Therefore all conclusions
in (iv) hold.
\end{proof}

Assume that the arc $\alpha=\alpha\left(  x_{1},x_{2}\right)  $ in Corollary
\ref{glue-1} is the whole boundary $\partial U,$ say, $x_{2}=x_{1},$ and
$\gamma=\left(  f,\partial U\right)  =\partial\Sigma$ is still simple, say,
$\gamma=\partial\Sigma$ is a Jordan curve. Then we can sew $\Sigma_{1}$ and
the closed domain $T_{c}=S\backslash T_{\gamma}^{\circ}$ so that the result
surface $\Sigma_{2}$ is a closed surface, say, $\Sigma_{2}=\left(
f_{2},S\right)  ,$ where $S$ is the sphere. Then above equations in the proof
all hold when we replace $\chi_{E_{q}}(f(x_{1}))$ by $0,$ even if $f(x_{1})\in
E_{q}.$ This can be explained in another way: we may choose the end point
$x_{2}=x_{1}$ of $\alpha$ not contained in $f_{1}^{-1}(E_{q}),$ then
$\chi_{E_{q}}(f(x_{1}))=0$ and thus the above argument works. This means that
we have

\begin{corollary}
\label{loop1}Let $\Sigma_{1}=\left(  f_{1},\overline{\Delta}\right)  $ be a
surface such that $\partial\Sigma$ is a Jordan curve on $S$ and let
$T_{\partial\Sigma}$ be the closed domain enclosed by $\partial\Sigma$ on $S.$
Then for the closed surface $\Sigma_{2}=\left(  f_{2},S\right)  $ over $S$
which is obtained by sewing $\Sigma$ and $T^{c}=S\backslash T_{\partial\Sigma
}^{\circ}$ along $\partial\Sigma,$ we have
\[
R(\Sigma)=R(\Sigma_{2})+R(T_{\partial\Sigma})+8\pi.
\]
Moreover, we have $CV_{f_{2}}=CV_{f_{1}}.$
\end{corollary}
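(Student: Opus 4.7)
The plan is to treat this corollary as the degenerate (closed-surface) limit of Corollary \ref{glue-1}, exactly as foreshadowed in the paragraph immediately preceding the statement. In that setup, the arc $\alpha = \alpha(x_1,x_2) \subset \partial U$ is allowed to collapse to the entire boundary curve $\partial U$ with $x_1 = x_2$. The sewing of $\Sigma_1$ with the complement $T^c = S \setminus T_{\partial\Sigma}^{\circ}$ along the simple Jordan curve $\gamma = \partial\Sigma$ then produces a closed surface $\Sigma_2 = (f_2, S)$ rather than a simply connected one.

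The first step is to remove the $\chi_{E_q}(f_1(x_1))$ term that appeared in (\ref{8pi}). Since $f_1$ is an OPCOFOM, the preimage $f_1^{-1}(E_q)$ is finite, so the intersection $f_1^{-1}(E_q) \cap \partial U$ is finite as well. Hence we may choose a base point $x_1 \in \partial U$ with $f_1(x_1) \notin E_q$, so that $\chi_{E_q}(f_1(x_1)) = 0$. With this choice the analogues of (\ref{AA}) and (\ref{NN}) become
\[
A(\Sigma_1) = A(\Sigma_2) + A(T_{\partial\Sigma}) - 4\pi,
\qquad
\overline{n}(\Sigma_1) = \overline{n}(\Sigma_2) + \overline{n}(T_{\partial\Sigma}) - q,
\]
since no endpoint of $\alpha$ contributes to the $E_q$-count. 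Substituting into the definition $R(\cdot) = (q-2)A(\cdot) - 4\pi\overline{n}(\cdot)$ yields the claimed identity $R(\Sigma) = R(\Sigma_2) + R(T_{\partial\Sigma}) + 8\pi$ at once.

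For the branch-value equality $CV_{f_2} = CV_{f_1}$, the key observation is that the sewing attaches the set $T^c$ via the identity embedding into $S$, which is a homeomorphism and therefore contributes no new branch points. Thus the branch locus of $f_2$ coincides with the branch locus of $f_1$, transported through the identification map, and $CV_{f_2} \supseteq CV_{f_1}$ trivially while the reverse inclusion uses the fact that no branch point of $f_2$ lies in $T^{c\circ}$. The only subtle point is at the glued image of $x_1$: here we use that $(f_1,\partial U)$ is a Jordan curve (so $f_1$ is injective on $\partial U$) and that locally the two boundary radii at $x_1$ described in Remark \ref{nod-1} are attached to the two sides of $\gamma$ in $T^c$; the local degree of $f_2$ at this point matches $v_{f_1}(x_1)$, so this point is a branch value of $f_2$ if and only if it was already a branch value of $f_1$.

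The main obstacle I anticipate is making the last step rigorous, namely showing that the local degree is preserved under the sewing at the single identified boundary point. This is handled by the local model in Lemma \ref{cov-1}(B) together with the assertion in Corollary \ref{glue-1}(iii), which identifies branch points on the suture circle with those on $\alpha^{\circ}$; since here $\alpha = \partial U$ is a closed Jordan curve without endpoints in the interior sense, the branch-point correspondence extends continuously across the glued point, giving the desired equality $CV_{f_2} = CV_{f_1}$.
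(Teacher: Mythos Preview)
Your proposal is correct and follows essentially the same approach as the paper: the paragraph immediately preceding the corollary statement treats it as the degenerate case of Corollary \ref{glue-1} with $\alpha = \partial U$ and $x_1 = x_2$, and explicitly remarks that one may choose the base point $x_1 \notin f_1^{-1}(E_q)$ so that $\chi_{E_q}(f_1(x_1)) = 0$, whence (\ref{8pi}) gives the claimed identity. Your treatment of $CV_{f_2} = CV_{f_1}$ is in fact more detailed than the paper's, which leaves the branch-value equality implicit in the construction and in Corollary \ref{glue-1}(ii)--(iii).
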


\label{abs-one}The equivalent class argument for two surface in Remark
\ref{abs-two} can be used for the above method. We left this to the reader.

\begin{example}
\label{exam2}For a surface $\Sigma=\left(  f,\overline{\Delta}\right)
\in\mathcal{F},$ we can cut $\overline{\Delta}$ by the radius $[0,1]$ to
obtain a surface $\Sigma_{1}$ whose interior is $\left(  f,\Delta
\backslash\lbrack0,1]\right)  $ and boundary is $\partial\Sigma_{1}+\left(
f,\left[  1,0\right]  \right)  +\left(  f,[0,1]\right)  .$ $\Sigma_{1}$ can be
expressed as $\left(  f_{1},\overline{\Delta^{+}}\right)  $ which split the
arc $\left(  f,[0,1]\right)  $ into two boundary arcs of $\Sigma_{1}$, where
$f_{1}\left(  z\right)  =f\left(  z^{2}\right)  $. Conversely, we can sew
\label{sew14}$\Sigma_{1}$ along $\left(  f_{1},[0,1]\right)  $ to recover the
surface $\Sigma.$ This trivial observation is generalized in Lemma \ref{glue}.
\end{example}

\begin{lemma}
\label{glue}$\label{2023-04-4-21:00 copy(12)}$Let $\Sigma=(f,\overline{U}%
)\ $be a surface in $\mathbf{F}$ such that $\partial U$ has a partition
$\partial U=\alpha_{1}\left(  x_{1},x_{2}\right)  +\alpha_{2}\left(
x_{2},x_{3}\right)  +A\left(  x_{3},x_{1}\right)  ,$ $\partial\Sigma$ has the
corresponding partition%
\[
\partial\Sigma=\gamma-\gamma+\Gamma=\left(  f,\alpha_{1}\right)  +\left(
f,\alpha_{2}\right)  +\left(  f,A\right)  ,
\]
and $\gamma=\left(  f,\alpha_{1}\right)  =-\left(  f,\alpha_{2}\right)  $ is a
simple arc with distinct endpoints. Then $\Sigma$ can be sewn along $\gamma$
becoming a surface $\Sigma_{1}$ such that the following hold:

(i) If $\Gamma$ is a just a point, say $\partial\Sigma=\gamma-\gamma,$ then
$\Sigma_{1}$ is a closed surface $\Sigma_{1}=\left(  f,S\right)  $ such that%
\[
R(\Sigma)=R(\Sigma_{1})+4\pi\#\gamma\cap E_{q},
\]
and
\[
CV_{f}=CV_{f_{1}}.
\]
(ii) If $\Gamma$ is not a point, then $\Sigma_{1}$ is a surface $\Sigma
_{1}=\left(  f_{1},\overline{\Delta}\right)  ,$ such that%
\begin{equation}
\partial\Sigma_{1}=\Gamma\label{LL1}%
\end{equation}%
\begin{equation}
A(\Sigma_{1})=A\left(  \Sigma\right)  ,L(\partial\Sigma_{1})=L(\partial
\Sigma)-2L(\gamma), \label{AL}%
\end{equation}%
\begin{equation}
CV_{f_{1}}\left(  \overline{\Delta}\right)  =CV_{f}\left(  \overline{\Delta
}\right)  \mathrm{\ \ \ or\ \ }CV_{f_{1}}\left(  \overline{\Delta}\right)
=CV_{f}\left(  \overline{\Delta}\right)  \cup\{f\left(  x_{1}\right)  \},
\label{CVCV}%
\end{equation}%
\begin{equation}
R(\Sigma)=R(\Sigma_{1})+4\pi\#\left(  \left[  \gamma\backslash\{f\left(
x_{1}\right)  \}\right]  \cap E_{q}\right)  , \label{RR1}%
\end{equation}
and if $f(x_{1})\in E_{q},$ then the following two equalities hold:%
\begin{equation}
CV_{f_{1}}(S\backslash E_{q})=CV_{f}\left(  S\backslash E_{q}\right)  ,
\label{CVCV2}%
\end{equation}
and%
\begin{equation}
R(\Sigma)=R(\Sigma_{1})+4\pi\#\gamma\cap E_{q}-4\pi. \label{RR2}%
\end{equation}

\end{lemma}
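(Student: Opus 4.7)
The plan is to construct $\Sigma_1$ by self-identifying $\alpha_1$ with $\alpha_2$, adapting the equivalence-class framework of Remark \ref{abs-two} from two surfaces to a single one. Define $\sim$ on $\overline{U}$ by: $y\sim z$ iff $y=z$, or $y\in\alpha_1$, $z\in\alpha_2$ and $f(y)=f(z)$. Existence and uniqueness of the partner $z$ for each $y\in\alpha_1$ follow because $(f,\alpha_1)=-(f,\alpha_2)=\gamma$ is simple with distinct endpoints. The quotient $\overline{Q}=\overline{U}/\sim$ is topologically $\overline{\Delta}$ in case (ii) and the sphere $S$ in case (i); $f$ descends to a well-defined continuous $f_1$ on $\overline{Q}$, which is an OPCOFOM by a local check using Lemma \ref{cov-1}. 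Composing with an OPH $h:\overline{\Delta}\to\overline{Q}$ supplied by Theorem \ref{st} realizes $\Sigma_1=(f_1\circ h,\overline{\Delta})$ as an element of $\mathbf{F}$, or as a closed surface on $S$ in case (i).

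From the construction, $A(\Sigma_1)=A(\Sigma)$ and $L(\partial\Sigma_1)=L(\partial\Sigma)-2L(\gamma)$ are immediate, and $\partial\Sigma_1=\Gamma$ in case (ii). For $\overline{n}(\Sigma_1)-\overline{n}(\Sigma)$ the bookkeeping has three contributions: each $w\in\gamma^\circ\cap E_q$ has exactly two boundary preimages in $\partial U$ (one in each $\alpha_j^\circ$, by simplicity of $\gamma$) that fuse into one interior preimage in $\overline{Q}$; the point $x_2$ switches from boundary to interior because its two adjacent boundary arcs along $\alpha_1,\alpha_2$ are glued into an interior arc, contributing $\chi_{E_q}(f(x_2))$; the merged point $x_1\sim x_3$ remains on $\partial\Sigma_1$ in case (ii) but becomes interior in case (i), contributing $\chi_{E_q}(f(x_1))$ in the latter. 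Substituting into $R(\Sigma)-R(\Sigma_1)=4\pi(\overline{n}(\Sigma_1)-\overline{n}(\Sigma))$ and using the set identity $(\gamma\setminus\{f(x_1)\})\cap E_q=(\gamma^\circ\cap E_q)\cup(\{f(x_2)\}\cap E_q)$ yields (\ref{RR1}). When $f(x_1)\in E_q$, rewriting $\#(\gamma\cap E_q)=\#((\gamma\setminus\{f(x_1)\})\cap E_q)+1$ gives (\ref{RR2}). Case (i) is analogous, with both endpoints contributing to produce $R(\Sigma)=R(\Sigma_1)+4\pi\#(\gamma\cap E_q)$.

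For the branch-value identities (\ref{CVCV})--(\ref{CVCV2}), I combine Lemma \ref{cov-1} with a local angle analysis. Off $\{x_1,x_2,x_3\}$ the quotient map is a local homeomorphism and transports branch points bijectively, so branch values are preserved. At $x_2$ the hypothesis $(f,\alpha_1)=-(f,\alpha_2)$ forces case (B1) of Remark \ref{notation}: both local boundary arcs at $x_2$ are oppositely oriented traversals of $\gamma$, so the interior angle at $x_2$ in $\Sigma$ equals $\omega_2\pi$ with $\omega_2=2v_f(x_2)$ automatically even, and the sewing unfolds this to a branched cover of order $v_f(x_2)$ at the now-interior point of $\Sigma_1$; thus $f(x_2)\in CV_{f_1}$ iff $f(x_2)\in CV_f$, so $x_2$ introduces no new branch value. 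At the merged point $x_1\sim x_3$ the interior angles of $\Sigma$ at $x_1$ and $x_3$ add, which can create a new branch point with value $f(x_1)=f(x_3)$; this is exactly the alternative $\cup\{f(x_1)\}$ in (\ref{CVCV}). When $f(x_1)\in E_q$, this potential new value lies in $E_q$ and so does not affect $CV_{f_1}(S\setminus E_q)$, giving (\ref{CVCV2}). The main technical obstacle is precisely this angular reduction at $x_2$ and at the merged boundary point: verifying that case (B1) of Remark \ref{notation} really does apply at $x_2$ and that the only possible new branch value introduced by the identification is $f(x_1)$. Both points follow from the local normal forms in Lemma \ref{cov-1}(B) together with the defining relation $(f,\alpha_1)=-(f,\alpha_2)$, which forces the two local branches at $x_2$ to be mirror images of one another.
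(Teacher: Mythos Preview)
Your proof is correct and follows essentially the same approach as the paper: both construct $\Sigma_1$ by identifying $\alpha_1$ with $\alpha_2$ (the paper via an explicit OPCOFOM $h:\overline{U}\to\overline{\Delta}$ mapping $\alpha_1+\alpha_2$ onto the slit $[0,1]$, you via the equivalent quotient description of Remark~\ref{abs-two}), and then track how $\overline{n}$ changes under the identification. Your treatment is in fact more explicit than the paper's on two points the paper leaves terse: the $\overline{n}$ bookkeeping distinguishing the contributions of $\gamma^\circ$, $x_2$, and $x_1\sim x_3$, and the local angular analysis at $x_2$ via case (B1) of Remark~\ref{notation} showing that no new branch value can appear there, so that the only possible new branch value in (\ref{CVCV}) is $f(x_1)$.
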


\begin{proof}
$\label{2023-04-4-21:00 copy(13)}$Assume $\Gamma$ is a point. Then we may
understand the surface $\Sigma$ is obtained by a closed surface $\Sigma
_{1}=\left(  f_{1},S\right)  $ so that $f_{1}$ maps $[0,1]$ homeomorphically
onto $\gamma$ and that the interior of $\Sigma$ is the open surface
$f_{1}:S\backslash\lbrack0,1]\rightarrow S$ and the boundary of $\partial
\Sigma$ is $\left(  f_{1},[0,1]\right)  +(f_{1},[1,0])=\gamma-\gamma.$ From
this we have the result (i).

Assume $\Gamma$ is not a point, say, $\alpha_{1}+\alpha_{2}$ is a proper arc
of $\partial U$, then $(f,\overline{U})$ can be sewn\label{sew29} along
$\gamma$ to become a surface $\Sigma_{1}=(f_{1},\overline{\Delta}):$ There
exists an OPCOFOM $h:\overline{U}\rightarrow\overline{\Delta}$ such that
$h|_{\overline{U}\backslash\alpha_{1}+\alpha_{2}}\rightarrow\overline{\Delta
}\backslash\lbrack0,1]$ is an OPH, $\left(  h,\alpha_{1}\right)
=-[0,1]=[1,0],\left(  h,\alpha_{2}\right)  =[0,1],$
\[
f(h^{-1}\left(  y\right)  \cap\alpha_{1})=f(h^{-1}\left(  y\right)  \cap
\alpha_{2}),y\in\lbrack0,1],
\]
and
\[
f_{1}\left(  z\right)  =f\circ h^{-1}\left(  z\right)  ,z\in\overline{\Delta
}.
\]
It is clear that $\Sigma_{1}=\left(  f_{1},\overline{\Delta}\right)  $ is a
well defined surface and (\ref{LL1})--(\ref{CVCV}) hold. Moreover we have%
\[
\overline{n}\left(  \Sigma\right)  =\overline{n}\left(  \Sigma_{1}\right)
-\#E_{q}\cap\left[  \gamma\backslash\{f\left(  x_{1}\right)  \}\right]  ,
\]
which with the first equality in (\ref{AL}) implies (\ref{RR1}). Then, under
the special case $f\left(  x_{1}\right)  \in E_{q},$ (\ref{CVCV2}) and
(\ref{RR2}) are just speecial case (\ref{CVCV}) and (\ref{RR1}).
\end{proof}

\section{The spherical isoperimetric inequalities\label{sg}}

In this section we list some results follows from Bernstein's isoperimetric inequalities.

\begin{lemma}
\label{Rad1}\label{ber}$\label{2023-04-4-21:00 copy(14)}$Let $\Gamma$ be a
closed curve on $S.$

(i) (Bernstein \cite{Ber}) If $L=L(\Gamma)\leq2\pi,$ $\Gamma$ is simple and
contained in some hemisphere $S_{1}$ on $S,$ then the area $A$ of the Jordan
domain of $S_{1}$ bounded by $\Gamma$ satisfies
\[
A\leq2\pi-\sqrt{4\pi^{2}-L^{2}},
\]
with equality if and only if $\Gamma$ is a circle.

(ii) (Rad\'{o} \cite{R}) If $L(\Gamma)<2\pi$ and $\Gamma$ is simple, then
$\Gamma$ lies in some open hemisphere on $S.$

(iii) If $L(\Gamma)<2\pi$ and $\Gamma$ consists of finitely many circular arcs
on $S$, then $\Gamma$ also\label{6868-22} lies in some open hemisphere on $S$.
\end{lemma}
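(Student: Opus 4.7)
My plan focuses on statement (iii), as (i) is the classical Bernstein isoperimetric inequality and (ii) is Rad\'o's theorem, both of which I would invoke via the cited references.

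For (iii), I would argue by contradiction using spherical integral geometry. Suppose $L(\Gamma)<2\pi$ but $\Gamma$ is contained in no open hemisphere. For each pole $u\in S$, write $C_u=\{x\in S:u\cdot x=0\}$ for the great circle with pole $u$. Since $\Gamma$ is not contained in the open hemisphere $\{u\cdot x>0\}$, and (applying the same observation to $-u$) not in $\{u\cdot x<0\}$ either, $\Gamma$ meets both closed hemispheres bounded by $C_u$.

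I then claim that, except on a measure-zero set of $u\in S$, the image $\Gamma$ actually contains points strictly on both sides of $C_u$, and so the continuous signed function $t\mapsto u\cdot \gamma(t)$ on the parameter circle of the closed curve $\Gamma$ changes sign at least twice, giving $\#(\Gamma\cap C_u)\geq 2$. The exceptional set of $u$'s for which $\Gamma$ lies in a closed hemisphere $\{u\cdot x\geq 0\}$ and only touches $C_u$ tangentially from one side is measure-zero because of the piecewise-circular hypothesis: each of the finitely many arcs of $\Gamma$ is an arc of a circle, and each circle is tangent to a great circle $C_u$ only for $u$ lying in a one-parameter family, hence on a measure-zero subset of $S$. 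A similar argument handles tangencies at the finitely many vertices/cusps of the arc decomposition.

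Next I would invoke the spherical Crofton formula
\[
L(\Gamma)=\frac{1}{4}\int_{S}\#(\Gamma\cap C_u)\,dA(u),
\]
where $dA$ is the standard area element on $S$ (total area $4\pi$). Combined with $\#(\Gamma\cap C_u)\geq 2$ almost everywhere, this gives $L(\Gamma)\geq \frac{1}{4}\cdot 2\cdot 4\pi=2\pi$, contradicting the hypothesis $L(\Gamma)<2\pi$. The piecewise-circular assumption does three jobs: it makes $\Gamma\cap C_u$ finite for almost every $u$ (two distinct circles on $S$ meet in at most two points by Lemma \ref{anti-pod}, applied to each arc's containing circle against $C_u$), it makes the tangent exceptional set measure-zero, and it keeps the Crofton step completely elementary.

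The main obstacle I anticipate is the careful bookkeeping for the exceptional $u$'s, where $\Gamma$ might meet $C_u$ only at vertices, cusps, or tangencies rather than transversely; with only finitely many arcs and vertices the union of such exceptional poles forms a finite union of measure-zero subsets of $S$, and the Crofton integral is unaffected. Any alternative attempted proof by induction on the number of arcs (splitting $\Gamma$ at a self-intersection and applying (ii) to each simpler piece) runs into the combining difficulty of showing that the various sub-loops, each lying in some open hemisphere, all fit in a common one, which is why I prefer the direct integral-geometric argument above.
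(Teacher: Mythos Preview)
Your Crofton argument for (iii) is correct and constitutes a genuinely different route from what the paper indicates. The paper does not give a proof in the text: it records that (i) and (ii) are the cited classical results and that ``(iii) can be proved by (i) and (ii) as in \cite{Zh1}'', i.e., the intended argument reduces the non-simple piecewise-circular case to the simple case via the machinery of that earlier paper. Your proof bypasses (i) and (ii) entirely and goes straight through integral geometry, which is cleaner and in fact yields the stronger statement that any rectifiable closed curve on $S$ of length $<2\pi$ lies in an open hemisphere, with the piecewise-circular hypothesis used only to keep the Crofton integrand elementary.

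One small remark on your measure-zero bookkeeping: you can replace the tangency/vertex analysis by a shorter convexity observation. The set $K=\{u\in S:\Gamma\subset\{x:u\cdot x\ge 0\}\}$ is the intersection with $S$ of a closed convex cone in $\mathbb{R}^3$; if $u_0$ were interior to $K$ then small perturbations of $u_0$ would still lie in $K$, forcing $\Gamma\subset\{u_0\cdot x>0\}$, contrary to hypothesis. Hence the cone has empty interior, so it lies in a plane through the origin and $K$ is contained in a single great circle, automatically of measure zero. This handles all the exceptional $u$'s at once without invoking the piecewise-circular structure, and your Crofton inequality $L(\Gamma)\ge \tfrac14\cdot 2\cdot 4\pi=2\pi$ then gives the contradiction exactly as you wrote.
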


(i) and (ii) are known, and (iii) can be proved by (i) and (ii) as in
\cite{Zh1}.

\begin{lemma}
\label{arg}Let $f:\overline{\Delta}\rightarrow S\ $be an OPCOFOM. If
$f(\Delta)\subset S\backslash E_{q}$ and $f(\partial\Delta)$ lies in some open
disk $D$ in $S\backslash E_{q},$ then $f(\overline{\Delta})\subset D.$
\end{lemma}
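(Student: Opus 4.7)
The plan is to reduce the statement to a standard argument-principle computation by way of Stoilow's theorem. Using Theorem~\ref{st}, after precomposing $f$ with an orientation-preserving homeomorphism of $\overline{\Delta}$ I may assume $f$ is holomorphic on a neighborhood of $\overline{\Delta}$; this substitution preserves both the hypotheses and the conclusion. It then suffices to show that every point $p\in S\setminus D$ has no $f$-preimage in $\Delta$, since $p\notin D\supset f(\partial\Delta)$ already, and so $p\notin f(\overline{\Delta})$ will follow.

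Fix $p\in S\setminus D$ and compose $f$ with a M\"obius transformation $M$ sending $p$ to $0$. To turn $\tilde f=M\circ f$ into a genuinely holomorphic (not merely meromorphic) map on $\overline{\Delta}$, I arrange that $M^{-1}(\infty)\notin f(\overline{\Delta})$; such a pole-point exists by a quick use of the $E_q$-hypothesis. If all of $E_q$ lay in $f(\overline{\Delta})$, then each $a\in E_q$, being outside $D\supset f(\partial\Delta)$, would lie in $f(\Delta)$, contradicting $f(\Delta)\subset S\setminus E_q$. So some $q_0\in E_q$ is missed by $f$, and I take $M^{-1}(\infty)=q_0$. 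With this choice $\tilde f$ is holomorphic on $\overline{\Delta}$ and $\tilde f(\partial\Delta)\subset M(D)$, where $M(D)$ is a bounded simply connected open subset of $\mathbb{C}$ that does not contain $0=M(p)$.

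The argument principle now identifies the number of zeros of $\tilde f$ in $\Delta$ (equivalently, the $p$-preimages of $f$ in $\Delta$) with the winding number of $\tilde f(\partial\Delta)$ around $0$. A loop contained in the simply connected set $M(D)\subset\mathbb{C}\setminus\{0\}$ is null-homotopic there, so this winding number vanishes and $p$ has no preimage in $\Delta$, as required.

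I do not foresee any serious obstacle: the whole argument is a routine argument-principle computation, and the only delicate point is the M\"obius-transformation trick that upgrades $\tilde f$ to a holomorphic map. That step is what makes the (otherwise contextual-looking) hypothesis $f(\Delta)\subset S\setminus E_q$ actually useful, since it guarantees a missed value of $f$ that can be sent to $\infty$.
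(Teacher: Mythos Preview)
Your proof is correct and rests on the same idea as the paper's, namely the argument principle; the paper simply packages it more tersely by observing that if $f(\Delta)$ meets $S\setminus D$ at all then (since $f(\partial\Delta)\subset D$) it must cover the entire connected set $S\setminus D\supset E_q$, contradicting $f(\Delta)\subset S\setminus E_q$. Your Stoilow/M\"obius reduction and per-point winding-number computation are a valid, more explicit version of the same reasoning.
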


\begin{proof}
By the assumption, $f(\partial\Delta)\cap\left(  S\backslash D\right)
=\emptyset.$ If $f(\Delta)\cap\left(  S\backslash D\right)  \neq\emptyset,$
then by the argument principle we have $f(\Delta)\supset S\backslash D\supset
E_{q},$ contradicting the assumption $f(\Delta)\subset S\backslash E_{q}.$
\end{proof}

The following lemma is Lemma 3.5 in \cite{Zh1}.

\begin{lemma}
\label{ber1}For each $k=1,\dots,n,$ let $F_{k}\neq\emptyset\ $be a domain in a
hemisphere $S_{k}$ on $S$ which is enclosed by a finite number of Jordan
curves and let $l_{k}=L(\partial F_{k}).$ If $l=\sum_{k=1}^{n}l_{k}<2\pi\ $and
$D_{l}$ is a disk in some hemisphere on $S$ with $L(\partial D_{l})=l,$ then%
\[
A(F_{1})+\cdots+A(F_{n})\leq A(D_{l})=2\pi-\sqrt{4\pi^{2}-l^{2}},
\]
with equality if and only if $n=1$ and $F_{1}$ is also a disk.
\end{lemma}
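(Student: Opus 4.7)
The plan is to reduce the multi-component statement to Bernstein's single-curve isoperimetric inequality (Lemma \ref{Rad1}(i)) in two steps: first discard the ``holes'' of each $F_k$ to reduce to a Jordan domain in a hemisphere, then combine the individual bounds via a convexity argument. Throughout, I will work with the function
\[
\phi(\ell)=2\pi-\sqrt{4\pi^{2}-\ell^{2}},\qquad \ell\in[0,2\pi],
\]
so that $A(D_{\ell})=\phi(\ell)$ by the area formula stated in the introduction.

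First, fix $k$. Since $F_{k}$ lies in a hemisphere $S_{k}$ and is bounded by finitely many disjoint Jordan curves on $S$, exactly one of these curves, call it $\Gamma_{k}$, encloses in $S_{k}$ a Jordan domain $\widetilde{F}_{k}\supset F_{k}$ (the other boundary curves bound the ``holes'' of $F_{k}$ inside $\widetilde{F}_{k}$; the existence of such a distinguished outer component is a standard planar fact applied in the hemisphere). Hence $A(F_{k})\leq A(\widetilde{F}_{k})$ and $L(\Gamma_{k})\leq l_{k}<2\pi$. Applying Lemma \ref{Rad1}(i) to $\Gamma_{k}$ inside $S_{k}$ and using that $\phi$ is strictly increasing on $[0,2\pi]$, I obtain
\[
A(F_{k})\leq A(\widetilde{F}_{k})\leq \phi(L(\Gamma_{k}))\leq \phi(l_{k}),
\]
with equality only if the second boundary component is absent, $\Gamma_k$ is a circle, and $L(\Gamma_k)=l_k$.

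Second, I will use that $\phi$ is convex on $[0,2\pi)$ with $\phi(0)=0$. Convexity is immediate from $\phi'(\ell)=\ell/\sqrt{4\pi^{2}-\ell^{2}}$ being increasing. For any convex $\phi$ with $\phi(0)=0$ and $a,b\geq 0$ with $a+b<2\pi$, writing $a=\tfrac{a}{a+b}(a+b)+\tfrac{b}{a+b}\cdot 0$ and similarly for $b$, then adding, gives the super\-additivity $\phi(a)+\phi(b)\leq\phi(a+b)$, strictly unless $a=0$ or $b=0$ (since $\phi$ is strictly convex on $(0,2\pi)$). Iterating over $k=1,\dots,n$,
\[
\sum_{k=1}^{n}A(F_{k})\leq\sum_{k=1}^{n}\phi(l_{k})\leq \phi\Bigl(\sum_{k=1}^{n}l_{k}\Bigr)=\phi(l)=A(D_{l}),
\]
which is the desired inequality. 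Equality forces each step to be tight: by the super\-additivity step, all but one of the $l_{k}$ vanish, forcing $n=1$ (since $F_{k}\neq\emptyset$ implies $l_{k}>0$); and then the first step forces $F_{1}=\widetilde{F}_{1}$ to be a disk.

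The only real subtlety is the outer-boundary reduction in the first step: one must verify that when $F_{k}$ is bounded by several Jordan curves all of which lie in the hemisphere $S_{k}$, the curves admit a nesting and a unique outermost component enclosing $F_{k}$ in $S_{k}$. This is straightforward in the hemisphere since every Jordan curve there separates $S_{k}$ into an ``inside'' Jordan subdomain and an ``outside,'' and the components of $\partial F_{k}$ are pairwise disjoint; everything else is a direct application of Bernstein's inequality and the elementary convexity of $\phi$.
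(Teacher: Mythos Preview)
Your argument is correct. The paper does not actually prove this lemma: it simply cites it as Lemma~3.5 of \cite{Zh1}, so there is no in-paper proof to compare against. Your two-step reduction---filling in the holes of each $F_{k}$ to pass to the outer Jordan domain $\widetilde F_{k}$, applying Bernstein's inequality (Lemma~\ref{Rad1}(i)) to each, then using the strict superadditivity of $\phi(\ell)=2\pi-\sqrt{4\pi^{2}-\ell^{2}}$ coming from $\phi(0)=0$ and strict convexity---is the natural route and handles the equality case cleanly. One small remark on the nesting step you flagged: since $L(\Gamma_{k})\le l_{k}<2\pi$, Rad\'o's theorem (Lemma~\ref{Rad1}(ii)) already places each boundary curve in an open hemisphere, so the ``outermost component'' discussion can be carried out there without worrying about curves touching the equator of $S_{k}$.
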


The following result is a consequence of Theorem 3.6 in \cite{Zh1}.

\begin{lemma}
\label{good}Let $\Sigma=(f,\overline{U})\in\mathcal{F}$. Assume $f(U)$ is
contained in some open hemisphere and $L(f,\partial U)<2\pi.$ Then
\begin{equation}
A(f,U)\leq A(T)<L(f,\partial U), \label{a4}%
\end{equation}
where $T$ is a disk in some open hemisphere on $S$ with $L(\partial
T)=L(f,\partial U).$
\end{lemma}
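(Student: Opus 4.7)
The plan is to reduce the inequality $A(f,U) \leq A(T)$ to a direct application of Lemma \ref{ber1} via the multiplicity (level-set) decomposition of the image, and then to verify the elementary inequality $A(T) < L(f,\partial U)$ by a short calculation.

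By Theorem \ref{st}(iii) and Remark \ref{holomo}, I may assume $f$ is holomorphic on $\overline{U}$, since the quantities involved are invariant under equivalence. Since $f(U)$ lies in a fixed open hemisphere $S_0$, the multiplicity function $n(w) = \#(f^{-1}(w) \cap U)$ is bounded, taking values in $\{0, 1, \dots, N\}$ for some $N$. Setting
\[
V_k = \{w \in S_0 : n(w) \geq k\}, \quad k = 1, \dots, N,
\]
I obtain a nested chain $V_1 \supset V_2 \supset \cdots \supset V_N$; each connected component of each $V_k$ lies in the hemisphere $S_0$ and is enclosed by finitely many Jordan curves, and
\[
A(f,U) = \sum_{k=1}^N A(V_k).
\]

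The crucial ingredient is the coarea-type equality
\[
\sum_{k=1}^{N} L(\partial V_k) = L(f, \partial U) =: L,
\]
which reflects that as the parameter traverses $\partial U$ once, its image under $f$ sweeps once across each stratum $V_k \setminus V_{k+1}$. Granted this identity, Lemma \ref{ber1} applied to the full collection of connected components of $V_1, \dots, V_N$ (whose total boundary length equals $L < 2\pi$) yields
\[
A(f, U) = \sum_{k=1}^N A(V_k) \leq 2\pi - \sqrt{4\pi^2 - L^2} = A(T),
\]
which is the first inequality of \eqref{a4}. For the second inequality, the required relation $A(T) < L$ is equivalent to $2\pi - L < \sqrt{4\pi^2 - L^2}$; both sides are positive because $L < 2\pi$, so squaring gives the equivalent $(2\pi - L)^2 < 4\pi^2 - L^2$, which simplifies to $L < 2\pi$, the hypothesis.

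The main obstacle is justifying the boundary-length identity $\sum_k L(\partial V_k) = L(f, \partial U)$: one must account carefully for portions of $\partial V_k$ arising from critical values of $f$ in the interior and for points of $f(\partial U)$ where the image curve fails to be simple. The author appeals to Theorem 3.6 of \cite{Zh1}, where this decomposition is established; a self-contained derivation would first treat the generic case in which $f$ has no critical points on $\partial U$ and $f(\partial U)$ has no self-tangencies (so each $V_k$ has piecewise smooth boundary and the identity is immediate), and then extend to the general case by approximation, with local behavior at boundary branch points controlled by the normal forms of Lemma \ref{cov-1}.
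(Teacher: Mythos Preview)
Your level-set decomposition is a reasonable strategy, but the claimed equality
\[
\sum_{k=1}^N L(\partial V_k) = L(f,\partial U)
\]
is false in general; only the inequality $\le$ holds. Take $f(z)=z^2$ on $\overline{\Delta^+}$, which lies in $\mathcal{F}$: the image is the disk $\Delta$ covered once, so $V_1=\Delta$ and $V_k=\emptyset$ for $k\ge 2$, giving $\sum_k L(\partial V_k)=L(\partial\Delta)$, while $L(f,\partial\Delta^+)=L(\partial\Delta)+2L([0,1])$ because the diameter $[-1,1]$ maps to $[0,1]$ traversed twice with opposite orientations. The fold contributes to $L(f,\partial U)$ but produces no jump in the degree, hence no contribution to $\sum_k L(\partial V_k)$. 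Fortunately only the inequality is needed: with $l:=\sum_k L(\partial V_k)\le L<2\pi$, Lemma~\ref{ber1} gives $A(f,U)\le 2\pi-\sqrt{4\pi^2-l^2}\le 2\pi-\sqrt{4\pi^2-L^2}=A(T)$. (Also: define $n(w)$ as the degree $\deg_f(w)$, not the cardinality $\#f^{-1}(w)$; with the latter, interior branch values really do puncture the $V_k$, while with the former $\deg_f$ is locally constant off $f(\partial U)$ and your worry about ``portions of $\partial V_k$ arising from critical values in the interior'' disappears.)

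As you note, the paper does not prove Lemma~\ref{good} directly but cites Theorem~3.6 of \cite{Zh1}. Its own self-contained argument appears as Lemma~\ref{good-1}, and the route is genuinely different from yours. Instead of stratifying the image, the paper uses the area formula of Lemma~\ref{Al}: since $f(\overline U)$ lies in a hemisphere one has $\deg_f(\infty)=0$, hence $A(\Sigma)=A(\partial\Sigma)$, reducing the problem to bounding the signed area enclosed by the closed curve $\Gamma=\partial\Sigma$. That curve is then decomposed combinatorially into finitely many pieces $\gamma_i$, each either a Jordan curve or a folded arc $I_i-I_i$; the folded pieces contribute zero to $A(\Gamma)$, and Bernstein's inequality (Lemma~\ref{ber}) bounds each Jordan piece, after which Lemma~\ref{ber1} combines them. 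Your approach trades this curve decomposition for a region decomposition and the (harder to justify) length comparison above; the paper's approach sidesteps the level sets entirely, at the price of introducing the signed-area formula and the curve-splitting combinatorics. The second inequality $A(T)<L$ is handled identically in both approaches.
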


But the author of \cite{Zh1} did not proved that the equality in (\ref{a4})
holds only if $\Sigma$ is a simple disk with perimeter $L(f,\partial U).$ We
will improve this and give a self-contained proof after we introduce an area formula.

For a rectifiable Jordan curve $\gamma$ in $\mathbb{C}$ which is oriented
anticlockwise, the spherical area $A(\gamma)$ of the domain $D_{\gamma}$ in
$\mathbb{C}$ enclosed by $\gamma$ can be defined by%
\begin{equation}
A(\gamma)=\iint_{D_{\gamma}}\frac{4dx\wedge dy}{(1+z\overline{z})^{2}}%
=\frac{2}{i}\int_{\gamma}\frac{\overline{z}dz}{1+z\overline{z}}, \label{zza7}%
\end{equation}
since the exterior differential of $\frac{\overline{z}dz}{1+z\overline{z}}$
equals
\[
d\frac{\overline{z}dz}{1+z\overline{z}}=\frac{d\overline{z}\wedge
dz}{1+z\overline{z}}-\frac{z\overline{z}d\overline{z}\wedge dz}{\left(
1+z\overline{z}\right)  ^{2}}=\frac{d\overline{z}\wedge dz}{\left(
1+z\overline{z}\right)  ^{2}}=\frac{2idx\wedge dy}{\left(  1+z\overline
{z}\right)  ^{2}}.
\]

Note that the formula for $A(\gamma)$ with $\gamma\subset\mathbb{C}$ depends
on the orientation of $\gamma,$ which is positive when $\gamma$ is
anticlockwise, or is negative when $\gamma$ is clockwise.

If $\Sigma=(f,\overline{U})$ is a surface over $S$, then the area may not be
determined by the boundary $\partial\Sigma=\left(  f,\partial U\right)  $
only. But if $\infty\notin\partial\Sigma$, then $A(\Sigma)$ is determined by
$\partial\Sigma$ and the covering number $\deg_{f}(\infty)$ of $f,$ where
$\deg_{f}(\infty)$ is defined to be $\lim_{q_{n}\rightarrow\infty}%
\#f^{-1}(q_{n})$ in which $q_{n}$ is a sequence of regular values of $f$
converging to $\infty.$ That is to say, we have the following.

\begin{lemma}
\label{Al}$\label{2023-04-4-21:00 copy(11)}$Let $\Sigma=\left(  f,\overline
{U}\right)  $ be a surface in which $U$ is a Jordan domain enclosed by a
finite number of piecewise smooth Jordan curves and $\partial\Sigma=\left(
f,\partial U\right)  $ is also piecewise smooth. Assume that $\infty
\notin\partial\Sigma.$ Then
\[
A(\Sigma)=4\pi\deg_{f}(\infty)+A(\partial\Sigma),
\]
where
\[
A(\partial\Sigma)=\frac{2}{i}\int_{\partial\Sigma}\frac{\overline{w}%
dw}{1+|w|^{2}}=\frac{2}{i}\int_{\partial U}\frac{\overline{f(z)}%
df(z)}{1+|f(z)|^{2}}.
\]

\end{lemma}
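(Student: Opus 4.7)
The plan is to apply Stokes' theorem after excising small disks around the poles of $f$. By Theorem \ref{st}(iii) and Remark \ref{holomo}, I may reparametrize so that $f$ is holomorphic on $\overline{U}$, since both $A(\Sigma)$ and the boundary curve $\partial\Sigma$ (hence the integral $A(\partial\Sigma)$) are invariant under OPH reparametrization. Because $\infty\notin\partial\Sigma$, the preimage $f^{-1}(\infty)$ consists of finitely many poles $p_{1},\dots,p_{m}\in U$ with orders $k_{1},\dots,k_{m}$ summing to $\deg_{f}(\infty)$. Choose $\varepsilon>0$ so small that the closed disks $\overline{D_{\varepsilon}(p_{j})}$ are pairwise disjoint and contained in $U$, and set $U_{\varepsilon}=U\setminus\bigcup_{j=1}^{m}\overline{D_{\varepsilon}(p_{j})}$.

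The differential computation displayed just before the lemma shows that the spherical area form on $\mathbb{C}$ is exact with primitive $\eta=\frac{2}{i}\frac{\overline{w}\,dw}{1+|w|^{2}}$. Pulling back by $f$ and applying Stokes' theorem on $U_{\varepsilon}$ (where $f$ is holomorphic and finite-valued, and the finitely many branch points interior to $U_{\varepsilon}$ cause no difficulty because $f^{\ast}d\eta$ is a bounded nonnegative top-form across them) gives
\[
\int_{U_{\varepsilon}}f^{\ast}\left(\tfrac{4\,dx\wedge dy}{(1+|w|^{2})^{2}}\right)=\int_{\partial U}f^{\ast}\eta-\sum_{j=1}^{m}\int_{\partial D_{\varepsilon}(p_{j})}f^{\ast}\eta,
\]
where the inner boundaries are oriented counterclockwise. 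As $\varepsilon\to 0$, the left-hand side increases to $A(\Sigma)$ by monotone convergence (the $k_{j}$-sheeted image $f(D_{\varepsilon}(p_{j}))$ is a spherical neighborhood of $\infty$ whose area tends to $0$), and the first term on the right is $A(\partial\Sigma)$ by the definition stated in the lemma.

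The heart of the argument is the local analysis at each pole. Writing $f(z)=c_{j}(z-p_{j})^{-k_{j}}\bigl(1+O(z-p_{j})\bigr)$, one computes
\[
\frac{\overline{f(z)}}{1+|f(z)|^{2}}=\frac{1}{f(z)}-\frac{1}{f(z)(1+|f(z)|^{2})},
\]
so $f^{\ast}\eta=\frac{2}{i}\frac{f'(z)}{f(z)}\,dz+R(z)\,dz$ with $|R(z)|=O(|z-p_{j}|^{2k_{j}-1})$ on $\partial D_{\varepsilon}(p_{j})$; the remainder integrates to $O(\varepsilon^{2k_{j}})\to 0$. The leading term is the logarithmic derivative of $f$, whose residue at $p_{j}$ equals $-k_{j}$, contributing $\frac{2}{i}\cdot 2\pi i\cdot(-k_{j})=-4\pi k_{j}$. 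Assembling,
\[
A(\Sigma)=A(\partial\Sigma)+4\pi\sum_{j=1}^{m}k_{j}=A(\partial\Sigma)+4\pi\deg_{f}(\infty),
\]
as desired. The principal technical obstacle is the two limit interchanges at $\varepsilon\to 0$: the area side is handled by monotone convergence combined with the shrinking-area estimate near $\infty$, while the boundary side requires the explicit expansion above to separate the $d\log f$ contribution (which captures the degree at $\infty$) from the error term that vanishes on the excised circles.
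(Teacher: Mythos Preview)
Your proof is correct and follows essentially the same approach as the paper: reduce to the holomorphic case via Stoilow's theorem, excise small disks around the poles, apply Stokes' theorem to the primitive $\eta$ of the spherical area form, and extract the $4\pi k_{j}$ contribution at each pole from the logarithmic derivative via the argument principle. Your algebraic decomposition $\frac{\overline{f}}{1+|f|^{2}}=\frac{1}{f}-\frac{1}{f(1+|f|^{2})}$ is exactly the paper's rewriting $\frac{\overline{f}f'}{1+|f|^{2}}=\frac{|f|^{2}}{1+|f|^{2}}\,d\log f$ (since $\frac{|f|^{2}}{1+|f|^{2}}=1-\frac{1}{1+|f|^{2}}\to 1$ near a pole), just presented with a more explicit error bound; one minor remark is that the lemma allows $U$ to be bounded by several Jordan curves, so Theorem~\ref{st}(ii) rather than (iii) is the appropriate citation, but this does not affect the argument.
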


\begin{proof}
This is essentially follows from Argument principle of meromorphic functions.
By Theorem \ref{st} (ii), we may assume $f$ is meromorphic on $\overline{U}.$

Let $p_{1},\dots,p_{k}$ be all poles, which are all distinct points of
$f^{-1}(\infty),$ of $f$ with multiplicities $m_{1},\dots,m_{k}.$ Then
$\{p_{j}\}\subset U$ and $\sum_{j=1}^{k}m_{j}=\deg_{f}(\infty).$ For each
$j=1,\dots,k,$ let $C_{j,\varepsilon}$ be small circles centered at $p_{j}$
with radius $\varepsilon.$ Then we have%
\[
\frac{2}{i}\int_{\partial U-C_{1,\varepsilon}-\cdots-C_{k,\varepsilon}}%
\frac{\overline{f(z)}f^{\prime}(z)dz}{1+|f(z)|^{2}}=%
{\displaystyle\iint\limits_{U\backslash\cup_{j=1}^{k}D\left(  p_{j}%
,\varepsilon\right)  }}
\frac{4\left\vert f^{\prime}(z)\right\vert ^{2}dx\wedge dy}{\left(
1+|f(z)|^{2}\right)  ^{2}}\rightarrow A(\Sigma)\ \mathrm{as\ }\varepsilon
\rightarrow0.
\]

For each $j=1,\dots,k,$ we have by Argument principle that%
\begin{align*}
&  \lim_{\varepsilon\rightarrow0}\frac{2}{i}\int_{-C_{j,\varepsilon}}%
\frac{\overline{f(z)}f^{\prime}(z)dz}{1+|f(z)|^{2}}=\lim_{\varepsilon
\rightarrow0}\frac{2}{i}\int_{-C_{j,\varepsilon}}\frac{\left\vert
f(z)\right\vert ^{2}d\log f(z)}{1+|f(z)|^{2}}\\
&  =\lim_{\varepsilon\rightarrow0}\frac{2}{i}\int_{-C_{j,\varepsilon}}d\log
f(z)=4\pi m_{j}.
\end{align*}
Then the conclusion follows from $\sum m_{j}=\deg_{f}(\infty).$
\end{proof}

\begin{corollary}
\label{A1}$\label{2023-04-4-21:00 copy(10)}$Assume $\Sigma_{j}=\left(
f_{j},\overline{\Delta}\right)  ,j=1,2,$ are two surfaces with $\partial
\Sigma_{1}=\partial\Sigma_{2}$ and $\partial\Sigma_{1}$ is piecewise smooth.
Then $A(\Sigma_{1})-A(\Sigma_{2})=n_{0}4\pi,$ where $n_{0}$ is an integer.
\end{corollary}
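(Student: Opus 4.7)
The plan is to derive the corollary directly from Lemma \ref{Al}, which expresses the area of a surface as a sum of a boundary integral plus $4\pi$ times the covering degree over $\infty$. The only subtlety is that Lemma \ref{Al} requires $\infty \notin \partial\Sigma$, whereas the hypothesis of the corollary only assumes that $\partial\Sigma_1 = \partial\Sigma_2$ is piecewise smooth.

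First I would reduce to the case $\infty \notin \partial\Sigma_1$. If $\infty$ lies on the common boundary, choose a rotation $\psi$ of $S$ with $\psi(\infty) \notin \psi(\partial\Sigma_1)$, and pass to the surfaces $\tilde{\Sigma}_j = (\psi \circ f_j, \overline{\Delta})$. Since rotations of $S$ are spherical isometries, $A(\tilde{\Sigma}_j) = A(\Sigma_j)$, and the common boundary condition is preserved. Hence proving the corollary for $\tilde\Sigma_j$ proves it for $\Sigma_j$.

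Next, with $\infty \notin \partial\Sigma_1 = \partial\Sigma_2$, I would apply Lemma \ref{Al} to each surface to obtain
\[
A(\Sigma_j) = 4\pi \deg_{f_j}(\infty) + A(\partial\Sigma_j), \qquad j=1,2,
\]
where $A(\partial\Sigma_j) = \frac{2}{i}\int_{\partial\Sigma_j} \frac{\overline{w}\, dw}{1+|w|^2}$ depends only on the boundary curve (with orientation), not on the filling. Since $\partial\Sigma_1 = \partial\Sigma_2$ as oriented piecewise smooth curves on $\mathbb{C}$, we have $A(\partial\Sigma_1) = A(\partial\Sigma_2)$.

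Subtracting yields
\[
A(\Sigma_1) - A(\Sigma_2) = 4\pi\bigl(\deg_{f_1}(\infty) - \deg_{f_2}(\infty)\bigr) = 4\pi n_0,
\]
where $n_0 = \deg_{f_1}(\infty) - \deg_{f_2}(\infty) \in \mathbb{Z}$. This completes the proof. There is no real obstacle here — the content is entirely in Lemma \ref{Al}; this corollary is a one-line consequence modulo the harmless rotation argument ensuring $\infty$ stays off the boundary.
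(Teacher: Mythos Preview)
Your proof is correct and takes essentially the same approach as the paper: reduce to the case $\infty\notin\partial\Sigma_1$ by a rotation (using that rotations are spherical isometries), then apply Lemma~\ref{Al} and subtract. One small slip: you wrote ``$\psi(\infty)\notin\psi(\partial\Sigma_1)$,'' which is equivalent to $\infty\notin\partial\Sigma_1$ and hence vacuous in the case you are handling; you mean $\infty\notin\psi(\partial\Sigma_1)$.
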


\begin{proof}
When $\infty\notin\partial\Sigma_{1},$ this follows from the previous lemma
directly. When $\infty\in\partial\Sigma_{1}$ we can consider the surfaces
$\Sigma_{j}^{\prime}=\left(  \varphi\circ f_{j},\overline{\Delta}\right)
,j=1,2,$ where $\varphi$ is a rotation of $S$ such that $\infty\notin
\partial\Sigma_{j}^{\prime},j=1,2.$ It is clear that $A(\Sigma_{j}%
)=A(\Sigma_{j}^{\prime}),$ and then we can apply the previous lemma to
$\Sigma_{j}^{\prime},j=1,2,$ to obtain the conclusion.
\end{proof}

\begin{lemma}
\label{f_t}$\label{2023-04-4-21:00 copy(9)}$Let $\Gamma:\partial
\Delta\rightarrow S\backslash\{\infty\}$ be a closed curve in $\mathbb{C}$
consisted of a finitely many simple circular arcs. If $\varphi_{t},$
$t\in\lbrack0,1],$ is a family of rotations on $S$ which is continuous with
respect to $t$ and $\varphi_{t}\left(  \Gamma\right)  \cap\{\infty
\}=\emptyset\ $for all $t\in\lbrack0,1].$ Then%
\[
A(\varphi_{t}\circ\Gamma)=A(\Gamma).
\]

\end{lemma}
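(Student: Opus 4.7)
The plan is to derive, for any single rotation $\varphi$ of $S$ with $\varphi(\Gamma)\not\ni\infty$, the closed-form identity
\[
A(\varphi\circ\Gamma)-A(\Gamma)=-4\pi\,n(\Gamma,\varphi^{-1}(\infty)),
\]
where $n(\Gamma,p)$ denotes the winding number of $\Gamma$ about $p\in\mathbb{C}\setminus\Gamma$ (with the convention $n(\Gamma,\infty)=0$), and then deduce from local constancy of winding numbers that the right-hand side is constant along the continuous family $\varphi_{t}$.

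The principal computation is as follows. I would parametrize a rotation via stereographic projection as the unitary M\"{o}bius transformation $\varphi(w)=(\alpha w-\bar{\beta})/(\beta w+\bar{\alpha})$ with $|\alpha|^{2}+|\beta|^{2}=1$. The unitarity gives the two identities
\[
d\varphi(w)=\frac{dw}{(\beta w+\bar{\alpha})^{2}},\qquad 1+|\varphi(w)|^{2}=\frac{1+|w|^{2}}{|\beta w+\bar{\alpha}|^{2}},
\]
whose substitution into the integral formula \eqref{zza7} for $A(\varphi\circ\Gamma)$ and a direct simplification produce
\[
A(\varphi\circ\Gamma)-A(\Gamma)=\frac{-2\beta}{i}\int_{\Gamma}\frac{dw}{\beta w+\bar{\alpha}}.
\]
This algebraic reduction, which rests squarely on $|\alpha|^{2}+|\beta|^{2}=1$, is the main computational step. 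When $\beta\neq0$ the integrand has a single simple pole at $-\bar{\alpha}/\beta=\varphi^{-1}(\infty)$, so the residue theorem yields $\int_{\Gamma}dw/(\beta w+\bar{\alpha})=(2\pi i/\beta)\,n(\Gamma,\varphi^{-1}(\infty))$ and the asserted identity follows at once. When $\beta=0$ the integrand is a closed holomorphic form on $\mathbb{C}$ whose integral along the closed curve $\Gamma$ vanishes; this is consistent with $\varphi^{-1}(\infty)=\infty$ and $n(\Gamma,\infty)=0$.

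To finish, I apply the identity to the family $\varphi_{t}$. The hypothesis $\varphi_{t}(\Gamma)\cap\{\infty\}=\emptyset$ is exactly $\varphi_{t}^{-1}(\infty)\notin\Gamma$, so the continuous path $t\mapsto\varphi_{t}^{-1}(\infty)$ lies in the open set $S\setminus\Gamma$ and stays within a single connected component. Since $n(\Gamma,\cdot)$ is locally constant on $S\setminus\Gamma$, the integer $n(\Gamma,\varphi_{t}^{-1}(\infty))$ is constant in $t$; under the natural interpretation that the family begins at $\varphi_{0}=\mathrm{id}$, one has $\varphi_{0}^{-1}(\infty)=\infty$ in the unbounded component where the winding number vanishes, so the constant is $0$ and $A(\varphi_{t}\circ\Gamma)=A(\Gamma)$ for every $t\in[0,1]$. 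The hardest piece of the argument is the algebraic collapse in the first step; once the integrand reduces to $dw/(\beta w+\bar{\alpha})$, the rest is the residue theorem plus the elementary fact that a continuous path in the open set $S\setminus\Gamma$ cannot leave its initial connected component.
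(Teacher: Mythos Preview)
Your proof is correct and takes a genuinely different route from the paper's. The paper argues geometrically: it writes $A(\Gamma)=\sum_{j} n_{V_j}A(V_j)$ as a sum over the bounded components $V_j$ of $\mathbb{C}\setminus\Gamma$, weighted by winding number, and then observes that a rotation $\varphi_t$ sends each $V_j$ to a bounded component $\varphi_t(V_j)$ of $\mathbb{C}\setminus\varphi_t(\Gamma)$ with the same area and index. Your approach is analytic: you substitute the unitary M\"obius form of a rotation directly into the line integral \eqref{zza7}, collapse the integrand to $-\beta\,dw/(\beta w+\bar\alpha)$, and read off the exact identity $A(\varphi\circ\Gamma)-A(\Gamma)=-4\pi\,n(\Gamma,\varphi^{-1}(\infty))$ via the residue theorem. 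This yields strictly more: a closed formula valid for \emph{any} single rotation avoiding $\infty$, from which the lemma follows by local constancy of the winding number along the path $t\mapsto\varphi_t^{-1}(\infty)$. The paper's argument is shorter and avoids the algebra, but yours makes transparent exactly why jumps of size $4\pi$ occur (cf.\ Lemma~\ref{Jordan-1}), and you are right to flag the implicit hypothesis $\varphi_0=\mathrm{id}$: without it the conclusion $A(\varphi_t\circ\Gamma)=A(\Gamma)$ can fail (take $\varphi_t\equiv\varphi$ constant with $\varphi^{-1}(\infty)$ inside $\Gamma$), and the paper's claim that each $\varphi_t(V_j)$ is a \emph{bounded} component uses the same assumption tacitly.
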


\begin{proof}
For each $a\in\mathbb{C}\backslash\Gamma$ let%
\[
n\left(  \Gamma,a\right)  =\frac{1}{2\pi i}\int_{\Gamma}\frac{dz}{z-a}.
\]
Then for each component $V$ of $\mathbb{C}\backslash\Gamma,$ $n\left(
\Gamma,a\right)  $ is a constant integer $n_{V}=n\left(  \Gamma,a\right)  $
for all $a\in V,$ and then we can write%
\[
n_{V}=n\left(  \Gamma,V\right)  .
\]
We call $n_{V}=n\left(  \Gamma,V\right)  $ the index of $V$ with respect to
$\Gamma.$ It is clear that the index of the unbounded component is zero. Let
$V_{1},\dots,V_{m}$ be all distinct bounded components of $\mathbb{C}%
\backslash\Gamma.$ Then we have%
\[
A(\Gamma)=\sum_{j=1}^{m}n_{V_{j}}A(V_{j}).
\]
By the assumption it is clear that for each $t\in\lbrack0,1],$ $V_{t,j}%
=\varphi_{t}\left(  V_{j}\right)  ,j=1,\dots,m,$ are all distinct bounded
components of $\mathbb{C}\backslash\varphi_{t}(\Gamma),$ and we have%
\[
n_{V_{t,j}}=n\left(  \varphi_{t}\left(  \Gamma\right)  ,V_{j}\right)
=n_{V_{j}},A(V_{t,j})=A(V_{j}),t\in\lbrack0,1].
\]
Thus%
\[
A(\varphi_{t}\left(  \Gamma\right)  )=\sum_{j=1}^{m}n_{V_{t,j}}A(V_{t,j}%
)=\sum_{j=1}^{m}n_{V_{j}}A(V_{j})=A(\Gamma),t\in\lbrack0,1].
\]

\end{proof}

For any piecewise smooth curve $\Gamma=\left(  f,\partial\Delta\right)  $ with
$\infty\notin\Gamma$ and any rotation $\varphi$ of the sphere $S$ with
$\infty\notin\varphi\left(  \Gamma\right)  ,$ $A(\Gamma)$ need not equals
$A(\left(  \varphi\circ f,\partial\Delta\right)  ).$ For example, for the
family of congruent circles $C_{x}=\partial D\left(  x,\frac{\pi}{4}\right)  $
in $S\backslash\{\infty\}$ oriented anticlockwise, we have $A(C_{x})=A(C_{0})
$ when $d\left(  x,\infty\right)  >\frac{\pi}{4},$ but $A(C_{x})=4\pi
-A(C_{0})$ when $d\left(  x,\infty\right)  <\frac{\pi}{4}.$ In general we have

\begin{lemma}
\label{Jordan-1}Let $\Gamma$ be a piecewise smooth Jordan curve in
$\mathbb{C}$ oriented anticlockwise. Then we have

(i) $0<A(\Gamma)=A(D)<4\pi,$ where $D$ is the Jordan domain in $\mathbb{C}$
bounded by $\Gamma.$

(ii) For any rotation $\varphi$ of $S$ with $\infty\notin\varphi\left(
\Gamma\right)  $,
\[
A(\varphi(\Gamma))=\left\{
\begin{array}
[c]{l}%
A(\Gamma),\mathrm{\ \ \ \ \ \ if\ }\infty\notin\varphi\left(  D\right) \\
A(\Gamma)-4\pi,\mathrm{\ if\ }\infty\in\varphi\left(  D\right)
\end{array}
\right.
\]

\end{lemma}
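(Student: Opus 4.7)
The plan is to deduce both parts from the area formula in Lemma \ref{Al}, applied to the surface $\Sigma_\varphi=(\varphi,\overline{D})$, where $\overline{D}\subset\mathbb{C}$ is the closed bounded Jordan domain enclosed by $\Gamma$ and $\varphi$ ranges over rotations of $S$ (including $\varphi=\mathrm{id}$). The hypotheses of Lemma \ref{Al} are satisfied: $D$ is a Jordan domain with piecewise smooth boundary $\Gamma$, the restriction $\varphi|_{\overline{D}}$ is an OPH into $S$ (hence an OPCOFOM), and $\partial\Sigma_\varphi=(\varphi,\Gamma)=\varphi(\Gamma)$ avoids $\infty$ by hypothesis. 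Since $\varphi$ is an isometry of $S$, one has $A(\Sigma_\varphi)=A(\varphi(D))=A(D)$, the spherical area of $D$, independent of the admissible choice of $\varphi$. Lemma \ref{Al} therefore yields the master identity
\[
A(D)=4\pi\,\deg_\varphi(\infty)+A(\varphi(\Gamma)).
\]

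Part (i) is the special case $\varphi=\mathrm{id}$. Since $D\subset\mathbb{C}$ is bounded, $\infty\notin\overline{D}$, so $\deg_{\mathrm{id}}(\infty)=0$ and the master identity reduces to $A(\Gamma)=A(D)$. The bounds $0<A(\Gamma)<4\pi$ then follow because $D$ is a nonempty proper open subset of $S$, so $0<A(D)<A(S)=4\pi$. For part (ii), use that $\varphi\colon S\to S$ is a bijection, whence $\deg_\varphi(\infty)=\#(\varphi^{-1}(\infty)\cap D)\in\{0,1\}$ and equals $1$ precisely when $\varphi^{-1}(\infty)\in D$, i.e.\ when $\infty\in\varphi(D)$. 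Substituting $A(D)=A(\Gamma)$ from part (i) into the master identity and splitting into the two cases delivers the formula claimed in (ii).

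There is essentially no obstacle; the statement is a short bookkeeping consequence of Lemma \ref{Al}, whose proof via the argument principle has already been given. The one geometric point worth flagging is the following: when $\varphi$ sends $\infty$ into $\varphi(D)$, the induced orientation of $\varphi(\Gamma)$, viewed as a curve in $\mathbb{C}$, becomes clockwise, and by the remark following (\ref{zza7}) this flips the sign of $A(\varphi(\Gamma))$ relative to the spherical area of the bounded planar component of its complement; the $-4\pi$ correction is precisely this degree term, which Lemma \ref{Al} automates.
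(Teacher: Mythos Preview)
Your proof is correct. You apply Lemma \ref{Al} to the surface $\Sigma_\varphi=(\varphi,\overline{D})$ and read off both parts from the single identity $A(D)=4\pi\,\deg_\varphi(\infty)+A(\varphi(\Gamma))$. The paper takes a different route: it argues directly from the winding-number decomposition $A(\gamma)=\sum_j n_{V_j}A(V_j)$ (as in the proof of Lemma \ref{f_t}), tracking how the index of each complementary component changes under the rotation --- in particular, when $\infty\in\varphi(D)$ the bounded component becomes $\varphi(D_\infty)$ with index $-1$, giving $A(\varphi(\Gamma))=-A(D_\infty)=A(D)-4\pi$. Your approach is more streamlined because Lemma \ref{Al} has already packaged the argument-principle bookkeeping; the paper's approach is more self-contained at this point, since it only needs the elementary index identity and the fact that rotations preserve spherical area, without invoking the general surface-area formula.
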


\begin{proof}
$\label{2023-04-4-21:00 copy(8)}\Gamma$ divides $\overline{\mathbb{C}}$ into
two components $D$ and $D_{\infty},$ where $D_{\infty}$ contains $\infty.$ It
is clear that $A\left(  \varphi(D)\right)  =A(D)$ and $A(\varphi\left(
D_{\infty}\right)  )=A(D_{\infty}).$ If $\varphi\left(  D\right)  $ does not
contains $\infty,$ then
\[
n_{\varphi(D)}=n\left(  \varphi\left(  \Gamma\right)  ,\varphi\left(
D\right)  \right)  =n\left(  \Gamma,D\right)  =n_{D}=1
\]
and
\[
A(\varphi(\Gamma))=A(\varphi(D))=A(D)=A(\Gamma).
\]
If $\infty\in\varphi\left(  D\right)  ,$ then $\infty\notin\varphi\left(
D_{\infty}\right)  $ and $n\left(  \varphi\left(  \Gamma\right)
,\varphi\left(  D_{\infty}\right)  \right)  =-1.$ Thus we have
\[
A(\varphi\left(  \Gamma\right)  )=-A\left(  \varphi\left(  D_{\infty}\right)
\right)  =-A(D_{\infty})=A(D)-4\pi=A(\Gamma)-4\pi.
\]

\end{proof}

Now we prove the following lemma.

\begin{lemma}
\label{A of curve}For $j=0,1,$ let $\Gamma_{j}=\left(  f_{j},\partial
\Delta\right)  $ be a closed curve on $S$ consisted of a finite number of
simple circular arcs, such that $\Gamma_{j}$ is contained in some open
hemisphere $S_{j}$ on $S$ with $S_{j}\subset\mathbb{C}\ $and there exists a
rotation $\varphi$ of $S$ such that $\varphi\circ f_{0}=f_{1}.$ Then
$A(\Gamma_{1})=A(\Gamma_{0}).$
\end{lemma}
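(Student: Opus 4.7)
The plan is to reduce to Lemma \ref{f_t} by constructing a continuous family of rotations $\psi_t \in SO(3)$, $t\in[0,1]$, with $\psi_0 = \mathrm{id}$, $\psi_1 = \varphi$, and $\infty \notin \psi_t(\Gamma_0)$ for every $t$. Given such a family, Lemma \ref{f_t} applied to $\Gamma_0$ immediately gives $A(\psi_t \circ f_0, \partial\Delta) \equiv A(\Gamma_0)$; evaluating at $t=1$ and using $\psi_1 \circ f_0 = \varphi \circ f_0 = f_1$ yields $A(\Gamma_1) = A(\Gamma_0)$.

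The condition $\infty \notin \psi_t(\Gamma_0)$ is equivalent to $\psi_t^{-1}(\infty) \notin \Gamma_0$. The map $\pi : SO(3) \to S$ sending $\psi \mapsto \psi^{-1}(\infty)$ is a principal $SO(2)$-bundle (since $SO(3)$ acts transitively on $S$ with point-stabilizer $SO(2)$) and hence a Hurewicz fibration. So to construct $\psi_t$ I will first produce a continuous path $p:[0,1] \to S \setminus \Gamma_0$ with $p(0) = \infty$ and $p(1) = \varphi^{-1}(\infty)$, lift it via $\pi$ to a path $\psi_t$ in $SO(3)$ starting at $\mathrm{id}$, and then connect the endpoint $\psi_1$ to $\varphi$ through the fiber $\pi^{-1}(\varphi^{-1}(\infty))$, which is a right coset of $\mathrm{Stab}(\infty) \cong SO(2) \cong S^{1}$ and hence path-connected; during this second leg $\pi(\psi_t) \equiv \varphi^{-1}(\infty) \notin \Gamma_0$, so the avoidance condition persists.

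The crux is therefore to show that $\infty$ and $\varphi^{-1}(\infty)$ lie in the same connected component of $S \setminus \Gamma_0$. Let $U_\infty$ and $U'_\infty$ denote the components of $S \setminus \Gamma_0$ and $S \setminus \Gamma_1$ containing $\infty$, respectively. Since $\Gamma_0$ is compact and contained in the open hemisphere $S_0$, we have $d(\Gamma_0, \partial S_0) > 0$, so $U_\infty$ contains not only the closed complementary hemisphere $S \setminus S_0$ (of area $2\pi$) but also a thin open collar of the equator $\partial S_0$ lying in $S_0$ and disjoint from $\Gamma_0$; hence $A(U_\infty) > 2\pi$, and symmetrically $A(U'_\infty) > 2\pi$. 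Now $\varphi(U_\infty)$ is a component of $S \setminus \Gamma_1$ with $A(\varphi(U_\infty)) = A(U_\infty) > 2\pi$; if $\varphi(U_\infty) \neq U'_\infty$ they would be disjoint subsets of $S$, yielding the contradiction $4\pi = A(S) \geq A(\varphi(U_\infty)) + A(U'_\infty) > 4\pi$. Therefore $\varphi(U_\infty) = U'_\infty$, so $\varphi^{-1}(\infty) \in U_\infty$, and any continuous path in $U_\infty$ from $\infty$ to $\varphi^{-1}(\infty)$ supplies the required $p(t)$.

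The main obstacle is the strict inequality $A(U_\infty) > 2\pi$, which in turn rests on $\Gamma_0$ lying in an \emph{open} hemisphere (so that, being compact, it is positively separated from the equator). Without this strict separation the argument degenerates, and one cannot rule out the pathological configuration in which $\varphi$ swaps the ``inside'' and ``outside'' components of $\Gamma_0$, making any path-lifting in $S \setminus \Gamma_0$ impossible.
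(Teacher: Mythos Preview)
Your proof is correct and follows the same overall strategy as the paper: build a continuous family of rotations $\psi_t$ from $\mathrm{id}$ to $\varphi$ with $\infty\notin\psi_t(\Gamma_0)$ and then invoke Lemma~\ref{f_t}. The paper's proof simply asserts that such a family exists ``by the assumption'' and then applies Lemma~\ref{f_t}, without justifying why the path of rotations can be chosen to avoid $\infty$.

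Your contribution is precisely to fill this gap. The area argument showing $A(U_\infty)>2\pi$ and $A(U'_\infty)>2\pi$, and hence $\varphi(U_\infty)=U'_\infty$, is a clean way to see that $\infty$ and $\varphi^{-1}(\infty)$ lie in the same component of $S\setminus\Gamma_0$; combined with the fibration $SO(3)\to S$, $\psi\mapsto\psi^{-1}(\infty)$, and path-connectedness of its $SO(2)$-fibers, this rigorously produces the required homotopy. Your observation that the \emph{openness} of the hemispheres is essential for the strict inequality (and hence for excluding the ``component-swapping'' scenario) is exactly the point the paper's one-line assertion glosses over.
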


It is clear that the circle $\Gamma_{1}=\partial D\left(  0,\frac{\pi}%
{4}\right)  $ and $\Gamma_{2}=-\partial D\left(  \infty,\frac{\pi}{4}\right)
$ are both in $\mathbb{C}$ and oriented anticlockwise, but they do not satisfy
the hypothesis of the lemma, since $\Gamma_{2}$ can not be contained in any
open hemisphere which does not contain $\infty.$

\begin{proof}
By the assumption, there exists a family $\varphi_{t},t\in\lbrack0,1],$ of
rotations of $S$ so that $\varphi_{t}\left(  z\right)  $ is continuous for
$\left(  z,t\right)  \in S\times\lbrack0,1],$ $\varphi_{0}=id$ and
$\varphi_{1}=\varphi,$ and the family $\Gamma_{t}\left(  z\right)
=\varphi_{t}\circ\Gamma_{1}(z),z\in\partial\Delta,$ never meet $\infty.$ This
implies that $A\left(  \Gamma_{t}\right)  $ is locally invariant for all
$t\in\lbrack0,1].$ Thus we have $A(\Gamma_{0})=A(\Gamma_{1}).$
\end{proof}

Now we can enhance Lemma \ref{good} as follows.

\begin{lemma}
\label{good-1}$\label{2023-04-4-21:00 copy(7)}$Let $\Gamma=\left(
f,\partial\Delta\right)  \ $be a closed curve consisted of finitely many
simple circular arcs such that $\partial\Sigma$ is contained in some open
hemisphere $S_{1}$ in $\mathbb{C}$ and $L(\Gamma)<2\pi.$ Then the following hold.

(i) $A(\Gamma)\leq A(T),$ where $T$ is a disk in some hemisphere on $S$ with
perimeter $L,$ with equality holding if and only if $\Gamma$ is a circle
oriented anticlockwise (say, $\Gamma$ is a convex circle).

(ii) If, in addition, $\Gamma$ is the boundary of a surface $\Sigma=\left(
f,\overline{\Delta}\right)  \in\mathcal{F}$ with $f(\overline{\Delta})\subset
S_{1}\subset\mathbb{C},$ then $A(\Sigma)\leq A(T)<L(f,\partial U),$ with
equality holding if and only if $\Sigma$ is a simple disk.
\end{lemma}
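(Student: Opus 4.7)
The plan is to prove (i) via the area formula of Lemma \ref{f_t} combined with Bernstein's isoperimetric inequality (in the form of Lemma \ref{ber1}), and then derive (ii) from (i) plus Lemma \ref{Al}.

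For (i), write $A(\Gamma)=\sum_j n_j A(V_j)$ via Lemma \ref{f_t}, where the $V_j$ are the bounded components of $\mathbb{C}\setminus\Gamma$ with winding numbers $n_j$. Since the closed hemisphere $S\setminus S_1$ is connected, contains $\infty$ and is disjoint from $\Gamma$, it lies in the unbounded component, so every $V_j\subset S_1$. Define the level sets $W_k:=\bigcup\{V_j:n_j\ge k\}$ for $k\ge 1$; then $A(\Gamma)\le\sum_{k\ge 1}A(W_k)$. An arc-counting argument gives $\sum_k L(\partial W_k)\le L(\Gamma)$: an arc $e$ of $\Gamma$ of multiplicity $m(e)$ belongs to $\partial W_k$ for at most $m(e)$ indices $k$, since the winding-number jump across $e$ has absolute value at most $m(e)$. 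Each $W_k$ is a union of Jordan domains in the hemisphere $S_1$, so Lemma \ref{ber1} yields $\sum_k A(W_k)\le 2\pi-\sqrt{4\pi^2-\ell^2}$ with $\ell:=\sum_k L(\partial W_k)\le L(\Gamma)<2\pi$; monotonicity of $L\mapsto 2\pi-\sqrt{4\pi^2-L^2}$ then gives $A(\Gamma)\le A(T)$.

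For the equality case in (i), all the inequalities above must collapse. $A(\Gamma)=\sum_k A(W_k)$ forces each $n_j\ge 0$; the equality case of Lemma \ref{ber1} forces the collection of involved domains to reduce to a single spherical disk, so only $W_1$ is nonempty and equals a disk $V\subset S_1$; the length equality then forces every arc of $\Gamma$ to appear with multiplicity one and to lie on $\partial V$. Hence $\Gamma$ is a simple circle traced once around $V$; since $V\subset S_1$ is a spherical disk of radius $<\pi/2$, $V$ is convex on $S$ and $\Gamma$ is a convex circle per Definition \ref{in}. The converse is immediate from the definition of $A(T)$.

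For (ii), the hypothesis $f(\overline{\Delta})\subset S_1\subset\mathbb{C}$ forces $\deg_f(\infty)=0$, so Lemma \ref{Al} gives $A(\Sigma)=A(\partial\Sigma)=A(\Gamma)$ and (i) then delivers $A(\Sigma)\le A(T)$. The strict inequality $A(T)<L$ follows, after squaring $2\pi-L<\sqrt{4\pi^2-L^2}$, from $L<2\pi$. If equality holds, by (i) $\Gamma$ is a convex circle bounding a convex disk $D_\Gamma\subset S_1$; any $q_0\in S\setminus(\overline{D_\Gamma}\cup S_1)$ (nonempty since $D_\Gamma\subsetneq S_1$) satisfies $q_0\notin f(\overline{\Delta})$, so the winding number of $\Gamma$ about $q_0$ is zero, while for a regular $q\in D_\Gamma\setminus f(C_f)$ it is $1$; the argument principle gives $\#f^{-1}(q)=1$, so $f$ has degree one and is a homeomorphism from $\overline{\Delta}$ onto $\overline{D_\Gamma}$, making $\Sigma$ a simple disk. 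The main obstacle is the length estimate $\sum_k L(\partial W_k)\le L(\Gamma)$ and the correct packaging of the (possibly multiply connected) level sets so that Lemma \ref{ber1} applies cleanly; once this bookkeeping is fixed, the remainder is routine.
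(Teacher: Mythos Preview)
Your proof is correct, but for part (i) you take a genuinely different route from the paper. The paper decomposes the \emph{curve} $\Gamma$ rather than the region: it asserts a partition of $\partial\Delta$ into subarcs that regroup into closed curves $\gamma_1,\dots,\gamma_n$ on $S$, each either a Jordan curve or a degenerate back-and-forth arc $I_i-I_i$; then $A(\Gamma)=\sum_i A(\gamma_i)$, each $|A(\gamma_i)|$ is bounded by Bernstein (Lemma~\ref{ber}), and Lemma~\ref{ber1} combines the pieces. Your approach instead layers the \emph{region} by the winding-number level sets $W_k=\{n(\Gamma,\cdot)\ge k\}$, uses $A(\Gamma)\le\sum_k A(W_k)$, and feeds the components of the $W_k$ into Lemma~\ref{ber1}. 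The paper's decomposition sidesteps your length estimate $\sum_k L(\partial W_k)\le L(\Gamma)$ entirely, at the cost of invoking a cycle decomposition of closed curves that it does not prove; your approach makes the area formula transparent but pays for it with the arc-counting step you correctly flag as the main bookkeeping. Two small points: the winding-number area formula you attribute to Lemma~\ref{f_t} appears only in its \emph{proof}, not its statement; and the $W_k$ need not be unions of Jordan domains (their components may be multiply connected), but Lemma~\ref{ber1} already allows domains bounded by finitely many Jordan curves, so this is harmless once phrased carefully.

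For part (ii), your argument is the same as the paper's (Lemma~\ref{Al} gives $A(\Sigma)=A(\Gamma)$, then apply (i)); your equality discussion via the argument principle is more detailed than what the paper writes.
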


Since $\Gamma\subset S_{1}\subset\mathbb{C},$ $\Gamma$ is an anticlockwise
circle if and only it is a convex circle on $S$.

\begin{proof}
$\label{2023-04-4-21:00 copy(6)}$If $\Gamma$ is of the form $\Gamma
=I_{1}-I_{1}.$ Then $A(\Gamma)=0$ and (i) holds. If $\Gamma$ is a Jordan curve
and $D$ is the domain bounded by $\Gamma$ in $S_{1}\subset\mathbb{C},$ then
$D\subset S_{1}$ and $|A(\Gamma)|=A(D)$ and (i) follows from Lemma \ref{ber} (i).

In general, $\partial\Delta$ has a partition $\left\{  \left\{  \alpha
_{ij}\right\}  _{j=1}^{k_{i}}\right\}  _{i=1}^{n}$ of a finite number of
subarcs with $\alpha_{ij}^{\circ}\cap\alpha_{i_{1}j_{1}}^{\circ}=\emptyset$
when $i\neq i_{1}$ or $j\neq j_{1}$ such that $\left(  f,\alpha_{ij}\right)  $
are all simple circular arcs and for each $i,\alpha_{i1}+\alpha_{i2}%
+\dots+\alpha_{ik_{i}}$ may not be a subarc of $\partial\Delta,$ but%
\[
\gamma_{i}=(f,\alpha_{i1})+\left(  f,\alpha_{i2}\right)  +\dots+\left(
f,\alpha_{ik_{i}}\right)
\]
is either a Jordan curve on $S$ or is of the form $\gamma_{i}=I_{i}-I_{i},$
where $I_{i}$ is a simple arc on $S.$ Then we have%
\[
A(\Gamma)=\sum_{i=1}^{n}A(\gamma_{i}),
\]
and by Lemma \ref{ber} (i)
\[
\left\vert A(\gamma_{i})\right\vert =\left\{
\begin{array}
[c]{l}%
0,\mathrm{\ \ \ \ \ if\ }\gamma_{i}\mathrm{\ is\ of\ the\ form\ }I_{i}-I_{i}\\
A(D_{i}),\mathrm{\ if\ }\gamma_{i}%
\mathrm{\ is\ a\ Jordan\ curve\ bounding\ a\ domain\ }D_{i}\mathrm{\ in\ }%
S_{1}%
\end{array}
\right.  \leq A(T_{i}),
\]
where $T_{i}$ is a disk in $\mathbb{C}$ with perimeter $L(\gamma_{i})$, with
equality holding if and only if $D_{j}$ is a disk. Then we have, by Lemma
\ref{ber1},%
\[
A(\Gamma)\leq\sum_{j=1}^{k}\left\vert A(\gamma_{j})\right\vert \leq\sum
_{j=1}^{k}A(T_{j})\leq A(T),
\]
where $T$ is a disk in $S_{1}$ with $L(\partial T)=L(\Gamma),$ and
$A(\Gamma)=A(T)$ if and only if $k=1$ and $\Gamma$ is a circle oriented
anticlockwise. (i) is proved.

Assume $\Gamma=\left(  f,\partial\Delta\right)  $ is the boundary of a surface
$\Sigma=\left(  f,\overline{\Delta}\right)  \ $with $f(\overline{\Delta
})\subset S_{1}\subset\mathbb{C}$. Then by Lemma \ref{Al} and Lemma
\ref{good-1} (i), we have%
\[
A(\Sigma)=A(\Gamma)\leq A(T),
\]
equality holding if and only if $\Sigma$ is a simple disk. By Lemma
\ref{good}, we also have $A(T)<L(f,\partial\Delta)$ and (ii) is proved.
\end{proof}

\begin{lemma}
\label{for-circular}$\label{2023-04-4-21:00 copy(5)}$Let $\partial
\Delta=\alpha\left(  a_{1},a_{2}\right)  +\beta\left(  a_{2},a_{1}\right)  $
be a partition of $\partial\Delta,$ and for each $j=1,2,$ let $\Gamma
_{j}=\left(  f_{j},\partial\Delta\right)  $ be a closed curve consisted of a
finite number of circular arcs such that $\Gamma_{j}$ is contained in an open
hemisphere $S_{j}\subset\mathbb{C},$ and assume
\[
\left(  f_{1},\alpha\right)  =\left(  f_{2},\alpha\right)  ,
\]%
\[
L\left(  f_{1},\beta\right)  =L\left(  f_{2},\beta\right)  ,
\]%
\[
L(f_{1},\beta)+\overline{q_{1}q_{2}}<2\pi,
\]
and $\left(  f_{2},\beta\right)  $ is an SCC arc, where $q_{j}=f(a_{j})$ for
$j=1,2.$ Then
\[
A\left(  \Gamma_{1}\right)  \leq A(\Gamma_{2}),
\]
equality holding if and only if $(f_{1},\beta)=\left(  f_{2},\beta\right)
\ $when $q_{1}\neq q_{2}$ or $(f_{1},\beta)\ $is a convex circle when
$q_{1}=q_{2}.$
\end{lemma}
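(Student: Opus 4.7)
The plan is to reduce $A(\Gamma_1) \le A(\Gamma_2)$ to a spherical Dido-type isoperimetric comparison with a common geodesic chord. Setting $c_j=(f_j,\beta)$ and $d=\overline{q_1q_2}$, the hypothesis together with $L(c_2)\ge d$ forces $d<\pi$, so the geodesic $g=\overline{q_2q_1}$ is well defined. Using the line-integral formula for area in Lemma \ref{Al} and its linearity in the path, the identity $(f_1,\alpha)=(f_2,\alpha)$ gives
\[
A(\Gamma_1)-A(\Gamma_2)=A(c_1)-A(c_2)=A(\delta_1)-A(\delta_2),\qquad \delta_j:=c_j-g,
\]
so it suffices to prove $A(\delta_1)\le A(\delta_2)$ with the stated equality conditions.

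Since $c_2$ is SCC, $\delta_2$ is a Jordan curve bounding the lune $D_2=\mathfrak{D}'(\overline{q_2q_1},c_2)$ of Definition \ref{lune-lens}, traversed counterclockwise, so $A(\delta_2)=A(D_2)$. Each $\delta_j$ has length $L+d<2\pi$ (with $L:=L(c_1)=L(c_2)$) and consists of finitely many simple circular arcs, so by Lemma \ref{Rad1}(iii) each lies in an open hemisphere on $S$. The inequality $A(\delta_1)\le A(D_2)$ is a spherical Dido inequality. To establish it I would introduce the reflection $\sigma$ of $S$ across the great circle containing $g$ and form the doubled closed curve $\hat\delta_j=c_j-\sigma(c_j)$ of length $2L$; its enclosed region is the $g$-symmetric union $R_j\cup\sigma(R_j)$, where $R_j$ is the region enclosed by $\delta_j$, and has spherical area $2A(\delta_j)$. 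For $j=2$, $\hat\delta_2$ is the strictly convex Jordan boundary of the symmetric lens $\mathfrak{D}(\overline{q_2q_1},c_2,c_2)$, of area exactly $2A(D_2)$. The desired comparison $A(\hat\delta_1)\le A(\hat\delta_2)$ is then sought within the class of $g$-symmetric closed curves of length $2L$, via decomposition of $\hat\delta_1$ into Jordan subcurves as in the proof of Lemma \ref{good-1}(i).

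The main obstacle is exactly this Dido step: Lemma \ref{good-1}(i) applied directly to $\hat\delta_1$ only bounds $A(\hat\delta_1)$ by the area of a round disk of perimeter $2L$, which is strictly larger than $2A(D_2)$ and yields no useful comparison. A Bonnesen-type refinement exploiting the reflective symmetry across $g$, or equivalently a calculus-of-variations argument showing that the critical curves for area at fixed length and fixed endpoints $\{q_1,q_2\}$ have constant geodesic curvature and hence are circular arcs, is required. For the equality analysis: when $q_1\ne q_2$, the uniqueness of the SCC arc of length $L$ on the designated side forces $c_1=c_2$; when $q_1=q_2$, the geodesic $g$ degenerates, $c_2$ is itself a convex circle of perimeter $L$, and the equality case of Lemma \ref{good-1}(i) applied directly to the closed curve $c_1$ (of perimeter $L<2\pi$ in an open hemisphere) forces $c_1$ to be a convex circle of the same perimeter.
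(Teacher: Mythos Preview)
Your reduction to comparing $A(\delta_1)\le A(\delta_2)$ with $\delta_j=c_j-g$ is correct, and you correctly identify that the naive application of Lemma~\ref{good-1}(i) to your doubled curve $\hat\delta_1$ only yields a disk bound, not the lens bound you need. But this is a genuine gap, not a technicality: the Bonnesen-type refinement or variational argument you gesture at is not available from the tools in the paper, and you have not supplied it.

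The paper sidesteps the Dido step entirely by a different choice of closing arc. Instead of closing $c_1=(f_1,\beta)$ with the geodesic $g$, it closes with the \emph{complementary circular arc} $C\setminus c^\circ$, where $C$ is the full circle determined by $c=(f_2,\beta)$. The resulting closed curve $\Gamma_1'=(C\setminus c^\circ)+(f_1,\beta)$ has length exactly $L(C)<2\pi$, so Lemma~\ref{good-1}(i) gives $A(\Gamma_1')\le A(D)$ where $D$ is the disk enclosed by $C$ itself---the bound is now sharp by construction, with equality iff $\Gamma_1'$ is a convex circle, i.e.\ iff $(f_1,\beta)=c$. The inequality $A(\Gamma_1)\le A(\Gamma_2)$ then follows from the additivity
\[
A(\Gamma_1)=A\bigl((f_1,\alpha)-(C\setminus c^\circ)\bigr)+A(\Gamma_1')\le A\bigl((f_1,\alpha)-(C\setminus c^\circ)\bigr)+A(C)=A\bigl((f_1,\alpha)+c\bigr)=A(\Gamma_2).
\]
The point is that by matching the closing arc to the extremal arc $c$ rather than to the geodesic, the comparison curve becomes exactly the extremal circle, and no refined isoperimetric inequality is needed.
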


\begin{remark}
\label{for-circular-1}It is permitted that $q_{1}=q_{2},$ and in this case,
$\left(  f_{j},\alpha\right)  $ and $\left(  f_{j},\beta\right)  $ are both
closed curves for $j=1,2$, and the conclusion follows from Lemma \ref{good-1}
directly. Note that when $q_{1}=q_{2},$ $L\left(  f,\beta_{1}\right)
=L\left(  f,\beta_{2}\right)  <2\pi.$
\end{remark}

\begin{proof}
$\label{2023-04-4-21:00 copy(4)}$Any closed hemisphere on $S$ can not contain
$\infty$ if it contains $0$ in its interior. Thus there exists a rotation
$\varphi$ of $S,$ such that $\varphi\left(  q_{1}\right)  =0\ $and
$\infty\notin\varphi(\overline{S_{j}}),$ and thus $\varphi\left(
\overline{S_{j}}\right)  \subset\mathbb{C}.$ Then $\left(  \varphi\circ
f_{j},\partial\Delta\right)  $ is contained in the open hemisphere
$\varphi\left(  S_{j}\right)  \subset\varphi\left(  \overline{S_{j}}\right)
\subset\mathbb{C},$ and thus by Lemma \ref{A of curve}, for $j=1,2,$ we have
\[
A\left(  \left(  \varphi\circ f_{j},\partial\Delta\right)  \right)  =A\left(
(f_{j},\partial\Delta)\right)  =A(\Gamma_{j}).
\]

So we may assume $q_{1}=0.\ $If $c=\left(  f_{2},\beta\right)  $ is the line
segment $\overline{q_{2}q_{1}}=\overline{q_{2},0}$, then $\left(  f_{1}%
,\beta\right)  $ is also the line segment $\overline{q_{2},0},$ and then
$\partial\Gamma_{1}\sim\partial\Gamma_{2}$ and the conclusion of the lemma
holds with $A\left(  \Gamma_{1}\right)  =A(\Gamma_{2}).$ Thus we may assume
that $c$ is strictly convex.

Let $C$ be the circle on $S$ determined by $c,$ and let $D$ be the disk
enclosed by $C$. Then $C$ is strictly convex and $\overline{D}$ is contained
in an open hemisphere $S_{C}$ on $S.$ Since $q_{1}=f_{1}(a_{1})=f_{2}%
(a_{1})=0,$ we may take $S_{C}$ so that $\overline{S_{C}}\subset\mathbb{C}.$
Define
\[
\Gamma_{1}^{\prime}=\left(  f_{1}^{\prime},\partial\Delta\right)  =\left(
f_{1}^{\prime},\alpha\right)  +\left(  f_{1}^{\prime},\beta\right)
=C\backslash c^{\circ}+\left(  f_{1},\beta\right)  .
\]
Then $L(\Gamma_{1}^{\prime})=L(C\backslash c)+L\left(  f_{1},\beta\right)
=L(C)<2\pi,$ which together with Lemma \ref{ber} (ii), implies that
$\Gamma_{1}^{\prime}$ is contained in some open hemisphere $S_{1}^{\prime}$ on
$S.$ Since $0\in\Gamma_{1}^{\prime},$ we have $\overline{S_{1}^{\prime}%
}\subset S\backslash\{\infty\}=\mathbb{C}.$ Therefore Lemma \ref{good-1} (i)
implies
\[
A(\Gamma_{1}^{\prime})\leq A\left(  D\right)  =A(C),
\]
equality holding if and only if $\Gamma_{1}^{\prime}$ is an anticlockwise
circle in $S_{1}^{\prime},$ say, $\left(  f_{1},\beta\right)  =c$ if $c\neq C$
or $\Gamma_{1}^{\prime}=\left(  f_{1},\beta\right)  $ is an anticlockwise
circle in $S_{1}^{\prime}$ if $c=C.$ Then we have%
\begin{align*}
A(\Gamma_{1})  &  =A(\left(  f_{1},\alpha\right)  +\left(  f_{1},\beta)\right)
\\
&  =A(\left(  f_{1},\alpha\right)  -C\backslash c^{\circ})+A(\Gamma
_{1}^{\prime})\\
&  \leq A(\left(  f_{1},\alpha\right)  -C\backslash c^{\circ})+A(C)\\
&  =A(\left(  f_{1},\alpha\right)  +c)=A\left(  \Gamma_{2}\right)  .
\end{align*}
say $A(\Gamma_{1})\leq A(\Gamma_{2}),$ equality holding if and only if
$(f_{1},\beta)$ is the arc $c\ $or $(f_{1},\beta)$ is a convex circle.
\end{proof}

\begin{lemma}
\label{good2}$\label{2023-04-4-21:00 copy(3)}$Let $\Sigma=(f,\overline{\Delta
})\in\mathcal{F}.$ Assume that the restriction $I=-\left(  f,\left[
-1,1\right]  \right)  $ is a simple line segment $I\ $on $S$, $\Sigma$ is
contained in some open hemisphere on $S$ and%
\begin{equation}
L(f,\partial\Delta)\leq2\pi\sin\frac{L(I)}{2}. \label{4.1}%
\end{equation}
Then the following hold.

(i) There exists $\theta_{1}\in(0,\pi)$ such that%
\begin{equation}
L(f,\partial\Delta)=L(\partial\mathfrak{D}(I,\theta_{1},\theta_{1}%
))\mathrm{\ and\ }A(\Sigma)\leq A(\mathfrak{D}(I,\theta_{1},\theta_{1})).
\label{4.2}%
\end{equation}

(ii) $A(\Sigma)=A(\mathfrak{D}(I,\theta_{1},\theta_{1}))$ if and only if
$\Sigma$ is a simple closed domain congruent to $\overline{\mathfrak{D}%
(I,\theta_{1},\theta_{1})}.$
\end{lemma}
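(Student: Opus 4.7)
The plan is to cut $\Sigma$ along the internal diameter $[-1,1]$, reduce the area comparison to boundary line integrals via Lemma \ref{Al}, invoke Lemma \ref{for-circular} on each half to replace it by a spherical lune, and finally symmetrize.

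After a rotation of $S$ I may assume $\Sigma\subset S_1$ with $\overline{S_1}\subset\mathbb{C}$; then $\infty\notin\Sigma$, the hemisphere hypothesis forces $L(I)<\pi$ so that $I$ is the geodesic $\overline{q_1q_2}$ with $q_1=f(1)$ and $q_2=f(-1)$, and Lemma \ref{Al} gives $A(\Sigma)=A(\partial\Sigma)$. Since $(f,[-1,1])=-I$ is simple, the diameter partitions $\overline{\Delta}$ into $\overline{\Delta^\pm}$, producing $\Sigma^\pm=(f,\overline{\Delta^\pm})$ whose boundaries $\partial\Sigma^+=-I+(f,\beta_+)$ and $\partial\Sigma^-=I+(f,\beta_-)$ share the cancelling piece $\pm I$; the boundary integrals therefore sum to $A(\Sigma)=A(\Sigma^+)+A(\Sigma^-)$. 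Writing $L^\pm=L(f,\beta_\pm)$, the inequality $L^\pm\geq d(q_1,q_2)=L(I)$ combined with the hypothesis yields
\[
L^\pm+L(I)\leq L^++L^-=L(f,\partial\Delta)\leq 2\pi\sin(L(I)/2)<2\pi,
\]
which is precisely the chord-plus-curve hypothesis of Lemma \ref{for-circular}.

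Applying Lemma \ref{for-circular} to each half with common part $[-1,1]$ and variable part $\beta_\pm$, I replace $(f,\beta_\pm)$ by an SCC arc $c^\pm$ of length $L^\pm$ joining $q_1$ and $q_2$, chosen on opposite sides of $I$ so that the resulting closed curves bound the lunes $\mathfrak{D}'(I,\theta_+)$ and $\mathfrak{D}'(-I,\theta_-)$, where $\theta_\pm\in(0,\pi/2]$ is defined by $\ell(\theta_\pm)=L^\pm$ and $\ell(\theta)$ denotes the common arc length of the symmetric lens $\mathfrak{D}(I,\theta,\theta)$. Summing,
\[
A(\Sigma)\leq A(\mathfrak{D}'(I,\theta_+))+A(\mathfrak{D}'(-I,\theta_-))=A(\mathfrak{D}(I,\theta_+,\theta_-)),
\]
an asymmetric lens with perimeter $\ell(\theta_+)+\ell(\theta_-)=L(f,\partial\Delta)$.

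It remains to symmetrize. I choose $\theta_1\in(0,\pi/2]$ by $2\ell(\theta_1)=L(f,\partial\Delta)$, which exists uniquely because $\ell$ is continuous and strictly increasing with $\ell(0^+)=L(I)\leq L(f,\partial\Delta)/2\leq\pi\sin(L(I)/2)=\ell(\pi/2)$. Then I need
\[
A(\mathfrak{D}(I,\theta_+,\theta_-))\leq A(\mathfrak{D}(I,\theta_1,\theta_1)),
\]
which reduces to the concavity of $M(\ell):=A(\mathfrak{D}'(I,\theta(\ell)))$ as a function of the arc length $\ell$; I expect to establish this by an explicit spherical-trigonometric computation, noting that geometrically $M'(\ell)$ equals the cosine of the cusp half-angle and so strictly decreases as the arc bulges out. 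For (ii), equality in Lemma \ref{for-circular} on each half forces $(f,\beta_\pm)=c^\pm$ (proper arcs, not closed circles, since $q_1\neq q_2$), equality in the symmetrization forces $\theta_+=\theta_-$, and Lemma \ref{b-in} then upgrades the boundary identification to a congruence $\Sigma\cong\overline{\mathfrak{D}(I,\theta_1,\theta_1)}$. The principal obstacle I expect is the concavity of $M$: while geometrically plausible, its rigorous verification requires careful spherical trigonometry.
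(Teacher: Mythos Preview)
Your reduction matches the paper exactly through the first two moves: cut along $[-1,1]$, use Lemma~\ref{Al} to pass to boundary integrals, and invoke Lemma~\ref{for-circular} on each half to replace $(f,\beta_\pm)$ by SCC arcs $c_1,c_2$ of the same lengths $L^\pm$. The paper even records the intermediate chain $A(\Sigma)=A(\gamma_1+\gamma_2)\le A(c_1+\gamma_2)\le A(c_1+c_2)$ in the same way.

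The divergence, and the gap, is in the symmetrization. Your claim that $\theta_\pm\in(0,\pi/2]$ is not justified: the hypothesis gives only $L^++L^-\le 2\ell(\pi/2)$, so one of the two arcs may be major. Worse, your heuristic $M'(\ell)=\cos(\text{half-angle})$ is incorrect---already in the Euclidean limit one computes $M'(\ell)=r$, the radius of the arc, which decreases for $\theta\in(0,\pi/2)$ but \emph{increases} for $\theta\in(\pi/2,\pi)$. So $M$ has an inflection at the semicircle and is not concave over the full range your argument needs; the spherical case behaves the same way. Hence the Jensen step $M(L^+)+M(L^-)\le 2M(L/2)$ is not available as written.

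The paper sidesteps concavity with a clean isoperimetric trick. Let $C_1'$ be the circle determined by the symmetric arc $c_1'$ of length $L/2$; hypothesis~\eqref{4.1} ensures $c_1'$ is at most half of $C_1'$, so on the complementary arc there is room for a copy $\mathfrak c_2'$ of $c_2'$, leaving a residual arc $\mathfrak c_3$, and $C_1'=c_1'+\mathfrak c_2'+\mathfrak c_3$. Now replace $c_1'$ by $c_1$ and $\mathfrak c_2'$ by a congruent copy $\mathfrak c_2$ of $c_2$ (same endpoints, since $d(q_2,q_3)=d(q_1,q_2)$), obtaining a Jordan curve $C_1=c_1+\mathfrak c_2+\mathfrak c_3$ with $L(C_1)=L(C_1')<2\pi$. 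Bernstein's inequality (Lemma~\ref{ber}) gives $A(D_1)\le A(D_1')$ with equality iff $C_1$ is a circle. Subtracting the common piece bounded by $\mathfrak c_3$ and the chords $\overline{q_1q_2},\overline{q_2q_3}$ yields exactly $A(\mathfrak D(I,c_1,c_2))\le A(\mathfrak D(I,c_1',c_2'))$, together with the sharp equality case needed for~(ii). No second derivative ever appears.
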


Since $\Sigma$ is contained in some open hemisphere on $S,$ we have
$L(I)<\pi,$ and the condition (\ref{4.1}) implies that $L(f,\partial\Delta)$
is not larger than the perimeter of the circle $\partial D\left(
0,\frac{L(I)}{2}\right)  $ with diameter $L(I),$ and so $L(f,\partial
\Delta)<2\pi.$ See Definition \ref{lune-lens} for the notation $\mathfrak{D}%
(I,\theta,\theta).$ Geometrically, it is clear that $L(\theta)=L(\partial
\left(  \mathfrak{D}(I,\theta,\theta\right)  )$ is strictly increasing on
$[0,\pi/2]$ as a continuous function of the angle $\theta,$ and thus
$\theta_{1}$ is uniquely determined by $L(\theta_{1})=L(f,\partial\Delta).$

(i) is essentially implied in the proof of Theorem 3.8 in \cite{Zh1}, based on
Theorem 3.6 in \cite{Zh1}, though in Theorem 3.8 of \cite{Zh1}, $I$ is
replaced by the special segment $\overline{1,0}$ and the condition (\ref{4.1})
is replaced by
\[
L(f,\partial\Delta)\leq2\pi\sin\frac{d(0,1)}{2}=2\pi\sin\frac{\pi}{4}=\sqrt
{2}\pi.
\]
But if we use the same method of \cite{Zh1} based on Lemma \ref{good-1}, we
can prove (ii) by the way.$\label{2023-04-4-21:00 copy(2)}$

\begin{proof}
$\label{2023-04-4-21:00 copy(1)}$ Recall that for a circular arc $c$,
$k\left(  c\right)  \geq0$ denotes the curvature.

Since $f(\overline{\Delta})$ is contained in a hemisphere on $S$,
$L(f,\partial\Delta)=2L(I)$ implies $f(\overline{\Delta})=I,$ which
contradicts $\Sigma\in\mathcal{F}.$ Thus $2L(I)<L(f,\partial\Delta)<2\pi.$

We now use Lemma \ref{for-circular} to give a new proof. Without loss of
generality, assume $q_{1}=f(1)=0.$ Let $q_{2}=f(-1),$ $\gamma_{1}=\left(
f,\left(  \partial\Delta\right)  ^{+}\right)  ,\gamma_{2}=\left(  f,\left(
\partial\Delta\right)  ^{-}\right)  ,$ $c_{1}$ and $c_{1}^{\prime}$ be the SCC
arcs from $q_{1}$ to $q_{2}$ with $L(c_{1})=L(f,\left(  \partial\Delta\right)
^{+})$ and $L(c_{1}^{\prime})=\frac{1}{2}L(f,\partial\Delta),$ and let $c_{2}$
and $c_{2}^{\prime}$ be the SCC arcs from $q_{2}$ to $q_{1}$ with
$L(c_{2})=L\left(  f,\left(  \partial\Delta\right)  ^{-}\right)  $ and
$L(c_{2}^{\prime})=\frac{1}{2}L\left(  f,\partial\Delta\right)  .$ Then we
have
\[
L(c_{1}+c_{2})=L(c_{1}^{\prime}+c_{2}^{\prime})=L(\gamma_{1}+c_{2})=L\left(
c_{1}+\gamma_{2}\right)  =L\left(  f,\partial\Delta\right)  <2\pi,
\]
and thus $\Sigma$ and the close curves $c_{1}+c_{2},c_{1}^{\prime}%
+c_{2}^{\prime},\gamma_{1}+c_{2},c_{1}+\gamma_{2}$ are contained in five open
hemispheres on $S$ in $\mathbb{C=}S\backslash\mathbb{\{}\infty\}$
respectively. By Lemmas \ref{Al} and \ref{for-circular} we have%
\[
A(\Sigma)=A(\gamma_{1}+\gamma_{2})\leq A(c_{1}+\gamma_{2})\leq A(c_{1}%
+c_{2}),
\]
with the second equality holding iff $\gamma_{1}=c_{1}$ and the last equality
holding iff $\gamma_{2}=c_{2}.$

We will show that
\[
A(c_{1}+c_{2})\leq A(c_{1}^{\prime}+c_{2}^{\prime}),
\]
with equality if and only if $c_{1}=c_{1}^{\prime}$ and $c_{2}=c_{2}^{\prime
}.$ It is clear that $c_{1}+c_{2}$ encloses the lens $D=\mathfrak{D}\left(
I,k_{1},k_{2}\right)  ,$ where $k_{j}$ is the curvature $k\left(
c_{j}\right)  $ of $c_{j},j=1,2;$ and $c_{1}^{\prime}+c_{2}^{\prime}$ encloses
the lens $D^{\prime}=\mathfrak{D}\left(  I,k_{1}^{\prime},k_{1}^{\prime
}\right)  ,$ where $k_{1}^{\prime}=k\left(  c_{1}^{\prime}\right)  =k\left(
c_{2}^{\prime}\right)  .$ $\overline{D}$ and $\overline{D^{\prime}}$ are
contained in two open hemispheres $S_{1}\subset\mathbb{C}$ and $S_{1}^{\prime
}\subset\mathbb{C}\ $of the five. Thus we have $A(D)=A(c_{1}+c_{2})$ and
$A(D^{\prime})=A(c_{1}^{\prime}+c_{2}^{\prime}).$

We show that $A(D)\leq A(D^{\prime}),$ equality holding only if $c_{1}%
=c_{1}^{\prime}$ and $c_{2}=c_{2}^{\prime}.$ Let $C_{1}^{\prime}$ be the
convex circle determined by $c_{1}^{\prime}.$ Then $C_{1}^{\prime}$ is
contained in some open hemisphere $S_{1}^{\prime\prime}\subset\mathbb{C}$ on
$S,$ and $c_{1}^{\prime}$ is the arc of $C_{1}^{\prime}$ from $q_{1}$ to
$q_{2}.$ By (\ref{4.1}) $c_{1}^{\prime}$ is at most half of $C_{1}^{\prime},$
and then $C_{1}^{\prime}\backslash c_{1}^{\prime\circ}$ contains the arc
$\mathfrak{c}_{2}^{\prime}=\mathfrak{c}_{2}^{\prime}\left(  q_{2}%
,q_{3}\right)  $ with $d(q_{2},q_{3})=d\left(  q_{1},q_{2}\right)  $, and
write $\mathfrak{c}_{3}=C_{1}^{\prime}\backslash\left(  c_{1}^{\prime
}+\mathfrak{c}_{2}^{\prime}\right)  ^{\circ}.$ When $c_{1}^{\prime}$ is half
of $C_{1}^{\prime},$ $\mathfrak{c}_{3}=\{q_{3}\}=\{q_{1}\}.$ Then
$\mathfrak{c}_{2}^{\prime}$ is congruent to $c_{2}^{\prime}$ and we have a
partition%
\[
C_{1}^{\prime}=c_{1}^{\prime}+\mathfrak{c}_{2}^{\prime}+\mathfrak{c}_{3}.
\]
When we replace $c_{1}^{\prime}$ by $c_{1},$ $\mathfrak{c}_{2}^{\prime}$ by
the convex arc $\mathfrak{c}_{2}=\mathfrak{c}_{2}\left(  q_{2},q_{3}\right)  $
congruent to $c_{2}=c_{2}\left(  q_{2},q_{1}\right)  ,$ the convex circle
$C_{1}^{\prime}$ becomes the Jordan curve%
\[
C_{1}=c_{1}+\mathfrak{c}_{2}+\mathfrak{c}_{3},
\]
with
\[
L(C_{1})=L(C_{1}^{\prime})=L(f,\partial\Delta)+L(\mathfrak{c}_{3})<2\pi.
\]
Then,$\ $by Lemma \ref{ber} (i), for the Jordan domain $D_{1}$ enclosed by
$C_{1}$ and the disk $D_{1}^{\prime}$ enclosed by $C_{1}^{\prime},$ we have
$A(D_{1})\leq A(D_{1}^{\prime})$, equality holding if and only if $C_{1}$ is a
circle, say $C_{1}=C_{1}^{\prime},$ $c_{1}=c_{1}^{\prime},$ $\mathfrak{c}%
_{2}=\mathfrak{c}_{2}^{\prime},$ which implies $c_{2}=c_{2}^{\prime}.$

We have the disjoint unions%
\[
D_{1}=D_{c_{1}}\cup\overline{q_{1}q_{2}}^{\circ}\cup D_{\mathfrak{c}_{2}}%
\cup\overline{q_{2}q_{3}}^{\circ}\cup D^{\prime\prime},
\]%
\[
D_{1}^{\prime}=D_{c_{1}^{\prime}}\cup\overline{q_{1}q_{2}}^{\circ}\cup
D_{\mathfrak{c}_{2}^{\prime}}\cup\overline{q_{2}q_{3}}^{\circ}\cup
D^{\prime\prime},
\]
where $D_{c_{1}}$ and $D_{\mathfrak{c}_{2}}$ are the disjoint lunes in $D_{1}$
of circular arcs $c_{1}\left(  q_{1},q_{2}\right)  $ and $\mathfrak{c}%
_{2}\left(  q_{2},q_{3}\right)  ,$ and $D_{c_{1}^{\prime}}$ and
$D_{\mathfrak{c}_{2}^{\prime}}$ are the disjoint lunes in $D_{1}^{\prime}$ of
circular arcs $c_{1}^{\prime}\left(  q_{1},q_{2}\right)  $ and $\mathfrak{c}%
_{2}^{\prime}\left(  q_{2},q_{3}\right)  .$ Then we have
\[
A\left(  D_{c_{1}}\cup D_{\mathfrak{c}_{2}}\right)  =A(D)\leq A(D_{c_{1}%
^{\prime}}\cup D_{\mathfrak{c}_{2}^{\prime}})=A(D^{\prime}),
\]
equality holding only if $c_{1}^{\prime}=c_{1}$ and $c_{2}=c_{2}^{\prime
}.\label{2023-04-4-21:00}$
\end{proof}

\begin{corollary}
\label{2-curvature}$\label{2023-04-5-6:00}$Let $l$ be a positive number and
for $j=1,2,$ let $I_{j}=\overline{q_{j1}q_{j2}}$ be a line segment with
$L(I_{j})<\pi\ $and let $\mathfrak{D}_{j}^{\prime}(l_{j})\ $be the lune such
that $\partial\mathfrak{D}_{j}^{\prime}\left(  l_{j}\right)  =c_{j}%
(l_{j})-I_{j},$ where $c_{j}$ is a convex circular arc from $q_{j1}$ to
$q_{j2}$ with $L(c_{j}(l_{j}))=l_{j}.$ Assume $l_{1}+l_{2}=l,I_{1}$ and
$I_{2}$ are fixed, but $l_{1}$ and $l_{2}$ vary, and for $j=1,2,\overline
{\mathfrak{D}_{j}^{\prime}(l_{j})}$ is contained in some open hemisphere
$S_{j}$ on $S.$ Then we have the following.

(A) If the curvature $k\left(  c_{1}(l_{1})\right)  $ of $c_{1}(l_{1})$ is
larger than $k\left(  c_{2}(l_{2})\right)  $ when $l_{1}=l_{1}^{0},l_{2}%
=l_{2}^{0}=l-l_{2}^{0}.$ Then there exists a $\delta>0,$ such that
\[
A\left(  \mathfrak{D}_{1}^{\prime}(l_{1})\right)  +A\left(  \mathfrak{D}%
_{2}^{\prime}(l_{2})\right)  =A\left(  \mathfrak{D}_{1}^{\prime}%
(l_{1})\right)  +A\left(  \mathfrak{D}_{2}^{\prime}(l-l_{1})\right)
\]
is a strictly decreasing function of $l_{1}\in(l_{1}^{0}-\delta,l_{1}%
^{0}+\delta)$.

(B) If $k\left(  c_{1}(l_{1})\right)  =k\left(  c_{2}(l_{2})\right)  $ when
$l_{1}=l_{1}^{0}$ and both $c_{1}(l_{1})$ and $c_{2}(l_{2})$ are major
circular arcs, say, the interior angle of $\mathfrak{D}_{j}^{\prime}(l_{j})$
at the cusps $>\pi/2$, then there exists a $\delta>0$ such that
\begin{equation}
A\left(  \mathfrak{D}_{1}^{\prime}(l_{1})\right)  +A\left(  \mathfrak{D}%
_{2}^{\prime}(l-l_{1})\right)  >A\left(  \mathfrak{D}_{1}^{\prime}(l_{1}%
^{0})\right)  +A\left(  \mathfrak{D}_{2}^{\prime}(l-l_{1}^{0})\right)  ,
\label{ss5}%
\end{equation}
when $0<\left\vert l_{1}-l_{1}^{0}\right\vert <\delta.$

(C)\label{20220908-1} (A) and (B) still hold when $q_{11}=q_{12},$ or
$q_{21}=q_{22},$ or both, holds (when $q_{j1}=q_{j2},$ $\mathfrak{D}%
_{j}^{\prime}(l_{j})$ becomes into a disk, say, $c_{j}(l_{j}$) is a circle).
\end{corollary}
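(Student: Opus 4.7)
The plan is to reduce everything to the single variational identity
\[
\frac{dA(\mathfrak{D}_j'(l_j))}{dl_j} \;=\; \tan r_j \;=\; \frac{1}{k(c_j(l_j))},
\]
for $j=1,2$, where $r_j$ is the spherical radius of the small circle containing $c_j(l_j)$, so that its geodesic curvature is $k(c_j)=\cot r_j$. Granting this identity, claim (A) is immediate: setting $f(l_1):=A(\mathfrak{D}_1'(l_1))+A(\mathfrak{D}_2'(l-l_1))$, one has
\[
f'(l_1)\;=\;\frac{1}{k(c_1(l_1))}\;-\;\frac{1}{k(c_2(l-l_1))},
\]
which is strictly negative at $l_1=l_1^0$ by hypothesis and hence on a $\delta$-neighborhood by continuity of $k(c_j)$ in $l_j$; strict monotonic decrease of $f$ then follows by integration.

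To establish the identity, I plan to combine Gauss--Bonnet with spherical trigonometry. Fix the chord $I_j$ of length $d<\pi$ and parametrize by $r$: write $k=\cot r$, $l=2\beta\sin r$ with $\sin\beta=\sin(d/2)/\sin r$, and $\sin\theta=k\tan(d/2)$, where $\theta$ is the cusp angle obtained from the isosceles spherical triangle $q_{j1}q_{j2}P$ ($P$ being the pole of the small circle). Gauss--Bonnet yields $A=2\theta-k\,l$, and differentiating using $\cos\theta\,d\theta=\tan(d/2)\,dk$ and the identity $\cos\theta=\cos\beta/\cos(d/2)$ (equivalently $\tan(d/2)/\cos\theta=\sin r\tan\beta$), one obtains after simplification $dA/dr=\tan r\cdot dl/dr$, i.e.\ $dA/dl=\tan r=1/k$. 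Equivalently, one may use the first-variation formulas $\delta A=-\int h\,ds$ and $\delta l=-\int\kappa_g h\,ds$ for a normal variation $hN$ along $c$ vanishing at the endpoints; since $\kappa_g\equiv k$ is constant along the small-circle arc, $\delta l=k\cdot\delta A$.

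For (B), the equality $k_1=k_2$ at $(l_1^0,l_2^0)$ gives $f'(l_1^0)=0$, and the second derivative is
\[
f''(l_1^0)\;=\;-\frac{1}{k_1^2}\frac{dk_1}{dl_1}\bigg|_{l_1^0}\;-\;\frac{1}{k_2^2}\frac{dk_2}{dl_2}\bigg|_{l_2^0}.
\]
It then suffices to show $dk/dl<0$ throughout the major-arc regime on a fixed chord. Parametrizing by $r\in(d/2,\pi/2)$, one has $\beta\in(\pi/2,\pi)$; a direct computation gives $dl/dr=2\cos r(\beta-\tan\beta)>0$ and $dk/dr=-\csc^2 r<0$, so $dk/dl<0$ and $f''(l_1^0)>0$, whence \eqref{ss5} by Taylor's theorem. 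For (C), when $q_{j1}=q_{j2}$ the lune degenerates to a spherical disk with $A_j=2\pi(1-\cos r_j)$ and $l_j=2\pi\sin r_j$; direct differentiation gives $dA_j/dl_j=\tan r_j=1/k_j$ and $dk_j/dl_j<0$ for $r_j\in(0,\pi/2)$, so the arguments of (A) and (B) apply verbatim in the degenerate case.

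The main obstacle will be establishing the identity $dA/dl=1/k$ uniformly across both the minor-arc ($\theta<\pi/2$) and major-arc ($\theta>\pi/2$) regimes. The relation $\cos\theta=\cos\beta/\cos(d/2)$ carries a definite sign in each case that must be tracked carefully, but in both the cancellation $\beta\sin r-\tan(d/2)/\cos\theta=\sin r(\beta-\tan\beta)$ produces the clean final formula $dA/dl=\tan r$, from which (A), (B), and (C) all follow by elementary calculus.
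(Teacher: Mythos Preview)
Your approach is correct and genuinely different from the paper's. The paper proves (A) by a geometric comparison argument: it cuts congruent small sub-arcs $c_{j,\varepsilon}$ (with equal chord length $\varepsilon$) from the middles of $c_1(l_1^0)$ and $c_2(l_2^0)$, replaces them by arcs $\mathfrak{c}_{j,\varepsilon}$ of the averaged length, and invokes Lemma~\ref{good2} (the lens isoperimetric inequality) to get the strict inequality $A(D_1)+A(D_2)<A(D_1')+A(D_2')$; it then uses Lemma~\ref{for-circular} to pass from the locally modified boundary to the genuine circular arc $c_j(l_j(\varepsilon))$. This yields the strict decrease only on the one-sided interval $(l_1^0-\delta,l_1^0]$; the full two-sided statement follows by applying the same argument at each nearby point where $k_1>k_2$ persists by continuity. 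Part (B) is then deduced from (A) by observing that on each side of $l_1^0$ the curvatures separate (one increases, the other decreases, since both arcs are major), so (A) applies on each half-interval and gives the strict local minimum.

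Your route via the variational identity $dA/dl=\tan r=1/k$ is cleaner and more direct: once you have $f'(l_1)=1/k_1-1/k_2$, (A) is a one-line continuity argument on the whole interval, and (B) follows from the single sign computation $dk/dl<0$ in the major-arc regime giving $f''(l_1^0)>0$. Your Gauss--Bonnet derivation is correct, including the identity $\cos\theta=\cos\beta/\cos(d/2)$ holding with the same sign in both the minor and major regimes (since $\cos\beta$ and $\cos\theta$ change sign together at $\beta=\theta=\pi/2$). One small caveat: the $r$-parametrization is singular at the semicircle ($\beta=\pi/2$, where $\tan\beta$ blows up), so of your two derivations the first-variation argument $\delta l=\int\kappa_g h\,ds=k\,\delta A$ is the more robust one, valid uniformly across the minor/major transition; you should lead with that. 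The paper's approach has the advantage of staying entirely within its isoperimetric comparison framework (no Gauss--Bonnet, no spherical trigonometry), but your calculus argument is shorter, gives both directions of (A) at once, and makes the role of the curvature transparent.
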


\begin{proof}
Assume
\begin{equation}
k\left(  c_{1}(l_{1}^{0})\right)  >k\left(  c_{2}(l_{2}^{0})\right)  =k\left(
c_{2}(l-l_{1}^{0})\right)  . \label{k>k}%
\end{equation}
For sufficiently small $\varepsilon>0,$ we may take small arcs
$c_{j,\varepsilon}=c_{j}\left(  q_{j1,\varepsilon},q_{j2,\varepsilon}\right)
$ in $c_{j}\left(  l_{j}^{0}\right)  $ so that the center points of
$c_{j,\varepsilon}$ and $c_{j}\left(  l_{j}^{0}\right)  $ are the same and the
chard $\overline{q_{j1,\varepsilon}q_{j2,\varepsilon}}$ has the same length
$\varepsilon$ for $j=1,2,$ and let $\mathfrak{c}_{j,\varepsilon}%
=\mathfrak{c}_{j,\varepsilon}\left(  q_{j1,\varepsilon},q_{j2,\varepsilon
}\right)  $ be the convex circular arc from $q_{j1,\varepsilon}\ $to
$q_{j2,\varepsilon}$ so that
\[
L(\mathfrak{c}_{j,\varepsilon})=\frac{1}{2}\left(  L(c_{1,\varepsilon}\right)
+L(c_{2,\varepsilon})),j=1,2.
\]
When $\varepsilon$ is small enough by (\ref{k>k}) we have $\mathfrak{c}%
_{1,\varepsilon}^{\circ}\subset\mathfrak{D}_{1}^{\prime}\left(  l_{1}%
^{0}\right)  $ and $\mathfrak{c}_{2,\varepsilon}^{\circ}$ is on the right side
of $c_{2,\varepsilon}.$ Let $D_{j}$ be the lune enclosed by $c_{j,\varepsilon
}-\overline{q_{j1,\varepsilon}q_{j2,\varepsilon}}$, and let $D_{j}^{\prime}$
be the lune enclosed by $\mathfrak{c}_{j,\varepsilon}-\overline
{q_{j1,\varepsilon}q_{j2,\varepsilon}}$, $j=1,2.$ Then by Lemma \ref{good2} we
have
\begin{equation}
A(D_{1})+A(D_{2})<A(D_{1}^{\prime})+A(D_{2}^{\prime}), \label{ss1}%
\end{equation}
and when $\varepsilon$ is small enough, the domain $T_{j}=\left(
\mathfrak{D}_{j}^{\prime}\left(  l_{j}^{0}\right)  \backslash D_{j}\right)
\cup D_{j}^{\prime}\ $is a Jordan domain for $j=1,2,$ and we have by
(\ref{ss1})
\begin{equation}
A\left(  \mathfrak{D}_{1}^{\prime}\left(  c_{1}\left(  l_{1}^{0}\right)
\right)  \right)  +A\left(  \mathfrak{D}_{2}^{\prime}\left(  c_{2}\left(
l_{2}^{0}\right)  \right)  \right)  <A\left(  T_{1}\right)  +A(T_{2}).
\label{ss2}%
\end{equation}
It is clear that
\[
\partial T_{j}=\left(  \left[  c_{j}\left(  l_{j}^{0}\right)  \backslash
c_{j,\varepsilon}\right]  \cup\mathfrak{c}_{j,\varepsilon}\right)  -I_{j}.
\]
Now we replace $\left[  c_{j}\left(  l_{j}^{0}\right)  \backslash
c_{j,\varepsilon}\right]  \cup\mathfrak{c}_{j,\varepsilon}$ with the whole SCC
arc $c_{j}\left(  l_{j}\left(  \varepsilon\right)  \right)  $ from $q_{j1}$ to
$q_{j2}$ with
\[
l_{j}\left(  \varepsilon\right)  =L(c_{j}\left(  l_{j}\left(  \varepsilon
\right)  \right)  )=L\left(  \left[  c_{j}\left(  l_{j}^{0}\right)  \backslash
c_{j,\varepsilon}\right]  \cup\mathfrak{c}_{j,\varepsilon}\right)  .
\]
Then $l_{1}\left(  \varepsilon\right)  <l_{1}^{0}\ $and $l_{2}\left(
\varepsilon\right)  =l-l_{1}\left(  \varepsilon\right)  >l_{2}^{0}.$ By Lemma
\ref{for-circular}, we have%
\begin{equation}
A(\mathfrak{D}_{j}^{\prime}\left(  c_{j}\left(  l_{j}\left(  \varepsilon
\right)  \right)  \right)  >A(T_{j}),j=1,2, \label{ss3}%
\end{equation}
which, with (\ref{ss2}) implies%
\begin{equation}
\sum_{j=1}^{2}A(\mathfrak{D}_{j}^{\prime}\left(  c_{j}\left(  l_{j}\left(
\varepsilon\right)  \right)  \right)  >\sum_{j=1}^{2}A(\mathfrak{D}%
_{j}^{\prime}\left(  l_{j}^{0}\right)  ). \label{ss4}%
\end{equation}
Since $l_{1}\left(  \varepsilon\right)  <l_{1}^{0}$ and $l_{1}\left(
\varepsilon\right)  $ depends on $\varepsilon$ continuously when
$\varepsilon>0$ is small enough, (\ref{ss4}) implies that
\[
\sum_{j=1}^{2}A(\mathfrak{D}_{j}^{\prime}\left(  c_{j}\left(  l_{j}\right)
\right)  >\sum_{j=1}^{2}A(\mathfrak{D}_{j}^{\prime}\left(  l_{j}^{0}\right)
),
\]
when $l_{1}<l_{1}^{0}$ and $l_{1}^{0}-l_{1}$ is small enough. Then we have
proved that there exists a small enough $\delta>0$ such that (A) holds for
$l_{1}\in(l_{1}^{0}-\delta,l_{1}^{0})$.

Now, assume $k(c_{1}\left(  l_{1}^{0}\right)  )=k\left(  c_{2}\left(
l_{2}^{0}\right)  \right)  $ and both $c_{1}\left(  l_{1}^{0}\right)  $ and
$c_{2}\left(  l_{2}^{0}\right)  $ are major arcs. Then on $(l_{1}^{0}%
-\delta,l_{1}^{0}]$, $k(c_{1}\left(  l_{1}\right)  )$ strictly decreases$\ $%
but $k\left(  c_{2}\left(  l-l_{1}\right)  \right)  $ strictly increases when
$\delta$ is small enough. Thus by (A) for sufficient small $\delta
>0,\sum_{j=1}^{2}A(\mathfrak{D}_{j}^{\prime}\left(  c_{j}\left(  l_{j}\right)
\right)  ,$ as a function of $l_{1},$ strictly decreases on $(l_{1}^{0}%
-\delta,l_{1}^{0})$ and strictly increase on $(l_{1}^{0},l_{1}^{0}+\delta).$
On the other hand, it is clear that $\sum_{j=1}^{2}A(\mathfrak{D}_{j}^{\prime
}\left(  c_{j}\left(  l_{j}\right)  \right)  $ is a continuous function of
$l_{1}$ when $\left\vert l_{1}-l_{1}^{0}\right\vert $ small enough. Hence (B) holds.

The proof of (C) is the same as (A) and (B).
\end{proof}

\begin{corollary}
\label{2-curvature2}Let $I=\overline{pq}$ be a line segment with $L(I)<\pi$,
$l\in(2\pi\sin\frac{L(I)}{2},2\pi),$ and let $\mathfrak{D}\left(
\overline{pq},x,l-x\right)  $ be the lens enclosed by SCC arcs $c_{x}%
=c_{x}\left(  p,q\right)  $ and $c_{l-x}^{\prime}=c_{l-x}^{\prime}\left(
q,p\right)  $ with $L(c_{x})=x$ and $L(c_{l-x}^{\prime})=l-x.$ Assume $x_{0}$
is the number such that $\mathfrak{D}\left(  \overline{pq},x_{0}%
,l-x_{0}\right)  $ is a disk and $x_{0}>l-x_{0}$. Then the following hold.

(i) If $x>l-x$ and $\angle\left(  \mathfrak{D}\left(  \overline{pq}%
,x,l-x\right)  ,p\right)  >\pi,$ then $x\in(\frac{l}{2},x_{0}).$

(ii) The function $A\left(  \mathfrak{D}\left(  \overline{pq},x,l-x\right)
\right)  $ is strictly increases for $x\in\lbrack\frac{l}{2},x_{0}].$
\end{corollary}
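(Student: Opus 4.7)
My plan is to prove (ii) by combining the local-monotonicity statements of Corollary~\ref{2-curvature} with a case-by-case curvature comparison, and then to deduce (i) by studying the interior-angle function $\Theta(x)=\theta(c_x)+\theta(c'_{l-x})$ directly, exploiting the fact that $\Theta(x)=\pi$ holds precisely at the disk configuration. Throughout I shall use the following elementary shape facts about SCC arcs from $p$ to $q$ with chord of spherical length $L(I)\in(0,\pi)$: the cusp angle $\theta(y)$ is a continuous, strictly increasing function of the arc length $y\in[L(I),2\pi-L(I)]$ taking the values $0,\pi/2,\pi$ at $y=L(I),\pi\sin(L(I)/2),2\pi-L(I)$ respectively, while the curvature $k(c_y)$ strictly increases on the minor branch $[L(I),\pi\sin(L(I)/2)]$ up to the maximum $\cot(L(I)/2)$ and strictly decreases on the major branch $[\pi\sin(L(I)/2),2\pi-L(I)]$.

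For (ii), since $l>2\pi\sin(L(I)/2)$ we have $l/2>\pi\sin(L(I)/2)$, so at $x=l/2$ both $c_x$ and $c'_{l-x}$ are major arcs of equal length and hence of equal curvature; Corollary~\ref{2-curvature}(B) then forces a strict local minimum of $A(x):=A(\mathfrak{D}(\overline{pq},x,l-x))$ at $x=l/2$. For $x\in(l/2,x_0)$ I will show $k(c_x)<k(c'_{l-x})$ by splitting at $x^{\ast}=l-\pi\sin(L(I)/2)$: on $(l/2,x^{\ast}]$ both arcs are major, and since $x>l-x$ the monotonicity of the major branch gives that the longer arc $c_x$ has the smaller curvature; on $[x^{\ast},x_0)$, $c_x$ is major while $c'_{l-x}$ is minor, and equality $k(c_x)=k(c'_{l-x})$ would force the two arcs to lie on a common circle whose circumference is $l$ with the major/minor split, which is the disk configuration and pins $x=x_0$. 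Since at the split point $k(c'_{l-x})=\cot(L(I)/2)$ strictly exceeds $k(c_x)$ (because $c_x$ is there a major arc on a circle of angular radius larger than $L(I)/2$), continuity propagates the strict inequality throughout the open interval. Applying Corollary~\ref{2-curvature}(A) after exchanging the roles of the two lunes (so that strict decrease in $l_1=l-x$ translates to strict increase in $x$) at every $x\in(l/2,x_0)$, and combining with the strict local minimum at $l/2$ and continuity of $A$, I conclude that $A$ is strictly increasing on $[l/2,x_0]$.

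For (i), let $\Theta(x)=\theta(c_x)+\theta(c'_{l-x})$ denote the interior angle at the cusp $p$ of the lens, a continuous function on $[L(I),l-L(I)]$. At $x=l/2$, $c_{l/2}$ is a major arc, so $\theta(c_{l/2})>\pi/2$ and $\Theta(l/2)>\pi$; at $x=x_0$ the lens is a disk and $\Theta(x_0)=\pi$; at $x=l-L(I)$, $c'_{L(I)}$ is the geodesic so $\theta(c'_{L(I)})=0$ and $\Theta(l-L(I))<\pi$. The key geometric fact is that $\Theta(x)=\pi$ forces the two SCC arcs to be tangent at $p$; by the symmetry of circular arcs they must then be tangent at $q$ as well, and since both arcs share the two distinct points $p$ and $q$ the three-intersection principle for distinct circles forces them to lie on a common circle, whose circumference must therefore equal $l$ and which, together with $x>l/2$, pins $x=x_0$. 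Hence $\Theta^{-1}(\pi)\cap(l/2,l-L(I)]=\{x_0\}$, and the intermediate value theorem together with the boundary values $\Theta(l/2)>\pi>\Theta(l-L(I))$ yields $\Theta>\pi$ on $[l/2,x_0)$ and $\Theta<\pi$ on $(x_0,l-L(I)]$. In particular the hypothesis $x>l/2$ together with $\Theta(x)>\pi$ forces $x\in(l/2,x_0)$, as required.

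The main obstacles are the non-monotonicity of the curvature-vs-length function for SCC arcs, which makes the case split at $x^{\ast}$ in (ii) unavoidable, and, in (i), the rigidity principle that two distinct circles through two common points cannot be tangent at either; this is precisely what pins the equation $\Theta(x)=\pi$ to the unique disk configuration $x=x_0$ and prevents any further crossing of $\pi$ beyond it.
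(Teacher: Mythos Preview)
Your proof is correct and follows essentially the same route as the paper. For (ii), both you and the paper reduce to Corollary~\ref{2-curvature}(A) via the curvature inequality $k(c_x)<k(c'_{l-x})$ on $(l/2,x_0)$; the paper argues this inequality more tersely (observing that equal curvature occurs only at $x=l/2$ and $x=x_0$, and that $k(c_x)$ decreases as $x$ leaves $l/2$), whereas your case split at $x^{\ast}=l-\pi\sin(L(I)/2)$ makes the argument watertight where the paper's phrase ``the curvature of $c'_{l-x}$ increases'' is literally true only on the major branch. For (i), the paper simply asserts in one line that $x>x_0$ forces the interior angle below $\pi$; your treatment via $\Theta(x)$, the tangency-rigidity principle for circles through two points, and the intermediate value theorem supplies the justification the paper omits.
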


\begin{proof}
If $x>l-x,$ then $x>l/2.$ If $x>x_{0},$ we must have $\angle\left(
\mathfrak{D}\left(  \overline{pq},x,l-x\right)  ,p\right)  <\pi.$ Thus by
assumption of (i) we have $x\in(\frac{l}{2},x_{0}),$ and thus (i) holds true.

By Lemma \ref{ber} and Corollary \ref{2-curvature} (B), $A\left(
\mathfrak{D}\left(  \overline{pq},x,l-x\right)  \right)  $ assumes the minimum
when $x=l/2$ and the maximum when $x=x_{0}.$ Either $x=l/2$ or $x=x_{0}$ is
the condition such that $c_{x}$ and $c_{l-x}^{\prime}$ have the same
curvature, say, when $x\neq l/2,x_{0},$ the two circular arc of $\mathfrak{D}%
\left(  \overline{pq},x,l-x\right)  $ have distinct curvature. On the other
hand the curvature continuously depends on $x\ $and when $x$ increase from
$l/2$ a little, the curvature of $c_{x}$ decrease a little (note that
$c_{l/2}$ is a major circular arc and so is $c_{x}$ for $x>l/2$). Thus when
$x$ increases in $(\frac{l}{2},x_{0}),$ the curvature $k\left(  c_{x}\right)
\ $of $c_{x}$ decreases and the curvature of $c_{l-x}^{\prime}$ increases.
Then (ii) follows from Corollary \ref{2-curvature} (A).
\end{proof}

The following lemma is a direct consequence of Lemma \ref{for-circular}.

\begin{lemma}
\label{good-lune}Let $\Sigma=\left(  f,\overline{\Delta^{+}}\right)
\in\mathcal{F}$ be a surface such that $I=\left(  f,[-1,1]\right)  $ is a line
segment on $S$ and $\Sigma$ is contained in some open hemisphere $S^{\prime}$
on $S.$ Assume $L\left(  \partial\Sigma\right)  <2\pi.$ Then%
\[
A(\Sigma)\leq A(\mathfrak{D}^{\prime}(I,\theta))
\]
for the unique $\theta$ with $L(\partial\mathfrak{D}^{\prime}(I,\theta
))=L(\partial\Sigma),$ with equality if and only if $\Sigma\mathfrak{\ }$is a
simple closed domain congruent to $\overline{\mathfrak{D}^{\prime}(I,\theta
)}.$
\end{lemma}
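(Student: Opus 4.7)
My plan is to reparametrize both $\Sigma$ and a congruent copy of the comparison lune as surfaces on the closed disk $\overline{\Delta}$ sharing the common boundary segment $I$, and then invoke Lemma~\ref{for-circular}.

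Write $p=f(-1)$, $q=f(1)$, and $\gamma=(f,(\partial\Delta)^{+})$, so $\partial\Sigma=I+\gamma$ with $I$ going from $p$ to $q$ and $\gamma$ from $q$ to $p$, and $L(\gamma)=L(\partial\Sigma)-L(I)$. Fix an OPH $\psi:\overline{\Delta}\to\overline{\Delta^{+}}$ inducing a partition $\partial\Delta=\alpha(a_{1},a_{2})+\beta(a_{2},a_{1})$ with $\psi(\alpha)=[-1,1]$ and $\psi(\beta)=(\partial\Delta)^{+}$, and put $f_{1}=f\circ\psi$; this gives a surface on $\overline{\Delta}$ equivalent to $\Sigma$, hence with the same area and boundary curve. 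For the comparison, take the lune $\overline{\mathfrak{D}^{\prime}(-I,\theta)}$, which is congruent to $\overline{\mathfrak{D}^{\prime}(I,\theta)}$ by symmetry and therefore has the same area; its boundary is $I+c^{\prime}$, where $c^{\prime}$ is the SCC arc from $q$ to $p$ with $L(c^{\prime})=L(\gamma)$. Parametrize this Jordan domain as $f_{2}:\overline{\Delta}\to\overline{\mathfrak{D}^{\prime}(-I,\theta)}$ so that $(f_{2},\alpha)=I$ and $(f_{2},\beta)=c^{\prime}$.

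Next I rotate $S$ so that $p=0$. Since $p\in\partial\Sigma\subset\Sigma$ and $\Sigma$ lies in an open hemisphere $S_{1}^{\prime}$, we have $0\in S_{1}^{\prime}$ and hence $\overline{S_{1}^{\prime}}\subset\mathbb{C}$; and because the closed lune has perimeter $L(\partial\Sigma)<2\pi$, Lemma~\ref{ber} (together with the observation that the enclosed Jordan domain lies on the small-area side of its boundary) places the whole closed lune inside an open hemisphere $S_{2}^{\prime}$, which then also contains $p=0$ and so satisfies $\overline{S_{2}^{\prime}}\subset\mathbb{C}$. The remaining hypotheses of Lemma~\ref{for-circular} are immediate: $(f_{1},\alpha)=(f_{2},\alpha)=I$, $L(f_{1},\beta)=L(f_{2},\beta)=L(\gamma)$, $L(f_{1},\beta)+d(p,q)=L(\partial\Sigma)<2\pi$, and $(f_{2},\beta)=c^{\prime}$ is SCC. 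The lemma yields $A(\Gamma_{1})\leq A(\Gamma_{2})$ with equality iff $\gamma=c^{\prime}$. Since both surfaces lie in open hemispheres inside $\mathbb{C}$, nothing maps to $\infty$, so Lemma~\ref{Al} gives $A(\Sigma)=A(\Gamma_{1})$ and $A(\mathfrak{D}^{\prime}(-I,\theta))=A(\Gamma_{2})$; combining yields $A(\Sigma)\leq A(\mathfrak{D}^{\prime}(I,\theta))$.

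The equality case is the main subtlety. If $A(\Sigma)=A(\mathfrak{D}^{\prime}(I,\theta))$, then $\gamma=c^{\prime}$, so $\partial\Sigma=I+c^{\prime}$ is a Jordan curve bounding $\mathfrak{D}^{\prime}(-I,\theta)$. The argument principle (in its OPCOFOM form) then forces $\#f^{-1}(w)=1$ for $w\in\mathfrak{D}^{\prime}(-I,\theta)$ and $\#f^{-1}(w)=0$ for $w$ outside the closed lune, so $f:\overline{\Delta^{+}}\to\overline{\mathfrak{D}^{\prime}(-I,\theta)}$ is a degree-one covering; since $\Sigma\in\mathcal{F}$, it has no branch points and is therefore a homeomorphism, exhibiting $\Sigma$ as a simple closed domain congruent to $\overline{\mathfrak{D}^{\prime}(I,\theta)}$. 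The main technical obstacle throughout is purely bookkeeping --- matching orientations of $\gamma$, $I$, and $c^{\prime}$ correctly and verifying the simultaneous hemisphere-in-$\mathbb{C}$ condition after the rotation --- after which Lemma~\ref{for-circular} does the substantive work.
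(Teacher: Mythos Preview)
Your proof is correct and takes essentially the same approach as the paper, which simply states that the lemma is a direct consequence of Lemma~\ref{for-circular}; you have carefully filled in the reparametrization, orientation, and hemisphere-in-$\mathbb{C}$ details that the paper leaves implicit, and your handling of the equality case via the argument principle is sound.
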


\begin{lemma}
\label{2circle}Let $L<2\pi$ be a positive number, $x\in\lbrack0,L]$ and
$T_{x}$ and $T_{L-x}$ be two disks in some open hemisphere on $S$ with
$L(\partial T_{x})=x$ and $L(\partial T_{L-x})=L-x.$ Then
\[
2A(T_{L/2})\leq A(T_{x})+A(T_{L-x})\leq T_{L}%
\]
and $A(T_{x})+A(T_{L-x})$ strictly decreases on $[0,L/2].$
\end{lemma}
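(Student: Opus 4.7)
The plan is to invoke Bernstein's formula from Lemma \ref{ber}(i), which states that the area of a disk of perimeter $l$ in an open hemisphere on $S$ equals $h(l):=2\pi-\sqrt{4\pi^{2}-l^{2}}$ for $l\in[0,2\pi)$. Since $L<2\pi$ and $x\in[0,L]$, both $x$ and $L-x$ lie in $[0,2\pi)$, so the disks $T_{x},T_{L-x}$ exist and $A(T_{x})=h(x)$, $A(T_{L-x})=h(L-x)$. The statement to prove thus becomes $2h(L/2)\le h(x)+h(L-x)\le h(L)$ (I read the apparent ``$T_{L}$'' on the right as $A(T_{L})=h(L)$), together with strict decrease of $F(x):=h(x)+h(L-x)$ on $[0,L/2]$.

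By the evident symmetry $F(x)=F(L-x)$, it suffices to establish the strict decrease on $[0,L/2]$; once this is done, the two-sided inequality follows immediately by evaluating $F$ at $x=L/2$ (giving $2h(L/2)$) and at $x=0$, using $h(0)=0$ so that $F(0)=h(L)$. A direct differentiation gives
\[
h'(l)=\frac{l}{\sqrt{4\pi^{2}-l^{2}}},
\]
which is manifestly strictly increasing on $[0,2\pi)$. Consequently
\[
F'(x)=h'(x)-h'(L-x)<0\qquad\text{for all }x\in[0,L/2),
\]
because then $x<L-x<2\pi$. This yields the strict monotonicity, and hence all three conclusions.

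The only mild point to note is the degenerate case $x=0$ (or $x=L$), where $T_{x}$ collapses to a single point and $A(T_{x})=0=h(0)$; this is consistent with the formula and requires no separate argument. There is no genuine obstacle: the entire proof reduces to the strict convexity of $h$ on $[0,2\pi)$ together with the reflective symmetry of $F$ about $x=L/2$.
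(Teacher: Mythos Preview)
Your proof is correct and follows essentially the same approach as the paper: both use the explicit area formula $A(T_{l})=2\pi-\sqrt{4\pi^{2}-l^{2}}$ and differentiate $F(x)=A(T_{x})+A(T_{L-x})$ to show $F'(x)<0$ on $[0,L/2)$. Your phrasing via the strict monotonicity of $h'$ is marginally cleaner than the paper's direct computation of $F'$, but the argument is the same.
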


\begin{proof}
This follows from Corollary \ref{2-curvature} (C). But here we can give a
simpler proof. Let $f(x)=A(T_{x})+A(T_{L-x}).$ Then we have
\[
f(x)=4\pi-\sqrt{4\pi^{2}-x^{2}}-\sqrt{4\pi^{2}-\left(  L-x\right)  ^{2}},
\]
and%
\begin{align*}
f^{\prime}(x)  &  =\frac{x}{\sqrt{4\pi^{2}-x^{2}}}+\frac{\left(  x-L\right)
}{\sqrt{4\pi^{2}-\left(  L-x\right)  ^{2}}}\\
&  =\frac{x\sqrt{4\pi^{2}-\left(  L-x\right)  ^{2}}+\left(  x-L\right)
\sqrt{4\pi^{2}-x^{2}}}{\sqrt{4\pi^{2}-x^{2}}\sqrt{4\pi^{2}-\left(  L-x\right)
^{2}}}.
\end{align*}
Thus we have $f^{\prime}\left(  x\right)  <0$ on $[0,\frac{L}{2}),$ and so
$f(x)$ strictly decreases on $x\in\lbrack0,\frac{L}{2}]$.
\end{proof}

\begin{lemma}
\label{ccircle}Let $c_{n}=c_{n}\left(  q_{n}^{\prime},q_{n}^{\prime\prime
}\right)  $ be a sequence of SCC arcs convergent to an SCC arc $c_{0}%
=c_{0}\left(  q_{0}^{\prime},q_{0}^{\prime\prime}\right)  $ with
$q_{0}^{\prime}\neq q_{0}^{\prime\prime}.$ Assume that either $c_{0}$ is
straight with $L(c_{0})>\pi$ or $c_{0}$ is strictly convex. Then
$c_{0}+\overline{q_{0}^{\prime\prime}q_{0}^{\prime}}$ is a convex Jordan curve
and, for sufficiently large $n$, $c_{n}+\overline{q_{n}^{\prime\prime}%
q_{n}^{\prime}}$ is also a convex Jordan curve and converges to $c_{0}%
+\overline{q_{0}^{\prime\prime}q_{0}^{\prime}}.$ Thus for sufficiently large
$n,$ the lune enclosed by $c_{n}+\overline{q_{n}^{\prime\prime}q_{n}^{\prime}%
}$ is convex and converges to the lune enclosed by $c_{0}+\overline
{q_{0}^{\prime\prime}q_{0}^{\prime}}.$
\end{lemma}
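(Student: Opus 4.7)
The plan is divided into three stages. First, I verify that the chord $\overline{q_0''q_0'}$ is defined, i.e., that $d(q_0',q_0'')<\pi$. If $c_0$ is strictly convex then $c_0$ lies in some open hemisphere (cf.\ Definition \ref{in}(4)), so its endpoints are non-antipodal; if $c_0$ is straight with $L(c_0)>\pi$, then $q_0',q_0''$ lie on a common great circle and the complementary arc has length $2\pi-L(c_0)<\pi$. Since $c_n\to c_0$ implies $q_n'\to q_0'$ and $q_n''\to q_0''$, and the geodesic map $(p,q)\mapsto\overline{pq}$ is continuous on non-antipodal pairs, $\overline{q_n''q_n'}$ is defined for all large $n$ and converges uniformly to $\overline{q_0''q_0'}$; combined with the hypothesis $c_n\to c_0$, this gives $c_n+\overline{q_n''q_n'}\to c_0+\overline{q_0''q_0'}$ uniformly as paths.

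Second, I establish that $c_0+\overline{q_0''q_0'}$ is a convex Jordan curve by exhibiting its enclosed region as the intersection of two convex sets on $S$. Let $C_0$ be the circle containing $c_0$ and $G_0$ the great circle containing $\overline{q_0''q_0'}$; let $\overline{D_0}$ be the closed disk bounded by $C_0$ on the left of $c_0$ (per the orientation convention of Definition \ref{in}), and let $\overline{H_0}$ be the closed hemisphere bounded by $G_0$ on the left of $\overline{q_0''q_0'}$. In the strictly convex case, $C_0\neq G_0$, so the circular arc $c_0$ meets the geodesic $\overline{q_0''q_0'}$ only at $\{q_0',q_0''\}$; hence $c_0+\overline{q_0''q_0'}$ is a Jordan curve whose enclosed region is precisely the circular segment $\overline{D_0}\cap\overline{H_0}$, convex as an intersection of convex sets. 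In the straight case with $L(c_0)>\pi$, one has $C_0=G_0$ and the chord is the complementary minor arc on $G_0$, so $c_0+\overline{q_0''q_0'}$ traces $G_0$ once and encloses the convex hemisphere $\overline{D_0}=\overline{H_0}$.

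Third, I apply the same intersection description to $c_n$ for large $n$: defining $C_n$, $G_n$, $\overline{D_n}$, $\overline{H_n}$ analogously, the SCC hypothesis forces $\overline{D_n}$ to be a convex closed disk (possibly a closed hemisphere in the degenerate subcase $C_n=G_n$), while $\overline{H_n}$ is always a closed hemisphere. Thus $c_n+\overline{q_n''q_n'}$ is a Jordan curve enclosing the convex region $\overline{D_n}\cap\overline{H_n}$, and convergence of the lunes follows from $C_n\to C_0$ and $G_n\to G_0$ together with the continuity of intersections of uniformly converging convex disks and hemispheres. The main technical subtlety arises in the straight case $L(c_0)>\pi$, where $c_n$ may be either straight (so $C_n=G_n$ and $\overline{D_n}$ is a hemisphere) or strictly convex with small curvature (so $C_n\neq G_n$ and $\overline{D_n}$ is a proper disk close to a hemisphere). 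In both subcases the SCC orientation consistently places $\overline{D_n}$ on the convex side of $C_n$, so both $\overline{D_n}$ and $\overline{H_n}$ remain convex and converge to the common limiting hemisphere bounded by $G_0=C_0$.
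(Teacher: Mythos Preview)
Your proposal is correct and follows essentially the same approach as the paper: both proofs split into the strictly convex case and the straight case with $L(c_0)>\pi$, verify that $d(q_0',q_0'')<\pi$ so the chord is defined and depends continuously on endpoints, and then deduce convergence of the closed curves and their enclosed lunes. The paper's own proof is much terser, simply declaring the strictly convex case ``clear'' and handling the straight case in two sentences; your intersection-of-convex-sets description $\overline{D_n}\cap\overline{H_n}$ is a clean way to make rigorous the convexity assertions that the paper leaves implicit, and it also transparently explains why the Jordan property and the convergence of lunes both hold.
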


\begin{proof}
This is clear when $c_{0}$ is strictly convex.

Assume $c_{0}$ is straight and $L(c_{0})>\pi.$ Then $d\left(  q_{0}^{\prime
},q_{0}^{\prime\prime}\right)  <\pi$ and thus for sufficiently large $n,$
$d\left(  q_{n}^{\prime},q_{n}^{\prime\prime}\right)  <\pi$ and $\overline
{q_{n}^{\prime}q_{n}^{\prime\prime}}$ converges to $\overline{q_{0}^{\prime
}q_{0}^{\prime\prime}}$. It is clear that $c_{0}+\overline{q_{0}^{\prime
\prime}q_{0}^{\prime}}$ is a great circle and thus, by the assumption of
convergence, $c_{n}+\overline{q_{n}^{\prime\prime}q_{n}^{\prime}}\ $converges
to $c_{0}+\overline{q_{0}^{\prime\prime}q_{0}^{\prime}}.$
\end{proof}

\section{The monotonicity of the function $h_{1}(\delta)$ \label{mono}}

For a line segment $I$ on $S$ with $L(I)=\delta<\pi,$ and a positive number
$\theta\in\lbrack0,\pi]$ define%
\[
A(\delta,\theta,\theta)=A(\mathfrak{D}\left(  I,\theta,\theta\right)  ),
\]
and
\[
L(\delta,\theta,\theta)=L(\partial\mathfrak{D}\left(  I,\theta,\theta\right)
),
\]
where $\mathfrak{D}\left(  I,\theta,\theta\right)  $ is the lens defined in
Definition \ref{lune-lens}. Then for a constant $A_{0}\ $define
\begin{equation}
h(A_{0},\delta,\theta,\theta)=\frac{A_{0}+\left(  q-2\right)  A(\delta
,\theta,\theta)}{L(\delta,\theta,\theta)}=\left(  q-2\right)  \frac
{\frac{A_{0}}{q-2}+A(\delta,\theta,\theta)}{L(\delta,\theta,\theta)}.
\label{ag59}%
\end{equation}

\begin{lemma}
\label{mid}For $\delta\in(0,\pi)$ and $\theta\in\lbrack0,\frac{\pi}{2}]$ we
have%
\begin{equation}
L(\delta,\theta,\theta)=\frac{4\tan\frac{\delta}{2}}{\sqrt{\sin^{2}\theta
+\tan^{2}\frac{\delta}{2}}}\left[  \arctan\frac{\sqrt{\sin^{2}\theta+\tan
^{2}\frac{\delta}{2}}}{\cos\theta}\right]  , \label{1110-1}%
\end{equation}%
\begin{equation}
A(\delta,\theta,\theta)=4\theta-\frac{L(\delta,\theta,\theta)\sin\theta}%
{\tan\frac{\delta}{2}}, \label{1110-2}%
\end{equation}
and%
\begin{equation}
h(A_{0},\delta,\theta,\theta)=\frac{\left(  \frac{A_{0}}{4}+\left(
q-2\right)  \theta\right)  \sqrt{\sin^{2}\theta+\tan^{2}\frac{\delta}{2}}%
}{\tan\frac{\delta}{2}\left[  \arctan\frac{\sqrt{\sin^{2}\theta+\tan^{2}%
\frac{\delta}{2}}}{\cos\theta}\right]  }-\frac{\left(  q-2\right)  \sin\theta
}{\tan\frac{\delta}{2}}. \label{ag60}%
\end{equation}

\end{lemma}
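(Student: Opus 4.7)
The plan is to establish the three identities in order. After a rotation of $S$, I may place the chord $I$ on the real axis with endpoints $0$ and $b = \tan(\delta/2)$, so $L(I) = 2\arctan b = \delta$. Since stereographic projection sends small circles on $S$ to Euclidean circles, the upper bounding arc $c$ of $\mathfrak{D}(I,\theta,\theta)$ is, in these coordinates, the planar arc through $0$ and $b$ whose tangent makes angle $\theta$ with the real axis at each endpoint. Elementary plane geometry identifies this arc as part of the Euclidean circle with center $z_{0} = b/2 - (ib/2)\cot\theta$ and radius $R = b/(2\sin\theta)$.

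Parametrizing $z(\phi) = z_{0} + Re^{i\phi}$ and expanding $|z(\phi)|^{2}$, one finds $|z|^{2} = (b^{2}/(2\sin^{2}\theta))(1 + \sin(\theta - \phi))$; setting $u = (\pi/2 + \theta - \phi)/2$ and using $1 - \cos(2u) = 2\sin^{2}u$ gives $1+|z|^{2} = (\sin^{2}\theta + b^{2}\sin^{2}u)/\sin^{2}\theta$. The spherical length $L(c) = \int_{c} 2|dz|/(1+|z|^{2})$ then reduces, under the further substitution $v = \tan u$, to
\[
L(c) = \int_{0}^{\tan\theta} \frac{2b\sin\theta \, dv}{\sin^{2}\theta + (b^{2}+\sin^{2}\theta)\,v^{2}} = \frac{2b}{\sqrt{b^{2}+\sin^{2}\theta}} \arctan\frac{\sqrt{b^{2}+\sin^{2}\theta}}{\cos\theta}.
\]
Doubling (by symmetry of the two bounding arcs) and restoring $b = \tan(\delta/2)$ yields (\ref{1110-1}).

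For the area I would apply the Gauss--Bonnet theorem to $\mathfrak{D}(I,\theta,\theta)$ on the unit sphere. The two bounding arcs are arcs of small circles, and so have constant common geodesic curvature $\kappa_{g}$; each cusp is a corner with interior angle $2\theta$ and hence exterior angle $\pi - 2\theta$. Thus
\[
A(\delta,\theta,\theta) + \kappa_{g}\, L(\delta,\theta,\theta) + 2(\pi - 2\theta) = 2\pi,
\]
i.e.\ $A(\delta,\theta,\theta) = 4\theta - \kappa_{g}\, L(\delta,\theta,\theta)$. To identify $\kappa_{g}$, let $P$ be a spherical pole of the small circle containing the upper arc and $M$ the midpoint of $I$. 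Then $Pp_{1}M$ is a right spherical triangle with right angle at $M$, hypotenuse $Pp_{1} = r$, leg $p_{1}M = \delta/2$, and angle $\pi/2 - \theta$ at $p_{1}$ (because $Pp_{1}$ is orthogonal to the tangent to the small circle at $p_{1}$, and that tangent meets $I$ at angle $\theta$). Napier's rules yield $\cos r = \cos(\delta/2)\cos(PM)$ and $\sin(PM) = \sin r \cos\theta$, from which an elementary manipulation gives $\cot r = \sin\theta/\tan(\delta/2)$. Since a small circle of spherical radius $r$ has geodesic curvature $\cot r$, this establishes (\ref{1110-2}). Formula (\ref{ag60}) is then an immediate algebraic substitution of (\ref{1110-1}) and (\ref{1110-2}) into (\ref{ag59}).

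The main technical nuisance is the length integral: tracking the substitutions cleanly and identifying the correct branch of $\arctan$ (noting that the argument $\sqrt{b^{2}+\sin^{2}\theta}/\cos\theta$ blows up only at $\theta = \pi/2$). A useful sanity check is that both formulas specialize correctly at the extremes: at $\theta = 0$ the lens collapses to $I$, giving $L = 2\delta$ and $A = 0$, while at $\theta = \pi/2$ the lens becomes a spherical cap of angular radius $\delta/2$ with $L = 2\pi\sin(\delta/2)$ and $A = 2\pi(1 - \cos(\delta/2))$, both recovered directly from (\ref{1110-1}) and (\ref{1110-2}).
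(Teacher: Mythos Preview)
Your argument is correct. The length computation is essentially the same as the paper's---both integrate the spherical length element $2|dz|/(1+|z|^{2})$ along the planar image of the arc under stereographic projection; the paper uses the polar parametrization $\alpha(t)=\dfrac{a_{\delta}\sin(\theta-t)}{\sin\theta}\,e^{it}$ (with $a_{\delta}=\tan(\delta/2)$) while you use the center--radius form, but the resulting integrals and substitutions are equivalent.

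The area formula is where the approaches genuinely diverge. The paper computes $A$ by directly integrating the spherical area form $\dfrac{4\,dx\,dy}{(1+|z|^{2})^{2}}$ in polar coordinates over the lune, and observes along the way that the integrand $\dfrac{1}{1+|\alpha(t)|^{2}}$ is exactly $\dfrac{\sin\theta}{a_{\delta}}$ times the length integrand, which yields (\ref{1110-2}) with no further geometry. Your route via Gauss--Bonnet is more conceptual: once you know the geodesic curvature of the bounding small circle, the relation $A=4\theta-\kappa_{g}L$ is immediate, and your Napier's-rule computation of $\kappa_{g}=\cot r=\sin\theta/\tan(\delta/2)$ is clean and correct (the pole $P$ you use is indeed the near pole, lying on the perpendicular bisector of $I$ on the lens side, so the right angle at $M$ and the angle $\pi/2-\theta$ at $p_{1}$ are justified). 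The paper's method is more self-contained (no appeal to Gauss--Bonnet or spherical trigonometry), while yours explains \emph{why} the coefficient in (\ref{1110-2}) is what it is---it is literally the geodesic curvature of the boundary arc.
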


Let $a_{\delta}$ be the positive number such that%
\[
\delta=\int_{0}^{a_{\delta}}\frac{2dx}{1+x^{2}}=2\arctan a_{\delta},
\]
Then $a_{\delta}=\tan\frac{\delta}{2},$ and we have
\[
A(\delta,\theta,\theta)=A(\mathfrak{D}(\overline{0,a_{\delta}},\theta,\theta)
\]
and
\[
L(\delta,\theta,\theta)=L(\partial\mathfrak{D}(\overline{0,a_{\delta}}%
,\theta,\theta).
\]
%

\begin{figure}
[ptb]
\begin{center}
\ifcase\msipdfoutput
\includegraphics[
trim=0.000000in 0.000000in 0.050069in 0.000000in,
height=2.8478in,
width=4.1451in
]%
{51.ps}%

\caption{ }%
\label{Ratio-1}%
\end{center}
\end{figure}

\begin{proof}
[\textbf{Proof of Lemma \ref{mid}}]We assume $\theta\in(0,\frac{\pi}{2}]$.
Then, the line segment $\overline{0,a_{\delta}}$ on the sphere $S$ divides
$\mathfrak{D}(\delta,\theta,\theta)$ into two symmetric lunes $\mathfrak{D}%
^{\prime}(\overline{0,a_{\delta}},\theta)$ and $\mathfrak{D}^{\prime
}(\overline{a_{\delta},0},\theta)$, and the upper lune is $\mathfrak{D}%
^{\prime}(\overline{a_{\delta},0},\theta)$, as in Figure \ref{Ratio-1} which
is in the plane $\mathbb{C}$. As shown in the plane figure, let $\alpha$ be
the circular arc on $\mathbb{C}$ of $\partial\mathfrak{D}^{\prime}%
(\overline{a_{\delta},0},\theta)$ from $a_{\delta}$ to $0$, let $c_{\theta}$
with $\operatorname{Im}c_{\theta}\leq0$ be the center of the circle on
$\mathbb{C}$ containing $\alpha,$ and let $c_{\theta}^{\prime}=2c_{\theta}.$
Then
\[
|c_{\theta}^{\prime}-c_{\theta}|=|0-c_{\theta}|=|c_{\theta}-a_{\delta}|,
\]
and thus $c_{\theta}^{\prime}$ is on the circle containing $\alpha$, and the
triangle in $\mathbb{C}$ with vertices $0,a_{\delta}$ and $c_{\theta}^{\prime
}$ is a right triangle whose interior angle at $c_{\theta}^{\prime}$ equals
$\theta$. Thus we have $|c_{\theta}^{\prime}|=a_{\delta}/\sin\theta.$ On the
other hand, for any point $z\in\alpha,$ the triangle with vertices
$0,c_{\theta}^{\prime}$ and $z$ is also a right triangle, as in the figure,
whose angle at $c_{\theta}^{\prime}$ has value $\theta-\arg z$. Thus
\[
|z|=\sin(\theta-t)|c_{\theta}^{\prime}|=a_{\delta}\sin(\theta-t)/\sin\theta,
\]
where $t=\arg z.$ Then, we obtain a parametric expression of the circular path
$\alpha$:%
\[
\alpha(t)=\frac{a_{\delta}\sin(\theta-t)}{\sin\theta}e^{it},t\in
\lbrack0,\theta],
\]
and%
\begin{equation}
|d\alpha(t)|=\frac{a_{\delta}|-e^{it}\cos(\theta-t)+ie^{it}\sin(\theta
-t)|dt}{\sin\theta}=\frac{a_{\delta}dt}{\sin\theta},t\in\lbrack0,\theta].
\label{68}%
\end{equation}

Therefore, we have%
\begin{align}
L(\delta,\theta,\theta)  &  =2L(\alpha)=2\int_{\alpha}\frac{2|dz|}{1+|z|^{2}%
}=\int_{0}^{\theta}\frac{4|d\alpha(t)|}{1+|\alpha(t)|^{2}}\label{Lcom}\\
&  =4\int_{0}^{\theta}\frac{a_{\delta}\sin\theta}{\sin^{2}\theta+a_{\delta
}^{2}\sin^{2}(\theta-t)}dt\nonumber\\
&  =4\int_{0}^{\theta}\frac{a_{\delta}\sin\theta}{\sin^{2}\theta+a_{\delta
}^{2}\sin^{2}x}dx,\nonumber
\end{align}
and so (\ref{1110-1}) follows from
\begin{align*}
L(\delta,\theta,\theta)  &  =-4\frac{a_{\delta}}{\sqrt{\sin^{2}\theta
+a_{\delta}^{2}}}\int_{0}^{\theta}\frac{d\frac{\sin\theta\cot x}{\sqrt
{\sin^{2}\theta+a_{\delta}^{2}}}}{\left[  \frac{\sin\theta\cot x}{\sqrt
{\sin^{2}\theta+a_{\delta}^{2}}}\right]  ^{2}+1}\\
&  =\left.  -4\frac{a_{\delta}}{\sqrt{\sin^{2}\theta+a_{\delta}^{2}}}%
\arctan\frac{\sin\theta\cot x}{\sqrt{\sin^{2}\theta+a_{\delta}^{2}}%
}\right\vert _{0}^{\theta}\\
&  =\frac{2\pi a_{\delta}}{\sqrt{\sin^{2}\theta+a_{\delta}^{2}}}%
-4\frac{a_{\delta}\arctan\frac{\cos\theta}{\sqrt{\sin^{2}\theta+a_{\delta}%
^{2}}}}{\sqrt{\sin^{2}\theta+a_{\delta}^{2}}}\\
&  =\frac{4a_{\delta}\arctan\frac{\sqrt{\sin^{2}\theta+a_{\delta}^{2}}}%
{\cos\theta}}{\sqrt{\sin^{2}\theta+a_{\delta}^{2}}}\\
&  =\frac{4\tan\frac{\delta}{2}}{\sqrt{\sin^{2}\theta+\tan^{2}\frac{\delta}%
{2}}}\arctan\frac{\sqrt{\sin^{2}\theta+\tan^{2}\frac{\delta}{2}}}{\cos\theta}%
\end{align*}

It is then clear that (\ref{1110-2}) will follow from (\ref{68}), the first
line of (\ref{Lcom}) and%
\begin{align*}
A(\delta,\theta,\theta)  &  =2A(\mathfrak{D}^{\prime}(\overline{0,a_{\delta}%
},\theta))=2%
{\displaystyle\iint\limits_{\mathfrak{D}^{\prime}(\overline{0,a_{\delta}%
},\theta)}}
\frac{4dxdy}{\left(  1+|z|^{2}\right)  ^{2}}\\
&  =2\int_{0}^{\theta}dt\int_{0}^{|\alpha_{\theta}(t)|}\frac{4rdr}{\left(
1+r^{2}\right)  ^{2}}=2\int_{0}^{\theta}\left(  2-\frac{2}{1+|\alpha_{\theta
}(t)|^{2}}\right)  dt\\
&  =4\theta-4\int_{0}^{\theta}\frac{dt}{1+|\alpha_{\theta}(t)|^{2}}%
=4\theta-\frac{\sin\theta}{a_{\delta}}\int_{0}^{\theta}\frac{4|d\alpha
_{\theta}(t)|}{1+|\alpha_{\theta}(t)|^{2}}\\
&  =4\theta-\frac{\sin\theta}{\tan\frac{\delta}{2}}L(\delta,\theta,\theta).
\end{align*}

Then
\begin{align*}
h(A_{0},\delta,\theta,\theta)  &  =\frac{A_{0}+\left(  q-2\right)
A(\delta,\theta)}{L(\delta,\theta)}\\
&  =\frac{A_{0}+4\theta\left(  q-2\right)  -\frac{\left(  q-2\right)
\sin\theta}{\tan\frac{\delta}{2}}L(\delta,\theta,\theta)}{L(\delta
,\theta,\theta)}\\
&  =\frac{A_{0}+4\theta\left(  q-2\right)  }{L(\delta,\theta,\theta)}%
-\frac{\left(  q-2\right)  \sin\theta}{\tan\frac{\delta}{2}}\\
&  =\frac{\left(  A_{0}+4\theta\left(  q-2\right)  \right)  \sqrt{\sin
^{2}\theta+\tan^{2}\frac{\delta}{2}}}{4\tan\frac{\delta}{2}\arctan\frac
{\sqrt{\sin^{2}\theta+\tan^{2}\frac{\delta}{2}}}{\cos\theta}}-\frac{\left(
q-2\right)  \sin\theta}{\tan\frac{\delta}{2}}\\
&  =\frac{\left(  \frac{A_{0}}{4}+\theta\left(  q-2\right)  \right)
\sqrt{\sin^{2}\theta+\tan^{2}\frac{\delta}{2}}}{\tan\frac{\delta}{2}%
\arctan\frac{\sqrt{\sin^{2}\theta+\tan^{2}\frac{\delta}{2}}}{\cos\theta}%
}-\frac{\left(  q-2\right)  \sin\theta}{\tan\frac{\delta}{2}}.
\end{align*}

\end{proof}

It is clear that when the boundary length of $\mathfrak{D}\left(
I,\theta,\theta\right)  $ is fixed while $\theta$ increases, $\delta=L(I)$ has
to decrease.

\begin{lemma}
\label{area}If $0<\delta_{2}<\delta_{1}<\pi,$ $\theta_{1}<\theta_{2}\leq
\frac{\pi}{2}$ and $L\left(  \delta_{1},\theta_{1},\theta_{1}\right)
=L(\delta_{2},\theta_{2},\theta_{2}),$ then
\[
A(\delta_{1},\theta_{1},\theta_{1})<A(\delta_{2},\theta_{2},\theta_{2}).
\]
%

\begin{figure}
[ptb]
\begin{center}
\ifcase\msipdfoutput
\includegraphics[
trim=0.000000in 0.000000in 0.022420in 0.000000in,
height=3.2526in,
width=4.1234in
]%
{Increasesare.ps}%

\caption{ }%
\label{area-increase}%
\end{center}
\end{figure}

\end{lemma}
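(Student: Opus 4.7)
The plan is to reparametrize the level curve $\{L(\delta,\theta,\theta)=L_{0}\}$ by a single auxiliary angle $\phi$ that turns both the length and area formulas of Lemma~\ref{mid} into remarkably clean expressions, and then to reduce the desired monotonicity to an elementary trigonometric inequality.

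The natural candidate is $\phi=\arctan\frac{\sqrt{\sin^{2}\theta+\tan^{2}(\delta/2)}}{\cos\theta}$, the very quantity appearing in \eqref{1110-1}. A direct calculation from $\sec^{2}\phi=1+\tan^{2}\phi$ yields the clean identity
\[
\cos\phi=\cos\theta\,\cos(\delta/2),
\]
which is just the spherical Pythagorean theorem with legs $\theta$ and $\delta/2$ and hypotenuse $\phi$. Substituting this into \eqref{1110-1} and simplifying gives $L(\delta,\theta,\theta)=4\phi\sin(\delta/2)/\sin\phi$, so along the level curve $L=L_{0}$ one has
\[
\sin(\delta/2)=\frac{L_{0}\sin\phi}{4\phi}.
\]
Feeding this together with $\cos\phi=\cos\theta\cos(\delta/2)$ into \eqref{1110-2} reduces $A(\delta,\theta,\theta)$ to the strikingly simple form
\[
A(\delta,\theta,\theta)=4\theta-\sqrt{16\phi^{2}-L_{0}^{2}}\qquad(\text{on }L=L_{0}).
\]
Endpoint checks confirm the set-up: the degenerate case $\theta=0$ (chord $\delta=L_{0}/2$) gives $\phi=L_{0}/4$ and $A=0$, while the disk case $\theta=\pi/2$ gives $\phi=\pi/2$ and $A=2\pi-\sqrt{4\pi^{2}-L_{0}^{2}}$.

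With $\phi$ as the parameter, I will differentiate the two defining relations to compute
\[
\theta'(\phi)=\frac{4\phi}{\sqrt{16\phi^{2}-L_{0}^{2}}}\left[1+\frac{L_{0}^{2}\cos\phi\,(\sin\phi-\phi\cos\phi)}{16\phi^{3}\cos^{2}(\delta/2)}\right],
\]
and combine this with $\frac{d}{d\phi}\sqrt{16\phi^{2}-L_{0}^{2}}=\frac{16\phi}{\sqrt{16\phi^{2}-L_{0}^{2}}}$ to obtain
\[
\frac{dA}{d\phi}=\frac{L_{0}^{2}\cos\phi\,(\sin\phi-\phi\cos\phi)}{\phi^{2}\cos^{2}(\delta/2)\,\sqrt{16\phi^{2}-L_{0}^{2}}}.
\]
Positivity on $(L_{0}/4,\pi/2)$ then reduces to $\sin\phi>\phi\cos\phi$, which vanishes at $0$ and has derivative $\phi\sin\phi>0$, hence holds on $(0,\pi/2)$. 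The same bracket shows $\theta'(\phi)>0$, so $\theta$ is strictly monotone in $\phi$ along the level curve; the hypothesis $\theta_{1}<\theta_{2}$ therefore gives $\phi_{1}<\phi_{2}$, and monotonicity of $A$ in $\phi$ yields the conclusion.

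The main obstacle is the trigonometric reduction that produces the clean formula $A=4\theta-\sqrt{16\phi^{2}-L_{0}^{2}}$; once the change of variable $\phi$ is identified as the spherical hypotenuse, this step is routine but demands careful bookkeeping of the square-root expressions. Every remaining step is straightforward single-variable calculus, and the whole argument is independent of the particular position of the chord $I$ since all quantities in Lemma~\ref{mid} depend only on its length $\delta$.
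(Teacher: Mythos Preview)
Your argument is correct and takes a genuinely different route from the paper. The paper's proof is geometric: it fixes $\theta_{1}$ and $L$, cuts off a small symmetric neighbourhood of one cusp of the lens $\mathfrak{D}(\overline{AB},\theta_{1},\theta_{1})$, replaces the removed boundary by a single circular arc of the same length, and invokes the lune isoperimetric inequality (Lemma~\ref{good-lune}) twice to show that the resulting domain, and then the symmetric lens with the shorter chord, has strictly larger area. This local step is then iterated by letting the chord shrink continuously. Your approach instead exploits the explicit formulas of Lemma~\ref{mid}: the substitution $\cos\phi=\cos\theta\cos(\delta/2)$ collapses $L$ to $4\phi\sin(\delta/2)/\sin\phi$ and, on the level set $L=L_{0}$, the area to the remarkably clean $A=4\theta-\sqrt{16\phi^{2}-L_{0}^{2}}$; the monotonicity then reduces to $\tan\phi>\phi$ on $(0,\pi/2)$. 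The paper's argument has the advantage of fitting the isoperimetric machinery used throughout and of not relying on the closed forms; your argument is shorter, yields an explicit expression for $dA/d\phi$, and shows as a by-product that the hypothesis $\delta_{2}<\delta_{1}$ is automatic once $L$ is fixed and $\theta_{1}<\theta_{2}$, since $\sin(\delta/2)=L_{0}\sin\phi/(4\phi)$ is strictly decreasing in $\phi$.
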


\begin{proof}
We fix $\theta_{1}$ and $L(\delta_{1},\theta_{1},\theta_{1})=L(\delta
_{2},\theta_{2},\theta_{2})=L,$ and show that the conclusion holds when
$\theta_{2}-\theta_{1}$ is small enough. Assume $\mathfrak{D}(\overline
{AB},\theta_{1},\theta_{1})$ with $L(\overline{AB})=\delta_{1}$ is given as in
Figure \ref{area-increase}, in which $A$ and $B$ are the two cusps. For a pair
of points $C$ and $C^{\prime}$ on $\partial\mathfrak{D}(\overline{AB}%
,\theta_{1},\theta_{1})$ which are symmetric to $\overline{AB},$ we let
$\gamma_{CC^{\prime}}$ be the circular arc with chord $\overline{CC^{\prime}}$
and length equal to the part $\widehat{C^{\prime}AC}$ of $\partial
\mathfrak{D}(\overline{AB},\theta_{1},\theta_{1}).$ It is clear that for
sufficiently small $\varepsilon,$ there exists a pair of $C$ and $C^{\prime}$
such that $\gamma_{CC^{\prime}}$ intersects $AB$ at $A^{\prime}$ with
$d(A^{\prime},B)=\delta_{1}-\varepsilon$ as in Figure \ref{area-increase} (2).
Let $D_{CBC^{\prime}A^{\prime}C}$ be the domain enclosed by the Jordan curve
consisted of $\gamma_{CC^{\prime}}$ and the part of $\partial\mathfrak{D}%
(\overline{AB},\theta_{1},\theta_{1})$ under the segment $\overline
{CC^{\prime}}.$ Then by Lemma \ref{good-lune} we have
\[
A(\delta_{1},\theta_{1},\theta_{1})\leq A(D_{CBC^{\prime}A^{\prime}C}),
\]
equality holding only if $\widehat{C^{\prime}AC}$ is a circular arc. But by
the hypothesis $\theta_{1}<\theta_{2}\leq\frac{\pi}{2},$ $\widehat{C^{\prime
}AC}$ can not be circular, and so the equality can not hold. It is clear that
the boundary length of the domain $D_{CBC^{\prime}A^{\prime}C}$ equals $L,$
and by Lemma \ref{good-lune} the domain $\mathfrak{D}(\overline{A^{\prime}%
B},\theta_{2},\theta_{2})$ with $L(\mathfrak{D}(\overline{A^{\prime}B}%
,\theta_{2},\theta_{2}))=L$ has larger area than $D_{CBC^{\prime}A^{\prime}%
C}.$ Thus we have%
\[
A(\delta_{1},\theta_{1},\theta_{1})<A(D_{CBC^{\prime}A^{\prime}C}%
)<A(\delta_{1}-\varepsilon,\theta_{2},\theta_{2}).
\]
Since $\varepsilon$ is arbitrary, the result follows.
\end{proof}

\begin{lemma}
\label{disk}(i) For $\delta\in\lbrack\delta_{E_{q}},\pi),$ let $D(\delta)$ be
a closed disk on $S$ with diameter $\delta$ and write $A(\delta)=A(D(\delta
))$, $L(\delta)=L(\partial D(\delta)).$ Then%
\begin{equation}
\left.
\begin{tabular}
[c]{ll}%
$h\left(  \delta\right)  $ & $=\dfrac{R(D(\delta))+4\pi}{L(\delta)}%
=\dfrac{4\pi-4\pi\overline{n}\left(  D(\delta\right)  )+\left(  q-2\right)
A(\delta)}{L(\delta)}$\\
& $=h(4\pi\left(  1-\overline{n}\left(  D(\delta\right)  \right)
),\delta,\frac{\pi}{2},\frac{\pi}{2}),$%
\end{tabular}
\ \ \ \right.  \label{529-1}%
\end{equation}
where $h\left(  A_{0},\delta,\theta,\theta\right)  $ is defined in (\ref{ag59}).

(ii) For any open interval $I^{\circ}$ in $[\delta_{E_{q}},\pi),$ if
$\overline{n}\left(  D\left(  \delta\right)  \right)  $ is a constant for each
$\delta\in I^{\circ}$, then $h\left(  \delta\right)  =\frac{R(D(\delta))+4\pi
}{L(\delta)}$ is real analytic on $I^{\circ}$ with%
\begin{equation}
h^{\prime}\left(  \delta\right)  =\frac{-\left(  q-2\overline{n}\left(
D(\delta\right)  )\right)  \cos\frac{\delta}{2}+q-2}{2\sin^{2}\frac{\delta}%
{2}}, \label{h'(d)}%
\end{equation}
and the following hold.

(ii1) If $\overline{n}\left(  D\left(  \delta\right)  \right)  =0$ on
$I^{\circ}$ and $\delta_{E_{q}}\geq2\arccos\frac{q-2}{q},$ then $h\left(
\delta\right)  $ strictly increases on $I^{\circ}\cap\lbrack\delta_{E_{q}}%
,\pi).$

(ii2) If $\overline{n}\left(  D\left(  \delta\right)  \right)  =0$ on
$I^{\circ}$ and $\delta_{E_{q}}<2\arccos\frac{q-2}{q},$ then $h\left(
\delta\right)  $ strictly decreases on $I^{\circ}\cap\lbrack\delta_{E_{q}%
},2\arccos\frac{q-2}{q}]$ and strictly increases on $I^{\circ}\cap
\lbrack2\arccos\frac{q-2}{q},\pi].$

(ii3) If $\overline{n}\left(  D\left(  \delta\right)  \right)  \geq1$ on
$I^{\circ},$ then $h\left(  \delta\right)  $ strictly increases on $I^{\circ
}.$

(ii4) $h\left(  \delta\right)  $ cannot assume the maximum value for every
$\delta\in I^{\circ}.$
\end{lemma}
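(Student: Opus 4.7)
The plan is to handle (i) by identifying a spherical disk as a special lens, then to reduce (ii) to a one-variable calculus exercise by specializing Lemma \ref{mid} to $\theta=\pi/2$. For (i), observe that a closed disk $D(\delta)$ of spherical diameter $\delta$ is precisely the lens $\mathfrak{D}(I,\pi/2,\pi/2)$ with $L(I)=\delta$: at each endpoint of $I$ the two bounding circular arcs must meet $I$ perpendicularly, so they combine into the single circle of diameter $I$. Thus $A(\delta) = A(\delta,\pi/2,\pi/2)$ and $L(\delta) = L(\delta,\pi/2,\pi/2)$, and the triple equality in (\ref{529-1}) follows by substituting $A_0 = 4\pi(1-\overline{n}(D(\delta)))$ into (\ref{ag59}) and using the definition of $R$.

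For (ii), fix the value $\nu = \overline{n}(D(\delta))$, which is constant on $I^\circ$, and specialize Lemma \ref{mid} to $\theta = \pi/2$. Since $\cos\theta \to 0^+$, the $\arctan$ factor in (\ref{1110-1}) tends to $\pi/2$ and the lemma collapses to the familiar spherical-cap formulas
$$L(\delta) = 2\pi\sin(\delta/2), \qquad A(\delta) = 2\pi\bigl(1-\cos(\delta/2)\bigr).$$
Substituting into part (i) produces the real-analytic expression
$$h(\delta) = \frac{q - 2\nu - (q-2)\cos(\delta/2)}{\sin(\delta/2)},$$
and a short differentiation (using $\sin^2+\cos^2 = 1$ to absorb one term in the numerator) gives (\ref{h'(d)}) on the nose. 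Everything thereafter is sign analysis of the numerator $N(\delta) = (q-2) - (q-2\nu)\cos(\delta/2)$.

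For (ii1) and (ii2), take $\nu=0$: then $N$ vanishes precisely at $\delta^{\ast} = 2\arccos\frac{q-2}{q}$ and passes from negative to positive there, which yields the two monotonicity patterns directly. For (ii3), assume $\nu\ge 1$: when $q-2\nu\le 0$ the positivity $N(\delta)>0$ is immediate from $q\ge 3$; when $0<q-2\nu\le q-2$ it follows from the strict inequality $\cos(\delta/2)<1$ on $(0,\pi)$, so $h$ is strictly increasing. For (ii4), the three subcases together show that $h'$ has at most one zero in $I^\circ$, so the real-analytic function $h$ is non-constant, and in particular cannot equal its maximum at every point of $I^\circ$. There is no serious obstacle; the only mildly delicate point is the $\theta\to\pi/2$ limit in Lemma \ref{mid}, where $\cos\theta$ appears inside an $\arctan$ denominator and must be treated as a one-sided limit.
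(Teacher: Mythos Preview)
Your proof is correct and follows essentially the same approach as the paper: both reduce to the explicit spherical-cap formulas $L(\delta)=2\pi\sin(\delta/2)$ and $A(\delta)=2\pi(1-\cos(\delta/2))$, compute $h(\delta)=\bigl(q-2\nu-(q-2)\cos(\delta/2)\bigr)/\sin(\delta/2)$, differentiate, and read off (ii1)--(ii4) from the sign of the numerator. The only cosmetic difference is that you recover the cap formulas by specializing Lemma~\ref{mid} at $\theta=\pi/2$, whereas the paper simply quotes them as elementary; also, (ii4) in the paper is meant as ``no interior maximum'', which your sign analysis in fact establishes (the unique possible critical point is a strict minimum), though you phrased the conclusion as non-constancy.
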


\begin{proof}
By definition, (\ref{529-1}) holds trivially. It is clear that%
\begin{align*}
h(\delta)  &  =\frac{R(D(\delta))+4\pi}{L(\delta)}=h\left(  4\pi-4\pi
\overline{n}\left(  D\left(  \delta\right)  \right)  ,\delta,\frac{\pi}%
{2},\frac{\pi}{2}\right) \\
&  =\frac{4\pi-4\pi\overline{n}\left(  D(\delta\right)  )+\left(  q-2\right)
A(\delta)}{L(\delta)}\\
&  =\frac{4\pi-4\pi\overline{n}\left(  D(\delta\right)  )+2\pi\left(
q-2\right)  (1-\cos\frac{\delta}{2})}{2\pi\sin\frac{\delta}{2}}\\
&  =\frac{q-2\overline{n}\left(  D(\delta\right)  )-\left(  q-2\right)
\cos\frac{\delta}{2}}{\sin\frac{\delta}{2}}.
\end{align*}
Thus we have (\ref{h'(d)}). If $h(\delta)$ assume a maximum value at some
point $\delta\in I^{\circ},$ then we must have
\begin{equation}
-\left(  q-2\overline{n}\left(  D(\delta\right)  )\right)  \cos\frac{\delta
}{2}+q-2=0. \label{h'(d)-1}%
\end{equation}
Therefore, (ii1)--(ii3) follow from (\ref{h'(d)}) directly. Now, (ii4) follows
from (ii1)--(ii3).\label{2023-04-05 11:16}
\end{proof}

\begin{lemma}
\label{must-strict-con}\label{used-in-must-strict-in Theoren}For any pair of
constants $\delta\in(0,\pi)$ and $A_{0},$ let $h\left(  A_{0},\delta
,\theta,\theta\right)  $ be given by (\ref{ag59}) with expression
(\ref{ag60}).
Then $h(\theta)=h\left(  A_{0},\delta,\theta,\theta\right)  $ strictly
increases in an interval $[0,\theta_{0}]$ for some $\theta_{0}>0.$
\end{lemma}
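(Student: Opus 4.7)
The plan is to show $h'(0) > 0$ and conclude by continuity of $h'$. Using the area identity (\ref{1110-2}), I first rewrite
\[
h(A_0,\delta,\theta,\theta)=\frac{A_0+(q-2)A(\delta,\theta,\theta)}{L(\delta,\theta,\theta)}=\frac{A_0+4(q-2)\theta}{L(\delta,\theta,\theta)}-\frac{(q-2)\sin\theta}{\tan\frac{\delta}{2}},
\]
which isolates the $\theta$-dependence into one rational term plus an elementary term. This form is much easier to differentiate at $\theta=0$ than the explicit formula (\ref{ag60}).

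Next I compute $L(0)$ and $L'(0)$ for $L(\theta):=L(\delta,\theta,\theta)$. Set $a=\tan(\delta/2)$, $u(\theta)=\sqrt{\sin^2\theta+a^2}$, $v(\theta)=\arctan\bigl(u(\theta)/\cos\theta\bigr)$, so that (\ref{1110-1}) reads $L(\theta)=4a\,v(\theta)/u(\theta)$. Direct computation gives $u(0)=a$, $u'(0)=0$, $v(0)=\delta/2$, and
\[
v'(\theta)=\frac{u'(\theta)\cos\theta+u(\theta)\sin\theta}{\cos^2\theta+u(\theta)^2},
\]
so $v'(0)=0$. Hence $L(0)=2\delta$ and $L'(0)=4a(u(0)v'(0)-u'(0)v(0))/u(0)^2=0$. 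The vanishing of both $u'(0)$ and $v'(0)$ reflects the $\theta\mapsto-\theta$ symmetry of the lens and is the only slightly delicate point of the computation.

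Substituting into the derivative of the rewritten expression yields
\[
h'(0)=\frac{4(q-2)\cdot 2\delta-A_0\cdot 0}{(2\delta)^2}-\frac{q-2}{\tan\frac{\delta}{2}}=(q-2)\left[\frac{2}{\delta}-\cot\frac{\delta}{2}\right].
\]
Since $\tan t>t$ for all $t\in(0,\pi/2)$, setting $t=\delta/2$ gives $\cot(\delta/2)<2/\delta$, and because $q\geq 3$ the bracket is strictly positive; thus $h'(0)>0$. The function $h(\theta)$ is smooth in $\theta\in[0,\pi/2]$ (both $L$ and $A$ are real-analytic there), so $h'$ is continuous at $0$ and there exists $\theta_0>0$ with $h'(\theta)>0$ on $[0,\theta_0]$, proving strict monotonicity on that interval.

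The argument has no real obstacle — the only thing to notice is that the naive approach of differentiating (\ref{ag60}) directly leads to cumbersome expressions, whereas splitting off the elementary term via (\ref{1110-2}) and exploiting $L'(0)=0$ reduces everything to the elementary inequality $\tan t>t$. Note also that the value of the constant $A_0$ plays no role at $\theta=0$ because its coefficient in $h'(0)$ is $L'(0)=0$; this is consistent with the fact that the lemma is stated uniformly in $A_0$.
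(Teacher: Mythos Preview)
Your proof is correct and follows essentially the same approach as the paper: compute $h'(0)$ and show it is positive using $\tan(\delta/2)>\delta/2$. The paper works with the factor $g(\theta)=\sqrt{\sin^{2}\theta+\tan^{2}(\delta/2)}\big/\bigl(\tan(\delta/2)\arctan(\cdot)\bigr)=4/L(\theta)$ from (\ref{ag60}) and simply asserts $g'(0)=0$, whereas you verify the equivalent fact $L'(0)=0$ explicitly via $u'(0)=v'(0)=0$; both routes yield the identical expression $h'(0)=(q-2)\bigl[2/\delta-\cot(\delta/2)\bigr]>0$.
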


\begin{proof}
Since $\delta\in(0,\pi)$, we have $\tan\frac{\delta}{2}>0,$ and thus it is
clear that the functions $h\left(  \theta\right)  $ is real analytic in a
neighborhood $I_{0}$ of $0\ $in $(-1,1).$ It is clear that
\[
\frac{d}{d\theta}\frac{\sqrt{\sin^{2}\theta+\tan^{2}\frac{\delta}{2}}}%
{\tan\frac{\delta}{2}\left[  \arctan\frac{\sqrt{\sin^{2}\theta+\tan^{2}%
\frac{\delta}{2}}}{\cos\theta}\right]  }=0
\]
at $\theta=0.$ Since $\delta\in(0,\pi),$ we have by (\ref{ag60}),%

\[
h^{\prime}\left(  0\right)  =\frac{q-2}{\frac{\delta}{2}}-\frac{q-2}{\tan
\frac{\delta}{2}}>0,
\]
and so the existence of $\theta_{0}$ follows.\label{2023-04-05 12:22}
\end{proof}

\begin{lemma}
\label{area2}For $\delta\in\left(  0,\pi\right)  $ and for a constant $A_{0},$
let $h(\delta)=\max_{\theta\in\lbrack0,\frac{\pi}{2}]}h(A_{0},\delta
,\theta,\theta),$ where $h(A_{0},\delta,\theta,\theta)$ is given by
(\ref{ag59}) with expression (\ref{ag60}). Let $\delta_{0}\in(0,\pi)$ and
assume $h\left(  \delta_{0}\right)  =h\left(  A_{0},\delta_{0},\theta
_{0},\theta_{0}\right)  $ for some $\theta_{0}\in\left(  0,\frac{\pi}%
{2}\right)  $. Then for each $\delta<\delta_{0}$ which is sufficiently close
to $\delta_{0},$ we have%
\[
h\left(  A_{0},\delta_{0},\theta_{0},\theta_{0}\right)  <h\left(  A_{0}%
,\delta,\theta_{\delta},\theta_{\delta}\right)  ,
\]
where $\theta_{\delta}$ is determined by $L(\partial\mathfrak{D}\left(
\delta_{0},\theta_{0},\theta_{0}\right)  )=L(\partial\mathfrak{D}\left(
\delta,\theta_{\delta},\theta_{\delta}\right)  )$, and thus%
\[
h\left(  \delta\right)  =\max_{\theta\in\lbrack0,\frac{\pi}{2}]}h(A_{0}%
,\delta,\theta,\theta)>h\left(  \delta_{0}\right)  .
\]

\end{lemma}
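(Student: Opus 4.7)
The plan is to reduce the claimed strict inequality to a single application of Lemma \ref{area}, which says that among lenses $\mathfrak{D}(\delta,\theta,\theta)$ of fixed perimeter, smaller diameter chord combined with larger angle gives strictly larger area. The numerator and denominator of $h(A_0,\delta,\theta,\theta)$ are chosen so that this is exactly what is needed.

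First, I would establish the existence of $\theta_\delta$ and locate it in $(\theta_0,\pi/2)$. By hypothesis $\theta_0\in(0,\pi/2)$, so in particular $L(\delta_0,\theta_0,\theta_0)<L(\delta_0,\pi/2,\pi/2)=2\pi\sin(\delta_0/2)$. Using the explicit formula (\ref{1110-1}), $L(\delta,\theta,\theta)$ is jointly continuous in $(\delta,\theta)$, strictly increasing in $\theta$ on $[0,\pi/2]$ for each fixed $\delta\in(0,\pi)$, and strictly increasing in $\delta$ on $(0,\pi)$ for each fixed $\theta\in[0,\pi/2]$. (Both monotonicities can be read from (\ref{1110-1}) by direct differentiation; alternatively, one observes that increasing either $\theta$ or $\delta$ strictly enlarges the lens domain on $S$.) Hence $L(\delta,\theta_0,\theta_0)<L(\delta_0,\theta_0,\theta_0)$ for every $\delta<\delta_0$, while $L(\delta,\pi/2,\pi/2)\to L(\delta_0,\pi/2,\pi/2)>L(\delta_0,\theta_0,\theta_0)$ as $\delta\to\delta_0$. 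By the intermediate value theorem applied to $\theta\mapsto L(\delta,\theta,\theta)$, there exists a unique $\theta_\delta\in(\theta_0,\pi/2)$ with $L(\delta,\theta_\delta,\theta_\delta)=L(\delta_0,\theta_0,\theta_0)$, provided $\delta$ is close enough to $\delta_0$.

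Next, having $0<\delta<\delta_0<\pi$, $\theta_0<\theta_\delta\leq\pi/2$, and equal lengths, I apply Lemma \ref{area} directly with $\delta_1=\delta_0$, $\delta_2=\delta$, $\theta_1=\theta_0$, $\theta_2=\theta_\delta$ to obtain the strict area inequality
\[
A(\delta_0,\theta_0,\theta_0)<A(\delta,\theta_\delta,\theta_\delta).
\]
Since $q-2\geq 1>0$ and the two denominators $L(\delta_0,\theta_0,\theta_0)=L(\delta,\theta_\delta,\theta_\delta)$ agree, definition (\ref{ag59}) gives
\[
h(A_0,\delta_0,\theta_0,\theta_0)<h(A_0,\delta,\theta_\delta,\theta_\delta)\leq\max_{\theta\in[0,\pi/2]}h(A_0,\delta,\theta,\theta)=h(\delta),
\]
which is the conclusion.

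The only place where real work is required is Step 1, namely the monotonicity of $L(\delta,\theta,\theta)$ in each variable separately, and in particular the strict inequality $\partial L/\partial\delta>0$ at $(\delta_0,\theta_0)$, which is what forces $\theta_\delta>\theta_0$. I expect this to follow cleanly from (\ref{1110-1}) by differentiation (write $t=\tan(\delta/2)$ and $s=\sin\theta$, and note that both $L$ itself and $\arctan(\sqrt{s^2+t^2}/\cos\theta)/\sqrt{s^2+t^2}$ behave monotonically in $t$ and $s$), but if the algebra becomes unpleasant one can alternatively invoke the implicit function theorem at the nondegenerate level set $L(\delta,\theta,\theta)=L_0$ near $(\delta_0,\theta_0)$, whose applicability is guaranteed by $\theta_0\in(0,\pi/2)$.
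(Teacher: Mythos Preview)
Your proof is correct and follows the same approach as the paper: fix the perimeter, find the compensating angle $\theta_\delta>\theta_0$ at the smaller chord $\delta<\delta_0$, then invoke Lemma \ref{area} for the strict area gain, which transfers directly to $h$ since the denominators agree. The paper asserts the existence of $\theta_\delta\in(\theta_0,\pi/2)$ with one sentence (``it is clear''), whereas you spell out the monotonicity and intermediate-value argument; this extra care is welcome but not a different idea.
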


\begin{proof}
It is clear that for sufficiently small $h>0,$ there exists $\theta
_{1}^{\prime}\in(\theta_{0},\frac{\pi}{2})$ such that
\[
L(\delta_{0}-h,\theta_{1}^{\prime},\theta_{1}^{\prime})=L(\delta_{0}%
,\theta_{0},\theta_{0}),
\]
and then by Lemma \ref{area} we have%
\begin{align*}
h(A_{0},\delta_{0},\theta_{0},\theta_{0})  &  =\frac{A_{0}}{L(\delta
_{0},\theta_{0},\theta_{0})}+\frac{\left(  q-2\right)  A(\delta_{0},\theta
_{0},\theta_{0})}{L(\delta_{0},\theta_{0},\theta_{0})}\\
&  <\frac{A_{0}}{L(\delta_{0}-h,\theta_{1}^{\prime},\theta_{1}^{\prime}%
)}+\frac{\left(  q-2\right)  A(\delta_{0}-h,\theta_{1}^{\prime},\theta
_{1}^{\prime})}{L(\delta_{0}-h,\theta_{1}^{\prime},\theta_{1}^{\prime})}\\
&  =\frac{A_{0}+\left(  q-2\right)  A(\delta_{0}-h,\theta_{1}^{\prime}%
,\theta_{1}^{\prime})}{L(\delta_{0}-h,\theta_{1}^{\prime},\theta_{1}^{\prime
})}\\
&  \leq\max_{\theta\in\lbrack0,\pi/2]}\frac{A_{0}+\left(  q-2\right)
A(\delta_{0}-h,\theta,\theta)}{L(\delta_{0}-h,\theta,\theta)}\\
&  =h(\delta_{0}-h).
\end{align*}

\end{proof}

\begin{lemma}
Let $\delta_{E_{q}}=\min\left\{  d(\mathfrak{a}_{i},\mathfrak{a}%
_{j}):\mathfrak{a}_{i}\in E_{q},\mathfrak{a}_{j}\in E_{q},\mathfrak{a}_{i}%
\neq\mathfrak{a}_{j}\right\}  .$ Then $\delta_{E_{q}}\leq\frac{2\pi}{3},$
equality holding only if $q=3$ and $\mathfrak{a}_{1},\mathfrak{a}%
_{2},\mathfrak{a}_{3}$ are on a great circle on $S$ and $d(\mathfrak{a}%
_{i},\mathfrak{a}_{j})=\frac{2\pi}{3}$ for each pair of $i$ and $j$ with
$i\neq j. $
\end{lemma}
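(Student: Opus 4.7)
My plan is to reduce everything to a three-point estimate on $S$ and then kill the equality case via a short inner-product computation in $\mathbb{R}^{3}$.

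Since $q\geq 3$ I first pick any three distinct points $P_{1},P_{2},P_{3}\in E_{q}$ and set $a_{ij}=d(P_{i},P_{j})$; as $\delta_{E_{q}}\leq \min_{i<j}a_{ij}$, it suffices to show $\min_{i<j}a_{ij}\leq 2\pi/3$. If the three points lie on some great circle $C$, ordering them along $C$ partitions $C$ into three arcs of lengths $\alpha_{1},\alpha_{2},\alpha_{3}$ with $\alpha_{1}+\alpha_{2}+\alpha_{3}=2\pi$; the smallest $\alpha_{k}$ is at most $2\pi/3$, and the spherical distance between its two endpoints equals $\min(\alpha_{k},2\pi-\alpha_{k})=\alpha_{k}\leq 2\pi/3$, with equality forcing $\alpha_{1}=\alpha_{2}=\alpha_{3}=2\pi/3$. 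If instead the three points are not on a common great circle, then every $a_{ij}<\pi$ and I will apply the spherical triangle inequality to $P_{1},P_{2}$ and the antipode $P_{3}'$ of $P_{3}$: this yields $a_{12}\leq d(P_{1},P_{3}')+d(P_{3}',P_{2})=(\pi-a_{13})+(\pi-a_{23})$, i.e.\ $a_{12}+a_{13}+a_{23}\leq 2\pi$. The inequality is strict because $P_{3}'$ is off the geodesic $\overline{P_{1}P_{2}}$ (equivalently, $P_{3}$ is off the great circle through $P_{1},P_{2}$), so $\min_{i<j}a_{ij}<2\pi/3$ strictly in this non-degenerate case, ruling out equality here.

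If $\delta_{E_{q}}=2\pi/3$, the previous paragraph forces any such triple $P_{1},P_{2},P_{3}$ to lie on a great circle with all three $a_{ij}=2\pi/3$. Suppose toward contradiction that $q\geq 4$ and pick $P_{4}\in E_{q}\setminus\{P_{1},P_{2},P_{3}\}$. After rotating $S$ I may take the unit vectors $\vec{P}_{j}=(\cos(2\pi j/3),\sin(2\pi j/3),0)$ for $j=1,2,3$, so $\vec{P}_{1}+\vec{P}_{2}+\vec{P}_{3}=\vec{0}$. The requirement $d(P_{4},P_{j})\geq 2\pi/3$ for $j=1,2,3$ is equivalent to $\vec{P}_{4}\cdot\vec{P}_{j}\leq \cos(2\pi/3)=-1/2$; summing the three inequalities gives $0=\vec{P}_{4}\cdot(\vec{P}_{1}+\vec{P}_{2}+\vec{P}_{3})\leq -3/2$, a contradiction. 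Therefore $q=3$, and the equality configuration is exactly the one stated in the lemma.

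The only nontrivial ingredient is the strict spherical perimeter bound $a+b+c<2\pi$ for a non-degenerate triangle, which I expect to be the main (and really only) obstacle. The antipode-plus-triangle-inequality trick sketched above resolves it in a few lines without invoking heavier machinery such as Lemma \ref{ber}, so I expect the full proof to remain quite short.
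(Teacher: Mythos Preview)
Your proof is correct, and it takes a genuinely different route from the paper's.

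The paper argues the inequality by a complementary-disk observation: if $\delta_{E_q}>2\pi/3$, then every $\mathfrak{a}_j$ with $j\geq 2$ lies in the open disk $\{d(\cdot,\mathfrak{a}_1)>2\pi/3\}$, which has diameter $2\pi/3$, so two of them are at distance $<2\pi/3$. For the equality case the paper fixes a pair $\mathfrak{a}_1,\mathfrak{a}_2$ at distance $2\pi/3$, notes that the locus $\{d(\cdot,\mathfrak{a}_1)\geq 2\pi/3\}\cap\{d(\cdot,\mathfrak{a}_2)\geq 2\pi/3\}$ is a single point on the great circle through $\mathfrak{a}_1,\mathfrak{a}_2$, hence every further point of $E_q$ must lie on that circle and in fact be that unique point, forcing $q=3$.

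Your approach instead bounds the spherical perimeter of any triple by $2\pi$ via the antipode-plus-triangle-inequality trick (with strictness off a great circle), and then disposes of a possible fourth point by the inner-product identity $\vec P_4\cdot(\vec P_1+\vec P_2+\vec P_3)=0$. This is a clean, uniform argument: one inequality handles both the bound and the exclusion of non-great-circle equality, and the $q\geq 4$ case becomes a one-line algebraic contradiction rather than a geometric tangency statement. The paper's version, on the other hand, avoids any computation and stays entirely at the level of disk diameters, which fits the ambient style of the paper. Either way, the content is light; your version would work fine as a replacement.
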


\begin{proof}
Assume $\delta_{E_{q}}>\frac{2\pi}{3}.$ Then we may assume $\mathfrak{a}%
_{2},\mathfrak{a}_{3},\dots,\mathfrak{a}_{q}$ are contained in the open disk
$D$ complementary to the disk $d(z,\mathfrak{a}_{1})\leq\frac{2\pi}{3}.$ But
it is clear that the diameter of $D$ is $\frac{2\pi}{3}$ and since $q\geq3$ we
have $\delta_{E_{q}}<\frac{2\pi}{3}.$ This is a contradiction.

Assume $\delta_{E_{q}}=\frac{2\pi}{3}.$ Then we may assume $d\left(
\mathfrak{a}_{1},\mathfrak{a}_{2}\right)  =\frac{2\pi}{3}$ and consider the
great circle $C$ on $S$ passing through $\mathfrak{a}_{1}$ and $\mathfrak{a}%
_{2}.$ If some $\mathfrak{a}_{j}$ of $\mathfrak{a}_{3},\dots,\mathfrak{a}_{q}
$ is contained in $S\backslash C,$ then we have $d\left(  \mathfrak{a}%
_{j},\{\mathfrak{a}_{1},\mathfrak{a}_{2}\}\right)  <\frac{2\pi}{3},$
contradicting to $\delta_{E_{q}}=\frac{2\pi}{3}.$ Thus $\left\{
\mathfrak{a}_{3},\dots,\mathfrak{a}_{q}\right\}  \subset C$, $q=3,$ and
$\mathfrak{a}_{1},\mathfrak{a}_{2},\mathfrak{a}_{3}$ divide $C$ equally.
\end{proof}

\begin{lemma}
\label{dDisk}For the number $\delta_{E_{q}},$ there exists a disk
$T_{0}\subset S$ with perimeter $3\delta_{E_{q}}$ and $T_{0}\cap
E_{q}=\emptyset.$
\end{lemma}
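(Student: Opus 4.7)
The plan is to obtain $T_0$ by identifying a ``sufficiently empty'' circumcircle of three points of $E_q$ and then shrinking it concentrically. Set $\delta:=\delta_{E_q}$ and let $r\in[0,\pi/2]$ satisfy $2\pi\sin r=3\delta$; since $\delta\le 2\pi/3$ by the preceding lemma, such an $r$ exists and parametrises a closed spherical disk of perimeter $3\delta$ on $S$.

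First I pick any face of the spherical Delaunay triangulation of $E_q$: three points $A,B,C\in E_q$ whose open circumdisk $D^{\ast}$, of some spherical radius $R\le\pi/2$, contains no other point of $E_q$. Such a triple exists for any $q\ge 3$; if $E_q$ is in degenerate (cocircular) position one first perturbs the points slightly and passes to the limit by continuity.

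Next I bound $R$ from below. The vertices $A,B,C$ partition the circumcircle into three arcs whose azimuthal central angles (at the circumcentre) sum to $2\pi$, so by pigeonhole at least one of them, say the angle $\theta$ between $A$ and $B$, satisfies $\theta\le 2\pi/3$. The chord formula
\[
\cos d(A,B)=\cos^{2}R+\sin^{2}R\cos\theta,
\]
combined with $d(A,B)\ge\delta$ and $\sin(\theta/2)\le\sin(\pi/3)=\sqrt{3}/2$, rearranges to
\[
\sin R\;\ge\;\frac{\sin(\delta/2)}{\sin(\theta/2)}\;\ge\;\frac{2\sin(\delta/2)}{\sqrt{3}}.
\]
The required inequality $\sin R\ge\sin r=3\delta/(2\pi)$ then reduces to the elementary bound
\[
\frac{\sin(\delta/2)}{\delta/2}\;\ge\;\frac{3\sqrt{3}}{2\pi},
\]
which holds for $\delta\in(0,2\pi/3]$ because $\sin x/x$ is strictly decreasing on $(0,\pi)$ and equals $3\sqrt{3}/(2\pi)$ exactly at $x=\pi/3$; the inequality is strict when $\delta<2\pi/3$.

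Finally, when $\delta<2\pi/3$ I take $T_0$ to be the closed disk of spherical radius $r$ concentric with $D^{\ast}$; since $r<R$, every point of $T_0$ lies strictly inside $D^{\ast}$ and, in particular, $A,B,C$ fall strictly outside $T_0$. Thus $T_0\cap E_q=\emptyset$ and $L(\partial T_0)=2\pi\sin r=3\delta_{E_q}$, as required. The step I expect to be the main obstacle is the extremal case $\delta_{E_q}=2\pi/3$: by the preceding lemma this forces $q=3$ with $E_3$ equispaced on a great circle, giving $R=r=\pi/2$, so the only concentric candidate for $T_0$ is a closed hemisphere whose bounding great circle passes through all three points of $E_q$. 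This truly degenerate case must be treated separately, for instance by taking $T_0$ to be an open hemisphere bounded by any great circle that avoids $E_q$, or by a perturbation/continuity argument consistent with the way the lemma is subsequently used.
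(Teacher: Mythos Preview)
Your approach via Delaunay circumdisks works in the generic case but is considerably more elaborate than the paper's, and carries two loose ends. First, the assertion $R\le\pi/2$ is unjustified and can fail (if $E_q$ clusters near a pole, some Delaunay faces have empty circumdisks larger than a hemisphere); fortunately this is inessential, since $R>\pi/2$ already gives $R>\pi/2\ge r$ and your shrinking step still applies. Second, your proposed fix for the extremal case $\delta_{E_q}=2\pi/3$ is wrong: an open hemisphere bounded by a great circle that \emph{avoids} the three equispaced points of $E_3$ necessarily contains at least one of them (any other great circle cuts the one through $E_3$ into two semicircles, and three points at mutual arc-distance $2\pi/3$ cannot all fit in one semicircle). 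The correct and immediate fix is to take the open hemisphere bounded by the great circle \emph{through} $E_3$; all three points then lie on $\partial T_0$ and the open hemisphere is empty.

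The paper's argument is a two-line explicit construction that sidesteps Delaunay, perturbation, and the degenerate case entirely. Pick $\mathfrak a_1,\mathfrak a_2\in E_q$ with $d(\mathfrak a_1,\mathfrak a_2)=\delta_{E_q}$ and let $T_0$ be the open disk whose boundary carries $\mathfrak a_1,\mathfrak a_2$ and a third point $a\in S$ at equal spacing (so $\mathfrak a_1,\mathfrak a_2,a$ form a spherical equilateral triangle of side $\delta_{E_q}$ inscribed in $\partial T_0$). Every point of the open $T_0$ lies at distance $<\delta_{E_q}$ from $\mathfrak a_1$ or $\mathfrak a_2$ (the supremum, attained only at $a\in\partial T_0$, equals $\delta_{E_q}$), so by minimality of $\delta_{E_q}$ no further point of $E_q$ can lie in $T_0$, while $\mathfrak a_1,\mathfrak a_2$ themselves sit on $\partial T_0$. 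The perimeter of this disk is $\ge 3\delta_{E_q}$ (three equal arcs, each with chord $\delta_{E_q}$), with equality precisely in the extremal case; a concentric shrink then gives the exact value, just as in your argument. Incidentally, the radius of this equilateral circumdisk satisfies $\sin R=\tfrac{2}{\sqrt3}\sin(\delta_{E_q}/2)$, which is exactly the lower bound you derive for your Delaunay $R$---so your pigeonhole step is, in effect, rediscovering the paper's disk as the extremal configuration.
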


\begin{proof}
Let $\mathfrak{a}_{1}$ and $\mathfrak{a}_{2}$ be two points of $E_{q}$ such
that $d\left(  \mathfrak{a}_{1},\mathfrak{a}_{2}\right)  =\delta_{E_{q}}.$
Then there exists a convex disk $T_{0}$ on $S$ whose boundary contains
$\mathfrak{a}_{1},$ $\mathfrak{a}_{2}$ and some $a\in S$ such that
$\mathfrak{a}_{1},\mathfrak{a}_{2},a$ divide $\partial T_{0}$ equally. It is
clear that $T_{0}$ is the desired disk.
\end{proof}

\begin{lemma}
\label{TD}\label{hd}Let $T$ be a convex disk on $S$ with $T\cap E_{q}%
=\emptyset.$ Then $H\left(  T\right)  $ is strictly increase as a function of
$L=L(\partial T)\in(0,\min\{L_{0},2\pi\})\ $and $H\left(  T\right)
\leq\left(  q-2\right)  ,$ where
\[
L_{0}=\max\left\{  L:%
\begin{array}
[c]{c}%
\mathrm{there\ is\ a\ convex\ open\ disk\ }T\mathrm{\ on\ }S\mathrm{\ with}\\
T\cap E_{q}=\emptyset\text{ and }L(\partial T)=L\}
\end{array}
\right\}  \geq3\delta_{E_{q}}.\mathrm{\ }%
\]
$\mathrm{\ }$
\end{lemma}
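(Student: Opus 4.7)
The plan is straightforward once one observes that for a convex open disk $T$ disjoint from $E_q$, the number $\overline{n}(T,E_q)=0$ (the interior contains no points of $E_q$), so
\[
H(T)=\frac{R(T)}{L(\partial T)}=\frac{(q-2)A(T)}{L}.
\]
Since $T$ is convex, it lies in a closed hemisphere on $S$, so by the elementary spherical geometry recalled in the introduction (equivalently, the equality case of Lemma \ref{Rad1}(i)), the area is given explicitly by
\[
A(T)=2\pi-\sqrt{4\pi^{2}-L^{2}}.
\]
Thus $H(T)=(q-2)f(L)$ where $f(L)=(2\pi-\sqrt{4\pi^{2}-L^{2}})/L$.

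For the monotonicity, I would just differentiate $f$. A routine computation gives
\[
f'(L)=\frac{2\pi\bigl(2\pi-\sqrt{4\pi^{2}-L^{2}}\bigr)}{L^{2}\sqrt{4\pi^{2}-L^{2}}},
\]
which is strictly positive on $(0,2\pi)$. Hence $H(T)$ is strictly increasing as a function of $L$ on the full interval $(0,\min\{L_{0},2\pi\})$, regardless of where $L_{0}$ sits.

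For the upper bound $H(T)\le q-2$, it suffices to show $f(L)\le 1$ on $(0,2\pi]$, i.e. $2\pi-L\le\sqrt{4\pi^{2}-L^{2}}$. Since both sides are nonnegative for $L\le 2\pi$, squaring reduces this to $L\le 2\pi$, which holds. Therefore $H(T)=(q-2)f(L)\le q-2$.

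Finally, the inequality $L_{0}\ge 3\delta_{E_{q}}$ is an immediate consequence of Lemma \ref{dDisk}, which produces a convex disk $T_{0}$ with $T_{0}\cap E_{q}=\emptyset$ and $L(\partial T_{0})=3\delta_{E_{q}}$. No substantive obstacle appears in any of the three parts; the only care needed is in invoking the closed-form area formula, which is legitimate precisely because convexity of the disk forces it into a hemisphere.
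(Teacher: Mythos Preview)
Your proof is correct and follows essentially the same approach as the paper: both reduce to the explicit formula $H(T)=(q-2)(2\pi-\sqrt{4\pi^2-L^2})/L$ via $\overline{n}(T)=0$ and the hemisphere area formula, then invoke Lemma \ref{dDisk} for $L_0\ge 3\delta_{E_q}$. The only cosmetic difference is that the paper rationalizes the numerator to rewrite $H(T)=\dfrac{(q-2)L}{2\pi+\sqrt{4\pi^2-L^2}}$, from which both the strict monotonicity and the bound $H(T)\le q-2$ are immediate by inspection, whereas you differentiate and square; these are equivalent elementary verifications.
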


\begin{proof}
This follows from Lemma \ref{dDisk}, $\overline{n}\left(  T\right)  =0,$ and
that, as a function of $L=L(\partial T),$
\begin{align*}
H(T)  &  =\frac{\left(  q-2\right)  A(T)}{L}=\frac{\left(  q-2\right)  \left(
2\pi-\sqrt{4\pi^{2}-L^{2}})\right)  }{L}\\
&  =\frac{\left(  q-2\right)  L}{\left(  2\pi+\sqrt{4\pi^{2}-L^{2}})\right)  }%
\end{align*}
strictly increase for $L\in(0,\min\{L_{0},2\pi\})]$ and $H\left(  T\right)
\leq\left(  q-2\right)  $ on $(0,\min\{L_{0},2\pi\}]$.
\end{proof}

\section{The Riemann Hurwitz formula and branch points}

The simplest version of Riemann Hurwitz formula is that for any BCCM
$f:S\rightarrow S$ with degree $d$
\begin{equation}
B_{f}=\sum_{z\in S}b_{f}\left(  z\right)  =\sum_{z\in S}\left(  v_{f}\left(
z\right)  -1\right)  =2d-2. \label{rh}%
\end{equation}
Recall that $B_{f},b_{f}\left(  z\right)  \ $and $v_{f}\left(  z\right)  $ are
defined in Remark \ref{notation} (D) and (E). The formula implies that $f$ has
exactly $2d-2$ branch points on $S.$ This formula implies the following directly.

\begin{lemma}
\label{RH}Let $\Sigma=(f,S)$ be a surface, say, $f:S\rightarrow S$ is a $d$ to
$1$ BCCM. Then
\begin{equation}
\sum_{v=1}^{q}\bar{n}(\Sigma,\mathfrak{a}_{v})=qd-\sum_{z\in E_{q}}%
b_{f}(z)\geq qd-\sum_{z\in S}b_{f}(z)=\left(  q-2\right)  d+2, \label{RH1}%
\end{equation}
and
\begin{equation}
R(\Sigma)=\left(  q-2\right)  A(\Sigma)-4\pi\sum_{v=1}^{q}\overline{n}%
(\Sigma,\mathfrak{a}_{v})\leq-8\pi. \label{RH2}%
\end{equation}
both equality holding iff $C_{f}^{\ast}=C_{f}\left(  S\backslash f^{-1}%
(E_{q})\right)  =\emptyset,$ say, $CV_{f}\subset E_{q}$ (see Remark
\ref{notation} (D) for the notations $C_{f}^{\ast}$ and $CV_{f}$).
\end{lemma}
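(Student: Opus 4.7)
The plan is to derive the equality by the local degree counting at each $\mathfrak{a}_v$, deduce the inequality from $b_f\ge 0$ at points outside $f^{-1}(E_q)$, and identify the right-hand side via the Riemann--Hurwitz formula (\ref{rh}). Throughout I read the symbol $\sum_{z\in E_q}b_f(z)$ in the statement as $\sum_{z\in f^{-1}(E_q)}b_f(z)$ (the only meaningful interpretation, since $b_f$ lives on the domain $S$ and vanishes off the critical set).

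First I would fix any $w\in S$ and, using the local normal form of a BCCM around each point in the finite fiber $f^{-1}(w)$, record the standard identity
\[
d\;=\;\sum_{z\in f^{-1}(w)} v_f(z)\;=\;\#f^{-1}(w)\;+\;\sum_{z\in f^{-1}(w)} b_f(z),
\]
so that $\overline{n}(\Sigma,w)=\#f^{-1}(w)=d-\sum_{z\in f^{-1}(w)}b_f(z)$. Summing this identity over $w=\mathfrak{a}_1,\dots,\mathfrak{a}_q$ and recalling that the fibers $f^{-1}(\mathfrak{a}_v)$ are pairwise disjoint, I obtain
\[
\sum_{v=1}^{q}\overline{n}(\Sigma,\mathfrak{a}_v)\;=\;qd\;-\;\sum_{z\in f^{-1}(E_q)}b_f(z),
\]
which is the first equality claimed in (\ref{RH1}).

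Next, since $b_f(z)\ge 0$ for every $z\in S$ and $f^{-1}(E_q)\subset S$, I have
\[
\sum_{z\in f^{-1}(E_q)}b_f(z)\;\le\;\sum_{z\in S}b_f(z)\;=\;B_f,
\]
with equality if and only if $b_f(z)=0$ for every $z\in S\setminus f^{-1}(E_q)$, i.e.\ $C_f^{\ast}=C_f(S\setminus f^{-1}(E_q))=\emptyset$. Plugging in the Riemann--Hurwitz identity $B_f=2d-2$ from (\ref{rh}) gives the asserted lower bound $qd-B_f=(q-2)d+2$ and completes (\ref{RH1}).

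Finally, for the bound on $R(\Sigma)$, I would use that $\Sigma=(f,S)$ is a closed surface of degree $d$, so $A(\Sigma)=4\pi d$. Substituting into the definition (\ref{Ahero}) of $R(\Sigma)$ and applying (\ref{RH1}),
\[
R(\Sigma)\;=\;(q-2)\cdot 4\pi d\;-\;4\pi\sum_{v=1}^{q}\overline{n}(\Sigma,\mathfrak{a}_v)
\;\le\;(q-2)\cdot 4\pi d\;-\;4\pi\bigl((q-2)d+2\bigr)\;=\;-8\pi,
\]
with equality exactly when the inequality in (\ref{RH1}) is an equality, i.e.\ iff $C_f^{\ast}=\emptyset$; equivalently $CV_f\subset E_q$. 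There is no real obstacle here: the only subtle point is making explicit the (slightly abusive) reading of $\sum_{z\in E_q}b_f(z)$ as a sum over the preimage $f^{-1}(E_q)$, after which every step is a direct bookkeeping consequence of (\ref{rh}).
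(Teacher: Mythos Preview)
Your proof is correct and follows exactly the route the paper takes: the paper simply declares the first two parts of (\ref{RH1}) trivial, invokes (\ref{rh}) for the third, and then substitutes $A(\Sigma)=4\pi d$ to obtain (\ref{RH2}). You have merely spelled out those trivial steps (including the intended reading of $\sum_{z\in E_q}b_f(z)$ as a sum over $f^{-1}(E_q)$), so there is nothing to add.
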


In fact, the first two parts of (\ref{RH1}) are trivial, and the third part of
(\ref{RH1}) follows from the equation (\ref{rh}). Then (\ref{RH1}), together
with the area formula $A(\Sigma)=4\pi d,$ implies (\ref{RH2}).

\begin{corollary}
\label{loop2}Let $\Sigma=\left(  f,\overline{\Delta}\right)  \in\mathcal{F}$
such that $\partial\Sigma$ is a Jordan curve and let $T$ be the closed domain
enclosed by $\partial\Sigma$ on $S.$ Then for the closed surface $\Sigma
_{0}=\left(  f_{0},S\right)  $ which is obtained by sewing $\Sigma$ and the
domain $S\backslash T$ along $\partial\Sigma,$ we have%
\begin{equation}
R(\Sigma)=R(\Sigma_{0})+R(T)+8\pi\leq R(T), \label{RH-1}%
\end{equation}
with equality holding if ond only if $\Sigma\in\mathcal{F}_{r},$ where%
\begin{equation}
\mathcal{F}_{r}=\{\left(  f,\overline{U}\right)  \in\mathcal{F}:C_{f}^{\ast
}=C_{f}\left(  \overline{U}\backslash f^{-1}(E_{q})\right)  =\emptyset\}.
\label{Fr}%
\end{equation}

\end{corollary}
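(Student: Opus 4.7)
The plan is to combine Corollary \ref{loop1} with Lemma \ref{RH} in a straightforward way. First, applying Corollary \ref{loop1} directly to $\Sigma$ and $T$ yields the closed surface $\Sigma_0 = (f_0, S)$ together with the identity
\[
R(\Sigma) = R(\Sigma_0) + R(T) + 8\pi,
\]
and also the equality $CV_{f_0} = CV_f$. This is exactly the first part of the claim, so nothing needs to be proved here beyond quoting Corollary \ref{loop1}.

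Next, to get the inequality $R(\Sigma) \le R(T)$, I would invoke Lemma \ref{RH} applied to the closed surface $\Sigma_0$: it gives
\[
R(\Sigma_0) \le -8\pi,
\]
with equality precisely when $C_{f_0}^{\ast} = C_{f_0}(S \setminus f_0^{-1}(E_q)) = \emptyset$, i.e.\ $CV_{f_0} \subset E_q$. Substituting this into the displayed identity yields $R(\Sigma) \le R(T)$.

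For the equality characterization, I would combine the two pieces above: equality in $R(\Sigma) \le R(T)$ holds iff $R(\Sigma_0) = -8\pi$, iff $CV_{f_0} \subset E_q$. But since Corollary \ref{loop1} gives $CV_{f_0} = CV_f$, this is equivalent to $CV_f \subset E_q$, which in turn is equivalent to $C_f \subset f^{-1}(E_q)$, that is $C_f^{\ast} = C_f(\overline{\Delta} \setminus f^{-1}(E_q)) = \emptyset$. By the definition \eqref{Fr} of $\mathcal{F}_r$, this is exactly the condition $\Sigma \in \mathcal{F}_r$.

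There is no real obstacle here: the whole argument is a bookkeeping assembly of results already established in the excerpt (Corollary \ref{loop1}, Lemma \ref{RH}, and the definition of $\mathcal{F}_r$). The only point demanding a bit of care is tracking that the branch values of $f_0$ and $f$ coincide so that the condition $CV_{f_0} \subset E_q$ translates faithfully into membership of $\Sigma$ in $\mathcal{F}_r$; this is delivered cleanly by the $CV$-invariance clause in Corollary \ref{loop1}.
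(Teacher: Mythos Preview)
Your proposal is correct and follows essentially the same route as the paper: invoke Corollary~\ref{loop1} for the identity $R(\Sigma)=R(\Sigma_0)+R(T)+8\pi$ and the equality $CV_{f_0}=CV_f$, then apply Lemma~\ref{RH} to obtain $R(\Sigma_0)\le -8\pi$ with equality iff $CV_{f_0}\subset E_q$, and finally translate this via $CV_{f_0}=CV_f$ into the condition $\Sigma\in\mathcal{F}_r$.
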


\begin{proof}
The first equality of (\ref{RH-1}) follows from Corollary \ref{loop1}. By
Lemma \ref{RH}, $R(\Sigma_{0})+8\pi\leq0,$ equality holding iff $CV_{f_{0}%
}\subset E_{q}.$ On the other hand, by Corollary \ref{loop1} we have
$CV_{f_{0}}=CV_{f}.$ Thus $CV_{f_{0}}\subset E_{q}$ iff $CV_{f}\subset E_{q},$
say, $\Sigma\in\mathcal{F}_{r}$.
\end{proof}

\begin{lemma}
\label{hw1}Let $K$ be a Jordan domain on $S$, let $\Sigma=\left(
f,\overline{U}\right)  \ $be a surface in $\mathbf{F}$ such that $f|_{\partial
U}:\partial U\rightarrow\partial K$ is an orientation preserving CCM with
degree $d$ and that $f$ covers $K$ by $d_{0}$ times\footnote{This only means
that each point of $K$ has $d_{0}$ inversers, counted with multiplicity.}.
Then the following hold.

(i) In the case $E_{q}\subset K,$%
\[
\#f^{-1}(E_{q})\geq\left(  q-2\right)  d_{0}+d+1.
\]

(ii) In the case $E_{q}\cap K=q-1$ and $E_{q}\cap\partial K=\emptyset$
\[
\#f^{-1}(E_{q})\geq\left(  q-2\right)  d_{0}+1.
\]

\end{lemma}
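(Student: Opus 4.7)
The plan is to combine the argument principle with the Riemann--Hurwitz formula applied to the branched cover $f\colon \overline{U}\to \overline{K}$ of closed topological disks. The hypothesis that $f|_{\partial U}\colon \partial U\to \partial K$ is a CCM means $f|_{\partial U}$ is an unramified covering, so all branch points of $f$ lie in $U$ (an interior branch number $>1$ at a boundary point would force $f|_{\partial U}$ to fail local injectivity onto $\partial K$, by the local normal form in Remark~\ref{notation}(B)).

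First, by the argument principle, for every $p\in K$ the winding number of $f|_{\partial U}$ about $p$ equals $d$, so
\[
\sum_{z\in f^{-1}(p)\cap U} v_f(z)=d;
\]
in particular $d_0=d$. For $p\in S\setminus\overline{K}$ the winding number vanishes, so $f^{-1}(p)\cap U=\emptyset$.

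Next I would compute $B_f(U):=\sum_{z\in U}(v_f(z)-1)$ by Riemann--Hurwitz. Doubling both disks along their boundaries produces spheres $\widetilde{U}$ and $\widetilde{K}$ and a branched cover $\widetilde{f}\colon \widetilde{U}\to \widetilde{K}$ of degree $d_0$ whose branch set is the union of that of $f$ with its mirror image---no new branching arises on the glued boundary because $f|_{\partial U}$ is unramified. Classical Riemann--Hurwitz on the sphere then gives $2=2d_0-2B_f(U)$, hence $B_f(U)=d_0-1$.

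Finally the conclusion follows by summing. In case (i), every $\mathfrak{a}_v\in E_q$ lies in $K$, so $\#f^{-1}(\mathfrak{a}_v)\cap U=d_0-B_f(f^{-1}(\mathfrak{a}_v)\cap U)$, and
\[
\#f^{-1}(E_q)=qd_0-B_f(f^{-1}(E_q)\cap U)\geq qd_0-B_f(U)=(q-1)d_0+1=(q-2)d_0+d+1.
\]
In case (ii), the single point of $E_q$ in $S\setminus\overline{K}$ contributes no preimages in $U$, and summing the identity over the remaining $q-1$ points in $K$ yields
\[
\#f^{-1}(E_q)\geq (q-1)d_0-(d_0-1)=(q-2)d_0+1.
\]
The main obstacle is justifying the Riemann--Hurwitz computation with boundary, but the CCM hypothesis makes the doubling argument go through cleanly without correction terms.
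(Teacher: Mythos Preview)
Your argument has a genuine gap: you assume $f(\overline{U})\subset\overline{K}$, equivalently $d_0=d$, but this is not part of the hypothesis and the paper explicitly needs the case $d_0>d$ (see the remark just before the lemma, and its use in the proof of Lemma~\ref{R<A}, where some $U_j$ may satisfy $f(\overline{U_j})\supset S$). Your appeal to the argument principle is where this goes wrong: on the sphere the argument principle gives, for $p\in K$,
\[
\sum_{z\in f^{-1}(p)\cap U}v_f(z)\;-\;\sum_{z\in f^{-1}(p')\cap U}v_f(z)=d
\]
for any fixed $p'\in S\setminus\overline{K}$, not $\sum v_f(z)=d$ outright. So $d_0=d$ plus the covering number over $S\setminus\overline K$, and the latter need not vanish. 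Your claim ``$f^{-1}(p')\cap U=\emptyset$ for $p'\notin\overline K$'' is therefore unjustified, and once it fails your doubling construction collapses: $f$ is not a map $\overline U\to\overline K$, so there is nothing to double. Consequently your branch count $B_f(U)=d_0-1$ is off; the correct value is $B_f(U)=2d_0-d-1$.

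The paper's route avoids this by a different extension. Rather than doubling, it glues a degree-$d$ BCCM $\overline{\mathbb{C}}\setminus\overline U\to S\setminus K$ (with a single branch point of order $d$) onto $f$, producing a branched cover $F\colon S\to S$ of degree $d_0$. Riemann--Hurwitz on the sphere then gives total branching $2d_0-2$, and subtracting the $(d-1)$ contributed by the glued piece yields $B_f(U)=2d_0-d-1$. With this correction your final summations go through verbatim and give the stated bounds; for~(ii) the paper also arranges the outside branch point to lie over the single $E_q$-point in $S\setminus\overline K$, which is what makes the inequality sharp.
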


By the convention, $\partial K$ is oriented in the way that $K$ is on the left
of $\partial K,$ and so $d\leq d_{0},$ with equality only if $f\left(
U\right)  \subset K.$

\begin{proof}
This is a consequence of Riemann-Hurwitz formula (\ref{rh}). We may extend $f$
to be a BCCM $F:S=\overline{\mathbb{C}}\rightarrow S$ so that $F$ restricted
to $\overline{\mathbb{C}}\backslash\overline{U}$ is a BCCM onto $S\backslash
K$ and that $F$ contains no branch point in $\overline{\mathbb{C}}%
\backslash\overline{U}$ when $d=1$, or contains only one branch point $p$ in
$\overline{\mathbb{C}}\backslash\overline{U}$ with $v_{f}(p)=d>1$.

(i) If $E_{q}\subset K,$ then
\begin{align*}
\#f^{-1}(E_{q})  &  =\#F^{-1}(E_{q})=d_{0}q-\sum_{z\in F^{-1}(E_{q})}\left(
v_{F}(z)-1\right) \\
&  \geq d_{0}q-\sum_{z\in U}\left(  v_{F}(z)-1\right) \\
&  =d_{0}q-\left[  \sum_{z\in\overline{\mathbb{C}}}\left(  v_{F}(z)-1\right)
-\sum_{z\in\overline{\mathbb{C}}\backslash\overline{U}}\left(  v_{F}%
(z)-1\right)  \right] \\
&  =d_{0}q-\left[  \left(  2d_{0}-2\right)  -\left(  d-1\right)  \right]
=\left(  q-2\right)  d_{0}+d+1.
\end{align*}

(ii) If $E_{q}\cap K=q-1,$ then we may construct $F$ so that $F(p)\in E_{q}$
and $v_{F}(p)=d$ for some $p\in\overline{\mathbb{C}}\backslash\overline{U},$
and then we have
\begin{align*}
\#f^{-1}(E_{q})  &  =\#F^{-1}(E_{q})-1=\left[  d_{0}q-\sum_{z\in F^{-1}%
(E_{q})}\left(  v_{F}(z)-1\right)  \right]  -1\\
&  \geq\left[  d_{0}q-\left(  2d_{0}-2\right)  \right]  -1=\left(  q-2\right)
d_{0}+1.
\end{align*}

\end{proof}

Now, we apply the previous lemma to prove the following lemma and theorem.

\begin{lemma}
\label{R<A}Let $\Sigma=\left(  f,\overline{\Delta}\right)  \in\mathbf{F}$ and
assume that there exists a Jordan curve $\Gamma$ in $S\backslash E_{q}$ such
that for the two components $K_{1}$ and $K_{2}$ of $S\backslash\Gamma,$
$\overline{K_{1}}\ $is contained in the lower half sphere $D\left(
0,\frac{\pi}{2}\right)  $ on $S,$ $\#E_{q}\cap K_{2}\geq q-1,$ and
$\partial\Sigma\subset K_{1}.$ Then
\begin{equation}
R(\Sigma)\leq\left(  q-2\right)  A\left(  \partial\Sigma\right)  , \label{mm2}%
\end{equation}
and moreover, if in addition $\overline{n}\left(  \Sigma\right)  \neq0,$ then%
\[
R(\Sigma)\leq\left(  q-2\right)  A\left(  \partial\Sigma\right)  -4\pi.
\]

\end{lemma}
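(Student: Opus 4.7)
The plan is to reduce the lemma to a counting bound on $\overline{n}(\Sigma)$ via the area formula of Lemma \ref{Al}, and then to prove that bound with a Riemann--Hurwitz analysis of the preimage decomposition of $\Delta$ by $f^{-1}(\Gamma)$. Writing $d=\deg_f(\infty)$, Lemma \ref{Al} gives $A(\Sigma)=4\pi d+A(\partial\Sigma)$, so
$R(\Sigma)-(q-2)A(\partial\Sigma)=4\pi\bigl[(q-2)d-\overline{n}(\Sigma)\bigr],$
and both inequalities amount to $\overline{n}(\Sigma)\geq(q-2)d$, with the strengthened $\overline{n}(\Sigma)\geq(q-2)d+1$ required when $\overline{n}(\Sigma)\neq 0$. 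For $d=0$ these are immediate, so I shall assume $d\geq 1$ throughout.

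First I will perturb $\Gamma$ slightly inside $K_1$ to avoid the finitely many branch values of $f$ while preserving the three hypotheses (an open condition, with wiggle room because $E_q$ and $\partial\Sigma$ each lie at positive distance from $\Gamma$). Then $f^{-1}(\Gamma)\cap\Delta$ is a disjoint union of Jordan curves, cutting $\Delta$ into open regions that map alternately into $K_1$ and $K_2$; label them $U_1^{(0)},\ldots,U_1^{(m_1-1)}$ (with the root $U_1^{(0)}$ containing $\partial\Delta$) and $U_2^{(1)},\ldots,U_2^{(m_2)}$. Simple connectivity of $\Delta$ forces the dual adjacency graph to be a tree with $m_1+m_2-1$ edges, each separating one $K_1$-region from one $K_2$-region, so the total number $k_2$ of boundary Jordan curves of $U_2=\bigsqcup_i U_2^{(i)}$ equals $m_1+m_2-1$.

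The central computation is Riemann--Hurwitz applied to each BCCM $f|_{\overline{U_2^{(i)}}}:\overline{U_2^{(i)}}\to\overline{K_2}$ of degree $d^{(i)}$ with $\sum_i d^{(i)}=d$. Each boundary circle maps to $\Gamma$ by an orientation-preserving CCM of positive degree, which forces $k_2\leq d$, and from $\chi(K_2)=1$ and $\chi(U_2^{(i)})=2-k_2^{(i)}$ one obtains $B_f(U_2)=d-2m_2+k_2$. Because $f^{-1}(a)\subset U_2$ for each $a\in E_q\cap K_2$ and $\overline{n}(\Sigma,a)=d-B_f(f^{-1}(a)\cap U_2)$, summing over $E_q\cap K_2$ yields $\overline{n}(\Sigma,E_q\cap K_2)\geq(\#E_q\cap K_2-1)\,d+m_2-m_1+1$. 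If in addition $a_0\in E_q\cap K_1$ exists, each non-root BCCM $f|_{\overline{U_1^{(j)}}}:\overline{U_1^{(j)}}\to\overline{K_1}$ ($j\geq 1$) has positive degree and hence contributes at least one distinct preimage of $a_0$, so $\overline{n}(\Sigma,a_0)\geq m_1-1$.

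When $\#E_q\cap K_2=q$ the bound $k_2\leq d$ alone gives $\overline{n}(\Sigma)\geq(q-2)d+2m_2$; when $\#E_q\cap K_2=q-1$ the terms $-m_1$ and $+(m_1-1)$ from the two bounds telescope to produce $\overline{n}(\Sigma)\geq(q-2)d+m_2$. Since $m_2\geq 1$ whenever $d\geq 1$, both cases yield $\overline{n}(\Sigma)\geq(q-2)d+1$, simultaneously establishing the basic inequality and its strengthened form. The main technical obstacle is justifying the perturbation of $\Gamma$ and carefully verifying the claimed tree structure together with $k_2\leq d$; once those combinatorial facts are in place the Riemann--Hurwitz bookkeeping is essentially mechanical.
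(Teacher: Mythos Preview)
Your argument is correct. Both your proof and the paper's reduce, via Lemma~\ref{Al}, to the inequality $\overline{n}(\Sigma)\geq(q-2)d+1$ for $d=\deg_f(\infty)\geq1$, and both begin by perturbing $\Gamma$ off the finitely many branch values. The difference lies in the decomposition and the form of Riemann--Hurwitz used. The paper takes only the \emph{outermost} cut: it lets $V$ be the component of $\Delta\setminus f^{-1}(\Gamma)$ adjacent to $\partial\Delta$ and writes $\Delta\setminus\overline{V}$ as a disjoint union of Jordan domains $U_1,\dots,U_k$, then applies the pre-packaged Lemma~\ref{hw1} (itself proved by extending each $U_j$ to a closed sphere covering) to get $\overline{n}(U_j)\geq(q-2)d_j+1$ and sum. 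You instead take the full decomposition into $K_1$- and $K_2$-regions, exploit the tree structure to get $k_2=m_1+m_2-1$, and apply the Riemann--Hurwitz formula for bordered surfaces directly to the $K_2$-regions, with a separate elementary count over the non-root $K_1$-regions when one point of $E_q$ lies in $K_1$. Your route is more self-contained and extracts sharper combinatorial information ($m_1,m_2$ separately); the paper's is shorter because Lemma~\ref{hw1} has already absorbed the Riemann--Hurwitz bookkeeping. Either way the same bound drops out.
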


\begin{proof}
Since $f$ has a finite number of branch values, we may assume that $\Gamma$
contains no branch value of $f.$ For otherwise, we can replace $\Gamma$ with
another Jordan curve $\Gamma^{\prime}$ close to $\Gamma$ enough so that
$\Gamma^{\prime}$ contains no branch value of $f$ and satisfies all the hypothesis.

Firstly, consider the case $\infty\notin f(\Delta).$ Then by Lemma \ref{Al}
\[
A(\Sigma)=A\left(  \partial\Sigma\right)  +4\pi\deg_{f}\left(  \infty\right)
=A\left(  \partial\Sigma\right)  ,
\]
and then
\begin{equation}
R(\Sigma)=\left(  q-2\right)  A(\partial\Sigma)-4\pi\overline{n}\left(
\Sigma\right)  , \label{nb1}%
\end{equation}
which implies the desired result when $\infty\notin f(\Delta)$.

Now assume $\infty\in f(\Delta)$ and let $\deg_{f}\left(  \infty\right)
=d_{0}.$ Then $d_{0}\ $is a positive integer and $f^{-1}(\Gamma)$ is not empty
and is consisted of a finite number of disjoint Jordan curves in $\Delta$.
Then $f^{-1}(\Gamma)\cap\partial\Delta=\emptyset,$ and $f^{-1}(\Gamma)$
divides $\Delta$ into a finite number of domains, and we let $V$ be the
component of $\Delta\backslash f^{-1}(\Gamma)$ with $\partial\Delta
\subset\partial V.$ Then $f(V)\cap\Gamma=\emptyset$ and%
\begin{equation}
f(V)\subset K_{1}, \label{aab1}%
\end{equation}
and then $\Delta\backslash\overline{V}$ is consisted of a finite number of
Jordan domains $U_{j},j=1,\dots,k,$ such that for each $j,$ $\overline{U_{j}%
}\subset\Delta,$ $\overline{U_{i}}\cap\overline{U_{j}}=\emptyset$ if $i\neq
j,$ and $f|_{\partial U_{j}}$ is a CCM from $\partial U_{j}$ onto $\Gamma.$ We
assume $f|_{U_{j}}$ covers $K_{2}$ with degree $d_{j}$ and $f|_{\partial
U_{j}}$ covers $\Gamma$ with degree $d_{j}^{\prime},j=1,\dots,k.$ Then we
have
\[
d_{0}=d_{1}+\cdots+d_{k}.
\]
Note that\footnote{It is possible that some Jordan curve of $f^{-1}(\Gamma
)\ $encloses another Jordan curve of $f^{-1}(\Gamma)$ in $\Delta.$ Thus
$U_{1},\dots,U_{k}$ maybe Jordan domains enclosed by just a part of Jordan
curves of $f^{-1}(\Gamma)$. If some $U_{j}$ contains some Jordan curve of
$f^{-1}(\Gamma),$ then $d_{j}>d_{j}^{\prime}$ and $f(U_{j})\supset S.$}
$d_{j}>d_{j}^{\prime}$ when $f|_{\overline{U_{j}}}\supset S$ and $d_{j}%
=d_{j}^{\prime}$ when $f|_{\overline{U_{j}}}=K_{2}.$ We write
\[
d_{0}^{\prime}=d_{1}^{\prime}+\cdots+d_{k}^{\prime}.
\]
By Lemma \ref{Al} we have%
\begin{equation}
A(\Sigma)=A(\partial\Sigma)+4\pi d_{0}, \label{A=A+}%
\end{equation}
where
\[
A(\partial\Sigma)=\frac{2}{i}\int_{\partial\Sigma}\frac{\overline{w}%
dw}{1+|w|^{2}}.
\]

The restriction $(f,\overline{U_{j}}),\partial K_{2},d_{j},d_{j}^{\prime}$
satisfies the hypothesis of Lemma \ref{hw1} (as $(f,\overline{U}),\partial
K,d_{0},d\ $there). Thus we have%
\[
\overline{n}\left(  U_{j}\right)  =\#f^{-1}(E_{q})\cap U_{j}\geq\left(
q-2\right)  d_{j}+1,\mathrm{\ for\ }j=1,\dots,k.
\]
Therefore
\begin{align*}
\overline{n}\left(  \Sigma\right)   &  \geq\#\left(  f^{-1}(E_{q})\cap
\cup_{j=1}^{k}U_{j}\right)  =\sum_{j=1}^{k}\#\left(  f^{-1}(E_{q})\cap
U_{j}\right) \\
&  \geq\sum_{j=1}^{k}\left(  \left(  q-2\right)  d_{j}+1\right)  =\left(
q-2\right)  d_{0}+k>\left(  q-2\right)  d_{0}+1.
\end{align*}
In consequence we have by (\ref{A=A+})
\begin{align*}
R(\Sigma)  &  =\left(  q-2\right)  A(\Sigma)-4\pi\overline{n}\left(
\Sigma,E_{q}\right) \\
&  \leq\left(  q-2\right)  \left[  A\left(  \partial\Sigma\right)  +4\pi
d_{0}\right]  -4\pi\left[  \left(  q-2\right)  d_{0}+1\right] \\
&  =\left(  q-2\right)  A\left(  \partial\Sigma\right)  -4\pi.
\end{align*}
This completes the proof.
\end{proof}

\begin{theorem}
\label{l<2dt}\label{2023-4-5-17:18}Let $L$ be a positive number with
$L\leq2\delta_{Eq}$, let $\Sigma=\left(  f,\overline{\Delta}\right)
\in\mathbf{F}$ with $L\left(  \partial\Sigma\right)  \leq L\ $and let $T_{0}$
be a closed disk in some hemisphere on $S$ with perimeter $L$ and $T_{0}\cap
E_{q}=\emptyset.$ Then
\begin{equation}
H(\Sigma)\leq H(T_{0}) \label{ab2}%
\end{equation}
with equality holding iff $\Sigma$ is a simple disk with $f(\Delta)\cap
E_{q}=\emptyset$.
\end{theorem}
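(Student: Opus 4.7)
The plan is to derive the chain
\[
\frac{R(\Sigma)}{L(\partial\Sigma)} \;\leq\; \frac{(q-2)A(\partial\Sigma)}{L(\partial\Sigma)} \;\leq\; \frac{(q-2)A(T_{L(\partial\Sigma)})}{L(\partial\Sigma)} \;\leq\; \frac{(q-2)A(T_0)}{L} \;=\; H(T_0),
\]
where the first inequality is Lemma~\ref{R<A}, the second is the spherical isoperimetric inequality (Lemma~\ref{good-1}(i)), and the third is the strict monotonicity of $l\mapsto (q-2)(2\pi-\sqrt{4\pi^2-l^2})/l$ on $(0,2\pi)$ (a purely analytic fact, compatible with Lemma~\ref{hd} since $L\leq 2\delta_{E_q}<3\delta_{E_q}\leq L_0$). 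By Remark~\ref{FF} I may assume $\Sigma\in\mathcal{F}$, so $\partial\Sigma$ is a finite union of SCC arcs; and by Lemma~\ref{Rad1}(iii) the hypothesis $L(\partial\Sigma)\leq 2\delta_{E_q}\leq 4\pi/3<2\pi$ places $\partial\Sigma$ in some open hemisphere $H$ of $S$, which I rotate to $H=D(0,\pi/2)\subset\mathbb{C}$ to match the setup of Lemma~\ref{R<A}.

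The key geometric input is that any two points of $f(\partial\Sigma)$ are joined by a subarc of $\partial\Sigma$ of $f$-length at most $L(\partial\Sigma)/2\leq\delta_{E_q}$, so $\mathrm{diam}(f(\partial\Sigma))\leq\delta_{E_q}$. Hence if $\mathfrak{a}_i,\mathfrak{a}_j\in E_q$ were two distinct points of $f(\partial\Sigma)$, splitting $\partial\Delta$ at their preimages would exhibit two subarcs each of $f$-length $\geq d(\mathfrak{a}_i,\mathfrak{a}_j)\geq\delta_{E_q}$, forcing $L(\partial\Sigma)=2\delta_{E_q}$ with $f(\partial\Sigma)$ equal to the geodesic $\overline{\mathfrak{a}_i\mathfrak{a}_j}$ traversed back and forth. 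Outside this degenerate borderline I obtain $\#(f(\partial\Sigma)\cap E_q)\leq 1$.

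To build the Jordan curve $\Gamma$ required by Lemma~\ref{R<A}, I use that $f(\partial\Sigma)$ is compact in $H$ with at most one point of $E_q$, so it has positive distance to both $\partial H$ and $E_q\setminus f(\partial\Sigma)$. Choosing $\varepsilon>0$ small enough that the closed $\varepsilon$-neighborhood $\overline{N_\varepsilon}$ of $f(\partial\Sigma)$ lies in $H$ and meets $E_q$ only inside $f(\partial\Sigma)\cap E_q$, I let $V$ be the component of $S\setminus\overline{N_\varepsilon}$ containing the closed opposite hemisphere $S\setminus H$, set $K_1=S\setminus V$, $K_2=V$, and take $\Gamma=\partial V$. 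For a generic $\varepsilon$ the finite-graph structure of $f(\partial\Sigma)$ makes $\Gamma$ a single Jordan curve in $S\setminus E_q$; one reads off $\overline{K_1}\subset H=D(0,\pi/2)$, $\partial\Sigma\subset K_1$, and $\#(E_q\cap K_2)\geq q-1$. Then Lemma~\ref{R<A} delivers $R(\Sigma)\leq(q-2)A(\partial\Sigma)$, strengthened to $R(\Sigma)\leq(q-2)A(\partial\Sigma)-4\pi$ whenever $\overline n(\Sigma)\neq 0$; combining with Lemma~\ref{good-1}(i) (equality iff $\partial\Sigma$ is a convex circle) and the monotonicity step yields $H(\Sigma)\leq H(T_0)$.

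For the equality clause: equality throughout the chain forces $L(\partial\Sigma)=L$, $\partial\Sigma$ to be a convex circle, and $\overline n(\Sigma)=0$; the argument principle then identifies $\Sigma$ as the simple closed disk bounded by $\partial\Sigma$, with $f(\Delta)\cap E_q=\emptyset$. The main obstacle I expect is the rigorous construction of $\Gamma$ as a single Jordan curve; I would justify this by a local analysis of $f(\partial\Sigma)$ as a finite union of SCC arcs and selecting $\varepsilon$ to avoid the finitely many critical values of the distance function $d(\,\cdot\,,f(\partial\Sigma))$, so that $\partial\overline{N_\varepsilon}$ is transverse to this graph. The degenerate borderline case $L(\partial\Sigma)=2\delta_{E_q}$ with $\#(f(\partial\Sigma)\cap E_q)=2$ I would handle separately by perturbing $\Sigma$ to slightly reduce $L(\partial\Sigma)$, applying the main argument, and passing to the limit.
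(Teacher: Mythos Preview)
Your approach is essentially the same as the paper's: place $\partial\Sigma$ in an open hemisphere, show that at most one point of $E_q$ lies near it, build a Jordan curve $\Gamma$ separating $\partial\Sigma$ from $q-1$ points of $E_q$, apply Lemma~\ref{R<A}, then the isoperimetric Lemma~\ref{good-1}(i) and the monotonicity Lemma~\ref{hd}. Two places where the paper is cleaner than your version: (1) instead of an $\varepsilon$-neighborhood of $f(\partial\Sigma)$, the paper takes the \emph{convex hull} $A$ of $\partial\Sigma$ inside $D(0,\pi/2)$, which is automatically a convex region bounded by a single Jordan curve, eliminating your transversality worry about $\partial\overline{N_\varepsilon}$; the diameter/half-arc bound you use is replaced by the simpler $L(\partial A)\leq L(\partial\Sigma)\leq 2\delta_{E_q}$, which directly limits $\#(A\cap E_q)\leq 2$. (2) For the degenerate case $\#(f(\partial\Sigma)\cap E_q)=2$, the paper does not perturb but sews $\Sigma$ along $\overline{\mathfrak a\mathfrak b}$ into a closed surface $\Sigma_0=(F,S)$ and invokes Lemma~\ref{RH} to get $R(\Sigma)=R(\Sigma_0)+8\pi\leq 0<R(T_0)$ in one line; this also makes the equality discussion cleaner, since this case is visibly strict.
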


By Lemma \ref{dDisk}, $T_{0}$ must exist.

\begin{proof}
By Rado's theorem, $\partial\Sigma$ is contained in some open hemisphere
$S^{\prime}$ on $S.$ We may assume $S^{\prime}=D(0,\pi/2),$ otherwise we
replace the surface $\Sigma$ and the set $E_{q}$ with $\Sigma^{\prime}=\left(
\varphi\circ f,\overline{\Delta}\right)  $ and $E_{q}^{\prime}=\varphi
(E_{q})\ $so that $\partial\Sigma^{\prime}\subset D(0,\pi/2),$ where $\varphi$
is a rotation of $S$ so that $\varphi\left(  \partial\Sigma\right)  \subset
D(0,\pi/2).$

Let $A$ be the convex hull of $\partial\Sigma$ in $D(0,\pi/2).$ Then by the
assumption we have
\begin{equation}
L(\partial A)\leq L(\partial\Sigma)\leq L \label{ab1}%
\end{equation}
and we can conclude that $A$ contains at most two points of $E_{q}.$

If $A$ contains two points $\mathfrak{a}$ and $\mathfrak{b}$ of $E_{q},$ then
we have $L=2\delta_{Eq}$, $\partial\Sigma=\overline{\mathfrak{ab}}%
+\overline{\mathfrak{ba}}$ and $\left(  \partial\Sigma\right)  \cap
E_{q}=\{\mathfrak{a},\mathfrak{b}\},$ and then we can sew\label{sew13}
$\Sigma$ along $\overline{\mathfrak{ab}}$ to obtain a surface $\Sigma
_{0}=\left(  F,S\right)  $ such that $F$ is a BCCM from $S$ onto $S.$ Then
$\overline{n}(\Sigma_{0})=\overline{n}\left(  \Sigma\right)  +2,$
$A(\Sigma_{0})=A(\Sigma)$ and thus we have by Lemma \ref{RH}
\[
R(\Sigma)=R(\Sigma_{0})+8\pi\leq0<R(T_{0})=\left(  q-2\right)  A(T_{0}),
\]
which implies (\ref{ab2}).

Assume $A$ contains at most one point of $E_{q}.$ Then there exists a Jordan
curve $\Gamma$ in $D\left(  0,\pi/2\right)  \backslash A$ whose interior
domain contains $A$ such that for the doubly connected domain $V$ between
$\Gamma$ and $\partial A$ in $D\left(  0,\pi/2\right)  ,$ $\overline
{V}\backslash\partial A$ contains no point of $E_{q}.$ Let $K_{1}$ be the
component of $S\backslash\Gamma$ which contains $A$ and $K_{2}$ the other
component. Then we have $\infty\in K_{2},$ $K_{1}\cup K_{2}=S\backslash
\Gamma,$ and $K_{2}$ contains at least $q-1$ points of $E_{q}.$ Thus by Lemma
\ref{R<A} and Lemma \ref{good-1} (i) we have
\[
R(\Sigma)\leq\left(  q-2\right)  A\left(  \partial\Sigma\right)  \leq\left(
q-2\right)  A(T_{1}),
\]
where $T_{1}$ is a disk in $S\backslash E_{q}$ with perimeter $L\left(
\partial\Sigma\right)  $, which with Lemma \ref{hd}, implies (\ref{ab2}). This
completes the proof.\label{11-1}
\end{proof}

\section{The spaces $\mathcal{F},\mathcal{F}(L),\mathcal{F}_{r},\mathcal{F}%
_{r}(L),\mathcal{C}\left(  L,m\right)  ,\mathcal{C}^{\ast}\left(  L,m\right)
,\mathcal{F}(L,m),\mathcal{F}_{r}(L,m)$}

Continuing Definition \ref{F}, in which $\mathcal{F}$ and $\mathcal{F}\left(
L\right)  $ have been defined, we introduce some subspaces of $\mathcal{F}$
and $\mathcal{F}\left(  L\right)  .$

\begin{definition}
\label{circu}(a) $\mathcal{C}(L,m)$ denotes the subspace of $\mathcal{F}(L)$
such that $\Sigma=\left(  f,\overline{U}\right)  \in\mathcal{C}(L,m)$ if and
only if $\partial U$ and $\partial\Sigma$ have partitions
\begin{equation}
\partial U=\alpha_{1}+\alpha_{2}+\cdots+\alpha_{m}, \label{part}%
\end{equation}%
\begin{equation}
\partial\Sigma=c_{1}+c_{2}+\cdots+c_{m}, \label{part1}%
\end{equation}
such that $c_{j}=\left(  f,\alpha_{j}\right)  $ is an \emph{SCC} arc on $S$
for $j=1,\dots,m$ (it is permitted that some $c_{j}$ may be a whole circle).
In this case, the partitions (\ref{part}) and (\ref{part1}) are both called
$\mathcal{C}\left(  L,m\right)  $-partition of $\partial\Sigma.$

(b) $\mathcal{C}^{\ast}(L,m)$ denotes the subspace of $\mathcal{C}%
(L,m)\mathcal{\ }$such that $\Sigma=\left(  f,\overline{U}\right)
\in\mathcal{C}^{\ast}(L,m)$ if and only if $\partial U$ and $\partial\Sigma$
have partitions (\ref{part}) and (\ref{part1}) such that $c_{j}=\left(
f,\alpha_{j}\right)  $ is an \emph{SCC} arc on $S$ and $f$ has no branch point
in $\alpha_{j}^{\circ}\cap f^{-1}(E_{q}),$ for every $j=1,\dots,m$. In this
case, the partitions (\ref{part}) and (\ref{part1}) are both called
$\mathcal{C}^{\ast}\left(  L,m\right)  $-partition of $\partial\Sigma$.

(c) $\mathcal{F}(L,m)$ denotes the subspace of $\mathcal{C}^{\ast}(L,m)$ such
that $\Sigma=\left(  f,\overline{U}\right)  \in\mathcal{F}(L,m)\ $if and only
if $\partial U$ and $\partial\Sigma$ have partitions (\ref{part}) and
(\ref{part1}), and that the following (i)---(iii) hold.

(i) Each $c_{j}=\left(  f,\alpha_{j}\right)  $ is an SCC arc.

(ii) For each $j=1,2,\dots,m,$ $f$ has no branch point in $\alpha_{j}^{\circ
}.$

(iii) For each $j=1,2,\dots,m,$ $f$ restricted to a neighborhood $D_{j}$ of
$\alpha_{j}^{\circ}$ in $\overline{\Delta}$ is a homeomorphism onto a one side
neighborhood $T_{j}\cup c_{j}^{\circ}$ of $c_{j}^{\circ},$ where $T_{j}$ is a
Jordan domain enclosed by $c_{j}$ and another circular arc $c_{j}^{\prime}$,
say, $\partial T_{j}=c_{j}-c_{j}^{\prime}$ and $c_{j}^{\prime}$ is a circular arc.

\label{Frpara}The partitions (\ref{part}) and (\ref{part1}) are both called
$\mathcal{F}(L,m)$-partitions of $\partial\Sigma$ if they satisfy (i)--(iii)
in addition.

(d) \label{FrFLMr}$\mathcal{F}_{r},$ as defined in (\ref{Fr}), is the subspace
of $\mathcal{F}$ such that $\Sigma=\left(  f,\overline{U}\right)
\in\mathcal{F}_{r}$ if and only if $f$ has no branch point in $\overline
{U}\backslash f^{-1}(E_{q}),$ and define
\[
\mathcal{F}_{r}(L)=\mathcal{F}_{r}\cap\mathcal{F}(L),
\]%
\[
\mathcal{F}_{r}(L,m)=\mathcal{F}_{r}\cap\mathcal{F}(L,m).
\]

\end{definition}

\begin{remark}
\label{circu-b}Note that $\mathcal{C}^{\ast}\left(  L,m\right)  $-partitons
and $\mathcal{F}\left(  L,m\right)  $-partitions envolve the inerior
information of the corresponding surfaces. But $\mathcal{C}\left(  L,m\right)
$-partitions does not involve the interior information of the surface, and
thus $\mathcal{C}\left(  L,m\right)  $-partitions can be defined for closed
curves which may not be bondary curves of surfaces.
\end{remark}

\begin{remark}
\label{spaces}The conditions (ii) and (iii) in the definition are equivalent
by Lemma \ref{int-arg} (also see Lemma \ref{int-arg1} later in this section),
we list (iii) just to emphasize. On the other hand it is clear that%
\[
\mathcal{C}\left(  L,m\right)  \supset\mathcal{C}^{\ast}\left(  L,m\right)
\supset\mathcal{F}\left(  L,m\right)  \supset\mathcal{F}_{r}\left(
L,m\right)  ,
\]%
\begin{align*}
\mathcal{C}\left(  L,m+1\right)   &  \supset\mathcal{C}\left(  L,m\right)  ,\\
\mathcal{C}^{\ast}\left(  L,m+1\right)   &  \supset\mathcal{C}^{\ast}\left(
L,m\right)  ,\\
\mathcal{F}\left(  L,m+1\right)   &  \supset\mathcal{F}\left(  L,m\right)  ,\\
\mathcal{F}^{\ast}\left(  L,m+1\right)   &  \supset\mathcal{F}^{\ast}\left(
L,m\right)  ,
\end{align*}
and all inclusion relationship are strict. On the hand, we have%
\[
\cup_{m=1}^{\infty}\mathcal{C}\left(  L,m\right)  =\cup_{m=1}^{\infty
}\mathcal{C}^{\ast}\left(  L,m\right)  =\cup_{m=1}^{\infty}\mathcal{F}\left(
L,m\right)  =\mathcal{F}\left(  L\right)  ,
\]
since for every $\Sigma=\left(  f,\overline{U}\right)  \in\mathcal{F}\left(
L\right)  ,$ $\partial\Sigma$ has an $\mathcal{F}(L,m)$-partition for some
$m\in\mathbb{N}.$
\end{remark}

\begin{definition}
\label{bylength}A curve $\left(  f,\partial\Delta\right)  $ is called
parametrized by length, if for each $\theta\in\lbrack0,2\pi]$ and the arc
$\Theta_{\theta}=\{e^{\sqrt{-1}t}:t\in\lbrack0,\theta]\}\subset\partial
\Delta,$
\[
L(f,\Theta_{\theta})=\frac{\theta}{2\pi}L(f,\partial\Delta).
\]

\end{definition}

\begin{remark}
\label{C(L,m)p}When a closed curve $\Gamma=\left(  \varphi,\partial
\Delta\right)  $ is parametrized by length and has $\mathcal{C}(L,m)$%
-partitions (\ref{part}) and (\ref{part1}), the partitions are uniquely
determined by the initial point of $\alpha_{1}$ and the length of
$c_{j},j=1,2,...,m,$ since we have%
\[
L(\alpha_{1}):L(\alpha_{2}):\dots:L(\alpha_{m})=L(c_{1}):L(c_{2}%
):...:L(c_{m}).
\]

\end{remark}

\begin{remark}
\label{parameter}Assume $\Sigma=\left(  f,\overline{\Delta}\right)
\in\mathcal{F}\left(  L,m\right)  $ and $\partial\Sigma$ is parametrized by
length. Then for any $\mathcal{F}\left(  L,m\right)  $-partition
\begin{equation}
\partial\Delta=\alpha_{1}\left(  a_{1},a_{2}\right)  +\cdots+\alpha_{m}\left(
a_{m},a_{1}\right)  \label{fmp1}%
\end{equation}
of $\partial\Sigma$ with $a_{1}=1$ and the corresponding $\mathcal{F}\left(
L,m\right)  $-partition
\begin{equation}
\partial\Sigma=c_{1}\left(  q_{1},q_{2}\right)  +\cdots+c_{m}\left(
q_{m},q_{1}\right)  , \label{fmp2}%
\end{equation}
we have $a_{j}=e^{\sqrt{-1}\theta_{j}}\ $for some $\theta_{j}$ such that
\[
0=\theta_{1}<\theta_{2}<\dots<\theta_{m}<\theta_{m+1}=2\pi,
\]
and%
\[
\theta_{j}=\frac{L(f,\Theta_{\theta_{j}})}{L\left(  \partial\Sigma\right)
}2\pi,j=1,2,\dots,m+1.
\]
Then for the homeomorphism $\varphi:[1,m+1]\rightarrow\lbrack0,2\pi]$ which
maps each interval $[j,j+1]$ linearly onto $[\theta_{j},\theta_{j+1}],$ we
have $a_{j}=e^{\sqrt{-1}\varphi\left(  j\right)  },$ and for $x\in(j,j+1),$
$a_{x}=e^{\sqrt{-1}\varphi\left(  x\right)  }$ is a point in $\alpha_{j}$, and
$q_{x}=f(a_{x})$ is a point in $c_{j}$. For example, $a_{1+\frac{1}{2}}$ is
the middle point of $\alpha_{1}$ and $q_{1+\frac{1}{2}}$ is the middle point
of $c_{1},$ $q_{m+\frac{1}{2}}$ is the middle point of $c_{m}$, while
$q_{m+1}=q_{1}.$
\end{remark}

\begin{lemma}
\label{tangent}\label{tangent-1}Let $\Sigma=\left(  f,\overline{\Delta
}\right)  \in\mathcal{F}\left(  L,m\right)  ,$ $\{a,b\}\subset\partial\Delta,$
$\alpha$ be an arc of $\partial\Delta$ from $a$ to $b,$ oriented by
$\partial\Delta,$ $\beta$ be a simple arc in $\overline{\Delta}$ from $a$ to
$b,$ and assume that the following (a)--(c) hold.

(a) (\ref{fmp1}) and (\ref{fmp2}) are $\mathcal{F}\left(  L,m\right)
$-partitions of $\partial\Sigma\ $with $c_{j}=\left(  f,\alpha_{j}\right)
,j=1,\dots,m.$

(b) $\alpha\cap\beta$ contains a connected component $I=I(a^{\prime}%
,b^{\prime}),$ oriented by $\partial\Delta,$ with $b^{\prime}\neq a,b,$ say,
$b^{\prime}\in\alpha^{\circ}\cap\beta^{\circ}.$

(c) $\left(  f,-\beta\right)  $ is an SCC arc.

Then the following hold.

(i) $b^{\prime}\in\{a_{j}\}_{j=1}^{m}.$

(ii) If $a^{\prime}\neq a,$ then $a^{\prime}\in\{a_{j}\}_{j=1}^{m},$ and $I$
is either a point of $\{a_{j}\}_{j=1}^{m},$ or an arc of $\partial\Delta$
which can be written as
\[
I=\alpha_{j_{1}}\left(  a_{j_{1}},a_{j_{1}+1}\right)  +\alpha_{j_{1}+1}\left(
a_{j_{1}+1},a_{j_{1}+2}\right)  +\cdots+\alpha_{j_{1}+k}\left(  a_{j_{1}%
+k},a_{j_{1}+k+1}\right)
\]
for some $j_{1}\leq m$ and some $0\leq k<m$ (with $a_{j}=a_{j-m}$ for each $j$
with $j>m$).

(iii) If $b^{\prime}$ is not a branch point of $f,$ then $\left(
f,\partial\Delta\right)  $ is not convex at $b^{\prime}$.

(iv) If, in addition to (a)--(c), $\left(  f,-\beta\right)  $ is strictly
convex, then $a^{\prime}=b^{\prime}\in\{a_{j}\}_{j=1}^{m},$ say, $I$ is a
point in $\{a_{j}\}_{j=1}^{m},$ and moreover $\beta^{\circ}\cap\partial
\Delta\subset\{a_{j}\}_{j=1}^{m}$.
\end{lemma}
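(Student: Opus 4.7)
The plan is to combine the rigidity that $(f,\beta)$ lies on a single circle $C$ (namely the one containing the SCC arc $(f,-\beta)$) with the local injectivity built into the $\mathcal{F}(L,m)$-structure. The key tool is condition (iii) of Definition \ref{circu}(c): around each interior point of $\alpha_k$, $f$ restricts to a homeomorphism of a neighborhood $D_k$ of $\alpha_k^\circ$ onto $T_k\cup c_k^\circ$, with $T_k\cap C_k=\emptyset$. This forces tangencies of $\beta$ with $\partial\Delta$ to occur only at vertices of the partition.

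For part (i), I suppose toward contradiction that $b'\in\alpha_k^\circ$ for some $k$. Since $b'$ is terminal for the connected component $I$ of $\alpha\cap\beta$ and $\beta$ is simple, $\beta$ must lie in $\Delta$ immediately on one side of $b'$ in its parametrization. On $I$ the image $(f,\beta)|_I$ coincides as a set with the corresponding sub-arc of $c_k$, so the single circle $C$ containing $(f,\beta)$ equals $C_k$. Consequently, just off $b'$ on the free side, $\beta$-points lie in $\Delta\cap D_k$ while $(f,\beta)$ remains on $C_k$; since $T_k\cap C_k=\emptyset$, the homeomorphism $f|_{D_k}\colon D_k\to T_k\cup c_k^\circ$ sends these images into $c_k^\circ$, whose preimage in $D_k$ is $\alpha_k^\circ\cap D_k\subset\partial\Delta$, contradicting $\beta\subset\Delta$ there. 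Hence $b'\in\{a_j\}_{j=1}^m$. Part (ii) follows by the symmetric argument at $a'$; once both endpoints of $I$ lie in $\{a_j\}$, the arc $I\subset\partial\Delta$ decomposes as a concatenation of consecutive $\alpha_j$'s.

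For part (iii), assume $b'=a_j$ is regular. By Theorem \ref{st}, I may take $f$ holomorphic and hence conformal at $b'$, so $c_{j-1}$ and $c_j$ share a common tangent at $q_j=f(b')$ inherited from the forward tangent of $\partial\Delta$. Choose local coordinates so this tangent is $+x$ and $\Delta$ is on the $+y$ side; the SCC property forces $c_{j-1}$ and $c_j$ each to curve into $\{y>0\}$ (convex disks on the left), so $f(\Delta)$ lies locally in $\{y>0\}$. Meanwhile, $(f,-\beta)$ SCC places the convex disk of $C$ on the left of $(f,-\beta)$ and hence on the right of $(f,\beta)$, so the arc of $C$ traced by $(f,\beta)$ past $q_j$ curves into $\{y<0\}$. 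Since $\beta$ past $b'$ lies in $\Delta$ by the (i)-argument, $(f,\beta)$ past $q_j$ ought to lie in $f(\Delta)\subset\{y>0\}$, contradicting the curvature analysis; hence $(f,\partial\Delta)$ cannot be convex at a regular $b'$. The main obstacle is the orientation bookkeeping -- correctly tracking which side of $C$ is ``convex'' for $(f,\beta)$ versus $(f,-\beta)$ and matching this to the ``left-of-boundary'' convention for $f(\Delta)$.

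For part (iv), strict convexity of $(f,-\beta)$ excludes $C$ from being a great circle (a great-circle arc cannot satisfy the open-hemisphere condition of Definition \ref{in}(4)). If $I$ had positive length, then on $I$ the arcs $(f,\beta)|_I$ and the corresponding sub-arc of $c_{j-1}$ coincide as sets, but the SCC conventions require the convex disk of $C=C_{j-1}$ to sit simultaneously on the left of $c_{j-1}$ and on the right of $(f,\beta)$, which is possible only when $C$ is a great circle, contradicting strict convexity. (The alternative case in which $\beta$ traverses $I$ opposite to $\partial\Delta$ is excluded by applying the local injectivity argument of (i) on the other side of $b'$.) Hence $I=\{b'\}$ and so $a'=b'\in\{a_j\}$. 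For the inclusion $\beta^\circ\cap\partial\Delta\subset\{a_j\}$, any $p\in\beta^\circ\cap\partial\Delta$ lies in some connected component of $\beta\cap\partial\Delta$; the same orientation-convexity argument forces this component to reduce to $\{p\}$, and then the argument of (i) applied with a sub-arc of $\partial\Delta$ containing $p$ in its interior yields $p\in\{a_j\}$.
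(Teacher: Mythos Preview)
Your argument for (i) has a genuine gap in the case that the component $I$ reduces to the single point $\{b'\}$, i.e.\ $a'=b'$. Your deduction ``On $I$ the image $(f,\beta)|_I$ coincides as a set with the corresponding sub-arc of $c_k$, so the single circle $C$ containing $(f,\beta)$ equals $C_k$'' requires $I$ to have positive length; when $I=\{b'\}$ you only get $f(b')\in c_k$, which does not pin down $C$. In that case $(f,\beta)$ is merely \emph{tangent} to $c_k$ at $f(b')$ from the $T_k$-side, and you must rule out an internal tangency $C\neq C_k$. This needs an orientation argument: if the tangent of $(f,\beta)$ at $f(b')$ agrees with that of $c_k$ you get a contradiction from the SCC convexity of $(f,-\beta)$ as in your (iii), but if it is \emph{opposite} to $c_k$ (which is not excluded a~priori---$\beta$ is only required to go from $a$ to $b$, and can touch $\partial\Delta$ against its orientation), the curvature comparison is consistent and no contradiction emerges from your setup. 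The same issue recurs in your (iii): your coordinate computation implicitly assumes the tangent of $(f,\beta)$ at $q_j$ is $+x$, which again comes from a positive-length overlap.

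The paper's proof packages exactly this orientation bookkeeping into Observation~\ref{ob1}: if two simple convex arcs $\gamma_1,\gamma_2$ satisfy the appropriate left/right and direction-compatibility conditions, then $\gamma_1\cap\gamma_2$ is a single (possibly degenerate) straight segment whose endpoints lie in $\partial\gamma_1\cup\partial\gamma_2$; in particular $\gamma_1^\circ\cap\gamma_2^\circ$ cannot contain an isolated point. Applying this with $\gamma_1=(f,\alpha_{b'})$ and $\gamma_2=(f,-\beta_{b'})$ dispatches both the arc case and the point case of (i) and (iii) simultaneously. Your positive-length argument for (i) via the explicit lune $T_k$ from Definition~\ref{circu}(c)(iii) is cleaner than the paper's in that regime, and your approach to (iv) is essentially the same as the paper's (both observe that on a nondegenerate $I$ the arc would have to be simultaneously convex and concave, hence straight). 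To close the gap you should either prove the point case directly by a tangency-plus-orientation analysis (carefully excluding the ``opposite tangent'' scenario using that $f$ is an OPH near $b'$ and that $\beta$ stays in $\overline\Delta$), or invoke a statement equivalent to Observation~\ref{ob1}.
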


Let $\gamma$ be a simple arc on $S$. Then for any $x\in\gamma^{\circ}$ there
exist a positive number $\delta_{x,\gamma}\ $and a subarc $\gamma^{\prime}$ of
$\gamma$ in $\overline{D(x,\delta_{x,\gamma})}$ such that $\partial
\gamma^{\prime}\subset\partial D(x,\delta_{x,\gamma})$ and $x\in\gamma
^{\prime\circ}\subset D(x,\delta_{x,\gamma})$. Hence $\gamma^{\prime}$ divides
$D(x,\delta_{x,\gamma})$ into two Jordan domains $D^{l}\left(  x,\delta
_{x,\gamma}\right)  $ and $D^{r}\left(  x,\delta_{x,\gamma}\right)  ,$ on the
left and right hand side of $\gamma^{\prime}$ (by convention, $\gamma^{\prime
}$ inherits the orientation of $\gamma$). From this observation, we can
introduce the following definition.

\begin{definition}
\label{LR}Let $\gamma_{1}$ and $\gamma_{2}$ be two arcs on $S$ with
$\gamma_{1}^{\circ}\cap\gamma_{2}^{\circ}\neq\emptyset\ $and assume
$\gamma_{2}$ is simple. We say that $\gamma_{1}$ is on the left hand side of
$\gamma_{2},$ if for each $x\in\gamma_{1}^{\circ}\cap\gamma_{2}^{\circ},$ $x$
has a neighborhood in $\gamma_{1}$ contained in $\overline{D^{l}\left(
x,\delta_{x,\gamma_{2}}\right)  }\backslash\partial D\left(  x,\delta
_{x,\gamma_{2}}\right)  .$ When we replace $D^{l}\left(  x,\delta
_{x,\gamma_{2}}\right)  $ with $D^{r}\left(  x,\delta_{x,\gamma_{2}}\right)  $
for every $x\in\gamma_{1}^{\circ}\cap\gamma_{2}^{\circ},$ we obtain the
definition of that $\gamma_{1}$ is on the right hand side of $\gamma_{2}.$
\end{definition}

By this definition, $\gamma_{2}$ is on the left hand side of itself, and on
the right hand side as well.

\begin{definition}
\label{arcarc}Let $\gamma_{1}$ and $\gamma_{2}$ be two simple arcs on $S$ and
assume $\gamma_{1}^{\circ}\cap\gamma_{2}^{\circ}\neq\emptyset$. When
$\gamma_{1}$ is on the left (right) hand side of $\gamma_{2},$ and $\gamma
_{2}$ is on the right (left) hand side of $\gamma_{1},$ we say that the
direction of $\gamma_{1}$ is determined by $\gamma_{2}.$
\end{definition}

The proof of the previous lemma is similar as the proof of Lemma 5.4 in
\cite{Zh1}, which is based on an observation stated after that lemma in
\cite{Zh1}. The proof here is based on a similar observation but is a little
more general:

\begin{Observation}
\label{ob1}Let $\gamma_{1}$ and $\gamma_{2}$ be two simple and convex arcs on
$S$ with $\gamma_{1}^{\circ}\cap\gamma_{2}^{\circ}\neq\emptyset.$ Assume that
$\gamma_{1}$ is on the left hand side of $\gamma_{2}$ and the direction of
$-\gamma_{1}$ is determined by $\gamma_{2}$ in the sense of Definition
\ref{arcarc}. Then $\gamma_{1}\cap\gamma_{2}$ is a simple line segment on $S,
$ and each of the two endpoints of $\gamma_{1}\cap\gamma_{2}$ is an endpoint
of $\gamma_{1}$ or $\gamma_{2}$, say, $\gamma_{1}^{\circ}\cap\gamma_{2}%
^{\circ}$ has no compact connected component.
\end{Observation}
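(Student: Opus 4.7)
The plan is to first simplify the two hypotheses into a symmetric ``mutual left-of'' condition, then show that this forces $\gamma_1$ and $\gamma_2$ to coincide in a neighborhood of every interior intersection point, and finally extract the topological conclusions from this local coincidence.

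First I would observe that, by Definition \ref{LR}, the condition ``$\gamma_1$ is on the left of $\gamma_2$'' depends only on the point set $\gamma_1 \subset S$ and the orientation of $\gamma_2$, so it is preserved when $\gamma_1$ is reversed; hence $-\gamma_1$ is also on the left of $\gamma_2$. Definition \ref{arcarc} then forces $\gamma_2$ on the right of $-\gamma_1$, which (since reversing $\gamma_1$ swaps the meaning of left and right of $\gamma_1$) unwinds to $\gamma_2$ on the left of $\gamma_1$. Thus the two hypotheses reduce to the symmetric pair: \emph{$\gamma_1$ is on the left of $\gamma_2$ and $\gamma_2$ is on the left of $\gamma_1$}.

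Next I would establish the key local claim: at every $x \in \gamma_1^\circ \cap \gamma_2^\circ$, $\gamma_1$ and $\gamma_2$ coincide in some neighborhood of $x$. Choose a local Euclidean chart near $x$ in which $x$ is the origin and $\gamma_2$ has tangent along the positive real axis at $0$ with its convex domain on the upper side; by convexity $\gamma_2$ is then locally the graph $y = g(x)$ of a convex function $g$ with $g(0) = g'(0) = 0$, hence $g \geq 0$. The first hypothesis places $\gamma_1$ in $\{y \geq g(x)\}$ locally, forcing $\gamma_1$ through the origin tangent to the real axis with direction $+x$ or $-x$. In the $+x$ case, $\gamma_1$ is locally the graph of a convex function $h$ with $h(0) = h'(0) = 0$ and $h \geq g$; the second hypothesis gives $g \geq h$, so $h \equiv g$ and $\gamma_1 \equiv \gamma_2$ locally. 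In the $-x$ case, the convex domain of $\gamma_1$ lies on the lower side, so $\gamma_1$ is locally the graph of a concave function $h$ with $h(0) = h'(0) = 0$ and hence $h \leq 0$; combined with $h \geq g \geq 0$, this forces $h \equiv g \equiv 0$, and again $\gamma_1 \equiv \gamma_2$ locally.

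From the local coincidence, $\gamma_1^\circ \cap \gamma_2^\circ$ is open in the open interval $\gamma_1^\circ$, and an open subset of an open interval cannot have a compact connected component, yielding the ``no compact connected component'' statement. For the claim that $\gamma_1 \cap \gamma_2$ is a single simple arc with endpoints in $\partial\gamma_1 \cup \partial\gamma_2$, every connected component of $\gamma_1 \cap \gamma_2$ extends on each side until it reaches an endpoint of $\gamma_1$ or $\gamma_2$, since an interior terminal point would contradict the local coincidence. In the circular-arc setting used in the applications, two distinct components would produce two complementary arcs $\alpha_1 \subset \gamma_1$ and $\alpha_2 \subset \gamma_2$ with matching endpoints and matching tangent directions there; since a circular arc is determined by two points and one tangent direction, this would force $\alpha_1 = \alpha_2$, contradicting $\alpha_1^\circ \cap \gamma_2 = \emptyset$. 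Hence the intersection is a single arc. The main technical obstacle is the limited regularity of a general convex arc, which a priori need not be $C^1$; I would handle this via the tangent cone of a convex arc at $x$, which is always well-defined and is either a single tangent direction or a corner wedge of angular width less than $\pi$, and a corner at an interior intersection point is incompatible with both left-of hypotheses holding simultaneously (the curve would leave $\overline{D^l}$ on one side of the corner), so only the smooth tangent case remains and the graph-based argument above applies.
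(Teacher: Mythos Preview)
The paper states Observation~\ref{ob1} without proof, presenting it as an elementary geometric fact (a generalization of an observation from \cite{Zh1}), and only uses the conclusion that $\gamma_1^\circ\cap\gamma_2^\circ$ has no compact connected component in the subsequent proof of Lemma~\ref{tangent}. Your local-coincidence argument is correct and cleanly establishes exactly this: the reduction to the symmetric ``each is on the left of the other'' condition is valid, and your graph analysis in a local chart shows that near any interior intersection point the two arcs must agree, so $\gamma_1^\circ\cap\gamma_2^\circ$ is open in $\gamma_1^\circ$ and hence has no compact component, and every endpoint of a component of $\gamma_1\cap\gamma_2$ lies in $\partial\gamma_1\cup\partial\gamma_2$.

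One point worth flagging: your $+x$ case yields only local coincidence, not straightness, and this case is not vacuous under the stated hypotheses (if $\gamma_1=\gamma_2$ as oriented strictly convex circular arcs, all four hypotheses hold formally via the degenerate reading of Definitions~\ref{LR} and~\ref{arcarc}, yet the intersection is not a great-circle segment). So the literal ``line segment on $S$'' claim is slightly stronger than what your argument gives, and indeed appears to require the intended non-degenerate reading that $-\gamma_1$ and $\gamma_2$ share orientation at intersection points, which forces your $-x$ case and hence $g\equiv h\equiv 0$. In the actual application (condition (g) in the proof of Lemma~\ref{tangent}) this orientation compatibility is present, so nothing is lost. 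Your connectedness argument in the final paragraph is somewhat informal for general convex arcs, but since Lemma~\ref{tangent} only needs the ``no compact component'' conclusion, this does not affect the use of the Observation in the paper.
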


\begin{proof}
[Proof of Lemma \ref{tangent}]\label{2023-4-5-19:08}We may write
\[
I=I\left(  a^{\prime},b^{\prime}\right)  =\alpha\left(  a^{\prime},b^{\prime
}\right)  =\beta\left(  a^{\prime},b^{\prime}\right)  ,
\]
which, by convention, means that $I$ is from $a^{\prime}$ to $b^{\prime}$ and
$I$ is a subarc of $\alpha$ (and $\beta$) whose direction is determined by
$\alpha$ (and $\beta$). Since $\Sigma\in\mathcal{F}\left(  L,m\right)  $ and
$\left(  f,-\beta\right)  $ is simple and circular, $\alpha\cap\beta$ is
consisted of a finite number of connected components. Thus $\alpha$ and
$\beta$ have subarcs\footnote{By convention, subarcs inherits the oriention.}
$\alpha_{b^{\prime}}=\alpha\left(  a^{\prime\prime},b^{\prime\prime}\right)  $
and $\beta_{b^{\prime}}=\beta(A^{\prime\prime},B^{\prime\prime})$ which are
neighborhoods of $b^{\prime}$ in $\alpha$ and in $\beta,$ respectively, such that

\begin{condition}
\label{condi1}(d) When $b^{\prime}\neq a^{\prime},$ we have $A^{\prime\prime
}=a^{\prime\prime}\in I^{\circ},$%
\[
\alpha_{b^{\prime}}\cap\beta_{b^{\prime}}=\alpha_{b^{\prime}}\cap
I=\beta_{b^{\prime}}\cap I=\alpha\left(  a^{\prime\prime},b^{\prime}\right)
=\beta\left(  a^{\prime\prime},b^{\prime}\right)  ;
\]
and when $b^{\prime}=a^{\prime},$ we have
\[
\alpha_{b^{\prime}}\cap\beta_{b^{\prime}}=\{b^{\prime}\}\subset\alpha
_{b^{\prime}}^{\circ}\cap\beta_{b^{\prime}}^{\circ}.
\]

(e) $\alpha_{b^{\prime}}\backslash\{b^{\prime}\}$ and $\beta_{b^{\prime}%
}\backslash\{b^{\prime}\}$ contain neither point of $\{a_{j}\}_{j=1}^{m}\ $nor
branch point of $f.$

(f) $\left(  f,-\beta_{b^{\prime}}\right)  $ is an SCC arc.

(g) If $b^{\prime}$ is not a branch point of $f$ and $\left(  f,\partial
\Delta\right)  $ is not folded at $b^{\prime},$ then $f$ is an OPH in a
neighborhood\footnote{When $b^{\prime}$ satisfies the assumption of (g), $f$
is a homeomorphism in a neighborhood of $b^{\prime}$ in $\overline{\Delta},$
and so the conclusion holds when $\alpha_{b^{\prime}}$ and $\beta_{b^{\prime}%
}$ is contained in that neighborhood.} of $\alpha_{b^{\prime}}\cup
\beta_{b^{\prime}}$ in $\overline{\Delta},$ $\gamma_{1}=(f,\alpha_{b^{\prime}%
})$ is a simple arc on the left hand side of $\gamma_{2}=\left(
f,-\beta_{b^{\prime}}\right)  ,$ and the direction of $\gamma_{1}$ is
determined by $-\gamma_{2}.$
\end{condition}

To prove (i), assume its opposite $b^{\prime}\notin\{a_{j}\}_{j=1}^{m}.$ Then
by (e) $\gamma_{1}=(f,\alpha_{b^{\prime}})$ is contained in some $c_{j}$ of
the partition (\ref{fmp2}) and thus is convex. Hence, (f), (g) and Observation
\ref{ob1} imply that $f(b^{\prime})$ is in the interior $\left(  \gamma
_{1}\cap\gamma_{2}\right)  ^{\circ}$, and thus $b^{\prime}$ is an interior
point of $\alpha_{b^{\prime}}\cap\beta_{b^{\prime}},$ which contradicts (d).
(i) is proved and the proof of (ii) is the same.

To prove (iii), assume that $b^{\prime}$ is not a branch point of $f\ $and the
conclusion fails, say, $\left(  f,\partial\Delta\right)  $ is convex at $b.$
Then $\left(  f,\partial\Delta\right)  $ is not folded at $b^{\prime}$ and so
all conclusions of (g) hold, and moreover, $\gamma_{1}=(f,\alpha_{b^{\prime}%
})$ is convex by (d) and (e). Thus (f), (g) and Observation \ref{ob1} again
imply that $b^{\prime}$ is an interior point of $\alpha_{b^{\prime}}\cap
\beta_{b^{\prime}}$, contradicting (d), as in the proof of (i). This proves (iii).

Assume $\left(  f,-\beta\right)  $ is strictly convex, and $I$ is not a point,
say, $a^{\prime}\neq b^{\prime}.$ Then $I$ has a subarc $I^{\prime}$ such that
$I^{\prime}\subset\left(  \partial\Delta\right)  \backslash\{a_{j}\}_{j=1}%
^{m},$ and then by (c) and definition of $\mathcal{F}\left(  L,m\right)  ,$
both $\left(  f,I^{\prime}\right)  $ and $\left(  f,-I^{\prime}\right)
\subset\left(  f,-\beta\right)  $ are convex arcs, which implies that $\left(
f,I^{\prime}\right)  $ is a line segment and thus $\left(  f,-\beta\right)  $
cannot be strictly convex. This proves (iv).
\end{proof}

The following result and the method of proof will be used a lot of times.

\begin{lemma}
\label{split}\label{ratio}Let $\left\{  L_{j}\right\}  _{j=1}^{N}$ and
$\{R_{j}\}_{j=1}^{N}$ be sets of positive numbers and assume $\frac{R_{1}%
}{L_{1}}\geq\frac{R_{j}}{L_{j}},j=2,\dots,N.$ Then%
\[
\frac{R_{1}}{L_{1}}\geq\frac{\sum_{j=1}^{N}R_{j}}{\sum_{j=1}^{N}L_{j}},
\]
with equality holding if and only if
\[
R_{1}/L_{1}=R_{2}/L_{2}=\dots=R_{N}/L_{N}.
\]

\end{lemma}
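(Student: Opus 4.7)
The plan is to give a short, elementary proof by converting the hypothesized ratio inequalities into cross-multiplied inequalities and then summing. Since every $L_j > 0$, the hypothesis $\frac{R_1}{L_1} \geq \frac{R_j}{L_j}$ is equivalent to
\[
R_j \leq \frac{R_1}{L_1}\, L_j, \qquad j = 1,2,\dots,N,
\]
with the $j=1$ case holding as equality. I would then sum these $N$ inequalities over $j$, obtaining
\[
\sum_{j=1}^{N} R_j \;\leq\; \frac{R_1}{L_1} \sum_{j=1}^{N} L_j,
\]
and divide by the positive quantity $\sum_{j=1}^{N} L_j$ to conclude
\[
\frac{\sum_{j=1}^{N} R_j}{\sum_{j=1}^{N} L_j} \;\leq\; \frac{R_1}{L_1}.
\]

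For the equality clause, I would trace back through this chain. Equality in the summed inequality holds if and only if equality holds in each individual inequality $R_j \leq \frac{R_1}{L_1} L_j$, which (again using $L_j > 0$) is equivalent to $\frac{R_j}{L_j} = \frac{R_1}{L_1}$ for every $j = 1,2,\dots,N$. Since the $j=1$ case is automatic, this is exactly the stated equality condition
\[
\frac{R_1}{L_1} = \frac{R_2}{L_2} = \cdots = \frac{R_N}{L_N}.
\]

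There is no substantive obstacle here; the lemma is the standard mediant inequality for positive fractions. The only thing to be careful about is to state the two directions of the equivalence cleanly and to use the positivity of the $L_j$ in both the cross-multiplication step and the final division step so that no inequality accidentally reverses.
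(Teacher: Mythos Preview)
Your proof is correct and follows essentially the same approach as the paper: the paper writes the single displayed chain $\sum R_j = \sum \frac{R_j}{L_j} L_j \leq \sum \frac{R_1}{L_1} L_j = \frac{R_1}{L_1}\sum L_j$, which is exactly your cross-multiply-and-sum argument. Your explicit treatment of the equality case is a slight elaboration of what the paper leaves implicit, but the idea is identical.
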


\begin{proof}
It follows from%
\[
\sum_{j=1}^{N}R_{j}=\sum_{j=1}^{N}\frac{R_{j}}{L_{j}}L_{j}\leq\sum_{j=1}%
^{N}\frac{R_{1}}{L_{1}}L_{j}=\frac{R_{1}}{L_{1}}\sum_{j=1}^{N}L_{j}.
\]

\end{proof}

The following lemma is very useful (recall definitions (\ref{Ahero}) and
(\ref{DH}) of $R(\Sigma)$ and $H(\Sigma)$).

\begin{lemma}
\label{dec}Let $\Sigma_{j},j=0,1,\dots,n,$ be surfaces in $\mathbf{F,}$ and
let $\varepsilon_{0}\ $be a positive number such that%
\[
R(\Sigma_{0})-\varepsilon_{0}\leq\sum_{j=1}^{n}R(\Sigma_{j})\mathrm{\ and\ }%
L(\partial\Sigma_{0})+\varepsilon_{0}\geq\sum_{j=1}^{n}L(\partial\Sigma_{j}).
\]
Then
\begin{equation}
\frac{H(\Sigma_{0})-\varepsilon_{0}/L(\partial\Sigma_{0})}{1+\varepsilon
_{0}/L(\partial\Sigma_{0})}\leq\max_{1\leq j\leq n}H(\Sigma_{j}). \label{ma}%
\end{equation}

\end{lemma}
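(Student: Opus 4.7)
The plan is to carry out a direct calculation that packages the "weighted mean" principle of Lemma \ref{split} with the two $\varepsilon_{0}$-slacks built into the hypotheses. Set $M := \max_{1\le j\le n} H(\Sigma_{j})$. The key identity is the very definition $H(\Sigma_{j}) = R(\Sigma_{j})/L(\partial \Sigma_{j})$, which is legitimate because every $\Sigma_{j}\in\mathbf{F}$ has a Jordan-curve boundary, so $L(\partial\Sigma_{j})>0$. Multiplying the inequality $H(\Sigma_{j})\le M$ through by this positive quantity preserves direction (regardless of the sign of $M$) and yields $R(\Sigma_{j})\le M\,L(\partial\Sigma_{j})$ for $j=1,\dots,n$.

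Next I would sum these $n$ inequalities and then invoke the second hypothesis $\sum_{j=1}^{n}L(\partial\Sigma_{j})\le L(\partial\Sigma_{0})+\varepsilon_{0}$ to obtain
\[
\sum_{j=1}^{n} R(\Sigma_{j}) \;\le\; M\sum_{j=1}^{n} L(\partial\Sigma_{j}) \;\le\; M\bigl(L(\partial\Sigma_{0})+\varepsilon_{0}\bigr).
\]
Combining this with the first hypothesis $R(\Sigma_{0})-\varepsilon_{0}\le\sum_{j=1}^{n}R(\Sigma_{j})$ gives
\[
R(\Sigma_{0})-\varepsilon_{0} \;\le\; M\bigl(L(\partial\Sigma_{0})+\varepsilon_{0}\bigr).
\]
Dividing both sides by $L(\partial\Sigma_{0})>0$ and rewriting $R(\Sigma_{0})/L(\partial\Sigma_{0})$ as $H(\Sigma_{0})$ produces
\[
H(\Sigma_{0}) - \frac{\varepsilon_{0}}{L(\partial\Sigma_{0})} \;\le\; M\left(1 + \frac{\varepsilon_{0}}{L(\partial\Sigma_{0})}\right),
\]
and a final division by the strictly positive factor $1+\varepsilon_{0}/L(\partial\Sigma_{0})$ isolates $M$ on the right, giving exactly the claimed inequality (\ref{ma}).

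There is no genuine obstacle here — the lemma is essentially bookkeeping. The only points that deserve explicit verification are (i) the positivity $L(\partial\Sigma_{j})>0$, which makes the multiplicative step from $H(\Sigma_{j})\le M$ to $R(\Sigma_{j})\le M\,L(\partial\Sigma_{j})$ safe even when $M\le 0$ (which can occur because $R$ may be negative), and (ii) the strict positivity of $1+\varepsilon_{0}/L(\partial\Sigma_{0})$, which allows the last division to preserve the direction of the inequality. Once these two observations are on the table, the four lines above constitute the entire argument.
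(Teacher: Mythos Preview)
Your approach is exactly the paper's intended one (the paper's proof is the single line ``follows from (\ref{Ahero}), (\ref{DH}) and Lemma \ref{ratio}''), and your write-up is a clean unpacking of that reference. There is, however, one genuine gap. You correctly flag that $M=\max_j H(\Sigma_j)$ may be negative, but then in the chain
\[
\sum_{j=1}^{n} R(\Sigma_{j}) \;\le\; M\sum_{j=1}^{n} L(\partial\Sigma_{j}) \;\le\; M\bigl(L(\partial\Sigma_{0})+\varepsilon_{0}\bigr)
\]
the \emph{second} inequality uses $\sum_j L(\partial\Sigma_j)\le L(\partial\Sigma_0)+\varepsilon_0$ multiplied by $M$, which reverses direction when $M<0$. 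So the argument as written only establishes (\ref{ma}) under the tacit extra hypothesis $M\ge 0$.

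This is not merely a cosmetic slip: without $M\ge 0$ the inequality (\ref{ma}) can actually fail (e.g.\ $n=1$, $R(\Sigma_1)=-10$, $L(\partial\Sigma_1)=1$, $R(\Sigma_0)=0$, $L(\partial\Sigma_0)=100$, $\varepsilon_0=10$ satisfies both hypotheses but gives left side $\approx -0.09$ and right side $-10$). The paper's one-line proof via Lemma~\ref{ratio} has the same lacuna, since Lemma~\ref{ratio} is stated for positive $R_j$. In every place the paper invokes Lemma~\ref{dec} (notably Lemma~\ref{undec-seq}) one has $M\ge 0$ because the relevant $H$-values are close to $H_{L,m}>0$ (Lemma~\ref{H>0}), so the usage is safe; but your proof should either add the hypothesis $M\ge 0$, or handle the case $M<0$ separately by noting it does not arise in the applications.
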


\begin{proof}
This follows from (\ref{Ahero}), (\ref{DH}) and Lemma \ref{ratio} directly.
\end{proof}

\begin{definition}
\label{hs1}\label{hs}We define%
\[
H_{L,m}=H_{L,m}\left(  E_{q}\right)  =\sup_{\Sigma\in\mathcal{F}(L,m)}%
H(\Sigma).
\]

\end{definition}

Then by Definition \ref{HL} we have%
\begin{equation}
H_{L}=\lim_{n\rightarrow+\infty}H_{L,m}. \label{HH}%
\end{equation}

\begin{lemma}
\label{increase}If $L\leq2\delta_{E_{q}},$ then%
\begin{equation}
H_{L}=H_{L,m}=H_{L,1}=\frac{\left(  q-2\right)  A(T)}{L}=\left(  q-2\right)
\frac{2\pi-\sqrt{4\pi^{2}-L^{2}}}{L}, \label{HLM}%
\end{equation}
$H_{L}$ increases strictly as a function of $L\in(0,2\delta_{E_{q}}]$ for any
given positive integer $m,$ where $T$ is a disk with perimeter $L\ $and
diameter less than $\pi.$
\end{lemma}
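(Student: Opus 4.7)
\textbf{Proof plan for Lemma \ref{increase}.} The goal is a chain of equalities
$H_L = H_{L,m} = H_{L,1} = (q-2)A(T)/L$ together with strict monotonicity on $(0, 2\delta_{E_q}]$. The key observation is that virtually all the work has already been done: Theorem \ref{l<2dt} provides the sharp upper bound for every surface in $\mathbf{F}(L)$, and Lemma \ref{hd} provides the monotonicity of $H(T)$. The plan is simply to interlock these two ingredients, the only subtlety being to verify that $T$ itself sits inside each of the nested classes $\mathcal{F}(L,m)$.

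First I would realize the disk $T$ (perimeter $L$, diameter $<\pi$, disjoint from $E_q$ — which exists by Lemma \ref{dDisk}, since $L \leq 2\delta_{E_q} \leq 3\delta_{E_q}$) as a surface in $\mathcal{F}(L,1)$: pick any OPH $h : \overline{\Delta} \to \overline{T}$ and regard $T$ as $(h, \overline{\Delta})$; its boundary $\partial T$ is a single convex circle, which is an SCC arc, so the trivial partition $\partial\Delta = \alpha_1$ with $c_1 = (h,\alpha_1) = \partial T$ is an $\mathcal{F}(L,m)$-partition for every $m \geq 1$. Since $T \cap E_q = \emptyset$, we have $\overline{n}(T) = 0$, whence
\[
H(T) = \frac{(q-2)A(T)}{L} = (q-2)\frac{2\pi - \sqrt{4\pi^2 - L^2}}{L}.
\]
In particular $H_{L,1} \geq H(T)$, and by the inclusions $\mathcal{F}(L,1) \subset \mathcal{F}(L,m) \subset \mathcal{F}(L)$ from Remark \ref{spaces},
\[
H(T) \leq H_{L,1} \leq H_{L,m} \leq H_L.
\]

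For the opposite inequality, I would invoke Theorem \ref{l<2dt}: since $L \leq 2\delta_{E_q}$, every $\Sigma \in \mathbf{F}(L)$ satisfies $H(\Sigma) \leq H(T)$. Taking suprema over $\Sigma \in \mathcal{F}(L) \subset \mathbf{F}(L)$ gives $H_L \leq H(T)$, and combined with the previous chain we obtain the desired equality
\[
H(T) = H_{L,1} = H_{L,m} = H_L = \frac{(q-2)A(T)}{L}
\]
for every $m \geq 1$. (The identity $H_L = \sup_{\Sigma \in \mathbf{F}(L)} H(\Sigma) = \sup_{\Sigma \in \mathcal{F}(L)} H(\Sigma)$ guaranteed by Remark \ref{FF} is implicit here.)

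Finally, for strict monotonicity I would apply Lemma \ref{hd}: the quantity $H(T)$ is strictly increasing in $L(\partial T)$ on $(0, \min\{L_0, 2\pi\})$, where $L_0 \geq 3\delta_{E_q}$. Using the earlier estimate $\delta_{E_q} \leq 2\pi/3$, we have $3\delta_{E_q} \leq 2\pi$, so $\min\{L_0,2\pi\} \geq 3\delta_{E_q} > 2\delta_{E_q}$. Therefore $H(T)$, and hence $H_L$ (which equals $H(T)$ throughout this range), is strictly increasing on $(0, 2\delta_{E_q}]$. There is no real obstacle — the heavy lifting was performed in Theorem \ref{l<2dt} (via Lemma \ref{R<A} and the Riemann--Hurwitz bookkeeping) and in Lemma \ref{hd}; the present lemma is essentially a clean packaging of those two results together with Remark \ref{FF} and (\ref{HH}).
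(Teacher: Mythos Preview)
Your proof is correct and follows essentially the same approach as the paper: invoke Theorem \ref{l<2dt} for the upper bound $H(\Sigma)\le H(T)$ over all of $\mathbf{F}(L)$, observe that the disk $T$ itself lies in $\mathcal{F}(L,1)$ to get the matching lower bound, and then cite Lemma \ref{hd} (labelled Lemma \ref{TD} in the paper's proof) for the strict monotonicity. The paper's own proof is a two-line citation of exactly these two results; you have simply spelled out the sandwiching $H(T)\le H_{L,1}\le H_{L,m}\le H_L\le H(T)$ and the interval check $2\delta_{E_q}<\min\{L_0,2\pi\}$ more explicitly.
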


\begin{proof}
By Theorem \ref{l<2dt}, for any $L\in(0,2\delta_{E_{q}}],$ we have
(\ref{HLM}). Thus, by Lemma \ref{TD}, $H_{L}=H_{L,m}$ strictly increase on
$(0,2\delta_{E_{q}}]$.
\end{proof}

\begin{lemma}
\label{H>0}For any $L>0,$ let $\mathcal{T}_{L}$ be the set of all closed
convex\footnote{A convex disk on $S$ is contained in some hemisphere on $S,$
and vice versa.} disks $T$ with $T^{\circ}\subset S\backslash E_{q}$ and
$L(\partial T)\leq L, $ and let $L_{0}=\sup_{T\in\mathcal{T}_{L}}L(\partial
T).$ Then for any positive integer $m\ $and any disk $T_{L_{0}}$ in
$\mathcal{T}_{L}\ $with $L(T_{0})=L_{0}$
\[
H_{L}\geq H_{L,m}\geq H_{L,1}\geq\sup_{T\in\mathcal{T}_{L}}H(\overline
{T})=H(T_{L_{0}})\geq H(T_{\min\{L,2\delta_{E_{q}}\}})>0.
\]

\end{lemma}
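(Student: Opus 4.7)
The plan is to verify the asserted chain of inequalities from left to right, each step using a previously established lemma or inclusion.

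First I would handle $H_L\ge H_{L,m}\ge H_{L,1}$ by simply invoking the set-theoretic inclusions $\mathcal{F}(L,1)\subset\mathcal{F}(L,m)\subset\mathcal{F}(L)$ recorded in Remark \ref{spaces}, since taking suprema of $H(\Sigma)$ over larger classes can only increase the value. Next, for $H_{L,1}\ge\sup_{T\in\mathcal{T}_L}H(\overline{T})$, I would note that for any $T\in\mathcal{T}_L$ the closed disk $\overline{T}$, viewed as a surface $(\mathrm{id},\overline{T})$, lies in $\mathcal{F}(L,1)$: its boundary is a single strictly convex circle on $S$ (one SCC arc, taking $m=1$); the identity map has no branch points; and a one-sided neighborhood of the boundary is a genuine lune of the type required by Definition \ref{circu}(c)(iii). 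Since $T^{\circ}\subset S\setminus E_q$ and $L(\partial T)\le L$, the surface lies in $\mathcal{F}(L,1)$, proving the inequality.

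For the equality $\sup_{T\in\mathcal{T}_L}H(\overline{T})=H(T_{L_0})$, I would appeal to Lemma \ref{hd}, which gives strict monotonicity of $H(T)$ as a function of $L(\partial T)$ on the admissible range, together with the explicit formula $H(T)=(q-2)(2\pi-\sqrt{4\pi^2-L(\partial T)^2})/L(\partial T)$. Strict monotonicity forces the supremum to be attained (in the limiting sense) at the maximal perimeter $L_0$; a standard compactness/continuity argument on disks on $S$ produces an actual closed disk $T_{L_0}$ with $T_{L_0}^{\circ}\subset S\setminus E_q$ and $L(\partial T_{L_0})=L_0$, giving the equality. For the next step $H(T_{L_0})\ge H(T_{\min\{L,2\delta_{E_q}\}})$, I would verify that $L_0\ge\min\{L,2\delta_{E_q}\}$: by Lemma \ref{dDisk} there exists a convex disk $T_0\subset S$ with $L(\partial T_0)=3\delta_{E_q}$ and $T_0\cap E_q=\emptyset$. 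If $L\ge 3\delta_{E_q}$, then $T_0\in\mathcal{T}_L$ and $L_0\ge 3\delta_{E_q}\ge 2\delta_{E_q}$. Otherwise we can shrink $T_0$ concentrically to a disk of perimeter $L$ still avoiding $E_q$, giving $L_0\ge L$. In either case, since $2\delta_{E_q}\le 4\pi/3<2\pi$, both perimeters lie in the regime where Lemma \ref{hd} applies, so monotonicity yields the desired inequality.

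Finally, $H(T_{\min\{L,2\delta_{E_q}\}})>0$ is immediate from the explicit formula in Lemma \ref{hd}, since $q\ge 3$ and $A(T)>0$ for every nondegenerate disk. None of the steps is substantively difficult — the mild technical point to be careful about is the third step, namely the existence of a limiting disk $T_{L_0}$ that genuinely attains the supremum of perimeters in $\mathcal{T}_L$ (the boundary of $T_{L_0}$ may touch $E_q$, but the interior stays disjoint from $E_q$, which is all one needs). This amounts to a routine Hausdorff-convergence argument for convex sets on $S$.
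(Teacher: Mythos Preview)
Your proof is correct and follows essentially the same route as the paper's: the paper's own proof is a one-line citation of the inclusions $\mathcal{T}_L\subset\mathcal{F}(L,1)\subset\mathcal{F}(L,m)\subset\mathcal{F}(L)$ together with Lemma~\ref{hd} (which in turn invokes Lemma~\ref{dDisk}), and your argument simply unpacks these same ingredients in more detail. Your extra care about the existence of $T_{L_0}$ via compactness is a fair point, though the paper treats this as given by the phrasing of the statement.
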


\begin{proof}
This follows from the relation $\mathcal{T}_{L}\subset\mathcal{F}\left(
L,1\right)  \subset\mathcal{F}\left(  L,m\right)  \subset\mathcal{F}\left(
L\right)  $, $T_{L_{0}}\in\mathcal{T}_{L}$ and Lemma \ref{TD}.
\end{proof}

\begin{lemma}
\label{dl}Let $L\in\mathcal{L}.$ Then there exists a positive number
$\delta_{L}$ such that for each $L^{\prime}\in(L-\delta_{L},L+\delta_{L}),$%
\[
H_{L}-\frac{\pi}{2L}<H_{L^{\prime}}<H_{L}+\frac{\pi}{2L},
\]

\end{lemma}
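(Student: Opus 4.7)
The plan is to observe that this lemma is essentially just the definition of continuity of $H_L$ at the point $L$, made quantitative with a specific choice of $\varepsilon$. By Definition \ref{L}, $L \in \mathcal{L}$ precisely means that $H_L = H_L(E_q)$, viewed as a function of the real variable $L \in (0,+\infty)$, is continuous at the point $L$. Moreover, by Remark \ref{RL}, this function is monotone nondecreasing, so continuity at $L$ is equivalent to the coincidence of the left and right limits.

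The first step will be to invoke the definition of continuity with the specific positive number $\varepsilon = \frac{\pi}{2L} > 0$: there exists $\delta_L' > 0$ such that for every $L' \in (L - \delta_L', L + \delta_L') \cap (0,+\infty)$,
\[
|H_{L'} - H_L| < \frac{\pi}{2L}.
\]
To ensure that $(L-\delta_L, L+\delta_L) \subset (0,+\infty)$, I would set $\delta_L = \min\{\delta_L', L/2\}$, which keeps the whole interval positive. This directly yields the two-sided estimate
\[
H_L - \frac{\pi}{2L} < H_{L'} < H_L + \frac{\pi}{2L}
\]
claimed in the statement.

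There is no real obstacle here: the content of the lemma is just an application of the definition of $\mathcal{L}$ with the concrete test value $\varepsilon = \pi/(2L)$, packaged for convenient use in later arguments where an absolute quantitative modulus of continuity (depending only on $L$) is needed. If anything, the only thing to be careful about is to write $\delta_L$ small enough that $L - \delta_L > 0$, so that $H_{L-\delta_L}$ is defined via the family $\mathcal{F}(L-\delta_L)$; this is arranged by the minimum above.
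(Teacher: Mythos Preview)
Your proposal is correct and matches the paper's approach exactly: the paper's entire proof is the single sentence ``This follows from Definition \ref{L},'' which is precisely the continuity-of-$H_L$-at-$L$ argument you spell out with $\varepsilon = \pi/(2L)$. Your added care about taking $\delta_L$ small enough that $L-\delta_L>0$ is a reasonable refinement the paper leaves implicit.
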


\begin{proof}
This follows from Definition \ref{L}.
\end{proof}

In \cite{S-Z} it is proved that for any surface $\Sigma$ in $\mathbf{F,}$
there exists a surface $\Sigma^{\prime}$ in $\mathbf{F=}\left(  f,\overline
{\Delta}\right)  $ with piecewise analytic boundary, such that $f$ has no
branch point in $\Delta\backslash f^{-1}(E_{q}),$ $A(\Sigma^{\prime})\geq
A(\Sigma),L(\partial\Sigma)\geq L(\partial\Sigma^{\prime})$, and moreover,
$\partial\Sigma^{\prime}$ is consisted of subarcs of $\partial\Sigma.$ In
\cite{CLZ1}, using the similar method and more analysis, the authors proved
the following theorem, which is a key step for proving the first main theorem.

\begin{theorem}
\label{re}Let $L\in\mathcal{L}$, $m$ be a positive integer, $\Sigma=\left(
f,\overline{\Delta}\right)  \in\mathcal{C}^{\ast}(L,m),$
\begin{equation}
\partial\Delta=\alpha_{1}+\cdots+\alpha_{m}, \label{001}%
\end{equation}
is a $\mathcal{C}^{\ast}\left(  L,m\right)  $-partition of $\partial\Sigma$
and
\begin{equation}
\partial\Sigma=c_{1}+\cdots+c_{m} \label{002}%
\end{equation}
is the corresponding $\mathcal{C}^{\ast}\left(  L,m\right)  $-partition$,$ and
assume that
\begin{equation}
H(\Sigma)>H_{L}-\frac{\pi}{2L(\partial\Sigma)}. \label{210615}%
\end{equation}
Then there exists a surface $\Sigma^{\prime}=\left(  f^{\prime},\overline
{\Delta}\right)  $ such that

(i) $\Sigma^{\prime}\in\mathcal{F}_{r}(L,m)$.

(ii) $H(\Sigma^{\prime})\geq H(\Sigma)$ and $L(\partial\Sigma^{\prime})\leq
L(\partial\Sigma).$ Moreover, at least one of the inequalities is strict if
$\Sigma\notin\mathcal{F}_{r}(L,m)$.

(iii) When $L(\partial\Sigma^{\prime})=L(\partial\Sigma)$ we have
$\partial\Sigma^{\prime}=\partial\Sigma$ and (\ref{001}) and (\ref{002}) are
$\mathcal{F}\left(  L,m\right)  $-partitions of $\partial\Sigma^{\prime}.$
\end{theorem}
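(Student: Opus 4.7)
The plan is to eliminate the ``bad'' branch points of $f$ by a finite sequence of local surgeries, each of which does not decrease $R(\Sigma)$ and does not increase $L(\partial\Sigma)$. First, I invoke Theorem \ref{st}(iii) to replace $\Sigma$ by an equivalent surface for which $f$ is holomorphic on $\overline{\Delta}$, so that its branch set in $\overline{\Delta}$ is finite. The set of points that prevent $\Sigma$ from lying in $\mathcal{F}_r(L,m)$ with the given partitions is
\[
\mathfrak{B}(\Sigma)=\bigl(C_f\cap(\Delta\setminus f^{-1}(E_q))\bigr)\,\cup\,\bigcup_{j=1}^{m}\bigl(C_f\cap\alpha_j^{\circ}\bigr),
\]
the first union obstructing $\mathcal{F}_r$ and the second obstructing condition (ii) of $\mathcal{F}(L,m)$; note that by the $\mathcal{C}^{\ast}(L,m)$-hypothesis, no point of $\alpha_j^{\circ}\cap f^{-1}(E_q)$ is branched, so $\mathfrak{B}(\Sigma)$ captures exactly the defects. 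If $\mathfrak{B}(\Sigma)=\emptyset$, take $\Sigma'=\Sigma$, and (i)--(iii) are immediate.

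Otherwise, for each $p\in\mathfrak{B}(\Sigma)$ I would pick a small disk $(p,U_p)$ of $\Sigma$ as in Definition \ref{nod} and Lemma \ref{cov-1} with $\overline{D(f(p),\delta)}\cap E_q=\emptyset$. The restriction $(f,\overline{U_p})$ is one of three models: a $k$-fold BCCM of $\overline{D(f(p),\delta)}$ (case $p\in\Delta$, Lemma \ref{cov-1}(A)), the boundary folding model (B3.1), or the boundary lune-attachment model (B3.2). In each case I would replace the branched piece $(f,\overline{U_p})$ by a one-sheeted Jordan piece whose boundary matches the ``old boundary'' of $U_p$ (Remark \ref{nod-1}) but whose ``new boundary''---the part projecting to $\partial D(f(p),\delta)$---is replaced by an SCC arc of length no greater than $L(c_3)$, where $c_3$ is the locally SCC arc of Lemma \ref{cov-1}(B1). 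By the isoperimetric chain (Lemma \ref{good-1}(ii), Lemma \ref{good-lune} for the boundary cases, and Lemmas \ref{good2}/Corollary \ref{2-curvature} for the two-lune case (B3.2)), the new piece has area at least that of the branched piece while the perimeter does not increase; and since $\overline{D(f(p),\delta)}\cap E_q=\emptyset$ we have $\overline{n}(\Sigma^{\mathrm{new}})\le\overline{n}(\Sigma)$, hence $R(\Sigma^{\mathrm{new}})\ge R(\Sigma)$. Each surgery strictly reduces $\#\mathfrak{B}$, so after finitely many steps we arrive at $\Sigma'$ with $\mathfrak{B}(\Sigma')=\emptyset$, i.e.\ $\Sigma'\in\mathcal{F}_r(L,m)$, which gives (i) and (ii). The strict-inequality clause of (ii) uses the strict-convex cases of Lemma \ref{good-1}(ii) and Lemma \ref{good-lune}: a genuinely branched $(f,\overline{U_p})$ cannot saturate the isoperimetric inequality, so at least one of $R$ or $L$ moves strictly. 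For (iii), if no strict step occurred then every local replacement left $\partial\Sigma$ unchanged as a curve on $S$ (only the interior covering was altered), so $\partial\Sigma'=\partial\Sigma$; now that no branch point remains on any $\alpha_j^{\circ}$, the $\mathcal{C}^{\ast}(L,m)$-partition automatically verifies conditions (ii) and (iii) of the definition of $\mathcal{F}(L,m)$-partition via Lemma \ref{int-arg}.

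The role of the near-extremality assumption (\ref{210615}) together with $L\in\mathcal{L}$ (Lemma \ref{dl}) is to control the replacements globally: the quantitative gap between the one-sheeted and the $k$-fold branched disk of radius $\delta$ is at least of order $(k-1)\cdot A(D(f(p),\delta))$, and the hypothesis bounds the cumulative such gap since $R(\Sigma)/L(\partial\Sigma)$ is close to the supremum $H_L$; this prevents the iteration from ever pushing $L(\partial\Sigma^{(\cdot)})$ above $L$ or $H$ below $H(\Sigma)$. The main obstacle I anticipate is the precise construction of the replacement in the boundary model (B3.2), because the two incoming SCC arcs of $\partial\Sigma$ at $p$ must simultaneously be preserved as subarcs of $\partial\Sigma'$ (so that the $\mathcal{C}^{\ast}(L,m)$-partition descends) while the fractional exponent $\omega_j$ must be collapsed to $1$; this requires the two-variable area--curvature analysis of Corollary \ref{2-curvature} and Corollary \ref{2-curvature2} to show that merging the two attached lunes into a single lens is strictly improving in area while weakly reducing perimeter, and it is here that the hypothesis $L\in\mathcal{L}$ is invoked to rule out boundary-value pathologies.
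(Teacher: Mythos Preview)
The paper does not give a self-contained proof of this theorem; it is quoted from the companion paper \cite{CLZ1} (see the paragraph immediately preceding the statement), so there is no in-paper argument to compare against. That said, your proposed surgery has a genuine topological obstruction that makes it unworkable as stated.

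The problem is the interior case $p\in\Delta\setminus f^{-1}(E_q)$. You propose to excise the disk $(p,U_p)$ on which $f$ is a $k$-fold BCCM onto $\overline{D(f(p),\delta)}$ and replace it by a ``one-sheeted Jordan piece'' glued back along $\partial U_p$. But any replacement $g:\overline{U_p}\to S$ with $g|_{\partial U_p}=f|_{\partial U_p}$ has boundary winding number $k$ around $f(p)$, so by the argument principle it covers $D(f(p),\delta)$ exactly $k$ times (with multiplicity); there is no one-sheeted extension. Your appeal to ``old boundary'' versus ``new boundary'' (Remark \ref{nod-1}) does not apply here: for $p\in\Delta$ the entire $\partial U_p$ is new boundary, and changing it breaks the glueing with $\Sigma\setminus U_p$. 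The isoperimetric lemmas you cite (Lemmas \ref{good-1}, \ref{good-lune}, \ref{good2}) compare simply connected pieces with prescribed outer boundary; they do not let you collapse multiplicity while respecting a fixed inner attaching circle.

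What the cited work actually does (as the paper hints via the reference to \cite{S-Z}) is a \emph{global} modification: one draws a path on $S$ from the offending branch value to a point of $E_q$ or to $\partial\Sigma$, lifts it, and uses the resulting cuts to split and reassemble $\Sigma$ so that the branch point either disappears or is pushed over $E_q$; the output boundary consists of subarcs of the original $\partial\Sigma$, which is why $L$ does not increase and the partition structure is preserved. The near-extremality hypothesis \eqref{210615} combined with $L\in\mathcal{L}$ enters exactly here, to control how these cuts interact with $\partial\Sigma$ and $E_q$ (compare Lemma \ref{nobo}); in your scheme there is nothing for it to do, since purely local excisions would never threaten the bound $L(\partial\Sigma)\le L$. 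If you want to salvage the approach, the first step is to abandon the local disk replacement for interior branch points and instead construct an $f$-lift of a short segment from $f(p)$ to the nearest point of $E_q$, then analyze the resulting cut surface.
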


The following Theorem is proved in \cite{CLZ2}, which is also a key step for
proving the first main theorem.

\begin{theorem}
\label{sim}There exists an integer $d^{\ast}=d^{\ast}(m,E_{q})$ depending only
on $m$ and $E_{q}$ such that for any $\Sigma=\left(  f,\overline{\Delta
}\right)  \in\mathcal{F}_{r}\left(  L,m\right)  $, there exists a surface
$\Sigma_{1}=\left(  f_{1},\overline{\Delta}\right)  $ in $\mathcal{F}%
_{r}\left(  L,m\right)  $ such that%
\[
d_{\max}\left(  \Sigma_{1}\right)  =d_{\max}\left(  f_{1}\right)  =\max_{w\in
S\backslash\partial\Sigma_{1}}\#f_{1}^{-1}(w)\leq d^{\ast},
\]
\[
\partial\Sigma_{1}=\partial\Sigma,
\]
and
\[
H(\Sigma_{1})=H(\Sigma).
\]

\end{theorem}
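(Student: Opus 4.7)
My plan is to show that whenever $d_{\max}(\Sigma)$ is sufficiently large, one can perform a degree-reducing surgery on $\Sigma$ that preserves $\partial\Sigma$ and $R(\Sigma)$, and hence $H(\Sigma)$. The key identity that makes the surgery work is that by Lemma~\ref{RH}, every closed surface $(g,S)$ of $\mathcal{F}_r$--type (i.e.\ with $CV_g\subseteq E_q$) satisfies $R(g,S)=-8\pi$ irrespective of its degree; in particular the trivial identity cover $(\mathrm{id},S)$ also has $R=-8\pi$. Consequently, cutting out an $\mathcal{F}_r$--type closed ``bubble'' of degree $d_0$ from $\Sigma$ and gluing in a degree-one identity copy would preserve $R$ exactly, while decreasing $d_{\max}$ by $d_0-1$.

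First I would apply Stoilow's Theorem~\ref{st} to reduce to the case that $f$ is meromorphic on $\overline{\Delta}$, so that its critical points are isolated and lie in $f^{-1}(E_q)$ by hypothesis. Next I would fix a spine $\Gamma$ on $S$ consisting of a geodesic tree joining a base point $w_0\notin E_q$ to each $\mathfrak{a}_v\in E_q$. The graph $f^{-1}(\Gamma\cup E_q)$ decomposes $\overline{\Delta}$ into open faces, each of which $f$ maps homeomorphically onto one of the two components of $S\setminus\Gamma$. The number of faces equals $2d_{\max}(\Sigma)$, while the hypothesis $\Sigma\in\mathcal{F}_r(L,m)$ forces the number of boundary-adjacent faces to be bounded by a function of $m$ and $q$: the boundary $\partial\Delta$ is partitioned into only $m$ SCC arcs, and each such arc can meet at most $q-1$ spine preimages, so the count of faces touching $\partial\Delta$ is at most $2m(q-1)$.

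The heart of the argument is a pigeonhole extraction: once $d_{\max}(\Sigma)$ exceeds an explicit $d^{\ast}(m,E_q)$ computed from the above boundary count plus Euler-characteristic considerations, the interior faces must contain a non-empty sub-collection $W$ whose total boundary in $\Delta$ consists of Jordan curves $\alpha_1,\dots,\alpha_k$ that are identified in pairs by $f$ onto the same arcs of $\Gamma$, so that $f|_{\overline{W}}$, after the obvious boundary identification, realizes a closed BCCM from a topological sphere onto $S$ of some degree $d_0\geq 2$, still of $\mathcal{F}_r$--type. I would then perform the surgery using Lemmas~\ref{patch-1} and~\ref{patch} (or the closed-surface version in Corollary~\ref{loop2}): cut $\overline{\Delta}$ along $\cup\alpha_j$, excise the bubble, and reglue the identity closed sphere $(\mathrm{id},S)$. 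By the $R=-8\pi$ identity above, the resulting $\Sigma'$ satisfies $\partial\Sigma'=\partial\Sigma$, $R(\Sigma')=R(\Sigma)$, $d_{\max}(\Sigma')<d_{\max}(\Sigma)$, and lies in $\mathcal{F}_r(L,m)$ since no branch points are created outside $f^{-1}(E_q)$ and the boundary partition into $m$ SCC arcs is untouched. Iterating finitely many times produces the desired $\Sigma_1$.

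The main obstacle will be the pigeonhole step, specifically showing that once the face count passes the threshold, the interior faces truly assemble into a compact region whose $f$-image covers $S\setminus\Gamma$ \emph{evenly} some positive number of times over each component. The boundary-face count alone only rules out having too many interior faces; promoting this to the existence of an ``evenly covering'' sub-collection requires tracking the monodromy permutations around each $\mathfrak{a}_v$ and observing that any collection of faces whose boundary in $\Delta$ is null in the face-adjacency graph realizes a closed covering component. The explicit bound $d^{\ast}(m,E_q)$ will emerge from combining the boundary-face estimate $\leq 2m(q-1)$ with a Euler-characteristic count that bounds how much branching above $E_q$ is admissible before a closed component is forced to appear.
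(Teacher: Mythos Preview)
The paper does not prove Theorem~\ref{sim}; it states the result and cites \cite{CLZ2} for the proof, so there is no in-paper argument to compare your proposal against directly.

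Your overall plan---locate a closed $\mathcal{F}_r$-type sub-cover inside $\Sigma$ when $d_{\max}$ is large and excise it, using Lemma~\ref{RH} to see that this preserves $R$---is the natural strategy, and the $R$-bookkeeping you outline is correct. But the two steps you yourself flag as obstacles are real gaps, not routine details. For the pigeonhole step, the boundary-face bound $O(mq)$ is fine, but a surplus of interior faces does not by itself yield a sub-collection whose identified boundary is a topological sphere \emph{and} whose removal leaves $\overline{\Delta}$ simply connected. The interior faces could assemble into a single higher-genus piece, or a planar piece whose complement in $\Delta$ is an annulus; your monodromy remark does not rule these out. For the surgery step, ``excise the bubble and reglue the identity closed sphere'' is topologically ambiguous: if $W\subset\Delta$ becomes a sphere only after pairwise boundary identifications, then $\Delta\setminus W$ is a slit disk, and you must specify which slit-sides to re-identify to recover a Jordan disk---this is exactly where the argument has to do work, and a face count alone does not supply it. In the paper's language the correct local model is the inverse of Lemma~\ref{patch}~(iii) (peeling off an $S\setminus\gamma$ layer along a simple arc $\gamma$ with $\partial\gamma\subset E_q$), but locating such an internal $\gamma$ is precisely the hard existence statement.

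A cleaner route you might consider, closer in spirit to what the paper does exploit elsewhere, is this: Lemma~\ref{makes}~(i) together with Corollary~\ref{loop2} shows that for $\Sigma\in\mathcal{F}_r$, the value $R(\Sigma)$ is determined by $\partial\Sigma$ alone (equality in (\ref{ag19}) holds exactly when $\Sigma\in\mathcal{F}_r$). So the problem reduces to constructing \emph{some} $\Sigma_1\in\mathcal{F}_r(L,m)$ with $\partial\Sigma_1=\partial\Sigma$ and $d_{\max}(\Sigma_1)$ bounded by a function of $m$ and $q$; Solution~\ref{sol} does exactly this when the boundary satisfies Definition~\ref{S-surface}~(1)--(3), and the general case needs an analogous explicit filling. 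This bypasses the bubble-detection entirely.
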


\begin{definition}
\label{FR'}$\mathcal{F}_{r}^{\prime}(L,m)$ is defined to be the subspace of
$\mathcal{F}_{r}(L,m)$ such that $\Sigma=\left(  f,\overline{U}\right)
\in\mathcal{F}_{r}^{\prime}(L,m)\ $iff $d_{\max}(\Sigma)\leq d^{\ast}=d^{\ast
}(m,q),$ where $d^{\ast}$ is the integer determined in Theorem \ref{sim}%
.\label{2023-4-5-19:46}
\end{definition}

\begin{corollary}
\label{FF'}\label{sim1}\label{FinF}Let $L\in\mathcal{L}$. Then for
sufficiently large $m,$ we have
\[
H_{L,m}=\sup_{\Sigma\in\mathcal{C}^{\ast}(L,m)}H(\Sigma)=\sup_{\Sigma
\in\mathcal{F}(L,m)}H(\Sigma)=\sup_{\Sigma\in\mathcal{F}_{r}(L,m)}%
H(\Sigma)=\sup_{\Sigma\in\mathcal{F}_{r}^{\prime}(L,m)}H(\Sigma).
\]

\end{corollary}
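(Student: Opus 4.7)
The plan is to establish the four equalities one at a time, using Theorem \ref{re} and Theorem \ref{sim} as the main tools, together with the monotonicity relation $H_{L,m}\uparrow H_L<\infty$ as $m\to\infty$ (which follows from $\cup_{m}\mathcal{F}(L,m)=\mathcal{F}(L)$ in Remark \ref{spaces} together with Ahlfors' SFT). The second equality is immediate from Definition \ref{hs1}, and the chain of inclusions
\[
\mathcal{F}_{r}^{\prime}(L,m)\subset\mathcal{F}_{r}(L,m)\subset\mathcal{F}(L,m)\subset\mathcal{C}^{\ast}(L,m)
\]
yields the trivial inequalities
\[
\sup_{\mathcal{F}_{r}^{\prime}(L,m)}H\leq\sup_{\mathcal{F}_{r}(L,m)}H\leq H_{L,m}\leq\sup_{\mathcal{C}^{\ast}(L,m)}H.
\]

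First, I would fix $m$ so large that $H_{L,m}>H_{L}-\pi/(2L)$; this is possible since $H_{L,m}\uparrow H_{L}<\infty$ and $L\in\mathcal{L}$. With this choice, I prove the first equality $H_{L,m}=\sup_{\mathcal{C}^{\ast}(L,m)}H$ by the following dichotomy applied to an arbitrary $\Sigma\in\mathcal{C}^{\ast}(L,m)$. If $H(\Sigma)\leq H_{L}-\pi/(2L(\partial\Sigma))$, then $L(\partial\Sigma)\leq L$ gives $H(\Sigma)\leq H_{L}-\pi/(2L)<H_{L,m}$. Otherwise the hypothesis (\ref{210615}) of Theorem \ref{re} is satisfied, which produces $\Sigma^{\prime}\in\mathcal{F}_{r}(L,m)\subset\mathcal{F}(L,m)$ with $H(\Sigma^{\prime})\geq H(\Sigma)$, and hence $H(\Sigma)\leq H(\Sigma^{\prime})\leq H_{L,m}$. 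In either case $\sup_{\mathcal{C}^{\ast}(L,m)}H\leq H_{L,m}$, which closes the first equality.

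The third equality $\sup_{\mathcal{F}_{r}(L,m)}H=H_{L,m}$ is established by the identical dichotomy, now applied to $\Sigma\in\mathcal{F}(L,m)\subset\mathcal{C}^{\ast}(L,m)$: Theorem \ref{re} again produces $\Sigma^{\prime}\in\mathcal{F}_{r}(L,m)$ with $H(\Sigma^{\prime})\geq H(\Sigma)$ whenever (\ref{210615}) holds, and the reverse inequality is automatic from the inclusion. Finally, the fourth equality is a direct consequence of Theorem \ref{sim}: for every $\Sigma\in\mathcal{F}_{r}(L,m)$, that theorem supplies $\Sigma_{1}\in\mathcal{F}_{r}(L,m)$ with $d_{\max}(\Sigma_{1})\leq d^{\ast}(m,E_{q})$ (hence $\Sigma_{1}\in\mathcal{F}_{r}^{\prime}(L,m)$ by Definition \ref{FR'}), $\partial\Sigma_{1}=\partial\Sigma$, and $H(\Sigma_{1})=H(\Sigma)$, which immediately gives $\sup_{\mathcal{F}_{r}^{\prime}(L,m)}H\geq\sup_{\mathcal{F}_{r}(L,m)}H$.

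The only genuine subtlety is calibrating $m$ large enough to make the hypothesis (\ref{210615}) accessible to any near-maximizing sequence, and to bring the ``bad'' surfaces (those failing (\ref{210615})) below the level $H_{L,m}$; both of these are controlled simultaneously by the single requirement $H_{L,m}>H_{L}-\pi/(2L)$, which is available precisely because $L\in\mathcal{L}$ ensures $H_{L}$ is the limit of $H_{L,m}$ and Ahlfors' SFT ensures $H_{L}<\infty$. No further analysis is needed beyond citing Theorems \ref{re} and \ref{sim}.
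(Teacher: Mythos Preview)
Your proof is correct and follows essentially the same approach as the paper: choose $m$ large enough that $H_{L,m}>H_{L}-\pi/(2L)$, then use Theorem~\ref{re} (which needs $L\in\mathcal{L}$ and the threshold (\ref{210615})) to pass from $\mathcal{C}^{\ast}(L,m)$ to $\mathcal{F}_{r}(L,m)$, and Theorem~\ref{sim} to pass from $\mathcal{F}_{r}(L,m)$ to $\mathcal{F}_{r}^{\prime}(L,m)$. The only cosmetic difference is that the paper argues with a maximizing sequence and jumps directly from $\mathcal{C}^{\ast}$ to $\mathcal{F}_{r}^{\prime}$ in one step, whereas you use a pointwise dichotomy and break the chain into separate equalities; the logical content is the same.
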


\begin{proof}
It suffices to show that, for sufficiently large $m$,
\begin{equation}
\sup_{\Sigma\in\mathcal{C}^{\ast}(L,m)}H(\Sigma)\leq\sup_{\Sigma\in
\mathcal{F}_{r}^{\prime}(L,m)}H(\Sigma). \label{C*F'}%
\end{equation}
By Remark \ref{spaces},
\[
H_{L}=\lim_{m\rightarrow\infty}\sup_{\Sigma\in\mathcal{C}^{\ast}(L,m)}%
H(\Sigma)>H_{L}-\frac{\pi}{2L}.
\]

Let $m$ be any large enough integer in $\mathbb{N}$ such that
\[
\sup_{\Sigma\in\mathcal{C}^{\ast}(L,m)}H(\Sigma)>H_{L}-\frac{\pi}{2L}.
\]
Then there exists a sequence $\Sigma_{n}$ in $\mathcal{C}^{\ast}(L,m)$ such
that
\[
\lim_{n\rightarrow\infty}H(\Sigma_{n})=\sup_{\Sigma\in\mathcal{C}^{\ast}%
(L,m)}H(\Sigma)
\]
and
\[
H(\Sigma_{n})>H_{L}-\frac{\pi}{2L}>H_{L}-\frac{\pi}{2L\left(  \partial
\Sigma_{n}\right)  }.
\]
Thus by Theorems \ref{re} and \ref{sim}, there exists a sequence $\Sigma
_{n}^{\prime}\in\mathcal{F}_{r}^{\prime}(L,m)$ such that $H(\Sigma_{n}%
^{\prime})\geq H(\Sigma_{n}).$ Thus (\ref{C*F'}) holds.
\end{proof}

\begin{lemma}
\label{LSZ}Let $\Sigma=\left(  f,\overline{\Delta}\right)  \in\mathbf{F}$ and
assume either (1) $f^{-1}(E_{q}\cap\Delta)\neq\emptyset,$ or (2)
$\partial\Sigma$ contains a simple arc with distinct end points in $E_{q}.$
Then
\begin{equation}
\frac{R(\Sigma)+4\pi}{L(\partial\Sigma)}\leq H_{0}, \label{H<4p}%
\end{equation}
where $H_{0}$ is defined by (\ref{H_0}).
\end{lemma}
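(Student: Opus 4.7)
The plan is to construct, for each $\Sigma$ satisfying hypothesis (1) or (2), a sequence $\Sigma^{(n)}\in\mathbf{F}$ whose $H$-values tend to $(R(\Sigma)+4\pi)/L(\partial\Sigma)$; by the definition $H_0=\sup_{\Sigma'\in\mathbf{F}}H(\Sigma')$ each $H(\Sigma^{(n)})\le H_0$, so the desired inequality will follow by passing to the limit.

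For hypothesis (1), fix $p\in\Delta$ with $f(p)=a\in E_q$ and choose a simple arc $\alpha\subset\overline{\Delta}$ from $p$ to some $x_0\in\partial\Delta$ so that $\gamma:=(f,\alpha)$ is a simple arc on $S$ and $\alpha^{\circ}$ is disjoint from $(f^{-1}(E_q)\cup C_f)\setminus\{p\}$; Stoilow's Theorem \ref{st} together with Lemma \ref{cov-1} let us take $\alpha$ to leave $p$ along a single local sector when $v_f(p)\ge 2$, and genericity allows the global choice to avoid the finite bad set. Slitting $\overline{\Delta}$ along $\alpha$ produces a slit disk $\tilde\Sigma\in\mathbf{F}$: its area equals $A(\Sigma)$, its boundary is $\partial\Sigma+\gamma-\gamma$, and $p$ has moved from the interior to the boundary so that $\overline{n}(\tilde\Sigma,E_q)=\overline{n}(\Sigma,E_q)-1$, giving
\[
R(\tilde\Sigma)=R(\Sigma)+4\pi,\qquad L(\partial\tilde\Sigma)=L(\partial\Sigma)+2L(\gamma).
\]
Take $n$ disjoint copies $\tilde\Sigma^{(1)},\dots,\tilde\Sigma^{(n)}$ and sew them into a linear chain via Lemma \ref{patch-1}, identifying the $-\gamma$-side of copy $i$ with the $\gamma$-side of copy $i+1$ for $i=1,\dots,n-1$. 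Each such sewing joins two Jordan-domain surfaces along a common simple arc, so $\Sigma^{(n)}\in\mathbf{F}$ at every stage; and since $\gamma^{\circ}\cap E_q=\emptyset$, iterating Lemma \ref{patch-1}(ii) yields
\[
R(\Sigma^{(n)})=nR(\Sigma)+4\pi n,\qquad L(\partial\Sigma^{(n)})=nL(\partial\Sigma)+2L(\gamma).
\]
The bound $H(\Sigma^{(n)})\le H_0$ then reads
\[
\frac{nR(\Sigma)+4\pi n}{nL(\partial\Sigma)+2L(\gamma)}\le H_0,
\]
and letting $n\to\infty$ gives $(R(\Sigma)+4\pi)/L(\partial\Sigma)\le H_0$.

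For hypothesis (2), if $\Sigma$ also has an interior preimage of $E_q$ the previous step applies directly, so assume not. Write $\partial\Sigma=\gamma+\Gamma$ with $\partial\gamma=\{p_1,p_2\}\subset E_q$ and pick a sub-arc $\gamma_1\subset\gamma$ whose endpoints are two consecutive $E_q$-points along $\gamma$, so that $\partial\gamma_1\subset E_q$ and $\gamma_1^{\circ}\cap E_q=\emptyset$. Apply Lemma \ref{patch}(iii) to sew $\Sigma$ with the slit sphere $S_{\gamma_1}$ (boundary $\gamma_1-\gamma_1$, interior $S\setminus\gamma_1$) along $\gamma_1$; this produces $\Sigma'\in\mathbf{F}$ with $\partial\Sigma'=\partial\Sigma$ and, by (\ref{drr}), $R(\Sigma')=R(\Sigma)$. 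The interior of $\Sigma'$ now absorbs $S\setminus\gamma_1$, so every point of $E_q\setminus\gamma_1$ becomes an interior preimage of $E_q$ in $\Sigma'$; since $q\ge 3$ and $\#(\gamma_1\cap E_q)=2$, this set is nonempty, placing $\Sigma'$ in Case (1). Applying Case (1) to $\Sigma'$ yields $(R(\Sigma)+4\pi)/L(\partial\Sigma)=(R(\Sigma')+4\pi)/L(\partial\Sigma')\le H_0$.

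The principal obstacle is constructing the slit arc $\alpha$ in Case (1): one needs $(f,\alpha)$ to be globally simple on $S$, $\alpha^{\circ}$ to avoid both $f^{-1}(E_q)\setminus\{p\}$ and the other branch points of $f$, and (if $v_f(p)\ge 2$) $\alpha$ to leave $p$ along a single local sheet of the branched cover, so that $\tilde\Sigma\in\mathbf{F}$ and the iterated sewing of Lemma \ref{patch-1} genuinely produces Jordan-domain surfaces at every stage. This reduces to a local analysis at $p$ via the Stoilow normal form combined with a genericity argument over the finite set of bad points in $\overline{\Delta}$.
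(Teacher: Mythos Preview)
Your Case~(2) reduction---sewing the slit sphere $S_{\gamma_1}$ along a boundary arc with endpoints in $E_q$ to create an interior $E_q$-preimage---is exactly the paper's argument. For Case~(1), however, the paper takes a much shorter route that sidesteps your principal obstacle entirely: normalize so that the interior $E_q$-preimage sits at $0\in\Delta$, and set $\Sigma_n=(f_n,\overline{\Delta})$ with $f_n(z)=f(z^n)$. Then $A(\Sigma_n)=nA(\Sigma)$, $L(\partial\Sigma_n)=nL(\partial\Sigma)$, and $\overline{n}(\Sigma_n)=n\,\overline{n}(\Sigma)-(n-1)$ (the $n$ sheets share the single preimage $0$), so $R(\Sigma_n)=nR(\Sigma)+4\pi(n-1)$ and $H(\Sigma_n)\to(R(\Sigma)+4\pi)/L(\partial\Sigma)$. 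No slit, no chain, no simplicity hypothesis on any arc.

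Your slit-and-chain construction would also work, but the obstacle you flag is not handled by the genericity sketch you give: choosing $\alpha^\circ$ to avoid the finitely many points of $C_f\cup f^{-1}(E_q)$ in $\overline{\Delta}$ says nothing about whether $f|_\alpha$ is globally injective, which is precisely what Lemma~\ref{patch-1} requires of the sewing arc. One honest fix is to choose a simple arc $\beta$ on $S$ from $f(p)$ first and lift it via Lemma~\ref{Ri} until the lift meets $\partial\Delta$---the lift of a simple $\beta$ is automatically $f$-simple---but you would still owe an argument that such a $\beta$ can be chosen whose lift actually reaches $\partial\Delta$. The $z^n$ device makes all of this moot.
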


This is Theorem 1.7 in \cite{L-S-Z}. In fact when (1) holds, we may assume
$0\in f^{-1}(E_{q})$ and let $\Sigma_{n}=\left(  f_{n},\overline{\Delta
}\right)  $ be the surface in $\mathbf{F}$ with $f_{n}\left(  z\right)
=f\left(  z^{n}\right)  ,z\in\overline{\Delta}.$ Then we have $R(\Sigma
_{n})=\left(  q-2\right)  nA(\Sigma)-4\pi\left(  n\overline{n}\left(
\Sigma\right)  -\left(  n-1\right)  \right)  =nR(\Sigma)+4\pi\left(
n-1\right)  ,$ and $L(\partial\Sigma_{n})=nL(\partial\Sigma),$ and thus
\[
H_{0}\geq\frac{R(\Sigma_{n})}{L(\partial\Sigma_{n})}=\frac{R(\Sigma
)+\frac{4\pi\left(  n-1\right)  }{n}}{L(\partial\Sigma)}\rightarrow
\frac{R(\Sigma)+4\pi}{L(\partial\Sigma)}\mathrm{\ as\ }n\rightarrow\infty,
\]
which implies (\ref{H<4p}). When (2) holds, (\ref{H<4p}) follows from the
following Lemma which is also proved in \cite{L-S-Z}.

\begin{lemma}
\label{LSZ1}Let $\Sigma=\left(  f,\overline{\Delta}\right)  \in\mathbf{F}$ and
assume that $\partial\Sigma$ contains a simple arc $\gamma$ with distinct end
points in $E_{q},$ say, (2) of the previous lemma holds. Then there exists a
surface $\Sigma_{1}=\left(  f_{1},\overline{\Delta}\right)  \in\mathbf{F}$
such that $L(\partial\Sigma)=L(\partial\Sigma_{1})$, $H(\Sigma)=H(\Sigma_{1})$
and $f_{1}^{-1}\left(  E_{q}\right)  \cap\Delta\neq\emptyset.$
\end{lemma}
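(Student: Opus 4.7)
The strategy is to sew $\Sigma$ along $\gamma$ with the ``slit sphere'' surface whose interior is $S\backslash\gamma$, thereby turning preimages of $E_{q}$ that sat on $\gamma$ (or that were uncovered in $S\backslash\gamma$) into interior preimages, while keeping the whole boundary intact.

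Write $\partial\gamma=\{\mathfrak{a},\mathfrak{b}\}\subset E_{q}$. Since $\gamma$ is a simple arc on $S$ with $\#\partial\gamma=2$, the complement $S\backslash\gamma$ is a Jordan domain, and Lemma \ref{-path} supplies a surface $S_{\gamma}=(g,\overline{V})\in\mathbf{F}$ with $g|_{V}\colon V\to S\backslash\gamma$ a homeomorphism and $\partial S_{\gamma}=(g,\partial V)=\gamma-\gamma$; a cyclic relabelling of the base point presents this boundary as $-\gamma+\gamma$. Now apply Lemma \ref{patch}(iii) with this $S_{\gamma}$ playing the role of $\Sigma_{2}$ and with $\Sigma$ as $\Sigma_{1}$: sewing along $\gamma$ produces a surface $\Sigma_{1}=(f_{1},\overline{\Delta})\in\mathbf{F}$ (the lemma's output $\Sigma_{3}$, relabelled to match the statement we wish to prove) with $\partial\Sigma_{1}=\partial\Sigma$ and, because $\partial\gamma\subset E_{q}$, with $R(\Sigma_{1})=R(\Sigma)$. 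In particular $L(\partial\Sigma_{1})=L(\partial\Sigma)$ and hence $H(\Sigma_{1})=H(\Sigma)$.

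It remains to show $f_{1}^{-1}(E_{q})\cap\Delta\neq\emptyset$. Because $q\geq 3$ we may pick $\mathfrak{c}\in E_{q}\backslash\{\mathfrak{a},\mathfrak{b}\}$, and exactly one of two cases occurs. If $\mathfrak{c}\in\gamma^{\circ}$, then by the explicit sewing of Lemma \ref{patch-1} the preimage of $\gamma^{\circ}$ becomes the open suture line $(-1,1)\subset\Delta$; in particular the preimage of $\mathfrak{c}$ on $\partial\Sigma$ is identified to a point of $(-1,1)\subset\Delta$. If instead $\mathfrak{c}\in S\backslash\gamma$, then $\mathfrak{c}$ has a unique $g$-preimage in $V$, and via the equivalence $(f_{1},\overline{\Delta^{-}})\sim(g,\overline{V})$ of Lemma \ref{patch-1}(i) this preimage lies in $\Delta^{-}\subset\Delta$. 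Either way $f_{1}^{-1}(\mathfrak{c})\cap\Delta\neq\emptyset$, so the construction meets all four requirements of the lemma.

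Since every constructive ingredient is already assembled in Section \ref{Sect3}, no substantive analytic obstacle is anticipated; the only delicate point is the dichotomy in the last paragraph, which ensures that regardless of whether the auxiliary $E_{q}$-point $\mathfrak{c}$ happens to lie on $\gamma$ itself or off it, one of the two interior-placement mechanisms---identification along the suture line, respectively homeomorphic covering of $S\backslash\gamma$---deposits a preimage inside $\Delta$.
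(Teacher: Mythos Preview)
Your proof is correct and follows the same construction as the paper: sew $\Sigma$ with the slit-sphere surface $S_{\gamma}$ along $\gamma$ and invoke Lemma~\ref{patch}(iii) (using $\partial\gamma\subset E_{q}$) to get $\partial\Sigma_{1}=\partial\Sigma$ and $R(\Sigma_{1})=R(\Sigma)$. The paper verifies $f_{1}^{-1}(E_{q})\cap\Delta\neq\emptyset$ more tersely via the count $\overline{n}(\Sigma_{1})=\overline{n}(\Sigma)+q-2\geq 1$, but your explicit dichotomy on whether $\mathfrak{c}\in\gamma^{\circ}$ or $\mathfrak{c}\in S\backslash\gamma$ is an equally valid (and slightly more transparent) way to reach the same conclusion.
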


In fact $\Sigma_{1}$ can be obtained by sew\label{sew26} $\Sigma$ and the
surface whose interior is $S\backslash\gamma$ and boundary is $\gamma-\gamma.
$ Then we have $A(\Sigma_{1})=A(\Sigma)+4\pi,$ $\overline{n}\left(  \Sigma
_{1}\right)  =\overline{n}\left(  \Sigma\right)  +q-2$ and $\partial
\Sigma=\partial\Sigma_{1}$. Therefore $H(\Sigma)=H(\Sigma_{1})$ and
$f_{1}^{-1}\left(  E_{q}\right)  \cap\Delta\neq\emptyset,$ and then
(\ref{H<4p}) holds by the discussion of (1) of the previous lemma.

The above discussion about Lemmas \ref{LSZ} and \ref{LSZ1} implies the following

\begin{corollary}
\label{LSZ2}If $\Sigma\in\mathcal{F}$ satisfies (1) or (2) of Lemma \ref{LSZ},
then there exists a sequence $\Sigma_{n}$ in $\mathcal{F}$ such $H(\Sigma
_{n})=\frac{R(\Sigma_{n})}{L(\partial\Sigma_{n})}\rightarrow\frac
{R(\Sigma)+4\pi}{L(\partial\Sigma)}$ as $n\rightarrow\infty.$
\end{corollary}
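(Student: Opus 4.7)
The plan is to treat cases (1) and (2) of Lemma \ref{LSZ} separately, with case (2) reduced to case (1) via Lemma \ref{LSZ1}, and then to exhibit an explicit approximating sequence in case (1) by a covering power construction.

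First assume case (1) holds: $f^{-1}(E_q)\cap\Delta\neq\emptyset$. After composing $f$ with a M\"obius transformation of $\overline{\Delta}$ (which does not change $R$, $L(\partial\Sigma)$, or $H(\Sigma)$), I may assume $0\in f^{-1}(E_q)$. Define $\Sigma_n=(f_n,\overline{\Delta})$ by $f_n(z)=f(z^n)$, $z\in\overline{\Delta}$; this is a well defined surface in $\mathbf{F}$ since $z\mapsto z^n$ is an OPCOFOM from $\overline{\Delta}$ onto itself. I will check the three invariants. Since $z\mapsto z^n$ is an $n$-to-$1$ BCCM on $\overline{\Delta}$ with its only branch point at $0$, one has $A(\Sigma_n)=nA(\Sigma)$ and $L(\partial\Sigma_n)=nL(\partial\Sigma)$. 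For the counting function, each point of $E_q\setminus\{f(0)\}$ has exactly $n$ times as many preimages in $\Delta$ under $f_n$ as under $f$, while $f(0)\in E_q$ is still a single interior preimage of $f_n$ (the $n$ preimages under $z\mapsto z^n$ collapse to $0$). Hence
\[
\overline{n}(\Sigma_n,E_q)=n\,\overline{n}(\Sigma,E_q)-(n-1),
\]
and consequently
\[
R(\Sigma_n)=(q-2)nA(\Sigma)-4\pi\bigl(n\,\overline{n}(\Sigma,E_q)-(n-1)\bigr)=nR(\Sigma)+4\pi(n-1).
\]
Therefore
\[
H(\Sigma_n)=\frac{R(\Sigma_n)}{L(\partial\Sigma_n)}=\frac{R(\Sigma)+4\pi(1-1/n)}{L(\partial\Sigma)}\ \longrightarrow\ \frac{R(\Sigma)+4\pi}{L(\partial\Sigma)},
\]
as desired.

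Now assume case (2): $\partial\Sigma$ contains a simple arc $\gamma$ with distinct endpoints in $E_q$. By Lemma \ref{LSZ1}, there exists $\Sigma_1=(f_1,\overline{\Delta})\in\mathbf{F}$ with $L(\partial\Sigma_1)=L(\partial\Sigma)$, $H(\Sigma_1)=H(\Sigma)$, and $f_1^{-1}(E_q)\cap\Delta\neq\emptyset$. From the first two equalities we obtain $R(\Sigma_1)=R(\Sigma)$, and therefore
\[
\frac{R(\Sigma_1)+4\pi}{L(\partial\Sigma_1)}=\frac{R(\Sigma)+4\pi}{L(\partial\Sigma)}.
\]
Since $\Sigma_1$ now satisfies the hypothesis of case (1), applying the construction of the first paragraph to $\Sigma_1$ yields a sequence $\Sigma_n\in\mathbf{F}$ with $H(\Sigma_n)\to \tfrac{R(\Sigma_1)+4\pi}{L(\partial\Sigma_1)}=\tfrac{R(\Sigma)+4\pi}{L(\partial\Sigma)}$, completing the proof.

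The only nontrivial computational point is the relation $\overline{n}(\Sigma_n,E_q)=n\,\overline{n}(\Sigma,E_q)-(n-1)$; everything else is either the multiplicativity of area and boundary length under the $n$-fold branched cover $z\mapsto z^n$ or a direct appeal to Lemma \ref{LSZ1}. There is no genuine obstacle here, as the two cases of Lemma \ref{LSZ} already come equipped (in the text preceding the corollary) with the exact constructions needed.
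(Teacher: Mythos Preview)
Your proof is correct and follows essentially the same approach as the paper: the paper's ``proof'' of this corollary is just the remark that the discussion preceding Lemmas \ref{LSZ} and \ref{LSZ1} already gives the construction, and that discussion is exactly your $f_n(z)=f(z^n)$ computation for case (1) together with the reduction via Lemma \ref{LSZ1} for case (2). One small point worth tightening: the corollary asks for $\Sigma_n\in\mathcal{F}$, not just $\mathbf{F}$; since $\partial\Sigma_n$ is simply $n$ concatenated copies of $\partial\Sigma$ (and in case (2) the $\Sigma_1$ from Lemma \ref{LSZ1} has $\partial\Sigma_1=\partial\Sigma$), the required partition into SCC arcs is inherited from $\Sigma\in\mathcal{F}$, so this is immediate.
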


\begin{lemma}
\label{int-arg1}\label{7:33-20230406}Let $\Sigma=\left(  f,\overline{\Delta
}\right)  \in\mathcal{F}_{r}(L)$, let $\alpha=\alpha\left(  b_{1}%
,b_{2}\right)  $ be an arc of $\partial\Delta$ such that $c=c\left(
p_{1},p_{2}\right)  =\left(  f,\alpha\right)  \ $is a simple arc (it is
possible that $p_{1}=p_{2}$ even if $b_{1}\neq b_{2}$). If $\alpha^{\circ}$
contains no branch point of $f,$ then there exists a \emph{closed} domain $T$
on $S$ such that

(i) $\partial T=c-c^{\prime},$ where $c^{\prime}$ is a simple arc from $p_{1}
$ to $p_{2}$ which is consisted of a finite number of line segments on $S$
such that $c\cap c^{\prime}=\{p_{1},p_{2}\}.$

(ii) The interior angles of $T$ at $p_{1}$ and $p_{2}$ are positive.

(iii) $f^{-1}$ has a univalent branch $g$ defined on $T\backslash\{p_{1}%
,p_{2}\}$ with $g(c)=\alpha.$

(iv) If $p_{1}\neq p_{2},$ then $T$ is a closed Jordan domain and the
univalent branch $g$ of $f^{-1}$ can be extended to a homeomorphism defined on
the closed domain $T.$
\end{lemma}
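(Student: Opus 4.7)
The plan is to construct $T$ as a ``one-sided polygonal collar'' of $c$ on $S$, built from images of small disk neighborhoods of points on $\alpha$ under $f$. The key ingredients are the local homeomorphism property of $f$ along $\alpha$ (which follows from $\alpha^{\circ}\cap C_{f}=\emptyset$ together with $\Sigma\in\mathcal{F}_{r}$) and the simplicity of the image arc $c$, which will ensure that the local inverses glue into a single univalent branch.

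First, for each $x\in\alpha$, apply Lemma~\ref{cov-1} together with Definition~\ref{nod} and Remark~\ref{nod-1} to obtain a disk $(x,V_{x})$ of $\Sigma$ of sufficiently small radius $\delta_{x}$. For $x\in\alpha^{\circ}$ (a regular point of $f$ with $(f,\partial\Delta)$ simple near $x$), $f|_{\overline{V_{x}}}$ is a homeomorphism onto a lens-type region on $S$ bounded by $c\cap\partial f(\overline{V_{x}})$ and a second SCC arc lying across $c$, as in Corollary~\ref{cov-2}(iv). For $x=b_{1}$ or $x=b_{2}$, $f|_{\overline{V_{x}}}$ has the normal form described in Lemma~\ref{cov-1}(B3), and the interior angle of $\Sigma$ at $x$ is strictly positive by Lemma~\ref{int-ang}. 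By compactness of $\alpha$, extract a finite subcover $\{V_{x_{k}}\}_{k=0}^{N}$ with $x_{0}=b_{1}$, $x_{N}=b_{2}$, and consecutive disks overlapping along $\alpha$. Using Corollary~\ref{cov-2}(i)(iii) to shrink each $\delta_{x_{k}}$, one arranges that $f(V_{x_{k}})\cap f(V_{x_{l}})=\emptyset$ whenever $V_{x_{k}}\cap V_{x_{l}}=\emptyset$; the global simplicity of $c$ is what permits this shrinking.

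Second, construct $c^{\prime}$ as a polygonal arc on $S$. Inside each $f(V_{x_{k}})$ with $0<k<N$, the ``opposite side'' of $c$ is the lens region on the far side of $c$, and one selects interior points $q_{k}\in f(V_{x_{k}})\cap f(V_{x_{k+1}})$ lying on that side; connect consecutive $q_{k}$ by straight line segments that remain inside $f(V_{x_{k}})\cap f(V_{x_{k+1}})$. At the endpoints, use the boundary radii supplied by Remark~\ref{nod-1} (understood through the sector model of Lemma~\ref{cov-1}(B3)) to join the polygonal chain to $p_{1}$ and $p_{2}$; by the normal form, the interior angles at $p_{1}$ and $p_{2}$ can be taken strictly positive and strictly less than the interior angles of $\Sigma$ at $b_{1},b_{2}$, which establishes (ii). Define $T$ to be the closed region bounded by $c-c^{\prime}$ on the side of $c$ determined by the images $f(V_{x_{k}})$. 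Then (i) is immediate from the construction, and (iii) follows because the local inverses $f|_{\overline{V_{x_{k}}}}^{-1}$ agree on overlaps (by simplicity of $c$) and hence glue into a single univalent branch $g$ of $f^{-1}$ defined on $T\setminus\{p_{1},p_{2}\}$ with $g(c\setminus\{p_{1},p_{2}\})=\alpha\setminus\{b_{1},b_{2}\}$.

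For (iv), when $p_{1}\neq p_{2}$, the polygonal arc $c^{\prime}$ meets $c$ only at $\{p_{1},p_{2}\}$, so $c-c^{\prime}$ is a Jordan curve and $T$ is a closed Jordan domain; the extension of $g$ to a homeomorphism on all of $T$ follows from Lemma~\ref{continue0}. The principal obstacle will be ensuring that the gluing of local inverses yields a \emph{univalent} branch on all of $T\setminus\{p_{1},p_{2}\}$: although each $V_{x_{k}}$ maps homeomorphically, one must rule out that two nonadjacent pieces $f(V_{x_{k}})$ and $f(V_{x_{l}})$ overlap on the collar side of $c$. This is prevented by the global simplicity of $c$ on $S$ together with a Lebesgue-number-type uniform shrinking of the $\delta_{x}$ along $\alpha$. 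A secondary subtlety arises when $b_{1}$ or $b_{2}$ is a branch point of $f$ (allowed since $\alpha^{\circ}$, not $\alpha$, is branch-free): one must work carefully with the $\omega_{j}$-sector model of Lemma~\ref{cov-1}(B3) to select boundary radii producing a polygonal $c^{\prime}$ that makes strictly positive angles with $c$ at $p_{1}$ and $p_{2}$ while staying inside $f(V_{b_{1}})$ and $f(V_{b_{2}})$.
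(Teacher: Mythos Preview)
Your proposal is correct and follows essentially the same approach as the paper: the paper's proof is a single sentence stating that the result follows from Lemma~\ref{int-ang} (positivity of interior angles) and Lemma~\ref{cov-1} (the local disk/sector structure), and your argument is precisely a detailed unpacking of how those local pieces glue into the collar $T$, with Lemma~\ref{continue0} supplying the extension in (iv). The compactness/Lebesgue-number shrinking you invoke to prevent nonadjacent overlaps is exactly the step the paper leaves implicit in ``$c$ is simple.''
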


\begin{proof}
Since $f$ has no branch point on $\alpha^{\circ}$ and $c$ is simple, the
results follows from Lemmas \ref{int-arg} and \ref{cov-1}.
\end{proof}

\begin{remark}
\label{Riemann}\label{7:43 -20230406}Let $\Sigma=\left(  f,\overline{\Delta
}\right)  \in\mathcal{F}.$

(i) $\Sigma=\left(  f,\overline{\Delta}\right)  $ can be understood as a
branched Riemann surface with boundary $\partial\Sigma=\left(  f,\partial
\Delta\right)  $, such that every point of $\Sigma$ is in fact a pair
$(f,p)=\left(  f(p),p\right)  $ with $p\in\overline{\Delta}:$ $\Sigma$ can be
regarded as a union of a finite number of disks $\left(  x_{j},U_{j}\right)  $
of $\Sigma\ $defined in Definition \ref{nod}. Then $\left(  f,U_{j}\right)  $
plays the role of chards for Riemann surfaces when $\Sigma$ is regarded as the
set of pairs $\left(  f,p\right)  =\left(  f(p),p\right)  ,p\in\overline
{\Delta}.$

(ii) Assume $a$ is a point in $\overline{\Delta}$ and $\overline{D}$ is a
closed domain in $\overline{\Delta}$ which is a neighborhood of $a$ in
$\overline{\Delta}$. If the restriction $f:\overline{D}\rightarrow T\ $with
$T=f(\overline{D})$ is a homeomorphism, we will call the subsurface $K=\left(
f,\overline{D}\right)  $ a simple closed domain of $\Sigma$ determined by $a$
(when $f(D)$ is given), and use the pair $\left(  T,a\right)  $ or $\left(
T,\left(  f,a\right)  \right)  \ $to denote this closed domain. It is clear
that $K$ and $\overline{D}$ are uniquely determined by $T$ and $a.$

(ii1) Then the term "$\left(  W,a\right)  $\emph{\ is a closed simple domain
of }$\Sigma$" means that "$W$\emph{\ is a closed domain} on $S$ \emph{and }%
$f$\emph{\ has a univalent branch }$g$\emph{\ defined on }$W$\emph{\ and
}$g(W)$ \emph{is a neighborhood of }$a$ in $\overline{\Delta}$".

(ii2) When $\overline{D}$ is a closed Jordan domain in $\overline{\Delta}$
such that $\left(  \partial D\right)  \cap\partial\Delta$ contains an arc
$\alpha$ of $\partial\Delta$ and $f:\overline{D}\rightarrow T=f(\overline{D})$
is a homeomorphism. Then $\overline{D},$ as an $f$-lift of $T,$ is uniquely
determined by the pair $(T,\left(  \partial D\right)  \cap\partial\Delta),$ or
$(T,\alpha),$ or $\left(  T,\left(  f,\alpha\right)  \right)  ,$ or $\left(
T,a\right)  ,$ where $a\ $is any interior point of $\alpha,$ say $a\in
\alpha^{\circ}.$ So we will write
\[
\left(  f,\overline{D}\right)  =\left(  T,\left(  \partial\Delta\right)
\cap\partial D\right)  )=\left(  T,\alpha\right)  =\left(  T,\left(
f,\alpha\right)  \right)  =\left(  T,a\right)  ,
\]
call $\left(  f,\left(  \partial\Delta\right)  \cap\partial D\right)  $ the
\emph{old} boundary of $\left(  f,\overline{D}\right)  ,$ and $\left(
f,\Delta\cap\partial D\right)  $ the \emph{new} \emph{boundary} of $\left(
f,\overline{D}\right)  .$

(iii) Assume $\Sigma,\alpha,c,c^{\prime},T,g$ satisfies all conditions of
Lemma \ref{int-arg1}. If $p_{1}\neq p_{2},$ then $g$ can be extended to $T$ so
that for $\overline{D}=g(T),$ $f(\overline{D})$ is the closed Jordan domain
$T\ $and $\left(  T,a\right)  $ is a closed simple Jordan domain of $\Sigma,$
where $a\in\alpha^{\circ}.$ Then $\left(  T,\alpha\right)  \ $and $\left(
T,c\right)  $ with $c=\left(  f,\alpha\right)  $ both denote $\left(
f,\overline{D}\right)  .$ If $p_{1}=p_{2},$ then $D$ is still a Jordan domain,
while $T$ is not. In this case we still use $\left(  T,\alpha\right)  ,$ or
$\left(  T,a\right)  ,$ where $a$ is an interior point of $\alpha,$ to denote
the surface $\left(  f,\overline{D}\right)  ,$ and call it a simple closed
domain as well. In fact, $T$ can be expressed as a union of Jordan curves,
each pair of which only intersect at $f(a)$.

(iv) Now assume $\Sigma=\left(  f,\overline{\Delta}\right)  \in\mathcal{F}%
\left(  L,m\right)  $ and let
\begin{align}
\partial\Sigma &  =c_{1}\left(  q_{1},q_{2}\right)  +\cdots+c_{m}\left(
q_{m},q_{1}\right) \label{LMP}\\
&  =\left(  f,\alpha_{1}\left(  a_{1},a_{2}\right)  \right)  +\cdots+\left(
f,\alpha_{m}\left(  a_{m},a_{1}\right)  \right)  ,\nonumber
\end{align}
be an $\mathcal{F}\left(  L,m\right)  $-partition of $\partial\Sigma.$ For
each $j,$ by definition of $\mathcal{F}\left(  L,m\right)  $ and
$\mathcal{F}\left(  L,m\right)  $-partitions, $f$ has no branch point in
$\alpha_{j}^{\circ}$ and $f$ is homeomorphism in a neighborhood of $\alpha
_{j}^{\circ}$ in $\overline{\Delta}.$ Then Lemma \ref{int-arg1} applies to
each $c_{j}$, and there exist a positive number $\theta>0$ and a closed domain
$T_{j,\theta}$ on $S$ enclosed by $c_{j}$ and $c_{j}^{\prime}$ such that:

(iv1) If $q_{j}\neq q_{j+1},$ $c_{j}^{\prime}$ is a circular arc from $q_{1}$
to $q_{2}$, $\partial T_{j,\theta}=c_{j}-c_{j}^{\prime}$ and
\[
\angle\left(  T_{j,\theta},q_{j}\right)  =\angle\left(  T_{j,\theta}%
,q_{j+1}\right)  =\theta\in(0,\min_{i=j+1}\angle\left(  \Sigma,a_{i}\right)
),
\]
and $\left(  T_{j,\theta},\alpha_{j}\right)  $ is a simple Jordan domain of
$\Sigma.$

(iv2) If $q_{j}=q_{j+1},$ $c_{j}^{\prime}$ is consisted of two convex
circulars arcs such that $\partial T_{j,\theta}=c_{j}-c_{j}^{\prime},$
$c_{j}^{\prime}$ is contained in the disk enclosed by $c_{j}$ and
$c_{j}^{\prime}\cap c_{j}=\{q_{j}\},$ the two interior angles of $T_{\theta}$
at $q_{j}$ are equal to $\theta,$ and there exists a Jordan domain $D$ in
$\Delta$ such that $\partial D=\alpha_{j}-\alpha_{j}^{\prime}$ with
$\alpha_{j}^{\prime\circ}\subset\Delta,$ and $f$ restricted to $\overline
{D}\backslash\{a_{j},a_{j+1}\}$ is a homeomorphism onto $T_{j,\theta
}\backslash\{q_{i}\}$. We then can use $\left(  T_{j,\theta},\alpha
_{j}\right)  $ or $\left(  T_{j,\theta},c_{j}\right)  ,$ in which
$c_{j}=\left(  f,\alpha_{j}\right)  ,$ to denote the subsurface $\left(
f,\overline{D}\right)  ,$ and call it a closed simple domain of $\Sigma$ as in (iii).
\end{remark}

\begin{definition}
\label{simple}Let $\Sigma=\left(  f,\overline{\Delta}\right)  $ be a surface
of $\mathbf{F.}$

(a) A point $a\in\overline{\Delta}$ is called a simple point of $f$ if $f$ is
homeomorphic in a neighborhood of $a$ in $\overline{\Delta}.$

(b) A point $\left(  f,a\right)  $ of $\Sigma$ is called a simple point of
$\Sigma$ if $a$ is a simple point of $f.$

(c) For a subset $A$ of $\overline{\Delta}\mathbf{\ }$and a point $a\in A,$
$a$ is called a simple point of $f$ in $A$ if $f$ is homeomorphic in a
neighborhood of $a$ in $A,$ and $\left(  f,a\right)  $ is called a simple
point of $\Sigma$ in $\left(  f,D\right)  $ if $a$ is a simple point of $f$ in
$D.$
\end{definition}

By definition a point $\left(  f,a\right)  $ of $\Sigma^{\circ},$ say,
$a\in\Delta,$ is a simple point of $\Sigma$ if and only if $a$ is a regular
point of $f.$ A point $\left(  f,a\right)  \in\partial\Sigma$ is a simple
point of $\Sigma,$ if and only if $a$ is a regular point of $f$ and
$\partial\Sigma$ is simple in a neighborhood of $a$ in $\partial\Delta.$ Note
that a simple point of a subsurface of $\Sigma$ needs not be a simple point of
$\Sigma.$

\begin{Deformation}
\label{Deform1}Consider a surface $\Sigma=\left(  f,\overline{\Delta}\right)
\in\mathcal{F}\left(  L,m\right)  $ with $\mathcal{F}\left(  L,m\right)
$-partition (\ref{LMP}). Assume that for some pair $j_{1}$ and $j_{2}$ with
$1\leq j_{1}<j_{2}\leq m,$
\[
L(c_{j_{i}})<\pi,c_{j_{i}}^{\circ}\cap E_{q}=\emptyset,
\]
and one of the following hold

(a) The curvatures $k(c_{j_{i}})$ of $c_{j_{i}}$ are distinct for $i=1,2.$

(b) $k(c_{j_{1}})=k(c_{j_{2}})$ and both $c_{j_{1}}$ and $c_{j_{2}}$ are major
circular arcs.

We will show that we can deform $\Sigma$ by changing the two arcs $c_{j_{1}}$
and $c_{j_{2}}$ to obtain a surface $\Sigma^{\prime}\in\mathcal{F}\left(
L,m\right)  $ such that
\[
H(\Sigma^{\prime})>H(\Sigma),L(\partial\Sigma^{\prime})=L(\partial\Sigma).
\]

By Corollary \ref{2-curvature}, using the notations $\left(  \overline
{T_{j_{i},\theta_{i}}},c_{j_{i}}\right)  $ with old boundary $c_{j_{i}}$ and
new boundary $c_{j_{i}}^{\prime\circ},i=1,2,$ in Remark \ref{Riemann} (iv2),
we can deform the simple domain $\left(  \overline{T_{j_{i},\theta_{i}}%
},c_{j_{i}}\right)  $ of $\Sigma$, $i=1,2$, as follows.

We replace $\left(  \overline{T_{j_{i},\theta_{i}}},c_{j_{i}}\right)  $ with
$\left(  \overline{T_{j_{i},\theta_{i}^{\prime}}^{\prime}},\mathfrak{c}%
_{j_{i}}\right)  ,$ where $T_{j_{i},\theta_{i}^{\prime}}^{\prime}$ is a domain
on $S$ enclosed by $\mathfrak{c}_{j_{i}}-c_{j_{i}}^{\prime}$ and
$\mathfrak{c}_{j_{i}}$ is a convex circular are from $q_{j_{i}}$ to
$q_{j_{i}+1},$ which is a small perturbation of $c_{j_{i}}$ with the same
endpoints and $\mathfrak{c}_{j_{i}}\cap c_{j_{i}}^{\prime}=\{q_{j_{i}%
},q_{j_{i}+1}\}$. Then by (a), or (b), and Corollary \ref{2-curvature}, we may
choose $\mathfrak{c}_{j_{i}}$ such that $\sum_{i=1}^{2}A(T_{j_{i},\theta
})<\sum_{i=1}^{2}A(T_{j_{i},\theta^{\prime}}^{\prime})$ and $\sum_{i=1}%
^{2}L(c_{j_{i}})=\sum_{j=1}^{2}L(\mathfrak{c}_{j_{i}}).$ After this
deformation we obtain a new surface $\Sigma^{\prime}\in\mathcal{F}\left(
L,m\right)  \ $such that the $\mathcal{F}\left(  L,m\right)  $-partition
(\ref{LMP}) changes into $\mathcal{F}\left(  L,m\right)  $-partition
\[
\partial\Sigma^{\prime}=c_{1}+\cdots+c_{j_{1}-1}+\mathfrak{c}_{j_{1}}%
+c_{j_{1}+1}+\cdots+c_{j_{2}-1}+\mathfrak{c}_{j_{2}}+c_{j_{2}+1}+\cdots+c_{m}%
\]
of $\partial\Sigma^{\prime}.$ Thus the surface $\Sigma^{\prime}$ with
$H(\Sigma^{\prime})$ larger and $L(\partial\Sigma^{\prime})$ unchanged exists.
$\Sigma^{\prime}$ is in fact obtained by moving $c_{j_{i}}$ to its left (or
right) hand side a little to the position of $\mathfrak{c}_{j_{i}},i=1,2.$
\end{Deformation}

\section{The distances on surfaces in $\mathcal{F}$}

We first introduce some results for counting terms of partitions.

\begin{lemma}
\label{mm'}Assume that $m\geq3,$ $\Sigma=\left(  f,\overline{\Delta}\right)
\in\mathcal{F}_{r}\left(  L,m\right)  ,$ (\ref{fmp1}) and (\ref{fmp2}) are
$\mathcal{F}\left(  L,m\right)  $-partitions of $\partial\Sigma\ $with
$c_{j}\left(  q_{j},q_{j+1}\right)  =\left(  f,\alpha_{j}\left(  a_{j}%
,a_{j+1}\right)  \right)  ,j=1,\dots,m,$ and that the following (a) and (b) hold.

(a) $a$ and $b$ are two points on $\partial\Delta,a\neq b,$ $\gamma_{1}$ is an
arc on $\partial\Delta$ from $a$ to $b$ and $\gamma_{0}=\left(  \partial
\Delta\right)  \backslash\gamma_{1}^{\circ},$ both oriented by $\partial
\Delta.$

(b) $I$ is a simple arc in $\overline{\Delta}$ from $a$ to $b$ such that
$I^{\circ}\in\Delta,\ I\cap f^{-1}(E_{q})=\emptyset,$ and either (b1) $\left(
f,-I\right)  $ is an SCC arc with $L\left(  f,I\right)  \leq L(f,\gamma_{0})$,
or (b2) $\left(  f,I\right)  $ is straight with $L(f,I)<\pi.$

Then the following hold.

(i) $I$ divides $\Delta$ into two Jordan domains $\Delta_{0}$ and $\Delta_{1}
$ on the left and right hand side of $I,$ respectively, $\partial\Delta
_{0}=\gamma_{0}+I,$ and $\partial\Delta_{1}=\gamma_{1}-I.$

(ii) The surface $\Sigma_{1}=\left(  f,\overline{\Delta_{1}}\right)  $ is
contained in $\mathcal{F}_{r}\left(  L,m_{1}^{\prime}\right)  $ with%
\begin{equation}
m_{1}^{\prime}=m+2-\#\left[  \gamma_{0}\cap\{a_{j}\}_{j=1}^{m}\right]  .
\label{m'}%
\end{equation}

(iii) When (b2) holds $\Sigma_{0}=\left(  f,\overline{\Delta_{0}}\right)  $ is
also contained in $\mathcal{F}_{r}\left(  L,m_{0}^{\prime}\right)  $ with%
\[
m_{0}^{\prime}=m+2-\#\left[  \gamma_{1}\cap\{a_{j}\}_{j=1}^{m}\right]  .
\]

\end{lemma}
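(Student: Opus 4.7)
The plan combines planar topology, spherical geometry, and careful bookkeeping of boundary vertices. For (i), the Jordan curve theorem applied to the simple arc $I$ (with $\partial I\subset\partial\Delta$ and $I^{\circ}\subset\Delta$) partitions $\Delta$ into two Jordan subdomains, labelled $\Delta_0,\Delta_1$ by the orientation of $I$, so that $\partial\Delta_0=\gamma_0+I$ and $\partial\Delta_1=\gamma_1-I$. To check $\Sigma_1\in\mathcal{F}(L)$ I bound $L(\partial\Sigma_1)=L(f,\gamma_1)+L(f,I)$: in case (b1) the inequality $L(f,I)\le L(f,\gamma_0)$ is given; in case (b2) $(f,I)$ is a line segment on $S$ of length $<\pi$, hence the unique shortest path from $f(a)$ to $f(b)$, so $L(f,I)=d(f(a),f(b))\le L(f,\gamma_0)$ because $(f,\gamma_0)$ is a path from $f(a)$ to $f(b)$. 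Either way $L(\partial\Sigma_1)\le L(\partial\Sigma)\le L$.

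Next I count the partition arcs of $\partial\Delta_1$. Setting $n_1:=\#[\gamma_1^{\circ}\cap\{a_j\}_{j=1}^m]$, the arc $\gamma_1$ carries the $n_1+2$ partition points $\{a,b\}\cup(\gamma_1^{\circ}\cap\{a_j\})$ and splits into $n_1+1$ subarcs; adjoining $-I$ yields $m_1'=n_1+2$ arcs. From the disjoint decomposition $\partial\Delta=\gamma_0\sqcup\gamma_1^{\circ}$ with $\gamma_0$ closed (so containing $a,b$) and $\{a_j\}\subset\partial\Delta$, one has $m=\#[\gamma_0\cap\{a_j\}]+n_1$, which gives $m_1'=m+2-\#[\gamma_0\cap\{a_j\}]$.

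Each subarc of $\gamma_1$ in this partition is a subarc of some $\alpha_j$, hence its $f$-image is a subarc of the SCC arc $c_j$ and so is itself SCC; the arc $-I$ is SCC by hypothesis (b1), or by (b2) since a straight arc of length $<\pi$ is simple and locally an arc of a great circle bounding a hemisphere. Since $\Sigma\in\mathcal{F}_r$ and $I\cap f^{-1}(E_q)=\emptyset$, $f$ is locally injective on $I^{\circ}$ and on the interiors of the $\gamma_1$-subarcs, so the one-sided homeomorphic neighborhood required by Definition \ref{circu}(c)(iii) exists at those points (applying Lemma \ref{cov-1}(A) to $\Sigma$ at interior points of $I$ and then restricting to the $\Delta_1$-side); the branch points of $f|_{\overline{\Delta_1}}$ lie in $f^{-1}(E_q)\cap\overline{\Delta_1}$, so $\Sigma_1\in\mathcal{F}_r(L,m_1')$. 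Statement (iii) follows by the symmetric argument with $\Delta_0,\gamma_0$ replacing $\Delta_1,\gamma_1$, using that in case (b2) the arc $(f,I)$ itself is SCC (not merely $(f,-I)$) and that the geodesic inequality equally gives $L(f,I)\le L(f,\gamma_1)$. The main obstacle is the vertex bookkeeping when $a$ or $b$ coincides with some $a_j$: treating $\gamma_0,\gamma_1$ as closed arcs containing their endpoints means such a coincidence is counted in both $\#[\gamma_0\cap\{a_j\}]$ and $\#[\gamma_1\cap\{a_j\}]$, and one must verify in all four subcases ($a\in\{a_j\}$ or not, independently of $b$) that this closed-arc convention makes the ``$+2$'' correction in the formulas for $m_1'$ and $m_0'$ come out correctly.
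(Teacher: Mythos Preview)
Your proposal is correct and follows essentially the same approach as the paper: Jordan curve theorem for (i), a length comparison $L(f,I)\le L(f,\gamma_0)$ for the perimeter bound, and vertex bookkeeping for the partition count. Your counting via $n_1:=\#[\gamma_1^{\circ}\cap\{a_j\}]$ together with the disjoint decomposition $\partial\Delta=\gamma_0\sqcup\gamma_1^{\circ}$ is in fact slightly cleaner than the paper's device of refining the $m$-term partition into $m+2$ possibly degenerate terms, and it already resolves the endpoint issue you flag at the end: since $\gamma_0$ is closed and $\gamma_1^{\circ}$ is its open complement, each $a_j$ lies in exactly one of them, so $m=\#[\gamma_0\cap\{a_j\}]+n_1$ holds unconditionally, and since $a\neq b$ the $n_1$ interior vertices together with $a,b$ always give $n_1+2$ distinct points on $\gamma_1$, hence $n_1+1$ subarcs, regardless of whether $a$ or $b$ coincides with some $a_j$. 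So there is no remaining obstacle---your own setup already handles all four subcases uniformly, and you can drop the caveat.
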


\begin{proof}
By the assumption, (i) is trivial to verify.

By (b), $f$ has no branch point in $I^{\circ}$ and thus we have:

(c) $f$ is homeomorphic in a neighborhood of $I^{\circ}$ in $\overline{\Delta
},$ and thus $\Sigma_{1}=\left(  f,\Delta_{1}\right)  \in\mathcal{F}_{r},$ and
when (b2) holds $\Sigma_{0}\in\mathcal{F}_{r}$ as well.

It is clear that by (b)
\[
L\geq L\left(  f,\partial\Delta\right)  =L(\gamma_{1}+\gamma_{0})\geq
L(\gamma_{1})+L(f,I)=L(\partial\Sigma_{1}).
\]
and for the same reason $L\geq L(\partial\Sigma_{0})$ when (b2) holds.
Therefore we have by (b) and (c) that:

(d) $\Sigma_{1}\in\mathcal{F}_{r}\left(  L\right)  ;$ and if (b2) holds, then
$\Sigma_{0}\in\mathcal{F}_{r}\left(  L\right)  $ also holds.

The endpoints $\{a,b\}$ gives a refinement of the $\mathcal{F}\left(
L,m\right)  $-partition (\ref{fmp1}) which contains $m+2$ terms, among which
at most two are just points, and we let $\mathbf{A}$ be the set of all these
terms. It is easy to see that if $\gamma_{0}$ contains $s$ points of
$\{a_{j}\}_{j=1}^{m},$ then $\gamma_{1}$ is a sum of $m+1-s$ terms of
$\mathbf{A,}$ no matter what $\#\{a,b\}\cap\{a_{j}\}_{j=1}^{m}$ is equal to.
Thus by (c) and (d) $\partial\Delta_{1}$ has an $\mathcal{F}_{r}\left(
L,m^{\prime}\right)  $-partition$\mathcal{\ }$consisted of the term $I$ and
$m+1-s$ terms of $\mathbf{A},$ for $\Sigma_{1},$ with $s=\#\gamma_{0}%
\cap\{a_{j}\}_{j=1}^{m}.$ Thus we have $\Sigma_{1}\in\mathcal{F}_{r}\left(
L,m_{1}^{\prime}\right)  $ and for the same reason, $\Sigma_{0}\in
\mathcal{F}_{r}\left(  L,m_{0}^{\prime}\right)  $ in the case (b2).
\end{proof}

\begin{lemma}
\label{m-1}$\label{k>1 every used?}$Assume $m>3$, $\Sigma=\left(
f,\overline{\Delta}\right)  \in\mathcal{F}_{r}\left(  L,m\right)  $,
(\ref{fmp1}) and (\ref{fmp2}) are $\mathcal{F}\left(  L,m\right)  $-partitions
of $\partial\Sigma\ $with $c_{j}\left(  q_{j},q_{j+1}\right)  =\left(
f,\alpha_{j}\left(  a_{j},a_{j+1}\right)  \right)  ,j=1,\dots,m,$ and the
following (a)--(e) hold (see Figure \ref{counting-edges-1} for $k=3$).

(a) $k$ is a positive integers with $2\leq k<m,$ $b_{2}$ and $b_{2k+1}$ are
two points on $\partial\Delta$ with $b_{2}\neq b_{2k+1};$ $\gamma_{0}$ is the
arc of $\partial\Delta$ from $b_{2k+1}$ to $b_{2}\ $and $\gamma_{0}^{c}$ is
the arc $\left(  \partial\Delta\right)  \backslash\gamma_{0}^{\circ}$, both
oriented by $\partial\Delta;$ and $\gamma_{0}^{\prime}=\gamma_{0}^{\prime
}\left(  a_{i_{0}},a_{i_{2}}\right)  $ is the smallest arc on $\partial\Delta$
containing $\gamma_{0}$ such that $\partial\gamma_{0}^{\prime}=\{a_{i_{0}%
},a_{i_{2}}\}\subset\{a_{j}\}_{j=1}^{m},$ that is, $\gamma_{0}^{\prime}$ is
the union of all the terms in (\ref{fmp1}) which intersect $\gamma_{0}^{\circ
}.$

(b) $I=I\left(  b_{2},b_{2k+1}\right)  $ is a simple arc in $\overline{\Delta
}$ from $b_{2}\in\partial\Delta$ to $b_{2k+1}\in\partial\Delta$ which has the
partition%
\[
I\left(  b_{2},b_{2k+1}\right)  =I_{2}\left(  b_{2},b_{3}\right)
+I_{3}\left(  b_{3},b_{4}\right)  +\cdots+I_{2k}\left(  b_{2k},b_{2k+1}%
\right)  ,
\]

such that for each $j=2,\dots,k,I_{2j-1}\subset\partial\Delta,$ while for each
$j=1,\dots,k,\emptyset\neq I_{2j}^{\circ}\subset\Delta.$

(c) $I\cap\gamma_{0}=\{b_{2},b_{2k+1}\},$ say, $I_{3}\left(  b_{3}%
,b_{4}\right)  ,\dots,I_{2k-1}\left(  b_{2k-1},b_{2k}\right)  $ are all
contained in the open arc $\left(  \gamma_{0}^{c}\right)  ^{\circ}\ $and
$b_{2},b_{3},\dots,b_{2k},b_{2k+1}$ are arranged anticlockwise on
$\partial\Delta.$

(d) $\left(  f,-I\right)  \ $is an SCC arc on $S$, $L(f,-I)\leq L(f,\gamma
_{0})$ and $\cup_{j=1}^{k}I_{2j}^{\circ}\subset\Delta\backslash f^{-1}%
(E_{q}).$

(e) One of the conditions (e1)--(e3) holds:

(e1) $\gamma_{0}^{\prime}\cap I^{\circ}=\emptyset,$ say, $I_{3}\cap\gamma
_{0}^{\prime}=I_{2k-1}\cap\gamma_{0}^{\prime}=\emptyset,$ as in Figures
\ref{counting-edges-1} (1) and (2);

(e2) $\gamma_{0}^{\prime}\cap I^{\circ}=\emptyset$ and $\gamma_{0}^{\circ}%
\cap\{a_{j}\}_{j=1}^{m}\neq\emptyset;$

(e3) $\gamma_{0}^{\prime}\cap I^{\circ}=\emptyset$ and $\gamma_{0}^{\prime
}=\gamma_{0},$ as in Figure \ref{counting-edges-1} (1).

Then the following hold:

(i) For each $j=2,\dots,k,$ the two end points of $I_{2j-1}$ are contained in
$\{a_{j}\}_{j=1}^{m},$ say $\{b_{3},\dots,b_{2k}\}\subset\{a_{j}\}_{j=1}^{m}.$

(ii) $I$ divides $\Delta$ into $k+1$ Jordan domains $\left\{  \Delta
_{i}\right\}  _{i=0}^{k}$ such that $\Delta_{0}$ is on the left hand side of
$I$ and $\Delta_{1},\dots,\Delta_{k}$ are on the right hand side of $I.$

(iii) For each $j=1,2,\dots,k$, one of the following holds.

(iii1) $\Sigma_{j}=\left(  f,\overline{\Delta_{j}}\right)  \in\mathcal{F}%
_{r}\left(  L,m\right)  $ if (e1) holds.

(iii2) $\Sigma_{j}=\left(  f,\overline{\Delta_{j}}\right)  \in\mathcal{F}%
_{r}\left(  L,m-1\right)  \ $if (e2) or (e3) holds.

(iv) $\min\left\{  L(\partial\Sigma_{1}),L(\partial\Sigma_{k})\right\}
\geq\min\{L(f,\gamma_{01}),L(f,\gamma_{02})\}$ where $\gamma_{0i},i=1,2,$ are
the two components of $\gamma_{0}^{\prime}\backslash\gamma_{0}^{\circ}.$
\end{lemma}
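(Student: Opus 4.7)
My plan is to prove the four parts in order, each reducing to a short geometric or combinatorial argument.

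For (i), I would apply Lemma \ref{tangent} with $\alpha := \gamma_0^c = \partial\Delta \setminus \gamma_0^\circ$ (from $b_2$ to $b_{2k+1}$ anticlockwise) and $\beta := I$. Hypothesis (d) supplies that $(f,-\beta)$ is an SCC arc, and for each $j \in \{2,\dots,k\}$, $I_{2j-1}$ is a connected component of $\alpha \cap \beta$ whose endpoints $b_{2j-1}, b_{2j}$ are strictly interior to both $\alpha$ and $\beta$ (neither is $b_2$ or $b_{2k+1}$). Parts (i)--(ii) of Lemma \ref{tangent} then immediately place $b_{2j-1}, b_{2j}$ in $\{a_i\}_{i=1}^m$.

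For (ii), the arcs $I_2, I_4, \dots, I_{2k}$ are $k$ pairwise disjoint simple arcs in $\overline{\Delta}$ with interior in $\Delta$ and endpoints on $\partial\Delta$; the anticlockwise ordering of $b_2, b_3, \dots, b_{2k+1}$ forces their endpoint pairs to be non-nesting on $\partial\Delta$, so a standard induction gives exactly $k+1$ Jordan complementary pieces. Letting $\alpha_{2j,2j+1}$ denote the anticlockwise subarc of $\partial\Delta$ from $b_{2j}$ to $b_{2j+1}$, the tongue bounded by $I_{2j} - \alpha_{2j,2j+1}$ sits on the right of $I$ at $I_{2j}$ and is taken as $\Delta_j$, while the remaining large region, bounded by $\gamma_0 + I$, is $\Delta_0$.

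For (iii), I split $\partial\Sigma_j = (f,\alpha_{2j,2j+1}) + (f,-I_{2j})$ by cutting at $b_{2j}, b_{2j+1}$ and at every $a_i \in \alpha_{2j,2j+1}^\circ$: each resulting subpiece of the old boundary is a subarc of some original $c_i$ and hence SCC, and the final piece $(f,-I_{2j})$ is a subarc of the SCC arc $(f,-I)$. Writing $s_j := \#(\alpha_{2j,2j+1}^\circ \cap \{a_i\})$, this realises $\Sigma_j \in \mathcal{F}_r(L, s_j + 2)$; membership in $\mathcal{F}_r$ is inherited from $\Sigma \in \mathcal{F}_r$ together with (d) (which bars $f^{-1}(E_q)$ from $I_{2j}^\circ$, so no branch points appear on the new boundary), and $L(\partial\Sigma_j) \le L$ follows from $L(f, I_{2j}) \leq L(f,-I) \leq L(f,\gamma_0)$. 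The rest is pigeonhole: by (i), $b_3,\dots, b_{2k}$ give $2(k-1)$ distinct $a_i$'s, of which at most two ($b_{2j}, b_{2j+1}$) lie on $\partial\alpha_{2j,2j+1}$, so at least $2k-2 \ge 2$ of them lie strictly outside $\alpha_{2j,2j+1}^\circ$. This forces $s_j \le m-2$, proving (iii1). Under (e3), $b_2$ and $b_{2k+1}$ contribute two further $a_i$'s outside $\alpha_{2j,2j+1}^\circ$; under (e2), at least one $a_i$ in $\gamma_0^\circ$ does so. Either way $s_j \le m-3$, proving (iii2).

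For (iv), by definition $\gamma_0' = \gamma_0'(a_{i_0}, a_{i_2})$ is the smallest arc containing $\gamma_0$ with endpoints in $\{a_i\}$; hence the component $\gamma_{02}$ of $\gamma_0' \setminus \gamma_0^\circ$ adjacent to $b_2$ runs anticlockwise from $b_2$ to the first $a_i$ past $b_2$. Since $b_3 \in \{a_i\}$ by (i) and $b_3 \in \partial\alpha_{2,3}$, this first $a_i$ lies inside $\alpha_{2,3}$, so $\gamma_{02} \subseteq \alpha_{2,3}$ and $L(\partial\Sigma_1) \ge L(f,\alpha_{2,3}) \ge L(f,\gamma_{02})$. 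Symmetrically $L(\partial\Sigma_k) \ge L(f,\gamma_{01})$, and taking minima gives (iv). The main bookkeeping obstacle is the uniform $s_j$-estimate in (iii), which must handle the boundary indices $j=1, k$ (where $b_2, b_{2k+1}$ may fail to be $a_i$'s) and the small case $k=2$; once (i) forces $b_3,\dots,b_{2k}$ into $\{a_i\}$, the counting argument above closes these subcases uniformly.
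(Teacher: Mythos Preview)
Your proof is correct and follows essentially the same route as the paper: Lemma~\ref{tangent} for (i), the standard Jordan-domain decomposition for (ii), the edge-count via (the content of) Lemma~\ref{mm'} for (iii), and the containment $\gamma_{02}\subset\gamma_1$ for (iv). One small difference worth noting: for (iii1) the paper counts the vertices $a_{i_0},a_{i_2}$ together with $b_3,b_{2k}$ and invokes (e1) to ensure these are distinct, whereas your count uses only $b_3,\dots,b_{2k}$, which already gives $2k-2\ge 2$ vertices outside every $\gamma_j^\circ$ without appealing to (e1)---a mild simplification.
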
%

\begin{figure}
[ptb]
\begin{center}
\ifcase\msipdfoutput
\includegraphics[
height=2.3393in,
width=4.1658in
]%
{countigedges.ps}%
\caption{ }%
\end{center}
\end{figure}

\begin{proof}
(i) follows from Lemma \ref{tangent}, and (ii) is trivial.

Let $\gamma_{j}$ be the arc of $\partial\Delta$ from $b_{2j}$ to $b_{2j+1}$
for $j=1,\dots,k.$ Then we may arrange $\Delta_{j}$ so that $\partial
\Delta_{j}=\gamma_{j}-I_{2j},j=1,2,\dots,k.$ It is clear that $\gamma_{j}%
\cap\gamma_{0}$ contains at most one point for $j=1,\dots,k,$ and then by (d)
we have
\[
L(f,\partial\Delta_{j})=L\left(  f,\gamma_{j}-I_{2j}\right)  \leq
L(f,\gamma_{j})+L(f,\gamma_{0})\leq L(f,\partial\Delta)\leq L,
\]
and moreover $f$ restricted to a neighborhood of $I_{2j}^{\circ}$ in
$\overline{\Delta}$ is a homeomorphism, by (d) and the assumption
$\Sigma=\left(  f,\overline{\Delta}\right)  \in\mathcal{F}_{r}(L,m).$

We may assume
\[
\left\{  a_{i_{0}},a_{i_{2}}\right\}  \subset\{a_{j}\}_{j=1}^{m}\ \text{with
}i_{0}<i_{2}<m+i_{0}\left(  a_{m+i}=a_{i}\right)  .
\]

Assume (e1) holds. Then $\{b_{3},b_{2k}\}\ $is contained in $I^{\circ}%
\cap\{a_{j}\}_{j=1}^{m}$ by (i), and is outside $\gamma_{0}^{\prime}$ by (e1),
and then $a_{i_{2}}$ and $b_{3}$ are distinct and both contained in
$\gamma_{1}\cap\{a_{j}\}_{j=1}^{m},$ and for the same reason, $a_{i_{0}}$ and
$b_{2k}$ are distinct and both contained in $\gamma_{k}\cap\{a_{j}\}_{j=1}%
^{m}.$ Then it is easy to see $s_{1}=\#\left[  \left(  \partial\Delta\right)
\backslash\gamma_{1}^{\circ}\right]  \cap\{a_{j}\}_{j=1}^{m}\geq\#\gamma
_{k}\cap\{a_{j}\}_{j=1}^{m}\geq2\ $and $\Sigma_{1}$ is contained in
$\mathcal{F}_{r}\left(  L,m\right)  ,$ by applying Lemma \ref{mm'} to $I_{2}$;
and so is $\Sigma_{k}$ for the same reason. It is trivial to see that when
$k>2,$ for $j=2,3,\dots,k-1,$ $s_{j}=\#\left[  \left(  \partial\Delta\right)
\backslash\gamma_{j}^{\circ}\right]  \cap\{a_{j}\}_{j=1}^{m}\geq\#\{a_{i_{2}%
},b_{3},b_{2k},a_{i_{0}}\}=4,$ and thus by Lemma \ref{mm'} $\Sigma_{j}%
\in\mathcal{F}_{r}\left(  L,m-2\right)  .$ Thus (iii1) holds.

Assume (e2) holds. Then there exists $i_{1}$ with $i_{0}<i_{1}<i_{2}$ and
$a_{i_{1}}\in\gamma_{0}^{\circ}.$ Consider $\Delta_{1}$ and $\Delta_{k}.$ It
is clear that $a_{i_{0}}$ and $a_{i_{1}}$ are both contained in $\left(
\partial\Delta\right)  \backslash\gamma_{1}.$ On the other hand, by (e2) and
(i) $b_{3}\in\left\{  \partial\gamma_{1}\right\}  \cap\{a_{j}\}_{j=1}^{m}$.
Thus $s_{1}=\#\left[  \left(  \partial\Delta\right)  \backslash\gamma
_{1}^{\circ}\right]  \cap\{a_{j}\}_{j=1}^{m}\geq3$ and by Lemma \ref{mm'} we
have $\Sigma_{1}\in\mathcal{F}_{r}\left(  L,m+2-s\right)  \subset
\mathcal{F}_{r}\left(  L,m-1\right)  .$ For the same reason $\Sigma_{k}%
\in\mathcal{F}_{r}\left(  L,m-1\right)  .$ It is trivial to see that when
$k>2,$ for $j=2,3,\dots,k-1,$ $s_{j}=\#\left[  \left(  \partial\Delta\right)
\backslash\gamma_{j}^{\circ}\right]  \cap\{a_{j}\}_{j=1}^{m}\geq\#\{a_{i_{2}%
},b_{3},b_{2k},a_{i_{0}},a_{i_{1}}\}=5,$ and thus we have by Lemma \ref{mm'}
$\Sigma_{j}\in\mathcal{F}_{r}\left(  L,m-3\right)  .$

Assume (e3) holds. Then we still have $s_{1}=\#\left[  \left(  \partial
\Delta\right)  \backslash\gamma_{1}^{\circ}\right]  \cap\{a_{j}\}_{j=1}%
^{m}\geq3$ and thus $\Sigma_{1}\in\mathcal{F}_{r}\left(  L,m-1\right)  .$ For
the same reason, we also have $\Sigma_{k}\in\mathcal{F}_{r}\left(
L,m-1\right)  $. Assume $k>2$ and let $j\in\{2,\dots,k-1\}.$ Then by (i)
$\partial\gamma_{j}$ are contained in $\{a_{j}\}_{j=1}^{m},$ and by the
assumptions, $a_{i_{0}},a_{i_{2}}$ are outside $\gamma_{j}^{\circ}$ and thus
$s_{j}=\#\left[  \left(  \partial\Delta\right)  \backslash\gamma_{j}^{\circ
}\right]  \cap\{a_{j}\}_{j=1}^{m}\geq\#\left\{  a_{i_{2}},b_{3},b_{2k}%
,a_{i_{0}}\right\}  =4,$ and then by Lemma \ref{mm'} we have $\Sigma_{j}%
\in\mathcal{F}_{r}\left(  L,m+2-4\right)  =\mathcal{F}_{r}(L,m-2)$. (iii2) has
been proved.

It is clear that $L\left(  \partial\Sigma_{1}\right)  \geq L\left(
\gamma_{01}\right)  $ and $L\left(  \partial\Sigma_{2}\right)  \geq L\left(
\gamma_{02}\right)  $ this implies that (iv) holds true.
\end{proof}

\begin{definition}
\label{df}Let $\Sigma=\left(  f,\overline{\Delta}\right)  \in\mathcal{F}.$ For
any two points $a$ and $b$ in $\overline{\Delta},$ define their $d_{f}%
$-distance $d_{f}\left(  a,b\right)  $ by%
\[
d_{f}(a,b)=\inf\{L(f,I):I\mathrm{\ is\ a\ curve\ in\ }\overline{\Delta
}\mathrm{\ with\ endpoints\ }a\mathrm{\ and\ }b\}\mathrm{;}%
\]
for any two sets $A$ and $B$ in $\overline{\Delta}$ define their $d_{f}%
$-distance by%
\[
d_{f}\left(  A,B\right)  =\inf\left\{  d_{f}\left(  a,b\right)  :a\in A,b\in
B\right\}  ;
\]
and for any set $A$ in $\overline{\Delta}$ and any $\varepsilon>0$ define the
$d_{f}$-$\varepsilon$-neighborhood of $A$ (in $\overline{\Delta}$) by%
\[
N_{f}(A,\varepsilon)=\left\{  x\in\overline{\Delta}:d_{f}(A,x)<\varepsilon
\right\}  .
\]

The distance $d_{f}\left(  a,b\right)  $ is also called the distance of
$\Sigma$ between the two points $\left(  f,a\right)  $ and $\left(
f,b\right)  $ of $\Sigma.$ Sometimes we will write $d_{\Sigma}\left(  \left(
f,a\right)  ,\left(  f,b\right)  \right)  =d_{f}\left(  a,b\right)  .$ Then
the notation $d_{\Sigma}\left(  \left(  f,A\right)  ,\left(  f,B\right)
\right)  $ between two sets of $\Sigma,$ and $N_{\Sigma}(\left(  f,A\right)
,\varepsilon)$ is well defined.
\end{definition}

\begin{remark}
When $\varepsilon$ is small enough, $(a,N_{f}\left(  a,\varepsilon\right)  )$
is the disk of $\Sigma$ with radius $\varepsilon$ (see Lemma \ref{cov-1},
Corollary \ref{cov-2} and Definition \ref{nod}). On the other hand, Corollary
\ref{cov-2} (iv) directly implies
\end{remark}

\begin{lemma}
\label{short-disk}Let $\Sigma=\left(  f,\overline{\Delta}\right)
\in\mathcal{F}(L,m)$, let (\ref{fmp1}) and (\ref{fmp2}) be $\mathcal{F}\left(
L,m\right)  $-partitions of $\partial\Sigma\ $with $c_{j}\left(  q_{j}%
,q_{j+1}\right)  =\left(  f,\alpha_{j}\left(  a_{j},a_{j+1}\right)  \right)
,j=1,\dots,m,$ for any $j$ let $a\in\alpha_{j}^{\circ},$ and finally let $I$
be a $d_{f}$-shortest path in $\overline{\Delta}.$ Then for any disk $\left(
a,U_{\delta}\right)  $ of $\Sigma$ with small enough radius $\delta<\frac{\pi
}{2}$ the following hold:

(i) $f:\overline{U_{\delta}}\rightarrow f(\overline{U_{\delta}})$ is a
homeomorphism and $f(\overline{U_{\delta}})$ is a convex lens. If $c_{j}$ is
straight, then $f(\overline{U_{\delta}})$ is half of a disk whose diameter is
contained in $c_{j}$ and $f(\overline{U_{\delta}})$ is on the left hand side
of $c_{j}.$

(ii) For any two points $a_{1}$ and $a_{2}$ in $\overline{U_{\delta}},$ the
$d_{f}$-shortest path from $a_{1}$ to $a_{2}$ exists, which is the unique
$f$-lift of the line segment $\overline{f(a_{1})f(a_{2})}$ in $\overline{U}.$

(iii) If $I\cap\overline{U_{\delta}}\neq\emptyset,$ then $I\cap\overline
{U_{\delta}}$ is a subarc of $I.$

(iv) If $a\in I^{\circ}\cap\alpha_{j}^{\circ},$ then $c_{j}$ is straight and
$I^{\circ}\cap\alpha_{j}^{\circ}$ is an open neighborhood of $a$ in
$\alpha_{j}.$ Thus $I^{\circ}\cap\alpha_{j}^{\circ}=\emptyset$ if $c_{j}$ is
strictly convex.
\end{lemma}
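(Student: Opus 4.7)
The plan is to address the four parts in order, leveraging the local-disk structure given by Corollary \ref{cov-2}(iv) together with the uniqueness of $f$-lifts. For (i), the $\mathcal{F}(L,m)$-partition hypothesis places $a$ in $\alpha_j^\circ$ as a regular point of $f$ with $(f,\partial\Delta)$ circular near $a$, so Corollary \ref{cov-2}(iv) applies verbatim: for every sufficiently small $\delta<\pi/2$, $f|_{\overline{U_\delta}}$ is a homeomorphism onto a convex lens $\overline{\mathfrak{D}(I,c_1,c_1')}$ with $c_1\subset c_j$ and $c_1'$ a sub-arc of $\partial D(f(a),\delta)$; when $c_j$ is straight the lens degenerates to a half-disk of $D(f(a),\delta)$ whose diameter lies in $c_j$, and the orientation convention on $\partial\Delta$ places the half-disk on the left of $c_j$.

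For (ii), I would use that $f(\overline{U_\delta})$ is spherically convex and has diameter $\le 2\delta<\pi$, so the geodesic $\overline{f(a_1)f(a_2)}$ is well defined and contained in $f(\overline{U_\delta})$; its unique $f$-lift $I^\ast$ in $\overline{U_\delta}$ therefore exists and satisfies $L(f,I^\ast)=d(f(a_1),f(a_2))$. Any competing path $\gamma$ in $\overline{\Delta}$ from $a_1$ to $a_2$ satisfies $L(f,\gamma)\ge d(f(a_1),f(a_2))$ by definition of spherical distance, and equality forces $(f,\gamma)$ to trace the geodesic on $S$; because the disk $\overline{U_\delta}$ is designed so that $f$ is a local homeomorphism at every point of it (there are no branch points of $f$ in $\overline{U_\delta}$ by the $\mathcal{F}(L,m)$-condition and regularity at $a$) and the geodesic never leaves the convex lens $f(\overline{U_\delta})$, the lift $\gamma$ must coincide with $I^\ast$ throughout. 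Part (iii) then follows by a geodesic-surgery argument: if $I\cap\overline{U_\delta}$ were disconnected, I would pick $p_1,p_2$ in distinct components with $p_1$ preceding $p_2$ along $I$, so that $I|_{[p_1,p_2]}$ leaves $\overline{U_\delta}$ on its interior; since sub-arcs of a $d_f$-minimizer are themselves $d_f$-minimizers, (ii) forces $I|_{[p_1,p_2]}=I^\ast\subset\overline{U_\delta}$, a contradiction.

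The main obstacle lies in (iv), whose crux is the geometric fact that a straight line segment passing through an interior boundary point of a smooth convex region, with the point in its \emph{open} interior, must be tangent to the boundary there, and that for a strictly convex boundary arc the tangent line touches the region only at the point of tangency. Granting this, the argument would run as follows. By (iii), $I\cap\overline{U_\delta}$ is a sub-arc $I'$ of $I$, and since $a\in I^\circ\cap\alpha_j^\circ$, shrinking $\delta$ we may arrange $a\in (I')^\circ$. Applying (ii) to the endpoints of $I'$, $(f,I')$ is a line segment on $S$ whose open interior contains $f(a)\in c_j^\circ\subset\partial(f(\overline{U_\delta}))$. The geometric fact above then rules out $c_j$ being strictly convex at $f(a)$, and as $c_j$ has constant curvature (it is an SCC arc), $c_j$ must be straight globally. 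With $c_j$ straight, (i) shows $f(\overline{U_\delta})$ is a half-disk with diameter along $c_j$, so the line segment $(f,I')$ can only lie along that diameter; pulling back via the homeomorphism $f|_{\overline{U_\delta}}$ gives $I'\subset\alpha_j$, which is an open neighborhood of $a$ in $\alpha_j$. The geometric fact itself I would justify via the support-line property of convex bodies on $S$: the tangent line at a smooth boundary point is a supporting line, so the region lies on one side; a strictly convex arc meets its tangent at only the point of tangency, so any line through $f(a)$ has one side entering the interior and the other side leaving it, whereas the tangent line lies outside the region except at $f(a)$.
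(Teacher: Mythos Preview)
Your proposal is correct and follows the same route the paper indicates: the paper does not give a separate proof but simply states that Corollary~\ref{cov-2}(iv) ``directly implies'' Lemma~\ref{short-disk}, and your argument is precisely a fleshed-out version of that implication. Your treatment of (ii)--(iv), via convexity of the lens image, uniqueness of lifts through regular points, and the tangent-line obstruction for strictly convex arcs, supplies the details the paper leaves implicit.
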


\begin{lemma}
\label{dfcon}For any $\Sigma=\left(  f,\overline{\Delta}\right)
\in\mathcal{F},$ $d_{f}(x,y)$ is a continuous function on $\overline{\Delta
}\times\overline{\Delta}.$ In other words, $d_{\Sigma}\left(  \cdot
,\cdot\right)  $ is a continuous function on $\Sigma\times\Sigma.$
\end{lemma}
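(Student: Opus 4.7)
My plan is to establish continuity via the triangle inequality together with a local bound on $d_f$ derived from the disk structure of surfaces in $\mathcal{F}$.

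First I would record that $d_f$ satisfies the triangle inequality: for any $x,y,z\in\overline{\Delta}$ and any curves $I_1$ from $x$ to $z$ and $I_2$ from $z$ to $y$, the concatenation $I_1+I_2$ is a curve from $x$ to $y$ with $L(f,I_1+I_2)=L(f,I_1)+L(f,I_2)$; taking infima gives $d_f(x,y)\leq d_f(x,z)+d_f(z,y)$. Consequently, for any sequences $x_n\to x_0$ and $y_n\to y_0$ in $\overline{\Delta}$,
\[
|d_f(x_n,y_n)-d_f(x_0,y_0)|\leq d_f(x_n,x_0)+d_f(y_n,y_0),
\]
so the entire problem reduces to proving the one-variable statement: if $y\to x_0$ in $\overline{\Delta}$, then $d_f(x_0,y)\to 0$.

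For this, the key is the local disk structure at an arbitrary point $x_0\in\overline{\Delta}$. By Lemma \ref{cov-1} and Definition \ref{nod}, for every sufficiently small $\delta\in(0,\pi/2)$ there exists a disk $(x_0,U_\delta)$ of $\Sigma$ of radius $\delta$, i.e. $\overline{U_\delta}$ is a (relatively) closed neighborhood of $x_0$ in $\overline{\Delta}$ with $f(\overline{U_\delta})\subset\overline{D(f(x_0),\delta)}$. By Corollary \ref{cov-2}(v), for each $y\in\overline{U_\delta}$ the line segment $\overline{f(x_0)f(y)}$ admits a unique $f$-lift $I(x_0,y)\subset\overline{U_\delta}$ from $x_0$ to $y$; in particular
\[
d_f(x_0,y)\leq L(f,I(x_0,y))=d(f(x_0),f(y))\leq\delta.
\]
Since $U_\delta$ is a neighborhood of $x_0$ in $\overline{\Delta}$, for every $\varepsilon>0$ it suffices to choose $\delta<\varepsilon$ to conclude that $d_f(x_0,y)<\varepsilon$ whenever $y$ lies in the neighborhood $U_\delta$ of $x_0$. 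This proves $d_f(x_0,y)\to 0$ as $y\to x_0$, and, combined with the triangle inequality above, yields the joint continuity of $d_f$ on $\overline{\Delta}\times\overline{\Delta}$.

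The only delicate point is handling boundary points $x_0\in\partial\Delta$, where the disk $(x_0,U_\delta)$ is only a relative neighborhood in $\overline{\Delta}$ and $f$ may be branched at $x_0$; but Lemma \ref{cov-1}(B) and Corollary \ref{cov-2}(v) are stated precisely to cover this case, guaranteeing both the existence of such a disk and the foliation by $f$-lifts of line segments from $x_0$. So no further obstacle arises, and the argument above applies uniformly to interior and boundary points.
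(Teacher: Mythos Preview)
Your proof is correct. The paper itself does not give a proof of this lemma at all: it simply states ``The proof is simple and standard and left to the reader.'' Your argument via the triangle inequality together with the disk structure from Lemma~\ref{cov-1}, Definition~\ref{nod}, and the radial foliation of Corollary~\ref{cov-2}(v) is exactly the kind of standard argument the author has in mind, and it handles both interior and boundary points uniformly as you note.
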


The proof is simple and standard and left to the reader.

\begin{lemma}
\label{d>d}Let $\Sigma=\left(  f,\overline{\Delta}\right)  \in\mathcal{F}$ and
let $\left(  x,U\right)  $ be a disk of $\Sigma$ with radius $\delta$ (see
Definition \ref{nod}). Then for any $y\in\overline{\Delta}\backslash U,$
$d_{f}(x,y)\geq\delta,$ and $d_{f}\left(  x,y\right)  =\delta$ for every
$y\in\overline{\left(  \partial U\right)  \backslash\partial\Delta}.$
\end{lemma}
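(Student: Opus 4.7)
The plan is to combine the explicit local model of a disk $(x,U)$ supplied by Lemma \ref{cov-1} and Remark \ref{nod-1} with the ``radial'' foliation from Corollary \ref{cov-2}(v). The structural fact I will extract and use throughout is
\[
\overline{U}\backslash U=\overline{(\partial U)\backslash\partial\Delta}\quad\text{and}\quad f\bigl(\overline{U}\backslash U\bigr)\subset\partial D(f(x),\delta),
\]
so that every point of $\overline{U}\backslash U$ is mapped by $f$ to spherical distance exactly $\delta$ from $f(x)$. The first identity reflects the convention in Definition \ref{nod} and Remark \ref{nod-1} that the relatively open disk $U$ absorbs the open old-boundary arcs $(\alpha_{1}+\alpha_{2})^{\circ}\subset\partial\Delta$ together with their shared vertex $x$; the second reflects the fact that the new boundary $\alpha_{3}$, together with its endpoints $p_{1},p_{2}$, is mapped into the spherical circle $\partial D(f(x),\delta)$.

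Granted this, the equality $d_{f}(x,y)=\delta$ for $y\in\overline{(\partial U)\backslash\partial\Delta}$ will follow directly from Corollary \ref{cov-2}(v): that corollary supplies a path $I(x,y)\subset\overline{U}$ which is the unique $f$-lift of the geodesic segment $\overline{f(x)f(y)}$, and by the structural fact this lift has spherical length exactly $d(f(x),f(y))=\delta$. Hence $d_{f}(x,y)\leq\delta$, and combined with the lower bound below this forces equality.

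For the lower bound $d_{f}(x,y)\geq\delta$ with $y\in\overline{\Delta}\backslash U$, I would run a first-exit argument. Given any continuous path $\beta:[0,1]\rightarrow\overline{\Delta}$ from $x$ to $y$, set $t_{0}=\inf\{t\in[0,1]:\beta(t)\notin U\}$. Since $\overline{\Delta}\backslash U$ is closed in $\overline{\Delta}$ this infimum is attained, and since $x\in U$ we have $t_{0}>0$, so $\beta(t_{0})\in\overline{U}\backslash U$. The structural fact then yields $d(f(x),f(\beta(t_{0})))=\delta$, whence
\[
L(f,\beta)\geq L\bigl(f,\beta|_{[0,t_{0}]}\bigr)\geq d\bigl(f(x),f(\beta(t_{0}))\bigr)=\delta,
\]
and taking the infimum over $\beta$ closes the proof.

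The only delicate step is the structural fact itself in Case (B) of Lemma \ref{cov-1} (when $x\in\partial\Delta$); it must be read off from the explicit sector-plus-lunes description in (B3.1) and (B3.2), using that each chord segment $\overline{q_{1}q}$, $\overline{qq_{2}}$ has spherical length $\delta$ and that the new-boundary arc $\alpha_{3}$ lies in $\partial D(f(x),\delta)$ by (B1). Case (A), where $f|_{\overline{U}}$ is a BCCM onto the closed disk $\overline{D(f(x),\delta)}$, is immediate. Once this bookkeeping is in place, the two bounds above are purely formal.
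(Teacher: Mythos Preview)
Your proof is correct and follows the same route as the paper, which simply writes ``This is trivial by Definition \ref{nod} and Remark \ref{nod-1}.'' You have essentially made explicit the two ingredients the paper leaves implicit: the identification $\overline{U}\backslash U=\overline{(\partial U)\backslash\partial\Delta}$ with $f(\overline{U}\backslash U)\subset\partial D(f(x),\delta)$, and the first-exit argument for the lower bound together with the radial lift from Corollary \ref{cov-2}(v) for the upper bound.
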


\begin{proof}
This is trivial by Definition \ref{nod} and Remark \ref{nod-1}.
\end{proof}

\begin{lemma}
\label{dfshor}Let $\Sigma=\left(  f,\overline{\Delta}\right)  \in\mathcal{F}$,
let $x\in\overline{\Delta}$, and let $\left(  x,U_{x}\right)  $, $\left(
x,U_{x}^{\prime}\right)  $ and $\left(  x,U_{x}^{\prime\prime}\right)  $ be
three disks in $\Sigma$ with radius $\delta/4,\delta/2,$ and $\delta$ (see
Definition \ref{nod}), respectively. Then for any two distinct points $a$ and
$b$ in $U_{x},$ the $d_{f}$-shortest path $I(a,b)$ exists; and more precisely,
putting $A=f(a),B=f(b),X=f(x),$ one of the following holds.

(i) If $a=x,$ then the $f$-lift $I\left(  x,b\right)  $ of $\overline{XB}$ is
the unique shortest path from $x$ to $b$.

(ii) If $A\neq B$ and $\overline{AB}$ has an $f$-lift $I=I(a,b)$ from $a$ to
$b,$ then $I$ is the unique $d_{f}$-shortest path.

(iii) If $A=B,$ or $A\neq B$ but $\overline{AB}$ has no $f$-lift from $a$ to
$b,$ then the $f$-lift $I=I(a,b)$ of $\overline{AXB},$ from $a$ to $x,$ and
then to $b,$ is the unique $d_{f}$-shortest path.
\end{lemma}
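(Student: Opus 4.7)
The plan is to reduce everything to the local branched-cover structure on $\overline{U_x''}$ provided by Lemma \ref{cov-1} and Corollary \ref{cov-2}(v). I will use three ingredients repeatedly: (a) by Corollary \ref{cov-2}(v), each $y\in\overline{U_x''}$ is joined to $x$ by a unique $f$-lift $I(x,y)$ of $\overline{X f(y)}$, whence $d_f(x,y)=d(X,f(y))$; (b) by Lemma \ref{d>d}, $d_f(x,y)\geq\delta$ whenever $y\notin U_x''$; and (c) for every path $\gamma$, $L(f,\gamma)\geq L(f(\gamma))\geq d(\text{endpoints of }f(\gamma))$.

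First I would localize the search: since $a,b\in U_x$, (a) gives $d_f(a,x),d_f(x,b)\leq\delta/4$, so $d_f(a,b)\leq\delta/2$; and any path that exits $\overline{U_x''}$ at a point $z$ incurs $L(f,\cdot)\geq d_f(a,z)\geq d_f(x,z)-d_f(x,a)\geq 3\delta/4>\delta/2$ by (b). Hence every shortest-path candidate lies in $\overline{U_x''}$, where $f$ is a BCCM with $x$ as its only possible branch point. Case (i) is then immediate: $I(x,b)$ has $f$-length $d(X,B)=d_f(x,b)$ by (a), and any shortest competitor $\gamma$ projects by (c) to a length-$d(X,B)$ path from $X$ to $B$, i.e.\ the unique geodesic $\overline{XB}$ (unique since $d(X,B)\leq\delta/4<\pi$), and then $\gamma=I(x,b)$ by the uniqueness clause of Corollary \ref{cov-2}(v). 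In Case (ii) the given lift $I(a,b)$ has $f$-length $d(A,B)$, matching (c); any shortest competitor projects to the unique geodesic $\overline{AB}$ (since $d(A,B)\leq\delta/2<\pi$), and the $f$-lift from $a$ ending at $b$ is uniquely determined---away from $x$ by local homeomorphy of $f$ on $\overline{U_x''}\setminus\{x\}$, and if $\overline{AB}$ passes through $X$ then any lift must hit $x$ and its continuation is forced by Corollary \ref{cov-2}(v).

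The main obstacle is Case (iii): I must show the concatenation $I(a,x)+I(x,b)$, of $f$-length $d(A,X)+d(X,B)$, is \emph{strictly} shortest. I would split a competitor $\gamma$ into two sub-cases. If $\gamma$ passes through $x$, splitting $\gamma$ at the first visit and applying Case (i) to each half yields $L(f,\gamma)\geq d(A,X)+d(X,B)$, with equality forcing $\gamma$ to coincide with the concatenation. If $\gamma$ avoids $x$, then by Lemma \ref{cov-1}(A) the restriction $f\colon\overline{U_x''}\setminus\{x\}\to\overline{D(X,\delta)}\setminus\{X\}$ is a $v_f(x)$-sheeted CCM, so $\gamma$ is the unique $f$-lift of $f(\gamma)$ starting at $a$. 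The hypothesis of Case (iii)---either $A=B$, or $A\neq B$ with no lift of $\overline{AB}$ from $a$ to $b$---forces $f(\gamma)$ into a non-trivial homotopy class in the punctured disk $\overline{D(X,\delta)}\setminus\{X\}$ (winding number $n\neq 0$ around $X$ relative to the reference path through $X$). The key estimate I then need is the cone-metric inequality $L(f(\gamma))>d(A,X)+d(X,B)$ in any such non-trivial class: splitting $f(\gamma)$ at its closest approach $P$ to $X$, with $r=d(P,X)>0$, the approach and departure portions have lengths $\geq d(A,X)-r$ and $\geq d(X,B)-r$, while the winding portion at distance $\geq r$ from $X$ contributes $\geq 2\pi r\lvert n\rvert$, summing to $\geq d(A,X)+d(X,B)+2r(\pi\lvert n\rvert-1)>d(A,X)+d(X,B)$ for $r>0$ and $\lvert n\rvert\geq 1$. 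Combining the two sub-cases identifies $I(a,x)+I(x,b)$ as the unique shortest path and completes the proof.
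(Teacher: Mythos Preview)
Your overall strategy is sound, and it is genuinely different from the paper's: you attempt a direct length estimate via the winding number in the punctured disk, while the paper argues by compactness (take a minimizing sequence $I_n$, show it stays in $U_x'$, use Arzel\`a--Ascoli on $\Gamma_n=(f,I_n)$ to extract a limit curve $\Gamma_0$, and then invoke Lemma~\ref{cov-3} to lift the limit; if $X\notin\Gamma_0$ one straightens $\Gamma_0$ locally to conclude $\Gamma_0=\overline{AB}$, contradicting the hypothesis of~(iii), so $X\in\Gamma_0$ and the claim follows).

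However, your ``cone-metric inequality'' in Case~(iii) is not correctly justified. After splitting $f(\gamma)$ at its point $P$ of closest approach to $X$, you have exactly \emph{two} subarcs, from $A$ to $P$ and from $P$ to $B$; there is no separate ``winding portion'' to add on. The bounds $L(\text{approach})\ge d(A,X)-r$ and $L(\text{departure})\ge d(X,B)-r$ come from the triangle inequality and already use the full length of those subarcs; you cannot then add a further $2\pi r|n|$ on top. In geodesic polar coordinates $(\rho,\theta)$ about $X$ one has $|d\gamma|^2=d\rho^2+\sin^2\rho\,d\theta^2$, so the radial and angular contributions satisfy a Pythagorean relation, not an additive one, and the estimate $L\ge d(A,X)+d(X,B)+2r(\pi|n|-1)$ does not follow from your decomposition.

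The conclusion $L(f(\gamma))>d(A,X)+d(X,B)$ \emph{is} correct for any path avoiding $X$ in a nontrivial homotopy class, but it needs a different argument: pass to the universal cover of the punctured disk (a cone of infinite angle with the induced spherical metric) and observe that whenever the angular displacement between the lifts of $A$ and $B$ exceeds $\pi$, every geodesic between them in the cone must pass through the apex, so the infimum of lengths in that class is exactly $d(A,X)+d(X,B)$ and is not attained. Alternatively, you can simply adopt the paper's compactness route, which sidesteps the explicit estimate entirely.
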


\begin{proof}
(i) follows from Corollary \ref{cov-2} (v). (ii) is trivial. We only prove (iii).

By definition there exists a sequence of paths $I_{n}\subset\overline{\Delta}$
from $a$ to $b$ such that for\footnote{If $I_{n}$ is the path $I_{n}%
:[0,1]\rightarrow\overline{\Delta},$ the length $L\left(  f,I_{n}\right)  $
should be understood to be $L(f\circ I_{n},[0,1]).$} $s_{n}=L\left(
f,I_{n}\right)  $
\[
\lim_{n\rightarrow\infty}s_{n}=d_{f}\left(  a,b\right)  .
\]
It is clear that $d_{f}\left(  a,b\right)  \leq L(\overline{AXB})<\delta/2$ by
Corollary \ref{cov-2} (v), since $d\left(  X,A\right)  <\delta/4,d\left(
X,B\right)  <\delta/4$. Thus, for sufficiently large $n,$ we see that
\[
I_{n}\subset U_{x}^{\prime},
\]
for otherwise we have $d_{f}\left(  a,b\right)  \geq\delta/2.$

We may parametrize $I_{n}\ $by length with $L\left(  f,I_{n}|_{[0,s]}\right)
=s,s\in\lbrack0,s_{n}]$ and
\[
s_{n}=L(f,I_{n})\rightarrow d_{f}\left(  a,b\right)  >0.
\]
By Aazela-Ascoli theorem, we may assume $\Gamma_{n}=(f,I_{n}),$ as a mapping
from $[0,s_{n}]$ to $S,$ has a subsequence uniformly converging to a path
$\Gamma_{0}:[0,s_{0}]\rightarrow S$ from $A$ to $B$ and we assume the
subsequence is $\Gamma_{n}$ itself. Then we have
\begin{equation}
L(\Gamma_{0})\leq d_{f}\left(  a,b\right)  \leq L(\overline{AXB}),
\label{L<=df}%
\end{equation}
and $\Gamma_{0}\subset\overline{D\left(  x,\delta/2\right)  }.$

If $X\in\Gamma_{0},$ then we have%
\[
L(\Gamma_{0})=d_{f}\left(  a,b\right)  =L(\overline{AXB})
\]
and by (i) the $f$-lift $I(a,b)$ of $\overline{AXB}$ is a $d_{f}$-shortest
path from $a$ to $b$. Let $I^{\prime}\left(  a,b\right)  $ be another $d_{f}%
$-shortest path. If $x\in I^{\prime}\left(  a,b\right)  ,$ then $x$ gives a
partition $I^{\prime}\left(  a,b\right)  =I^{\prime}\left(  a,x\right)
+I^{\prime}\left(  x,b\right)  ,$ $I^{\prime}\left(  a,x\right)  $ and
$I^{\prime}\left(  x,b\right)  $ have to be the $d_{f}$-shortest paths from
$a$ to $x,$ and $x$ to $b,$ respectively by (i), and thus $I^{\prime}\left(
a,x\right)  +I^{\prime}\left(  x,b\right)  =I\left(  a,b\right)  $ by(i). If
$x\notin I^{\prime}\left(  a,b\right)  ,$ we can show, as the following
discussion for the case $X\notin\Gamma_{0},$ that that $I^{\prime}\left(
a,b\right)  $ is the unique $f$-lift of $\overline{AB},$ which implies
$d_{f}\left(  a,b\right)  =L(I^{\prime}\left(  a,b\right)  )=L\left(
\overline{AB}\right)  =L(\overline{AX})+L(\overline{XB})=d_{f}\left(
a,b\right)  .$ This is a contradiction, since $L\left(  \overline{AB}\right)
<L(\overline{AX})+L(\overline{XB})$ when $X\notin\overline{AB}.$

Assume $X\notin\Gamma_{0}.$ Then there exists a disk $\left(  x,V_{x}\right)
$ of $\Sigma$ in $\left(  x,U_{x}\right)  $ such that $I_{n}\subset
U_{x}\backslash\overline{V_{x}}$, $f$ is locally homeomorphic on
$\overline{U_{x}}\backslash V_{x}$, and $\Gamma_{0}\subset f(\overline{U_{x}%
}\backslash V_{x}).$ Then $\Gamma_{0}$ has an $f$-lift $I_{0}$ such that
$I_{n}$ uniformly converges to $I_{0},$ by Lemma \ref{cov-3}. Then $I_{0}$ is
a $d_{f}$-shortest path from $a$ to $b.$ We will show that $\Gamma
_{0}=\overline{AB}.$

It is clear that $I_{0}$ is simple, for otherwise there is another path
$I_{0}^{\prime}=I_{0}^{\prime}\left(  a,b\right)  $ which is obtained from
$I_{0}$ by omitting a loop of $I_{0}$ so that $L\left(  f,I_{0}^{\prime
}\right)  <d_{f}\left(  a,b\right)  $ contradicting $I_{0}$ being shortest. It
is also clear that any subarc of $I_{0}$ is a $d_{f}$-shortest path.

Let $y\in I_{0}^{\circ}.$ Then by the assumption we have $x\notin I_{0}$ and
by Corollary \ref{cov-2} (i) and (iv), $y$ has a neighborhood $I_{y}%
=I_{y}\left(  y^{\prime},y^{\prime\prime}\right)  $ in $I_{0}$ so that $I_{y}$
is contained in a disk $\left(  y,U_{y}\right)  $ of $\left(  x,U_{x}^{\prime
}\right)  \ $with $x\notin U_{y}.$ Then $f(U_{y})$ is convex and
$f:U_{y}\rightarrow f(U_{y})$ is homeomorphic. Thus both $f\left(
I_{y}\right)  $ and $\overline{f(y^{\prime})f(y^{\prime\prime})}\ $can be lift
into $\overline{U_{y}}\subset U_{x}^{\prime}$ from $y^{\prime}$ to
$y^{\prime\prime}.$ Then $I_{y}$ has to be the lift of $\overline{f(y^{\prime
})f(y^{\prime\prime})}.$ Thus $\left(  f,I_{0}\right)  $ is straight
everywhere and we have $\left(  f,I_{0}\right)  =\Gamma_{0}=\overline{AB}.$
Then in this case, $I_{0}$ is the unique $d_{f}$-shortest from $a$ to $b.$
\end{proof}

\begin{lemma}
\label{ex-short}Let $\Sigma=\left(  f,\overline{U}\right)  \in\mathcal{F}$ and
let $a_{1}$ and $a_{2}$ be two distinct points in $\overline{\Delta}.$ Then
the $d_{f}$-shortest path $I$ from $a_{1}$ to $a_{2}$ exists, and for any such
path $I,$ $\left(  f,I\right)  $ is a polygonal path on $S$ from $f(a_{1}) $
to $f(a_{2}).$
\end{lemma}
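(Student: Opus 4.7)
The plan is to prove existence via an Arzelà--Ascoli argument and then deduce the polygonal structure from the local classification of shortest paths in Lemma \ref{dfshor}.

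For existence, I would take a minimizing sequence of rectifiable paths $I_n$ from $a_1$ to $a_2$ in $\overline{\Delta}$ with $L_n := L(f, I_n) \to d_f(a_1, a_2)$, and reparametrize each by its $f$-arclength rescaled to $[0,1]$, so that $f \circ I_n : [0,1] \to S$ is $L_n$-Lipschitz in the spherical metric. This family is uniformly Lipschitz and hence equicontinuous on $[0,1]$. To transfer equicontinuity to the $I_n$ themselves, I would invoke the local normal forms from Lemma \ref{cov-1}: near a regular point, $f$ is a local homeomorphism and the spherical and Euclidean metrics are comparable; near a branch point of order $k$, $f$ locally looks like $\zeta \mapsto \zeta^k$ (or such a power map sewn with lunes, as in Lemma \ref{cov-1}(B3)), so a path of $f$-length $s$ has Euclidean diameter at most $C s^{1/k}$. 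Only finitely many branch points appear, yielding a uniform modulus of continuity for the $I_n$. Arzelà--Ascoli then gives a uniformly convergent subsequence $I_{n_j} \to I_0$ in $\overline{\Delta}$ with $I_0(0) = a_1$ and $I_0(1) = a_2$. Lower semi-continuity of $f$-length under uniform convergence on $S$ yields $L(f, I_0) \leq \liminf L_{n_j} = d_f(a_1, a_2)$, so equality holds and $I_0$ is a shortest path.

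For the polygonal structure, let $I$ be any $d_f$-shortest path from $a_1$ to $a_2$ and fix an interior point $p = I(t_0)$. Choose a disk $(p, U_\delta)$ of $\Sigma$ with $\delta$ small enough that Lemma \ref{dfshor} applies to the nested disks $U_{\delta/4} \subset U_{\delta/2} \subset U_\delta$ around $p$. By continuity of $I$ there is an open subinterval $J \ni t_0$ with $I(J) \subset U_{\delta/4}$. Any subarc of $I$ is itself $d_f$-shortest between its endpoints (otherwise $I$ could be shortened), so Lemma \ref{dfshor} forces $(f, I|_{[t_1,t_2]})$, for any $t_1 < t_2$ in $J$, to be either the $f$-lift of the straight segment $\overline{f(I(t_1))\, f(I(t_2))}$ or the $f$-lift of the two-segment path $\overline{f(I(t_1))\, f(p)\, f(I(t_2))}$ through $f(p)$. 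In either case $(f, I|_J)$ is piecewise straight with at most one bend at $f(p)$. Compactness of $[0,1]$ lets finitely many such intervals cover it, so $(f, I)$ is a polygonal path; the endpoints $a_1, a_2$ are handled the same way by shrinking $J$ to a one-sided neighborhood.

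The principal difficulty is the equicontinuity step near boundary branch points, where Lemma \ref{cov-1}(B) distinguishes the (B3.1) and (B3.2) local models (a pure power map, versus a power map sewn with one or two lunes along straight radii). There I would need to verify that the Hölder estimate $|I_n(s) - I_n(s')| \leq C|s - s'|^{1/k}$ persists across the sewing, which reduces to a direct computation in local coordinates on each sewn piece followed by a triangle-inequality patching argument. Once that is in hand, the remainder is routine bookkeeping with Lemmas \ref{dfshor} and \ref{cov-3}.
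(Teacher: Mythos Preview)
Your approach is correct but takes a genuinely different route from the paper's. For existence you run a global Arzel\`a--Ascoli argument, with equicontinuity of the $I_n$ coming from a H\"older-$1/k$ estimate near each branch point (where the pullback metric degenerates like $|z|^{k-1}|dz|$, so a path of $f$-length $s$ has Euclidean diameter $\lesssim s^{1/k}$ once $s$ is small). The paper avoids this analysis entirely: it fixes a finite cover of $\overline{\Delta}$ by nested disks $(p_s,U_s)\subset(p_s,U_s')\subset(p_s,U_s'')$ of radii $\delta/4,\delta/2,\delta$, and runs a chain argument along the minimizing sequence $J_n$. Starting from $a_1\in U_{s_1}$, it takes the \emph{last} point $a_{n2}$ where $J_n$ meets $\overline{U\cap\partial U_{s_1}}$, passes to a subsequence with $a_{n2}\to a_{02}$, and invokes Lemma~\ref{dfshor} to produce the local shortest segment $I_{01}$ from $a_1$ to $a_{02}$; the tail of $J_n$ beyond $a_{n2}$ then lies in $\bigcup_{s\neq s_1}U_s$, so the process terminates after at most $s_0$ steps and outputs the minimizer directly as a finite concatenation $I_{01}+\cdots+I_{0s^*}$ of local pieces, each polygonal by Lemma~\ref{dfshor}. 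The paper's route is more elementary---no moduli of continuity, no lower semicontinuity of length---and gives existence and the polygonal structure in one pass; your route is the standard metric-geometry compactness argument and would transfer more readily to other degenerate conformal metrics, at the price of the H\"older verification across the sewn lunes in the (B3.2) boundary model, which you rightly isolate as the one nontrivial point.
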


\begin{proof}
It is clear that, for some positive integer $\delta_{0},$ There are $3s_{0}$
disks $\left(  p_{s},U_{s}\right)  ,$ $\left(  p_{s},U_{s}^{\prime}\right)  ,$
$\left(  p_{s},U_{s}^{\prime\prime}\right)  $ of $\Sigma,$ with radius
$\delta/4,\delta/2,\delta$ for each $s=1,\dots,s_{0},$ such that
$\mathcal{O}=\{U_{s}\}_{s=1}^{s_{0}}$ is an open covering of $\overline{U}.$
Note that by Lemma \ref{cov-1} and Definition \ref{nod}, for each
$s=1,\dots,s_{0},$ the four relations $U_{s}\cap\partial U\neq\emptyset
,U_{s}^{\prime}\cap\partial U\neq\emptyset,U_{s}^{\prime\prime}\cap\partial
U\neq\emptyset$ and $p_{s}\in\partial U$ are equivalent.

Write $C_{s}=\overline{U\cap\partial U_{s}},$ the closure of the part of
$\partial U_{s}$ located in $U,$ for $s=1,2,\dots,s_{0}.$ We assume that no
$U_{s}^{\prime\prime}$ contains $\overline{U},$ and then $C_{s}\neq\emptyset$
and, as $\alpha_{3}$ in Lemma \ref{cov-1} (B) (B1), $C_{s}$ is connected for
all $s=1,\dots,s_{0}.$

By definition, there exists a sequence of paths $J_{n}$ in $\overline{U}$ from
$a_{1}$ to $a_{2}$ such that
\begin{equation}
\lim_{n\rightarrow\infty}L(f,J_{n})=d_{f}(a_{1},a_{2}).
\end{equation}
It is clear that $a_{1}\in U_{s_{1}}\in\mathcal{O}\ $for some $s_{1}\leq
s_{0}.$ If $C_{s_{1}}\cap J_{n}=\emptyset$ for some $J_{n},$ then
$\{a_{1},a_{2}\}\subset U_{s_{1}}$ and the $d_{f}$-shortest path $I,$ such
that $\left(  f,I\right)  $ is polygonal, exists by Lemma \ref{dfshor}. Thus
we may assume that for each $n,C_{s_{1}}\cap J_{n}\neq\emptyset,$ and let
$a_{n2}$ be the latest point of $J_{n}$ contained in $J_{n}\cap C_{s_{1}}%
.\ $By taking subsequence we may assume that $a_{n2}\rightarrow a_{02}\in
C_{s_{1}}$, and then for the smaller arc $C_{s_{1}}^{n}$ of $C_{s_{1}}$
between $a_{02}$ and $a_{n2}$, the length $L(f,C_{s_{1}}^{n})$ tends to $0,$
since we assumed $f$ is holomorphic on $\overline{U}\ $(note that $C_{s_{1}}$
may be a circle). By the previous lemma, the $d_{f}$-shortest path
$I_{01}=I_{01}\left(  a_{01},a_{02}\right)  =I_{01}\left(  a_{1}%
,a_{02}\right)  ,$ which means by convention that $I_{01}$ is an arc from
$a_{01}=a_{1}$ to $a_{02},$ exists, and $\left(  f,I_{01}\right)  $ is
polygonal. Let $J_{n,1}=C_{s_{1}}^{n}+J_{n}\left(  a_{n2},a_{2}\right)  $ and%
\[
J_{n}^{1}=I_{01}\left(  a_{01},a_{02}\right)  +J_{n,1}.
\]
Then we still have%
\[
L(f,J_{n}^{1})\rightarrow d_{f}(a_{1},a_{2}).
\]

Let $U_{s_{2}}$ be an element of $\mathcal{O}$ such that $a_{02}\in U_{s_{2}%
}.$ Then $s_{2}\neq s_{1}$ and when $n$ is large enough $C_{s_{1}}^{n}\subset
U_{s_{2}}$. Thus we have by definition of $a_{n2},$%
\[
J_{n,1}\subset\cup_{s\in\{1,2,\dots,s_{0}\}\backslash\left\{  s_{1}\right\}
}U_{s}.
\]

Applying the same argument to $J_{n,1},$ we can show that there exist a point
$a_{03}\in C_{s_{2}},$ a $d_{f}$-shortest path $I_{02}\left(  a_{02}%
,a_{03}\right)  ,$ a path $J_{n,2}\ $from $a_{03}$ to $a_{2}$ such that%
\[
J_{n,2}\subset\cup_{s\in\{1,2,\dots,s_{0}\}\backslash\left\{  s_{1}%
,s_{2}\right\}  }U_{s},
\]
and for the path $J_{n}^{2}=I_{01}\left(  a_{01},a_{02}\right)  +I_{02}\left(
a_{02},a_{03}\right)  +J_{n,2}$ from $a_{1}=a_{01}$ to $a_{2}$
\[
L(f,J_{n}^{2})\rightarrow d_{f}(a_{1},a_{2}).
\]

Repeating the above method a finite number of times, we can finally prove that
there exists a path
\begin{equation}
I=I_{01}+I_{02}+\cdots+I_{0s^{\ast}} \label{poly}%
\end{equation}
with $s^{\ast}\leq s_{0}$ such that $L(f,I)=d_{f}(a_{1},a_{2}).$ The existence
of $I$ is proved.

For any $d_{f}$-shortest path $I$ from $a_{1}$ to $a_{2},$ we may apply the
above argument to $I$ to show that $(f,I)$ is a polygonal. This completes the proof.
\end{proof}

\begin{lemma}
\label{shortest}\label{dfd}Let $m\geq3,$ $\Sigma=\left(  f,\overline{\Delta
}\right)  \in\mathcal{F}_{r}(L,m)$ and let $b_{1}$ and $b$ be two distinct
points on $\partial\Delta$ with%
\begin{equation}
d_{f}(b_{1},b)<\pi. \label{k13}%
\end{equation}
Let
\begin{equation}
\partial\Delta=\alpha_{1}\left(  a_{1},a_{2}\right)  +\alpha_{2}\left(
a_{2},a_{3}\right)  +\cdots+\alpha_{m}\left(  a_{m},a_{1}\right)  \label{ma11}%
\end{equation}
be an $\mathcal{F}(L,m)$-partition of $\partial\Sigma$ and
\begin{equation}
\partial\Sigma=c_{1}\left(  q_{1},q_{2}\right)  +c_{2}\left(  q_{2}%
,q_{3}\right)  +\cdots+c_{m}\left(  q_{m},q_{1}\right)  \label{ma12}%
\end{equation}
be the corresponding $\mathcal{F}(L,m)$-partition. Then for any $d_{f}%
$-\emph{shortest} path $I=I\left(  b_{1},b\right)  $ from $b_{1}$ to $b$, say,
$L(f,I)=d_{f}(b_{1},b),$ the following hold:

(i) $I$ is simple.

(ii) For each component $J$ of $I\cap\Delta\backslash f^{-1}(E_{q}),$ $\left(
f,J\right)  $ is a simple straight arc with $L(f,J)<\pi.$

(iii) For each $j\in\{1,\dots,m\},$ if $c_{j}$ is strictly convex, then
$I\cap\alpha_{j}^{\circ}\subset\{b_{1},b\},$ and thus $I^{\circ}\cap\alpha
_{j}^{\circ}=\emptyset$.

(iv) For each $j\in\{1,\dots,m\},$ if $c_{j}$ is straight and $I^{\circ}%
\cap\alpha_{j}^{\circ}\neq\emptyset$, then one of the following holds:

(iv1) $\alpha_{j}\subset I\ $or $I\subset\alpha_{j};$

(iv2) $\alpha_{j}\cap I$ is an subarc of $\alpha_{j}$ with $\alpha_{j}\cap
I=\alpha_{j}\left(  a_{j},b^{\prime}\right)  ,$ or $\alpha_{j}\cap
I=\alpha_{j}\left(  b^{\prime},a_{j+1}\right)  ,$ where $b^{\prime}\in
\{b_{1},b\};$

(iv3) $\alpha_{j}\cap I\ $is consisted of two subarcs $I^{\prime}$ and
$I^{\prime\prime}$ of $\alpha_{j},$ with $I^{\prime}\cap I^{\prime\prime
}=\emptyset,I^{\prime}=\alpha_{j}\left(  a_{j},b_{1}^{\prime}\right)  $,
$I^{\prime\prime}=\alpha_{j}\left(  b_{2}^{\prime},a_{j+1}\right)  ,$ and
$\{b_{1}^{\prime},b_{2}^{\prime}\}=\{b_{1},b\}.$ This occurs only if
$\{b_{1},b\}\subset\alpha_{j}$ and the subarc of $c_{j}$ with endpoints
$f(b_{1})$ and $f(b)$ has length $>\pi;$\newline Thus each compact component
of $I^{\circ}\cap\partial\Delta$ is a point of $\left\{  a_{j}\right\}
_{j=1}^{m},$ or an arc of $\partial\Delta$ with two endpoints in $\left\{
a_{j}\right\}  _{j=1}^{m}.$

(v) $I$ has a partition $I=I_{1}+I_{2}+\cdots+I_{2k+1},k\leq m+1,$ such that
for each $j=1,2,\dots,k,$ $I_{2j}^{\circ}\neq\emptyset$ and $I_{2j}^{\circ
}\subset\Delta;$ and for each $j=1,2,\dots,k+1,$ $I_{2j-1}\subset
\partial\Delta$ and either $I_{2j-1}$ is a point or $\left(  f,I_{2j-1}%
\right)  $ is an arc of $\partial\Sigma$ which is a polygonal path on $S$ (if
$I\cap\Delta=\emptyset,$ $k=0$ and $I=I_{1}$).

(vi) If $k\geq1,$ and if for some $j=1,2,\dots,k,$ $I_{2j}^{\circ}\cap
f^{-1}(E_{q})=\emptyset,$ then $\left(  f,I_{2j}\right)  $ is a simple line
segment with $L\left(  f,I_{2j}\right)  <\pi\ $and $f$ restricted to a
neighborhood of $I_{2j}^{\circ}$ is a homeomorphism.

(vii) If $k\geq1$ and $I_{1}$ is not a point, then the joint point of $I_{1}$
and $I_{2}$ is contained in $\{a_{j}\}_{j=1}^{m},$ and if $I_{2k+1}$ is not a
point, then the joint point of $I_{2k}$ and $I_{2k+1}$ is also contained in
$\{a_{j}\}_{j=1}^{m}.$

(viii) If $k\geq2,$ then the endpoints of all $I_{j}$ for $j=3,5,\dots,2k-1,$
are contained in $\{a_{j}\}_{j=1}^{m}.$

(ix) Assume $k\geq1.$ Then $I_{2}$ divides $\Delta$ into two Jordan domains
$\Delta_{1}\ $and $\Delta_{2}.$ Assume, in addition to $k\geq1,$
\begin{equation}
\Delta\cap I_{2}\cap f^{-1}(E_{q})=\emptyset, \label{ma13}%
\end{equation}
and one of the following (a)--(d) holds:

(a) $\left(  \partial\Delta_{i}\right)  \cap\partial\Delta$ contains at least
two points of $\{a_{j}\}_{j=1}^{m}$ for each $i=1,2;$ or

(b) $k>1$, and the two endpoints of $I_{2}\ $are not simultaneously contained
in any term $\alpha_{j}$ in (\ref{ma11}), for $j=1,\dots,m;$ or

(c) $k=1$, the two endpoints of $I_{2}\ $are not simultaneously contained in
any term $\alpha_{j}$ in (\ref{ma11}), for $j=1,\dots,m,$ and either one of
$I_{1}$ and $I_{3}$ is not a point, or one of $I_{1}$ and $I_{3}$ is a point
contained in $\{a_{j}\}_{j=1}^{m}.$ \newline Then both $\left(  f,\overline
{\Delta_{1}}\right)  $ and $\left(  f,\overline{\Delta_{2}}\right)  \ $are
contained in $\mathcal{F}_{r}(L,m).$

(x) If $b_{1}\in\alpha_{1}^{\circ}\mathrm{\ }$and
\begin{equation}
L(f,I_{1})<d\left(  \left\{  q_{1},q_{2}\right\}  ,f(b_{1})\right)  ,
\label{ina1<d}%
\end{equation}
then $I_{1}$ is a point, and if in addition $k\geq1,$ (\ref{ma13}) holds,
\begin{equation}
b_{2}\not \in \left(  \alpha_{m}+\alpha_{1}+\alpha_{2}\right)  ^{\circ},
\label{noinm12}%
\end{equation}
and
\begin{equation}
L(f,I_{2})<d\left(  \left\{  q_{1},q_{2}\right\}  ,f(b_{1})\right)  ,
\label{<d}%
\end{equation}
then the conclusion of (ix) holds: $I_{2}$ cuts $\Sigma$ into two surfaces
contained in $\mathcal{F}_{r}(L,m).$
\end{lemma}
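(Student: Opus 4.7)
The plan is to build up conclusions (i)--(x) in order, treating the shortest path $I$ as a purely local object almost everywhere and then extracting global combinatorial consequences from the partition data of $\partial\Sigma$. Existence of $I$ and the fact that $(f,I)$ is polygonal follow from Lemma \ref{ex-short}. For (i), any self-intersection would let us excise a sub-loop of $I$ and produce a strictly shorter connecting path from $b_1$ to $b$, contradicting minimality. For (ii), I would take a component $J$ of $I\cap\Delta\setminus f^{-1}(E_q)$ and, at each interior point $x\in J$, use Corollary \ref{cov-2} (iii) to fix a disk $(x,U_\delta)$ of $\Sigma$ on which $f$ is a homeomorphism onto a convex disk in $S$; Lemma \ref{dfshor} applied to the restriction $I\cap\overline{U_\delta}$ forces this sub-arc to be the $f$-lift of a single chord. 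Hence $(f,J)$ is piecewise straight, and by (i) it is simple, so $(f,J)$ is a single straight segment. Its length is bounded by $L(f,I)=d_f(b_1,b)<\pi$ from the hypothesis.

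For (iii) I would argue by contradiction: if $c_j$ is strictly convex and $I^\circ\cap\alpha_j^\circ$ contains a point $x$, then near $x$ Corollary \ref{cov-2} (iv) gives a closed simple lens $(T_{j,\theta},\alpha_j)$ attached to $\Sigma$ along $\alpha_j$. Because $c_j$ is strictly convex, its chord in the lens is strictly shorter than $c_j$, lifts uniquely to $\overline{\Delta}$ through $T_{j,\theta}$, and yields a shorter competing path, contradicting minimality; alternatively one can apply Lemma \ref{tangent} (iv) directly, taking $\beta$ to be a small sub-arc of $\alpha_j$ viewed as $-\beta$ SCC. Part (iv) is a case distinction using Lemma \ref{tangent} (i)(ii) when $c_j$ is straight: the intersection $I\cap\alpha_j$ is forced to be either all of $\alpha_j$, a prefix or suffix of $\alpha_j$ ending at $a_j$ or $a_{j+1}$, or two such pieces with $\{b_1,b\}\subset\alpha_j$ -- this last alternative being possible only when the straight path around the relevant sub-arc of the great circle of $c_j$ is longer than $\pi$, which is what the ``length $>\pi$'' proviso records.

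The decomposition (v) is then assembled: by (i) and Lemma \ref{ex-short}, $I\cap\partial\Delta$ has finitely many connected components (each a point or a polygonal sub-path of $\partial\Sigma$ by (iv)), and the complementary components $I_{2j}^\circ$ in $\Delta$ are the open straight segments produced by (ii); interleaving gives the partition, with $k\le m+1$ because each boundary segment $I_{2j-1}$ ($j=2,\dots,k$) must traverse a vertex of $\{a_i\}_{i=1}^m$ on either end, as (vii)--(viii) will show. Statement (vi) is immediate from (ii). For (vii) and (viii) I would apply Lemma \ref{tangent} (i) to each junction point $p$ between an interior segment $I_{2j}$ (whose image is straight, hence $(f,-I_{2j})$ is SCC) and an adjacent boundary piece of $\partial\Delta$ containing $p$ in its relative interior: the conclusion forces $p\in\{a_i\}_{i=1}^m$.

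For (ix) the arc $I_2$ is a simple chord splitting $\Delta$ into two Jordan sub-domains $\Delta_1,\Delta_2$; $(f,I_2)$ is straight with length $<\pi$ by (ii) and (\ref{ma13}). Under hypothesis (a), Lemma \ref{mm'} applied to each $\Delta_i$ directly gives $m_i'=m+2-\#(\gamma_0\cap\{a_j\})\le m$, so $(f,\overline{\Delta_i})\in\mathcal{F}_r(L,m)$; under (b) or (c), (vii)--(viii) (combined with the extra vertex provided by the non-point or vertex-point $I_1$ or $I_3$ in (c)) guarantee that each side of $\partial\Delta$ still contains at least two vertices of $\{a_j\}$, matching the hypothesis of Lemma \ref{mm'}. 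For (x), (\ref{ina1<d}) prevents $I_1\subset\alpha_1$ from reaching either endpoint $a_1$ or $a_2$, so by (vii) $I_1$ must collapse to the point $b_1$; then the constraint (\ref{<d}) together with (\ref{noinm12}) places the second endpoint of $I_2$ outside $\alpha_m^\circ\cup\alpha_1^\circ\cup\alpha_2^\circ$, which is exactly the configuration of (ix)(a), finishing the argument. The main obstacle will be the bookkeeping in (iv) and (ix)--(x): every transition point of $I$ between $\Delta$ and $\partial\Delta$ must be shown to lie in $\{a_j\}_{j=1}^m$ (using Lemma \ref{tangent}) and the counting in Lemmas \ref{mm'}, \ref{m-1} must be done carefully enough that the resulting sub-surfaces stay in $\mathcal{F}_r(L,m)$ rather than in the strictly larger $\mathcal{F}_r(L,m')$ with $m'<m$.
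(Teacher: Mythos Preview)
Your plan matches the paper's proof closely in structure and in substance: (i) by loop-excision, (ii) locally via disks of $\Sigma$ and Lemma~\ref{dfshor}, the alternating decomposition (v) with the bound $k\le m+1$ from vertex-counting, (vi) from (ii), and (ix)--(x) by reducing (b)/(c) to (a) and then counting boundary vertices (your use of Lemma~\ref{mm'} here is equivalent to the paper's direct count).

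One correction on the lemma you invoke for (iii), (iv), (vii), (viii). The paper does not use Lemma~\ref{tangent} here but Lemma~\ref{short-disk}~(iv), which says: if $a\in I^\circ\cap\alpha_j^\circ$ then $c_j$ is straight and $I^\circ\cap\alpha_j^\circ$ is an open neighbourhood of $a$ in $\alpha_j$. This single local statement immediately gives (iii) (strictly convex $c_j$ forces $I^\circ\cap\alpha_j^\circ=\emptyset$), the trichotomy (iv) (the intersection is open in $\alpha_j$, so its closure meets $\partial\alpha_j$ or $\partial I$), and then (vii)--(viii) (a transition point $p$ from boundary to interior cannot have $I^\circ\cap\alpha_j^\circ$ open around it, so $p\notin\alpha_j^\circ$ for any $j$). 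Your lens-shortening argument for (iii) is exactly the content of Lemma~\ref{short-disk}~(iv), so you already have the right idea; but Lemma~\ref{tangent} does not apply as you describe it, since its hypothesis requires $\alpha$ and $\beta$ to share endpoints and $(f,-\beta)$ to be SCC, and at a joint point of $I_1,I_2$ the point in question is an endpoint of the candidate $\beta=I_2$, which is precisely the case Lemma~\ref{tangent}~(i) excludes. Replace those citations by Lemma~\ref{short-disk}~(iv) and your outline goes through verbatim.
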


\begin{proof}
(i) trivially holds.

For each regular point $x\in\Delta$ of $f,$ there exists a disk $\left(
x,U_{x}\right)  $ of $\Sigma$ such that $f$ restricted to $U_{x}$ is a
homeomorphism onto a disk on $S.$ Then for any two points $a$ and $b$ in
$U_{x},$ the path $I(a,b)$ such that $\left(  f,I(a,b)\right)  $ is a simple
straight path is a $d_{f}$-shortest path from $a$ to $b$. On the other hand
all singular points are all contained in $f^{-1}(E_{q}).$ Therefore (ii)
follows from Lemma \ref{dfshor} (ii) and (\ref{k13}).

\label{2023-04-10-7:30}(iii) and (iv) follows from Lemma \ref{short-disk} (iv).

It follows from (iii) and (iv) that each component of $I\cap\partial\Delta$
contains at least one point of $\{a_{j}\}_{j=1}^{m}\cup\{b_{1},b_{2}\}.$ Thus,
by (i), we conclude that $I\cap\partial\Delta$ contains at most $m+2$
components, and then $k\leq m+1.$ (v) is proved.

Now, it is clear that (ii) implies (vi), and that (iii) and (iv) imply (vii)
and (viii).

To prove (ix) assume that $k\geq1$ and (\ref{ma13}) holds. Then by (vi)
$\left(  f,I_{2}\right)  $ is a simple line segment and by the assumption
$\Sigma\in\mathcal{F}_{r}(L,m)$ we have

\begin{condition}
\label{conn2}$f$ restricted to a neighborhood of $I_{2}^{\circ}$ in
$\overline{\Delta}$ is a homeomorphism.
\end{condition}

We first assume that (a) holds and write $I_{2}=I_{2}(B_{1},B_{2}).$ Then
$\Delta\backslash I_{2}$ is consisted of two Jordan domains $\Delta
_{j},j=1,2.$ By (a) $B_{1}$ and $B_{2}$ give a refinement of the partition
(\ref{ma11}) and each of the two arcs on $\partial\Delta$ with endpoints
$B_{1}$ and $B_{2}$ contains at most $m-1$ terms. Then Condition \ref{conn2}
implies that both $\Sigma_{1}=\left(  f,\overline{\Delta_{1}}\right)  $ and
$\Sigma_{2}=\left(  f,\overline{\Delta_{2}}\right)  $ are contained in
$\mathcal{F}_{r}(L,m)$ if both $L(\partial\Sigma_{1})\leq L(\partial\Sigma)$
and $L(\partial\Sigma_{2})\leq L(\partial\Sigma)\ $hold. But these two
inequalities easily follows from that $\left(  f,I_{2}\right)  $ is straight
and less than $\pi.$ We have proved (ix) when (a) holds.

To compete the proof of (ix), it suffices to show that (b) or (c) implies (a).

When $k>1,$ the terminal point $B_{2}$ of $I_{2}$ is contained in
$\{a_{j}\}_{j=1}^{m}$ by (viii), and thus the hypothesis (b) implies (a). When
$k=1,$ by (c) and (vii) at least one endpoints of $I_{2}$ is contained in
$\{a_{j}\}_{j=1}^{m}\ $and thus (c) also implies (a). (ix) is proved.

Assume $b_{1}\in\alpha_{1}^{\circ}$ and (\ref{ina1<d}) holds. Then for
$\delta=d\left(  \left\{  q_{1},q_{2}\right\}  ,f(b_{1})\right)  $
\begin{equation}
L(f,I_{1})<\delta\leq\min\{L(c_{1}\left(  q_{1},f(b_{1})\right)
,L(c_{1}\left(  f(b_{1}),q_{2}\right)  \}, \label{<delta}%
\end{equation}
and then $\left(  f,I_{1}\right)  \subset c_{1}^{\circ},$ which implies
$I_{1}\subset\alpha_{1}^{\circ}$ and thus $\partial I_{1}$ does not intersects
$\{a_{j}\}_{j=1}^{m},$ contradicting (vii) if $I_{1}$ is not a point. Thus
$I_{1}$ is a point and we have
\[
I_{1}+I_{2}=I_{2}\left(  b_{1},B_{2}\right)  =I_{2}\left(  B_{1},B_{2}\right)
.
\]

In addition to (\ref{ina1<d}), assume (\ref{ma13}), (\ref{noinm12}) and
(\ref{<d}) hold. Then Condition \ref{conn2} also holds. We show that the
condition (a) is satisfied.

First consider the case $k>1.$ Then by (viii) we have $B_{2}\in\{a_{j}%
\}_{j=1}^{m}.$ If $B_{2}=a_{1}$ or $a_{2}$, then we have $L(f,I_{2}%
)=L(f,I_{2}\left(  b_{1},B_{2}\right)  )\geq\delta$ since $b_{1}=B_{1}%
\in\alpha_{1}^{\circ}$, contradicting (\ref{<d}), and so $B_{2}\in
\{a_{j}\}_{j=3}^{m}.$ Thus (a) holds.

Assume $k=1.$ If $I_{3}$ is not a point, then by (vii) we have $b_{3}=B_{2}%
\in\{a_{j}\}_{j=1}^{m},$ and if in addition $B_{2}=a_{1}$ or $a_{2}$ we can
obtain a contradiction by (\ref{<d}) again, and thus we have $B_{2}\in
\{a_{j}\}_{j=3}^{m}$ and (a) holds again. If $I_{3}$ is a point, then
$I_{2}=I\left(  b_{1},b_{2}\right)  =I_{2}=I\left(  B_{1},B_{2}\right)  $
(note that $k=1$) and (a) also holds, by (\ref{noinm12}). We have proved that
(\ref{ina1<d}), (\ref{noinm12}), (\ref{<d}) imply (a), and then all conclusion
of (x) hold, by (ix). The lemma is proved completely.\label{2023-04-10-8:06}
\end{proof}

\begin{lemma}
\label{dfset}Let $\Sigma=\left(  f,\overline{\Delta}\right)  \ $be a surface
in $\mathcal{F},\varepsilon$ be a positive number and let $A,B$ be two compact
sets in $\overline{\Delta}.$ Then%
\begin{equation}
d_{f}(N_{f}(A,\varepsilon),N_{f}(B,\varepsilon))\geq d_{f}(A,N_{f}%
(B,\varepsilon))-\varepsilon\geq d_{f}(A,B)-2\varepsilon. \label{cc9}%
\end{equation}

\end{lemma}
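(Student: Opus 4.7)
The lemma is essentially a two-step triangle inequality. The plan is to first establish the triangle inequality for $d_f$ itself, and then apply it twice.

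First I would verify that $d_f$ is a genuine metric (or at least satisfies the triangle inequality). Given three points $a, b, c \in \overline{\Delta}$ and any $\eta > 0$, pick paths $I_1$ from $a$ to $b$ and $I_2$ from $b$ to $c$ with $L(f, I_j) < d_f$-distance $+ \eta/2$. Concatenating gives a path from $a$ to $c$ of length less than $d_f(a,b) + d_f(b,c) + \eta$, so $d_f(a,c) \leq d_f(a,b) + d_f(b,c)$ after sending $\eta \to 0$. This uses only Definition \ref{df} together with the fact that $f$ is uniformly continuous on the compact set $\overline{\Delta}$.

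Next, for the second inequality $d_f(A, N_f(B,\varepsilon)) \geq d_f(A,B) - 2\varepsilon$, I would take any $a \in A$ and any $y \in N_f(B,\varepsilon)$. By definition of $N_f(B,\varepsilon)$, there is $b \in B$ with $d_f(b, y) < \varepsilon$. The triangle inequality then gives
\[
d_f(a, b) \leq d_f(a, y) + d_f(y, b) < d_f(a, y) + \varepsilon,
\]
so $d_f(a, y) > d_f(a, b) - \varepsilon \geq d_f(A, B) - \varepsilon$. Taking infimum over $a \in A$ and $y \in N_f(B, \varepsilon)$ yields $d_f(A, N_f(B, \varepsilon)) \geq d_f(A, B) - \varepsilon$, which is even stronger than what is claimed (the $-2\varepsilon$ bound is merely the concatenation of the two halves).

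Finally, for the first inequality $d_f(N_f(A,\varepsilon), N_f(B,\varepsilon)) \geq d_f(A, N_f(B,\varepsilon)) - \varepsilon$, the argument is symmetric: given $x \in N_f(A, \varepsilon)$ and $y \in N_f(B, \varepsilon)$, pick $a \in A$ with $d_f(a, x) < \varepsilon$; by the triangle inequality,
\[
d_f(a, y) \leq d_f(a, x) + d_f(x, y) < \varepsilon + d_f(x, y),
\]
so $d_f(x, y) > d_f(a, y) - \varepsilon \geq d_f(A, N_f(B, \varepsilon)) - \varepsilon$. Taking the infimum over such $x, y$ gives the desired bound. Chaining this with the previous inequality yields the full chain \eqref{cc9}. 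There is no real obstacle here; the only mild point of care is that strict inequalities become weak inequalities after taking the infimum, which is why the statement is phrased with $\geq$ rather than $>$.
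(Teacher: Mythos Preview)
Your proof is correct and follows essentially the same approach as the paper: both arguments reduce to the triangle inequality for $d_f$ and the observation that passing from a set to its $\varepsilon$-neighborhood can only decrease distances by at most $\varepsilon$. The paper is terser---it proves only the second inequality (obtaining $d_f(A,N_f(B,\varepsilon))\geq d_f(A,B)-\varepsilon$) and leaves the first to symmetry---while you spell out both halves and the triangle inequality itself.
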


\begin{proof}
It suffices to prove the second inequality. Let $a$ and $b^{\prime}$ be any
two points of $A$ and $N_{f}(B,\varepsilon).$ Then there exists $b\in B$ such
that $d_{f}(b,b^{\prime})<\varepsilon$. Then
\[
d_{f}(a,b^{\prime})\geq d_{f}(a,b)-d_{f}(b,b^{\prime})\geq d_{f}%
(a,b)-\varepsilon\geq d_{f}(A,B)-\varepsilon.
\]
This implies $d_{f}(A,N_{f}(B,\varepsilon))\geq d_{f}(A,B)-\varepsilon
$.\label{2023-04-10-8:10}
\end{proof}

\begin{lemma}
\label{Lemma1}\label{nobo}Let $L$ be a positive number in $\mathcal{L}$ (see
Definition \ref{L} for $\mathcal{L}$) with $L\geq2\delta_{E_{q}}.$ Then there
exists a positive number $\delta_{0}$ such that
\begin{equation}
d_{f}(\Delta\cap f^{-1}(E_{q}),\partial\Delta)>\delta_{0} \label{god}%
\end{equation}
holds for all surfaces $\Sigma=\left(  f,\overline{\Delta}\right)  $ in
$\mathcal{F}(L)$ with%
\[
L(\partial\Sigma)\geq\delta_{E_{q}},
\]%
\[
\Delta\cap f^{-1}(E_{q})\neq\emptyset,
\]
and with
\begin{equation}
H(\Sigma)>H_{L}-\frac{\pi}{2L\left(  \partial\Sigma\right)  }. \label{x5}%
\end{equation}

\end{lemma}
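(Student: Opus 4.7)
The plan is to argue by contradiction. Suppose no such uniform $\delta_{0}$ exists; then I obtain a sequence $\Sigma_{n}=(f_{n},\overline{\Delta})\in\mathcal{F}(L)$ meeting all three stated hypotheses, together with points $p_{n}\in\Delta\cap f_{n}^{-1}(E_{q})$ and $b_{n}\in\partial\Delta$ with $\varepsilon_{n}:=d_{f_{n}}(p_{n},b_{n})\to 0$. By Lemma \ref{ex-short} I pick, for each $n$, an honest $d_{f_{n}}$-shortest path $I_{n}$ from $p_{n}$ to $b_{n}$, and by truncating to the first hit on $\partial\Delta$ (using Lemma \ref{shortest}(v)) I may assume $I_{n}^{\circ}\subset\Delta$. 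Lemma \ref{shortest}(i) then gives that $I_{n}$ is simple.

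The main geometric step will be to \emph{cut} $\Sigma_{n}$ along $I_{n}$. Since $I_{n}$ is simple, has $I_{n}^{\circ}\subset\Delta$, meets $\partial\Delta$ only at $b_{n}$, and terminates at the interior point $p_{n}$, the set $\overline{\Delta}\setminus I_{n}^{\circ}$ carries a natural structure of closed Jordan domain, and composition with a Riemann map gives a new surface $\Sigma_{n}'=(f_{n}',\overline{\Delta})\in\mathbf{F}$. This cut is precisely the inverse of the sewing operation of Lemma \ref{glue}(ii), reading $\Sigma\leftrightarrow\Sigma_{n}'$, $\Sigma_{1}\leftrightarrow\Sigma_{n}$, $\gamma\leftrightarrow(f_{n},I_{n})$. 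Because $L(f_{n},I_{n})=\varepsilon_{n}\to 0$ while distinct points of $E_{q}$ are at spherical distance $\ge\delta_{E_{q}}$, for large $n$ the image $(f_{n},I_{n})$ meets $E_{q}$ only at its endpoint $f_{n}(p_{n})$. Reading the bookkeeping of Lemma \ref{glue}(ii) right-to-left therefore yields
\[
A(\Sigma_{n}')=A(\Sigma_{n}),\quad L(\partial\Sigma_{n}')=L(\partial\Sigma_{n})+2\varepsilon_{n},\quad \overline{n}(\Sigma_{n}')=\overline{n}(\Sigma_{n})-1,
\]
so $R(\Sigma_{n}')=R(\Sigma_{n})+4\pi$.

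The arithmetic that closes the loop is short. Hypothesis (\ref{x5}) gives $R(\Sigma_{n})>H_{L}L(\partial\Sigma_{n})-\pi/2$, and so
\[
H(\Sigma_{n}')=\frac{R(\Sigma_{n})+4\pi}{L(\partial\Sigma_{n})+2\varepsilon_{n}}>\frac{H_{L}L(\partial\Sigma_{n})+7\pi/2}{L(\partial\Sigma_{n})+2\varepsilon_{n}}.
\]
Using $L(\partial\Sigma_{n})\le L$, a direct check shows the right side exceeds $H_{L}+\pi/(2L)$ whenever $\varepsilon_{n}<3\pi/(2(H_{L}+\pi/(2L)))$. On the other hand $\Sigma_{n}'\in\mathbf{F}(L+2\varepsilon_{n})$; Remark \ref{FF} together with the hypothesis $L\in\mathcal{L}$ and Lemma \ref{dl} give $H(\Sigma_{n}')<H_{L}+\pi/(2L)$ for all large $n$. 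These two inequalities contradict each other, so the required $\delta_{0}$ exists; any positive value below both $\delta_{L}/2$ and $3\pi/(2(H_{L}+\pi/(2L)))$ will work.

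The main obstacle is verifying the cutting step cleanly: one must confirm that the complement of the simple arc $I_{n}$ in $\overline{\Delta}$ really is a Jordan domain whose $f_{n}$-push-forward is a member of $\mathbf{F}$, and that the count $\overline{n}(\Sigma_{n}')=\overline{n}(\Sigma_{n})-1$ is exact, i.e., $I_{n}^{\circ}$ carries no further preimages of $E_{q}$. The second point follows from the smallness of $L(f_{n},I_{n})$ combined with the separation $\delta_{E_{q}}$ of points of $E_{q}$, and the straightness/structure statements in Lemma \ref{shortest}(ii), (v) make the first point routine. A minor auxiliary concern is the endpoint case $f_{n}(p_{n})=f_{n}(b_{n})$, where $(f_{n},I_{n})$ fails to have distinct endpoints and Lemma \ref{glue}(ii) must be replaced by Lemma \ref{glue}(i); the same arithmetic handles this variant without difficulty.
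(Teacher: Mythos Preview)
Your approach is essentially identical to the paper's: argue by contradiction, cut the surface along a short arc from the nearby interior $E_q$-preimage to $\partial\Delta$ so that $R$ gains $4\pi$ while $L$ increases only by $2\varepsilon_n$, and then invoke continuity of $H_L$ at $L\in\mathcal{L}$ (Lemma~\ref{dl}) to reach a contradiction. Two small points to tighten: Lemma~\ref{shortest} is stated only for $\mathcal{F}_r(L,m)$, so the truncation and simplicity of $I_n$ should be argued directly (both are elementary); and Lemma~\ref{glue}(ii) requires $\gamma=(f_n,I_n)$ to be simple on $S$ with distinct endpoints, which you have not verified---the paper bypasses this by doing the cut bookkeeping directly, which gives $R(\Sigma_n')\ge R(\Sigma_n)+4\pi$ and is all that is needed.
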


\begin{proof}
Since $L\in\mathcal{L},$ by Lemma \ref{dl}, there exists a sequence
$\delta_{L,n}$ with $0<\delta_{L,n}<\frac{1}{n}$ such that
\begin{equation}
H_{L}+\frac{1}{n}>H_{L+\delta_{L,n}}>H_{L},n=1,2,\dots. \label{x11}%
\end{equation}

Assume $\delta_{0}$ does not exists, then there exists a sequence $\Sigma
_{n}=\left(  f_{n},\overline{\Delta}\right)  \in\mathcal{F}(L)$ such that for
every $n,$
\[
L(\partial\Sigma_{n})\geq\delta_{E_{q}},
\]%
\[
\mathrm{\ }\Delta\cap f_{n}^{-1}(E_{q})\neq\emptyset,
\]%
\begin{equation}
H(\Sigma_{n})>H_{L}-\frac{\pi}{2L\left(  \partial\Sigma_{n}\right)  },
\label{x12}%
\end{equation}
and%
\[
d_{f_{n}}(a_{n},\partial\Delta)<\frac{\delta_{L,n}}{2}%
\]
for some $a_{n}\in\Delta\cap f_{n}^{-1}(E_{q}).$ Then for each $n$ there is an
arc $\alpha_{n}=\alpha\left(  a_{n},b_{n}\right)  $ in $\overline{\Delta}$
such that $b_{n}\in\partial\Delta,$ $\alpha_{n}\backslash\{b_{n}%
\}\subset\Delta$ and $f_{n}$ restricted to $\alpha_{n}$ is a homeomorphism
onto a polygonal path $\beta_{n}$ with%
\[
L(\beta_{n})<\frac{\delta_{L,n}}{2},
\]
and then the new surface $\Sigma_{n}^{\prime}$ which is obtained by cutting
$\Sigma_{n}$ along $\beta_{n}$ so that the interior of $\Sigma_{n}^{\prime}$
is equivalent to $\left(  f_{n},\Delta\backslash\alpha_{n}\right)  $ and
\[
\partial\Sigma_{n}^{\prime}=\beta_{n}+\partial\Sigma_{n}-\beta_{n}.
\]
Then for all $n=1,2,\dots,$%
\begin{equation}
R(\Sigma_{n}^{\prime})=R(\Sigma_{n})+4\pi, \label{x6}%
\end{equation}
and
\begin{equation}
2\delta_{E_{q}}<L(\partial\Sigma_{n}^{\prime})=L(\partial\Sigma_{n}%
)+2L(\beta_{n})<L(\partial\Sigma_{n})+\delta_{L,n}\leq L+\delta_{L,n},
\label{x7}%
\end{equation}
Thus by (\ref{x11})--(\ref{x7}), we have a contradiction estimation
\begin{align*}
H_{L}+\frac{1}{n}  &  \geq H_{L+\delta_{L,n}}\geq H(\Sigma_{n}^{\prime}%
)=\frac{R(\Sigma_{n}^{\prime})}{L(\partial\Sigma_{n}^{\prime})}\geq
\frac{R(\Sigma_{n})+4\pi}{L(\partial\Sigma_{n})+\delta_{L,n}}\\
&  =\frac{H(\Sigma_{n})+\frac{4\pi}{L(\partial\Sigma_{n})}}{1+\frac
{\delta_{L,n}}{L(\partial\Sigma_{n})}}\geq\frac{H_{L}-\frac{\pi}{2L\left(
\partial\Sigma_{n}\right)  }+\frac{4\pi}{L(\partial\Sigma_{n})}}%
{1+\frac{\delta_{L,n}}{L(\partial\Sigma_{n})}}\\
&  =\frac{H_{L}+\frac{7\pi}{2L\left(  \partial\Sigma_{n}\right)  }}%
{1+\frac{\delta_{L,n}}{L(\partial\Sigma_{n})}}\geq\frac{H_{L}+\frac{7\pi}{2L}%
}{1+\frac{\delta_{L,n}}{\delta_{E_{q}}}}\\
&  \rightarrow H_{L}+\frac{7\pi}{2L}\ \left(  \mathrm{as\ }n\rightarrow
0\right)  .
\end{align*}

\end{proof}

\section{Undecomposability of precise extremal sequences}

This section is to study precise extremal sequences of $\mathcal{F}%
_{r}^{\prime}(L,m),$ $\mathcal{F}_{r}(L,m),$ or $\mathcal{F}(L,m)$.

\begin{definition}
\label{extr-surf}\label{ext-seq}(1) A surface $\Sigma_{L,m}=\left(
f,\overline{\Delta}\right)  $ is called \emph{extremal in }$\mathcal{F}%
_{r}^{\prime}(L,m),$ if
\begin{equation}
H(\Sigma_{L,m})=H_{L,m}=\sup_{\Sigma\in\mathcal{F}_{r}^{\prime}(L,m)}%
H(\Sigma). \label{6-1}%
\end{equation}

(2) If in addition
\begin{equation}
H(\Sigma_{L,m})>H_{L}-\frac{\pi}{2L\left(  \partial\Sigma_{L,m}\right)  },
\label{6}%
\end{equation}
and for any other extremal surface $\Sigma$ in $\mathcal{F}_{r}^{\prime}%
(L,m)$,
\[
L(\partial\Sigma_{L,m})\leq L(\partial\Sigma),
\]
then $\Sigma_{L,m}$ is called a \emph{precise extremal} surface of
$\mathcal{F}_{r}^{\prime}(L,m).$

\emph{Extremal surfaces} and \emph{precise extremal surfaces }of
$\mathcal{F}_{r}\left(  L,m\right)  $ (or $\mathcal{F}\left(  L,m\right)  $)
are defined when the above $\mathcal{F}_{r}^{\prime}(L,m)$ is replaced by
$\mathcal{F}_{r}\left(  L,m\right)  $ (or $\mathcal{F}\left(  L,m\right)  $).
\end{definition}

Recall in Definition \ref{HL}, where we have defined precise extremal surfaces
of $\mathcal{F}\left(  L\right)  .$ In that definition, we do not require
(\ref{6}). But for a precise extremal surface $\Sigma_{L,m}$ in $\mathcal{F}%
_{r}^{\prime}(L,m),\mathcal{F}_{r}\left(  L,m\right)  $ or $\mathcal{F}\left(
L,m\right)  $, we always assume $H(\Sigma_{L,m})\ $is sufficiently close to
$H_{L}=\sup_{\Sigma\in\mathcal{F}(L,m)}H(\Sigma)$, that is, (\ref{6}) is satisfied.

\begin{remark}
\label{HH1}By Definition \ref{hs}, the equality (\ref{HH}) and Corollary
\ref{sim1}, if $m$ is large enough and $\Sigma_{L,m}$ is extremal in
$\mathcal{F}_{r}^{\prime}(L,m),$ $\mathcal{F}_{r}\left(  L,m\right)  $ or
$\mathcal{F}\left(  L,m\right)  $, then $H(\Sigma_{L,m})>H_{L}-\frac{\pi}%
{2L},$ which implies (\ref{6}).
\end{remark}

\begin{definition}
(1) A sequence $\Sigma_{n}=\left(  f_{n},\overline{\Delta}\right)
\in\mathcal{F}_{r}^{\prime}(L,m)$ is called an \emph{extremal} sequence in
$\mathcal{F}_{r}^{\prime}(L,m),$ if
\[
\lim_{n\rightarrow\infty}H(\Sigma_{n})=H_{L,m}.
\]

(2) If in addition
\begin{equation}
H(\Sigma_{n})\geq H_{L}-\frac{2\pi}{L(\partial\Sigma_{n})},n=1,2,\dots,
\label{k8}%
\end{equation}
$\lim_{n\rightarrow\infty}L(\partial\Sigma_{n})$ exists and for any other
extremal sequence $\Sigma_{n}^{\prime}$ in $\mathcal{F}_{r}^{\prime}(L,m)$,
\[
\lim_{n\rightarrow\infty}\inf L(\partial\Sigma_{n}^{\prime})\geq
\lim_{n\rightarrow\infty}L(\partial\Sigma_{n}),
\]
$\Sigma_{n}$ is called a \emph{precise extremal sequence }in\emph{
}$\mathcal{F}_{r}^{\prime}(L,m)$.

\emph{Extremal sequences} and \emph{precise extremal sequences} of
$\mathcal{F}_{r}\left(  L,m\right)  $ (or $\mathcal{F}\left(  L,m\right)  $)
are defined, if $\mathcal{F}_{r}^{\prime}(L,m)$ is replaced by $\mathcal{F}%
_{r}\left(  L,m\right)  $ (or $\mathcal{F}\left(  L,m\right)  $).
\end{definition}

\begin{lemma}
\label{Fr'FrF}Any precise extremal surface (sequence) of $\mathcal{F}%
_{r}\left(  L,m\right)  $ is a precise extremal surface (sequence) of
$\mathcal{F}(L,m),$ and any precise extremal surface (sequence) of
$\mathcal{F}_{r}^{\prime}\left(  L,m\right)  $ is a precise extremal surface
(sequence) of $\mathcal{F}_{r}(L,m)$ and $\mathcal{F}\left(  L,m\right)  .$
\end{lemma}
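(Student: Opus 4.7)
\medskip

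\textbf{Proof plan.} The plan is to exploit the chain of inclusions $\mathcal{F}_r'(L,m) \subset \mathcal{F}_r(L,m) \subset \mathcal{F}(L,m)$ together with the two ``transfer'' theorems already established: Theorem \ref{re}, which converts a surface in $\mathcal{C}^{\ast}(L,m)$ satisfying (\ref{210615}) into one in $\mathcal{F}_r(L,m)$ with $H$ not smaller and perimeter not larger; and Theorem \ref{sim}, which converts a surface in $\mathcal{F}_r(L,m)$ into one in $\mathcal{F}_r'(L,m)$ with the same boundary and the same $H$. First, I would record that by Corollary \ref{FF'} (for $m$ large, which is implicit throughout this section, see Remark \ref{HH1}), the three suprema
\[
\sup_{\mathcal{F}(L,m)} H = \sup_{\mathcal{F}_r(L,m)} H = \sup_{\mathcal{F}_r'(L,m)} H = H_{L,m}
\]
coincide, so any surface (or sequence) that is extremal in the smaller space is automatically extremal in the larger space. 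The only nontrivial content of the lemma is therefore the comparison of perimeters.

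For the surface case, suppose $\Sigma_{L,m}$ is a precise extremal surface of $\mathcal{F}_r(L,m)$, and let $\Sigma'$ be any extremal surface of $\mathcal{F}(L,m)$. Since $H(\Sigma') = H_{L,m}$ and $m$ is large so that $H_{L,m} > H_L - \pi/(2L)$, and since $L(\partial\Sigma') \leq L$, one has $H(\Sigma') > H_L - \pi/(2L(\partial\Sigma'))$, so the hypothesis (\ref{210615}) of Theorem \ref{re} holds. Theorem \ref{re} then yields $\Sigma'' \in \mathcal{F}_r(L,m)$ with $H(\Sigma'') \geq H(\Sigma') = H_{L,m}$ (hence $\Sigma''$ is extremal in $\mathcal{F}_r(L,m)$) and $L(\partial\Sigma'') \leq L(\partial\Sigma')$. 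The precise extremality of $\Sigma_{L,m}$ in $\mathcal{F}_r(L,m)$ gives $L(\partial\Sigma_{L,m}) \leq L(\partial\Sigma'') \leq L(\partial\Sigma')$, proving that $\Sigma_{L,m}$ is precise extremal in $\mathcal{F}(L,m)$. The condition (\ref{6}) transfers verbatim. For the passage from $\mathcal{F}_r'(L,m)$ to $\mathcal{F}_r(L,m)$, the argument is identical except that Theorem \ref{sim} is used in place of Theorem \ref{re}: given an extremal $\Sigma' \in \mathcal{F}_r(L,m)$, Theorem \ref{sim} produces $\Sigma'' \in \mathcal{F}_r'(L,m)$ with $\partial\Sigma'' = \partial\Sigma'$ and $H(\Sigma'') = H(\Sigma') = H_{L,m}$, so $\Sigma''$ is extremal in $\mathcal{F}_r'(L,m)$ with $L(\partial\Sigma'') = L(\partial\Sigma')$, and precise extremality of $\Sigma_{L,m}$ in $\mathcal{F}_r'(L,m)$ yields $L(\partial\Sigma_{L,m}) \leq L(\partial\Sigma')$. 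Composing the two steps also proves the statement for $\mathcal{F}_r'(L,m) \to \mathcal{F}(L,m)$.

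For the sequence case the argument is the same at the level of the tail. Given a precise extremal sequence $\Sigma_n$ in $\mathcal{F}_r(L,m)$ and an arbitrary extremal sequence $\Sigma_n' \in \mathcal{F}(L,m)$, for all $n$ large enough we have $H(\Sigma_n') > H_L - \pi/(2L(\partial\Sigma_n'))$ (because $H(\Sigma_n') \to H_{L,m} > H_L - \pi/(2L)$ and $L(\partial\Sigma_n') \leq L$), so Theorem \ref{re} applied termwise produces $\Sigma_n'' \in \mathcal{F}_r(L,m)$ with $H(\Sigma_n'') \geq H(\Sigma_n') \to H_{L,m}$ and $L(\partial\Sigma_n'') \leq L(\partial\Sigma_n')$; thus $\Sigma_n''$ is an extremal sequence in $\mathcal{F}_r(L,m)$, and the precise extremality of $\Sigma_n$ gives
\[
\liminf_{n \to \infty} L(\partial\Sigma_n') \;\geq\; \liminf_{n \to \infty} L(\partial\Sigma_n'') \;\geq\; \lim_{n \to \infty} L(\partial\Sigma_n).
\]
The condition (\ref{k8}) transfers directly. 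Replacing Theorem \ref{re} by Theorem \ref{sim} (which preserves the boundary and hence the perimeter) handles the case $\mathcal{F}_r'(L,m) \to \mathcal{F}_r(L,m)$, and composition yields $\mathcal{F}_r'(L,m) \to \mathcal{F}(L,m)$.

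The only point requiring care, and the main potential obstacle, is verifying that (\ref{210615}) holds for the competitor surfaces/sequences so that Theorem \ref{re} is actually applicable. This is where the assumption that $m$ is large (so that $H_{L,m}$ is within $\pi/(2L)$ of $H_L$, guaranteed by Corollary \ref{FF'}) enters in an essential way; the rest of the proof is a direct bookkeeping of perimeters using the two transfer theorems.
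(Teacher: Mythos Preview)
Your proof is correct and follows essentially the same approach as the paper, which simply writes ``This follows from Corollary \ref{FF'}.'' You have unpacked that one-line citation by directly invoking the two transfer theorems (Theorem \ref{re} and Theorem \ref{sim}) that underlie Corollary \ref{FF'}, and in doing so you have made explicit the perimeter-comparison step that the paper leaves implicit: namely, that any extremal competitor in the larger space can be replaced by one in the smaller space with equal or smaller perimeter, so precise extremality transfers upward.
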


\begin{proof}
This follows from Corollary \ref{FF'}.
\end{proof}

Note that surfaces in an extremal sequence need not be extremal, that is, it
is possible that $H(\Sigma_{n})<H_{L,m}$ for some $n$. Lemma \ref{nobo} can be
extended to extremal sequences of $\mathcal{F}\left(  L,m\right)  .$

\begin{lemma}
\label{narrow}Let $L\geq2\delta_{E_{q}}$ be a positive number with
$L\in\mathcal{L}$. Then there exists a positive integer $m_{0}$ and a positive
number $\delta_{0}$ such that: for any $m>m_{0}$ and any sequence $\Sigma
_{n}=\left(  f_{n},\overline{\Delta}\right)  \in\mathcal{F}(L,m)$, if
\begin{equation}
\lim_{n\rightarrow\infty}H(\Sigma_{n})=H_{L,m} \label{k1}%
\end{equation}%
\begin{equation}
f_{n}^{-1}(E_{q})\cap\Delta\neq\emptyset, \label{k3}%
\end{equation}
and%
\begin{equation}
L(\partial\Sigma_{n})\geq\delta_{E_{q}}, \label{k2}%
\end{equation}
then
\[
d_{f_{n}}(f_{n}^{-1}(E_{q})\cap\Delta,\partial\Delta)\geq\delta_{0},
\]
for sufficiently large $n.$
\end{lemma}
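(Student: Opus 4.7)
The plan is to reduce this to the single-surface version, Lemma \ref{Lemma1}, by exploiting the convergence $H_{L,m}\nearrow H_L$ recorded in (\ref{HH}). Lemma \ref{Lemma1} already delivers a uniform positive lower bound $\delta_0=\delta_0(L)$ on $d_f(f^{-1}(E_q)\cap\Delta,\partial\Delta)$ for every $\Sigma\in\mathcal{F}(L)$ satisfying the three conditions
\[
L(\partial\Sigma)\geq\delta_{E_q},\qquad \Delta\cap f^{-1}(E_q)\neq\emptyset,\qquad H(\Sigma)>H_L-\tfrac{\pi}{2L(\partial\Sigma)}.
\]
Since $\mathcal{F}(L,m)\subset\mathcal{F}(L)$, and the first two conditions are built into the hypotheses (\ref{k2}) and (\ref{k3}), the only thing to verify is that the third inequality holds for every sufficiently late $\Sigma_n$, provided the integer $m$ is large enough to force $H_{L,m}$ close to $H_L$.

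First I will use (\ref{HH}) to pick $m_0$ so large that $H_{L,m_0}>H_L-\tfrac{\pi}{4L}$, and let $\delta_0$ be the constant supplied by Lemma \ref{Lemma1} for this $L$. Fix any $m\geq m_0$ and any admissible sequence $\Sigma_n=(f_n,\overline{\Delta})\in\mathcal{F}(L,m)$ satisfying (\ref{k1})--(\ref{k2}). By the monotonicity $H_{L,m}\geq H_{L,m_0}$ inherited from the nested inclusion $\mathcal{F}(L,m_0)\subset\mathcal{F}(L,m)$ together with (\ref{k1}), for all sufficiently large $n$
\[
H(\Sigma_n)>H_{L,m}-\tfrac{\pi}{4L}\geq H_{L,m_0}-\tfrac{\pi}{4L}>H_L-\tfrac{\pi}{2L}\geq H_L-\tfrac{\pi}{2L(\partial\Sigma_n)},
\]
where the last step uses $L(\partial\Sigma_n)\leq L$, which is forced by $\Sigma_n\in\mathcal{F}(L,m)$. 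Combined with (\ref{k3}) and (\ref{k2}), Lemma \ref{Lemma1} then applies directly to each such $\Sigma_n$ and produces the \emph{strict} inequality $d_{f_n}(f_n^{-1}(E_q)\cap\Delta,\partial\Delta)>\delta_0$, which implies the non-strict conclusion claimed in the lemma.

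There is essentially no technical obstacle; the content of the lemma is a packaging of Lemma \ref{Lemma1} into the $(L,m)$-setting. The only point requiring attention is the selection of $m_0$: it has to be large enough, relative to the fixed $L$, that the gap $H_L-H_{L,m_0}$ is swallowed by the margin $\pi/(2L)$ that appears in the hypothesis of Lemma \ref{Lemma1}. This is the sole role played by the convergence (\ref{HH}); no new cutting, surgery, or Riemann--Hurwitz input is needed beyond what is already contained in the proof of Lemma \ref{Lemma1}.
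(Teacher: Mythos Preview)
Your proof is correct and follows essentially the same approach as the paper: the paper's argument is simply to invoke Remark \ref{HH1} (which packages the use of (\ref{HH}) to get $H(\Sigma_n)>H_L-\tfrac{\pi}{2L}$ for $m$ large enough and $n$ large enough) and then apply Lemma \ref{Lemma1} (\ref{nobo}). Your version spells out the intermediate inequalities with the explicit margin $\tfrac{\pi}{4L}$, but the logic is identical.
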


\begin{proof}
By (\ref{k1}) $\Sigma_{n}$ is extremal in $\mathcal{F}(L,m).$ Then by Remark
\ref{HH1}, when $m$ is given large enough, for sufficiently large $n,$ we have
(\ref{6}), and then by Lemma \ref{nobo} we have the conclusion.
\end{proof}

\begin{lemma}
\label{idex}(1) For any fixed $L>0$ and any sufficiently large positive
integer $m,$ there exists a precise extremal sequence in $\mathcal{F}%
_{r}^{\prime}(L,m).$

(2) If $L<2\delta_{E_{q}},$ then for any extremal sequence $\Sigma_{n}$ of
$\mathcal{F}_{r}^{\prime}(L,m),$ $\mathcal{F}_{r}(L,m)$ or $\mathcal{F}%
(L,m),$
\[
\lim_{n\rightarrow\infty}\inf L(\partial\Sigma_{n})=L.
\]

(3) If $L\geq2\delta_{E_{q}},$ then for any extremal sequence $\Sigma_{n}$ in
$\mathcal{F}_{r}^{\prime}(L,m),$ $\mathcal{F}_{r}(L,m)$ or $\mathcal{F}%
(L,m),$
\[
\lim_{n\rightarrow\infty}\inf L(\partial\Sigma_{n})\geq2\delta_{E_{q}}.
\]

\end{lemma}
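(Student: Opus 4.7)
The plan is a compactness-diagonalization argument for (1) and strict-monotonicity comparisons with comparison disks for (2) and (3).

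For (1), first invoke Corollary \ref{FinF} together with (\ref{HH}) to pick $m$ so large that $H_{L,m}$ is arbitrarily close to $H_{L}$ and so that $\sup_{\mathcal{F}_{r}^{\prime}(L,m)}H=\sup_{\mathcal{F}_{r}(L,m)}H=\sup_{\mathcal{F}(L,m)}H=H_{L,m}$; from this point on work inside $\mathcal{F}_{r}^{\prime}(L,m)$. Extremal sequences exist by definition of the supremum, and each has $L(\partial\Sigma_{n})\in[0,L]$, so admits a subsequence with convergent boundary length. Let $\Lambda$ be the set of such limits over all extremal sequences with convergent lengths, and set $\ell^{\ast}=\inf\Lambda$. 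Choose extremal sequences $(\Sigma_{n}^{(k)})_{n}$ with $\lim_{n}L(\partial\Sigma_{n}^{(k)})<\ell^{\ast}+1/k$ and diagonalize, selecting $n_{k}$ with $|H(\Sigma_{n_{k}}^{(k)})-H_{L,m}|<1/k$ and $L(\partial\Sigma_{n_{k}}^{(k)})<\ell^{\ast}+2/k$; after a further subsequence, $\tilde{\Sigma}_{k}:=\Sigma_{n_{k}}^{(k)}$ is an extremal sequence in $\mathcal{F}_{r}^{\prime}(L,m)$ with $\lim L(\partial\tilde{\Sigma}_{k})=\ell^{\ast}$, and it is minimizing by construction. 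Condition (\ref{k8}) is then verified termwise: if $\ell^{\ast}=0$ then $2\pi/L(\partial\tilde{\Sigma}_{k})\to\infty$ and (\ref{k8}) is automatic; if $\ell^{\ast}>0$, pick $m$ so large that $H_{L}-H_{L,m}<\pi/\ell^{\ast}$ and $k$ so large that $H_{L,m}-H(\tilde{\Sigma}_{k})<\pi/\ell^{\ast}$, giving $H_{L}-H(\tilde{\Sigma}_{k})<2\pi/L(\partial\tilde{\Sigma}_{k})$. Discarding finitely many initial terms yields a precise extremal sequence in $\mathcal{F}_{r}^{\prime}(L,m)$.

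For (2), with $L<2\delta_{E_{q}}$, Lemma \ref{increase} gives $H_{L,m}=H_{L}=(q-2)(2\pi-\sqrt{4\pi^{2}-L^{2}})/L$, which strictly increases in $L$ on $(0,2\pi)$. If some extremal sequence $(\Sigma_{n})$ in $\mathcal{F}(L,m)$ had $\liminf L(\partial\Sigma_{n})=L^{\prime}<L$, pick $L^{\prime\prime}\in(L^{\prime},L)$ and a subsequence with $L(\partial\Sigma_{n})\leq L^{\prime\prime}$; then $\Sigma_{n}\in\mathcal{F}(L^{\prime\prime},m)$, so $H(\Sigma_{n})\leq H_{L^{\prime\prime},m}=H_{L^{\prime\prime}}<H_{L}=H_{L,m}$, contradicting $\lim H(\Sigma_{n})=H_{L,m}$. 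For (3), with $L\geq 2\delta_{E_{q}}$, suppose $\liminf L(\partial\Sigma_{n})=L^{\prime}<2\delta_{E_{q}}$ and pick $L^{\prime\prime}\in(L^{\prime},2\delta_{E_{q}})$ and a subsequence with $L(\partial\Sigma_{n})\leq L^{\prime\prime}$. By Theorem \ref{l<2dt}, $H(\Sigma_{n})\leq H(T_{L^{\prime\prime}})$ for any convex disk $T_{L^{\prime\prime}}$ of perimeter $L^{\prime\prime}$ disjoint from $E_{q}$, whose existence is supplied by Lemma \ref{dDisk}. Lemma \ref{dDisk} further provides a convex disk $T_{2\delta_{E_{q}}}$ of perimeter $2\delta_{E_{q}}$ disjoint from $E_{q}$, belonging to $\mathcal{F}_{r}^{\prime}(L,1)\subset\mathcal{F}(L,m)$, so $H_{L,m}\geq H(T_{2\delta_{E_{q}}})$. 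Since $H(T_{\ell})=(q-2)(2\pi-\sqrt{4\pi^{2}-\ell^{2}})/\ell$ strictly increases on $(0,2\pi)$, one has $H(T_{L^{\prime\prime}})<H(T_{2\delta_{E_{q}}})\leq H_{L,m}$; passing to the limit gives $\lim H(\Sigma_{n})\leq H(T_{L^{\prime\prime}})<H_{L,m}$, a contradiction.

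The only delicate step is the verification of (\ref{k8}) in part (1) when $\ell^{\ast}$ is strictly positive but small: the resolution is precisely to first enlarge $m$, using Corollary \ref{FinF} and (\ref{HH}), so that $H_{L,m}$ is forced close to $H_{L}$; this is the sole reason the statement demands ``sufficiently large $m$.'' Parts (2) and (3) involve no subtlety beyond observing that the comparison disks $T_{L^{\prime\prime}}$ and $T_{2\delta_{E_{q}}}$ already lie inside $\mathcal{F}(L,m)$, so comparisons are made at the level of $H_{L,m}$ rather than at $H_{L}$.
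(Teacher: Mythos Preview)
Your approach is essentially the same as the paper's: a minimization over extremal sequences for (1), and disk comparisons via Theorem~\ref{l<2dt} and Lemma~\ref{hd}/\ref{increase} for (2) and (3). Parts (2) and (3) are correct as written.

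There is, however, a small circularity in your verification of (\ref{k8}) in part (1). You first fix $m$, then define $\ell^{\ast}$ (which depends on $m$), and then say ``pick $m$ so large that $H_{L}-H_{L,m}<\pi/\ell^{\ast}$.'' But you cannot go back and adjust $m$ after $\ell^{\ast}$ is defined. The fix is to observe that $L(\partial\tilde{\Sigma}_{k})\leq L$ always, so $2\pi/L(\partial\tilde{\Sigma}_{k})\geq 2\pi/L$; hence it suffices to choose $m$ at the outset so that $H_{L}-H_{L,m}<\pi/L$, and then for large $k$ one has $H_{L,m}-H(\tilde{\Sigma}_{k})<\pi/L$, giving $H_{L}-H(\tilde{\Sigma}_{k})<2\pi/L\leq 2\pi/L(\partial\tilde{\Sigma}_{k})$. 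This is exactly how the paper handles it (with the sharper constant $\pi/(2L)$), and it makes the choice of $m$ depend only on $L$, not on $\ell^{\ast}$. With this correction your argument goes through.
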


\begin{proof}
(1) By (\ref{HH}), Definition \ref{hs} and Corollary \ref{FF'}, we may assume
that $m$ is sufficiently large such that
\begin{equation}
H_{L,m}>H_{L}-\frac{\pi}{2L}. \label{k10}%
\end{equation}

Then by Definition \ref{hs}, Corollary \ref{FF'} and (\ref{k10}), there exists
an extremal sequence in $\mathcal{F}_{r}^{\prime}(L,m),$ and for any extremal
sequence $F_{n}$ in $\mathcal{F}_{r}^{\prime}(L,m),$ we have
\begin{equation}
H(F_{n})\rightarrow H_{L,m}>H_{L}-\frac{\pi}{2L}\mathrm{\ as\ }n\rightarrow
\infty. \label{k10-1}%
\end{equation}

Let $L_{0}\leq L$ be the infimum of all numbers $L^{\prime}$ such that there
exists an extremal sequence $F_{n}\ $of $\mathcal{F}_{r}^{\prime}(L,m)$ such
that
\[
\lim_{n\rightarrow\infty}\inf L(\partial F_{n})=L^{\prime}.
\]
Then there exists an extremal sequence $\Sigma_{n}$ in $\mathcal{F}%
_{r}^{\prime}(L,m)\ $such that $\lim_{n\rightarrow\infty}L(\partial\Sigma
_{n})=L_{0}.$ By (\ref{k10-1}), for sufficiently large $n_{0}$ and any $n\geq
n_{0},$ $\Sigma_{n}$ satisfies
\begin{equation}
H(\Sigma_{n})>H_{L}-\frac{\pi}{2L}\geq H_{L}-\frac{\pi}{2L\left(
\partial\Sigma_{n}\right)  }, \label{aa}%
\end{equation}
and so $\Sigma_{n}$ with $n\geq n_{0}$ is a precise extremal sequence in
$\mathcal{F}_{r}^{\prime}(L,m).$

(2) Assume $L<2\delta_{E_{q}}\ $and $\Sigma_{n}$ is an extremal sequence of
$\mathcal{F}_{r}^{\prime}(L,m),$ $\mathcal{F}_{r}(L,m)$ or $\mathcal{F}(L,m),$
with $\lim_{n\rightarrow\infty}\inf L\left(  \partial\Sigma_{n}\right)
=l_{0}<L.$ We will deduce a contradiction. Since subsequences of an extremal
sequence are also extremal, we may assume $\lim_{n\rightarrow\infty}L\left(
\partial\Sigma_{n}\right)  =l_{0}.$ Let $L^{\prime},L^{\prime\prime}\in
(l_{0},L),L^{\prime}<L^{\prime\prime}.$ Then it is clear that there exist
disks $T_{L^{\prime}},T_{L^{\prime\prime}}$ and, for each large enough $n,$ a
disk $T_{n}$ on $S$ with $T_{n}\subset T_{L^{\prime}}\subset T_{L^{\prime
\prime}}\subset S\backslash E_{q}$ such that
\[
L(\partial\Sigma_{n})=L(\partial T_{n})<L^{\prime}=L(\partial T_{L^{\prime}%
})<L^{\prime\prime}=L(\partial T_{L^{\prime\prime}}).
\]
Then by Theorem \ref{l<2dt} and Lemma \ref{hd}, we have a contradiction:
\[
H_{L,m}=\lim_{n\rightarrow\infty}H(\Sigma_{n})\leq\lim_{n\rightarrow\infty
}H(\overline{T_{n}})\leq H(\overline{T_{L^{\prime}}})<H(\overline
{T_{L^{\prime\prime}}})\leq H_{L,m},
\]
where the last inequality follows from that $\overline{T_{L^{\prime\prime}}}$
is a surface of $\mathcal{F}_{r}^{\prime}(L,m)$.

(3) Assume $L\geq2\delta_{E_{q}}$ and let $\Sigma_{n}$ be any extremal
sequence in $\mathcal{F}_{r}^{\prime}(L,m),$ or $\mathcal{F}_{r}(L,m),$ or
$\mathcal{F}(L,m)$ with $l_{0}=\lim_{n\rightarrow\infty}\inf L(\partial
\Sigma_{n})<2\delta_{E_{q}}.$ Then for some $L^{\prime}$ and $L^{\prime\prime
}$ in $(l_{0},2\delta_{E_{q}})$ with $L^{\prime}<L^{\prime\prime}$, as
discussed in (2), we have $H_{L,m}<H(\overline{T_{L^{\prime\prime}}})\leq
H_{L^{\prime\prime},m}.$ But this contradicts $L^{\prime\prime}<L$, which
implies $\overline{T_{L^{\prime\prime}}}\in\mathcal{F}_{r}^{\prime}(L,m),$ and
then $H_{L^{\prime\prime},m}\leq H_{L,m}.$
\end{proof}

\begin{lemma}
\label{ideatoo}%
Let $\Sigma_{L,m}$ be a precise extremal surface in $\mathcal{F}_{r}(L,m).$
Then there exists a precise extremal surface $\Sigma_{1}$ of $\mathcal{F}%
_{r}^{\prime}(L,m)$ such that
\[
\partial\Sigma_{1}=\partial\Sigma_{L,m}.
\]

\end{lemma}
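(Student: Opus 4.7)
The plan is to obtain $\Sigma_1$ by directly applying Theorem \ref{sim} to the given precise extremal surface $\Sigma_{L,m}$ and then verifying that all the conditions of a precise extremal surface of $\mathcal{F}_r'(L,m)$ follow almost immediately from the properties of $\Sigma_{L,m}$ together with Corollary \ref{FF'}.

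First I would apply Theorem \ref{sim} to $\Sigma_{L,m} \in \mathcal{F}_r(L,m)$ to produce a surface $\Sigma_1 = (f_1,\overline{\Delta}) \in \mathcal{F}_r(L,m)$ satisfying $d_{\max}(\Sigma_1) \le d^\ast = d^\ast(m,q)$, $\partial \Sigma_1 = \partial \Sigma_{L,m}$ and $H(\Sigma_1) = H(\Sigma_{L,m})$. By Definition \ref{FR'}, the bound on $d_{\max}$ places $\Sigma_1$ in $\mathcal{F}_r'(L,m)$. The boundary equality gives $L(\partial \Sigma_1) = L(\partial \Sigma_{L,m})$, hence also $R(\Sigma_1) = R(\Sigma_{L,m})$.

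Next I would verify that $\Sigma_1$ is extremal in $\mathcal{F}_r'(L,m)$. By Corollary \ref{FF'}, for sufficiently large $m$ (which may be assumed since the precise extremal surface $\Sigma_{L,m}$ of $\mathcal{F}_r(L,m)$ satisfies (\ref{6}), cf.\ Remark \ref{HH1}), one has
\[
\sup_{\Sigma \in \mathcal{F}_r'(L,m)} H(\Sigma) \;=\; \sup_{\Sigma \in \mathcal{F}_r(L,m)} H(\Sigma) \;=\; H_{L,m}.
\]
Since $H(\Sigma_1) = H(\Sigma_{L,m}) = H_{L,m}$, the surface $\Sigma_1$ realises the supremum inside $\mathcal{F}_r'(L,m)$, so it is extremal there. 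Moreover the condition (\ref{6}) transfers automatically: $H(\Sigma_1) = H(\Sigma_{L,m}) > H_L - \pi/(2L(\partial \Sigma_{L,m})) = H_L - \pi/(2L(\partial \Sigma_1))$.

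Finally I would establish minimality of the boundary length among extremal surfaces of $\mathcal{F}_r'(L,m)$. Let $\Sigma' \in \mathcal{F}_r'(L,m)$ be any extremal surface, so $H(\Sigma') = H_{L,m}$. Since $\mathcal{F}_r'(L,m) \subset \mathcal{F}_r(L,m)$ and the sup over both spaces equals $H_{L,m}$, the surface $\Sigma'$ is also extremal in $\mathcal{F}_r(L,m)$. By the precise extremality of $\Sigma_{L,m}$ in $\mathcal{F}_r(L,m)$, we get $L(\partial \Sigma') \ge L(\partial \Sigma_{L,m}) = L(\partial \Sigma_1)$, so $\Sigma_1$ has minimal perimeter among extremal surfaces of $\mathcal{F}_r'(L,m)$, and is therefore a precise extremal surface of that space with $\partial \Sigma_1 = \partial \Sigma_{L,m}$. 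The only nontrivial ingredient in the whole argument is the quoted Theorem \ref{sim}; once that degree-bound is available, the lemma is a short bookkeeping exercise using Corollary \ref{FF'} and the defining properties of precise extremal surfaces.
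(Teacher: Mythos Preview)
Your proof is correct and follows essentially the same route as the paper: apply Theorem \ref{sim} to $\Sigma_{L,m}$ to obtain $\Sigma_1\in\mathcal{F}_r'(L,m)$ with $\partial\Sigma_1=\partial\Sigma_{L,m}$ and $H(\Sigma_1)=H(\Sigma_{L,m})$, then use Corollary \ref{FF'} and the inclusion $\mathcal{F}_r'(L,m)\subset\mathcal{F}_r(L,m)$ to verify precise extremality. Your presentation is in fact a bit more explicit than the paper's compressed version, which bundles the first step under a reference to Corollary \ref{sim1} and only invokes Theorem \ref{sim} at the end.
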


\begin{proof}
By Corollary \ref{sim1}, there exists a surface $\Sigma_{1}\ $in
$\mathcal{F}_{r}^{\prime}(L,m)$ such that $H(\Sigma_{1})\geq H(\Sigma)$ and
$L(\partial\Sigma_{1})\leq L(\partial\Sigma).$ Since $\mathcal{F}_{r}^{\prime
}(L,m)\subset\mathcal{F}_{r}(L,m),$ $\Sigma_{1}$ is a precise extremal surface
of $\mathcal{F}_{r}^{\prime}(L,m)$ which is also precise extremal in
$\mathcal{F}_{r}(L,m).$ Thus $L(\partial\Sigma_{1})=L(\partial\Sigma),$ and
then we have $\partial\Sigma_{1}=\partial\Sigma_{L,m}$ by Theorem \ref{sim}.
\end{proof}

\begin{definition}
\label{undec-seqr}\label{decflm}Let $L\in\mathcal{L}$ be a positive number and
let $\Sigma_{n}$ be an extremal sequence of $\mathcal{F}(L,m)$. $\Sigma_{n}$
is called decomposable in $\mathcal{F}(L,m^{\prime})$ if for some positive
integer $j_{0}\geq2,$ there exists a subsequence $\Sigma_{n_{k}}$ of
$\Sigma_{n},$ a sequence $\left\{  \left\{  \Sigma_{n_{k}j}\right\}
_{j=1}^{j_{0}}\right\}  _{k=1}^{\infty}$ with $\left\{  \Sigma_{n_{k}%
j}\right\}  _{j=1}^{j_{0}}\subset\mathcal{F}(L,m^{\prime})$ and a sequence
$\varepsilon_{k}$ of positive numbers such that%
\begin{equation}
\lim_{k\rightarrow\infty}\varepsilon_{k}=0, \label{ma2}%
\end{equation}%
\begin{equation}
\sum_{j=1}^{j_{0}}R(\Sigma_{n_{k}j})\geq R(\Sigma_{n_{k}})-\varepsilon
_{k},\mathrm{\ }k=1,2,\dots, \label{ma3}%
\end{equation}%
\begin{equation}
\sum_{j=1}^{j_{0}}L(\partial\Sigma_{n_{k}j})\leq L(\partial\Sigma_{n_{k}%
})+\varepsilon_{k}, \label{ma1}%
\end{equation}
and one of the following conditions holds:

(a) For each $j\leq j_{0},$ $\lim_{k\rightarrow\infty}\inf L(\partial
\Sigma_{n_{k}j})<\lim_{k\rightarrow\infty}\inf L(\partial\Sigma_{n_{k}}).$

(b) For at least two distinct subscribe $j_{1}\ $and $j_{2}$ in $\{1,2,\dots
,j_{0}\}$%
\[
\lim_{k\rightarrow\infty}\inf L(\partial\Sigma_{n_{k}j_{i}})>0,i=1,2.
\]

(c) For some subscribe $j_{1}\leq j_{0},$
\[
\lim_{k\rightarrow\infty}\inf L(\partial\Sigma_{n_{k}j_{1}})>0\mathrm{\ and\ }%
\lim_{k\rightarrow\infty}\sup H(\Sigma_{n_{k}j_{1}})=0.
\]

\end{definition}

\begin{lemma}
\label{undec-seq}Let $L\in\mathcal{L}$ be a positive number and let
$\Sigma_{n}$ be a precise extremal sequence of $\mathcal{F}(L,m)$. Then
$\Sigma_{n}$ can't be decomposable in $\mathcal{F}(L,m^{\prime})\ $for every
positive integer $m^{\prime}\leq m.$
\end{lemma}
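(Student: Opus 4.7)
The plan is to argue by contradiction, using Lemma~\ref{dec} as the main quantitative tool. Suppose $\Sigma_{n}$ is a precise extremal sequence of $\mathcal{F}(L,m)$ that is nonetheless decomposable in $\mathcal{F}(L,m')$ for some positive integer $m'\le m$. Then there is a subsequence $\Sigma_{n_{k}}$, a sequence of decompositions $\{\Sigma_{n_{k}j}\}_{j=1}^{j_{0}}\subset\mathcal{F}(L,m')$, and a vanishing sequence $\varepsilon_{k}\to 0$ satisfying (\ref{ma3}) and (\ref{ma1}). Applying Lemma~\ref{dec} with $\Sigma_{0}=\Sigma_{n_{k}}$ and $\varepsilon_{0}=\varepsilon_{k}$ yields
\[
\max_{1\le j\le j_{0}} H(\Sigma_{n_{k}j})\ \ge\ \frac{H(\Sigma_{n_{k}})-\varepsilon_{k}/L(\partial\Sigma_{n_{k}})}{1+\varepsilon_{k}/L(\partial\Sigma_{n_{k}})}.
\]
By Lemma~\ref{idex}(2)--(3), $\liminf_{k}L(\partial\Sigma_{n_{k}})\ge\min\{L,2\delta_{E_{q}}\}>0$, so the bound $\varepsilon_{k}/L(\partial\Sigma_{n_{k}})\to 0$ and $H(\Sigma_{n_{k}})\to H_{L,m}$ together imply $\liminf_{k}\max_{j} H(\Sigma_{n_{k}j})\ge H_{L,m}$.

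Next I would pick, for each $k$, an index $j_{k}\in\{1,\dots,j_{0}\}$ achieving the maximum above, and pass to a subsequence on which $j_{k}=j^{*}$ is constant. Because $\Sigma_{n_{k}j^{*}}\in\mathcal{F}(L,m')\subset\mathcal{F}(L,m)$, we get $H(\Sigma_{n_{k}j^{*}})\le H_{L,m'}\le H_{L,m}$ for all $k$, so combining with the previous step shows $H(\Sigma_{n_{k}j^{*}})\to H_{L,m}=H_{L,m'}$. In particular, $\{\Sigma_{n_{k}j^{*}}\}_{k}$ is an extremal sequence of $\mathcal{F}(L,m)$ (and of $\mathcal{F}(L,m')$). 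By Lemma~\ref{H>0} we also have $H_{L,m}>0$, a fact I will need to rule out case (c).

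I will then rule out each of the three alternatives in Definition~\ref{decflm}. In case (a), already $\liminf_{k}L(\partial\Sigma_{n_{k}j^{*}})<\liminf_{k}L(\partial\Sigma_{n_{k}})=\lim_{n}L(\partial\Sigma_{n})$, which contradicts the precise extremality of $\{\Sigma_{n}\}$ in $\mathcal{F}(L,m)$. In case (b), there exist distinct $j_{1},j_{2}$ with $\liminf_{k}L(\partial\Sigma_{n_{k}j_{i}})>0$; using (\ref{ma1}) to isolate the $j^{*}$--term gives
\[
\liminf_{k}L(\partial\Sigma_{n_{k}j^{*}})\ \le\ \lim_{n}L(\partial\Sigma_{n})-\sum_{i:\ j_{i}\ne j^{*}}\liminf_{k}L(\partial\Sigma_{n_{k}j_{i}})\ <\ \lim_{n}L(\partial\Sigma_{n}),
\]
which again contradicts precise extremality (the sum on the right is strictly positive because at least one of $j_{1},j_{2}$ differs from $j^{*}$). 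In case (c), let $j_{1}$ be the index with $\liminf_{k}L(\partial\Sigma_{n_{k}j_{1}})>0$ and $\limsup_{k}H(\Sigma_{n_{k}j_{1}})=0$; if $j^{*}=j_{1}$ then $H(\Sigma_{n_{k}j^{*}})\to 0$ clashes with $H(\Sigma_{n_{k}j^{*}})\to H_{L,m}>0$, while if $j^{*}\ne j_{1}$ the same perimeter-subtraction argument as in case (b) gives $\liminf_{k}L(\partial\Sigma_{n_{k}j^{*}})<\lim_{n}L(\partial\Sigma_{n})$, contradicting precise extremality.

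The main obstacle is bookkeeping rather than new ideas: one must check that the index $j^{*}$ extracted from Lemma~\ref{dec} actually lies in whichever subset $\{j_{1}\}$ or $\{j_{1},j_{2}\}$ is distinguished by condition (b) or (c), and handle the two possibilities in parallel so that the perimeter estimate produced by (\ref{ma1}) transfers into a strict deficit $\liminf_{k}L(\partial\Sigma_{n_{k}j^{*}})<\lim_{n}L(\partial\Sigma_{n})$. A secondary subtlety is that precise extremality, as defined, requires $\lim_{n}L(\partial\Sigma_{n})$ to exist, so every appeal to a contradictory sequence $\{\Sigma_{n_{k}j^{*}}\}_{k}$ must be accompanied by a further subsequence extraction so that $\lim_{k}L(\partial\Sigma_{n_{k}j^{*}})$ exists and equals the offending $\liminf$; once that is done, the comparison with the precise-extremality property of $\{\Sigma_{n}\}$ is immediate.
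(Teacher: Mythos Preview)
Your proposal is correct and follows essentially the same approach as the paper: apply Lemma~\ref{dec} to get an index $j^{*}$ whose $H$-values converge to $H_{L,m}$, note via Lemma~\ref{idex} that the perimeters of $\Sigma_{n_{k}}$ are bounded below, and then in each of cases (a)--(c) produce an extremal sequence in $\mathcal{F}(L,m)$ with strictly smaller limiting perimeter, contradicting precise extremality. Your write-up is in fact slightly more explicit than the paper's about the subsequence extraction for $j^{*}$ and about invoking $H_{L,m}>0$ (Lemma~\ref{H>0}) to handle case~(c).
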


\begin{proof}
By Lemma \ref{idex} we have
\begin{equation}
L_{1}=\lim_{k\rightarrow\infty}\inf L(\partial\Sigma_{n_{k}})>0. \label{ma5}%
\end{equation}

Assume that the sequence $\left\{  \left\{  \Sigma_{n_{k}j}\right\}
_{j=1}^{j_{0}}\right\}  _{k=1}^{\infty}$ satisfying (\ref{ma3}), (\ref{ma1})
and one of (a)--(c) in the definition exists. We may further assume%
\[
H(\Sigma_{n_{k}1})=\max_{1\leq j\leq j_{0}}H(\Sigma_{n_{k}j}).
\]
Then for each $k,$ by (\ref{ma2})--(\ref{ma1}) and Lemma \ref{dec}%
\[
H(\Sigma_{n_{k}1})\geq\frac{H(\Sigma_{n_{k}})-\varepsilon_{k}/L(\partial
\Sigma_{n_{k}})}{1+\varepsilon_{k}/L(\partial\Sigma_{n_{k}})},
\]
which with (\ref{ma5}) implies%
\[
\lim_{k\rightarrow\infty}\inf H(\Sigma_{n_{k}1})\geq\lim_{k\rightarrow\infty
}H(\Sigma_{n_{k}})=H_{L,m}.
\]
Thus $\Sigma_{n_{k}1},k=1,2,\dots,$ is an extremal sequence of $\mathcal{F}%
(L,m)$ and in fact the equality holds (note that $\mathcal{F}\left(
L,m^{\prime}\right)  \subset\mathcal{F}(L,m)$ and we assumed $\Sigma_{n_{k}%
j}\in\mathcal{F}\left(  L,m^{\prime}\right)  $).

Assume (a) or (b) holds. Then, by (\ref{ma2}) and (\ref{ma1}), we have, for
sufficiently large $k,$%
\begin{equation}
L(\partial\Sigma_{n_{k}1})\leq L^{\prime}<L_{1}, \label{ma4}%
\end{equation}
for some $L^{\prime}<L_{1}$. Since $\Sigma_{n_{k}1}$ is extremal in
$\mathcal{F}(L,m),$ (\ref{ma5}) and (\ref{ma4}) show that $\Sigma_{n}$ is not
a precise extremal sequence in $\mathcal{F}(L,m)$. This contradicts the
assumption that $\Sigma_{n}$ is precise extremal in $\mathcal{F}(L,m)$.

Assume (c) holds. Then $1\neq j_{1},$ and then (\ref{ma4}) holds for
sufficiently $k$ and we obtain a contradiction again. We have proved that
$\Sigma_{n}$ is not decomposable in $\mathcal{F}(L,m^{\prime})$ when
$m^{\prime}\leq m.$
\end{proof}

As a direct consequence of the previous lemma we have the following.

\begin{lemma}
\label{undec}Let $L$ be a positive number. Then for any positive integer
$m^{\prime}\leq m$, any precise extremal surface $\Sigma$ of $\mathcal{F}%
(L,m)$ can't be decomposable in $\mathcal{F}(L,m^{\prime})$. That is to say,
for any integer $j_{0}\geq2,$ there does not exist a number of $j_{0}$
surfaces $\Sigma_{j},j=1,\dots,j_{0},\ $in $\mathcal{F}(L,m^{\prime})$ such
that
\[
\sum_{j=1}^{j_{0}}R(\Sigma_{j})\geq R(\Sigma),\mathrm{\ }\sum_{j=1}^{j_{0}%
}L(\partial\Sigma_{j})\leq L(\partial\Sigma),
\]
and that%
\[
L(\partial\Sigma_{j})>0,j=1,2,\dots,j_{0}.
\]
The same conclusion hold when $\mathcal{F}\left(  L,m\right)  $ and
$\mathcal{F}(L,m^{\prime})$ are both replaced by $\mathcal{F}(L).$
\end{lemma}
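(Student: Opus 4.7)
I will derive Lemma~\ref{undec} directly from Lemma~\ref{ratio} combined with clause~(3) in the definition of a precise extremal surface, bypassing the sequence formulation of Lemma~\ref{undec-seq}.

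Assume for contradiction that $\Sigma_{1},\dots,\Sigma_{j_{0}}\in\mathcal{F}(L,m')$ exist with $j_{0}\geq 2$, $L(\partial\Sigma_{j})>0$ for every $j$, $\sum_{j}R(\Sigma_{j})\geq R(\Sigma)$ and $\sum_{j}L(\partial\Sigma_{j})\leq L(\partial\Sigma)$. Lemma~\ref{H>0} (applied through the chain $\mathcal{T}_{L}\subset\mathcal{F}(L,1)\subset\mathcal{F}(L,m)$) gives $H_{L,m}\geq H_{L,1}>0$, so $R(\Sigma)=H(\Sigma)L(\partial\Sigma)=H_{L,m}L(\partial\Sigma)>0$. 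After relabeling I may take $j^{*}$ to realize $\max_{j}H(\Sigma_{j})=\max_{j}R(\Sigma_{j})/L(\partial\Sigma_{j})$. Lemma~\ref{ratio}, read as the weighted-average inequality with positive weights $L(\partial\Sigma_{j})$, then yields
\[
H(\Sigma_{j^{*}})\;\geq\;\frac{\sum_{j}R(\Sigma_{j})}{\sum_{j}L(\partial\Sigma_{j})}\;\geq\;\frac{R(\Sigma)}{\sum_{j}L(\partial\Sigma_{j})}\;\geq\;\frac{R(\Sigma)}{L(\partial\Sigma)}=H_{L,m},
\]
where the last two inequalities use $R(\Sigma)>0$ together with $0<\sum_{j}L(\partial\Sigma_{j})\leq L(\partial\Sigma)$.

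On the other hand, $m'\leq m$ and Remark~\ref{spaces} give $\mathcal{F}(L,m')\subset\mathcal{F}(L,m)$, so $\Sigma_{j^{*}}\in\mathcal{F}(L,m)$ and hence $H(\Sigma_{j^{*}})\leq H_{L,m}$. The two bounds force $H(\Sigma_{j^{*}})=H_{L,m}$, making $\Sigma_{j^{*}}$ an extremal surface of $\mathcal{F}(L,m)$. Since $j_{0}\geq 2$ and every $L(\partial\Sigma_{j})>0$,
\[
L(\partial\Sigma_{j^{*}})\;\leq\;L(\partial\Sigma)-\sum_{j\neq j^{*}}L(\partial\Sigma_{j})\;<\;L(\partial\Sigma),
\]
producing an extremal surface of $\mathcal{F}(L,m)$ whose perimeter is strictly smaller than that of $\Sigma$. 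This contradicts clause~(3) of Definition~\ref{extr-surf} for the precise extremal surface $\Sigma$.

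The $\mathcal{F}(L)$ version follows by the same argument, replacing $H_{L,m}$ by $H_{L}$ and invoking the minimality clause from Definition~\ref{HL} in place of Definition~\ref{extr-surf}(3). The only point that needs care—and therefore the main obstacle I anticipate—is justifying the central display: the step $\sum R(\Sigma_{j})/\sum L(\partial\Sigma_{j})\geq R(\Sigma)/L(\partial\Sigma)$ requires $R(\Sigma)\geq 0$, so that shrinking the denominator from $L(\partial\Sigma)$ to $\sum_{j}L(\partial\Sigma_{j})$ preserves the direction of the inequality, while the weighted-average form of Lemma~\ref{ratio} requires every $L(\partial\Sigma_{j})$ to be strictly positive. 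Both facts are ensured by the hypotheses of the lemma together with Lemma~\ref{H>0}, so no deformation of $\Sigma$, no use of Stoilow's theorem, and no sequence extraction is needed.
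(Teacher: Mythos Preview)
Your argument is correct and is essentially the same ratio-comparison idea the paper uses, just applied directly to the single surface rather than routed through the sequence version. The paper states Lemma~\ref{undec} as a one-line corollary of Lemma~\ref{undec-seq}, whose proof in turn uses Lemma~\ref{dec}/Lemma~\ref{ratio} exactly as you do; you simply short-circuit the detour through sequences. Two small remarks: the minimal-perimeter condition you invoke lives in clause~(2) of Definition~\ref{extr-surf}, not a clause~(3); and Lemma~\ref{ratio} as stated assumes the $R_{j}$ are positive, but its proof (and hence your weighted-average inequality) only needs the $L_{j}$ positive, so your use is legitimate.
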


\section{Precise extremal sequences in $\mathcal{F}_{r}\left(  L,m\right)  $
with converging boundary}

The goal of this section is to prove the following Theorem, which plays a key
role of the proof of the main theorems.

\begin{theorem}
\label{nobi}For fixed $L\in\mathcal{L}$ with $L\geq2\delta_{E_{q}}$ and large
enough $m>3,$ let $\Sigma_{n}=\left(  f_{n},\overline{\Delta}\right)  $ be a
precise extremal sequence in $\mathcal{F}_{r}(L,m)$, and assume that the
following conditions (A)--(D) hold.

(A) For each $n=1,2,\dots,\Gamma_{n}=\partial\Sigma_{n}$ has $\mathcal{F}%
(L,m)$-partitions%
\begin{equation}
\partial\Delta=\alpha_{n1}\left(  a_{n1},a_{n2}\right)  +\alpha_{n2}%
(a_{n2},a_{n3})+\cdots+\alpha_{nm}(a_{nm},a_{n1}) \label{pk1}%
\end{equation}
and%
\begin{equation}
\Gamma_{n}=\partial\Sigma_{n}=c_{n1}\left(  q_{n1},q_{n2}\right)
+c_{n2}\left(  q_{n2},q_{n3}\right)  +\cdots+c_{nm}\left(  q_{nm}%
,q_{n1}\right)  \label{pk2}%
\end{equation}
with $c_{nj}=\left(  f,\alpha_{nj}\right)  ,j=1,\dots,m.$

(B) $\Gamma_{0}=\left(  f_{0},\partial\Delta\right)  $ is a curve on $S$ which
has $\mathcal{C}(L,m)$-partitions%
\begin{equation}
\partial\Delta=\alpha_{01}\left(  a_{01},a_{02}\right)  +\alpha_{02}%
(a_{02},a_{03})+\cdots+\alpha_{0m}(a_{0m},a_{01}), \label{pk3}%
\end{equation}
and%
\begin{equation}
\Gamma_{0}=c_{01}\left(  q_{01},q_{02}\right)  +c_{02}\left(  q_{02}%
,q_{03}\right)  +\cdots+c_{0m}\left(  q_{0m},q_{01}\right)  , \label{pk4}%
\end{equation}
with $c_{0j}=\left(  f_{0},\alpha_{0j}\right)  ,j=1,\dots,m.$

(C) $\Gamma_{n}=\partial\Sigma_{n}=\left(  f_{n},\partial\Delta\right)  $
uniformly converges to $\Gamma_{0}=\left(  f_{0},\partial\Delta\right)  ,$ and
moreover, for each $j=1,\dots,m,\alpha_{nj}$ uniformly converges to
$\alpha_{0j},$ and $c_{nj}$ uniformly converges to $c_{0j}$.

(D) For every $n=0,1,\dots,$ $\Gamma_{n}=\left(  f_{n},\partial\Delta\right)
$ are parametrized by length and $a_{n1}=1.$

Then

(i) For any pair of distinct two points $a$ and $b$ in $\partial\Delta,$
\begin{equation}
\lim_{n\rightarrow\infty}\inf d_{f_{n}}(a,b)>0. \label{mar1}%
\end{equation}

(ii) For any disjoint compact arcs $I$ and $J$ of $\partial\Delta,$
\[
\lim_{n\rightarrow\infty}\inf d_{f_{n}}(I,J)>0.
\]

\end{theorem}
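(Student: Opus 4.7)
Part (i) will be proved by contradiction; part (ii) will then be deduced from (i) by a triangle inequality combined with the parametrization-by-length hypothesis (D). The guiding idea for (i) is: if $d_{f_n}(a,b)\to 0$ along a subsequence, a $d_{f_n}$-shortest path $I_n$ from $a$ to $b$ becomes arbitrarily short and must cross $\Delta$ through the interior; such a short crossing piece cuts $\Sigma_n$ into two subsurfaces whose boundary lengths stay bounded below (by parametrization by length) and whose $R$-values sum to $R(\Sigma_n)$ (by the cutting formula of Lemma~\ref{cut-off}). This is a decomposition in the sense of Definition~\ref{decflm} that is forbidden by Lemma~\ref{undec-seq}.

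\textbf{Execution of (i).} Assume $\liminf_n d_{f_n}(a,b)=0$ for some fixed distinct $a,b\in\partial\Delta$ and pass to a subsequence with $d_{f_n}(a,b)\to 0$. Lemma~\ref{ex-short} provides a $d_{f_n}$-shortest path $I_n\subset\overline{\Delta}$ from $a$ to $b$ with $L(f_n,I_n)\to 0$, and Lemma~\ref{shortest}(v) gives a partition $I_n=I_{n,1}+\cdots+I_{n,2k_n+1}$ with $I_{n,2j}^{\circ}\subset\Delta$ and $I_{n,2j-1}\subset\partial\Delta$. Because $\Gamma_n$ is parametrized by length and $L(\partial\Sigma_n)\geq 2\delta_{E_q}$ (Lemma~\ref{idex}(3)), each of the two arcs of $\partial\Delta$ joining $a$ to $b$ has $\Gamma_n$-length bounded below by a fixed positive constant; thus $k_n\geq 1$ for all large $n$. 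Pick an interior piece $J_n:=I_{n,2j_0}$ with $L(f_n,J_n)\leq L(f_n,I_n)\to 0$. By Lemma~\ref{shortest}(vi) and a small length-preserving perturbation if necessary, I may assume $J_n^{\circ}\cap f_n^{-1}(E_q)=\emptyset$, so $(f_n,J_n)$ is a simple straight segment and $f_n$ is a local homeomorphism near $J_n^{\circ}$; using Lemma~\ref{shortest}(vii)--(viii), I choose $j_0$ so that $\partial J_n\subset\{a_{nj}\}_{j=1}^{m}\cup\{a,b\}$.

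Cutting $\Delta$ along $J_n$ yields Jordan domains $\Delta_{n,1},\Delta_{n,2}$ with subsurfaces $\Sigma_{n,i}:=(f_n,\overline{\Delta_{n,i}})$, and Lemma~\ref{cut-off} gives
\begin{equation*}
R(\Sigma_{n,1})+R(\Sigma_{n,2})=R(\Sigma_n),\qquad L(\partial\Sigma_{n,1})+L(\partial\Sigma_{n,2})=L(\partial\Sigma_n)+2L(f_n,J_n),
\end{equation*}
so with $\varepsilon_n:=2L(f_n,J_n)\to 0$ the estimates (\ref{ma2})--(\ref{ma1}) of Definition~\ref{decflm} hold. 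The same positive lower bound on the $\Gamma_n$-length of each arc of $\partial\Delta\setminus\partial J_n$ produces $\liminf_n L(\partial\Sigma_{n,i})>0$ for $i=1,2$, which is condition (b) of Definition~\ref{decflm}. Applying Lemmas~\ref{mm'} and~\ref{m-1} to $\Sigma_{n,i}$, and exploiting $\partial J_n\subset\{a_{nj}\}_{j=1}^{m}\cup\{a,b\}$ together with the assumption that $m$ is large, I obtain $\Sigma_{n,i}\in\mathcal{F}_r(L,m')$ for some $m'\leq m$. Since $\Sigma_n$ is precise extremal in $\mathcal{F}_r(L,m)$, hence in $\mathcal{F}(L,m)$ by Lemma~\ref{Fr'FrF}, this contradicts Lemma~\ref{undec-seq} and proves (i).

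\textbf{Part (ii) and the main obstacle.} If $\liminf_n d_{f_n}(I,J)=0$, choose $a_n\in I$ and $b_n\in J$ with $d_{f_n}(a_n,b_n)\to 0$ and pass to a subsequence with $a_n\to a\in I$ and $b_n\to b\in J$; since $I\cap J=\emptyset$, $a\neq b$. Writing $\ell(\cdot,\cdot)$ for parameter arc-length on $\partial\Delta$ and using parametrization by length with $L(\partial\Sigma_n)\leq L$, we have $d_{f_n}(a,a_n)\leq\ell(a,a_n)L(\partial\Sigma_n)/(2\pi)\to 0$, and similarly $d_{f_n}(b,b_n)\to 0$, so the triangle inequality gives $d_{f_n}(a,b)\to 0$, contradicting (i). The principal technical obstacle in (i) is verifying that the cut subsurfaces $\Sigma_{n,i}$ lie in $\mathcal{F}_r(L,m')$ with $m'\leq m$: this rests on placing $\partial J_n$ inside the partition nodes using Lemma~\ref{shortest}(vii)--(viii), combined with the refinement counting of Lemmas~\ref{mm'} and~\ref{m-1}, which requires $m$ to be chosen sufficiently large so that sufficiently many original nodes remain on each arc of $\partial\Delta\setminus\partial J_n$.
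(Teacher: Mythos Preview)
Your derivation of (ii) from (i) is fine and matches the paper's Step~6. The substantive gap is in (i), and it lies in the sentence ``Applying Lemmas~\ref{mm'} and~\ref{m-1} \ldots\ together with the assumption that $m$ is large, I obtain $\Sigma_{n,i}\in\mathcal{F}_r(L,m')$ for some $m'\leq m$.'' This is false in the configuration the paper calls Case~3: take $k_n=1$, $a\in\alpha_{n1}^{\circ}$, and suppose the terminal point $b_{n,2}$ of $J_n=I_{n,2}$ lands in the interior $\alpha_{n2}^{\circ}$ of the \emph{adjacent} edge (nothing you have written rules this out). Then the short arc of $\partial\Delta$ from $a$ to $b_{n,2}$ contains exactly one node $a_{n,2}$, so Lemma~\ref{mm'} gives $m'=m+2-1=m+1$ for the large piece, and Lemma~\ref{undec-seq} (which requires $m'\le m$) is inapplicable. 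Enlarging $m$ does nothing: the nodes $\{a_{nj}\}$ sit at fixed geometric positions on $\partial\Sigma_n$, and in this configuration exactly one of them lies on the short arc regardless of how many there are elsewhere. The paper devotes Steps~2--4 to this obstacle: it first proves a separate geometric lemma (Lemma~\ref{ccl2}) that if $c_{01}$ is strictly convex or has length $>\pi$ then (\ref{mar1}) already holds; it then shows that in Case~3 both $c_{01}$ and $c_{02}$ are forced to be straight with $L\le\pi$ and hence $c_{01}+c_{02}$ is folded at $q_{02}$, which allows a univalent extension across a thin lens and yields a different, multi-piece decomposition in which one piece has $H\to 0$ (invoking condition~(c) of Definition~\ref{decflm}) and the others land in $\mathcal{F}(L,m-1)$ via Lemma~\ref{m-1}. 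Your one-cut argument captures only the paper's Case~1.

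A secondary gap: the phrase ``by a small length-preserving perturbation if necessary, I may assume $J_n^{\circ}\cap f_n^{-1}(E_q)=\emptyset$'' is not an argument. Lemma~\ref{shortest}(vi) \emph{assumes} this emptiness; it does not deliver it, and a perturbed path is no longer a $d_{f_n}$-geodesic, so the structure in Lemma~\ref{shortest}(v)--(viii) collapses. The correct justification is Lemma~\ref{narrow} (recorded as~(\ref{az1}) in the paper): since $L(f_n,I_n)\to 0$, every point of $I_n\cap\Delta$ is within $d_{f_n}$-distance $<\delta_0$ of $\partial\Delta$ and therefore avoids $f_n^{-1}(E_q)\cap\Delta$.
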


\begin{center}
\label{st1}\textbf{Some conventions for the proof of Theorem \ref{nobi}.}
\end{center}

We first make some\textbf{\ }conventions on the assumptions of this theorem.
By Lemma \ref{idex} and the assumption of Theorem \ref{nobi} we have
\begin{equation}
L(\Gamma_{0})=L\left(  f_{0},\partial\Delta\right)  =\lim_{n\rightarrow\infty
}L(f_{n},\partial\Delta)=\lim_{n\rightarrow\infty}L(\partial\Sigma_{n}%
)\geq2\delta_{E_{q}}>0, \label{ap28-2}%
\end{equation}
and then by Lemmas \ref{narrow}, for the sequence $\Sigma_{n}$ in the theorem
in proof, there exists a positive number $\delta_{0}$ such that%
\begin{equation}
d_{f_{n}}(f_{n}^{-1}(E_{q})\cap\Delta,\partial\Delta)\geq\delta_{0}%
,n=1,2,\dots\label{az1}%
\end{equation}

\begin{remark}
\label{428-1}By the assumption, the restriction $f_{n}|_{\partial\Delta
}:\partial\Delta\rightarrow S$ is linear in length, say,
\begin{equation}
L(f_{n},e^{\sqrt{-1}\left[  0,\theta\right]  })=\frac{\theta}{2\pi}%
L(f_{n},\partial\Delta),\theta\in\lbrack0,2\pi],n=0,1,2,\dots, \label{ap28-1}%
\end{equation}
where $e^{\sqrt{-1}\left[  0,\theta\right]  }$ denotes the arc $\{e^{\sqrt
{-1}t}:t\in\lbrack0,\theta]\}$ on $\partial\Delta.$ It is permitted that some
$\alpha_{0j},$ as the limit of $\alpha_{nj},$ is a point, and by (C), (D) and
(\ref{ap28-2}) it is clear that $\alpha_{0j}$ is a point iff $c_{0j}$ is a
point, for each $j=1,\dots,m.$
\end{remark}

\begin{remark}
\label{ap11}For each $j_{0}=2,\dots,m$ and each $n=0,1,2,\dots,$ let $\phi
_{n}(z)=a_{nj_{0}}z$ be the rotation of $\mathbb{C}$ (note that $a_{nj_{0}%
}=e^{\sqrt{-1}\theta_{nj_{0}}}\in\partial\Delta$ for some $\theta_{nj_{0}}%
\in\lbrack0,2\pi)$). Then the sequence $\Sigma_{n}^{j_{0}}=\left(  f_{n}%
\circ\phi_{n},\overline{\Delta}\right)  $ is also a precise extremal sequence
of $\mathcal{F}_{r}(L,m)$ and
\begin{equation}
\partial\Delta=\alpha_{n1}^{j_{0}}\left(  a_{n1}^{j_{0}},a_{n2}^{j_{0}%
}\right)  +\alpha_{n2}^{j_{0}}\left(  a_{n2}^{j_{0}},a_{n3}^{j_{0}}\right)
+\cdots+\alpha_{nm}^{j_{0}}\left(  a_{nm}^{j_{0}},a_{n1}^{j_{0}}\right)  ,
\label{ap28}%
\end{equation}
with
\[
a_{nj}^{j_{0}}=\frac{a_{n,j_{0}+j-1}}{a_{nj_{0}}}%
\]
for $j=1,\dots,m,$ are $\mathcal{F}(L,m)$-partitions of $\partial\Sigma
_{n}^{j_{0}}$ with $n\geq1,$ or is a $\mathcal{C}(L,m)$-partition for
$\Gamma_{0}^{j_{0}}=\left(  f_{0}\circ\phi_{0},\partial\Delta\right)  .$ Since
$a_{n1}^{j_{0}}=1$ for all $n=0,1,\dots,$ and $a_{nj_{0}}$ converges to
$a_{0j_{0}}$ as $n\rightarrow\infty,$ it is clear that the partition
(\ref{ap28}), the sequence $\Sigma_{n}^{j_{0}}$ and the curve $\Gamma
_{0}^{j_{0}}$ satisfy all hypothesis of the theorem in proof as the partition
(\ref{pk1}), the sequence $\Sigma_{n}$ and the curve $\Gamma_{0}$. On the
other hand we have%
\[
d_{f_{n}\circ\phi_{n}}\left(  \phi_{n}^{-1}(a),\phi_{n}^{-1}(b)\right)
=d_{f_{n}}\left(  a,b\right)  ,n=1,2,\dots,
\]
for any pair of two points $a$ and $b$ in $\overline{\Delta},$ and by (C) at
least one arc $\alpha_{0j}$ is not a point. Therefore, to prove (i) of the
theorem, we may always assume that the following conditions hold (by omitting
at most a finite number of terms of $\left\{  \Sigma_{n}\right\}
_{n=1}^{\infty}$).
\end{remark}

\begin{condition}
\label{fa2}$\alpha_{01}$ is not a point and that point $a$ in (\ref{mar1}) is
fixed and contained in $\alpha_{01}.$
\end{condition}

\begin{condition}
\label{da}If $a\in\alpha_{01}^{\circ},$ then we have: (a) there exists a
compact arc
\[
\alpha_{a}=\alpha_{a}\left(  a^{\prime},a^{\prime\prime}\right)  =\alpha
_{01}\left(  a^{\prime},a^{\prime\prime}\right)  \subset\alpha_{01}^{\circ}%
\]
such that $\alpha_{a}$ is a neighborhood of $a$ in $\alpha_{01}^{\circ},$ that
for each $n\in\mathbb{N}^{0}$ and the two points $q_{n}^{\prime}%
=f_{n}(a^{\prime}),q_{n}^{\prime\prime}=f_{n}(a^{\prime\prime}),$ the arc
\[
c_{n,a}=c_{n1}\left(  q_{n}^{\prime},q_{n}^{\prime\prime}\right)  =\left(
f_{n},\alpha_{a}\right)  \subset c_{n1}^{\circ}%
\]
is a compact neighborhood of $f_{n}(a)$ in $c_{n1}^{\circ},$ and%
\begin{equation}
L(c_{n,a})>\pi\mathrm{\ if}\;L(c_{01})>\pi. \label{ma7-2}%
\end{equation}

(b) there exists a number $\delta_{a}\in\left(  0,\delta_{0}\right)  \ $such
that for each$\mathrm{\ }n=0,1,2,\dots,$%
\begin{equation}
\alpha_{a}\subset\alpha_{n1}\backslash D\left(  \left\{  a_{n1},a_{n2}%
\right\}  ,4\delta_{a}\right)  \mathrm{\ }\text{\textrm{and}}\mathrm{\ }%
c_{n,a}\subset c_{n1}\backslash D\left(  \left\{  q_{n1},q_{n2}\right\}
,4\delta_{a}\right)  , \label{ma7-1}%
\end{equation}%
\begin{equation}
L(c_{n1}\left(  q_{n1},q_{n}^{\prime}\right)  )=L(c_{n1}\left(  q_{n}%
^{\prime\prime},q_{n2}\right)  )>4\delta_{a}. \label{ma7}%
\end{equation}
Here $D\left(  Q,r\right)  =\cup_{x\in Q}D(x,r)$ is the $r$-neighborhood of
$Q$ on the sphere $S$ and $\partial\Delta$ is regarded as a set on $S.$
\end{condition}

In fact, Condition \ref{da} follows from (A)--(D) and Condition \ref{fa2}.

If $a\in\alpha_{01}^{\circ},$ then for every positive number $\varepsilon
\leq4\delta_{a}$ (see Condition \ref{da}) we introduce:

\begin{notation}
\label{note1}$\alpha_{01,\varepsilon}$ is the largest connected and compact
neighborhood of $\alpha_{01}$ in $\partial\Delta$ such that
\[
\left(  f_{0},\alpha_{01,\varepsilon}\backslash\alpha_{01}\right)
\subset\overline{D\left(  \{q_{01},q_{02}\},\varepsilon\right)  }.
\]

\end{notation}

\begin{lemma}
\label{l1}(i) For any arc $\gamma$ on $\partial\Delta$ with distinct
endpoints, we have
\[
L(f_{n},\gamma)=\frac{L(\gamma)}{2\pi}L(f_{n},\partial\Delta)>0,n=0,1,2,\dots
,
\]
and for any two arcs $\gamma_{1}$ and $\gamma_{2}$ on $\partial\Delta$ with
$L(\gamma_{1})\leq L(\gamma_{2})$ we have%
\[
L(f_{n},\gamma_{1})\leq L(f_{n},\gamma_{2}),n=0,1,2,\dots.
\]

(ii) If $\{x_{n}\}$ and $\{y_{n}\}$ are two sequence of points on
$\partial\Delta$ such that $\lim_{n\rightarrow\infty}d\left(  x_{n}%
,y_{n}\right)  =0,$ then $\lim_{n\rightarrow\infty}d_{f_{n}}\left(
x_{n},y_{n}\right)  =0.$

(iii) If the given point $a$ is contained in $\alpha_{01}^{\circ}$, then for
the points $a^{\prime},a^{\prime\prime},q_{n}^{\prime},q_{n}^{\prime\prime
},n=0,1,2,\dots,$ in Condition \ref{da} we have,
\begin{align*}
\lim_{n\rightarrow\infty}L(\alpha_{n1}\left(  a_{n1},a^{\prime}\right)  )  &
=L(\alpha_{01}\left(  a_{01},a^{\prime}\right)  )>4\delta_{a},\mathrm{\ }\\
\lim_{n\rightarrow\infty}L(\alpha_{n1}\left(  a^{\prime\prime},a_{n2}\right)
)  &  =L(\alpha_{01}\left(  a^{\prime\prime},a_{02}\right)  )>4\delta_{a};\\
\lim_{n\rightarrow\infty}L(c_{n1}\left(  q_{n1},q_{n}^{\prime}\right)  )  &
=L(c_{01}\left(  q_{01},q_{0}^{\prime}\right)  )>4\delta_{a},\\
\lim_{n\rightarrow\infty}L(c_{n1}\left(  q_{n}^{\prime\prime},q_{n2}\right)
)  &  =L(c_{01}\left(  q_{0}^{\prime\prime},q_{02}\right)  )>4\delta_{a}.
\end{align*}

(iv) If $a\in\alpha_{01}^{\circ}\ $and $\{b_{n}\}_{n=1}^{\infty}\ $is a
sequence in $(\partial\Delta)\backslash\alpha_{01,3\delta_{a}}^{\circ}$ with
\begin{equation}
\lim_{n\rightarrow\infty}d_{f_{n}}\left(  a,b_{n}\right)  \rightarrow0,
\label{-0}%
\end{equation}
then for sufficiently large $n,$
\begin{equation}
d_{f_{n}}\left(  a,b_{n}\right)  <\min\{\delta_{0},\mathrm{\ }\inf\left\{
L(f_{n},\gamma_{0}\left(  a,b_{n}\right)  \right\}  _{n=0}^{\infty}),
\label{ddd}%
\end{equation}
where $\gamma_{0}(a,b_{n})$ is the shorter\footnote{Since $\left(
f_{n},\partial\Delta\right)  $ are parametrized by length, this also means
$L(f_{n},\gamma_{0})\leq L(f_{n},\left(  \partial\Delta\right)  \backslash
\gamma_{0}).$} arc of $\partial\Delta$ with endpoints $a$ and $b_{n}.$
\end{lemma}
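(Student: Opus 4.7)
The plan is to dispatch (i)--(iii) directly from the length parametrization (hypothesis (D) and Remark~\ref{428-1}) together with Condition~\ref{da}, and to reduce (iv) to establishing a uniform positive lower bound on $L(f_n,\gamma_0(a,b_n))$. For (i), after a rotation of $\partial\Delta$ placing the initial endpoint of $\gamma$ at $1$, equation (\ref{ap28-1}) delivers the identity $L(f_n,\gamma) = \frac{L(\gamma)}{2\pi} L(f_n,\partial\Delta)$; monotonicity in $L(\gamma)$ and strict positivity are then immediate from (\ref{ap28-2}). For (ii), let $\tau_n$ be the shorter arc of $\partial\Delta$ joining $x_n$ and $y_n$; its arc-length tends to $0$ together with $d(x_n,y_n)$, so by (i) and boundedness of $L(f_n,\partial\Delta) \to L(\Gamma_0)$ we get $d_{f_n}(x_n,y_n) \le L(f_n,\tau_n) \to 0$. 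Part (iii) is read off from the uniform convergences $\alpha_{nj} \to \alpha_{0j}$ and $c_{nj} \to c_{0j}$ in hypothesis (C) together with (i); the strict lower bounds by $4\delta_a$ are exactly (\ref{ma7}) of Condition~\ref{da}.

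For (iv), the bound $d_{f_n}(a,b_n) < \delta_0$ for large $n$ is immediate from $d_{f_n}(a,b_n) \to 0$. The substantive content is the inequality $d_{f_n}(a,b_n) < \inf_{n \ge 0} L(f_n,\gamma_0(a,b_n))$, for which it suffices to prove that the infimum on the right is strictly positive. Because $a \in \alpha_{01}^\circ$ lies in the interior of the closed neighborhood $\alpha_{01,3\delta_a}$ of $\alpha_{01}$ (Notation~\ref{note1}), the arc-length distance on $\partial\Delta$ from $a$ to the complement $(\partial\Delta) \setminus \alpha_{01,3\delta_a}^\circ$ is a positive constant $\eta = \eta(a,\delta_a) > 0$. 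Since the hypothesis places every $b_n$ in that complement, the shorter arc $\gamma_0(a,b_n)$ has arc-length at least $\eta$, so by (i), $L(f_n,\gamma_0(a,b_n)) \ge \frac{\eta}{2\pi} L(f_n,\partial\Delta)$. As $L(f_n,\partial\Delta) \to L(\Gamma_0) \ge 2\delta_{E_q}$ is uniformly bounded below across all $n \ge 0$, the infimum is strictly positive; combined with $d_{f_n}(a,b_n) \to 0$ this yields (\ref{ddd}) for all sufficiently large $n$.

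This lemma is essentially bookkeeping built on the standing assumptions, and there is no substantive obstacle. The only point requiring a moment of care is the identification of the positive constant $\eta$ in (iv), which uses precisely the fact that $\alpha_{01,3\delta_a}$ is a genuine closed neighborhood of the arc $\alpha_{01}$ on $\partial\Delta$ and $a$ lies in its interior; the rest is an application of the length-parametrization identity (i) and the uniform boundedness of $L(f_n,\partial\Delta)$ from (\ref{ap28-2}).
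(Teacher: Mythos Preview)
Your proof is correct and follows essentially the same approach as the paper. The paper is even terser, simply noting that (i)--(iii) follow from (A)--(D) together with Conditions~\ref{fa2} and~\ref{da}; for (iv) it invokes (\ref{ma7-1}) to obtain the explicit lower bound $L(\gamma_0(a,b_n))>4\delta_a$ (rather than your abstract $\eta>0$) and then applies the length parametrization and (\ref{ap28-2}) exactly as you do.
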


\begin{proof}
In fact, (i)--(iii) follows from (A)--(D), and Conditions \ref{fa2} and
\ref{da}. Note that Condition \ref{fa2} (b) holds for $n=0,1,\dots.$

Assume that the given point $a$ is contained in $\alpha_{01}^{\circ}.$ Then
for the sequence $\left\{  b_{n}\right\}  _{n=1}^{\infty}\subset
(\partial\Delta)\backslash\alpha_{01,3\delta_{a}}^{\circ},$ we have by
(\ref{ma7-1}) that $L(\gamma_{0}\left(  a,b_{n}\right)  )>4\delta_{a},$ and
thus by Remark \ref{428-1} we have
\[
L\left(  f_{n},\gamma_{0}\left(  a,b_{n}\right)  \right)  >\frac{4\delta_{a}%
}{2\pi}L(f_{n},\partial\Delta)>0
\]
for $n=0,1,\dots,n.$ Hence, by (\ref{ap28-2}), we have $\inf\left\{
L(f_{n},\gamma_{0}\left(  a,b_{n}\right)  \right\}  _{n=0}^{\infty}>0,$ which
implies (\ref{ddd}) for sufficiently large $n.$\medskip
\end{proof}

\begin{center}
\textbf{Step 1.} \textbf{Some useful results inspiring Theorem \ref{nobi}}
\end{center}

In this step we will prove (\ref{mar1}) in some special cases, and we will
deduce some contradictions under the condition that (\ref{mar1}) fails. These
simple results inspire Theorem \ref{nobi}, though the complete proof of
Theorem \ref{nobi} is very complicated. $\medskip\medskip$

\begin{lemma}
\label{ccl1}For any two points $x$ and $y$ on $\partial\Delta$ with
$f_{0}(x)\neq f_{0}(y),$ (\ref{mar1}) holds, say,
\begin{equation}
\lim_{n\rightarrow\infty}\inf d_{f_{n}}(x,y)\geq d(f_{0}(x),f_{0}(y))>0.
\label{aug1}%
\end{equation}

\end{lemma}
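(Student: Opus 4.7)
The plan is very short because Lemma~\ref{ccl1} is essentially the statement that the spherical geodesic distance $d(f_0(x),f_0(y))$ is a lower bound for the limiting $d_{f_n}$-distance, and this will follow almost tautologically from the definition of $d_{f_n}$ together with the uniform convergence hypothesis (C).

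First I would use Definition~\ref{df}: for any curve $I$ in $\overline{\Delta}$ joining $x$ and $y$, the image $(f_n,I)$ is a curve on $S$ from $f_n(x)$ to $f_n(y)$, and its spherical length $L(f_n,I)$ is at least the spherical distance $d(f_n(x),f_n(y))$ (since the geodesic is the shortest path on the sphere). Taking the infimum over all admissible $I$ gives
\[
d_{f_n}(x,y)\;\geq\;d(f_n(x),f_n(y)).
\]

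Next, by hypothesis (C), $\Gamma_n=(f_n,\partial\Delta)$ converges uniformly to $\Gamma_0=(f_0,\partial\Delta)$ on $\partial\Delta$, and $\{x,y\}\subset\partial\Delta$, so $f_n(x)\to f_0(x)$ and $f_n(y)\to f_0(y)$ on $S$. The spherical distance $d(\cdot,\cdot)$ is continuous, hence
\[
\lim_{n\to\infty}d(f_n(x),f_n(y))\;=\;d(f_0(x),f_0(y)).
\]
Combining the two displays gives
\[
\liminf_{n\to\infty}d_{f_n}(x,y)\;\geq\;d(f_0(x),f_0(y)),
\]
and the assumption $f_0(x)\neq f_0(y)$ makes the right side strictly positive, establishing (\ref{aug1}).

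There is no real obstacle here; the only thing to be careful about is invoking the correct sphere-valued lower bound $L(f_n,I)\geq d(f_n(x),f_n(y))$, which is immediate from the fact that lengths on $S$ are computed with the spherical metric $\rho(z)|dz|$ (as recalled in the introduction) and the geodesic $\overline{f_n(x)f_n(y)}$ minimizes length among paths on $S$ joining its endpoints. All other ingredients (continuity of $d$, uniform convergence from (C)) are already built into the hypotheses.
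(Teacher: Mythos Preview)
Your proof is correct and follows exactly the same approach as the paper's: bound $d_{f_n}(x,y)\geq d(f_n(x),f_n(y))$ from the definition, then use the uniform convergence in (C) to pass to the limit. The paper's own proof is simply the one-line display $d_{f_n}(x,y)\geq d(f_n(x),f_n(y))\to d(f_0(x),f_0(y))$; your version spells out the same steps in more detail.
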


\begin{proof}
By (B)--(D) we have
\[
d_{f_{n}}(x,y)\geq d(f_{n}(x),f_{n}(y))\rightarrow d(f_{0}(x),f_{0}(y)),
\]
and then we have (\ref{aug1}).$\medskip$
\end{proof}

\begin{lemma}
\label{obs2}For the given point $a$ in $\alpha_{01},$ if there exists a number
$\delta>0$ such that each surface $\Sigma_{n}$ contains a disk\footnote{See
Definition \ref{nod}.} $\left(  a,U_{n}^{\delta}\right)  $ of radius $\delta$,
then (\ref{mar1}) holds for all $b\in\left(  \partial\Delta\right)
\backslash\{a\}.$
\end{lemma}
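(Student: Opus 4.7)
My plan is to reduce the conclusion to Lemma \ref{d>d}, which guarantees $d_{f_n}(a,y) \geq \delta'$ whenever $y$ lies outside a disk of radius $\delta'$ centered at $a$. For each fixed $b \in (\partial\Delta)\setminus\{a\}$ it therefore suffices to exhibit a single $\delta'\in(0,\delta]$ such that $b \notin U_n^{\delta'}$ for all sufficiently large $n$; this will give $d_{f_n}(a,b) \geq \delta'$, and hence $\liminf_n d_{f_n}(a,b) \geq \delta'>0$.

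The first step is to note that the hypothesis automatically yields disks of every smaller radius as well. For any $\delta'\in(0,\delta]$, the component of $f_n^{-1}(\overline{D(f_n(a),\delta')}) \cap \overline{U_n^\delta}$ containing $a$ is a disk $(a, U_n^{\delta'})$ of $\Sigma_n$ of radius $\delta'$, by Lemma \ref{cov-1} together with Corollary \ref{cov-2}(i). Set $\gamma_n^{\delta'} := \overline{U_n^{\delta'}} \cap \partial\Delta$; by Remark \ref{nod-1} this is a closed subarc of $\partial\Delta$ containing $a$ whose image $(f_n,\gamma_n^{\delta'})$ is the union of at most two SCC arcs issuing from $f_n(a)$ and lying inside the spherical disk $\overline{D(f_n(a),\delta')}$ (cases (B3.1)--(B3.2) of Lemma \ref{cov-1}). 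A routine spherical computation shows that any SCC arc with chord length $\delta'$ has length at most $\pi\sin(\delta'/2) \leq \pi\delta'/2$, the maximum being attained by a semicircle of a small circle of spherical radius $\delta'/2$; consequently $L(f_n,\gamma_n^{\delta'}) \leq \pi\delta'$.

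Now I transfer this spherical bound to a bound on the $\partial\Delta$-length of $\gamma_n^{\delta'}$. By Condition (D), $(f_n,\partial\Delta)$ is parametrized by length, and by (\ref{ap28-2}) we have $L(f_n,\partial\Delta) \geq 2\delta_{E_q}$ for all large $n$, so Lemma \ref{l1}(i) gives
\[
L(\gamma_n^{\delta'}) \;=\; \frac{2\pi\, L(f_n,\gamma_n^{\delta'})}{L(f_n,\partial\Delta)} \;\leq\; \frac{\pi^2\delta'}{\delta_{E_q}}
\]
for all large $n$. Given $b\neq a$ on $\partial\Delta$, let $d_0>0$ denote the length along the shorter arc of $\partial\Delta$ from $a$ to $b$, and pick $\delta'\in(0,\delta]$ with $\pi^2\delta'/\delta_{E_q}<d_0$. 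Then the arc $\gamma_n^{\delta'}$, which is centered at $a$ and has total length $<d_0$, cannot reach $b$; hence $b\notin\overline{U_n^{\delta'}}$ and Lemma \ref{d>d} gives $d_{f_n}(a,b)\geq\delta'$, proving (\ref{mar1}). The only point requiring care is the uniform SCC-arc length estimate, which must be checked separately in the two sub-cases of Lemma \ref{cov-1}(B)---case (B3.1) gives a folded straight segment of length $\delta'$, while (B3.2) produces two genuine SCC arcs---but in either situation the bound $\pi\delta'/2$ per arc suffices, so this is only a matter of bookkeeping rather than a substantive obstacle.
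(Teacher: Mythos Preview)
Your argument is correct in substance, but one sentence needs tightening: the claim ``any SCC arc with chord length $\delta'$ has length at most $\pi\sin(\delta'/2)$'' is false as stated---a major arc of a great circle with chord $\delta'$ has length $2\pi-\delta'$. What is true, and what you actually use, is that an SCC arc contained in $\overline{D(f_n(a),\delta')}$ with one endpoint at the center $f_n(a)$ and the other on the boundary circle has length at most $\pi\sin(\delta'/2)$: the containment forces it to be the minor arc (or a semicircle) of its supporting circle, and then the bound is routine. With that correction, and with the harmless adjustment that (\ref{ap28-2}) gives only $\lim_n L(f_n,\partial\Delta)\geq 2\delta_{E_q}$ (so use, say, $\delta_{E_q}$ rather than $2\delta_{E_q}$ as the lower bound for large $n$), your proof goes through.

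The paper takes a different route to the same reduction to Lemma~\ref{d>d}. Instead of bounding lengths uniformly in $n$, it exploits the convergence hypotheses (B)--(D) of Theorem~\ref{nobi}: since the boundary-radius images $c_{j,n}^{\delta}=(f_n,\alpha_{j,n}^{\delta})$ are subarcs of $\partial\Sigma_n$ with endpoints at spherical distance $\delta$, they converge to SCC arcs $c_{j,0}^{\delta}$, and hence the arcs $\alpha_{j,n}^{\delta}\subset\partial\Delta$ converge to arcs $\alpha_{j,0}^{\delta}$. Running this with radius $r\delta$ and letting $r\to 0$ makes $\alpha_{1,0}^{r\delta}+\alpha_{2,0}^{r\delta}$ shrink to $\{a\}$, so for any $b\neq a$ one picks $r$ with $b\notin\alpha_{1,0}^{r\delta}+\alpha_{2,0}^{r\delta}$, whence $b\notin U_n^{r\delta}$ for large $n$. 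Your approach is more self-contained and quantitative; the paper's leans on the ambient convergence machinery already in place for Theorem~\ref{nobi}.
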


\begin{proof}
This follows from Lemma \ref{d>d}. In fact we can write
\[
\partial U_{n}^{\delta}=\alpha_{1,n}^{\delta}+\alpha_{2,n}^{\delta}%
+\alpha_{3,n}^{\delta}=\alpha_{1,n}^{\delta}\left(  a_{1,n}^{\delta},a\right)
+\alpha_{2,n}^{\delta}\left(  a,a_{2,n}^{\delta}\right)  +\alpha_{3,n}%
^{\delta}\left(  a_{2,n}^{\delta},a_{1,n}^{\delta}\right)
\]
as in Lemma \ref{cov-1}, so that $-\alpha_{1,n}^{\delta}$ and $\alpha
_{2,n}^{\delta}$ are the boundary radius and $\alpha_{3,n}^{\delta}$ is the
new boundary (see Definition \ref{nod} and Remark \ref{nod-1} for the radius,
the boundary radius, the new and the old boundary of a disk of a surface). For
$j=1,2,$ since the spherical distance of the two endpoints of $c_{j,n}%
^{\delta}=\left(  f_{n},\alpha_{j,n}^{\delta}\right)  $ equals $\delta,$ we
have, by (B)--(D), that $c_{j,n}^{\delta}=\left(  f_{n},\alpha_{j,n}^{\delta
}\right)  $ converges uniformly to an SCC arc $c_{j,0}^{\delta},$ and thus
$\alpha_{j,n}^{\delta}$ converges to an arc $\alpha_{j,0}^{\delta}$ on
$\partial\Delta,$ for $j=1,2.$

It is clear that for any $r\in(0,1],$ $\left(  a,U_{n}^{\delta}\right)  $
contains the disk $\left(  a,U_{n}^{r\delta}\right)  $ of radius $r\delta$ of
$\Sigma_{n},$ the corresponding $\alpha_{j,n}^{r\delta}$ are well defined with
$\partial U_{n}^{r\delta}=\alpha_{1,n}^{r\delta}+\alpha_{2,n}^{r\delta}%
+\alpha_{3,n}^{r\delta}$ for $j=1,2,3$ and $n=0,1,2,\dots,$ and the above
argument applies when $\delta$ is replaced by $r\delta.$ Moreover,
$\alpha_{1,0}^{r\delta}+\alpha_{2,0}^{r\delta}$ uniformly converges to the
point $a$ as $r\rightarrow0.$ Thus, for any $b\in\left(  \partial
\Delta\right)  \backslash\{a\},$ there exists $r\in(0,1)$ such that
$b\notin\alpha_{1,0}^{r\delta}+\alpha_{2,0}^{r\delta}.$ Then for sufficiently
large $n,$ $b\notin\alpha_{1,n}^{r\delta}+\alpha_{2,n}^{r\delta},$ which
implies $b\notin\left(  a,U_{n}^{r\delta}\right)  ,$ and thus by Lemma
\ref{d>d} we have $d_{f_{n}}\left(  a,b\right)  >r\delta>0.$ Therefore
(\ref{mar1}) holds for all $b\in\partial\Delta$ with $b\neq a.$
\end{proof}

Lemma \ref{obs2} can be used to show Lemma \ref{ccl2}, which state that
(\ref{mar1}) holds when $a\in\alpha_{01}^{\circ}$ and $\alpha_{01}$ either is
strictly convex or is straight and $L(c_{01})>\pi.$ Lemma \ref{ccl2} is the
second key ingredient of the proof of Theorem \ref{nobi}. By Lemma \ref{ccl1}
and Condition \ref{da} we have the following lemma.

\begin{lemma}
\label{ccl6}If the given point $a$ is contained in $\alpha_{01}^{\circ},$ then
for any $b\in\alpha_{01,4\delta_{a}}\backslash\{a\}$%
\[
\lim_{n\rightarrow\infty}\inf d_{f_{n}}\left(  a,b\right)  \geq d\left(
f_{0}(a),f_{0}(b)\right)  >0.
\]

\end{lemma}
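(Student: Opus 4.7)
The plan is to reduce the statement to Lemma~\ref{ccl1} by verifying that $f_{0}(a)\neq f_{0}(b)$ whenever $b\in\alpha_{01,4\delta_{a}}\setminus\{a\}$, after which the conclusion $\liminf d_{f_{n}}(a,b)\geq d(f_{0}(a),f_{0}(b))>0$ is immediate from Lemma~\ref{ccl1}. So the entire task is to establish the separation $f_{0}(a)\neq f_{0}(b)$.

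I would split the argument by the position of $b$. First, when $b\in\alpha_{01}\setminus\{a\}$: since the partition (\ref{pk4}) is a $\mathcal{C}(L,m)$-partition of $\Gamma_{0}$ by hypothesis~(B) of Theorem~\ref{nobi}, the arc $c_{01}=(f_{0},\alpha_{01})$ is an SCC arc, hence in particular simple, so $f_{0}|_{\alpha_{01}}$ is injective and $f_{0}(b)\neq f_{0}(a)$. Second, when $b\in\alpha_{01,4\delta_{a}}\setminus\alpha_{01}$: by Notation~\ref{note1} (with $\varepsilon=4\delta_{a}$), $f_{0}(b)\in\overline{D(\{q_{01},q_{02}\},4\delta_{a})}$, so $d(f_{0}(b),\{q_{01},q_{02}\})\leq 4\delta_{a}$. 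On the other hand, the hypothesis $a\in\alpha_{01}^{\circ}$ activates Condition~\ref{da}, whose relations (\ref{ma7-1})--(\ref{ma7}) applied at $n=0$ give $f_{0}(a)\in c_{0,a}\subset c_{01}\setminus D(\{q_{01},q_{02}\},4\delta_{a})$, i.e.\ $d(f_{0}(a),\{q_{01},q_{02}\})>4\delta_{a}$. Combining the two distance estimates yields $f_{0}(a)\neq f_{0}(b)$.

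Having verified the separation in both cases, Lemma~\ref{ccl1} applied to the pair $(a,b)$ produces the stated inequality $\liminf_{n\to\infty} d_{f_{n}}(a,b)\geq d(f_{0}(a),f_{0}(b))>0$. No step here is subtle; the only thing one needs to keep straight is that $\alpha_{01,4\delta_{a}}$ is defined through $f_{0}$ (not through the approximating $f_{n}$), which is precisely why Notation~\ref{note1} and Condition~\ref{da}(b) fit together cleanly, and why the argument does not require any uniformity beyond what hypotheses (B)--(D) of Theorem~\ref{nobi} already supply.
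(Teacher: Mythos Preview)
Your proof is correct and follows exactly the route the paper intends: the paper states only ``By Lemma~\ref{ccl1} and Condition~\ref{da}'' for this lemma, and your two-case split (first $b\in\alpha_{01}$ using simplicity of $c_{01}$, then $b\in\alpha_{01,4\delta_a}\setminus\alpha_{01}$ using the distance estimates from Notation~\ref{note1} and (\ref{ma7-1})) spells out precisely the verification that $f_0(a)\neq f_0(b)$ needed to invoke Lemma~\ref{ccl1}. One tiny remark: the containment in (\ref{ma7-1}) removes an \emph{open} disk, so strictly speaking it yields $d(f_0(a),\{q_{01},q_{02}\})\geq 4\delta_a$ rather than strict inequality, but this is harmless since $\delta_a$ in Condition~\ref{da} is existential and may be taken slightly larger while preserving all the conditions.
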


\begin{lemma}
\label{in-bd-1}Let $a\in\alpha_{01}^{\circ}$ and let $b_{n}$ be a sequence in
$\left(  \partial\Delta\right)  \backslash\alpha_{01,3\delta_{a}}^{\circ}$
which satisfies (\ref{-0}). Then for sufficiently large $n$ and the $d_{f_{n}%
}$-shortest path $I_{n}=I_{n}\left(  a,b_{n}\right)  $ from $a$ to $b_{n}$%
\begin{equation}
I_{n}\cap\Delta\neq\emptyset\mathrm{\ but\ }I_{n}\cap\Delta\cap f_{n}%
^{-1}(E_{q})=\emptyset. \label{neq0=0}%
\end{equation}

\end{lemma}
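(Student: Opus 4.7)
The proof proposal is short because the two conclusions follow almost immediately once one combines (\ref{az1}) with Lemma~\ref{l1}(iv). I will set things up so that both conclusions are consequences of a single estimate on $d_{f_n}(a,b_n)$ that holds for all sufficiently large $n$.

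First I would invoke Lemma~\ref{l1}(iv): since $a\in\alpha_{01}^\circ$ and $b_n\in(\partial\Delta)\setminus\alpha_{01,3\delta_a}^\circ$ with $d_{f_n}(a,b_n)\to 0$, for all sufficiently large $n$ we have
\[
d_{f_n}(a,b_n)<\min\bigl\{\delta_0,\; L(f_n,\gamma_0(a,b_n))\bigr\},
\]
where $\gamma_0(a,b_n)$ denotes the shorter of the two arcs of $\partial\Delta$ joining $a$ to $b_n$.

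For the first assertion, I would argue by contradiction: suppose $I_n\subset\partial\Delta$. Then $I_n$ is one of the two arcs of $\partial\Delta$ from $a$ to $b_n$, so $L(f_n,I_n)\ge L(f_n,\gamma_0(a,b_n))$. But $I_n$ is a $d_{f_n}$-shortest path, so $L(f_n,I_n)=d_{f_n}(a,b_n)$, and the displayed estimate above yields $d_{f_n}(a,b_n)<L(f_n,\gamma_0(a,b_n))\le L(f_n,I_n)=d_{f_n}(a,b_n)$, a contradiction. Hence $I_n\cap\Delta\neq\emptyset$.

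For the second assertion, suppose toward a contradiction that there is $x_n\in I_n\cap\Delta\cap f_n^{-1}(E_q)$. Because $I_n$ is a $d_{f_n}$-shortest path from $a$ to $b_n$ and $x_n\in I_n$, the subarc of $I_n$ from $a$ to $x_n$ witnesses
\[
d_{f_n}(x_n,\partial\Delta)\le d_{f_n}(x_n,a)\le L(f_n,I_n)=d_{f_n}(a,b_n).
\]
Combining with the estimate from Lemma~\ref{l1}(iv), this gives $d_{f_n}(x_n,\partial\Delta)<\delta_0$ for all sufficiently large $n$; but (\ref{az1}) forces $d_{f_n}(f_n^{-1}(E_q)\cap\Delta,\partial\Delta)\ge\delta_0$ for every $n$, which is a contradiction. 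Thus $I_n\cap\Delta\cap f_n^{-1}(E_q)=\emptyset$, and (\ref{neq0=0}) is established. There is no real obstacle here — the entire content is organizing the three ingredients (Lemma~\ref{l1}(iv), the defining property of a shortest path, and the uniform separation (\ref{az1}) guaranteed by Lemma~\ref{nobo}) in the right order.
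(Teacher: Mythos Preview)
Your proof is correct and follows essentially the same approach as the paper: both invoke Lemma~\ref{l1}(iv) (i.e., the estimate (\ref{ddd})) to get $d_{f_n}(a,b_n)<\min\{\delta_0, L(f_n,\gamma_0(a,b_n))\}$, then rule out $I_n\subset\partial\Delta$ by a length comparison, and finally use (\ref{az1}) to exclude $E_q$-preimages from $I_n\cap\Delta$. Your version states the inequality from Lemma~\ref{l1}(iv) in the slightly weaker form without the infimum over $n$, which is exactly what is needed and follows immediately from the paper's version.
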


\begin{proof}
Let $\gamma_{0}\left(  a,b_{n}\right)  $ be the shorter arc of $\partial
\Delta$ with $\partial\gamma_{0}=\{a,b_{n}\}.$ Then for sufficiently large
$n,$ (\ref{ddd}) holds. If $I_{n}\cap\Delta=\emptyset$ for some $n=n_{0}$
which is so large that (\ref{ddd}) holds for this $n_{0},$ then we have
$I_{n_{0}}\subset\partial\Delta$ and thus
\begin{align*}
d_{f_{n_{0}}}\left(  a,b_{n_{0}}\right)   &  =L\left(  f_{n_{0}},I_{n_{0}%
}\right)  \geq L\left(  f_{n_{0}},\gamma_{0}\left(  a,b_{n_{0}}\right)
\right) \\
&  \geq\min\{\delta_{0},\mathrm{\ }\inf\left\{  L(f_{n},\gamma_{0}\left(
a,b_{n}\right)  \right\}  _{n=0}^{\infty}),
\end{align*}
contradicting to (\ref{ddd}). Thus, when $n$ is large enough, $I_{n}\cap
\Delta\neq\emptyset,$ and for any $x\in I_{n}\cap\Delta,$ we have
\[
d_{f_{n}}\left(  x,\partial\Delta\right)  \leq L(f_{n},I_{n})=d_{f_{n}}\left(
a,b_{n}\right)  <\delta_{0},
\]
and so by (\ref{az1}) we have $I_{n}\cap\Delta\cap f_{n}^{-1}(E_{q}%
)=\emptyset.$
\end{proof}

\label{deleted Lemma obs3}

\begin{lemma}
\label{ap7}(\textbf{The first key step for the proof of Theorem \ref{nobi})
}Let $\Sigma_{n}=\left(  f_{n},\overline{\Delta}\right)  $ be the precise
extremal sequence satisfying all conditions (A)--(D) in Theorem \ref{nobi}.
Assume that the following additional condition hold:

(E) $a\in\alpha_{01}^{\circ}$, $\{b_{n}\}$ is a sequences in $\left(
\partial\Delta\right)  \backslash\alpha_{0,3\delta_{a}}^{\circ},$ and for the
$d_{f_{n}}$-shortest path $I_{n}=I_{n}\left(  a,b_{n}\right)  $ from $a$ to
$b_{n}$,
\begin{equation}
\lim_{n\rightarrow\infty}\inf d_{f_{n}}\left(  a,b_{n}\right)  =\lim
_{n\rightarrow\infty}\inf\left(  f_{n},I_{n}\right)  =0. \label{--0}%
\end{equation}

Then $\Sigma_{n}=\left(  f_{n},\overline{\Delta}\right)  $ contains a
subsequence which is still denoted by $\Sigma_{n}=\left(  f_{n},\overline
{\Delta}\right)  $ such that

(i) $I_{n}\cap\Delta\neq\emptyset,$ and $I_{n}$ has a partition
\[
I_{n}=I_{n1}\left(  a,b_{n1}\right)  +I_{n2}\left(  b_{n1},b_{n2}\right)
+\dots+I_{n,2k+1}\left(  b_{n,2k},b_{n}\right)
\]
given by Lemma \ref{shortest} (v): $I_{n,2j-1}\subset\partial\Delta$ for
$j=1,\dots,k+1$, and $I_{n,2j}^{\circ}\subset\Delta\ $for $j=1,\dots,k.$
Moreover, $k\geq1$ is independent of $n.$

(ii) $I_{n,2j}^{\circ}\cap f_{n}^{-1}\left(  E_{q}\right)  =\emptyset,$
$f_{n}$ restricted to a neighborhood of $I_{n,2j}^{\circ}$ is a homeomorphism,
$\left(  f_{n},I_{n,2j}\right)  $ is a straight arc on $S,j=1,\dots,k,$ and
moreover $I_{n2}$ divides $\Delta$ into two Jordan domains $\Delta_{n1}$ and
$\Delta_{n2}$, say, $I_{n2}$ divides $\Sigma_{n}$ into two surfaces
$F_{nj}=\left(  f_{n},\overline{\Delta_{nj}}\right)  ,j=1,2.$

(iii) $I_{n1}$ is a point. Thus $I_{n2}=I_{n2}\left(  b_{n1},b_{n2}\right)
=I_{n2}\left(  a,b_{n2}\right)  =I_{n}\left(  a,b_{n2}\right)  .$

(iv) $b_{n2}\in\{a_{nj}\}_{j=3}^{m}$ and $F_{nj}$ are both contained in
$\mathcal{F}_{r}\left(  L,m\right)  ,$ provided that%
\begin{equation}
\left\{  b_{n},b_{n2}\right\}  \cap\{a_{nj}\}_{j=1}^{m}\neq\emptyset.
\label{inaj}%
\end{equation}

(v) $F_{nj}$ are both contained in $\mathcal{F}_{r}\left(  L,m\right)  ,$
provided that
\begin{equation}
b_{n2}\not \in \left(  \alpha_{nm}+\alpha_{n1}+\alpha_{n2}\right)  ^{\circ}
\label{noin1m}%
\end{equation}

(vi) $\Sigma_{n}$ is decomposable in $\mathcal{F}_{r}(L,m)$, provided that
(\ref{inaj}) or (\ref{noin1m}) holds for each $n=1,2,\dots.$
\end{lemma}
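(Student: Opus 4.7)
The strategy is to read off the anatomy of the $d_{f_n}$-shortest path $I_n$ supplied by Lemma~\ref{shortest}, and then to convert the degeneration $d_{f_n}(a,b_n)\to 0$ into the claimed decomposition of $\Sigma_n$ by means of Lemma~\ref{shortest}(vi)--(x). I first invoke Lemma~\ref{in-bd-1} to obtain, for all sufficiently large $n$, both $I_n\cap\Delta\neq\emptyset$ and $I_n\cap\Delta\cap f_n^{-1}(E_q)=\emptyset$. Lemma~\ref{shortest}(v) then furnishes a partition $I_n=I_{n1}+\cdots+I_{n,2k_n+1}$ with $1\leq k_n\leq m+1$, and pigeonholing on $k_n$ along a subsequence gives (i). For (ii), the inequality $L(f_n,I_{n,2j})\leq d_{f_n}(a,b_n)\to 0$ together with (\ref{az1}) forces $I_{n,2j}^\circ\cap f_n^{-1}(E_q)=\emptyset$, and Lemma~\ref{shortest}(vi) identifies $(f_n,I_{n,2j})$ as a simple straight arc along which $f_n$ is a local homeomorphism; the simple arc $I_{n2}$ then cuts $\Delta$ into two Jordan domains $\Delta_{nj}$.

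For (iii) I appeal to the first assertion of Lemma~\ref{shortest}(x). Since $\alpha_{nj}\to\alpha_{0j}$ and $a\in\alpha_{01}^\circ$, we have $a\in\alpha_{n1}^\circ$ for large $n$; by (\ref{ma7-1}) of Condition~\ref{da}, $d(f_n(a),\{q_{n1},q_{n2}\})>4\delta_a$, while $L(f_n,I_{n1})\leq d_{f_n}(a,b_n)\to 0$. Hence (\ref{ina1<d}) is satisfied and Lemma~\ref{shortest}(x) forces $I_{n1}$ to be a point, so $I_{n2}=I_n(a,b_{n2})$. Moreover $b_{n2}\neq a$, for otherwise the simpleness of $I_n$ (Lemma~\ref{shortest}(i)) would fail.

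Assuming (\ref{inaj}) for (iv): if $k\geq 2$, Lemma~\ref{shortest}(viii) places $b_{n2}\in\{a_{nj}\}_{j=1}^m$; if $k=1$ and $I_{n3}$ is not a point, Lemma~\ref{shortest}(vii) delivers the same conclusion; and if $k=1$ with $I_{n3}$ a point, then $b_n=b_{n2}$ and (\ref{inaj}) itself places $b_{n2}\in\{a_{nj}\}$. The possibilities $b_{n2}=a_{n1}$ or $a_{n2}$ are ruled out by $d_{f_n}(a,b_{n2})\geq d(f_n(a),\{q_{n1},q_{n2}\})>4\delta_a$, contradicting $d_{f_n}(a,b_{n2})\to 0$; thus $b_{n2}\in\{a_{nj}\}_{j=3}^m$. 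This arrangement places at least two points of $\{a_{nj}\}_{j=1}^m$ on each arc of $\partial\Delta\setminus\{a,b_{n2}\}$, so condition (a) of Lemma~\ref{shortest}(ix) is satisfied and $F_{nj}\in\mathcal{F}_r(L,m)$. For (v), (\ref{noin1m}) is exactly (\ref{noinm12}), and the remaining hypotheses $a\in\alpha_{n1}^\circ$, (\ref{ina1<d}), (\ref{ma13}) and (\ref{<d}) have all been verified above, so the second assertion of Lemma~\ref{shortest}(x) applies directly.

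For (vi) I verify Definition~\ref{decflm} with $j_0=2$ and the pair $\{F_{n_k,1},F_{n_k,2}\}$. Lemma~\ref{cut-off} combined with $I_{n2}^\circ\cap f_n^{-1}(E_q)=\emptyset$ gives the exact identity $R(F_{n1})+R(F_{n2})=R(\Sigma_n)$, so (\ref{ma3}) holds with $\varepsilon_k=0$, and $\varepsilon_k=2d_{f_{n_k}}(a,b_{n_k})\to 0$ handles (\ref{ma1}). The remaining content is condition (b): under either (\ref{inaj}) or (\ref{noin1m}), $b_{n2}$ stays bounded away from $a$ on both sides of $\partial\Delta$ because the passing-$a_{n2}$ arc has $\liminf$-length bounded below by a positive limit of $|\alpha_{01}(a,a_{02})|$-type terms, and symmetrically for the opposite arc; Lemma~\ref{l1}(i) then yields $\liminf L(\partial F_{nj})>0$ for $j=1,2$. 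The main obstacle is the bookkeeping in (iv)--(v), namely matching the precise sub-case of Lemma~\ref{shortest}(vii)--(x) to the location of $b_{n2}$ and extracting the quantitative separation of $b_{n2}$ from $a_{n1},a_{n2}$ from Condition~\ref{da}; once this is done, everything reduces to a routine application of the earlier shortest-path lemmas.
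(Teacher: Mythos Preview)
Your proposal is correct and follows essentially the same approach as the paper's proof: both derive (i)--(ii) from Lemma~\ref{in-bd-1} and Lemma~\ref{shortest}(v)--(vi), obtain (iii) from the smallness of $L(f_n,I_{n1})$ relative to the $4\delta_a$-gap of Condition~\ref{da}, locate $b_{n2}\in\{a_{nj}\}_{j=3}^m$ by the same distance argument, and then feed condition (a) of Lemma~\ref{shortest}(ix) to get $F_{nj}\in\mathcal{F}_r(L,m)$ and the decomposability via Definition~\ref{decflm}(b). The only cosmetic differences are that for (iii) and (v) you invoke the packaged statement Lemma~\ref{shortest}(x), whereas the paper appeals directly to the underlying parts (vii) and (ix); and in (vi) you should use a single sequence $\varepsilon_k=2d_{f_{n_k}}(a,b_{n_k})$ for both (\ref{ma3}) and (\ref{ma1}) rather than two different values.
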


\begin{proof}
By (\ref{--0}), taking subsequence, we may assume that (\ref{-0}) holds. Then
(i) follows from Lemma \ref{shortest} and Lemma \ref{in-bd-1}.

By Lemma \ref{in-bd-1}, we have $I_{n,2j}\cap f^{-1}(E_{q})=\emptyset$ for
sufficiently large $n$ and each $j=1,\dots,2k.$ Thus (ii) follows from Lemma
\ref{shortest}, and $\tau_{n}=\left(  f_{n},I_{n2}\right)  $ is a simple and
straight arc on $S$.

By (\ref{-0}) and (\ref{ma7-1}), omitting a finite number of terms of the
sequence $\{I_{n}\},$ we may assume%
\begin{equation}
L(f_{n},I_{n1}\left(  a,b_{n1}\right)  )<\delta_{a}<\min\{L\left(
c_{n1}(q_{n1},f_{n}(a)\right)  ),L\left(  c_{n1}(f_{n}(a),q_{n2}\right)  )\}.
\label{<da}%
\end{equation}
Then by $I_{n1}\subset\partial\Delta$ we have $\left(  f_{n},I_{n1}\right)
\subset c_{n1}^{\circ}\ $and $I_{n1}\subset\alpha_{n1}^{\circ},$ and so
$I_{n1}\cap\{a_{nj}\}_{j=1}^{m}=\emptyset.$ Then by Lemma \ref{shortest} (vii)
$I_{n1}=\{a\}$ is a singleton and (iii) is proved.

By (\ref{-0}) and (ii) we have%
\begin{equation}
\lim_{n\rightarrow\infty}d_{f_{n}}\left(  b_{n1},b_{n2}\right)  =\lim
_{n\rightarrow\infty}d_{f_{n}}\left(  a,b_{n2}\right)  =\lim_{n\rightarrow
\infty}\overline{f\left(  a\right)  f\left(  b_{n2}\right)  }=0. \label{---0}%
\end{equation}
Assume $b_{n2}\in\{a_{nj}\}_{j=1}^{m}$. For sufficiently large $n,$ by
(\ref{---0}) we have $b_{n2}\neq a_{n1},a_{n2},$ since
\[
d\left(  f_{n}\left(  a\right)  ,\left\{  f_{n}\left(  a_{n1}\right)
,f_{n}\left(  a_{n2}\right)  \right\}  \right)  >4\delta_{a}%
\]
by (\ref{ma7-1}); and thus $b_{n2}\in\{a_{nj}\}_{j=3}^{m}.$ Assume $b_{n}%
\in\{a_{nj}\}_{j=1}^{m}$. Then we have $b_{n2}\in\{a_{nj}\}_{j=1}^{m}$ as well
by Lemma \ref{shortest}, and then for sufficiently large $n$ we also have
$b_{n2}\in\{a_{nj}\}_{j=3}^{m},$ by (\ref{---0}).

Now assume (\ref{inaj}) holds. Then by the above discussion we may assume
$b_{n2}\in\{a_{nj}\}_{j=3}^{m},$ and then both of the two arcs of
$\partial\Delta$ with common endpoints $a$ and $b_{n2}$ contain at least two
points of $\{a_{nj}\}_{j=1}^{m},$ and thus $F_{n1}$ and $F_{n2}$ are both
contained in $\mathcal{F}_{r}(L,m)$ by Lemma \ref{shortest} (ix), and (iv) is
proved completely.

If (\ref{noin1m}) holds, then each of the two arcs of $\partial\Delta$ with
common endpoints $a$ and $b_{n2}$ contains at least two points of
$\{a_{nj}\}_{j=1}^{m}\ $again. Thus we also have $F_{nj}\in\mathcal{F}%
_{r}(L,m),j=1,2,$ by Lemma \ref{shortest} (ix), and (v) is proved.

Now assume (\ref{inaj}) holds. Then $F_{n1}$ and $F_{n2}$ are both contained
in $\mathcal{F}_{r}(L,m).$ On the other hand by (i) and (ii) and (\ref{-0}) we
have%
\[
R(F_{n1})+R(F_{n2})=R(\Sigma_{n}),
\]
and%
\[
L(\partial F_{n1})+L(\partial F_{n2})=L(\partial\Sigma_{n})+2L(\tau_{n}),
\]
with
\[
\lim_{n\rightarrow\infty}L(\tau_{n})=\lim_{n\rightarrow\infty}L(f_{n}%
,I_{n})=0.
\]

Let $\gamma_{nj},j=1,2,$ be the two arcs of $\partial\Delta$ with endpoints
$a$ and $b_{n2}$ such that $\gamma_{nj}=\left(  \partial\Delta_{nj}\right)
\cap\partial\Delta.$ Then by (iv) and (\ref{ma7-1}) we have
\begin{align*}
\lim_{n\rightarrow\infty}L(\partial F_{nj})  &  =\lim_{n\rightarrow\infty
}\left(  L(f_{n},\gamma_{nj})+L(\tau_{n})\right) \\
&  \geq\min\{L\left(  c_{n1}(q_{n1},f_{n}(a)\right)  ),L\left(  c_{n1}%
(f_{n}(a),q_{n2}\right)  )\}\geq4\delta_{a},j=1,2.
\end{align*}
Therefore, $\Sigma_{n}$ is decomposable in $\mathcal{F}_{r}(L,m)\ $by
Definition \ref{undec-seqr} (b).

If (\ref{noin1m}) holds for each large enough $n,$ the above argument can be
used to show that $\Sigma_{n}$ is also decomposable in $\mathcal{F}_{r}(L,m).$
This completes the proof of Lemma \ref{ap7}.
\end{proof}

\begin{corollary}
\label{out-3d}For any $a\in\alpha_{01}^{\circ}$ and $b\in\partial
\Delta\backslash\alpha_{0,3\delta_{a}}^{\circ},$ if the interior $I_{n}%
^{\circ}$ of the $d_{f_{n}}$-shortest path $I_{n}=I_{n}\left(  a,b\right)  $
is contained in $\Delta$ and if $b\in\partial\Delta\backslash\left(
\alpha_{nm}+\alpha_{n1}+\alpha_{n2}\right)  ^{\circ}.$ Then $\lim
_{n\rightarrow\infty}\inf d_{f_{n}}\left(  a,b\right)  >0.$
\end{corollary}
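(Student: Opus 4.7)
The plan is to argue by contradiction using Lemma \ref{ap7} together with the undecomposability of precise extremal sequences (Lemma \ref{undec-seq}). Suppose for contradiction that $\liminf_{n\to\infty} d_{f_n}(a,b)=0$. After passing to a subsequence we may arrange that $d_{f_n}(a,b)\to 0$. Set $b_n=b$ for all $n$; since $b\in\partial\Delta\setminus\alpha_{0,3\delta_a}^{\circ}$ by hypothesis, the triple $(a,\{b_n\},\{I_n\})$ satisfies condition (E) of Lemma \ref{ap7}, so the conclusions (i)--(vi) of that lemma apply.

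Next I would use the extra assumption $I_n^{\circ}\subset\Delta$ to pin down the shape of the $d_{f_n}$-shortest path. In the partition $I_n=I_{n1}+I_{n2}+\cdots+I_{n,2k+1}$ provided by Lemma \ref{ap7}(i), the only portions of $I_n$ that can meet $\partial\Delta$ are the odd-indexed pieces. The hypothesis $I_n^{\circ}\subset\Delta$ forces $I_{n1}=\{a\}$ (which also matches Lemma \ref{ap7}(iii)), $I_{n,2k+1}=\{b\}$, and $k=1$. Consequently $I_n$ reduces to the single interior arc $I_{n2}=I_{n2}(a,b)$, and the endpoint $b_{n2}$ appearing in the lemma coincides with our fixed point $b$.

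The second hypothesis of the corollary, $b\notin(\alpha_{nm}+\alpha_{n1}+\alpha_{n2})^{\circ}$, is now precisely the condition (\ref{noin1m}) of Lemma \ref{ap7}(v). Applying that part of the lemma, both Jordan pieces $F_{n1}=(f_n,\overline{\Delta_{n1}})$ and $F_{n2}=(f_n,\overline{\Delta_{n2}})$ into which $I_{n2}$ cuts $\Sigma_n$ belong to $\mathcal{F}_r(L,m)$. Then Lemma \ref{ap7}(vi) yields that $\Sigma_n$ is decomposable in $\mathcal{F}_r(L,m)$ in the sense of Definition \ref{undec-seqr}. However, $\Sigma_n$ is by hypothesis a precise extremal sequence in $\mathcal{F}_r(L,m)$, which by Lemma \ref{undec-seq} cannot be decomposable. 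This contradiction proves $\liminf_{n\to\infty} d_{f_n}(a,b)>0$.

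There is essentially no hidden obstacle here: the corollary is a direct packaging of Lemma \ref{ap7}(v)--(vi). The only point that requires a little care is verifying that the assumption $I_n^{\circ}\subset\Delta$ reduces the general partition in Lemma \ref{ap7}(i) to the single-arc case $k=1$ with $b_{n2}=b$, so that hypothesis (\ref{noin1m}) is in a form to which (v)--(vi) can be applied verbatim.
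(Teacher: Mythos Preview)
Your proof is correct and follows the same route as the paper: assume the conclusion fails, set $b_n=b$, and invoke Lemma~\ref{ap7}(vi) together with Lemma~\ref{undec-seq} to reach a contradiction. The paper's proof is a one-line citation of these two lemmas, while you have spelled out explicitly why the hypothesis $I_n^{\circ}\subset\Delta$ forces $k=1$ and $b_{n2}=b$, so that condition~(\ref{noin1m}) is satisfied; this is exactly the detail the paper leaves implicit.
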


\begin{proof}
If $\lim_{n\rightarrow\infty}\inf d_{f_{n}}\left(  a,b\right)  =0,$ then we
have a contradiction by Lemma \ref{undec-seq} and Lemma \ref{ap7} (vi) with
$b_{n}=b$ for every $n=1,2,\dots$
\end{proof}

\begin{center}
\label{st2}\textbf{Step 2. Proof of Theorem \ref{nobi} (i) in a special case}
\end{center}

This step is to prove the following Lemma, which is the second key to Prove
Theorem \ref{nobi} (i).

\begin{lemma}
\label{ccl2}(\textbf{The second key to the proof of Theorem \ref{nobi} (i))
}For any $j=1,2,\dots,m,$ if $L(c_{0j})>\pi,$ or if $L(c_{0j})>0$ and $c_{j}$
is strictly convex, then%
\[
\lim_{n\rightarrow\infty}\inf d_{f_{n}}(x,b)>0
\]
for every $x\in\alpha_{0j}^{\circ}$ and every $b\in\left(  \partial
\Delta\right)  \backslash\{x\}.$
\end{lemma}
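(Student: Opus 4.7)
By Remark~\ref{ap11} we may reduce to $j=1$ and fix $x\in\alpha_{01}^{\circ}$ so that Conditions~\ref{fa2} and~\ref{da} are in force. The core idea is to exhibit, for all sufficiently large $n$, a disk $(x,U_{n}^{\delta_{1}})$ of $\Sigma_{n}$ in the sense of Definition~\ref{nod} whose spherical radius $\delta_{1}>0$ is independent of $n$; the conclusion $\liminf_{n}d_{f_{n}}(x,b)>0$ for every $b\in(\partial\Delta)\setminus\{x\}$ will then be immediate from Lemma~\ref{obs2}.

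For each sufficiently large $n$, the assumptions $\Sigma_{n}\in\mathcal{F}_{r}(L,m)$ and $x\in\alpha_{n1}^{\circ}$ (which holds by condition (C) of Theorem~\ref{nobi}) imply that $f_{n}$ is a local homeomorphism at $x$ and that $\partial\Sigma_{n}$ is circular at $f_{n}(x)$. Corollary~\ref{cov-2}(iv) therefore supplies a maximal $\delta_{n}>0$ for which $(x,U_{n}^{\delta_{n}})$ is a disk of $\Sigma_{n}$, with image a closed lens (when $c_{n1}$ is strictly convex near $f_{n}(x)$) or a closed half-disk (when $c_{n1}$ is straight there) on $S$. The main task becomes to prove $\inf_{n}\delta_{n}>0$.

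The obstructions limiting $\delta_{n}$ are of three kinds: (a) an interior branch point of $f_{n}$ entering the lift $U_{n}^{\delta_{n}}$; (b) the old boundary arc $c_{n1}\cap\overline{D(f_{n}(x),\delta_{n})}$ reaching one of the endpoints $q_{n1},q_{n2}$; (c) degeneration of the image lens or half-disk. Obstruction (a) is excluded while $\delta_{n}<\delta_{0}$ by the uniform estimate (\ref{az1}), since every branch point of $f_{n}$ in $\Delta$ lies at $d_{f_{n}}$-distance at least $\delta_{0}$ from $\partial\Delta$ and hence from $x$. Obstruction (b) is excluded while the arc length of $c_{n1}\cap\overline{D(f_{n}(x),\delta_{n})}$ on either side of $f_{n}(x)$ stays below $4\delta_{a}$, by Condition~\ref{da}, equation~(\ref{ma7}). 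Obstruction (c) is precisely where the hypothesis on $c_{0j}$ enters: in both the strictly convex and the straight-with-$L(c_{01})>\pi$ cases, Lemma~\ref{ccircle} combined with the convergence $c_{n1}\to c_{01}$ ensures that the curvatures $k(c_{n1})$ near $f_{n}(x)$ are uniformly bounded and that the corresponding lens (or half-disk) image converges to a non-degenerate limit. A standard spherical comparison then turns the arc-length threshold $4\delta_{a}$ into a uniform spherical-radius lower bound $\delta_{1}>0$ for $\delta_{n}$.

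The hardest step will be (c): converting the qualitative assumption on $c_{01}$ into a quantitative, uniform-in-$n$ geometric bound on the family $\{c_{n1}\}$ near $f_{n}(x)$, and verifying that the associated lenses or half-disks do not collapse as $n\to\infty$. Here the role of $L(c_{01})>\pi$ in the straight case is essential, since it guarantees convexity of the Jordan curves $c_{n1}+\overline{q_{n2}q_{n1}}$ for large $n$ via Lemma~\ref{ccircle}. Once the uniform disk $(x,U_{n}^{\delta_{1}})$ is in hand, Lemma~\ref{obs2} applied with $a=x$ and radius $\delta_{1}$ immediately yields $\liminf_{n\to\infty}d_{f_{n}}(x,b)>0$ for every $b\in(\partial\Delta)\setminus\{x\}$, finishing the proof.
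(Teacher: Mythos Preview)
Your plan has a genuine gap: the three obstructions (a), (b), (c) you list do not exhaust what can prevent the disk $(x,U_n^{\delta})$ from growing. The missing case is that the \emph{new boundary} of the lift --- the arc $\alpha_3\subset\overline{\Delta}$ with $(f_n,\alpha_3)\subset\partial D(f_n(x),\delta)$ --- may, while remaining free of branch points and while the old boundary is still safely inside $\alpha_{n1}^{\circ}$, touch $\partial\Delta$ at a point $y$ lying on some \emph{other} arc $\alpha_{nk}$, $k\neq 1$. Neither (\ref{az1}) (which only keeps interior branch points away) nor Condition~\ref{da} (which only controls the position within $\alpha_{n1}$) nor any non-degeneration of the image lens excludes this; the image lens in $S$ is perfectly healthy, but its $f_n$-lift has run into the boundary elsewhere. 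Indeed, such a collision is exactly what it would mean for $d_{f_n}(x,y)$ to be small with $y$ far from $x$ on $\partial\Delta$, so your argument is in danger of being circular.

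This is precisely the obstruction the paper's proof is built to handle. The paper replaces the circular disk by the lune $T_{n,\theta}$ bounded by the fixed subarc $c_{n,a}$ and a strictly convex arc $c_{n,a,\theta}$, then grows $\theta$ to a maximal $\theta_n^{\ast}$. If $\theta_n^{\ast}\to 0$, the lift $\alpha_{a,\theta_n^{\ast}}$ of $c_{n,a,\theta_n^{\ast}}$ must meet $\partial\Delta$ (Claims~\ref{a'a''}--\ref{CCL10}); strict convexity of $c_{n,a,\theta_n^{\ast}}$ forces these intersections into $\{a_{nj}\}_{j=1}^{m}$ by Lemma~\ref{tangent}(iv). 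The resulting cut of $\Sigma_n$ into pieces in $\mathcal{F}_r(L,m)$ (via Lemma~\ref{m-1}) then contradicts the \emph{precise extremality} of the sequence through the undecomposability Lemma~\ref{undec-seq}. Your proposal never invokes extremality or undecomposability, yet these are the hypotheses that do the real work here; without them the conclusion is false in general.
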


\begin{proof}
By Remark \ref{ap11} and Lemma \ref{ccl6}, it suffices to prove Lemma
\ref{ccl2} for $j=1,$ $x=a\in\alpha_{01}^{\circ}$ (the fixed point) and each
$b\in\left(  \partial\Delta\right)  \backslash\alpha_{01,3\delta_{a}}\ $(see
Condition \ref{da}).

Let $C_{nj}$ be the circle determined by $c_{nj},$ for $j=1,\dots
,m,n=0,1,\dots.$ If $c_{01}$ is strictly convex, then we may assume, by taking
subsequence of $\Sigma_{n}$, that all $c_{n1}$ are strictly convex, and then
$c_{n,a}+\overline{q_{n}^{\prime\prime}q_{n}^{\prime}}$ encloses a closed
domain $T_{n},$ for all $n=0,1,\dots,$ and it is clear that $\overline
{q_{n}^{\prime\prime}q_{n}^{\prime}}$ divides the closed disk enclosed by
$C_{n1}$ into two lunes, and $T_{n}$ is the closure of the lune on the right
hand side of $\overline{q_{n}^{\prime}q_{n}^{\prime\prime}}\ $(see Condition
\ref{da} for the notation $c_{n,a}=c_{n,a}\left(  q_{n}^{\prime},q_{n}%
^{\prime\prime}\right)  $). If $c_{01}$ is straight, then by the assumption
$L(c_{01})>\pi$ and Condition \ref{da}, all $c_{n,a},n=0,1,2,\dots,$ have
length $>\pi$ and thus $d\left(  q_{n}^{\prime},q_{n}^{\prime\prime}\right)
<\pi,$ and then $c_{n,a}+\overline{q_{n}^{\prime\prime}q_{n}^{\prime}}$ also
encloses a closed and convex domain $T_{n},$ and $T_{n}$ is in fact a closed
hemisphere with $\overline{q_{n}^{\prime}q_{n}^{\prime\prime}}\subset C_{n1}$
when $c_{n1}$ and $c_{01}$ are straight.

For $n=0,1,2,\dots,$ let $\theta_{n}$ be the interior angle of $T_{n}$ at the
cusps $q_{n}^{\prime}$ and $q_{n}^{\prime\prime}$ and for each $\theta
\in\lbrack0,\theta_{n}]$ let $c_{n,a,\theta}$ be the circular arc in $T_{n}$
from $q_{n}^{\prime}$ to $q_{n}^{\prime\prime}$ so that the closed domain
$T_{n,\theta}$ enclosed by $c_{n,a}-c_{n,a,\theta}$ has interior angle
$\theta$ at the cusps $q_{n}^{\prime}$ and $q_{n}^{\prime\prime}.$ Then
$T_{n,\theta}\subset T_{n},$ $T_{n,0}$ is just the arc $c_{n,a}=c_{n,a,0}$ and
$T_{n,\theta_{n}}=T_{n},$ for $n=0,1,2,...$ It is clear that $c_{n,a,\theta}$
is strictly convex when $\theta\in(0,\theta_{n}),$ whether $c_{01}$ is
straight or not. On the other hand by Lemma \ref{ccircle} we may assume that%
\begin{equation}
\theta_{n}\geq\frac{\theta_{0}}{2}=\left\{
\begin{array}
[c]{c}%
\pi/2,\mathrm{\ if\ }c_{01}\mathrm{\ is\ straight,}\\
\theta_{0}/2>0,\mathrm{\ if\ }c_{01}\mathrm{\ is\ strictly\ convex},
\end{array}
\right.  \mathrm{\ \ }n=1,2,\dots\label{app1}%
\end{equation}
Note that each $c_{nj}$ is convex and so it is either straight or strictly convex.

Since $\alpha_{a}$ given in Condition \ref{da} is a compact subarc of all
$\alpha_{n1}^{\circ},n=0,1,2\dots,$ by Definition of $\mathcal{F}_{r}(L,m)$
and Lemma \ref{int-arg1}, for each $n=1,2,\dots,$ and sufficiently small
$\theta=\theta(n)\in(0,\theta_{n}],$ $f_{n}^{-1}$ has a univalent branch
$g_{n,\theta}$ defined on the closed domain $T_{n,\theta}$ such that
$g_{n,\theta}\left(  c_{n,a}\right)  =\alpha_{a}$. Let $\theta_{n}^{\ast}$ be
the largest positive number in $(0,\theta_{n}]$ such that $g_{n,\theta}$ is
well defined for every $\theta\in(0,\theta_{n}^{\ast})$, say, $g_{n,\theta}$
is a univalent branch of $f_{n}^{-1}$ defined on $T_{n,\theta}$ with
$g_{n,\theta}\left(  c_{n,a,0}\right)  =\alpha_{a}.$ Then it is clear that
$g_{n,\theta^{\prime}}$ equals $g_{n,\theta^{\prime\prime}}$ on $T_{n,\theta
^{\prime}}\subset T_{n,\theta^{\prime\prime}}$ for every pair of
$\theta^{\prime}$ and $\theta^{\prime\prime}$ with $0<\theta^{\prime}%
<\theta^{\prime\prime}<\theta_{n}^{\ast},$ and then $f_{n}^{-1}$ has a
univalent branch $g_{n,\theta_{n}^{\ast}}$ defined on $T_{n,\theta_{n}^{\ast}%
}\backslash c_{n,a,\theta_{n}^{\ast}}^{\circ}\ $with $g_{n,\theta_{n}^{\ast}%
}\left(  c_{n,a}\right)  =\alpha_{a}.$ This, together with Lemma
\ref{continue0}, implies that $g_{n,\theta_{n}^{\ast}}$ can extend to a
univalent branch of $f_{n}^{-1}$ defined on the closed Jordan domain
$T_{n,\theta_{n}^{\ast}}$. We still use $g_{n,\theta_{n}^{\ast}}$ to denote
the extension and let $\alpha_{a,\theta_{n}^{\ast}}=g_{n,\theta_{n}^{\ast}%
}\left(  c_{n,a,\theta_{n}^{\ast}}\right)  ,$ which is a simple arc in
$\overline{\Delta}$ from $a^{\prime}$ to $a^{\prime\prime}.$ We summarize this
by a claim:

\begin{claim}
\label{ap9}$f_{n}:D_{n}=g_{n}(T_{n,\theta_{n}^{\ast}})\rightarrow
T_{n,\theta_{n}^{\ast}}$ is a homeomorphism and $\partial D_{n}=\alpha
_{a}-\alpha_{a,\theta_{n}^{\ast}}$ is a Jordan curve, and thus $D_{n}^{\circ
}\cup\alpha_{a}=D_{n}\backslash\alpha_{a,\theta_{n}^{\ast}}^{\circ}$ contains
no branch point of $f_{n}.$
\end{claim}

Since $\alpha_{a}=\alpha(a^{\prime},a^{\prime\prime})$ is a compact subarc of
all $\alpha_{n1}^{\circ},n\in\mathbb{N},$ by Lemma \ref{int-arg1} and Claim
\ref{ap9}, we have

\begin{claim}
\label{a'a''}For each $n\in\mathbb{N},\alpha_{n1}^{\circ}$ has a neighborhood
$N_{n}$ in $\overline{\Delta}$ such that $f_{n}:D_{n}\cup N_{n}\rightarrow
T_{n,\theta_{n}^{\ast}}\cup f_{n}\left(  N_{n}\right)  $ is also a
homeomorphism. Thus, if $\theta_{n}^{\ast}\in\left(  0,\theta_{n}\right)  ,$
the part of $\alpha_{a,\theta_{n}^{\ast}}^{\circ}$ near its endpoints
$a^{\prime}$ and $a^{\prime\prime}$ is contained in $N_{n}^{\circ}%
\subset\Delta.$
\end{claim}

We will prove%
\begin{equation}
\underline{\theta}=\lim_{n\rightarrow\infty}\inf\theta_{n}^{\ast}>0.
\label{fa4}%
\end{equation}

We first show that this implies Lemma \ref{ccl2}. Since $\theta_{n}^{\ast}>0$
for each $n\geq1,$ (\ref{fa4}) implies $\theta_{n}^{\ast}>\varphi_{0}$ for
some $\varphi_{0}>0$ and all $n\geq1.$ By Lemma \ref{ccircle} $T_{n,\theta
_{n}}$ and $T_{n,\varphi_{0}}$ converge to $T_{0,\theta_{0}}$ and
$T_{0,\varphi_{0}}.$ Thus it is clear that there is a positive number
\[
\delta_{1}<\left\{  \delta_{0},\delta_{a}\right\}
\]
such that for the $\delta_{1}$-neighborhood $V_{n}=D\left(  f_{n}%
(a),\delta_{1}\right)  \cap T_{n,\varphi_{0}}$ of $f_{n}(a)$ in $T_{n,\varphi
_{0}}\ $and large enough $n,$ $V_{n}$ is the part of the disk $D\left(
f_{n}(a),\delta_{1}\right)  $ on the left hand side of $c_{n,a}\ $with
$D\left(  f_{n}(a),\delta_{1}\right)  \cap c_{n,a}\subset V_{n}$ and $V_{n}$
does not intersects $c_{n,a,\varphi_{0}}.$ Thus, when we choose $\delta_{1}$
small enough, $(a,U_{n})$ with $U_{n}=g_{n,\theta_{n}^{\ast}}(V_{n})$ is a
disk of $\Sigma_{n}$ with radius $\delta_{1},$ $U_{n}\subset D_{n}$ and
$U_{n}\cap\partial\Delta\subset\alpha_{a}$, but $(a,U_{n})$ depends on $n$
(see Definition \ref{nod}). Therefore by Lemma \ref{obs2}, (\ref{mar1}) holds,
and thus (\ref{fa4}) implies Lemma \ref{ccl2}.

Now return to prove (\ref{fa4}), and assume that it fails. Then, by
(\ref{app1}), we may assume
\begin{equation}
\underline{\theta}=\lim_{n\rightarrow\infty}\theta_{n}^{\ast}%
=0\mathrm{\ and\ }\theta_{n}^{\ast}<\inf\{\theta_{n}\}_{n=1}^{\infty}
\label{v1}%
\end{equation}
for all $n$, and then for sufficiently large $n$ and each point $y$ in
$T_{n,\theta_{n}^{\ast}}$, there exists a line segment $I_{n,y}=\overline
{yy^{\ast}}\subset T_{n,\theta_{n}^{\ast}}$ of length $<\theta_{n}^{\ast}\pi$
for some $y^{\ast}\in c_{n,a}.$ Thus we have
\begin{equation}
d_{f_{n}}\left(  x,\partial\Delta\right)  \leq d_{f_{n}}\left(  x,\alpha
_{a}\right)  \leq L(I_{n,f_{n}(x)})<\theta_{n}^{\ast}\pi\rightarrow0
\label{d->0}%
\end{equation}
for all $x\in D_{n}\backslash\alpha_{a}$ and sufficiently large $n,$ and thus
by (\ref{az1}) we may assume that

\begin{claim}
\label{CCL8}$D_{n}\cap\Delta\cap f_{n}^{-1}(E_{q})=\emptyset$ for all
$n=1,2,\dots,$ and so $f_{n}$ has no any branch point in $D_{n}\cap\Delta.$
Thus $\alpha_{a,\theta_{n}^{\ast}}\cap\Delta\cap f_{n}^{-1}(E_{q})=\emptyset$
for all $n=1,2,\dots.$
\end{claim}

Since $0<\theta_{n}^{\ast}<\theta_{n},$ when $\alpha_{a,\theta_{n}^{\ast}%
}^{\circ}\cap\partial\Delta=\emptyset,$ $f_{n}$ is homeomorphic in a
neighborhood of $\alpha_{a,\theta_{n}^{\ast}}$ in $\overline{\Delta}$ by
Claims \ref{a'a''} and \ref{CCL8}, and so $\theta_{n}^{\ast}$ can be enlarge,
which contradicts the definition of $\theta_{n}^{\ast}.$ Hence $\alpha
_{a,\theta_{n}^{\ast}}^{\circ}\cap\partial\Delta$ is not empty, which together
with Lemma \ref{tangent} (iv) implies that $\alpha_{a,\theta_{n}^{\ast}%
}^{\circ}\cap\partial\Delta$ is a finite set contained in $\{a_{j}\}_{j=1}%
^{m},$ and thus we have by Claim \ref{a'a''}

\begin{claim}
\label{CCL10}$\alpha_{a,\theta_{n}^{\ast}}\cap\partial\Delta=\{a^{\prime
},a^{\prime\prime}\}\cup\{a_{ni_{j}}\}_{j=1}^{k-1}$ in which $\{a_{ni_{j}%
}\}_{j=1}^{k-1}=\alpha_{a,\theta_{n}^{\ast}}^{\circ}\cap\partial\Delta$ for
some $k\geq2$ is a subset of $\{a_{nj}\}_{j=1}^{m},$ and $a^{\prime\prime
},a_{ni_{1}},a_{ni_{2}},\dots,a_{ni_{k-1}},a^{\prime}$ are arranged on
$\partial\Delta$ anticlockwise.
\end{claim}

It is clear that $\{a_{ni_{j}}\}_{j=1}^{k-1}\subset D_{n}$ since $D_{n}$ is
closed, and thus by (\ref{d->0}) $d_{f_{n}}\left(  a_{ni_{j}},a_{\alpha
}\right)  $ $<\theta_{n}^{\ast}\pi\rightarrow0$ for every $j=1,\dots,k-1.$ On
the other hand, we have $d_{f_{n}}\left(  \{a_{n1},a_{n2}\},\alpha_{a}\right)
$ $\geq d\left(  \{q_{n1},q_{n2}\},c_{\alpha}\right)  >4\delta_{a}$ by
(\ref{ma7-1}). Thus for large enough $n$ we have
\begin{equation}
a_{ni_{j}}\notin\{a_{n1},a_{n2}\}\mathrm{\ for\ each\ }j=1,\dots,k-1.
\label{2-side}%
\end{equation}
Then $\alpha_{n,\theta_{n}^{\ast}}$ cut $\Delta$ into $k+1$ components
$\left\{  \Delta_{nj}\right\}  _{j=0}^{k}$ of which $\Delta_{n0}$ is on the
left hand side of $-\alpha_{n,\theta_{n}^{\ast}}$ and $\Delta_{nj}%
,j=1,\dots,k,$ are on the right hand side of $-\alpha_{n,\theta_{n}^{\ast}},$
and the intersections $\gamma_{j}=\overline{\partial\Delta_{nj}}\cap
\partial\Delta,j=1,\dots,k,$ are arcs of $\partial\Delta$ arranged on
$\partial\Delta$ anticlockwise. We may assume $k$ is independent of $n.$

Since $c_{a,n}$ and $c_{a,\theta_{n}^{\ast}}$ are both convex and share the
same endpoints $\{q_{n}^{\prime},q_{n}^{\prime\prime}\}$, $c_{a,n}$ is on the
convex circle $C_{n1}\ $and $c_{a,\theta_{n}^{\ast}}^{\circ}$ is in the domain
enclosed by $C_{n1},$ we have
\begin{equation}
L(f_{n},\alpha_{a,\theta_{n}^{\ast}})=L(c_{a,\theta_{n}^{\ast}})<L(c_{a,n}).
\label{LL<L}%
\end{equation}
Then $k,\alpha_{a},\alpha_{n1},-\alpha_{a,\theta_{n}^{\ast}},\Sigma
_{n},a^{\prime\prime},a^{\prime},\left\{  \{a_{i_{j}}\}\right\}  _{j=1}%
^{k-1}\ $satisfy (a) (b) (c) (d) (e1) of Lemma \ref{m-1} as $k,\gamma
_{0},\gamma_{0}^{\prime},I,\Sigma,b_{2},b_{2k+1},\{I_{2j-1}\}_{j=2}^{k}$
there, but here all $I_{2j-1}=\{a_{i_{j}}\}$ are points. Then by Lemma
\ref{m-1}, $\Sigma_{nj}=\left(  f_{n},\overline{\Delta_{nj}}\right)
\in\mathcal{F}_{r}\left(  L,m\right)  $ for $j=1,2,\dots,k.$

It is clear that by Claim \ref{CCL8} and (\ref{LL<L}),
\begin{align*}
\sum_{j=1}^{k}R(\Sigma_{nj})  &  =R(\Sigma_{n})-\left(  q-2\right)
A(T_{n,\theta_{n}^{\ast}}),\\
\sum_{j=1}^{k}L(\partial\Sigma_{nj})  &  \leq L(\partial\Sigma)-L(c_{a,n}%
)+L(c_{a,\theta_{n}^{\ast}})<L(\partial\Sigma),
\end{align*}
with $A(T_{n,\theta_{n}^{\ast}})\rightarrow0$ as $n\rightarrow\infty.$ On the
other hand it is clear that $\alpha_{n1}\left(  a^{\prime\prime}%
,a_{n2}\right)  \subset\partial\Delta_{n1}$ and by (\ref{ma7}),
\[
L(f_{n},\alpha_{n1}\left(  a^{\prime\prime},a_{n2}\right)  )=L(c_{n1}\left(
q_{n}^{\prime\prime},q_{n2}\right)  )\geq4\delta_{a}.
\]
Thus $L(\partial\Sigma_{n1})>4\delta_{a}$ and, for the same reason,
$L(\partial\Sigma_{nk})>4\delta_{a}.$ Hence $\Sigma_{n}$ is decomposable by
Definition \ref{undec-seqr} with (b), which contradicts Lemma \ref{undec-seq}.
We have proved (\ref{fa4}), and then Lemma \ref{ccl2} is proved
completely.\medskip\medskip
\end{proof}

\begin{center}
\label{st3} \textbf{Step 3. Preliminary discussion in the case }$a\in
\alpha_{01}^{\circ}$
\end{center}

The purpose of this and next steps is to prove Assertion \ref{asC} introduced
later, which deduce Theorem (i) in the case $a\in\alpha_{01}^{\circ}.$

By Condition \ref{fa2} and Lemma \ref{ccl1}, to prove Theorem \ref{nobi} (i),
it suffices to prove the following assertion.

\begin{Assertion}
\label{asA}For the fixed $a\in\alpha_{01}$ and any $b\in\left(  \partial
\Delta\right)  \backslash\{a\},$ we have%
\begin{equation}
\lim_{n\rightarrow\infty}\inf d_{f_{n}}(a,b)>0. \label{ma17}%
\end{equation}

\end{Assertion}

We first prove this assertion under the condition
\begin{equation}
a\in\alpha_{01}^{\circ}. \label{zz2}%
\end{equation}
Then by Lemma \ref{ccl6}, to prove Assertion \ref{asA} under (\ref{zz2}), it
suffices to prove

\begin{Assertion}
\label{asB}(\ref{ma17}) holds when $a\in\alpha_{01}^{\circ}$ and $b\in\left(
\partial\Delta\right)  \backslash\alpha_{01,4\delta_{a}}^{\circ}.$
\end{Assertion}

Assume that (\ref{zz2}) holds. Then we may assume that $\alpha_{n1}%
\subset\alpha_{01,\delta_{a}}$ for all $n$ (see Notation \ref{note1} for
$\alpha_{01,\delta_{a}}$)$,$ since this is true for sufficiently large $n.$

By Lemma \ref{dfcon}, for each $n,$ there exists a point $b_{n}$ in $\left(
\partial\Delta\right)  \backslash\alpha_{01,3\delta_{a}}^{\circ}$ such that
\begin{equation}
d_{f_{n}}(a,b_{n})=\min_{x\in\left(  \partial\Delta\right)  \backslash
\alpha_{01,3\delta_{a}}^{\circ}}d_{f_{n}}(a,x). \label{min-1}%
\end{equation}
By Lemma \ref{shortest}, the $d_{f_{n}}$-shortest path $I_{n}=I_{n}\left(
a,b_{n}\right)  $ with $b_{n}\in\left(  \partial\Delta\right)  \backslash
\alpha_{01,3\delta_{a}}^{\circ}$ exists. Then, to prove Assertion \ref{asB},
it suffices to prove the following assertion.

\begin{Assertion}
\label{asC}When the fixed $a$ is contained in $\alpha_{01}^{\circ},$%
\[
\lim_{n\rightarrow\infty}\inf L(f_{n},I_{n}\left(  a,b_{n}\right)
)=\lim_{n\rightarrow\infty}\inf d_{f_{n}}(a,b_{n})>0.
\]

\end{Assertion}

To prove Assertion \ref{asC}, we may assume that
\begin{equation}
\lim_{n\rightarrow\infty}b_{n}=b_{0}\in\left(  \partial\Delta\right)
\backslash\alpha_{01,3\delta_{a}}^{\circ}, \label{lim=b0}%
\end{equation}
exists and thus, by Condition \ref{da} (b), we have
\begin{equation}
a\in\alpha_{n1}^{\circ}\cap\alpha_{01}^{\circ}\text{ \textrm{and} }b_{n}%
\in\left(  \partial\Delta\right)  \backslash\alpha_{01,3\delta_{a}}^{\circ
},\mathrm{\ for\ }n=0,1,2,\dots\label{ma18}%
\end{equation}

Assume Assertion \ref{asC} fails. Then by Lemma \ref{l1} (ii) we may assume,
by taking subsequence, that
\begin{equation}
\lim_{n\rightarrow\infty}d_{f_{n}}\left(  a,b_{0}\right)  =\lim_{n\rightarrow
\infty}d_{f_{n}}(a,b_{n})=\lim_{n\rightarrow\infty}L(f_{n},I_{n}\left(
a,b_{n}\right)  )=0. \label{az4}%
\end{equation}

If $b_{0}\in\alpha_{01,4\delta_{a}},$ then by (\ref{lim=b0}) we have $b_{0}%
\in\alpha_{01,4\delta_{a}}\backslash\{a\}$, and then by Lemma \ref{ccl6} we
have $\lim_{n\rightarrow\infty}\inf d_{f_{n}}\left(  a,b_{0}\right)  >0.$ This
contradicts (\ref{az4}), and so $b_{0}\in\left(  \partial\Delta\right)
\backslash\alpha_{01,4\delta_{a}},$ and so, by taking subsequence,
(\ref{ma18}) can be enhanced to be%
\begin{equation}
a\in\alpha_{n1}^{\circ}\cap\alpha_{01}^{\circ}\text{ \textrm{and} }b_{n}%
\in\left(  \partial\Delta\right)  \backslash\alpha_{01,4\delta_{a}%
},\mathrm{\ for\ }n=0,1,2,\dots\label{ma18-1}%
\end{equation}

By (\ref{ma18-1}) and Lemma \ref{in-bd-1}, we have
\begin{equation}
I_{n}\cap\Delta\neq\emptyset,\mathrm{\ but\ }I_{n}\cap\Delta\cap f_{n}%
^{-1}(E_{q})=\emptyset\mathrm{\ for\ }n\mathrm{\ large\ enough}, \label{ag64}%
\end{equation}
and then by Lemma \ref{ap7}, taking subsequence, we conclude that $I_{n}$ has
a partition
\begin{equation}
I_{n}=I_{n1}\left(  a,b_{n1}\right)  +I_{n2}\left(  b_{n1},b_{n2}\right)
+I_{n3}\left(  b_{n2},b_{n3}\right)  +\cdots+I_{n,2k+1}\left(  b_{n,2k}%
,b_{n,2k+1}\right)  \label{a7}%
\end{equation}
(with $b_{n,2k+1}=b_{n}$) satisfying all conclusions of Lemma \ref{ap7} for
each $n,$ in which $k\geq1$ is independent of $n.$ Then

\begin{claim}
\label{a=b1I2noeq}$I_{n1}\ $is the point $a$ in $\alpha_{01}^{\circ}%
,I_{n2}^{\circ}\subset\Delta$, $\left(  f_{n},I_{n2}\right)  $ is straight,
$\partial I_{n2}=I_{n2}\cap\partial\Delta,$ and
\begin{equation}
I_{n2}^{\circ}\cap f^{-1}(E_{q})=\emptyset,n=1,2,\dots, \label{ma19}%
\end{equation}

\end{claim}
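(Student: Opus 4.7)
The plan is to read \textbf{Claim \ref{a=b1I2noeq}} as a direct unpacking of the conclusions of Lemma \ref{ap7} in the present setup, so the entire proof will consist of checking that the hypotheses of that lemma hold and then reading off each of the four assertions from the numbered conclusions. I would first verify hypothesis (E) of Lemma \ref{ap7}: the standing case assumption (\ref{zz2}) gives $a\in\alpha_{01}^\circ$; the construction of $b_n$ via (\ref{min-1}), together with (\ref{ma18-1}), places each $b_n$ in $(\partial\Delta)\setminus\alpha_{0,3\delta_a}^\circ$; and the assumed failure of Assertion \ref{asC} combined with (\ref{az4}) yields $\lim_{n\to\infty}\inf L(f_n,I_n(a,b_n))=\lim_{n\to\infty}\inf d_{f_n}(a,b_n)=0$. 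Thus (E) is in force and Lemma \ref{ap7} applies, producing, after passing to a subsequence, the partition (\ref{a7}) with all its structural conclusions; in particular $k\geq 1$ is independent of $n$.

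With Lemma \ref{ap7} in hand, I would derive the four assertions of the claim as follows. The identity $I_{n1}=\{a\}$ is Lemma \ref{ap7}(iii), together with the observation that, by the convention in the partition (\ref{a7}), $I_{n1}$ is the initial subarc of $I_n=I_n(a,b_n)$ with initial point $a$; the membership $a\in\alpha_{01}^\circ$ is just the standing case. The inclusion $I_{n2}^\circ\subset\Delta$ is explicitly part of Lemma \ref{ap7}(i). Straightness of $(f_n,I_{n2})$ and the relation $I_{n2}^\circ\cap f_n^{-1}(E_q)=\emptyset$ are both recorded in Lemma \ref{ap7}(ii), which is what gives (\ref{ma19}).

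For the identity $\partial I_{n2}=I_{n2}\cap\partial\Delta$, I would argue in two steps. Since $I_{n2}^\circ\subset\Delta$, no interior point of $I_{n2}$ lies on $\partial\Delta$, so $I_{n2}\cap\partial\Delta\subset\partial I_{n2}=\{b_{n1},b_{n2}\}$. Conversely, $b_{n1}$ is the common endpoint of $I_{n1}$ and $I_{n2}$, and $b_{n2}$ is the common endpoint of $I_{n2}$ and $I_{n3}$; since Lemma \ref{ap7}(i) asserts $I_{n1},I_{n3}\subset\partial\Delta$, both endpoints lie in $\partial\Delta$, yielding the reverse inclusion.

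The main obstacle would be, in principle, that Lemma \ref{ap7} requires the preceding subsequence extractions and the verification of (E); but all of this has already been arranged in the passages producing (\ref{ma18-1}), (\ref{az4}) and (\ref{ag64}), so in practice the proof of this claim is purely bookkeeping. The only subtle point worth flagging is the alignment of the indexing in (\ref{a7}) with that of Lemma \ref{ap7}, so that ``$I_{n1}$ is a point'' translates precisely into ``$I_{n1}$ is the point $a$''; this is immediate from the orientation convention that the partition lists subarcs from the initial to the terminal point of $I_n$.
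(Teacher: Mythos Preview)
Your proposal is correct and matches the paper's approach exactly: the paper states Claim \ref{a=b1I2noeq} immediately after observing that the partition (\ref{a7}) satisfies all conclusions of Lemma \ref{ap7}, so the claim is meant as nothing more than a direct unpacking of those conclusions, which is precisely what you do. Your additional remark on why $\partial I_{n2}=I_{n2}\cap\partial\Delta$ follows from $I_{n1},I_{n3}\subset\partial\Delta$ and $I_{n2}^\circ\subset\Delta$ makes explicit a point the paper leaves implicit.
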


It is clear that $I_{n2}$ divides $\Delta$ into two Jordan domains
$\Delta_{1n}$ and $\Delta_{n2}$ and we let $\Delta_{n1}$ be the one on the
right hand side of $I_{n2},$ and write%
\[
F_{nj}=\left(  f_{n},\overline{\Delta_{nj}}\right)  ,j=1,2.
\]

We may assume
\begin{equation}
\lim_{n\rightarrow\infty}b_{n2}=b_{2}\mathrm{\ for\ some\ }b_{2}\in
\partial\Delta. \label{lim=b2}%
\end{equation}

Now, it is clear that to prove Assertion \ref{asC}, by taking subsequence, we
may assume that one of the following holds:

\noindent\textbf{Case 1. }$k=1\ $and $b_{n2}\in\{a_{nj}\}_{j=1}^{m}$ for each
$n=1,2,\dots,$ or $k\geq2.$

\noindent\textbf{Case 2. }$k=1\ $and $b_{n2}\notin\{a_{nj}\}_{j=1}^{m},$ for
each $n=1,2,\dots,$ but $b_{2}\in\{a_{0j}\}_{j=1}^{m}.$

\noindent\textbf{Case 3. }$k=1\ $and $b_{n2}\notin\{a_{nj}\}_{j=1}^{m},$ for
each $n=1,2,\dots,$ and $b_{2}\notin\{a_{0j}\}_{j=1}^{m}.$\medskip\medskip

\begin{center}
\label{st4}\textbf{Step 4. Case 1, 2, or 3 implies a contradiction}
\end{center}

We will deduce a contradiction in each of Cases 1--3.

\begin{center}
\textbf{Step 4.1 Discussion of Case 1}
\end{center}

\begin{claim}
\label{ccl5}Case 1 cannot occur.
\end{claim}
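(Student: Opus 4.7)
The plan is to show that Case 1 forces the sequence $\Sigma_n$ to be decomposable in $\mathcal{F}_r(L,m)$, which by Lemmas \ref{Fr'FrF} and \ref{undec-seq} contradicts the precise extremality hypothesis. The whole argument will consist in verifying hypothesis (\ref{inaj}) of Lemma \ref{ap7}(vi) in both alternatives of Case 1; the decomposition and the resulting contradiction are then handed to us by Lemma \ref{ap7} directly.

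In the first alternative ($k=1$ together with $b_{n2}\in\{a_{nj}\}_{j=1}^{m}$ for every $n$), (\ref{inaj}) is literally the standing assumption, so nothing is left to check. In the second alternative ($k\geq 2$), I would appeal to Lemma \ref{shortest}(viii) applied to the $d_{f_n}$-shortest path $I_n$: its partition (\ref{a7}) has at least two interior pieces, so the endpoints of the intermediate boundary arcs $I_{n3},I_{n5},\dots,I_{n,2k-1}$ all belong to $\{a_{nj}\}_{j=1}^{m}$. In particular $b_{n2}$, being the common endpoint of $I_{n2}$ and $I_{n3}$, lies in $\{a_{nj}\}_{j=1}^{m}$, so (\ref{inaj}) holds as well.

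With (\ref{inaj}) verified in both alternatives, Lemma \ref{ap7}(iv),(vi) yields subsurfaces $F_{nj}=(f_n,\overline{\Delta_{nj}})\in\mathcal{F}_r(L,m)$, $j=1,2$, cut out by $I_{n2}$, with $b_{n2}\in\{a_{nj}\}_{j=3}^{m}$, together with the matching
\[
R(F_{n1})+R(F_{n2})=R(\Sigma_n),\qquad L(\partial F_{n1})+L(\partial F_{n2})=L(\partial\Sigma_n)+2L(f_n,I_{n2}),
\]
where $L(f_n,I_{n2})\leq d_{f_n}(a,b_n)\to 0$ by (\ref{az4}). Because $b_{n2}\in\{a_{nj}\}_{j=3}^{m}$, (\ref{ma7-1}) and (\ref{ma7}) force $\liminf_{n\to\infty}L(\partial F_{nj})\geq 4\delta_a>0$ for $j=1,2$, so condition (b) of Definition \ref{decflm} is satisfied. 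Since $\{\Sigma_n\}$ is a precise extremal sequence in $\mathcal{F}_r(L,m)$ and hence, by Lemma \ref{Fr'FrF}, also in $\mathcal{F}(L,m)$, Lemma \ref{undec-seq} rules such a decomposition out, producing the contradiction that closes the proof. The main obstacle here is purely bookkeeping --- identifying $b_{n2}$ as a vertex $a_{nj}$ with $j\geq 3$ via Lemma \ref{shortest}(viii) and the separation estimate (\ref{ma7-1}); the genuine substantive difficulties are deferred to Cases 2 and 3, where $b_{n2}$ is not \emph{a priori} a vertex at all, and the decomposition has to be constructed by a more delicate perturbation argument.
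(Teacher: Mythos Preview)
Your proposal is correct and follows essentially the same approach as the paper: verify (\ref{inaj}) in both alternatives of Case 1 (trivially when $k=1$, via Lemma \ref{shortest}(viii) when $k\geq 2$), then invoke Lemma \ref{ap7}(vi) to obtain decomposability and contradict Lemma \ref{undec-seq}. The paper's proof is a two-line version of yours; your additional paragraph unpacking the $R$- and $L$-bookkeeping and the lower bound $\liminf L(\partial F_{nj})\geq 4\delta_a$ is redundant here, since these details are already absorbed into the proof of Lemma \ref{ap7}(vi), but it does no harm.
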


\begin{proof}
If $k>1$ then $b_{n2}\in\{a_{nj}\}_{j=1}^{m}$ by Lemma \ref{shortest}. Thus
$\{b_{n},b_{n2}\}$ satisfies (\ref{inaj}) in Case 1, and so by Lemma \ref{ap7}
(vi), $\Sigma_{n}$ is decomposable in $\mathcal{F}_{r}\left(  L,m\right)  ,$
contradicting to Lemma \ref{undec-seq}.\medskip
\end{proof}

\begin{center}
\textbf{Step 4.2 A general discussion of Case 2 and Case 3}
\end{center}

Now assume that Case 2 or Case 3 occurs. Then $k=1.$ We may assume $I_{n3}$ is
a point, for otherwise by Lemma \ref{shortest} $b_{n2}\in\{a_{nj}\}_{j=1}^{m}$
and Case 1 occurs. Then $b_{n2}=b_{n3}=b_{n}$ and thus, by Claim
\ref{a=b1I2noeq}, for $n=1,2,\dots,$%
\begin{equation}
I_{n}=I_{n}\left(  a,b_{n}\right)  =I_{n2}=I_{n2}(b_{n1},b_{n2})=I_{n2}%
(a,b_{n}), \label{428-2}%
\end{equation}
and then by (\ref{ma18-1}) we have $b_{n2}=b_{n}\in\left(  \partial
\Delta\right)  \backslash\alpha_{01,4\delta_{a}}.$ If $b_{n2}\notin\left(
\alpha_{nm}+\alpha_{n1}+\alpha_{n2}\right)  ^{\circ}\ $holds for infinitely
many $n,$ then by (\ref{az4}) and Lemma \ref{ap7} (v) $\Sigma_{n}$ is
decomposable in $\mathcal{F}\left(  L,m\right)  ,$ contradicting Lemma
\ref{undec-seq}. We then can assume, for each $n,b_{n}=b_{n2}\ $is in
$\alpha_{n2}^{\circ}\backslash\alpha_{01,4\delta_{a}}$ or $\alpha_{nm}^{\circ
}\backslash\alpha_{01,4\delta_{a}},$ and thus we can further assume
\begin{equation}
b_{n}=b_{n2}\in\alpha_{n2}^{\circ}\backslash\alpha_{01,4\delta_{a}%
},n=1,2,\dots. \label{ap1}%
\end{equation}
Then we have,
\begin{equation}
b_{0}=b_{2}=\lim_{n\rightarrow\infty}b_{n}=\lim_{n\rightarrow\infty}b_{n2}%
\in\alpha_{02}\backslash\alpha_{01,4\delta_{a}}^{\circ},n=1,2,\dots.
\label{ap1-1}%
\end{equation}
By the way we see that $\alpha_{02},$ as the limit of $\alpha_{n2},$ is not a
point, for otherwise $\alpha_{n2}\backslash\alpha_{01,4\delta_{a}}$ is empty
for all large enough $n.$

By definition of $\mathcal{F}_{r}\left(  L,m\right)  ,f_{n}$ is homeomorphic
in a neighborhood of $\alpha_{nj}^{\circ}$ for each $j=1,\dots,m,$ which with
(\ref{ma18-1}) and (\ref{ap1}) implies that $f_{n}$ is homeomorphic in some
neighborhoods of the endpoints of $I_{n}=I_{n}\left(  a,b_{n}\right)  .$ On
the other hand, for large enough $n,$ $f_{n}$ is homeomorphic in a
neighborhood of $I_{n2}^{\circ}\ $by Lemma \ref{ap7} (ii). Therefore by
(\ref{ag64}) we conclude that

\begin{condition}
\label{homI2}$f_{n}$ is homeomorphic in a neighborhood of $I_{n2}\left(
a,b_{n2}\right)  =I_{n}\left(  a,b_{n}\right)  $ in $\overline{\Delta}$ and
$I_{n2}^{\circ}\cap f_{n}^{-1}(E_{q})=\emptyset.$
\end{condition}

\begin{center}
\textbf{Step 4.3 Complete the discussion of Case 2}
\end{center}

\begin{claim}
\label{ccl5-1}Case 2 implies a contradiction.
\end{claim}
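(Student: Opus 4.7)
My plan is to reduce Case~2 to the already-handled Case~1 by replacing the wandering endpoint $b_{n2}$ with the nearby partition point $a_{n3}$, which lies in $\{a_{nj}\}_{j=1}^{m}$.

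First I would pin down the limit $b_2=\lim b_{n2}$. By Step~4.2 we already know $b_{n2}\in\alpha_{n2}^{\circ}\setminus\alpha_{01,4\delta_a}$, $\alpha_{02}$ is nondegenerate, and $b_2\in\alpha_{02}\setminus\alpha_{01,4\delta_a}^{\circ}$. In Case~2 we also have $b_2\in\{a_{0j}\}_{j=1}^{m}$, so $b_2$ is an endpoint of $\alpha_{02}$. The endpoint $a_{02}$ lies in $\alpha_{01,4\delta_a}^{\circ}$ by Notation~\ref{note1}, so the only possibility is $b_2=a_{03}$. In particular $a_{03}\notin\alpha_{01,4\delta_a}$, because otherwise $b_{n2}$ would eventually enter $\alpha_{01,4\delta_a}$, contradicting~(\ref{ap1}); hence for all large $n$,
\[
c_n:=a_{n3}\in\partial\Delta\setminus\alpha_{01,3\delta_a}^{\circ}.
\]

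Next I would verify that the sequence $\{c_n\}$ satisfies condition~(E) of Lemma~\ref{ap7}. Since $(f_n,\partial\Delta)$ is parametrized by length and $L(\partial\Sigma_n)$ is uniformly bounded, Lemma~\ref{l1}(i) combined with $L(\alpha_{n2}(b_{n2},a_{n3}))\to 0$ (because both $b_{n2}$ and $a_{n3}$ converge to $a_{03}$) yields $L(f_n,\alpha_{n2}(b_{n2},a_{n3}))\to 0$. Then by the triangle inequality and~(\ref{az4}),
\[
d_{f_n}(a,c_n)\;\le\;d_{f_n}(a,b_{n2})+L(f_n,\alpha_{n2}(b_{n2},a_{n3}))\;\longrightarrow\;0.
\]

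Finally I would apply Lemma~\ref{ap7} to the $d_{f_n}$-shortest path from $a$ to $c_n$. Because $c_n=a_{n3}\in\{a_{nj}\}_{j=1}^{m}$, the hypothesis~(\ref{inaj}) of Lemma~\ref{ap7}(vi) is automatic, so $\Sigma_n$ is decomposable in $\mathcal{F}_r(L,m)$. This directly contradicts Lemma~\ref{undec-seq}, since $\Sigma_n$ was assumed to be a precise extremal sequence of $\mathcal{F}_r(L,m)$. The only real subtlety in the plan is ensuring that $a_{03}$ is genuinely separated from $\alpha_{01}$ so that Lemma~\ref{ap7} applies with $c_n$ in the role of $b_n$; this follows cleanly from $b_{n2}\in\alpha_{n2}^{\circ}\setminus\alpha_{01,4\delta_a}$ together with $b_{n2}\to a_{03}$.
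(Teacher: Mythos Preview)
Your proposal is correct and follows essentially the same route as the paper: both argue that $b_2=a_{03}$, use the parametrization by length to get $d_{f_n}(a,a_{n3})\to 0$, verify $a_{n3}\in(\partial\Delta)\setminus\alpha_{01,3\delta_a}^{\circ}$, and then invoke Lemma~\ref{ap7}(vi) with $a_{n3}\in\{a_{nj}\}_{j=1}^m$ to force decomposability, contradicting Lemma~\ref{undec-seq}. The only minor imprecision is that from $b_{n2}\notin\alpha_{01,4\delta_a}$ you only get $a_{03}\notin\alpha_{01,4\delta_a}^{\circ}$ in the limit, but since $\alpha_{01,3\delta_a}\subset\alpha_{01,4\delta_a}^{\circ}$ this still yields the needed $a_{n3}\in(\partial\Delta)\setminus\alpha_{01,3\delta_a}^{\circ}$.
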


\begin{proof}
Now assume Case 2 occurs. Then by the condition of Case 2 and (\ref{ap1-1}),
\begin{equation}
b_{n}=b_{n2}\rightarrow b_{2}=a_{03}\in\alpha_{02}\backslash\alpha
_{01,4\delta_{a}}^{\circ}\mathrm{\ as\ }n\rightarrow\infty, \label{b2=a03}%
\end{equation}
which with the condition $\lim_{n\rightarrow\infty}a_{n3}=a_{03}$ implies, for
sufficiently large $n,$
\begin{equation}
a_{n3}\in\alpha_{n2}\backslash\alpha_{01,3\delta_{a}}^{\circ}. \label{in-3d}%
\end{equation}
By (\ref{b2=a03}) and the condition $\lim_{n\rightarrow\infty}a_{n3}=a_{03}$
we have $\lim_{n\rightarrow\infty}d\left(  b_{n},a_{n3}\right)  =0,$ which
with Lemma \ref{l1} (i) implies $\lim_{n\rightarrow\infty}d_{f_{n}}\left(
b_{n},a_{n3}\right)  =0.$ Thus by (\ref{az4}) we have
\[
\lim_{n\rightarrow\infty}d_{f_{n}}\left(  a,a_{n3}\right)  \leq\lim
_{n\rightarrow\infty}d_{f_{n}}\left(  a,b_{n}\right)  +\lim_{n\rightarrow
\infty}d_{f_{n}}\left(  b_{n},a_{n3}\right)  =0.
\]
Hence by (\ref{in-3d}) and Lemma \ref{ap7} (vi), by taking subsequence, we may
assume that the $d_{f_{n}}$-shortest path $\tilde{I}_{n}=\tilde{I}_{n}\left(
a,a_{n3}\right)  $ from $a$ to $a_{n3}$ has a partition%
\[
\tilde{I}_{n}=\tilde{I}_{n1}\left(  a,\tilde{b}_{n1}\right)  +\tilde{I}%
_{n2}\left(  \tilde{b}_{n1},\tilde{b}_{n2}\right)  +\tilde{I}_{n3}\left(
\tilde{b}_{n2},\tilde{b}_{n3}\right)  +\cdots+\tilde{I}_{n,2\tilde{k}%
+1}\left(  \tilde{b}_{n,2\tilde{k}},a_{n3}\right)  .
\]
satisfying all conclusions of Lemma \ref{ap7} for each $n,$ in which
$1\leq\tilde{k}\leq m+1$ is independent of $n.$ Then $\{a_{n3},\tilde{b}%
_{n2}\}\cap\{a_{nj}\}_{j=1}^{m}\neq\emptyset$, say (\ref{inaj}) holds, and
thus by Lemma \ref{ap7} (vi) $\Sigma_{n}$ is decomposable in $\mathcal{F}%
_{r}\left(  L,m\right)  ,$ contradicting to Lemma \ref{undec-seq}%
.\medskip\medskip
\end{proof}

\begin{center}
\textbf{Step 4.4 Discussion of Case 3: (1) Case 3 implies Condition
\ref{cond3-31}}
\end{center}

Now we assume that Case 3 occurs. Then by Condition of Case 3 and
(\ref{ap1-1}) we have $b_{2}\in\alpha_{02}^{\circ}$ and thus by (\ref{ap1-1})
we have%
\[
b_{2}=\lim_{n\rightarrow\infty}b_{n2}\in\alpha_{02}^{\circ}\backslash
\alpha_{01,4\delta_{a}},
\]
and then by (\ref{ap1}) we may assume
\[
\left\{  b_{2},b_{n2}\right\}  \subset\left[  \alpha_{02}^{\circ}\cap
\alpha_{n2}^{\circ}\right]  \backslash\alpha_{01,4\delta_{a}},n=1,2,\dots.
\]
Then
\begin{equation}
a\neq b_{2},b_{n2}\mathrm{\ for\ }n=1,2,\dots. \label{may1}%
\end{equation}

By (\ref{az4}) and (\ref{ap1-1}), we have
\begin{equation}
\lim_{n\rightarrow\infty}d_{f_{n}}\left(  a,b_{2}\right)  =0. \label{may2}%
\end{equation}
Then by Lemma \ref{ccl1} we have $f_{0}(a)=f_{0}(b_{2}),$ and then
\[
f_{0}\left(  a\right)  =f_{0}\left(  b_{2}\right)  \in c_{01}^{\circ}\cap
c_{02}^{\circ}.
\]
Thus by (\ref{may1}), (\ref{may2}) and Lemma \ref{ccl2}, we may assume
$c_{0j}=\left(  f_{0},\alpha_{0j}\right)  $ are both straight and
$L(c_{0j})\leq\pi,$ for $j=1,2.$ Then Case 3 can be discussed under the
following condition.

\begin{condition}
\label{cond3-31}Both $c_{01}$ and $c_{02}$ are straight, $f_{0}(a)=f_{0}%
\left(  b_{2}\right)  \in c_{01}^{\circ}\cap c_{02}^{\circ},$ $L(c_{01}%
)\leq\pi\ $and $L(c_{02})\leq\pi.\medskip$
\end{condition}

\begin{center}
\textbf{Step 4.5 Discussion of Case 3: (2) The closed Jordan domains }$K_{n1}
$, $K_{n2}$ \textbf{and }$K_{n,\psi_{n}^{\ast}}$
\end{center}

Let $C_{nj}$ be the circle determined by $c_{nj}$ for $n=0,1,\dots,$ and
$j=1,2.$ Then $C_{01}$ and $C_{02}$ are great circles on $S.$ By
(\ref{ma7-1}), we have
\[
f_{n}(a)\in c_{n1}\backslash D\left(  \left\{  q_{n1},q_{n2}\right\}
,3\delta_{a}\right)  \rightarrow c_{01}\backslash D\left(  \left\{
q_{01},q_{02}\right\}  ,3\delta_{a}\right)  \ni f_{0}(a)
\]
as $n\rightarrow\infty,$ which implies that $f_{0}(a)$ and $q_{02}$ are not
antipodal on $S$ by Condition \ref{cond3-31}. Then we have $\{q_{02}%
,f_{0}(a)\}\subset C_{01}\cap C_{02},$ which implies $C_{01}=-C_{02},$ say,
$c_{01}+c_{02}$ is folded at $q_{02}.$ Let $A_{n}=f_{n}(a)\ $and $B_{n}%
=f_{n}(b_{n}).$ Then $\tau_{n}=\left(  f_{n},I_{n}\right)  =\left(
f_{n},I_{n2}\right)  =\overline{A_{n}B_{n}}\rightarrow\{f_{0}(a)\}$ by
(\ref{az4}), and we see that, by (\ref{ap1}), $\alpha_{n2}^{\circ}%
\backslash\alpha_{01,3\delta_{a}}$ is an open arc of $\alpha_{n2}^{\circ}$
containing $b_{n}$ (see Notation \ref{note1} for $\alpha_{01,3\delta_{a}}$).
Then by (\ref{min-1}) and Condition \ref{homI2} we must have $\tau_{n}\perp
c_{n2}$ at $B_{n},$ where $\tau_{n}\perp c_{n2}$ indicates that $\tau_{n}$
intersects $c_{n2}$ at $B_{n}$ perpendicularly.

Since $\tau_{n}\perp c_{n2}\ $at $B_{n}\ $and $\tau_{n}\ $is on the left hand
side of $c_{n1}\ $and $c_{n2},$ we see that
\[
\eta_{n}=c_{n1}\left(  A_{n},q_{n2}\right)  +c_{n2}\left(  q_{n2}%
,B_{n}\right)  -\tau_{n}\left(  A_{n},B_{n}\right)
\]
is a convex topological triangle, enclosed by the two convex circular subarcs
of $c_{n1}$ and $c_{n2}$ and the line segment $\tau_{n}.$ It is clear that
$\eta_{n}$ converges to the folded arcs $\overline{f_{0}\left(  a\right)
f_{0}\left(  a_{02}\right)  }+\overline{f_{0}\left(  a_{02}\right)
f_{0}\left(  a\right)  }$ as $n\rightarrow\infty.$ Then $c_{n1}+c_{n2}$ is
strictly convex at $q_{n2}$, the interior angle of $\eta_{n}$ at $A_{n}$ is
almost $\pi/2,$ $C_{n1}$ and $C_{n2}$ enclose a "thin" and convex closed lens
$K_{n}$ which is on the left hand side of $c_{n1}+c_{n2}$, and $\tau_{n} $
divides $K_{n}$ into two closed Jordan domains $K_{n1}$ and $K_{n2}$ such that
$K_{n1}$ is on the right hand side of $\tau_{n},$ for $n=1,2,\dots.$ Then
$\partial K_{n1}=\eta_{n}$ and both $K_{n1}$ and $K_{n2}$ are contained in
some open hemispheres, respectively. It is clear that one of the two cusps of
$K_{n}$ is $q_{n2},$ and we let $q_{n2}^{\ast}$ be the other cusp.

Assume the interior center of $C_{n2}$, the center on the left hand side of
$C_{n2},$ is $P_{n}\ $and $C_{n2}=\partial D(P_{n},R_{n}).$ Since the
orientation of $C_{n2}$ is given by $c_{n2}$ and all $c_{nj}$ are convex for
all $n=0,1,\dots,$ and all $j=1,2,\dots,m,$ we have $R_{n}\leq\frac{\pi}{2}$.
Let $R_{n,\psi}$ be the radius $\overline{P_{n}B_{n,\psi}}$ of $C_{n2}$ such
that $B_{n,\psi}$ is the point on the arc $\partial K_{n}\cap C_{n2}$ so that
the angle between the radius $\overline{P_{n}q_{n2}}$ and $\overline
{P_{n}B_{n,\psi}}$ at $P_{n}$ equals $\psi$, $A_{n,\psi}$ be the intersection
of $R_{n,\psi}$ and $c_{n1},$ and write $R_{n,\psi_{n}}=\overline{P_{n}%
q_{n2}^{\ast}}\ $and $R_{n,\psi_{n,a}}=\overline{P_{n}B_{n}}.\ $Then
$R_{n,0}=\overline{P_{n}q_{2n}}$ and $0<\psi_{n,a}<\psi_{n}.$ Let
\begin{equation}
\tau_{n,\psi}=\tau_{n,\psi}\left(  A_{n,\psi},B_{n,\psi}\right)
=\overline{A_{n,\psi}B_{n,\psi}}=K_{n}\cap R_{n,\psi},\psi\in\lbrack0,\psi
_{n}], \label{ap4}%
\end{equation}
and%
\begin{equation}
K_{n,\psi}=\cup_{\theta\in\lbrack0,\psi]}\tau_{n,\theta}. \label{Knc}%
\end{equation}
Then $\tau_{n,0}=\{q_{n2}\},$
\[
\tau_{n}=\tau_{n,\psi_{n,a}}=\overline{A_{n,\psi_{n,a}}B_{n,\psi_{n,a}}%
}=\overline{A_{n}B_{n}},
\]
$K_{n,\psi_{n,a}}=K_{n1}$ and $K_{n,\psi_{n}}=K_{n}$. By the way we see that
$I_{n}$ is an $f_{n}$-lift of $\tau_{n,\psi_{n,a}}=\tau_{n},$ and $K_{n,\psi}$
is a closed Jordan domain for each $\psi\in\lbrack0,\psi_{n}],$ with an
exception at $\psi=0$: $K_{n,0}=\{q_{n2}\}.$

Since $C_{n1}$ and $C_{n2}$ tend to the great circles $C_{01}\ $and $C_{02}$
with $C_{02}=-C_{01}$, we have
\begin{equation}
\lim_{n\rightarrow\infty}\max_{\psi\in\lbrack0,\psi_{n}]}L(\tau_{n,\psi
})\rightarrow0. \label{ap2}%
\end{equation}
Then we may assume $K_{n}$ is so thin that
\begin{equation}
\max_{w\in K_{n}}d(w,\partial K_{n})<\delta_{0}. \label{ap10}%
\end{equation}

Now we can prove the following

\begin{claim}
\label{ccl3}For sufficiently large $n,$ the restriction $f_{n1}=f_{n}%
|_{\overline{\Delta_{n1}}}$ is a homeomorphism onto $K_{n,\psi_{a}}=K_{n1}$,
that is to say, $F_{n1}=\left(  f_{n},\overline{\Delta_{n1}}\right)  $ is the
simple closed domain $K_{n1}.$
\end{claim}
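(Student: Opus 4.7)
The plan is to propagate a univalent branch of $f_n^{-1}$ from a neighborhood of $\partial K_{n1}$ to all of $K_{n1}$, following closely the strategy of Lemma \ref{ccl2}. Since $\Sigma_n\in\mathcal{F}_r(L,m)$, all branch points of $f_n$ lie in $f_n^{-1}(E_q)$, and by (\ref{az1}) these are at $d_{f_n}$-distance at least $\delta_0$ from $\partial\Delta$; combined with the thinness estimate (\ref{ap10}), any $f_n$-preimage of $K_{n1}$ that reaches $\partial\Delta$ stays within $d_{f_n}$-distance $\delta_0$ of $\partial\Delta$ and therefore avoids all branch points. This is the mechanism that enables the extension.

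First I identify the boundary map. Since $\Delta_{n1}$ lies on the right of $I_{n2}$, one has $\partial\Delta_{n1}=\alpha_{n1}(a,a_{n2})+\alpha_{n2}(a_{n2},b_n)+I_{n2}(b_n,a)$, whose $f_n$-image is $c_{n1}(A_n,q_{n2})+c_{n2}(q_{n2},B_n)+\tau_n(B_n,A_n)=\eta_n=\partial K_{n1}$. This restricted map is a homeomorphism: the first two summands by the $\mathcal{F}(L,m)$-partition property (\ref{pk1})--(\ref{pk2}), and the third by Condition \ref{homI2}. The perpendicularity $\tau_n\perp c_{n2}$ at $B_n$ yields a positive interior angle at $B_n$, and the interior angles at $A_n$ and $q_{n2}$ are positive by Lemma \ref{int-arg}. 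Applying Lemma \ref{int-arg1} along each boundary piece and patching gives a univalent branch $g_n$ of $f_n^{-1}$ on a closed one-sided neighborhood of $\eta_n$ inside $\overline{K_{n1}}$, mapping into $\overline{\Delta_{n1}}$ and agreeing with $(f_n|_{\partial\Delta_{n1}})^{-1}$.

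Next, adapting the $\theta_n^*$-construction of Lemma \ref{ccl2} to the radial foliation $\{\tau_{n,\psi}\}_{\psi\in[0,\psi_{n,a}]}$ of $K_{n1}$ from (\ref{ap4})--(\ref{Knc}), let $\psi_n^*\in(0,\psi_{n,a}]$ be the supremum of those $\psi$ for which $g_n$ extends to a univalent branch $g_{n,\psi}$ of $f_n^{-1}$ on $K_{n,\psi}$. By Lemma \ref{continue0} the supremum is attained. If $\psi_n^*<\psi_{n,a}$, the arc $\alpha_{n,\psi_n^*}:=g_{n,\psi_n^*}(\tau_{n,\psi_n^*})$ must meet $\partial\Delta$ at some interior point, for otherwise (\ref{az1}), (\ref{ap10}) and the $\mathcal{F}_r$ condition give that $f_n$ is locally homeomorphic in a neighborhood of $\alpha_{n,\psi_n^*}$ in $\overline{\Delta}$, and another application of Lemma \ref{int-arg1} and Lemma \ref{continue0} extends $g_{n,\psi_n^*}$ past $\psi_n^*$, contradicting maximality. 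By Lemma \ref{tangent}(iv) such interior intersection points lie in $\{a_{nj}\}_{j=1}^{m}$, and Lemma \ref{m-1} then shows that $\alpha_{n,\psi_n^*}$ cuts $\Sigma_n$ into surfaces in $\mathcal{F}_r(L,m)$. Combined with $L(\tau_{n,\psi_n^*})<L(c_{n1}(A_n,q_{n2}))+L(c_{n2}(q_{n2},B_n))$ from Lemma \ref{good-1}, and with the length lower bounds (\ref{ma7-1}) ensuring that both pieces carry boundary length bounded below independently of $n$, this gives a decomposition of the precise extremal sequence $\{\Sigma_n\}$ in the sense of Definition \ref{undec-seqr}(b), contradicting Lemma \ref{undec-seq}. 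Hence $\psi_n^*=\psi_{n,a}$; uniqueness of $f_n$-lifts of $\tau_n$ from $a$ (Condition \ref{homI2}) forces $g_{n,\psi_{n,a}}(\tau_n)=I_{n2}$, and therefore $g_{n,\psi_{n,a}}(K_{n1})=\overline{\Delta_{n1}}$, which is the desired homeomorphism.

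The main obstacle is the contradiction step when $\psi_n^*<\psi_{n,a}$: one must verify that the two components into which $\alpha_{n,\psi_n^*}$ splits $\Sigma_n$ actually lie in $\mathcal{F}_r(L,m)$ (so that Lemma \ref{m-1} applies) and each has boundary length bounded below by a positive constant, so that Definition \ref{undec-seqr}(b) genuinely applies. The $4\delta_a$ separation of $a$ from $\{a_{n1},a_{n2}\}$ guaranteed by (\ref{ma7-1}), together with $b_n\in\alpha_{n2}\setminus\alpha_{01,4\delta_a}$, will supply the required long boundary subarcs on both sides of $\alpha_{n,\psi_n^*}$.
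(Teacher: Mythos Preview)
Your overall plan is workable but takes a far longer route than the paper, and the specific execution has a gap.

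The paper's proof is direct and does not invoke decomposability at all for this claim. It works with the restriction $f_{n1}=f_n|_{\overline{\Delta_{n1}}}$ as a surface in $\mathcal{F}_r(L,3)$, observes that the only possible branch values of $f_{n1}$ in $K_{n1}\setminus\{q_{n2}\}$ lie in $E'=K_{n1}^\circ\cap E_q$, and then rules out $E'\neq\emptyset$ by exactly the mechanism you identify in your opening paragraph: if $Q\in E'$, choose the first leaf $\tau_{n,\psi'}$ (sweeping from $\psi_{n,a}$ down) that meets $Q$; the region $\cup_{\psi\in[\psi',\psi_{n,a}]}\tau_{n,\psi}$ has no branch values in its interior, so it lifts univalently into $\overline{\Delta_{n1}}$, placing a point of $f_n^{-1}(Q)\subset f_n^{-1}(E_q)$ in $\Delta$ at $d_{f_n}$-distance at most $L(\tau_{n,\psi'})<\delta_0$ from $\partial\Delta$, contradicting (\ref{az1}). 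With $E'=\emptyset$ one applies Lemma \ref{b-in} and is done. You already have this observation in your first paragraph; you can finish in two lines instead of building the decomposability machinery.

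The concrete gap in your argument is the appeal to Lemma \ref{tangent}(iv). That part of the lemma requires $(f_n,-\beta)$ to be \emph{strictly} convex, but $\tau_{n,\psi_n^*}=\overline{A_{n,\psi_n^*}B_{n,\psi_n^*}}$ is a straight segment (a piece of a radius of $\partial D(P_n,R_n)$), so (iv) does not apply and $\alpha_{n,\psi_n^*}^\circ\cap\partial\Delta$ need not consist of isolated points of $\{a_{nj}\}_{j=1}^m$; it could contain subarcs of those $\alpha_{nj}$ for which $c_{nj}$ is straight. One could try to repair this using parts (i)--(ii) of Lemma \ref{tangent} together with Lemma \ref{m-1} in its full generality (with nondegenerate $I_{2j-1}$), but verifying hypotheses (e1)--(e3) and the boundary-length lower bounds on both sides then becomes delicate. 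Given that the paper's direct argument avoids all of this, the decomposability route is an unnecessary detour here; save it for the continuation past $\psi_{n,a}$, where the foliating arcs $c_{12,\theta}'$ are genuinely strictly convex and Lemma \ref{tangent}(iv) applies cleanly.
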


\begin{proof}
By Condition \ref{homI2} and Definition of $\mathcal{F}_{r}(L,m)$, $f_{n}$
restricted to $\partial\Delta_{n1}=-I_{n2}\left(  a,b_{n}\right)  +\alpha
_{n1}\left(  a,a_{n2}\right)  +\alpha_{2}\left(  a_{n2},b_{n}\right)  ,$ with
$b_{n}=b_{n2},$ is a homeomorphism onto $\partial K_{n1}$ and $f_{n}$ has no
branch point on $\left(  \partial\Delta_{n1}\right)  \backslash\{a_{n2}\}$.
Recall that we are still in Case 3, and so $I_{n}\left(  a,b_{n}\right)
=I_{2n}\left(  a,b_{n2}\right)  .$ On the other hand, $f_{n}$ is locally
homeomorphic on $\left(  \Delta\backslash f^{-1}\left(  E_{q}\right)  \right)
\cup\left(  \partial\Delta\right)  \backslash\{a_{nj}\}_{j=1}^{m}.$ Then
$f_{n1}=f_{n}|_{\overline{\Delta_{n1}}}$ is homeomorphic in a neighborhood of
$\left(  \partial\Delta_{n1}\right)  \backslash\{a_{n2}\}$ in $\overline
{\Delta_{n1}}$ and $\left(  f_{n1},\overline{\Delta_{n1}}\right)
\in\mathcal{F}_{r}\left(  L,3\right)  $, and then $f_{n1}(\overline
{\Delta_{n1}})\supset K_{n1}$.

The set of branch values of $f_{n1}$ in $K_{n1}\backslash\{c_{n2}\}$ is
contained in $E^{\prime}=K_{n1}^{\circ}\cap E_{q}.$ If $E^{\prime}%
\neq\emptyset,$ let $\psi\in(0,\psi_{n,a}]$ decrease continuously from
$\psi_{n,a}$ to $0$ so that $\tau_{n,\psi}$ first meets a value $Q\in
E^{\prime}$ for some $\psi=\psi^{\prime}\in\left(  0,\psi_{n,a}\right)  .$
Then by Lemma \ref{b-in} we see that $f_{n}^{-1}$ has a univalent branch
defined on the closed Jordan domain $K_{n1}^{\prime}=\cup_{\psi\in\lbrack
\psi^{\prime},\psi_{n,a}]}\tau_{n,\psi}$ and so $\tau_{n,\psi^{\prime}}$ has
an $f_{n}$-lift in $\overline{\Delta_{n1}}$ whose endpoints are on
$\alpha_{n1}^{\circ}$ and $\alpha_{n2}^{\circ},$ but interior in $\Delta
_{n1},$ and thus $f_{n}^{-1}(E_{q})\cap\Delta_{n1}\supset f_{n}^{-1}%
(Q)\cap\Delta_{n1}\neq\emptyset,$ which with (\ref{ap2}) implies that%
\[
d_{f_{n}}\left(  f_{n}^{-1}(E_{q}\right)  \cap\Delta,\partial\Delta)\leq
d_{f_{n}}\left(  f_{n}^{-1}(Q\right)  \cap\Delta_{n1},\alpha_{n1}+\alpha
_{n2})<L(\tau_{n,\psi^{\prime}})<\delta_{0},
\]
for sufficiently large $n,$ contradicting (\ref{az1}). Then for sufficiently
large $n,$ $f_{n}$ has no any branch value in $K_{n1}\backslash\{q_{n2}\},$
and thus $f_{n}^{-1}$ has a univalent branch defined on $K_{n1}\backslash
\{q_{n2}\}.$ Therefore Claim \ref{ccl3} follows from Lemma \ref{b-in}.
\end{proof}

By Condition \ref{homI2} we may discuss Case 3 under the following condition.

\begin{condition}
\label{gd}For each $n=1,2,\dots,$ there exists $\varphi_{n}\in(\psi_{n,a}%
,\psi_{n}),$ such that for every $\psi\in(\psi_{n,a},\varphi_{n}],\tau
_{n,\psi}$ has a unique $f_{n}$-lift $I_{n,\psi}$ in $\overline{\Delta}$ from
a point $a_{n,\psi}\in\alpha_{n1}^{\circ}$ to a point $b_{n,\psi}\in
\alpha_{n2}^{\circ},$ $I_{n,\psi}^{\circ}\subset\Delta,$ and $K_{n,\psi}$ has
an $f_{n}$-lift $D_{n,\psi}$ so that $D_{n,\psi}$ is a closed Jordan domain
enclosed by%
\[
\partial D_{n,\psi}=\alpha_{n1}\left(  a_{n,\psi},a_{n2}\right)  +\alpha
_{n2}\left(  a_{n2},b_{n,\psi}\right)  -I_{n,\psi}\left(  a_{n,\psi}%
,b_{n,\psi}\right)  ,
\]
say, $f_{n}$ restricted to $D_{n,\psi}$ is a homeomorphism onto $K_{n,\psi}.$
\end{condition}

\begin{claim}
For each $n=1,2,\dots,$ let $\psi_{n}^{\ast}$ be the supremun of all
$\varphi_{n}\in(\psi_{n,a},\psi_{n})$ satisfying Condition \ref{gd}. Then%
\begin{equation}
\psi_{n}^{\ast}<\psi_{n},n=1,2,\cdots. \label{ap3}%
\end{equation}

\end{claim}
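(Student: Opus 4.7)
The plan is to argue by contradiction, exploiting the degeneration of the chord $\tau_{n,\psi}$ to the cusp $q_{n2}^{\ast}$ as $\psi\to\psi_n$. Suppose, for some fixed $n$, that $\psi_n^{\ast}=\psi_n$. Then by the definition of $\psi_n^{\ast}$, Condition \ref{gd} holds for every $\psi\in(\psi_{n,a},\psi_n)$; in particular, there exist points $a_{n,\psi}\in\alpha_{n1}^{\circ}$ and $b_{n,\psi}\in\alpha_{n2}^{\circ}$ with $f_n(a_{n,\psi})=A_{n,\psi}$ and $f_n(b_{n,\psi})=B_{n,\psi}$, and an $f_n$-lift $I_{n,\psi}$ of $\tau_{n,\psi}$ from $a_{n,\psi}$ to $b_{n,\psi}$ with $I_{n,\psi}^{\circ}\subset\Delta$.

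Fix a sequence $\psi_k\uparrow\psi_n$. By the very definition (\ref{Knc}) of $K_{n,\psi}$, the chord $\tau_{n,\psi}=K_n\cap R_{n,\psi}$ degenerates to the single point $\{q_{n2}^{\ast}\}$ as $\psi\to\psi_n$, since $R_{n,\psi_n}=\overline{P_n q_{n2}^{\ast}}$ and $K_n\cap\overline{P_n q_{n2}^{\ast}}=\{q_{n2}^{\ast}\}$. Hence $A_{n,\psi_k}\to q_{n2}^{\ast}$, $B_{n,\psi_k}\to q_{n2}^{\ast}$, and $L(I_{n,\psi_k})=L(\tau_{n,\psi_k})\to 0$. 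Passing to a subsequence we may assume $a_{n,\psi_k}\to p_1\in\overline{\alpha_{n1}}$ and $b_{n,\psi_k}\to p_2\in\overline{\alpha_{n2}}$. Because $I_{n,\psi_k}^{\circ}\subset\Delta$ and is uniformly away from $f_n^{-1}(E_q)$ for $k$ large (the arc's $f_n$-image lies in $K_n$, and by (\ref{ap10}) together with (\ref{az1}) the lift cannot touch $f_n^{-1}(E_q)\cap\Delta$), the map $f_n$ is a local homeomorphism in a neighborhood of $I_{n,\psi_k}$ in $\overline{\Delta}$. Combined with $L(I_{n,\psi_k})\to 0$, this forces the arcs $I_{n,\psi_k}$ to shrink to a single point in $\overline{\Delta}$, so $p_1=p_2=:p$.

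Now $p\in\overline{\alpha_{n1}}\cap\overline{\alpha_{n2}}=\{a_{n2}\}$ by the partition (\ref{pk1}), so $p=a_{n2}$ and hence $f_n(p)=f_n(a_{n2})=q_{n2}$. On the other hand, continuity of $f_n$ gives $f_n(p)=\lim_{k\to\infty}f_n(a_{n,\psi_k})=\lim_{k\to\infty}A_{n,\psi_k}=q_{n2}^{\ast}$. This contradicts the fact that the two cusps $q_{n2}$ and $q_{n2}^{\ast}$ of the non-degenerate thin lens $K_n$ are distinct (since $C_{n1}$ and $C_{n2}$ are distinct circles on $S$ for every finite $n$, cf.\ the discussion preceding (\ref{ap2})). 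Therefore $\psi_n^{\ast}<\psi_n$ for every $n$.

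The main obstacle in this plan is ensuring the geometric convergence $p_1=p_2$ from the metric convergence $L(I_{n,\psi_k})\to 0$. This reduces to checking that $f_n$ is locally homeomorphic along $I_{n,\psi_k}$ for large $k$, which in turn rests on the two facts that $I_{n,\psi_k}^{\circ}\subset\Delta\setminus f_n^{-1}(E_q)$ (so Lemma \ref{cov-1}(A) applies inside $\Delta$) and that $f_n$ is homeomorphic near $\alpha_{n1}^{\circ}\cup\alpha_{n2}^{\circ}$ by $\Sigma_n\in\mathcal{F}_r(L,m)$ (so the endpoints stay well-behaved). Once this is in place, the topological collapse of $I_{n,\psi_k}$ to a point forces the endpoints to merge in $\overline{\Delta}$, triggering the cusp contradiction $q_{n2}=q_{n2}^{\ast}$.
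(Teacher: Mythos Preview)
Your argument is correct and reaches the same conclusion as the paper, but by a genuinely different route. The paper observes that if $\psi_n^{\ast}=\psi_n$ then $f_n^{-1}$ has a univalent branch on all of $K_n\setminus\{q_{n2}^{\ast}\}$; Lemma~\ref{b-in} then upgrades this to a global homeomorphism $f_n:\overline{\Delta}\to K_n$, whence $\partial\Sigma_n=c_{n1}+c_{n2}$ and $m=2$, contradicting the standing hypothesis $m>3$. Your approach instead tracks the endpoints of the lifted chords as $\psi\uparrow\psi_n$, shows they collapse to a single point of $\alpha_{n1}\cap\alpha_{n2}=\{a_{n2}\}$, and derives the contradiction $q_{n2}=f_n(a_{n2})=q_{n2}^{\ast}$ from the distinctness of the two cusps of the non-degenerate lens $K_n$.

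Your route avoids Lemma~\ref{b-in} entirely, which is a nice simplification. One point deserves a cleaner justification: the step ``$L(I_{n,\psi_k})\to 0$ plus local homeomorphism forces the arcs to shrink to a point'' is not quite the right mechanism. What you actually need is only $p_1=p_2$, and this follows directly from the metric $d_{f_n}$: one has $d_{f_n}(a_{n,\psi_k},b_{n,\psi_k})\le L(f_n,I_{n,\psi_k})\to 0$, and by Lemma~\ref{dfcon} (continuity of $d_{f_n}$) together with Lemma~\ref{d>d} (which gives $d_{f_n}(x,y)>0$ for $x\ne y$), the limits satisfy $d_{f_n}(p_1,p_2)=0$, hence $p_1=p_2$. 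The local-homeomorphism discussion in your last paragraph is then unnecessary. With this adjustment, your proof is complete and arguably more elementary than the paper's, while the paper's version is more structural and yields the stronger intermediate conclusion that $\Sigma_n$ would be a one-sheeted lens.
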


\begin{proof}
Assume $\psi_{n}^{\ast}=\psi_{n}$ for some $n.$ Then $q_{n2}^{\ast}\in
c_{n1}\cap c_{n2}$ and $f_{n}^{-1}$ has a univalent branch $g_{n}$ defined on
$K_{n,\psi_{n}^{\ast}}\backslash\{q_{n2}^{\ast}\}=K_{n,\psi_{n}}%
\backslash\{q_{n2}^{\ast}\}=K_{n}\backslash\{q_{n2}^{\ast}\}$. This, together
with Lemma \ref{b-in}, implies that $f_{n}$ restricted to $\overline{\Delta}$
is a homeomorphism onto $K_{n}$ and $\partial\Sigma_{n}=c_{n1}+c_{n2},$ and
then we have $m=2.$ This contradicts that $m>3,$ and then (\ref{ap3}) is
true.\medskip
\end{proof}

\begin{center}
\textbf{Step 4.6 Discussion of Case 3: (3) The subsurface }$F_{n0}^{\prime}$
\end{center}

By Claim \ref{ccl3} and Condition \ref{gd}, it is clear that $f_{n}^{-1}$ has
a univalent branch $g_{n}$ defined on $K_{n,\psi_{n}^{\ast}}\backslash
\tau_{n,\psi_{n}^{\ast}}$. By Lemma \ref{continue0} this branch can be
extended to a univalent branch well defined on $K_{n,\psi_{n}^{\ast}},\ $but
we still denote it by $g_{n}$. Let
\[
I_{n,\psi_{n}^{\ast}}=I_{n,\psi_{n}^{\ast}}\left(  a_{n,\psi_{n}^{\ast}%
},b_{n,\psi_{n}^{\ast}}\right)  =g_{n}(\tau_{n,\psi_{n}^{\ast}}\left(
A_{n,\psi_{n}^{\ast}},B_{n,\psi_{n}^{\ast}}\right)  )
\]
(see (\ref{ap4})), and let $\lambda_{n0}$ be the arc of $\partial\Delta$ from
$a_{n,\psi_{n}^{\ast}}$ to $b_{n,\psi_{n}^{\ast}},$ say,%
\begin{equation}
\lambda_{n0}=\alpha_{n1}\left(  a_{n,\psi_{n}^{\ast}},a_{n2}\right)
+\alpha_{n2}\left(  a_{n2},b_{n,\psi_{n}^{\ast}}\right)  . \label{gman0}%
\end{equation}
Then it is clear that $I_{n,\psi_{n}^{\ast}}\cap I_{n}=\emptyset,$ and
$-I_{n,\psi_{n}^{\ast}}+\lambda_{n0}$ encloses a domain $\Delta_{n0}$ in
$\Delta$ such that
\[
f_{n}:\overline{\Delta_{n0}}\rightarrow K_{n,\psi_{n}^{\ast}}%
\]
is a homeomorphism. Since $\left(  f_{n},I_{n,\psi_{n}^{\ast}}\right)  $ is
straight, we have
\[
L(f_{n},\partial\Delta_{n0})=L(f_{n},I_{n,\psi_{n}^{\ast}})+L(f_{n}%
,\lambda_{n0})\leq L(f_{n},\left(  \partial\Delta\right)  \backslash
\lambda_{n0})+L(f_{n},\lambda_{n0})\leq L,
\]
and then we have

\begin{claim}
\label{home}$f_{n}$ restricted to $\overline{\Delta_{n0}}$ is a homeomorphism
onto $K_{n,\psi_{n}^{\ast}},$ which is the closure of the component of
$K_{n}\backslash\tau_{n,\psi_{n}^{\ast}}$ on the right hand side of
$\tau_{n,\psi_{n}^{\ast}}$. Moreover for sufficiently large $n$
\[
F_{n0}^{\prime}=\left(  f_{n},\overline{\Delta_{n0}}\right)  \in
\mathcal{F}_{r}(L,3)\subset\mathcal{F}_{r}(L,m),
\]
and%
\begin{equation}
\lim_{n\rightarrow\infty}\inf L(\partial F_{n0}^{\prime})\geq L(c_{01}\left(
A_{n,\psi_{n}^{\ast}},q_{n2}\right)  >\delta_{a}>0. \label{L>0}%
\end{equation}
(note that $\left(  f_{n},I_{n,\psi_{n}^{\ast}}\right)  $ is straight and we
assumed $m>3$).
\end{claim}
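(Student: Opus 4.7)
The plan is to verify Claim \ref{home} in three pieces: the homeomorphism statement, membership in $\mathcal{F}_r(L,3)$, and the length lower bound, each flowing more or less directly from what has been built up.

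First I would assemble the homeomorphism. By Claim \ref{ccl3} together with Condition \ref{gd}, $f_n^{-1}$ possesses a univalent branch $g_n$ on the "open" wedge $K_{n,\psi_n^\ast}\setminus\tau_{n,\psi_n^\ast}$. Lemma \ref{continue0} lets me extend $g_n$ across the line segment $\tau_{n,\psi_n^\ast}$ to a homeomorphism defined on the whole closed Jordan domain $K_{n,\psi_n^\ast}$. The image $g_n(K_{n,\psi_n^\ast})$ is a closed Jordan domain whose boundary is exactly $-I_{n,\psi_n^\ast}(a_{n,\psi_n^\ast},b_{n,\psi_n^\ast})+\lambda_{n0}$, so it coincides with $\overline{\Delta_{n0}}$, and $f_n=g_n^{-1}$ on this set is the desired homeomorphism.

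Next I would show $F_{n0}'\in\mathcal{F}_r(L,3)$. The Jordan curve $\partial\Delta_{n0}$ breaks naturally into the three pieces $\alpha_{n1}(a_{n,\psi_n^\ast},a_{n2})$, $\alpha_{n2}(a_{n2},b_{n,\psi_n^\ast})$, and $-I_{n,\psi_n^\ast}$; under $f_n$ these map homeomorphically to the SCC arcs $c_{n1}(A_{n,\psi_n^\ast},q_{n2})$, $c_{n2}(q_{n2},B_{n,\psi_n^\ast})$, and the line segment $-\tau_{n,\psi_n^\ast}$ respectively (a line segment is circular with curvature $0$, so it qualifies as SCC). Since $f_n|_{\overline{\Delta_{n0}}}$ is a homeomorphism, there are no branch points at all in $\overline{\Delta_{n0}}$, which is what is needed for the $\mathcal{F}_r$ label. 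The constraint $L(\partial F_{n0}')\le L$ comes from the straightness of $\tau_{n,\psi_n^\ast}=(f_n,I_{n,\psi_n^\ast})$: the line segment realizes the $d_{f_n}$-distance between its endpoints, so its length is $\le L(f_n,(\partial\Delta)\setminus\lambda_{n0})$, yielding
\[
L(\partial F_{n0}')=L(f_n,I_{n,\psi_n^\ast})+L(f_n,\lambda_{n0})\le L(f_n,(\partial\Delta)\setminus\lambda_{n0})+L(f_n,\lambda_{n0})=L(\partial\Sigma_n)\le L.
\]

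Finally, for the $\liminf$ bound, I would use the ordering of the parameter $\psi$. Since $\psi_n^\ast>\psi_{n,a}$ and both $A_{n,\psi}$ on $c_{n1}$ and $B_{n,\psi}$ on $c_{n2}$ move away from the cusp $q_{n2}$ as $\psi$ increases, the arc $c_{n1}(A_{n,\psi_n^\ast},q_{n2})$ contains $c_{n1}(A_n,q_{n2})=c_{n1}(f_n(a),q_{n2})$. By (\ref{ma7}) in Condition \ref{da}(b), $L(c_{n1}(f_n(a),q_{n2}))>4\delta_a$, so
\[
\liminf_{n\to\infty}L(\partial F_{n0}')\ge\liminf_{n\to\infty}L(c_{n1}(A_{n,\psi_n^\ast},q_{n2}))\ge 4\delta_a>\delta_a>0,
\]
which is the asserted bound. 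There is no real obstacle here; the claim is essentially a packaging step, and the only place requiring any care is noting that $\psi_n^\ast>\psi_{n,a}$ is what forces $A_{n,\psi_n^\ast}$ to lie on the far side of $A_n$ from $q_{n2}$ along $c_{n1}$, so that the lower bound from (\ref{ma7}) applies.
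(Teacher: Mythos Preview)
Your proposal is correct and follows essentially the same route as the paper: the paper establishes the homeomorphism via Claim \ref{ccl3}, Condition \ref{gd}, and Lemma \ref{continue0} in the paragraphs immediately preceding the claim, and gives the same length estimate $L(f_n,\partial\Delta_{n0})\le L$ from the straightness of $(f_n,I_{n,\psi_n^\ast})$. Your justification of the lower bound (\ref{L>0}) via the monotonicity of $A_{n,\psi}$ in $\psi$ together with (\ref{ma7}) is more explicit than the paper, which simply asserts the inequality; your observation that $\psi_n^\ast>\psi_{n,a}$ forces $c_{n1}(A_{n,\psi_n^\ast},q_{n2})\supset c_{n1}(f_n(a),q_{n2})$ is exactly the missing detail.
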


It is clear that the closed Jordan domain $\overline{\Delta_{n0}}$ is the
union $\cup_{\psi\in\lbrack0,\psi_{n}^{\ast}]}I_{n,\psi}$ with $I_{n,\psi
}^{\circ}\subset\Delta_{n0},\partial I_{n,\psi}\subset\left(  \partial
\Delta_{n0}\right)  \cap\partial\Delta$ and $\tau_{n,\psi}^{\circ}=\left(
f_{n},I_{n,\psi}^{\circ}\right)  \subset K_{n,\psi_{n}^{\ast}}^{\circ}$ for
all $\psi\in(0,\psi_{n}^{\ast})$. Then by (\ref{az1}) and (\ref{ap2}) we have%
\begin{equation}
\Delta_{n0}\cap f_{n}^{-1}(E_{q})=\emptyset, \label{noEq}%
\end{equation}
and then by Claim \ref{home} we have
\[
R(F_{n0}^{\prime})=\left(  q-2\right)  A(F_{n0}^{\prime})<\left(  q-2\right)
A(K_{n,\psi_{n}^{\ast}})<\left(  q-2\right)  A(K_{n})\rightarrow0
\]
as $n\rightarrow\infty,$ and
\begin{equation}
H(F_{n0}^{\prime})\leq\frac{\left(  q-2\right)  A(K_{n})}{L(\partial
F_{n0}^{\prime})}\leq\frac{\left(  q-2\right)  A(K_{n})}{\delta_{a}%
}\rightarrow0\left(  n\rightarrow\infty\right)  . \label{428-3}%
\end{equation}
\medskip

\begin{center}
\textbf{Step 4.7 Discussion of Case 3: (4) Discussion of the }$d_{f_{n}}%
$\textbf{-shortest path }$I_{n,\psi_{n}^{\ast}}$
\end{center}

If there exists a subsequence $\{n_{s}\}_{s=1}^{\infty}$ of $\{n\}$ such that
$I_{n_{s},\psi_{n_{s}}^{\ast}}\subset\partial\Delta,$ then $\Delta_{n_{s}%
0}=\Delta,$ and then $F_{n_{s}0}^{\prime}=\Sigma_{n_{s}}=\left(  f_{n_{s}%
},\overline{\Delta}\right)  $ by Claim \ref{home}. But this implies
\[
A(f_{n_{s}},\Delta)=A(K_{n_{s},\psi_{n_{s}}^{\ast}})<A(K_{n_{s}})\rightarrow0
\]
as $s\rightarrow\infty.$ Then by (\ref{L>0}), we have
\[
H(\Sigma_{n_{s}})\leq\frac{\left(  q-2\right)  A(f_{n_{s}},\Delta)}%
{L(\partial\Sigma_{n_{s}})}=\frac{\left(  q-2\right)  A(f_{n_{s}},\Delta
)}{L(\partial F_{n0}^{\prime})}\rightarrow0,
\]
which contradicts that $\Sigma_{n}$ is an extremal sequence of $\mathcal{F}%
_{r}\left(  L,m\right)  $ (and of $\mathcal{F}\left(  L,m\right)  $) by Lemma
\ref{H>0}. Thus we may assume that%
\[
I_{n,\psi_{n}^{\ast}}\cap\Delta\neq\emptyset,n=1,2,\dots.
\]

By (\ref{ap2}) we have%
\begin{equation}
\lim_{n\rightarrow\infty}L(f_{n},I_{n,\psi_{n}^{\ast}})=\lim_{n\rightarrow
\infty}L(\tau_{n,\psi_{n}^{\ast}})=0, \label{ma20}%
\end{equation}
and so by (\ref{az1}) we may assume that%
\begin{equation}
\Delta\cap I_{n,\psi_{n}^{\ast}}\cap f_{n}^{-1}(E_{q})=\emptyset,n=1,2,\dots.
\label{503-1}%
\end{equation}
Then by Lemma \ref{shortest}, as (\ref{a7}) for $I_{n}$, we may assume, by
taking subsequence, that $I_{n,\psi_{n}^{\ast}}$ has a partition%
\begin{equation}
I_{n,\psi_{n}^{\ast}}=J_{n1}\left(  a_{n1}^{\prime},a_{n2}^{\prime}\right)
+J_{n2}\left(  a_{n2}^{\prime},a_{n3}^{\prime}\right)  +\cdots+J_{n,2k^{\prime
}+1}\left(  a_{n,2k^{\prime}+1}^{\prime},a_{n,2k^{\prime}+2}^{\prime}\right)
\label{In*}%
\end{equation}
satisfying all conclusions of Lemma \ref{shortest} when we regard $k^{\prime}
$ as $k$ there, where $k^{\prime}$ is independent of $n$, $a_{n1}^{\prime
}=a_{n,\psi_{n}^{\ast}}$ and $a_{n,2k^{\prime}+2}^{\prime}=b_{n,\psi_{n}%
^{\ast}}$. Then $f_{n}$ has no branch point in $\cup_{j=1}^{k^{\prime}%
}J_{n,2j}^{\circ}\subset\Delta\cap I_{n,\psi_{n}^{\ast}}$ and we have

\begin{claim}
\label{home1}$f_{n}$ restricted to a neighborhood of $J_{n,2j}^{\circ}$ in
$\overline{\Delta}$ is a homeomorphism$\ $and $\left(  f_{n},J_{n,2j}\right)
$ is straight for every $j=1,\dots,k^{\prime}.\medskip$
\end{claim}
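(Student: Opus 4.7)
The plan is to reduce the claim to a direct application of Lemma \ref{shortest} (vi). First I would verify that $I_{n,\psi_n^\ast}$ is itself a $d_{f_n}$-shortest path from $a_{n,\psi_n^\ast}$ to $b_{n,\psi_n^\ast}$: since $(f_n,I_{n,\psi_n^\ast}) = \tau_{n,\psi_n^\ast}$ is a straight line segment on $S$, any other curve $\gamma$ in $\overline{\Delta}$ joining these two points projects to a curve on $S$ of length at least $d(f_n(a_{n,\psi_n^\ast}),f_n(b_{n,\psi_n^\ast})) = L(\tau_{n,\psi_n^\ast}) = L(f_n,I_{n,\psi_n^\ast})$, so $I_{n,\psi_n^\ast}$ realises the $d_{f_n}$-distance. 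Consequently the partition (\ref{In*}) is precisely the one produced by Lemma \ref{shortest} (v), with odd-indexed pieces lying on $\partial\Delta$ and even-indexed pieces satisfying $J_{n,2j}^\circ \subset \Delta$.

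Next I would feed into Lemma \ref{shortest} (vi) the already-established fact (\ref{503-1}): $\Delta \cap I_{n,\psi_n^\ast} \cap f_n^{-1}(E_q) = \emptyset$. Since $J_{n,2j}^\circ \subset \Delta \cap I_{n,\psi_n^\ast}$ for every $j=1,\dots,k^\prime$, this immediately yields $J_{n,2j}^\circ \cap f_n^{-1}(E_q) = \emptyset$, which is exactly the hypothesis of part (vi) of that lemma. Its conclusion delivers simultaneously both assertions of the claim: $(f_n, J_{n,2j})$ is a simple straight arc (with $L(f_n,J_{n,2j}) < \pi$ as a bonus), and $f_n$ restricted to some neighborhood of $J_{n,2j}^\circ$ in $\overline{\Delta}$ is a homeomorphism.

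There is no real obstacle here --- the claim is essentially a corollary of Lemma \ref{shortest} (vi) once the shortest-path property of $I_{n,\psi_n^\ast}$ and the condition (\ref{503-1}) are in place, both of which have already been recorded in the preceding paragraphs. The only thing worth double-checking in a full write-up is the interior-versus-boundary location of the pieces $J_{n,2j}$, which is guaranteed by how the partition in (\ref{In*}) was constructed via Lemma \ref{shortest} (v); this uses that $\Sigma_n \in \mathcal{F}_r(L,m)$, so $f_n$ has no branch points in $\Delta \setminus f_n^{-1}(E_q)$, consistent with the straightness of $(f_n, J_{n,2j})$.
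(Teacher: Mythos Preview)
Your proposal is correct and matches the paper's approach essentially line for line: the paper introduces the partition (\ref{In*}) by invoking Lemma \ref{shortest}, then records that $f_n$ has no branch point in $\cup_{j=1}^{k'}J_{n,2j}^{\circ}\subset\Delta\cap I_{n,\psi_n^\ast}$ (from (\ref{503-1})) and immediately deduces Claim \ref{home1}. Your explicit verification that $I_{n,\psi_n^\ast}$ is a $d_{f_n}$-shortest path (because its $f_n$-image is the straight segment $\tau_{n,\psi_n^\ast}$) is a detail the paper leaves implicit but is indeed the reason Lemma \ref{shortest} applies.
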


\begin{center}
\textbf{Step 4.8 Discussion of Case 3: (5) Complete the discussion of Case 3}
\end{center}

Corresponding to (\ref{In*}), we write
\[
I_{n}^{\ast}=J_{n2}\left(  a_{n2}^{\prime},a_{n3}^{\prime}\right)
+\cdots+J_{n,2k^{\prime}}\left(  a_{n,2k^{\prime}}^{\prime},a_{n,2k^{\prime
}+1}^{\prime}\right)  .
\]
Let $\gamma_{n0}=\gamma_{n0}\left(  a_{n2}^{\prime},a_{n,2k^{\prime}%
+1}^{\prime}\right)  $ be the arc on $\partial\Delta$ from $a_{n2}^{\prime}$
to $a_{n,2k^{\prime}+1}^{\prime}$ and let $\gamma_{n0}^{\prime}=\gamma
_{n0}^{\prime}\left(  a_{ni_{0}},a_{ni_{2}}\right)  $ be the smallest arc of
$\partial\Delta$ containing $\gamma_{n0}$ such that the endpoints of
$\gamma_{n0}^{\prime}$ are contained in $\{a_{nj}\}_{j=1}^{m}.$ Then we can
write%
\begin{align*}
\gamma_{n0}^{\prime}  &  =\gamma_{n0}^{\ast}+\gamma_{n0}^{\ast\ast},\\
\gamma_{n0}^{\ast}  &  =\gamma_{n0}^{\ast}\left(  a_{ni_{0}},a_{n2}^{\prime
}\right)  +\gamma_{n0}^{\ast}\left(  a_{n2}^{\prime},a_{n2}\right)
=\gamma_{n0}^{\ast}\left(  a_{ni_{0}},a_{n2}^{\prime}\right)  +\alpha
_{n1}\left(  a_{n2}^{\prime},a_{n2}\right) \\
\gamma_{n0}^{\ast\ast}  &  =\gamma_{n0}^{\ast\ast}\left(  a_{n2}%
,a_{n,2k^{\prime}+1}^{\prime}\right)  +\gamma_{n0}^{\ast\ast}\left(
a_{n,2k^{\prime}+1},a_{ni_{2}}\right)  =\alpha_{n2}\left(  a_{n2}%
,a_{n,2k^{\prime}+1}^{\prime}\right)  +\gamma_{n0}^{\ast\ast}\left(
a_{n,2k^{\prime}+1},a_{ni_{2}}\right)  .
\end{align*}
with
\[
i_{0}\left(  \operatorname{mod}m\right)  <2<i_{2}.
\]
Then $a_{n2}\in\gamma_{n0}^{\circ}\cap\{a_{nj}\}_{j=1}^{m}\neq\emptyset.$

If $J_{n1}$ is a point, then $a_{n2}^{\prime}=a_{n1}^{\prime}\in\alpha_{n1}$
and $a_{ni_{0}}=a_{n1},$ and thus $\gamma_{n0}^{\ast}=\alpha_{n1}$ and%
\[
f_{n}(I_{n}^{\ast\circ}\cap\gamma_{n0}^{\ast})=\tau_{n,\psi_{n}^{\ast}}%
^{\circ}\cap c_{n1}\left(  f\left(  a_{n2}^{\prime}\right)  ,q_{n2}\right)
=\emptyset,
\]
thus
\begin{equation}
I_{n}^{\ast\circ}\cap\gamma_{n0}^{\ast}=\emptyset. \label{=0}%
\end{equation}
If $J_{n1}$ is not a point, then $a_{n1}^{\prime}=a_{n1},$ $a_{n2}^{\prime
}=a_{ni_{0}}$ and $\gamma_{n0}^{\ast}=-J_{n1}\left(  a_{ni_{0}},a_{n1}\right)
+\alpha_{n1}\left(  a_{n1},a_{n2}\right)  .$ Since $f_{n}$ maps $\partial
\Delta_{n0}$ homeomorphically onto $\partial K_{n,\psi_{n}^{\ast}},$
(\ref{=0}) also holds. Thus in both cases (\ref{=0}) holds. For the same
reason, we may show that $I_{n}^{\ast\circ}\cap\gamma_{n0}^{\ast\ast
}=\emptyset.$ Thus we have $I_{n}^{\ast\circ}\cap\gamma_{n0}^{\prime
}=\emptyset,$ and (e2) of Lemma \ref{m-1} holds for $\Sigma=\Sigma_{n}$.

Now we have proved that $-I_{n}^{\ast},\gamma_{n0},\gamma_{n0}^{\prime}%
,\Sigma_{n},k^{\prime}$ satisfy all hypothesis of Lemma \ref{m-1} as
$I,\gamma_{0},\gamma_{0}^{\prime},\Sigma,k$ there. Let $\left\{  \Delta
_{nj}\right\}  _{j=0}^{k^{\prime}}$ be the $k^{\prime}+1$ components of
$\Delta\backslash I_{n}^{\ast}$ such that $\Delta_{n0}$ is the the only one on
the right hand side of $I_{n}^{\ast},$ which we discussed above, and all
others are on the left hand side of $I_{n}^{\ast},$ and moreover we have
\begin{equation}
F_{nj}^{\prime}=\left(  f_{n},\overline{\Delta_{nj}}\right)  \in
\mathcal{F}(L,m-1)\mathrm{\ for\ every\ }j=1,\dots,k^{\prime}.
\label{all-Fj_FLM}%
\end{equation}

Now we deduce a contradiction. It is clear that we have
\[
\sum_{j=0}^{k^{\prime}}R(F_{nj}^{\prime})=R(\Sigma_{n}).
\]%
\[
\varepsilon_{n}=\sum_{j=0}^{k^{\prime}}L(f_{n},J_{n,2j})\rightarrow0\left(
n\rightarrow\infty\right)  ,
\]
and%
\[
\sum_{j=0}^{k^{\prime}}L(\partial F_{nj}^{\prime})\leq L(\partial\Sigma
_{n})+2\varepsilon_{n},
\]
equality holding if and only if all $J_{n,2j+1},j=0,\dots,k^{\prime},$ are
points. Thus we can conclude, by (\ref{428-3}) and Claim \ref{home}, that the
sequence $\Sigma_{n}$ is decomposable in $\mathcal{F}\left(  L,m\right)  $ (by
Definition \ref{undec-seqr} (c)). This contradicts Lemma \ref{undec-seq}. We
have completed the discussion of Case 3 and obtained a contradiction.

We have proved that each of the three cases implies a contradiction, and thus
(\ref{az4}) fails. Therefore Assertion \ref{asC} holds true, and we have
proved Assertion \ref{asA} in the case $a\in\alpha_{01}^{\circ}.\medskip
\medskip$

\begin{center}
\label{st5}\textbf{Step 5. Discussion in the case }$a\in\partial\alpha_{01}$
\end{center}

Now we prove Assertion \ref{asA} under the condition
\begin{equation}
a=a_{01}\in\partial\alpha_{01}=\{a_{01},a_{02}\}. \label{zz3}%
\end{equation}
The case $a=a_{02}$ can be discussed as the case $a=a_{01}$ in the same way.

Now that we have already proved Assertion \ref{asA} with (\ref{zz2}), we
conclude that, under the condition $a=a_{01}\in\partial\alpha_{01},$ Assertion
\ref{asA} holds for all $b\in\left(  \partial\Delta\right)  \backslash
\{a_{0j}\}_{j=1}^{n}.$ So, to prove Assertion \ref{asA} under (\ref{zz3}), it
remains to prove the following special case of (\ref{mar1}): for each
$j_{0}=2,3,\dots,m,$%
\begin{equation}
\lim_{n\rightarrow\infty}\inf d_{f_{n}}(a_{01},a_{0j_{0}})>0\mathrm{\ }%
\text{\textrm{if}}\mathrm{\ }a_{0j_{0}}\neq a_{01}. \label{az5-1}%
\end{equation}
Note that $j_{0}\neq1$ may not imply $a_{0j_{0}}\neq a_{01}:$ when
$\alpha_{0m}$ is a point, for example, $a_{0m}=a_{01}$.

Since $d_{f_{n}}(a_{nj},a_{0j})\rightarrow0$ for each $j=1,2,\dots,m$,
(\ref{az5-1}) is equivalent to%
\begin{equation}
\lim_{n\rightarrow\infty}\inf d_{f_{n}}(a_{n1},a_{nj_{0}})>0\text{
\textrm{if}}\mathrm{\ }a_{0j_{0}}\neq a_{01}. \label{az5}%
\end{equation}

Now we fix a $j_{0}$ with
\begin{equation}
a_{0j_{0}}\neq a_{01}. \label{not=}%
\end{equation}
By Lemma \ref{shortest}, there exists a $d_{f_{n}}$-shortest path $J_{n}$ from
$a_{n1}$ to $a_{nj_{0}}.$ To prove (\ref{az5}) we assume the contrary
$\lim_{n\rightarrow\infty}\inf d_{f_{n}}(a_{n1},a_{nj_{0}})=0.$ Then by taking
subsequence, we may assume%
\begin{equation}
\lim_{n\rightarrow\infty}d_{f_{n}}(a_{n1},a_{nj_{0}})=\lim_{n\rightarrow
\infty}L(f_{n},J_{n})=0. \label{az8}%
\end{equation}
If $J_{n}$ has a subsequence $J_{n_{k}}$ such that $J_{n_{k}}\subset
\partial\Delta,$ then we have by (\ref{az8}) that $J_{n_{k}}$ tends to the
point $a_{01},$ which implies $a_{j_{0}}=a_{01},$ contradicting (\ref{not=})
(note that $\Gamma_{n}=\left(  f_{n},\partial\Delta\right)  $ are
parameterized by length for all $n=0,1,2,\dots.$ So we may assume $J_{n}%
\cap\Delta_{n}\neq\emptyset\ $for all $n.$ Then by (\ref{az1}) we may assume%
\begin{equation}
J_{n}\cap f^{-1}(E_{q})\cap\Delta=\emptyset,n=1,2,\dots. \label{az7}%
\end{equation}

By Lemma \ref{shortest} and taking subsequence, we can assume that $J_{n}$ has
a partition
\[
J_{n}=J_{n1}\left(  a_{n1}^{\prime},a_{n2}^{\prime}\right)  +J_{n2}\left(
a_{n2}^{\prime},a_{n3}^{\prime}\right)  +J_{n3}\left(  a_{n3}^{\prime}%
,a_{n4}^{\prime}\right)  +\cdots+J_{n,2k^{\prime}+1}\left(  a_{n,2k^{\prime
}+1}^{\prime},a_{n,2k^{\prime}+2}^{\prime}\right)
\]
with $a_{n1}^{\prime}=a_{n1}$, $a_{n,2k^{\prime}+2}^{\prime}=a_{nj_{0}},$
$a_{nj}^{\prime}\rightarrow a_{0i_{j}}\ $for some $a_{0i_{j}}\in
\{a_{0j}\}_{j=1}^{m}$ for each $j=1,2,\dots,2k^{\prime}+2,$ and $k^{\prime
}<m.$ We show that there exists $j_{1}\in\{1,\dots,k^{\prime}\},$ such that
the two endpoints of $J_{n,2j_{1}}$ converges to distinct points, say,
\[
a_{n,2j_{1}}^{\prime}\rightarrow a_{0,i_{2j_{1}}}\neq a_{0i_{2j_{1}+1}%
}\leftarrow a_{n,2j_{1}+1}^{\prime}\mathrm{\ as\ }n\rightarrow\infty.
\]
Assume that this fails. Then each pair $a_{n,2j}^{\prime},a_{n,2j+1}^{\prime
},$ the endpoints of $J_{2j},$ converge to the same point $a_{0i_{2j}%
}=a_{0i_{2j+1}},$ for $j=1,\dots,k^{\prime};$ and, as an arc on $\partial
\Delta,$ by (\ref{az8}) each $J_{n,2j-1}\left(  a_{n,2j-1},a_{n,2j}\right)  $
converges to the same point $a_{n,i_{2j-1}}=a_{n,i_{2j}},$ for $j=1,2,\dots
,k^{\prime}+1.$ Therefore, all $a_{nj}^{\prime}$ converge to the same point,
and thus
\[
a_{01}=\lim_{n\rightarrow\infty}a_{n1}=\lim_{n\rightarrow\infty}a_{n1}%
^{\prime}=\lim_{n\rightarrow\infty}a_{n,2k+2}^{\prime}=\lim_{n\rightarrow
\infty}a_{nj_{0}}=a_{0j_{0}}%
\]
This contradicts (\ref{not=}).

Thus the segment $J_{n,2j_{1}}=J_{n,2j_{1}}\left(  a_{n,2j_{1}}^{\prime
},a_{n,2j_{1}+1}^{\prime}\right)  $ of $J_{n}$ satisfies Lemma \ref{shortest}
for the case $k=1$ there, so that the two endpoints $a_{0i_{2j_{1}}}$ and
$a_{0i_{2j_{1}+1}}$ of $J_{n,2j_{1}}$ belong to $\{a_{nj}\}_{j=1}^{m}.$ Thus
by Lemma \ref{shortest} for the case $k=1,$ $J_{n,2j_{1}}$ divides $\Sigma
_{n}$ into two subsurfaces $\Sigma_{n0}$ and $\Sigma_{n1}$ contained in
$\mathcal{F}_{r}(L,m)$. By (\ref{az7}) we have
\[
R(\Sigma_{n0})+R(\Sigma_{n1})=R\left(  \Sigma_{n}\right)  .
\]
On the other hand we also have%
\[
L(\partial\Sigma_{n0})+L(\partial\Sigma_{n1})=L(\partial\Sigma_{n}%
)+2L(f_{n},J_{n,2j_{1}})\rightarrow L(f_{0},\partial\Delta)\leq L
\]%
\[
\lim_{n\rightarrow\infty}\inf L(\partial\Sigma_{nj})\geq\min\left\{
L(f_{0},\gamma_{0}),L(f_{0},\gamma_{1})\right\}  >0,j=1,2,
\]
where $\gamma_{0}$ and $\gamma_{1}$ are the two arcs of $\partial\Delta$ with
the two distinct endpoints $a_{0i_{2j_{1}}}$ and $a_{0i_{2j_{1}+1}}.$ Then
$\Sigma_{n}$ is decomposable in $\mathcal{F}_{r}(L,m)\subset\mathcal{F}(L,m),$
by Definition \ref{decflm} (b). This contradicts the conclusion of Lemma
\ref{undec-seq}, and hence (\ref{az8}) cannot hold. We have proved Assertion
\ref{asA} completely and so (i) is completely proved.$\medskip\medskip$

\begin{center}
\label{st6}\textbf{Step 6. Theorem \ref{nobi} (i) implies Theorem \ref{nobi}
(ii)}
\end{center}

If (ii) is not true, then there exists sequences $x_{n1}\in I$ and $x_{n2}\in
J$ such that
\[
x_{n1}\rightarrow x_{01}\in I,x_{n2}\rightarrow x_{02}\in J,
\]
and
\[
d_{f_{n}}(x_{n1},x_{n2})\rightarrow0,
\]
as $n\rightarrow\infty.$ On the other hand, for the shorter arc $\alpha
_{j}=\alpha_{j}\left(  x_{nj},x_{0j}\right)  $ from $x_{nj}$ to $x_{0j}$ on
$\partial\Delta,j=1,2,$ we have by Lemma \ref{l1} (ii) $d_{f_{n}}\left(
x_{nj},x_{0j}\right)  \rightarrow0,$ as $n\rightarrow\infty,$ for $j=1,2.$
Then we have%
\[
d_{f_{n}}(x_{01},x_{02})\leq d_{f_{n}}\left(  x_{01},x_{n1}\right)  +d_{f_{n}%
}\left(  x_{n1},x_{n2}\right)  +d_{f_{n}}\left(  x_{n2},x_{02}\right)
\rightarrow0.
\]
But it is clear that $x_{01}\neq x_{02},$ and we obtain a contradiction by (i)
of the Theorem \ref{nobi}, and thus Theorem \ref{nobi} (ii) holds
true.\medskip\medskip

\section{Existence and property of extremal surfaces in $\mathcal{F}%
_{r}\left(  L,m\right)  $}

The goal of this section is to prove the following theorem.

\begin{theorem}
\label{LK}Let $L\in\mathcal{L}$ be a positive number and $m$ be a sufficiently
large positive integer. Then there exists a unique positive number $L_{1}$
with
\[
L_{1}\leq L
\]
and a precise extremal surface $\Sigma_{L_{1}}\ $in $\mathcal{F}_{r}(L,m)$
such that $L(\partial\Sigma_{L_{1}})=L_{1}.\medskip$
\end{theorem}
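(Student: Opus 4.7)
The plan is to construct the extremal surface as a limit of a precise extremal sequence, using Theorem \ref{nobi} as the key non-degeneracy tool.

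First, I would fix $m$ large enough that $H_{L,m} > H_L - \pi/(2L)$, which is possible by equation (\ref{HH}) and Corollary \ref{FF'}. By Lemma \ref{idex}(1), there exists a precise extremal sequence $\Sigma_n = (f_n,\overline{\Delta}) \in \mathcal{F}_r'(L,m)$, and by Lemma \ref{Fr'FrF} this is also a precise extremal sequence in $\mathcal{F}_r(L,m)$. Set $L_1 = \lim_{n\to\infty} L(\partial\Sigma_n)$; by Lemma \ref{idex} we have $L_1 \geq \min(L, 2\delta_{E_q}) > 0$. Next, by reparametrizing $\partial\Sigma_n$ by length and rotating $\overline{\Delta}$ so that $a_{n1} = 1$, each boundary $\Gamma_n = (f_n,\partial\Delta)$ has the $\mathcal{F}(L,m)$-partition (\ref{pk1})--(\ref{pk2}). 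Since each $c_{nj}$ is an SCC arc with total length bounded by $L$, Arzel\`a--Ascoli gives a subsequence (still called $\Sigma_n$) such that each $\alpha_{nj}$ converges to an arc $\alpha_{0j} \subset \partial\Delta$ and each $c_{nj}$ converges uniformly to an SCC arc $c_{0j}$, producing a limit curve $\Gamma_0 = (f_0,\partial\Delta)$ with the $\mathcal{C}(L,m)$-partitions (\ref{pk3})--(\ref{pk4}). Thus conditions (A)--(D) of Theorem \ref{nobi} are satisfied.

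The heart of the argument is to upgrade this boundary convergence to a convergent interior. Theorem \ref{sim} gives a uniform bound $\deg_{\max}(\Sigma_n) \leq d^\ast(m,E_q)$, so the complexity of the sheets is uniformly controlled. Theorem \ref{nobi} then supplies the crucial non-degeneracy: for any two disjoint compact arcs $I, J \subset \partial\Delta$, $\liminf_n d_{f_n}(I,J) > 0$, and for any two distinct points on $\partial\Delta$, the $d_{f_n}$-distance stays bounded away from zero. Together with (\ref{az1}), this prevents (i) branch points or preimages of $E_q$ from escaping to $\partial\Delta$, (ii) the interior from pinching so that distinct boundary points coalesce on the surface, and (iii) the bounded number of sheets from collapsing. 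By Stoilow's theorem (Theorem \ref{st}) we may take each $f_n$ holomorphic on $\overline{\Delta}$; then the bounds on degree and on branch-point locations, combined with the boundary convergence, give a subsequence converging (uniformly on $\overline{\Delta}$) to a holomorphic $f_0$ on $\overline{\Delta}$. The limit $\Sigma_0 = (f_0,\overline{\Delta})$ inherits the $\mathcal{F}(L,m)$-partition from its boundary; the non-degeneracy from Theorem \ref{nobi} ensures $\Sigma_0$ is genuinely in $\mathcal{F}_r(L,m)$ (not in $\mathcal{F}_r(L,m')$ for any smaller $m'$, and with the correct homotopy type).

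Once $\Sigma_0$ is constructed, the continuous functionals pass to the limit: $L(\partial\Sigma_0) = L_1$ by length convergence of the circular arcs $c_{nj}$, $A(\Sigma_n) \to A(\Sigma_0)$ by Lemma \ref{Al} and uniform convergence, and $\overline{n}(\Sigma_n) \to \overline{n}(\Sigma_0)$ using Theorem \ref{nobi}(ii) to prevent preimages of $E_q$ from merging or crossing the boundary. Consequently $H(\Sigma_0) = \lim_n H(\Sigma_n) = H_{L,m}$, so $\Sigma_0$ is extremal in $\mathcal{F}_r(L,m)$, and since $L(\partial\Sigma_0) = L_1$ equals the infimum of $L(\partial\Sigma)$ over precise extremal sequences, $\Sigma_0$ is a precise extremal surface. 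For uniqueness of $L_1$: if $\Sigma_{L_1}$ and $\Sigma_{L_1'}$ are both precise extremal with $L_1 < L_1'$, then $\Sigma_{L_1}$ violates the minimality clause in Definition \ref{extr-surf}(2) for $\Sigma_{L_1'}$, contradiction.

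The main obstacle is the interior convergence step — specifically, verifying that the limit surface lies in $\mathcal{F}_r(L,m)$ rather than in some larger class $\mathcal{F}_r(L,m')$ with $m' > m$ arising from spurious subdivisions, or in some degenerate object where two preimage sheets collapse. This is precisely what Theorem \ref{nobi} rules out: without the $d_{f_n}$-distance lower bound, one could have sequences where the interior develops pinch points or where the branch structure simplifies in the limit, destroying either the extremality or the $\mathcal{F}_r(L,m)$ membership. Verifying that Theorem \ref{nobi} combined with the degree bound from Theorem \ref{sim} rigorously implies the existence of a classical limit $f_0$ (as an OPCOFOM with exactly $m$ boundary SCC arcs) is the most delicate part of the argument.
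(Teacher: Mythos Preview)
Your setup (precise extremal sequence, length parametrization, boundary convergence, invocation of Theorem \ref{nobi}) matches the paper's Step 1 exactly. The gap is in the interior convergence step, and the paper explicitly flags why your approach fails: right after stating the idea, it writes ``Though $\Gamma_n = (f_n,\partial\Delta)$ converges to $\Gamma_0 = (f_0,\partial\Delta)$ uniformly, $f_n$ may not converge in $\Delta$.'' The reason is a conflict between your two normalizations. To satisfy conditions (A)--(D) of Theorem \ref{nobi} you must reparametrize each $\partial\Sigma_n$ by arc-length; this composition by a boundary homeomorphism $\varphi_n$ of $\overline{\Delta}$ destroys holomorphicity of $f_n$ in $\Delta$, so normal families do not apply. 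Conversely, if you keep $f_n$ holomorphic, the boundary parametrizations need not converge uniformly, and Theorem \ref{nobi} is unavailable. So the sentence ``bounds on degree and on branch-point locations, combined with the boundary convergence, give a subsequence converging uniformly on $\overline{\Delta}$'' is not justified.

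The paper therefore takes a completely different route: rather than passing to a limit, it performs \emph{surgery} on a single $\Sigma_n$ for $n$ large. Using Theorem \ref{nobi} it proves two lifting lemmas (Theorems \ref{key3} and \ref{key1}) that construct, inside $\Delta$, a Jordan curve $\gamma_{n,\delta,\varepsilon}$ whose $f_n$-image $\tilde{\Gamma}_{\delta,\varepsilon}$ is \emph{independent of $n$}. This curve separates $\overline{\Delta}$ into an inner piece $\Delta_{n,\delta,\varepsilon}$ (on which $f_n$ is kept) and a thin annulus $\mathcal{A}_{n,\delta,\varepsilon}$; the annulus is then replaced by an explicit map $F_{n,\delta,\varepsilon}$ with boundary $\Gamma_0$. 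The resulting surface $\Sigma^*_{n,\delta,\varepsilon}$ has $\partial\Sigma^*_{n,\delta,\varepsilon}=\Gamma_0$, and because the inner boundary is $n$-independent and the degree is bounded, $A(\Sigma^*_{n,\delta,\varepsilon})$ and $\overline{n}(\Sigma^*_{n,\delta,\varepsilon})$ are eventually \emph{constant} in $n$ (Claim \ref{CL8}). Comparing $R(\Sigma^*_{n,\delta,\varepsilon})$ with $R(\Sigma_n)$ on the thin annulus (Claim \ref{CL9}) then gives $H(\Sigma^*_{n,\delta,\varepsilon})=H_{L,m}$. Any one of these surfaces, followed if necessary by Theorem \ref{re} to restore $\mathcal{F}_r(L,m)$, is the desired $\Sigma_{L_1}$. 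The missing idea in your proposal is precisely this surgery via the $n$-independent lifted curve; Theorem \ref{nobi} is used not to prove convergence of $f_n$ but to guarantee that the lifts in Theorems \ref{key3} and \ref{key1} land where they should.
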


The proof is divided into 5 steps. The key points of the proof are Theorems
\ref{key3} and \ref{key1}, which follows from Theorem \ref{nobi}.\medskip

\begin{center}
\textbf{Step 1. Notations, \textbf{simple }discussions and the idea for proof
of Theorem \ref{LK}. \medskip}
\end{center}

When $L\leq2\delta_{E_{q}},$ let $L_{1}=L$ and let $\Sigma_{L}$ be a simple
disk on $S$ whose interior is outside $E_{q}.$ Then $L_{1}$ and $\Sigma
_{L_{1}}$ are the desired number and surface, by Theorem \ref{l<2dt}. So
through out the proof, we assume%
\[
L\geq2\delta_{E_{q}}.
\]

The proof for this case is complicated. We will first state the idea of the
proof, after some preparation.

By Corollary \ref{FF'}, there exists a precise extremal sequence $\Sigma_{n}$
of $\mathcal{F}_{r}^{\prime}(L,m),$ such that $\Sigma_{n}$ is also a precise
extremal sequence of $\mathcal{F}_{r}(L,m)$ and $\mathcal{F}(L,m),$ and,
moreover,
\begin{equation}
L_{1}=\lim_{n\rightarrow\infty}\inf L(\partial\Sigma_{n})\geq2\delta_{E_{q}}.
\label{>2d}%
\end{equation}

By definition of $\mathcal{F}_{r}^{\prime}(L,m),$ for the number $d^{\ast
}=d^{\ast}\left(  E_{q},m\right)  ,$ which depends only on $m$ and $E_{q}$ and
given by Theorem \ref{sim}, we have

\begin{conclusion}
\label{co1}$\deg_{\max}f_{n}\leq d^{\ast}\ $for all $n=1,2,\dots
\label{have delete-deg_min OK?}$
\end{conclusion}

We assume that all $\partial\Sigma_{n}=\left(  f_{n},\partial\Delta\right)  $
are $\mathcal{F}(L,m)$-partitioned, and then by definition, for each $n,$
$\partial\Delta$ has an $\mathcal{F}(L,m)$-partition
\begin{equation}
\partial\Delta=\alpha_{n1}\left(  a_{n1},a_{n2}\right)  +\alpha_{n2}\left(
a_{n2},a_{n3}\right)  +\cdots+\alpha_{nm}\left(  a_{nm},a_{n1}\right)
\label{pt1}%
\end{equation}
for $\partial\Sigma_{n},$ and $\partial\Sigma_{n}$ has the corresponding
$\mathcal{F}\left(  L,m\right)  $-partition%
\begin{equation}
\partial\Sigma_{n}=c_{n1}\left(  q_{n1},q_{n2}\right)  +c_{n2}\left(
q_{n2},q_{n3}\right)  +\cdots+c_{nm}\left(  q_{nm},q_{nm}\right)  ,
\label{pt2}%
\end{equation}
consisted of $m$ circular arcs as in Definition \ref{circu}. Then $f_{n}$
restricted to each $\alpha_{nj}$ is the SCC arc $c_{nj}$, and Remark
\ref{ap11}, we may assume

\begin{condition}
\label{co63}$a_{n1}=1$ for all $n=1,2,\dots.$
\end{condition}

Since for any homeomorphism $h$ of $\overline{\Delta}$ onto itself and
$\Sigma_{n}^{h}=\left(  f_{n}\circ h,\overline{\Delta}\right)  ,\Sigma_{n}%
^{h}$ is also a precise extremal sequence of $\mathcal{F}_{r}(L,m),$ we may
assume, by taking subsequence and applying Aazela-Ascoli Theorem to curves
parametrized by length, the following.

\begin{condition}
\label{co2}All $\Gamma_{n}=\partial\Sigma_{n}$ are parametrized by length and
uniformly converges to a closed curve $\Gamma_{0}=(f_{0},\partial\Delta)$, for
each $n\geq1$ and $j\in M=\{1,2,\dots,m\},$ $c_{nj}=\left(  f,\alpha
_{nj}\right)  $ is an SCC arc.
\end{condition}

In fact, parametrized by length, $\left(  f_{n},\partial\Delta\right)  $ is a
linear mapping in length and so does $(f_{0},\partial\Delta).$ Thus $f_{n}$
restricted to each $\alpha_{nj}$ is the simple circular arc $c_{nj}$ for all
$n\in\mathbb{N}^{0}=\{0\}\cup\mathbb{N}$, where $\mathbb{N}$ \label{nature} is
the set of positive integers, and $j\in M=\{1,2,\dots,m\}$, and we have by
Conditions \ref{co63} and \ref{co2} that:

\begin{conclusion}
\label{inver}For each $j\in M=\{1,2,\dots,m\}$ and $n\in\mathbb{N}^{0}$,
$f_{n}^{-1}$ has a unique univalent branch $\tilde{g}_{nj}$ defined on
$c_{nj}^{\circ}$ with $\tilde{g}_{nj}\left(  c_{nj}^{\circ}\right)
=\alpha_{nj}^{\circ},$ such that $\tilde{g}_{nj}$ uniformly converges to
$\tilde{g}_{0j}:c_{0j}^{\circ}\rightarrow\alpha_{0j}^{\circ},$ and thus for
any interval $I_{n}\subset c_{nj}^{\circ}$ which converges to $I_{0}\subset
c_{0j}^{\circ},$ $\tilde{g}_{nj}(I_{n})$ converges to $\tilde{g}_{0j}%
(I_{0}).\medskip$ When $c_{nj}$ is not closed, $\tilde{g}_{nj}$ can be
extended to be homeomorphic on $c_{nj}.$
\end{conclusion}

\noindent\textbf{The idea of proving Theorem \ref{LK}. }\emph{Though }%
$\Gamma_{n}=\left(  f_{n},\partial\Delta\right)  $\emph{\ converges to
}$\Gamma_{0}=(f_{0},\partial\Delta)$\emph{\ uniformly, }$f_{n}$\emph{\ may not
converge in }$\Delta.$\emph{\ The key of the proof is to construct a surface
}$\Sigma_{L_{1}}=\left(  f_{L_{1}},\overline{\Delta}\right)  \in
\mathcal{F}_{r}^{\prime}\left(  L_{1},m\right)  $\emph{\ such that}%
\[
R(\Sigma_{L_{1}})=\lim R(\Sigma_{n})=H_{L,m},
\]
\emph{\ }%
\[
f_{L_{1}}|_{\partial\Delta}=f_{0}\mathrm{\ and\ }L(f_{L_{1}},\partial
\Delta)=\Gamma_{0}=L(f_{0},\partial\Delta).
\]
\emph{\ This }$f_{L_{1}}$\emph{\ will be obtained by modifying some }$f_{n}%
$\emph{\ for sufficiently large }$n.$\emph{\ The key of this modification is
that }$\Sigma_{n}=\left(  f_{n},\Delta\right)  $\emph{\ has a subsequence,
which will be still denoted by }$\Sigma_{n},$\emph{\ such that the following
holds:}

\emph{(a) There exists a closed Jordan domain }$\Delta_{n}$\emph{\ contained
in }$\Delta\ $\emph{such that the curves }$\left(  f_{n},\partial\Delta
_{n}\right)  $\emph{\ contain no point of }$E_{q}$\emph{\ and are equivalent
each other, and }$R\left(  f_{n},\Delta_{n}\right)  $\emph{\ are equal to each
other.}

\emph{(b) For }$\mathcal{A}_{n}=\overline{\Delta}\backslash\Delta_{n}^{\circ}
$, \emph{there exists a sequence of surfaces }$B_{n}=\left(  F_{n}%
,\mathcal{A}_{n}\right)  $\emph{\ such that }$f_{n}|_{\partial\Delta_{n}%
}=F_{n}|_{\partial\Delta_{n}}$\emph{, }$f_{0}=F_{n}|_{\partial\Delta},$
\emph{and }$R(f_{n},A_{n})-R(F_{n},A_{n})\rightarrow0$\emph{\ as
}$n\rightarrow\infty.$\emph{\ }

\emph{(c) For the surface }$\Sigma_{n}^{\ast}=\left(  f_{n}^{\ast}%
,\overline{\Delta}\right)  ,$\emph{\ in which }$f_{n}^{\ast}$\emph{\ is
defined by }$f_{n}$\emph{\ on }$\Delta_{n}$\emph{\ and by }$F_{n}$\emph{\ on
}$A_{n}$\emph{, }$H\left(  \Sigma_{n}^{\ast}\right)  $\emph{\ is the constant
}$H_{L,m} $\emph{\ for all }$n.$

\emph{The key ingredient of this idea are Theorems \ref{key3} and \ref{key1},
which will be proved later in this section, and which with Theorem \ref{nobi}
deduce (a)--(c). The proof of these two theorems are applications of Theorem
\ref{nobi}.}$\medskip$

By (\ref{pt1}), (\ref{pt2}) and Conditions \ref{co63} and \ref{co2},
$\partial\Delta$ has the $\mathcal{C}\left(  L,m\right)  $-partition
\begin{equation}
\partial\Delta=\alpha_{01}\left(  a_{01},a_{02}\right)  +\alpha_{02}\left(
a_{02},a_{03}\right)  +\cdots+\alpha_{0m}\left(  a_{m},a_{01}\right)  ,
\label{pt3}%
\end{equation}
for $\Gamma_{0}$ so that $\alpha_{01}$ initiates at $a_{01}=1\in\partial
\Delta,$ and $\Gamma_{0}$ has the corresponding $\mathcal{C}\left(
L,m\right)  $-partition
\begin{equation}
\Gamma_{0}=c_{01}\left(  q_{01},q_{02}\right)  +c_{02}\left(  q_{02}%
,q_{03}\right)  +\cdots+c_{0m}\left(  q_{0m},q_{01}\right)  , \label{pt4}%
\end{equation}
so that

\begin{conclusion}
\label{co3}For each $j=1,\dots,m,f_{0}$ restricted to $\alpha_{0j}$ is the SCC
arc $c_{0j}$, $c_{nj}$ converges to $c_{0j}$ uniformly, and thus $c_{0j}$ is a
point iff $\alpha_{0j}$ is a point.
\end{conclusion}

By assumption we have%
\begin{equation}
2\delta_{E_{q}}\leq L(\Gamma_{0})=L_{1}=\lim_{n\rightarrow\infty}L(\Gamma
_{n}). \label{cc10}%
\end{equation}
Since $\partial\Sigma_{n}$ and $\Gamma_{0}$ are parametrized by length and
$a_{n1}=a_{01}=1,$ we have

\begin{conclusion}
\label{co4}For each $j=1,2,\dots,m,$ $\alpha_{nj}$ converges to $\alpha_{0j}$
as well and thus $\alpha_{nj}$ converges to a point iff $\alpha_{0j}$ and
$c_{0j}$ are both point-arcs. Moreover, for any sequence of intervals
$[\theta_{n1},\theta_{n2}]$ of real numbers with $[\theta_{n1},\theta
_{n2}]\rightarrow\left[  \theta_{01},\theta_{02}\right]  $ and for the
sequence of arcs $I_{n}=\{e^{\sqrt{-1}\theta}:\theta\in\lbrack\theta
_{n1},\theta_{n2}]\}$ of $\partial\Delta,$
\[
L(f_{n},I_{n})\rightarrow L(f_{0},I_{0}).
\]

\end{conclusion}

Recall that $M=\{1,\dots,m\}.$ Then there exists a subset $M_{0}=\{i_{1}%
,i_{2},\dots,i_{m_{0}}\}\ $of $M$ with
\[
1\leq i_{1}<i_{2}<\dots<i_{m_{0}}\leq m,
\]
such that $j\in M_{0}$ iff $\alpha_{0j}$ is not a point. Note that
$\alpha_{0j}$ is a point iff $c_{0j}$ is.

\begin{conclusion}
\label{co5}The partition (\ref{pt3}) has a simplified partition
\begin{equation}
\partial\Delta=\alpha_{0i_{1}}\left(  a_{0i_{1}},a_{0i_{1}+1}\right)
+\alpha_{0i_{2}}\left(  a_{0i_{2}},a_{0i_{2}+1}\right)  +\cdots+\alpha
_{0m_{0}}\left(  a_{i_{m_{0}}},a_{i_{m_{0}}+1}\right)  , \label{pt5}%
\end{equation}
with $m_{0}\leq m$ and $a_{01}=\cdots=a_{0i_{1}-1}=a_{0i_{1}}=1,$ such that
all point-arcs in (\ref{pt3}) are deleted, and the partition (\ref{pt4}) also
has a simplified partition:
\begin{equation}
\Gamma_{0}=c_{0i_{1}}\left(  q_{0i_{1}},q_{0i_{1}+1}\right)  +c_{0i_{2}%
}\left(  q_{0i_{2}},q_{0i_{2}+1}\right)  +\cdots+c_{0m_{0}}\left(
q_{i_{m_{0}}},q_{i_{m_{0}}+1}\right)  , \label{pt6}%
\end{equation}
such that all point-arcs in (\ref{pt4}) are also deleted and that $f_{0}$
restricted to each $a_{0j}$ for $j\in M_{0}=\{i_{1},\dots,i_{m_{0}}\}$ is the
SCC arc $c_{0j}.$
\end{conclusion}

By Lemma \ref{narrow} and Theorem \ref{nobi}, there exists a $\delta_{1}>0$
such that for all $n$,
\begin{equation}
\delta_{1}<d_{f_{n}}(f^{-1}(E_{q})\cap\Delta,\partial\Delta), \label{cc11}%
\end{equation}
and for sufficiently\footnote{Don't confuse $\alpha$ with $a$, although they
look similar from a distance, up close they are different! The three distance
in the curly brackets are of two arcs, one arc and one point, and two points.}
large $n,$
\begin{equation}
\delta_{1}<\min_{j\in M_{0}}\left\{  \min_{\substack{\left\{  i,j\right\}  \in
M_{0} \\i\neq j}}d_{f_{n}}\left(  \alpha_{0i},\alpha_{0j}\right)
,\min_{\substack{i\in M_{0} \\j\in M,a_{0j}\notin\alpha_{0i}}}d_{f_{n}}%
(\alpha_{0i},a_{0j}),\min_{\substack{\left\{  i,j\right\}  \subset M
\\a_{0i}\neq a_{0j}}}d_{f_{n}}(a_{0i},a_{0j})\right\}  . \label{co44}%
\end{equation}
It is clear that%
\begin{equation}
\delta_{2}=\frac{1}{3}\min\left\{  d(w_{1},w_{2}):\{w_{1},w_{2}\}\subset
E_{q}\cup\{q_{0j}\}_{j=1}^{m}\mathrm{\ and\ }w_{1}\neq w_{2}\right\}  >0.
\label{co51}%
\end{equation}
On the other hand we have
\begin{equation}
\delta_{3}=\min_{j\in M_{0}}L(c_{0j})>0. \label{cc7}%
\end{equation}

Let $\delta$ be a positive number with%
\begin{equation}
\delta<\frac{\min\{\delta_{1},\delta_{2},\delta_{3}\}}{12\pi\left(  d^{\ast
}+1\right)  m}. \label{co49}%
\end{equation}
Then we have
\begin{equation}
\left(  \overline{D(q_{0j},\delta)}\backslash\{q_{0j}\}\right)  \cap
E_{q}=\emptyset,j\in M, \label{cc15}%
\end{equation}
and it is clear that:

\begin{conclusion}
\label{co46}For each $j\in M_{0},$ $c_{0j}$ is divided into three arcs:%
\[
c_{0j}=c_{0j,\delta}^{1}+c_{0j,\delta}^{2}+c_{0j,\delta}^{3}=c_{0j,\delta}%
^{1}\left(  q_{0j},q_{0j,\delta}^{1}\right)  +c_{0j,\delta}^{2}\left(
q_{0j,\delta}^{1},q_{0j,\delta}^{2}\right)  +c_{0j,\delta}^{3}\left(
q_{0j,\delta}^{2},q_{0,j+1}\right)
\]
with $c_{0j,\delta}^{2}=c_{0j}\backslash D(\{q_{0j},q_{0,j+1}\},\delta),$ and
each $\alpha_{0j}$ corresponding to $c_{0j}$ is divided into three arcs%
\[
\alpha_{0j}=\alpha_{0j,\delta}^{1}+\alpha_{0j,\delta}^{2}+\alpha_{0j,\delta
}^{3}=\alpha_{0j,\delta}^{1}\left(  a_{0j},a_{0j,\delta}^{1}\right)
+\alpha_{0j,\delta}^{2}\left(  a_{0j,\delta}^{1},a_{0j,\delta}^{2}\right)
+\alpha_{0j,\delta}^{3}\left(  a_{0j,\delta}^{2},a_{0,j+1}\right)  ,
\]
with $c_{0j,\delta}^{i}=\left(  f_{0},\alpha_{0j,\delta}^{i}\right)  $ for
$i=1,2,3,$ say,
\[
f_{0}(a_{0j})=q_{0j},f_{0}(a_{0j,\delta}^{1})=q_{0j,\delta}^{1},f_{0}%
(a_{0j,\delta}^{2})=q_{0j,\delta}^{2},f_{0}(a_{0,j+1})=q_{0,j+1};
\]
and thus, for sufficiently large $n,$ $c_{nj}=c_{nj}\left(  q_{nj}%
,q_{n,j+1}\right)  $ is divided into three arcs by $\partial D(\{q_{0j}%
,q_{0,j+1}\},\delta):$%
\[
c_{nj}=c_{nj,\delta}^{1}+c_{nj,\delta}^{2}+c_{nj,\delta}^{3}=c_{nj,\delta}%
^{1}\left(  q_{nj},q_{nj,\delta}^{1}\right)  +c_{nj,\delta}^{2}\left(
q_{nj,\delta}^{1},q_{nj,\delta}^{2}\right)  +c_{nj,\delta}^{3}\left(
q_{nj,\delta}^{2},q_{n,j+1}\right)  ,
\]
with $d(q_{nj,\delta}^{1},q_{0j})=\delta,$ $d(q_{nj,\delta}^{2},q_{0,j+1}%
)=\delta$ and
\[
c_{nj,\delta}^{2}=c_{nj,\delta}^{2}\left(  q_{nj,\delta}^{1},q_{nj,\delta}%
^{2}\right)  =c_{nj}\backslash D(\left\{  q_{0j},q_{0,j+1}\right\}  ,\delta),
\]
and $\alpha_{nj}=\alpha_{nj}\left(  a_{nj},a_{nj}\right)  ,$ corresponding to
$c_{nj},$ is divided into three arcs%
\[
\alpha_{nj}=\alpha_{nj,\delta}^{1}+\alpha_{nj,\delta}^{2}+\alpha_{nj,\delta
}^{3}=\alpha_{nj,\delta}^{1}\left(  a_{nj},a_{nj,\delta}^{1}\right)
+\alpha_{nj,\delta}^{2}\left(  a_{nj,\delta}^{1},a_{nj,\delta}^{2}\right)
+\alpha_{nj,\delta}^{3}\left(  a_{nj,\delta}^{2},a_{n,j+1}\right)  ,
\]
such that $f_{n}\ $restricted to $\alpha_{nj,\delta}^{i}\ $is a homeomorphism
onto $c_{nj,\delta}^{i},$ say,
\[
c_{nj,\delta}^{i}=\left(  f_{n},\alpha_{nj,\delta}^{i}\right)  ,i=1,2,3,
\]
and
\[
f_{n}(a_{nj})=q_{nj},f_{n}(a_{nj,\delta}^{1})=q_{nj,\delta}^{1},f_{n}%
(a_{nj,\delta}^{2})=q_{nj,\delta}^{2},f_{n}(a_{nj+1})=q_{nj+1};
\]
and therefore, we have
\[
c_{nj,\delta}^{i}\rightarrow c_{0j,\delta}^{i},\alpha_{nj,\delta}%
^{i}\rightarrow\alpha_{0j,\delta}^{i}%
\]
as $n\rightarrow\infty,$ for $i=1,2,3.$
\end{conclusion}

Thus $\Sigma_{n}$ and $\Gamma_{0}$ satisfies (A)--(D) in Theorem \ref{nobi}.
Then we have

\begin{claim}
\label{lim}For two sequences $I_{nj},$ of arcs on $\partial\Delta$ such that
$I_{nj}\rightarrow I_{0j}$ as $n\rightarrow\infty$ for $j=1,2,$ we have%
\[
\lim_{n\rightarrow\infty}\inf d_{f_{n}}\left(  I_{n1},I_{n2}\right)
=\lim_{n\rightarrow\infty}\inf d_{f_{n}}\left(  I_{01},I_{02}\right)  .
\]

\end{claim}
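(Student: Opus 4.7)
\medskip

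The plan is to sandwich $d_{f_n}(I_{n1},I_{n2})$ between $d_{f_n}(I_{01},I_{02}) \pm 2\eta_n$ with $\eta_n\to 0$, using Lemma~\ref{dfset} together with the uniform control of $d_{f_n}$ by Euclidean distance on $\partial\Delta$ that follows from Condition~\ref{co2} (the curves $\Gamma_n$ are parametrized by length) and the boundedness $L(\partial\Sigma_n)\leq L$.

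The first step is to translate the Hausdorff convergence $I_{nj}\to I_{0j}$ on $\partial\Delta$ into a $d_{f_n}$-Hausdorff statement. Concretely, pick $\varepsilon_n\to 0$ such that every point of $I_{nj}$ lies within Euclidean distance $\varepsilon_n$ of $I_{0j}$, and vice versa. For any such pair $x\in I_{nj}$, $y\in I_{0j}$ with $d(x,y)<\varepsilon_n$, the shorter arc of $\partial\Delta$ from $x$ to $y$ has arc length $O(\varepsilon_n)$, and Lemma~\ref{l1}(i) then gives $d_{f_n}(x,y)\leq \tfrac{1}{2\pi}L(f_n,\partial\Delta)\cdot O(\varepsilon_n)\leq O(\varepsilon_n L)$. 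Hence there exists $\eta_n\to 0$, independent of the point chosen, such that
\[
I_{nj}\subset N_{f_n}(I_{0j},\eta_n)\quad\text{and}\quad I_{0j}\subset N_{f_n}(I_{nj},\eta_n),\qquad j=1,2,
\]
for all sufficiently large $n$.

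The second step is a double application of Lemma~\ref{dfset}. Using $I_{nj}\subset N_{f_n}(I_{0j},\eta_n)$, the monotonicity $A\subset A'\Rightarrow d_{f_n}(A,B)\geq d_{f_n}(A',B)$ combined with Lemma~\ref{dfset} yields
\[
d_{f_n}(I_{n1},I_{n2})\geq d_{f_n}\bigl(N_{f_n}(I_{01},\eta_n),\,N_{f_n}(I_{02},\eta_n)\bigr)\geq d_{f_n}(I_{01},I_{02})-2\eta_n.
\]
Symmetrically, using $I_{0j}\subset N_{f_n}(I_{nj},\eta_n)$ gives
\[
d_{f_n}(I_{01},I_{02})\geq d_{f_n}(I_{n1},I_{n2})-2\eta_n.
\]
Taking $\liminf_{n\to\infty}$ in both inequalities and using $\eta_n\to 0$ yields the desired equality $\liminf d_{f_n}(I_{n1},I_{n2})=\liminf d_{f_n}(I_{01},I_{02})$.

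The only real subtlety is the uniform estimate $I_{nj}\subset N_{f_n}(I_{0j},\eta_n)$ with $\eta_n$ independent of the point, which I expect to be the main (but modest) obstacle. This is forced by Condition~\ref{co2}: because $(f_n,\partial\Delta)$ is linear in length and $L(f_n,\partial\Delta)\leq L$, any arc of Euclidean length $\varepsilon$ on $\partial\Delta$ has $f_n$-length at most $\tfrac{L}{2\pi}\varepsilon$, giving a uniform-in-$n$ Lipschitz comparison between the two distances on $\partial\Delta$. Once this is in hand, the rest is a purely formal sandwich argument using only Lemma~\ref{dfset}, and no appeal to the deeper Theorem~\ref{nobi} is needed for this particular claim.
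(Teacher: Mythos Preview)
Your proposal is correct and is precisely the natural argument; the paper itself states Claim~\ref{lim} without proof, treating it as a direct consequence of the length parametrization (Condition~\ref{co2}, Conclusion~\ref{co4}) together with Lemma~\ref{dfset}, which is exactly the route you take.
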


By definition, $\alpha_{0j,\delta}^{2},j\in M_{0},$ are disjoint compact arcs
in $\partial\Delta,$ $\alpha_{nj}\rightarrow\alpha_{0j},$ $\alpha_{nj,\delta
}^{2}\rightarrow\alpha_{0j,\delta}^{2}$ and $\alpha_{0j,\delta}^{2}%
\subset\alpha_{0j}^{\circ}$. Then by Theorem \ref{nobi} and Claim \ref{lim} we
may assume by taking subsequence, that
\begin{equation}
\delta_{4}=\min\left\{  \min_{\substack{\left\{  i,j\right\}  \subset M_{0}
\\i\neq j}}\inf_{n\in\mathbb{N}}\left\{  d_{f_{n}}\left(  \alpha_{0i,\delta
}^{2},\alpha_{0j,\delta}^{2}\right)  \right\}  ,\min_{j\in M_{0}}\inf
_{n\in\mathbb{N}}d_{f_{n}}\left(  \left(  \partial\Delta\right)
\backslash\alpha_{nj},\alpha_{nj,\delta}^{2}\right)  \right\}  >0.
\label{506-1}%
\end{equation}
Then we have by the relation $\alpha_{ni,\delta}^{2}\subset\left(
\partial\Delta\right)  \backslash\alpha_{nj}$ for $\left\{  i,j\right\}
\subset M_{0},i\neq j,$ we have
\begin{equation}
\min_{\left\{  i,j\right\}  \subset M_{0},i\neq j}d_{f_{n}}\left(
\alpha_{ni,\delta}^{2},\alpha_{nj,\delta}^{2}\right)  \geq\delta_{4}>0.
\label{506-3}%
\end{equation}

It is clear that we may assume that $\delta$ is small enough at first such
that
\begin{equation}
L(c_{0j,\delta}^{1})=L(c_{0j,\delta}^{3})<2\pi\delta\mathrm{\ for\ all\ }j\in
M_{0}. \label{506-4}%
\end{equation}

By, Conclusions \ref{co4} and \ref{co46}, Claim \ref{lim}, and (\ref{506-4}),
we have for each $j\in M_{0},$%
\begin{align*}
d_{f_{n}}\left(  a_{0j},a_{nj,\delta}^{1}\right)   &  \leq d_{f_{n}}\left(
a_{0j},a_{nj}\right)  +d_{f_{n}}\left(  a_{nj},a_{nj,\delta}^{1}\right) \\
&  \leq d_{f_{n}}\left(  a_{0j},a_{nj}\right)  +L(f_{n},\alpha_{nj,\delta}%
^{1})\rightarrow0+L(c_{0j,\delta}^{1})<2\pi\delta,
\end{align*}
as $n\rightarrow\infty,$ and for the same reason%
\[
d_{f_{n}}\left(  a_{0,j+1},a_{nj,\delta}^{2}\right)  \rightarrow
L(c_{0j,\delta}^{3})<2\pi\delta,
\]
as $n\rightarrow\infty,$ for each $j\in M_{0}.$ Thus we have for sufficiently
large $n,$%
\begin{equation}
\max_{j\in M_{0}}\max\left\{  d_{f_{n}}\left(  a_{0j},a_{nj,\delta}%
^{1}\right)  ,d_{f_{n}}\left(  a_{n,j+1,\delta}^{2},a_{0,j+1}\right)
\right\}  <2\pi\delta. \label{505-1}%
\end{equation}

For each $j\in M_{0},$ we let $C_{0j}$ be the circle determined by $c_{0j}%
\ $and $C_{0j}=\partial D\left(  p_{j},r\right)  .$ Then $r_{j}\leq\pi/2$ and
$p_{j}$ is on the left hand side of $C_{0j}.$ For any positive number
$\varepsilon\ll\delta$ with
\begin{equation}
\varepsilon<\frac{\min\{\delta,\delta_{1},\delta_{2},\delta_{3},\delta_{4}%
\}}{12\pi\left(  d^{\ast}+1\right)  m}, \label{co59}%
\end{equation}
let $C_{0j,\pm\varepsilon}=\partial D(p_{j},r_{j}\pm\varepsilon)\ $and write
$\mathcal{A}_{0j,\pm\varepsilon}=D(C_{0j},\varepsilon),$ which is the
$\varepsilon$ neighborhood of $C_{0j}$ on $S:$
\[
\mathcal{A}_{0j,\pm\varepsilon}:r_{j}-\varepsilon<d(w,p_{j})<r_{j}%
+\varepsilon,
\]
and let
\begin{equation}
R_{j,\delta,\varepsilon}=\overline{D(c_{0j},\varepsilon)}\backslash
D(\{q_{0j},q_{0,j+1}\},\delta), \label{co54}%
\end{equation}
which is compact and is the component of $\overline{\mathcal{A}_{0j,\pm
\varepsilon}}\backslash D(\{q_{0j},q_{0,j+1}\},\delta)$ containing
$c_{0j}\backslash D(\{q_{0j},q_{0,j+1}\},\delta).$ Then we have%
\[
\partial\mathcal{A}_{0j,\pm\varepsilon}=C_{0j,-\varepsilon}\cup
C_{0j,+\varepsilon}.
\]
Recall that $D(X,\delta)=\cup_{x\in X}D(x,\delta).$

By (\ref{co44}), (\ref{co51}) and (\ref{co49}), $\mathcal{C}_{\delta
}=\{\partial D(q_{0j},\delta):j\in M\}$ is consisted of disjoint circles. Then
we may assume that the positive numbers $\delta$ and $\varepsilon\ll\delta$
are small enough such that the following holds.

\begin{conclusion}
\label{co53}For each circle $J_{1}$ of $\mathcal{C}_{\delta}$ and each circle
$J_{2}\ $of the $3m_{0}$ circles $\left\{  C_{0j,\pm\varepsilon}:j\in
M_{0}\right\}  \cup\left\{  C_{0j}:j\in M_{0}\right\}  ,$ either $J_{1}\cap
J_{2}=\emptyset$ or $J_{1}$ and $J_{2}$ intersect almost perpendicularly; the
closed domain $R_{j,\delta,\varepsilon},j\in M_{0},$ defined by (\ref{co54}),
is a quadrilateral enclosed by four circular arcs contained in
$C_{0j,-\varepsilon}$, $\partial D(q_{0j},\delta),$ $C_{0j,+\varepsilon}$ and
$\partial D(q_{0,j+1},\delta):$%
\begin{align}
\partial R_{j,\delta,\varepsilon}  &  =-c_{0j,\delta,-\varepsilon}^{2}\left(
q_{0j,\delta,-\varepsilon}^{1},q_{0j,\delta,-\varepsilon}^{2}\right)
+\tau_{j,\delta,\varepsilon}^{1}\left(  q_{0j,\delta,-\varepsilon}%
^{1},q_{0j,\delta,\varepsilon}^{1}\right) \label{z1}\\
&  +c_{j,\delta,\varepsilon}^{2}\left(  q_{0j,\delta,\varepsilon}%
^{1},q_{0j,\delta,\varepsilon}^{2}\right)  +\tau_{j,\delta,\varepsilon}%
^{2}\left(  q_{0j,\delta,\varepsilon}^{2},q_{0j,\delta,-\varepsilon}%
^{2}\right)  ;\nonumber
\end{align}
and%
\begin{equation}
\min_{1\leq j\leq m_{0}}L(c_{0j,\delta,-\varepsilon}^{2})\geq\frac{4}{5}%
\min_{1\leq j\leq m_{0}}L(c_{0j})>5\delta. \label{co32}%
\end{equation}

\end{conclusion}

For each $j\in M_{0},$ it is clear that for sufficiently large $n,$
$q_{nj,\delta}^{1}$ and $q_{nj,\delta}^{2}$ are contained in the interior of
$\tau_{j,\delta,\varepsilon}^{1}\ $and $\tau_{j,\delta,\varepsilon}^{2}$
respectively.\medskip

\begin{center}
\textbf{Step 2 Two lifting results: Theorems \ref{key3} and \ref{key1}%
\medskip}
\end{center}

Let $t_{0}=2\pi\left(  d^{\ast}+1\right)  \sin\delta,$ which is $\left(
d^{\ast}+1\right)  $ times of the length of any circle on $S$ with radius
$\delta,$ and for $j\in M$ let
\[
\zeta_{nj,\delta}^{t_{0}}=\zeta_{nj,\delta}^{t_{0}}(t),t\in\left[
0,t_{0}\right]  ,
\]
be the locally simple path which describes $-\partial D(q_{0j},\delta)$ by
$d^{\ast}+1$ times, parametrized by length, oriented clockwise, and from
$\zeta_{nj,\delta}^{t_{0}}(0)=q_{n,j-1,\delta}^{2}$ to itself, say,
$\zeta_{nj,\delta}^{t_{0}}(t_{0})=q_{n,j-1,\delta}^{2}.$

For each $j\in M,$ there exists two numbers $\mathfrak{i}_{1}=\varphi
_{1}\left(  j\right)  $ and $\mathfrak{i}_{2}=\varphi_{2}\left(  j\right)  $
in $M_{0},$ which are uniquely determined by $j,$ such that $\mathfrak{i}%
_{1}+1\leq j\leq\mathfrak{i}_{2},$ and that each arc $\alpha_{0i}$ with $i\in
M$ and $\mathfrak{i}_{1}+1\leq i\leq\mathfrak{i}_{2}-1$ is a point, when
$\mathfrak{i}_{2}>\mathfrak{i}_{1}+1.$ In other words, $\alpha_{0\mathfrak{i}%
_{1}}$ and $\alpha_{0\mathfrak{i}_{2}}$ are terms in (\ref{pt3}) having
positive length and joined at $a_{0j},$ and $\alpha_{0\mathfrak{i}_{1}}$ is
before $\alpha_{0\mathfrak{i}_{2}}$ in the direction of $\partial\Delta.$ For
example, when $j\in M_{0},$ we have $\varphi_{2}\left(  j\right)  =j$ and
$\varphi_{1}\left(  j+1\right)  =j,$ and it is clear that
\[
\varphi_{1}(M)=\varphi_{1}(M_{0})=\varphi_{2}(M)=\varphi_{2}(M_{0})=M_{0}.
\]

\begin{remark}
In the remain part of this section, the discussion for the case $M_{0}%
\subsetneqq M$ makes the argument more complicated, without more deeper
meaning. The reader may understand the arguments only in the special case
$M_{0}=M,$ though we discuss in general for completeness. When $M_{0}=M,$ one
has $\mathfrak{i}_{1}=\varphi_{1}(j)=j-1$ and $\mathfrak{i}_{2}=\varphi
_{2}(j)=j\ $for all $j\in M.$
\end{remark}

We first prove the following theorem.

\begin{theorem}
\label{key3}\label{CL1}Let $j\in M$. Then for sufficiently large $n,$ there
exists a number $t_{nj}\in(0,t_{0}),$ such that the for $\mathfrak{i}%
_{1}=\varphi_{1}\left(  j\right)  $ and $\mathfrak{i}_{2}=\varphi_{2}\left(
j\right)  $ the following hold.

(i) $\zeta_{nj,\delta}^{t_{nj}}=\zeta_{nj,\delta}^{t_{0}}|_{\left[
0,t_{nj}\right]  }$ has an $f_{n}$-lift $\eta_{nj,\delta}^{t_{nj}}%
=\eta_{nj,\delta}^{t_{nj}}(t),t\in\lbrack0,t_{nj}],$ from $a_{n\mathfrak{i}%
_{1},\delta}^{2}\ $to $a_{n\mathfrak{i}_{2},\delta}^{1},$ say, $\eta
_{nj,\delta}^{t_{nj}}(0)$ is the terminal point $a_{n\mathfrak{i}_{1},\delta
}^{2}$ of $\alpha_{n\mathfrak{i}_{1},\delta}^{2},$ $\eta_{nj,\delta}^{t_{nj}%
}(t_{nj})$ is the initial point$\ a_{n\mathfrak{i}_{2},\delta}^{1}$ of
$\alpha_{n\mathfrak{i}_{2},\delta}^{2},$ and
\[
f_{n}\left(  \eta_{nj,\delta}^{t_{nj}}(t)\right)  =\zeta_{nj,\delta}^{t_{nj}%
}(t),t\in\lbrack0,t_{nj}].
\]

(ii) The interior $\eta_{nj,\delta}^{t_{nj}\circ}$ of $\eta_{nj,\delta
}^{t_{nj}}\ $is contained in $\Delta,$ say, $\eta_{nj,\delta}^{t_{nj}\circ
}(t)\in\Delta$ for all $t\in(0,t_{nj}).$

(iii)
\begin{equation}
\eta_{nj,\delta}^{t_{nj}}\subset D_{f_{n}}(a_{0j},t_{nj}+2\pi\delta)\subset
D_{f_{n}}(a_{0j},\delta_{1}/3), \label{cc3}%
\end{equation}

and for each pair $\{i,j\}\subset M$ with $a_{0i}\neq a_{0j}$%
\begin{equation}
\eta_{nj,\delta}^{t_{nj}}\cap\eta_{nj,\delta}^{t_{nj}}=\emptyset.
\label{nointers}%
\end{equation}

(iv) \label{cc12}$\eta_{nj,\delta}^{t_{nj}}$ is a simple arc in $\overline
{\Delta}$ and
\begin{equation}
\eta_{nj,\delta}^{t_{nj}\circ}\cap f_{n}^{-1}(E_{q})=\emptyset, \label{cc17}%
\end{equation}
and thus $f_{n}$ has no branch point on $\eta_{nj,\delta}^{t_{nj}}%
=\eta_{nj,\delta}^{t_{nj}}\left(  a_{n\mathfrak{i}_{1},\delta}^{2}%
,a_{n\mathfrak{i}_{2},\delta}^{1}\right)  \ $(by convention, $\eta_{nj,\delta
}^{t_{nj}}\left(  a_{n\mathfrak{i}_{1},\delta}^{2},a_{n\mathfrak{i}_{2}%
,\delta}^{1}\right)  $ indicates $\eta_{nj,\delta}^{t_{nj}}$ is a path from
$a_{n\mathfrak{i}_{1},\delta}^{2}\ $to $a_{n\mathfrak{i}_{2},\delta}^{1}$).
\end{theorem}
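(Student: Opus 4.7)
\medskip
\noindent\textbf{Proof plan for Theorem \ref{key3}.} The plan is to construct $\eta_{nj,\delta}^{t_{nj}}$ as the maximal $f_n$-lift of an initial segment of $\zeta_{nj,\delta}^{t_0}$ and then identify its terminal point. At $a_{n\mathfrak{i}_1,\delta}^2\in\alpha_{n\mathfrak{i}_1}^{\circ}$, $f_n$ is a local homeomorphism because $\Sigma_n\in\mathcal{F}_r(L,m)$; moreover by Conclusion \ref{co53} the circle $\partial D(q_{0j},\delta)$ crosses $c_{n\mathfrak{i}_1}$ almost perpendicularly at $q_{n\mathfrak{i}_1,\delta}^2$, so a short initial lift into $\overline{\Delta}$ with its interior in $\Delta$ exists. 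Lemma \ref{Ri} then continuously extends the lift inside $\overline{\Delta}$ until it either returns to $\partial\Delta$ or covers the whole path $\zeta_{nj,\delta}^{t_0}$; let $t_{nj}\in(0,t_0]$ be the first return time to $\partial\Delta$, so that $\eta_{nj,\delta}^{t_{nj}\circ}\subset\Delta$.

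The first claim is that $t_{nj}<t_0$, which yields part of (i) and (ii). Indeed, if the lift stayed in $\overline{\Delta}$ for the full parameter interval, then since $\zeta_{nj,\delta}^{t_0}$ traces $-\partial D(q_{0j},\delta)$ exactly $d^\ast+1$ times, every regular value $w$ on that circle would acquire at least $d^\ast+1$ preimages under $f_n$ on the lift, contradicting the uniform degree bound $\deg_{\max}f_n\le d^\ast$ from Conclusion \ref{co1}. Along the way I would also record the length estimate $L(f_n,\eta_{nj,\delta}^{t_{nj}})\le t_0$, which together with (\ref{co49}) gives $t_0<\delta_0/3$, hence $\eta_{nj,\delta}^{t_{nj}\circ}\cap f_n^{-1}(E_q)=\emptyset$ by (\ref{az1}) and (\ref{cc11}); this establishes (\ref{cc17}) and shows $f_n$ has no branch point on $\eta_{nj,\delta}^{t_{nj}}$, which in turn forces simplicity (a self-intersection would propagate, by uniqueness of lifts in a branch-free region, into a periodic lift, contradicting the exit).

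The heart of the argument is identifying the exit point as $a_{n\mathfrak{i}_2,\delta}^1$. The endpoint $x_{nj}=\eta_{nj,\delta}^{t_{nj}}(t_{nj})\in\partial\Delta$ must satisfy $f_n(x_{nj})\in\partial D(q_{0j},\delta)$, hence lies in $\{a_{ni,\delta}^1,a_{ni,\delta}^2\}$ for some $i\in M$ with $\partial D(q_{0i},\delta)\cap\partial D(q_{0j},\delta)\ne\emptyset$ or $\partial D(q_{0,i+1},\delta)\cap\partial D(q_{0j},\delta)\ne\emptyset$. By Conclusion \ref{co46} and (\ref{cc15}), for large $n$ the only such $i$ are those with $q_{0i}=q_{0j}$ or $q_{0,i+1}=q_{0j}$, i.e.\ $i\in\{\mathfrak{i}_1,\mathfrak{i}_1+1,\dots,\mathfrak{i}_2\}$; moreover, for the intermediate indices $\mathfrak{i}_1<i<\mathfrak{i}_2$ the arc $c_{ni}$ is entirely contained in $D(q_{0j},\delta)$ for large $n$ (since $c_{0i}$ is a point at $q_{0j}$), so $c_{ni}\cap\partial D(q_{0j},\delta)=\emptyset$ and these indices contribute no candidate. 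This leaves only $x_{nj}\in\{a_{n\mathfrak{i}_1,\delta}^2,a_{n\mathfrak{i}_2,\delta}^1\}$. Since $d_{f_n}(a_{n\mathfrak{i}_1,\delta}^2,x_{nj})\le t_0$ by the lift, the case $x_{nj}=a_{n\mathfrak{i}_1,\delta}^2$ is impossible: it would force $t_{nj}$ to be a positive multiple of the circle length $2\pi\sin\delta$ and the lift to be a closed curve enclosing a subdomain of $\Delta$, whence by the same argument-principle/degree reasoning as in the first claim the lift would extend into that subdomain indefinitely. Hence $x_{nj}=a_{n\mathfrak{i}_2,\delta}^1$, which finishes (i), with (\ref{cc3}) following from the length estimate, Theorem \ref{nobi}, (\ref{co44}) and (\ref{co59}); and (\ref{nointers}) follows from the confinement $\eta_{nj,\delta}^{t_{nj}}\subset D_{f_n}(a_{0j},\delta_1/3)$ together with (\ref{co44}).

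The main obstacle I expect is the last step: ruling out $x_{nj}=a_{n\mathfrak{i}_1,\delta}^2$ and intermediate candidates, and in particular verifying that for large $n$ the arcs $c_{ni}$ with $\mathfrak{i}_1<i<\mathfrak{i}_2$ really sit inside $D(q_{0j},\delta)$. This is plausible because $\alpha_{0i}$ and $c_{0i}$ degenerate to the point $a_{0j},q_{0j}$, but making it rigorous requires the uniform convergence $c_{ni}\to\{q_{0j}\}$ from Conclusion \ref{co3}--\ref{co4} together with the smallness of $\varepsilon\ll\delta$ from (\ref{co59}). The orientation of $\zeta_{nj,\delta}^{t_0}$ (clockwise around $\partial D(q_{0j},\delta)$) and the orientation-preservation of $f_n$ given in Lemma \ref{cov-1} are what dictate that the lift enters $\Delta$ on the side facing $a_{n\mathfrak{i}_2,\delta}^1$ rather than $a_{n\mathfrak{i}_1,\delta}^2$, which is the geometric point underlying the exit-identification argument.
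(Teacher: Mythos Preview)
Your overall architecture---construct the maximal lift via Lemma \ref{Ri}, use the degree bound $\deg_{\max}f_n\le d^\ast$ to force $t_{nj}<t_0$, and then locate the exit---matches the paper. The paper's route to (\ref{cc17}) is more direct than yours: since $\partial D(q_{0j},\delta)\cap E_q=\emptyset$ by (\ref{cc15}), the lift cannot meet $f_n^{-1}(E_q)$ at all, and in particular $f_n$ has no branch point anywhere on the lift; simplicity then follows immediately from uniqueness of branch-free lifts. Your detour through (\ref{az1}) works too but is unnecessary.

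The genuine gap is in your exit-point identification. You argue that since $f_n(x_{nj})\in\partial D(q_{0j},\delta)$, the candidate boundary arcs $\alpha_{ni}$ are only those whose images $c_{ni}$ meet this circle, and you claim these are exactly $i=\mathfrak{i}_1,\mathfrak{i}_2$ (plus the degenerating intermediate arcs). But nothing in Conclusion \ref{co46} or (\ref{cc15}) prevents some distant arc $c_{ni}$ with $i\in M_0$, $i\ne\mathfrak{i}_1,\mathfrak{i}_2$ from passing through $D(q_{0j},\delta)$ on the sphere---its endpoints $q_{0i},q_{0,i+1}$ are far from $q_{0j}$, but the arc itself could be long and wander near $q_{0j}$. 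Your image-based bookkeeping cannot rule this out.

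The paper fixes this by reversing your order: it proves (\ref{cc3}) \emph{before} identifying the exit. The length estimate gives $\eta_{nj,\delta}^{t_{nj}}\subset D_{f_n}(a_{0j},t_{nj}+2\pi\delta)\subset D_{f_n}(a_{0j},\delta_1/3)$ directly from the triangle inequality and (\ref{505-1}). Then (\ref{co44}) (which encodes Theorem \ref{nobi}) says $d_{f_n}(a_{0j},\alpha_{0i})>\delta_1$ for $i\in M_0$ with $a_{0j}\notin\alpha_{0i}$, so Lemma \ref{dfset} gives $d_{f_n}(\eta_{nj,\delta}^{t_{nj}},\alpha_{ni})>\delta_1/2$ for all such $i$. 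This confines the exit to $\alpha_{n\mathfrak{i}_1}\cup\alpha_{n\mathfrak{i}_2}$ via \emph{intrinsic} surface distance, regardless of whether the $S$-images $c_{ni}$ happen to cross the circle. The final discrimination between $a_{n\mathfrak{i}_1,\delta}^2$ and $a_{n\mathfrak{i}_2,\delta}^1$ then comes from orientation, essentially as you sketch in your last paragraph. You already have all the ingredients for (\ref{cc3}); moving it ahead of the exit identification closes the gap.
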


\begin{proof}
By (\ref{cc15}) and Definition \ref{circu} (c) (iv) for $\mathcal{F}\left(
L,m\right)  \supset\mathcal{F}_{r}\left(  L,m\right)  $ we have

\begin{condition}
\label{condi2}$\zeta_{nj,\delta}^{t_{0}}$ never passes any point of $E_{q}$
and for sufficiently large $n,$ $f_{n}$ is homeomorphic in neighborhoods of
$a_{n\mathfrak{i}_{1},\delta}^{2}$ and $a_{n\mathfrak{i}_{2},\delta}^{1}$ in
$\overline{\Delta}.$ Thus $\zeta_{nj,\delta}^{t_{0}}$ never passes through any
branch point of $f_{n}.$
\end{condition}

From this condition we have: there exists a maximal number $t_{nj}\in
(0,t_{0}]\ $satisfying the following condition.

\begin{condition}
\label{cond1}(A) The part $\zeta_{nj,\delta}^{t_{nj}}$ of $\zeta_{nj,\delta
}^{t_{0}}$ has an $f_{n}$-lift $\eta_{nj,\delta}^{t_{nj}}(t)$ starting from
$a_{n\mathfrak{i}_{1},\delta}^{2}\in\alpha_{n\mathfrak{i}_{1},\delta}^{2}%
\ $(recall that $a_{n\mathfrak{i}_{1},\delta}^{2}$ is the terminal point of
$\alpha_{n\mathfrak{i}_{1},\delta}^{2}$).

(B) $\zeta_{nj,\delta}^{t_{nj}\circ}\subset\Delta.$
\end{condition}

By Condition \ref{condi2}, the lift $\eta_{nj,\delta}^{t_{nj}}$ is uniquely
determined by $t_{nj}$ and the initial point $\eta_{nj,\delta}%
(0)=a_{n\mathfrak{i}_{1},\delta}^{2}\ $and satisfies (\ref{cc17}). Since
$\deg_{\max}f_{n}\leq d^{\ast}\ $and $t_{nj}$ is maximal, we have by Lemma
\ref{Ri} that
\begin{equation}
t_{nj}\in(0,t_{0})\mathrm{\ and\ }\eta_{nj,\delta}^{t_{nj}}(t_{nj})\in
\partial\Delta, \label{49-1}%
\end{equation}
We have proved that $\eta_{nj,\delta}^{t_{nj}}$ satisfies (ii) and (\ref{cc17}).

Now, we show that $\eta_{nj,\delta}^{t_{nj}}$ satisfies (iii). Since
$\eta_{nj,\delta}^{t_{nj}}(0)=a_{n\mathfrak{i}_{1},\delta}^{2}$ and
$\eta_{nj,\delta}^{t_{nj}}$ is parametrized by $d_{f_{n}}$-length, by
(\ref{505-1}) we have for sufficiently large $n,$%
\begin{align*}
\eta_{nj,\delta}^{t_{nj}}  &  \subset\overline{D_{f_{n}}(a_{n\mathfrak{i}%
_{1},\delta}^{2},t_{nj})}\subset\overline{D_{f_{n}}(a_{0j},t_{nj}+d_{f_{n}%
}(a_{n\mathfrak{i}_{1},\delta}^{2},a_{0j}))}\\
&  \subset D_{f_{n}}(a_{0j},t_{nj}+2\pi\delta).
\end{align*}
On the other hand, by (\ref{49-1}), we have
\[
t_{nj}+2\pi\delta<2\pi\delta\left(  d^{\ast}+1\right)  +2\pi\delta<4\pi
\delta\left(  d^{\ast}+1\right)  <\delta_{1}/3.
\]
Therefore (\ref{cc3}) holds, which with (\ref{co44}) and Lemma \ref{dfset},
implies that for each pair $\{i,j\}$ in $M$ with $a_{0i}\neq a_{0j}$%
\begin{align*}
d_{f_{n}}(\eta_{ni,\delta}^{t_{ni}},\eta_{nj,\delta}^{t_{nj}})  &  \geq
d_{f_{n}}\left(  D_{f_{n}}(a_{0i},\frac{\delta_{1}}{3}),D_{f_{n}}(a_{0j}%
,\frac{\delta_{1}}{3})\right) \\
&  \geq d_{f_{n}}(a_{0i},a_{0j})-\frac{2\delta_{1}}{3}>\frac{\delta_{1}}{3}>0.
\end{align*}
That is to say (\ref{nointers}) holds, and (iii) is proved.

By Condition \ref{condi2}, the $f_{n}$-lift $\eta_{nj,\delta}^{t_{nj}}$ is
simple, and thus (iv) is true.

It remains to prove (i). For sufficiently large $n,$ it is clear that for each
$i\in M_{0},\alpha_{ni}\subset D_{f_{n}}(\alpha_{0i},\delta),$ and thus by
(\ref{cc3}) and Lemma \ref{dfset} we have, for $i\in M_{0}$ with
$i\neq\mathfrak{i}_{1},\mathfrak{i}_{2},$ that
\begin{align*}
d_{f_{n}}\left(  \eta_{nj,\delta}^{t_{nj}},\alpha_{ni}\right)   &  \geq
d_{f_{n}}\left(  D_{f_{n}}(a_{0j},\frac{\delta_{1}}{3}),D_{f_{n}}(\alpha
_{0i},\delta)\right) \\
&  \geq d_{f_{n}}\left(  a_{0j},\alpha_{0i}\right)  -\frac{\delta_{1}}%
{3}-\delta,
\end{align*}
and then by (\ref{co44}) $d_{f_{n}}\left(  \eta_{nj,\delta},\alpha
_{ni}\right)  >\delta_{1}/2.$ Then for sufficiently large $n,$ $\eta
_{nj,\delta}^{t_{nj}}\left(  t_{nj}\right)  \cap\alpha_{ni}\neq\emptyset$
holds only for $i=\mathfrak{i}_{1}\ $or $\mathfrak{i}_{2}.$ Thus, when $t$
tends to $t_{nj}$ in $[0,t_{nj}]$, $\eta_{nj,\delta}(t)$ tend to
$\alpha_{n\mathfrak{i}_{1}}\ $or $\alpha_{n\mathfrak{i}_{2}},$ and it is clear
that we only have $\eta_{nj,\delta}(t_{nj})=a_{n\mathfrak{i}_{2}}^{1}\in
\alpha_{n\mathfrak{i}_{2}}.$ (i) has been proved and Theorem \ref{CL1} is
proved completely.
\end{proof}

By taking subsequence, we may assume that for each $j\in M$ there exists
$v_{j}\in\mathbb{N}^{0},$ independent of $n,$ such that
\begin{equation}
d^{\ast}+1\geq\frac{t_{nj}}{2\pi\sin\delta}>v_{j}\geq\frac{t_{nj}}{2\pi
\sin\delta}-1. \label{za10}%
\end{equation}

For each $j\in M_{0},$ we write%
\begin{equation}
\tau_{nj,\delta,\varepsilon}^{1,L}=\tau_{j,\delta,\varepsilon}^{1}\left(
q_{0j,\delta,-\varepsilon}^{1},q_{nj,\delta}^{1}\right)  ,\tau_{nj,\delta
,\varepsilon}^{2,L}=\tau_{j,\delta,\varepsilon}^{2}\left(  q_{nj,\delta}%
^{2},q_{0j,\delta,-\varepsilon}^{2}\right)  , \label{az-8}%
\end{equation}
say, $\tau_{nj,\delta,\varepsilon}^{1,L}$ is the arc of $\tau_{j,\delta
,\varepsilon}^{1}$ from $q_{0j,\delta,-\varepsilon}^{1}$ to $q_{nj,\delta}%
^{1}$ and $\tau_{nj,\delta,\varepsilon}^{2,L}$ is the arc of $\tau
_{j,\delta,\varepsilon}^{2}$ from $q_{nj,\delta}^{2}\ $to $q_{0j,\delta
,-\varepsilon}^{2}.$ In other words, $\tau_{nj,\delta,\varepsilon}^{1,L}$ and
$\tau_{nj,\delta,\varepsilon}^{2,L}$ are the parts of $\tau_{nj,\delta
,\varepsilon}^{1}$ and $\tau_{nj,\delta,\varepsilon}^{2}$ on the left hand
side of $c_{nj},$ respectively. Then by Theorem \ref{CL1} and properties of
path lifts, we have the following.

\begin{claim}
\label{CL2}For each $j\in M,\eta_{nj,\delta}^{t_{nj}}=\eta_{nj,\delta}%
^{t_{nj}}(t),t\in\lbrack0,t_{nj}],$ is a simple path in $\overline{\Delta}$
from $a_{n\varphi_{1}\left(  j\right)  ,\delta}^{2}$ to $a_{n\varphi
_{2}(j),\delta}^{1},$
\[
\eta_{nj,\delta}^{t_{nj}\circ}=\eta_{nj,\delta}^{t_{nj}}|_{(0,t_{nj})}%
\subset\Delta,
\]
and $\eta_{nj,\delta}^{t_{nj}}$ is the $f_{n}$-lift of the path%
\begin{equation}
\zeta_{nj,\delta}^{t_{nj}}=\zeta_{nj,\delta}^{t_{0}}|_{[0,t_{nj}]}%
=\tau_{n\varphi_{1}(j),\delta,\varepsilon}^{2,L}+\zeta_{j,\delta,\varepsilon
}+\tau_{n\varphi_{2}\left(  j\right)  ,\delta,\varepsilon}^{1,L},
\label{conorL}%
\end{equation}
where $\tau_{n\varphi_{1}(j),\delta,\varepsilon}^{2,L}$ and $\tau
_{n\varphi_{2}\left(  j\right)  ,\delta,\varepsilon}^{1,L}$ are defined by
(\ref{az-8}), and $\zeta_{j,\delta,\varepsilon}$ can be expressed as
\[
\zeta_{j,\delta,\varepsilon}=\overset{v_{j}\;\mathrm{copies\;of\;}-\partial
D(q_{0j},\delta)\mathrm{\;}}{\overbrace{-\partial D(q_{0j},\delta
)-\dots-\partial D(q_{0j},\delta)}}-\kappa,
\]
in which each $-\partial D(q_{0j},\delta)$ is regarded as a closed simple path
from $q_{0\varphi_{1}(j),\delta,-\varepsilon}^{2}$ to itself, and $-\kappa$ is
the simple arc of $-\partial D(q_{0j},\delta)$ from $q_{0\varphi_{1}%
(j),\delta,-\varepsilon}^{2}$ to $q_{0\varphi_{2}(j),\delta,-\varepsilon}%
^{1},$ and $\zeta_{j,\delta,-\varepsilon}=-\kappa$ when $v_{j}=0. $
\end{claim}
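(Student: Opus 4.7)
The plan is to read the claim directly off Theorem~\ref{key3} together with the definitions in Conclusion~\ref{co46} and equation~(\ref{az-8}). The assertions that $\eta_{nj,\delta}^{t_{nj}}$ is a simple arc in $\overline{\Delta}$ from $a_{n\varphi_1(j),\delta}^2$ to $a_{n\varphi_2(j),\delta}^1$, with interior in $\Delta$, and that it is the $f_n$-lift of $\zeta_{nj,\delta}^{t_{nj}}=\zeta_{nj,\delta}^{t_0}|_{[0,t_{nj}]}$, are immediate restatements of parts (i), (ii) and (iv) of Theorem~\ref{key3}; no additional argument is needed for these.

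The substantive step is the identification of $\zeta_{nj,\delta}^{t_{nj}}$ with the concatenation in~(\ref{conorL}). Writing $\mathfrak{i}_k=\varphi_k(j)$ for $k=1,2$, I first observe that $q_{n,j-1,\delta}^2=q_{n\mathfrak{i}_1,\delta}^2$: every $\alpha_{0i}$ with $\mathfrak{i}_1<i<j$ is a point, so $q_{0,\mathfrak{i}_1+1}=\cdots=q_{0j}$ and therefore $\partial D(q_{0,\mathfrak{i}_1+1},\delta)=\partial D(q_{0j},\delta)$; for $n$ large this common starting point lies in the interior of $\tau_{\mathfrak{i}_1,\delta,\varepsilon}^2$, and the terminal point $q_{n\mathfrak{i}_2,\delta}^1$ lies in the interior of $\tau_{\mathfrak{i}_2,\delta,\varepsilon}^1$. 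Walking along $-\partial D(q_{0j},\delta)$ clockwise from $q_{n\mathfrak{i}_1,\delta}^2$, the portion up to the first meeting with $C_{0j,-\varepsilon}$ at $q_{0\mathfrak{i}_1,\delta,-\varepsilon}^2$ is exactly $\tau_{n\mathfrak{i}_1,\delta,\varepsilon}^{2,L}$ by~(\ref{az-8}), and the portion from the last meeting $q_{0\mathfrak{i}_2,\delta,-\varepsilon}^1$ with $C_{0j,-\varepsilon}$ to the endpoint $q_{n\mathfrak{i}_2,\delta}^1$ is $\tau_{n\mathfrak{i}_2,\delta,\varepsilon}^{1,L}$. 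The middle portion $\zeta_{j,\delta,\varepsilon}$ is then a loop traversal of $-\partial D(q_{0j},\delta)$ starting at $q_{0\mathfrak{i}_1,\delta,-\varepsilon}^2$ and ending at $q_{0\mathfrak{i}_2,\delta,-\varepsilon}^1$; by the definition of $v_j$ in~(\ref{za10}) it consists of exactly $v_j$ complete revolutions followed by the residual arc $-\kappa$ from $q_{0\mathfrak{i}_1,\delta,-\varepsilon}^2$ to $q_{0\mathfrak{i}_2,\delta,-\varepsilon}^1$, reducing to $-\kappa$ alone when $v_j=0$.

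The only obstacle worth mentioning is the bookkeeping: matching the clockwise orientation of $-\partial D(q_{0j},\delta)$ against the orientations chosen in~(\ref{z1}) for $\tau_{j,\delta,\varepsilon}^1$ and $\tau_{j,\delta,\varepsilon}^2$, handling the index shift when intermediate $\alpha_{0i}$ reduce to points, and checking the length identity
\[
t_{nj}=L(\tau_{n\mathfrak{i}_1,\delta,\varepsilon}^{2,L})+v_j\cdot 2\pi\sin\delta+L(\kappa)+L(\tau_{n\mathfrak{i}_2,\delta,\varepsilon}^{1,L})
\]
against~(\ref{za10}). None of this is substantive; the claim is essentially a bookkeeping consequence of Theorem~\ref{key3}.
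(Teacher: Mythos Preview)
Your proposal is correct and matches the paper's approach: the paper introduces Claim~\ref{CL2} with the single sentence ``Then by Theorem~\ref{CL1} and properties of path lifts, we have the following,'' so you have actually supplied more detail than the paper does. Your bookkeeping on the index shift when intermediate $\alpha_{0i}$ collapse to points, and on the decomposition of the clockwise traversal into the two $\tau^{L}$-tails plus $v_j$ full turns plus the residual $-\kappa$, is exactly the content the paper leaves implicit.
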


Since $\{v_{j}\}_{j=1}^{M}$ is independent of $n,$ and $\zeta_{j,\delta
,\varepsilon}$ is starting from the point $q_{0\varphi_{1}(j),\delta
,-\varepsilon}^{2}\in C_{0\varphi_{1}(j),-\varepsilon}\cap\partial D\left(
q_{0j},\delta\right)  $ to the point $q_{0\varphi_{2}(j),\delta,-\varepsilon
}^{1}\in C_{0\varphi_{2}(j),-\varepsilon}\cap\partial D\left(  q_{0j}%
,\delta\right)  ,$ which are also independent of $n,$ we have:

\begin{claim}
\label{ind}For each $j\in M,\zeta_{j,\delta,\varepsilon}$ is a subarc of
$\zeta_{nj,\delta}^{t_{nj}}|_{[0,t_{nj}]}=\zeta_{nj,\delta}^{t_{0}%
}|_{[0,t_{nj}]}$ which is independent of $n,$ and we can write
\begin{equation}
\eta_{nj,\delta,\varepsilon}^{t_{nj}}=\mathfrak{t}_{n\varphi_{1}%
(j),\delta,\varepsilon}^{2}+\eta_{nj,\delta,\varepsilon}+\mathfrak{t}%
_{n\varphi_{2}(j),\delta,\varepsilon}^{1}, \label{az-5}%
\end{equation}
with
\begin{equation}
\left(  f_{n},\mathfrak{t}_{n\varphi_{1}(j),\delta,\varepsilon}^{2}\right)
=\tau_{n\varphi_{1}(j),\delta,\varepsilon}^{2,L},\;\left(  f_{n}%
,\mathfrak{t}_{n\varphi_{2}(j),\delta,\varepsilon}^{1}\right)  =\tau
_{n\varphi_{2}(j),\delta,\varepsilon}^{1,L} \label{az-6}%
\end{equation}
and
\[
\left(  f_{n},\eta_{nj,\delta,\varepsilon}\right)  =\zeta_{j,\delta
,\varepsilon}=\zeta_{j,\delta,\varepsilon}\left(  q_{0\varphi_{1}\left(
j)\right)  ,\delta,-\varepsilon}^{2},q_{0\varphi_{2}\left(  j)\right)
,\delta,-\varepsilon}^{1}\right)  .
\]

\end{claim}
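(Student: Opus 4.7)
The claim is essentially a repackaging of the decomposition already established in Claim~\ref{CL2}, together with the observation that the middle factor does not depend on $n$. I will carry it out in three short steps.

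First, take the decomposition of $\zeta_{nj,\delta}^{t_{nj}}$ furnished by Claim~\ref{CL2},
\[
\zeta_{nj,\delta}^{t_{nj}}=\tau_{n\varphi_{1}(j),\delta,\varepsilon}^{2,L}+\zeta_{j,\delta,\varepsilon}+\tau_{n\varphi_{2}(j),\delta,\varepsilon}^{1,L},
\]
and lift it. By Theorem~\ref{key3}(i)--(iv), the path $\eta_{nj,\delta}^{t_{nj}}$ is a simple $f_{n}$-lift of $\zeta_{nj,\delta}^{t_{nj}}$ whose interior misses $f_{n}^{-1}(E_{q})$ and therefore contains no branch point; hence $f_{n}$ is locally homeomorphic at every interior point of the lift. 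Unique path lifting then yields a unique partition $\eta_{nj,\delta}^{t_{nj}}=\mathfrak{t}_{n\varphi_{1}(j),\delta,\varepsilon}^{2}+\eta_{nj,\delta,\varepsilon}+\mathfrak{t}_{n\varphi_{2}(j),\delta,\varepsilon}^{1}$ whose three pieces $f_{n}$-project to the corresponding three pieces above; this gives \eqref{az-5} together with \eqref{az-6}.

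Second, verify the $n$-independence of $\zeta_{j,\delta,\varepsilon}$. Its initial point $q_{0\varphi_{1}(j),\delta,-\varepsilon}^{2}$ and terminal point $q_{0\varphi_{2}(j),\delta,-\varepsilon}^{1}$ are determined by $E_{q}$, by $\delta$, by $\varepsilon$, and by the limit boundary data $\{q_{0i}\}_{i\in M}$, and none of these depends on $n$. By passing to a subsequence as in \eqref{za10}, the winding integer $v_{j}$ around the fixed circle $\partial D(q_{0j},\delta)$ is also independent of $n$. Since $\zeta_{j,\delta,\varepsilon}$ is, by construction, the concatenation of $v_{j}$ full clockwise laps of $\partial D(q_{0j},\delta)$ followed by the fixed residual arc $-\kappa$ from $q_{0\varphi_{1}(j),\delta,-\varepsilon}^{2}$ to $q_{0\varphi_{2}(j),\delta,-\varepsilon}^{1}$, it is pinned down by data independent of $n$; in particular $\zeta_{j,\delta,\varepsilon}$ is the arc claimed.

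Third, identify the outer pieces. By definition \eqref{az-8}, $\tau_{n\varphi_{1}(j),\delta,\varepsilon}^{2,L}$ is precisely the subarc of $\tau_{\varphi_{1}(j),\delta,\varepsilon}^{2}$ from $q_{n\varphi_{1}(j),\delta}^{2}$ to $q_{0\varphi_{1}(j),\delta,-\varepsilon}^{2}$ on the left side of $c_{n\varphi_{1}(j)}$, and similarly for $\tau_{n\varphi_{2}(j),\delta,\varepsilon}^{1,L}$; by Conclusion~\ref{co53} these are the two radial arcs through which $\zeta_{nj,\delta}^{t_{nj}}$ enters and leaves the $\delta$-disk around $q_{0j}$. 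Their $f_{n}$-lifts $\mathfrak{t}_{n\varphi_{k}(j),\delta,\varepsilon}^{*}$ then match the prescription in \eqref{az-6}.

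The proof has essentially no obstacle: the only point that requires a line of care is ruling out collapse or overlap of the three lifted pieces, which follows from the simplicity of $\eta_{nj,\delta}^{t_{nj}}$ (Theorem~\ref{key3}(iv)) combined with the fact that in the image, the three pieces meet only at the two prescribed transition points lying on the circles $C_{0\varphi_{k}(j),-\varepsilon}$, $k=1,2$. Everything else is bookkeeping following Claim~\ref{CL2}.
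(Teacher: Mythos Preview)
Your proposal is correct and follows essentially the same approach as the paper. The paper's own justification is even terser: it simply notes, in the sentence immediately preceding the claim, that since $\{v_j\}_{j=1}^{M}$ is independent of $n$ (by \eqref{za10}) and the endpoints $q_{0\varphi_{1}(j),\delta,-\varepsilon}^{2}$, $q_{0\varphi_{2}(j),\delta,-\varepsilon}^{1}$ of $\zeta_{j,\delta,\varepsilon}$ are also independent of $n$, the claim follows directly from the decomposition in Claim~\ref{CL2}; your three steps spell out this reasoning in more detail.
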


It is clear that for each $j\in M_{0},$
\begin{equation}%
\begin{tabular}
[c]{ll}%
$\max\left\{  L(\tau_{nj,\delta,\varepsilon}^{1,L}),L(\tau_{nj,\delta
,\varepsilon}^{2,L})\right\}  $ & \\
$=\max\left\{  L(f_{n},\mathfrak{t}_{nj,\delta,-\varepsilon}^{1}%
),L(f_{n},\mathfrak{t}_{nj,\delta,-\varepsilon}^{2})\right\}  $ &
$=\varepsilon+o(\varepsilon)\mathrm{\ as\ }\varepsilon\rightarrow0.$%
\end{tabular}
\ \ \ \ \ \label{cc4}%
\end{equation}
We write%
\begin{equation}
\eta_{nj,\delta,\varepsilon}=\eta_{nj,\delta,\varepsilon}\left(
a_{0\varphi_{1}\left(  j)\right)  ,\delta,-\varepsilon}^{2},a_{0\varphi
_{2}\left(  j)\right)  ,\delta,-\varepsilon}^{1}\right)  \label{beg-end}%
\end{equation}

Now we prove the following Theorem.

\begin{theorem}
\label{CL4}\label{key1}For sufficiently large $n,$ and each $j\in M_{0},$ the
following hold:

(i) $c_{0j,\delta,-\varepsilon}^{2}$ has an $f_{n}$-lift $\alpha
_{nj,\delta,-\varepsilon}^{2}$ such that the initial point of $\alpha
_{nj,\delta,-\varepsilon}^{2}$ equals the terminal point $a_{0\varphi
_{2}\left(  j\right)  ,\delta,-\varepsilon}^{1}=a_{0j,\delta,-\varepsilon}%
^{1}$ of $\eta_{nj,\delta,\varepsilon},$ and the terminal point $a_{0j,\delta
,-\varepsilon}^{2}$ of $\alpha_{nj,\delta,-\varepsilon}^{2}$ equals the
initial point $a_{0\varphi_{1}(j+1),\delta,-\varepsilon}^{2}=a_{0j,\delta
,-\varepsilon}^{2}$ of $\eta_{n,j+1,\delta,\varepsilon};$ and moreover,%
\begin{equation}
\alpha_{nj,\delta,-\varepsilon}^{2}\subset D_{f_{n}}(\alpha_{0j,\delta}%
^{2},2\pi\varepsilon)\subset D_{f_{n}}(\partial\Delta,\delta_{1}). \label{m1}%
\end{equation}

(ii) For any other $i\in M_{0}$ with $i\neq j,$ $\alpha_{ni,\delta
,\varepsilon}^{2}\cap\alpha_{nj,\delta,\varepsilon}^{2}=\emptyset.$
\end{theorem}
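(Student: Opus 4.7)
The plan is to construct the lift $\alpha_{nj,\delta,-\varepsilon}^{2}$ by path-continuation, starting from the interior-of-$\Delta$ endpoint $a_{0j,\delta,-\varepsilon}^{1}$ of the arc $\eta_{nj,\delta,\varepsilon}$ built in Theorem \ref{key3}. Concretely, parametrize $c_{0j,\delta,-\varepsilon}^{2}$ by length, observe that this arc avoids $E_{q}$ (by \eqref{cc15} and Conclusion \ref{co53}), and invoke Lemma \ref{Ri} to extend the partial $f_{n}$-lift in $\overline{\Delta}$ maximally, with interior in $\Delta$, until either the whole arc is covered or the lift strikes $\partial\Delta$. The desired lift $\alpha_{nj,\delta,-\varepsilon}^{2}$ will be this maximal continuation, and the bulk of the argument will be to prove that it covers the entire arc and terminates precisely at the point $a_{0j,\delta,-\varepsilon}^{2}$ (which is the initial point of $\eta_{n,j+1,\delta,\varepsilon}$).

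The key step is location control. For every partial lift point $p$, one has $f_{n}(p)\in c_{0j,\delta,-\varepsilon}^{2}$, a point lying within spherical distance $\varepsilon+o(\varepsilon)$ of $c_{0j,\delta}^{2}=f_{0}(\alpha_{0j,\delta}^{2})$. For large $n$, $c_{nj,\delta}^{2}$ is within spherical Hausdorff distance $o(1)$ of $c_{0j,\delta}^{2}$, and the short radial segment in the thin strip $R_{j,\delta,\varepsilon}$ joining $f_{n}(p)$ to a point of $c_{nj,\delta}^{2}$ has an $f_{n}$-lift in $\overline{\Delta}$ via the one-sided homeomorphic neighborhood of $\alpha_{nj}^{\circ}$ guaranteed by Definition \ref{circu} (c)(iii). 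This yields an arc in $\overline{\Delta}$ of $d_{f_{n}}$-length $\leq 2\pi\varepsilon$ joining $p$ to $\alpha_{nj,\delta}^{2}$, proving the inclusion $\alpha_{nj,\delta,-\varepsilon}^{2}\subset D_{f_{n}}(\alpha_{nj,\delta}^{2},2\pi\varepsilon)\subset D_{f_{n}}(\alpha_{0j,\delta}^{2},2\pi\varepsilon+o(1))$ claimed in \eqref{m1}. The second inclusion of \eqref{m1} then follows from the distance hierarchy \eqref{co49}, \eqref{co59} since $4\pi\varepsilon\ll\delta_{1}$. This location control also rules out premature escape of the lift to $\partial\Delta$: by \eqref{cc11} the lift stays at $d_{f_{n}}$-distance $<\delta_{1}$ from $\partial\Delta$ only near $\alpha_{nj}^{\circ}$, and the local homeomorphy near $\alpha_{nj}^{\circ}$ from the $\mathcal{F}(L,m)$ structure keeps the lift off $\partial\Delta$ except possibly at the endpoint, where it must arrive at the unique preimage of $q_{0j,\delta,-\varepsilon}^{2}$ inside the $d_{f_{n}}(\alpha_{0j,\delta}^{2},2\pi\varepsilon)$-neighborhood.

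For the disjointness statement (ii), assume $\alpha_{ni,\delta,-\varepsilon}^{2}\cap\alpha_{nj,\delta,-\varepsilon}^{2}\neq\emptyset$ for $\{i,j\}\subset M_{0}$, $i\neq j$. Applying the inclusion from (i) to both arcs, one obtains
\[
d_{f_{n}}(\alpha_{0i,\delta}^{2},\alpha_{0j,\delta}^{2})\leq 4\pi\varepsilon,
\]
which contradicts \eqref{506-3} as soon as $\varepsilon<\delta_{4}/(4\pi)$, a condition ensured by our choice \eqref{co59}. Thus (ii) is immediate once (i) is in hand.

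The main obstacle is the interior location control in the middle paragraph: the lift is a priori only guaranteed to exist locally and could in principle meander through branched sheets of $\Sigma_{n}$, since $\deg_{\max}f_{n}\leq d^{\ast}$ but can be larger than $1$. The argument sketched above is delicate because it must simultaneously (a) use the $\mathcal{F}(L,m)$ structure to produce local single-valued inverses near $\alpha_{nj}^{\circ}$, (b) use Theorem \ref{nobi} to separate distinct boundary arcs in the surface metric $d_{f_{n}}$ uniformly in $n$, and (c) use the proximity estimate \eqref{cc11} to avoid $f_{n}^{-1}(E_{q})$. The parameter hierarchy $\varepsilon\ll\delta\ll\min\{\delta_{1},\delta_{3},\delta_{4}\}$ baked into \eqref{co49} and \eqref{co59} is what makes all three estimates compatible, and it is the precise reason why Theorem \ref{nobi} was needed in the form proved.
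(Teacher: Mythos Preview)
Your overall architecture is right---(ii) does follow immediately from \eqref{m1} and \eqref{506-3}, exactly as you and the paper both argue---but your proof of (i) has a genuine gap in the location-control paragraph.

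You lift $c_{0j,\delta,-\varepsilon}^{2}$ by path continuation starting at $a_{0j,\delta,-\varepsilon}^{1}\in\Delta$, and for an arbitrary point $p$ on this partial lift you want to show $p\in D_{f_{n}}(\alpha_{nj,\delta}^{2},2\pi\varepsilon)$. Your argument is: the short radial segment in $R_{j,\delta,\varepsilon}$ from $f_{n}(p)$ to $c_{nj,\delta}^{2}$ ``has an $f_{n}$-lift in $\overline{\Delta}$ via the one-sided homeomorphic neighborhood of $\alpha_{nj}^{\circ}$.'' But that lift, produced by the specific inverse branch attached to $\alpha_{nj}^{\circ}$, runs from some point of $\alpha_{nj,\delta}^{2}$ to a point $p'$ with $f_{n}(p')=f_{n}(p)$; there is no reason a priori that $p'=p$, because you have not yet shown that $p$ lies in the range of that particular branch. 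Since $\deg_{\max}f_{n}$ can be as large as $d^{\ast}$, there may be several preimages of $f_{n}(p)$ in $\Delta$, and your continuation could be on a different sheet. So the step ``this yields an arc \dots\ joining $p$ to $\alpha_{nj,\delta}^{2}$'' is circular: it assumes what it is meant to prove.

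The paper avoids this by reversing the direction of the construction. Rather than lifting the long arc $c_{0j,\delta,-\varepsilon}^{2}$ from an interior point, it builds a univalent branch $\tilde{g}_{nj,\delta,\varepsilon}$ of $f_{n}^{-1}$ on the entire closed quadrilateral $R_{nj,\delta,\varepsilon}^{L}$ at once, by foliating $R_{nj,\delta,\varepsilon}^{L}$ with short radial fibers $\tau_{q}$ (each of length $<\pi\varepsilon$) emanating from points $q\in c_{nj,\delta}^{2}$ on the \emph{boundary}. Each $\tau_{q}$ has a unique $f_{n}$-lift $\beta_{n,a}$ starting at the known boundary point $a=\tilde g_{nj}(q)\in\alpha_{nj,\delta}^{2}$; by Lemma \ref{Ri} this lift either completes or hits $\partial\Delta$, and the short length combined with \eqref{506-1} rules out the latter (the terminal point would have $d_{f_{n}}$-distance $<\pi\varepsilon<\delta_{4}$ from $\alpha_{nj,\delta}^{2}$ yet would lie on $(\partial\Delta)\setminus\alpha_{nj}$). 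One then defines $\alpha_{nj,\delta,-\varepsilon}^{2}:=\tilde{g}_{nj,\delta,\varepsilon}(c_{0j,\delta,-\varepsilon}^{2})$, which is automatically on the correct sheet, and the endpoint matching with the $\eta$-arcs follows because the edges $\tau_{nj,\delta,\varepsilon}^{1,L},\tau_{nj,\delta,\varepsilon}^{2,L}$ of $R_{nj,\delta,\varepsilon}^{L}$ are exactly the short arcs $\mathfrak{t}_{nj,\delta,\varepsilon}^{1},\mathfrak{t}_{nj,\delta,\varepsilon}^{2}$ appearing in \eqref{az-5}, and both are lifts with the same initial boundary points, hence coincide by uniqueness (there are no branch points along them). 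The inclusion \eqref{m1} then drops out from the fiber-length bound.
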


\begin{proof}
We first show that (i) implies (ii). By (\ref{m1}) we have for large enough
$n$ and each pair of distinct $i$ and $j$ in $M_{0}$ that
\[
d_{f_{n}}\left(  \alpha_{ni,\delta,-\varepsilon}^{2},\alpha_{nj,\delta
,-\varepsilon}^{2}\right)  \geq d_{f_{n}}\left(  D_{f_{n}}(\alpha_{0i,\delta
}^{2},2\pi\varepsilon),D_{f_{n}}(\alpha_{0j,\delta}^{2},2\pi\varepsilon
)\right)  ,
\]
and then, by Lemma \ref{dfset}, (\ref{co59}) and (\ref{506-1}), we have%
\[
d_{f_{n}}\left(  \alpha_{ni,\delta,-\varepsilon}^{2},\alpha_{nj,\delta
,-\varepsilon}^{2}\right)  \geq d_{f_{n}}\left(  \alpha_{0i,\delta}^{2}%
,\alpha_{0j,\delta}^{2},\right)  -4\pi\varepsilon\geq\delta_{4}-4\pi
\varepsilon>\delta_{4}/2>0.
\]
This implies (ii).

Consider the closed quadrilateral $R_{j,\delta,\varepsilon}$ defined by
(\ref{co54}) for each $j\in M_{0}$. By Conclusion \ref{co53}, it is clear that
for sufficiently large $n,$ the interior $c_{nj,\delta}^{2\circ}$ of
$c_{nj,\delta}^{2}=c_{nj}\cap R_{j,\delta,\varepsilon}$ is contained in
$R_{j,\delta,\varepsilon}^{\circ}$ with $\partial c_{nj,\delta}^{2}%
\subset\partial R_{j,\delta,\varepsilon},$ and thus $c_{nj,\delta}^{2}$
divides $R_{j,\delta,\varepsilon}$ into two closed quadrilateral which are
closed Jordan domains$\ $and the one on the left hand side of $c_{nj,\delta
}^{2}$ is denoted by $R_{nj,\delta,\varepsilon}^{L}.$

Since $c_{nj,\delta}^{2}$ converges to $c_{0j,\delta}^{2}$ as $n\rightarrow
\infty$, $c_{nj,\delta}^{2}\cap\partial R_{j,\delta,\varepsilon}=c_{nj}%
\cap\partial R_{j,\delta,\varepsilon}\ $is consisted of the two endpoints
$q_{nj,\delta}^{1}$ and $q_{nj,\delta}^{2}$ of $c_{nj,\delta}^{2},$ which are
terminal and initial points of $\tau_{nj,\delta,\varepsilon}^{1,L}$ and
$\tau_{nj,\delta,\varepsilon}^{2,L}$ respectively. By (\ref{z1}) and
(\ref{az-8})--(\ref{az-6}), we may assume (by taking subsequence) that
$\partial R_{nj,\delta,\varepsilon}^{L}$ has the partition
\begin{equation}
\partial R_{nj,\delta,\varepsilon}^{L}=-c_{0j,\delta,-\varepsilon}^{2}%
+\tau_{nj,\delta,\varepsilon}^{1,L}+c_{nj,\delta}^{2}+\tau_{nj,\delta
,\varepsilon}^{2,L}, \label{z2}%
\end{equation}
with $c_{nj,\delta}^{2}\cap c_{0j,\delta,-\varepsilon}^{2}=\emptyset,$
corresponding to (\ref{z1}).

We will show the following:

\begin{claim}
\label{globallift}$f_{n}^{-1}$ has a univalent branch $\tilde{g}%
_{nj,\delta,\varepsilon}$ defined on closed quadrilateral $R_{nj,\delta
,\varepsilon}^{L}$ such that%
\[
\tilde{g}_{nj,\delta,\varepsilon}(c_{nj,\delta}^{2})=\tilde{g}_{nj,\delta
,\varepsilon}(c_{nj}\cap R_{j,\delta,\varepsilon})=\tilde{g}_{nj,\delta
,\varepsilon}(c_{nj}\cap R_{j,\delta,\varepsilon}^{L})=\alpha_{nj,\delta}%
^{2},
\]
and
\begin{equation}
\tilde{g}_{nj,\delta,\varepsilon}(R_{nj,\delta,\varepsilon}^{L}\backslash
c_{nj,\delta}^{2})\subset\Delta\backslash f_{n}^{-1}(E_{q}). \label{in-Delta}%
\end{equation}

\end{claim}

It is clear that there exists a family $\{\tau_{q},q\in c_{nj,\delta}^{2}\}$
of simple circular arcs which is a continuous fibration of $R_{nj,\delta
,\varepsilon}^{L}$. Precisely speaking,
\[
R_{nj,\delta,\varepsilon}^{L}=\cup_{q\in c_{nj,\delta}^{2}}\tau_{q},
\]%
\[
\tau_{q_{nj,\delta}^{1}}=-\tau_{nj,\delta,\varepsilon}^{1,L}=-\tau
_{j,\delta,\varepsilon}^{1}\left(  q_{0j,\delta,-\varepsilon}^{1}%
,q_{nj,\delta}^{1}\right)  ,
\]%
\[
\tau_{q_{nj,\delta}^{2}}=\tau_{nj,\delta,\varepsilon}^{2,L}=\tau
_{j,\delta,\varepsilon}^{2}\left(  q_{nj,\delta}^{2},q_{0j,\delta
,-\varepsilon}^{2}\right)  ,
\]%
\[
\tau_{q}\cap\tau_{q^{\prime}}=\emptyset,\mathrm{\ for\ }\left\{  q,q^{\prime
}\right\}  \subset c_{nj,\delta}^{2}\mathrm{\ with\ }q\neq q^{\prime},
\]
each $\tau_{q}$ is a simple circular path in $R_{nj,\delta,\varepsilon}^{L}$
from $q\in c_{nj,\delta}^{2}$ to a point $\psi\left(  q\right)  \in
c_{j,\delta,-\varepsilon}^{2}$ with%
\begin{equation}
L(\tau_{q})\leq\max\{L(\tau_{nj,\delta,\varepsilon}^{1,L}),L(\tau
_{nj,\delta,\varepsilon}^{2,L})\}<\pi\varepsilon\mathrm{\ for\ }q\in
c_{nj,\delta}^{2}. \label{ptR}%
\end{equation}
Moreover, $\tau_{q}\backslash\{q\}\subset T_{nj},$ where $T_{nj}$ is the
(open) disk $T_{nj}$ enclosed by the circle $C_{nj}$ determined by $c_{nj}.$

It is clear that $f_{n}$ restricted a neighborhood of $\alpha_{nj,\delta}^{2}
$ in $\overline{\Delta}$ is a homeomorphism onto a neighborhood of
$c_{nj,\delta}^{2}$ in the closed disk $\overline{T_{nj}}.$ Thus, for each
$q\in c_{nj,\delta}^{2},$ it is clear that $\tau_{q}=\tau_{q}\left(
q,\psi\left(  q\right)  \right)  $ contains a MAXIMAL arc $\tau_{q}^{\prime
}=\tau_{q}\left(  q,q^{\prime}\right)  $ such that $\tau_{q}^{\prime}$ has an
$f_{n}$-lift $\beta_{n,a}=\beta_{n,a}\left(  a,a^{\prime}\right)  $ with
$\tau_{q}^{\prime}=\left(  f_{n},\beta_{n,a}\right)  $ and $\beta_{n,a}%
^{\circ}\subset\Delta.$ We show that $a^{\prime}\in\Delta$ say $\beta
_{a}\backslash\{a\}\subset\Delta.$

Since $L(\tau_{q}^{\prime})\leq L(\tau_{q})<\pi\varepsilon$ we have
\[
d_{f_{n}}\left(  a,a^{\prime}\right)  \leq L(\tau_{q}^{\prime})<\pi
\varepsilon<\frac{\delta_{4}}{4},
\]
and then, by $a\in\alpha_{nj,\delta}^{2},$ we have%
\[
d_{f_{n}}\left(  \left(  \partial\Delta\right)  \backslash\alpha
_{nj},a^{\prime}\right)  \geq d_{f_{n}}\left(  \left(  \partial\Delta\right)
\backslash\alpha_{nj},\alpha_{nj,\delta}^{2}\right)  -d_{f_{n}}\left(
a,a^{\prime}\right)  >\delta_{4}-\pi\varepsilon>0.
\]
This implies $a^{\prime}\notin\left(  \partial\Delta\right)  \backslash
\alpha_{nj}.$ On the other hand, $q^{\prime}\in T_{nj}=T_{nj}\backslash
c_{nj}$ and thus $a^{\prime}\notin\alpha_{nj}.$ We have proved that
$a^{\prime}\in\Delta$. By Lemma \ref{Ri}, $\beta_{n,p}$ is the $f_{n}$-lift of
the whole arc $\tau_{q}$ and $f_{n}(a^{\prime})=\psi\left(  q\right)  ,$ the
endpoint of $\tau_{q}.$

Now that $\beta_{n,a}$ is the $f_{n}$-lift of $\tau_{q}$ with $\beta
_{n,a}\backslash\{a\}\subset\Delta$ we have that for each $x\in\beta
_{n,a}\backslash\{a\},$ $d_{f_{n}}\left(  x,\partial\Delta\right)  \leq
L(\tau_{q})<\varepsilon\pi<\pi\delta<\delta_{1}.$ Thus we have $\beta
_{n,a}\backslash\{a\}\cap f_{n}^{-1}\left(  E_{q}\right)  =\emptyset,$ and
thus
\begin{equation}
\left(  R_{nj,\delta,\varepsilon}^{L}\backslash c_{nj,\delta}^{2}\right)  \cap
E_{q}=\emptyset. \label{za4}%
\end{equation}
$\ $and $f_{n}$ has no branch value on $R_{nj,\delta,\varepsilon}%
^{L}\backslash c_{nj,\delta}^{2}.$ By Lemma \ref{b-in}, Claim \ref{globallift}
is proved.

We are still considering the fixed $j\in M_{0}.$ Then we have $\varphi
_{2}\left(  j\right)  =j$ and $\varphi_{1}\left(  j+1\right)  =j+1.$ Then the
edge $\tau_{q_{nj,\delta}^{1}}=\tau_{q_{nj,\delta,\varepsilon}^{1}}^{1,L}$ of
$\partial R_{nj,\delta,\varepsilon}^{L}\ $is the arc $\tau_{n\varphi
_{2}\left(  j\right)  ,\delta,\varepsilon}^{1,L}=\tau_{nj,\delta,\varepsilon
}^{1,L}$ of $\zeta_{nj,\delta}|_{[0,t_{nj}]}$ in (\ref{conorL}) and the edge
$\tau_{q_{nj,\delta}^{2}}=\tau_{q_{nj,\delta,\varepsilon}^{2}}^{2,L}$ of
$\partial R_{nj,\delta,\varepsilon}^{L}\ $is the arc $\tau_{n\varphi
_{1}\left(  j+1\right)  ,\delta,\varepsilon}^{1,L}=\tau_{n,j+1,\delta
,\varepsilon}^{1,L}$ of $\zeta_{n,j+1,\delta}|_{[0,t_{n,j+1}]}$ in
(\ref{conorL}) for $j+1.$ Thus the arc $\mathfrak{t}_{n\varphi_{2}%
(j),\delta,\varepsilon}^{1}=\mathfrak{t}_{nj,\delta,\varepsilon}^{1}$ in
(\ref{az-5}) equals $\beta_{n,a_{nj,\delta}^{1}}=\tilde{g}_{nj,\delta
,\varepsilon}\left(  \tau_{q_{nj,\delta}^{1}}^{1,L}\right)  $ and
$\mathfrak{t}_{n\varphi_{1}(j+1),\delta,\varepsilon}^{2}=\mathfrak{t}%
_{n,j+1,\delta,\varepsilon}^{2}$ in (\ref{az-5}) equals $\beta_{n,a_{nj,\delta
}^{2}}=\tilde{g}_{nj,\delta,\varepsilon}\left(  \tau_{q_{nj,\delta}^{2}}%
^{1,L}\right)  ,\ $since $\mathfrak{t}_{nj,\delta,\varepsilon}^{1}%
\backslash\left\{  a_{nj,\delta}^{1}\right\}  $ and $\mathfrak{t}%
_{n,j+1,\delta,\varepsilon}^{2}\backslash\left\{  a_{nj,\delta}^{2}\right\}  $
contains no branch point of $f_{n}\ $and $f_{n}$ is homeomorphic in a
neighborhood of $\alpha_{nj,\delta}^{2}.$ Let $\alpha_{nj,\delta,-\varepsilon
}^{2}=\tilde{g}_{nj,\delta,\varepsilon}\left(  c_{j,\delta,-\varepsilon}%
^{2}\right)  .$ Then $\alpha_{nj,\delta,-\varepsilon}^{2}$ satisfies (ii),
except (\ref{m1}).

By (\ref{ptR}) we have $R_{nj,\delta,\varepsilon}^{L}\subset D\left(
c_{nj,\delta}^{2},\varepsilon\pi\right)  ,$ which implies%
\begin{equation}
\alpha_{nj,\delta,-\varepsilon}^{2}\subset\tilde{g}_{nj,\delta.\varepsilon
}\left(  R_{nj,\delta,\varepsilon}\right)  \subset D_{f_{n}}(\alpha
_{nj,\delta}^{2},\pi\varepsilon). \label{cc13}%
\end{equation}
Since $\alpha_{nj,\delta}^{2}$ converges $\alpha_{0j}^{2}\subset\alpha
_{0j}^{\circ}$ as $n\rightarrow\infty,$ we have by Conclusion \ref{inver},
\[
D_{f_{n}}(\alpha_{nj,\delta}^{2},\pi\varepsilon)\subset D_{f_{n}}%
(\alpha_{0j,\delta}^{2},2\pi\varepsilon)
\]
for sufficiently large $n$. Then (\ref{m1}) holds and Theorem \ref{CL4} is proved.
\end{proof}

By the way, by (\ref{za4}) we have
\begin{equation}
\left(  \tilde{g}_{nj,\delta,\varepsilon}(R_{nj,\delta,\varepsilon}%
^{L})\backslash\alpha_{nj,\delta}^{2}\right)  \cap f_{n}^{-1}(E_{q}%
)=\emptyset. \label{za4-1}%
\end{equation}

It is clear that the following hold.

\begin{conclusion}
\label{co62}For each $j\in M_{0},$ the definition of $R_{nj,\delta
,\varepsilon}^{L}\ $is valid for $n=0,$ say, $R_{0j,\delta,\varepsilon}^{L}$
is well defined and we have%
\begin{equation}
\partial R_{0j,\delta,\varepsilon}^{L}=-c_{0j,\delta,-\varepsilon}^{2}%
+\tau_{0j,\delta,\varepsilon}^{1,L}+c_{0j,\delta}^{2}+\tau_{0j,\delta
,\varepsilon}^{2,L}, \label{za3}%
\end{equation}
where $\tau_{0j,\delta,\varepsilon}^{1,L}\ $and $\tau_{0j,\delta,\varepsilon
}^{2,L}\ $are the parts of $\tau_{j,\delta,\varepsilon}^{1}$ and
$\tau_{j,\delta,\varepsilon}^{2}\ $on the left hand side of $c_{0j}.$
\end{conclusion}

Since for any neighborhood $V$ of $c_{0j,\delta}^{2}$ on $S$ the closed
domains $R_{nj,\delta,\varepsilon}^{L}\ $and $R_{0j,\delta,\varepsilon}^{L}$
coincide outside $V$ when $n$ is large enough, (\ref{za4}) implies%
\begin{equation}
\left(  R_{0j,\delta,\varepsilon}^{L}\backslash c_{0j,\delta}^{2}\right)  \cap
E_{q}=\emptyset. \label{cc16}%
\end{equation}
\medskip

\begin{center}
\textbf{Step 3 The construct of the Jordan curve }$\gamma_{n,\delta
,\varepsilon}$ \textbf{in }$\Delta$\medskip
\end{center}

Let%

\begin{equation}
\gamma_{n,\delta,\varepsilon}=\sum_{j\in M_{0}}\left(  \alpha_{n,\varphi
_{1}\left(  j\right)  ,\delta,-\varepsilon}^{2}+\eta_{nj,\delta,\varepsilon
}\right)  , \label{4}%
\end{equation}
and%
\[
\tilde{\Gamma}_{\delta,\varepsilon}=\sum_{j\in M_{0}}\left(  c_{\varphi
_{1}\left(  j\right)  ,\delta,-\varepsilon}^{2}+\zeta_{j,\delta,\varepsilon
}\right)  ,
\]
then by Theorems \ref{CL1} and \ref{CL4}, Calim \ref{ind} and (\ref{beg-end}),
we see that $\gamma_{n,\delta,\varepsilon}$ is a closed curve in $\Delta$,
$\tilde{\Gamma}_{\delta,\varepsilon}$ is a closed curve on $S,$ and,%
\[
\tilde{\Gamma}_{\delta,\varepsilon}=\left(  f_{n},\gamma_{n,\delta
,\varepsilon}\right)  .
\]

We assume that $\tilde{\Gamma}_{\delta,\varepsilon}$ and $\gamma
_{n,\delta,\varepsilon}$ are parametrized by length. We will show the claim:

\begin{claim}
\label{CL5}For sufficiently large $n,\gamma_{n,\delta,\varepsilon}$ is a
simple curve in $\Delta$ which depends on $n,\delta$ and $\varepsilon,$ while
$\tilde{\Gamma}_{\delta,\varepsilon}=\left(  f_{n},\gamma_{n,\delta
,\varepsilon}\right)  $ is a closed curve on $S$ which is independent of $n.$
\end{claim}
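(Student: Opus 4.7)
\textbf{Proof plan for Claim \ref{CL5}.} The claim has two assertions: independence of $\tilde\Gamma_{\delta,\varepsilon}$ from $n$, and simplicity of $\gamma_{n,\delta,\varepsilon}$. The first is essentially by construction, while the second requires combining the two lifting theorems just proved with the uniform $d_{f_n}$-separation inequalities established before.

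\emph{Independence of $\tilde\Gamma_{\delta,\varepsilon}$.} Each summand $c_{\varphi_1(j),\delta,-\varepsilon}^{2}$ is described by (\ref{z1}) purely in terms of the circle $C_{0\varphi_1(j),-\varepsilon}$ and the circles $\partial D(q_{0\varphi_1(j)},\delta)$ and $\partial D(q_{0j},\delta)$, all of which are determined by the limit datum $\Gamma_0$ and the parameters $\delta,\varepsilon$. Each $\zeta_{j,\delta,\varepsilon}$ is described in Claim \ref{ind} in terms of $q_{0j}$, $\delta$, $\varepsilon$ and the integer $v_j$, and (\ref{za10}) together with passage to a subsequence makes $v_j$ independent of $n$. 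Hence $\tilde\Gamma_{\delta,\varepsilon}$ depends only on $\delta$, $\varepsilon$ and the limit data, not on $n$.

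\emph{Chain consistency and piecewise simplicity.} By Theorem \ref{key3}(i), Claim \ref{ind} and Theorem \ref{key1}(i), I would check that for every $j\in M_0$ the terminal point of $\alpha_{n,\varphi_1(j),\delta,-\varepsilon}^{2}$ equals $a_{0\varphi_1(j),\delta,-\varepsilon}^{2}$, which is the initial point of $\eta_{nj,\delta,\varepsilon}$, and that the terminal point of $\eta_{nj,\delta,\varepsilon}$ equals $a_{0\varphi_2(j),\delta,-\varepsilon}^{1}=a_{0j,\delta,-\varepsilon}^{1}$ (using $\varphi_2(j)=j$ on $M_0$), which matches the initial point of the next $\alpha$-piece $\alpha_{nj,\delta,-\varepsilon}^{2}$ appearing in (\ref{4}) with the successor $j'$ of $j$ in $M_0$. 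Hence the concatenation in (\ref{4}) closes up. Each piece is individually simple: the $\eta$-pieces by Theorem \ref{key3}(iv), and the $\alpha$-pieces as the images of $c_{0j,\delta,-\varepsilon}^{2}$ under the univalent branch $\tilde g_{nj,\delta,\varepsilon}$ of $f_n^{-1}$ provided by Theorem \ref{key1}.

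\emph{Pairwise non-intersection (except at chain endpoints).} For two distinct $\eta$-pieces with $i\neq j$ in $M_0$, the bound (\ref{cc3}) places them in $D_{f_n}(a_{0i},\delta_1/3)$ and $D_{f_n}(a_{0j},\delta_1/3)$, and these $d_{f_n}$-disks are disjoint by (\ref{co44}) and Lemma \ref{dfset}, because distinct indices in $M_0$ label distinct corners so $a_{0i}\neq a_{0j}$. Two $\alpha$-pieces are handled by Theorem \ref{key1}(ii). A non-adjacent $(\eta_{nj,\delta,\varepsilon},\alpha_{ni,\delta,-\varepsilon}^{2})$-pair (meaning $i\notin\{\varphi_1(j),j\}$) satisfies $a_{0j}\notin\alpha_{0i}$, so (\ref{co44}) together with (\ref{cc3}), (\ref{m1}) and Lemma \ref{dfset} yields a positive $d_{f_n}$-lower bound.

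\emph{Main obstacle: adjacent $\eta$--$\alpha$ pairs.} The genuine difficulty is to show that an adjacent pair, say $\eta_{nj,\delta,\varepsilon}$ and $\alpha_{nj,\delta,-\varepsilon}^{2}$ meeting at $a_{0j,\delta,-\varepsilon}^{1}$, intersects only at that shared endpoint (the other adjacency $\alpha_{n\varphi_1(j),\delta,-\varepsilon}^{2}$ at $a_{0\varphi_1(j),\delta,-\varepsilon}^{2}$ is symmetric). The plan is threefold: (a) show the set-theoretic intersection of the image arcs is a single point, using that $\zeta_{j,\delta,\varepsilon}\subset\partial D(q_{0j},\delta)$ and $c_{0j,\delta,-\varepsilon}^{2}\subset C_{0j,-\varepsilon}$ lie on two distinct circles meeting almost perpendicularly by Conclusion \ref{co53}, so the short relevant subarcs meet exactly at $q_{0j,\delta,-\varepsilon}^{1}$; (b) locally near $a_{0j,\delta,-\varepsilon}^{1}\in\Delta$, invoke the univalent branch $\tilde g_{nj,\delta,\varepsilon}$ of $f_n^{-1}$ from Theorem \ref{key1} together with (\ref{cc17}) and (\ref{za4-1}) to conclude that the lifts cross transversally and share only that one preimage in a neighborhood of $a_{0j,\delta,-\varepsilon}^{1}$; (c) rule out distant intersections by noting that $\eta_{nj,\delta,\varepsilon}$ stays within $d_{f_n}$-distance $2\pi(d^{\ast}+1)\sin\delta$ of $a_{0j}$ by Theorem \ref{key3}(iii), while the portion of $\alpha_{nj,\delta,-\varepsilon}^{2}$ entering that neighborhood is contained in $\tilde g_{nj,\delta,\varepsilon}(R_{nj,\delta,\varepsilon}^{L})$, on which $\tilde g_{nj,\delta,\varepsilon}$ is injective; any additional intersection would force $f_n$ to be non-injective on this lifted region, contradicting Theorem \ref{key1}. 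Step (c) is the real technical core and requires unwinding the decomposition of $\zeta_{j,\delta,\varepsilon}$ into $v_j$ loops around $q_{0j}$ plus the arc $-\kappa$, and checking that none of these loops lifts back into $R_{nj,\delta,\varepsilon}^{L}$ except at the endpoint.
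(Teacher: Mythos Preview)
Your overall plan is correct and mirrors the paper's proof: independence of $\tilde\Gamma_{\delta,\varepsilon}$ by construction (Claim \ref{ind}), piecewise simplicity from Theorems \ref{key3}(iv) and \ref{key1}, and disjointness of non-adjacent pieces via the $d_{f_n}$-distance estimates (\ref{cc3}), (\ref{m1}), (\ref{co44}) together with Lemma \ref{dfset}. The paper records the non-adjacent $\eta$--$\alpha$ case as (\ref{cc5}) and combines it with (\ref{nointers}) and Theorem \ref{key1}(ii), exactly as you propose.

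Where you diverge from the paper is in your treatment of the adjacent $\eta$--$\alpha$ pair, which you call ``the real technical core'' and for which you outline a three-step scheme culminating in an unwinding of the $v_j$ loops. This is unnecessary, and the paper handles it in one stroke (Conclusion \ref{co43}). The argument is simply this: if $p\in\alpha_{nj,\delta,-\varepsilon}^{2}\cap\eta_{nj,\delta,\varepsilon}$, then $f_n(p)$ lies in $c_{0j,\delta,-\varepsilon}^{2}\cap\zeta_{j,\delta,\varepsilon}$ as sets on $S$; since $c_{0j,\delta,-\varepsilon}^{2}\subset C_{0j,-\varepsilon}$ and $\zeta_{j,\delta,\varepsilon}\subset\partial D(q_{0j},\delta)$, Conclusion \ref{co53} and the definition (\ref{co54}) of $R_{j,\delta,\varepsilon}$ force $f_n(p)=q_{0j,\delta,-\varepsilon}^{1}$. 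But $f_n$ restricted to $\alpha_{nj,\delta,-\varepsilon}^{2}$ is a homeomorphism onto $c_{0j,\delta,-\varepsilon}^{2}$ (Theorem \ref{key1}), so there is exactly one point of $\alpha_{nj,\delta,-\varepsilon}^{2}$ over $q_{0j,\delta,-\varepsilon}^{1}$, namely the shared endpoint $a_{0j,\delta,-\varepsilon}^{1}$. Hence $p$ equals that endpoint. Your step (c) is therefore vacuous: the multiple passes of $\zeta_{j,\delta,\varepsilon}$ through $q_{0j,\delta,-\varepsilon}^{1}$ are irrelevant, because the constraint comes from the injectivity of $f_n$ on the $\alpha$-piece, not on the $\eta$-piece. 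Your step (a) already contains the whole argument once combined with this observation.
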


\begin{proof}
For each $j\in M_{0},$ by Claim \ref{ind} and the fact that $c_{j-1,\delta
,-\varepsilon}^{2}$ is independent of $n,$ $\tilde{\Gamma}_{\delta
,\varepsilon}$ is also independent of $n,$ and so, to prove the claim, it
suffices to prove that $\gamma_{n,\delta,\varepsilon}$ is simple.

For each $j\in M_{0},$ by Theorem \ref{CL1} (iv), $\eta_{nj,\delta}^{t_{nj}}$
is simple, and then by Claim \ref{ind}, $\eta_{nj,\delta,\varepsilon}\ $as a
subarc of $\eta_{nj,\delta}^{t_{nj}}\ $is also simple and has distinct
endpoints; and by Theorem \ref{CL4} (i) $\alpha_{nj,\delta,-\varepsilon}^{2}$
is simple with distinct endpoints. On the other hand, it it is clear, by
Theorems \ref{key3} and \ref{key1}, that $c_{nj,\delta,-\varepsilon}^{2}%
\cap\zeta_{nj,\delta,\varepsilon}=\{q_{j,\delta,-\varepsilon}^{1}\}$ and
$c_{n\varphi_{1}\left(  j\right)  ,\delta,-\varepsilon}^{2}\cap\zeta
_{nj,\delta,\varepsilon}=\{q_{\varphi_{1}\left(  j\right)  ,\delta
,-\varepsilon}^{2}\}.$ Thus $\alpha_{n\varphi_{1}(j),\delta,-\varepsilon}%
^{2}+\eta_{nj,\delta,\varepsilon}$ and $\eta_{nj,\delta,\varepsilon}%
+\alpha_{nj,\delta,-\varepsilon}^{2}$ are simple arcs. Therefore by Theorem
\ref{key3} (iii) and Theorem \ref{key1} (ii) we have:

\begin{conclusion}
\label{co43}For each $j\in M_{0},$ the arcs $\alpha_{n\varphi_{1}%
(j),\delta,-\varepsilon}^{2}+\eta_{nj,\delta,\varepsilon}+\alpha_{n\varphi
_{2}(j),\delta,\varepsilon}^{2}$ and $\eta_{n\varphi_{1}(j),\delta
,-\varepsilon}+\alpha_{n\varphi_{1}(j),\delta,-\varepsilon}^{2}+\eta
_{nj,\delta,\varepsilon}\ $are simple for sufficiently large $n.$
\end{conclusion}

Let $j\in M.$ by (\ref{cc3}) and (\ref{m1}), we have $\eta_{nj,\delta
,\varepsilon}\subset\eta_{nj,\delta}\subset D_{f_{n}}(a_{0j},\delta_{1}%
/3)\ $and$\mathrm{\ }\alpha_{n\varphi_{l}(j),\delta,-\varepsilon}^{2}\subset
D_{f_{n}}(\alpha_{0\varphi_{l}(j),\delta}^{2},2\pi\varepsilon)$ with $l=1,2,$
for large enough $n$. Then, for every pair of $i$ and $j$ with $i\neq
\varphi_{1}(j),\varphi_{2}(j)\ $and $i\in M_{0}$ we have by Lemma \ref{dfset}
and (\ref{co49})
\begin{align*}
d_{f_{n}}(\alpha_{ni,\delta,-\varepsilon}^{2},\eta_{nj,\delta,\varepsilon})
&  \geq d_{f_{n}}\left(  D_{f_{n}}(\alpha_{0i,\delta}^{2},2\pi\varepsilon
),D_{f_{n}}(a_{0j},\delta_{1}/3)\right) \\
&  \geq d_{f_{n}}(\alpha_{0i,\delta}^{2},a_{0j})-2\pi\varepsilon-\delta
_{1}/3\\
&  \geq d_{f_{n}}(\alpha_{0i},a_{0j})-2\pi\varepsilon-\delta_{1}/3\\
&  \geq\delta_{1}-2\pi\varepsilon-\delta_{1}/3>0,
\end{align*}
for sufficiently large $n.$ Then for sufficiently large $n,$ we have for
$i\neq\varphi_{1}(j),\varphi_{2}(j),$
\begin{equation}
\alpha_{ni,\delta,-\varepsilon}^{2}\cap\eta_{nj,\delta,\varepsilon}=\emptyset.
\label{cc5}%
\end{equation}

For sufficiently large $n,$ by (\ref{cc5}), (\ref{nointers}), Conclusion
\ref{co43} and Theorem \ref{CL4} (ii) we conclude that $\gamma_{n,\delta
,\varepsilon}$ is a simple curve in $\Delta,$ and Claim \ref{CL5} is proved
completely.$\medskip$
\end{proof}

\begin{center}
\textbf{Step 4 Construction of the sequence }$\Sigma_{n,\delta,\varepsilon
}^{\ast}$ \textbf{with the same Ahlfors error terms}\medskip
\end{center}

For sufficiently large $n,$ let $\Delta_{n,\delta,\varepsilon}$ be the
\emph{closed} Jordan domain in $\Delta$ enclosed by $\gamma_{n,\delta
,\varepsilon} $ and let $\mathcal{A}_{n,\delta,\varepsilon}=\overline{\Delta
}\backslash\Delta_{n,\delta,\varepsilon}^{\circ}.$ It is clear that
$\gamma_{n,\delta,\varepsilon},\{\mathfrak{t}_{nj,\delta,\varepsilon}%
^{1}\}_{j\in M_{0}},\{\mathfrak{t}_{nj,\delta,\varepsilon}^{2}\}_{j\in M_{0}}$
divide $\Delta$ into $2m_{0}+1$ Jordan domains $\Delta_{n,\delta,\varepsilon
},\Delta_{nj,\delta},$ and $\Delta_{nj,\delta,\varepsilon}^{\prime},j\in
M_{0},$ where $\Delta_{nj,\delta}$ is the part of $\Delta$ on the right hand
side of $\eta_{nj,\delta}\label{this-is-right-no-episilon}$ and $\overline
{\Delta_{nj,\delta,\varepsilon}^{\prime}}=\tilde{g}_{nj,\delta,\varepsilon
}(R_{nj,\delta,\varepsilon}^{L}),$ with $\overline{\Delta_{nj,\delta
,\varepsilon}^{\prime}}\cap\overline{\Delta_{nj,\delta}}=\mathfrak{t}%
_{nj,\delta,\varepsilon}^{1},\ $and $\overline{\Delta_{n,j,\delta,\varepsilon
}^{\prime}}\cap\overline{\Delta_{n,j+1,\delta}}%
\label{this-is-right-no-episilon copy(1)}=\mathfrak{t}_{nj,\delta,\varepsilon
}^{2} $. Then, for each $j\in M_{0},$ we have%
\begin{equation}
\partial\Delta_{nj,\delta}=-\eta_{nj,\delta}+I_{nj,\delta}=-\mathfrak{t}%
_{nj,\delta,\varepsilon}^{1}-\eta_{nj,\delta,\varepsilon}-\mathfrak{t}%
_{n\varphi_{1}(j),\delta,\varepsilon}^{2}+I_{nj,\delta}, \label{za7-1}%
\end{equation}
where $I_{nj,\delta}$ is the arc of $\partial\Delta$ from $a_{n\varphi
_{1}(j),\delta}^{2}$ to $a_{nj,\delta}^{1}.$ In fact,
\begin{equation}
I_{nj,\delta}=\alpha_{n\varphi_{1}(j),\delta}^{3}+\alpha_{n\varphi_{1}%
(j)+1}+\dots+\alpha_{n,j-1}+\alpha_{nj,\delta}^{1}, \label{za7}%
\end{equation}
and as a limit of $I_{nj,\delta}$ we have
\begin{align*}
I_{0j,\delta}  &  =\alpha_{0\varphi_{1}(j),\delta}^{3}\left(  a_{0\varphi
_{1}\left(  j\right)  ,\delta}^{2},a_{0\varphi_{1}\left(  j\right)  +1,\delta
}\right)  +\alpha_{n\varphi_{1}(j)+1}\left(  a_{0\varphi_{1}\left(  j\right)
+1,\delta},a_{0\varphi_{1}\left(  j\right)  +2,\delta}\right) \\
&  +\dots+\alpha_{n,j-1}\left(  a_{0,j-1,\delta},a_{0j,\delta}\right)
+\alpha_{0j,\delta}^{1}\left(  a_{0j,\delta},a_{0j,\delta}^{1}\right) \\
&  =\alpha_{0\varphi_{1}(j),\delta}^{3}\left(  a_{0\varphi_{1}\left(
j\right)  ,\delta}^{2},a_{0\varphi_{1}\left(  j\right)  +1,\delta}\right)
+\alpha_{0j,\delta}^{1}\left(  a_{0j,\delta},a_{0j,\delta}^{1}\right)  ,
\end{align*}
where $\alpha_{n\varphi_{1}(j)+1},\alpha_{0\varphi_{1}\left(  j\right)
+2,\delta},\dots,\alpha_{n,j-1}\left(  a_{0,j-1,\delta},a_{0j,\delta}\right)
$ are all equal to the point-arc $a_{0j,\delta}.$

Now we can prove

\begin{claim}
\label{CL6}For sufficiently large $n,$ there exists a surface $B_{n}=\left(
F_{n,\delta,\varepsilon},\mathcal{A}_{n,\delta,\varepsilon}\right)  $ such that

(i) $F_{n,\delta,\varepsilon}|_{\partial\Delta}=f_{0}$, $F_{n,\delta
,\varepsilon}=f_{n}$ in a neighborhood of $\gamma_{n,\delta,\varepsilon}$ in
$\mathcal{A}_{n,\delta,\varepsilon},$ and $\mathcal{A}_{n,\delta,\varepsilon}
$ is contained in $D_{F_{n}}(\partial\Delta,2\pi\delta).$

(ii) $F_{n,\delta,\varepsilon}^{-1}(E_{q})\cap\left[  \mathcal{A}%
_{n,\delta,\varepsilon}\backslash\partial\Delta\right]  =\emptyset$.

(iii) For each $j\in M_{0},$ the restriction $F_{nj,\delta,\varepsilon
}^{\prime}=F_{n,\delta,\varepsilon}|_{\Delta_{nj,\delta,\varepsilon}^{\prime}%
}$ is a homeomorphism from $\Delta_{nj,\delta,\varepsilon}^{\prime}$ onto
$R_{0j,\delta,\varepsilon}$ such that
\begin{align*}
\left(  F_{nj,\delta,\varepsilon}^{\prime},\alpha_{nj,\delta,-\varepsilon}%
^{2}\right)   &  =\left(  f_{n},\alpha_{nj,\delta,-\varepsilon}^{2}\right)
=c_{0j,\delta,-\varepsilon}^{2},\\
\left(  F_{nj,\delta,\varepsilon}^{\prime},\alpha_{nj,\delta}^{2}\right)   &
=\left(  f_{0},\alpha_{0j,\delta}^{2}\right)  =c_{0j,\delta}^{2},\\
\left(  F_{nj,\delta,\varepsilon}^{\prime},\mathfrak{t}_{nj,\delta
,\varepsilon}^{1}\right)   &  =\tau_{0j,\delta,\varepsilon}^{1,L}%
,\mathrm{\ \ }\left(  F_{nj,\delta,\varepsilon}^{\prime},\mathfrak{t}%
_{nj,\delta,\varepsilon}^{2}\right)  =\tau_{0j,\delta,\varepsilon}^{2,L}.
\end{align*}

(iv) For each $j\in M_{0},$ the restriction $\left(  F_{nj,\delta,\varepsilon
},\Delta_{nj,\delta}\right)  =\left(  F_{n,\delta,\varepsilon}|_{\Delta
_{nj,\delta}},\Delta_{nj,\delta}\right)  $ is a surface contained in
$\overline{D(q_{0j},\delta)}$ such that, corresponding to (\ref{za7-1}) and
(\ref{za7}),
\begin{align*}
\left(  F_{nj,\delta,\varepsilon},\mathfrak{t}_{n\varphi_{1}\left(  j\right)
,\delta,\varepsilon}^{2}\right)   &  =\tau_{0n\varphi_{1}\left(  j\right)
,\delta,\varepsilon}^{2,L},\mathrm{\ \ }\left(  F_{nj,\delta,\varepsilon
},\mathfrak{t}_{nj,\delta,\varepsilon}^{1}\right)  =\tau_{0j,\delta
,\varepsilon}^{1,L},\\
\left(  F_{nj,\delta,\varepsilon},\eta_{nj,\delta,\varepsilon}\right)   &
=\left(  f_{n},\eta_{nj,\delta,\varepsilon}\right)  =\zeta_{j,\delta
,\varepsilon},\\
\left(  F_{nj,\delta,\varepsilon},I_{nj,\delta}\right)   &  =c_{0,j-1,\delta
}^{3}+c_{0j,\delta}^{1},\\
F_{nj,\delta,\varepsilon}^{-1}(q_{0j})  &  =\{a_{0j}\},
\end{align*}
$a_{0j}$ is the only possible branch point of $F_{nj,\delta,\varepsilon}$ and
$v_{F_{nj,\delta,\varepsilon}}(a_{0j})=v_{j}+1$ (see (\ref{za10}) for $v_{j}$).
\end{claim}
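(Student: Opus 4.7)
The plan is to build $F_{n,\delta,\varepsilon}$ piece by piece on the $2m_0$ Jordan subdomains $\Delta_{nj,\delta,\varepsilon}'$ and $\Delta_{nj,\delta}$, $j\in M_0$, of $\mathcal{A}_{n,\delta,\varepsilon}$, and then check that the pieces glue continuously along the shared arcs $\mathfrak{t}_{nj,\delta,\varepsilon}^{1}, \mathfrak{t}_{nj,\delta,\varepsilon}^{2}$ and along the old boundary $\gamma_{n,\delta,\varepsilon}$ where we must have $F_{n,\delta,\varepsilon}=f_n$. On the small "collar" pieces $\Delta_{nj,\delta,\varepsilon}'$ the existing homeomorphism $f_n\colon \overline{\Delta_{nj,\delta,\varepsilon}'}\to R_{nj,\delta,\varepsilon}^L$ (from Claim \ref{globallift}) should be post-composed with an orientation-preserving homeomorphism $\varphi_{nj}\colon R_{nj,\delta,\varepsilon}^L\to R_{0j,\delta,\varepsilon}^L$ of the two quadrilaterals that is the identity on the common edge $-c_{0j,\delta,-\varepsilon}^2$ and sends the remaining three edges edge-to-edge in the obvious way (so that $c_{nj,\delta}^{2}\mapsto c_{0j,\delta}^{2}$ and $\tau_{nj,\delta,\varepsilon}^{i,L}\mapsto \tau_{0j,\delta,\varepsilon}^{i,L}$, $i=1,2$). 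Such $\varphi_{nj}$ exists because both $R^L$'s are closed quadrilaterals and by Conclusion \ref{co53} they differ only by a small perturbation; setting $F_{n,\delta,\varepsilon}|_{\overline{\Delta_{nj,\delta,\varepsilon}'}} = \varphi_{nj}\circ f_n$ immediately gives (iii), and (\ref{cc16}) ensures the image avoids $E_q$ except possibly at $q_{0j}\in\partial\Delta$, covering half of (ii).

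On the larger pieces $\Delta_{nj,\delta}$, the task is to realize a branched covering whose boundary describes exactly
\[
-\tau_{0j,\delta,\varepsilon}^{1,L}-\zeta_{j,\delta,\varepsilon}-\tau_{0\varphi_1(j),\delta,\varepsilon}^{2,L}+\bigl(c_{0,j-1,\delta}^{3}+c_{0j,\delta}^{1}\bigr),
\]
a closed curve on $S$ contained in $\overline{D(q_{0j},\delta)}$ that winds around $q_{0j}$ exactly $v_j+1$ times. My plan is to use the standard model $w=q_{0j}+\psi_j(\zeta^{v_j+1})$ on the closed half-disk $\overline{\Delta^{+}}$, where $\psi_j$ is a Möbius chart centering $q_{0j}$ and flattening $\partial D(q_{0j},\delta)$, and then to transport this model to $\Delta_{nj,\delta}$ by an orientation-preserving homeomorphism of $\overline{\Delta_{nj,\delta}}$ onto $\overline{\Delta^{+}}$ chosen so that $a_{0j}\in I_{nj,\delta}\cap\overline{\Delta_{nj,\delta}}$ corresponds to $0$. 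The preimage $a_{0j}$ of $q_{0j}$ is then the unique branch point, with $v_{F_{nj,\delta,\varepsilon}}(a_{0j})=v_j+1$, as required by (iv). This model has image $\overline{D(q_{0j},\delta)}$, so $F_{n,\delta,\varepsilon}^{-1}(E_q)\cap\overline{\Delta_{nj,\delta}}\subset\{a_{0j}\}\subset\partial\Delta$, completing (ii); and the image lies in $\overline{D(q_{0j},\delta)}\subset D_{F_n}(\partial\Delta,2\pi\delta)$, giving the inclusion in (i).

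The gluing check amounts to: on each $\mathfrak{t}_{nj,\delta,\varepsilon}^{1}$ (resp.\ $\mathfrak{t}_{nj,\delta,\varepsilon}^{2}$) the quadrilateral definition assigns $\tau_{0j,\delta,\varepsilon}^{1,L}$ (resp.\ $\tau_{0j,\delta,\varepsilon}^{2,L}$) while the branched-cover definition on the adjacent $\Delta_{nj,\delta}$ (resp.\ $\Delta_{n,j+1,\delta}$) assigns the same arc — so we simply need to pick the parametrizations so that the two maps agree pointwise on these arcs, which we arrange by first fixing the homeomorphism on each $\mathfrak{t}$-arc and then extending both sides. On $\gamma_{n,\delta,\varepsilon}$ itself, we need $F_{n,\delta,\varepsilon}=f_n$ in a neighborhood, and this is automatic on the $\Delta_{nj,\delta,\varepsilon}'$ side because $f_n$ is a homeomorphism there and we choose $\varphi_{nj}$ to be the identity near $\alpha_{nj,\delta,-\varepsilon}^{2}$ (after pre-composing with a collar retraction). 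On the outer boundary $\partial\Delta$, the definitions on $\Delta_{nj,\delta,\varepsilon}'$ reproduce $f_0$ on $\alpha_{0j,\delta}^{2}$ and the definitions on $\Delta_{nj,\delta}$ reproduce $f_0$ on $I_{0j,\delta}=\alpha_{0\varphi_1(j),\delta}^{3}+\alpha_{0j,\delta}^{1}$ (the intermediate "point-arcs" all collapse to $a_{0j}$, which maps to $q_{0j}$), so together the restriction to $\partial\Delta$ is exactly $f_0$.

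The main obstacle I anticipate is the combinatorial bookkeeping in (iv): one must verify that, as $\zeta_{j,\delta,\varepsilon}$ wraps $v_j$ complete times around $-\partial D(q_{0j},\delta)$ and then a partial arc $-\kappa$, and as it is preceded and followed by the radial segments $\tau^{2,L}$ and $\tau^{1,L}$, the total rotation of the boundary of $\Delta_{nj,\delta}$ around $q_{0j}$ comes out to precisely $v_j+1$, matching the branch number given by (\ref{za10}) and Remark \ref{notation}(C). This requires carefully tracking the direction changes at the four "corners" of $\partial\Delta_{nj,\delta}$ and at the cusp $a_{0j}$, but once verified the standard model $\zeta\mapsto q_{0j}+\psi_j(\zeta^{v_j+1})$ automatically provides an OPCOFOM onto $\overline{D(q_{0j},\delta)}$ with the prescribed boundary behavior, and the construction is complete.
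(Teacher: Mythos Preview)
Your plan is essentially the same as the paper's: decompose $\mathcal{A}_{n,\delta,\varepsilon}$ into the $2m_0$ pieces $\overline{\Delta_{nj,\delta,\varepsilon}'}$ and $\overline{\Delta_{nj,\delta}}$, put a homeomorphism onto $R_{0j,\delta,\varepsilon}^L$ on each quadrilateral piece, put a branched cover into $\overline{D(q_{0j},\delta)}$ with a single branch point at $a_{0j}$ on each fan piece, and glue along the $\mathfrak{t}$-arcs. The paper's proof is much terser (it simply asserts that $f_n$ restricted to a collar $\overline{U}\cap\overline{\Delta_{nj,\delta}}$, respectively $\overline{U}\cap\overline{\Delta_{nj,\delta,\varepsilon}'}$, ``can be extended'' to the required piece), but the content is the same.

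Two points of imprecision are worth noting. First, the explicit model $\zeta\mapsto q_{0j}+\psi_j(\zeta^{v_j+1})$ on $\overline{\Delta^{+}}$ does not literally have the required boundary: the arc $(F_{nj,\delta,\varepsilon},I_{nj,\delta})$ must trace the two circular arcs $c_{0,j-1,\delta}^{3}+c_{0j,\delta}^{1}$ (which meet at $q_{0j}$ at the interior angle of $\Sigma_0$ there, generally not $\pi$), and $(F_{nj,\delta,\varepsilon},-\eta_{nj,\delta})$ must reproduce $-\zeta_{nj,\delta}^{t_{nj}}$, which by (\ref{za10}) goes around $\partial D(q_{0j},\delta)$ a non-integer number of times strictly between $v_j$ and $v_j+1$. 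So the power map itself is only a local model near the branch point; the actual surface on $\overline{\Delta_{nj,\delta}}$ is obtained by taking this local model and adjusting it by a boundary-respecting homeomorphism of the target, not of the source alone. Second, part of $\gamma_{n,\delta,\varepsilon}$ --- namely each $\eta_{nj,\delta,\varepsilon}$ --- lies on $\partial\Delta_{nj,\delta}$, not on $\partial\Delta_{nj,\delta,\varepsilon}'$, so the requirement ``$F_{n,\delta,\varepsilon}=f_n$ in a neighborhood of $\gamma_{n,\delta,\varepsilon}$'' must also be enforced on the fan pieces, not only on the quadrilateral pieces as you wrote. The paper handles both issues at once by \emph{starting} from $f_n$ on a collar $U$ of $\gamma_{n,\delta,\varepsilon}$ (where $f_n$ already has exactly the right behavior on $\eta_{nj,\delta,\varepsilon}$) and then extending outward to $\partial\Delta$; this is the cleanest way to guarantee (i), and it also makes the winding-number bookkeeping automatic, since the extension only has to fill in a simply connected region with prescribed boundary in $\overline{D(q_{0j},\delta)}$.
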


\begin{proof}
In fact there exists a Jordan domain $U$ with $\overline{U}\subset\Delta,$
which contains $\overline{\Delta_{n,\delta,\varepsilon}},$ such that for each
$j\in M_{0},\overline{U}\cap\overline{\Delta_{nj,\delta}}$ and $\overline
{U}\cap\overline{\Delta_{nj,\delta,\varepsilon}^{\prime}}$ are closed Jordan
domains, that the restriction $f_{n}|_{\overline{U}\cap\overline
{\Delta_{nj,\delta,\varepsilon}^{\prime}}}$ can be extended to a homeomorphism
$F_{nj,\delta,\varepsilon}^{\prime}$ from $\Delta_{nj,\delta,\varepsilon
}^{\prime}$ onto $R_{0j,\delta,\varepsilon}^{L}$ satisfying (iii), that the
restriction $\left(  f_{n}|_{\overline{U}\cap\overline{\Delta_{nj,\delta}}%
},\overline{U}\cap\overline{\Delta_{nj,\delta}}\right)  $ can be extended to a
surface $\left(  F_{nj,\delta,\varepsilon},\overline{\Delta_{nj,\delta}%
}\right)  $ contained in $\overline{D(q_{0j},\delta)}$ satisfying (iv), that
$F_{nj,\delta,\varepsilon}$ and $F_{nj,\delta,\varepsilon}^{\prime}$ agree on
$\mathfrak{t}_{nj,\delta,\varepsilon}^{1}$ (note that we assumed $j\in M_{0}%
$), that $F_{n,j+1,\delta,\varepsilon}$ and $F_{nj,\delta,\varepsilon}%
^{\prime}$ agree on $\mathfrak{t}_{nj,\delta,\varepsilon}^{2}.$ Then (i) holds
trivially, and, by (\ref{cc15}) and (\ref{cc16}), (ii) also holds. Then it is
clear that these $2m_{0}$ mappings agree on the intersection boundary, and so
compose the desired global mapping $F_{n,\delta,\varepsilon}$ defined on
$\mathcal{A}_{n,\delta,\varepsilon}.$ The claim is proved.
\end{proof}

Let
\[
f_{n,\delta,\varepsilon}^{\ast}(z)=\left\{
\begin{array}
[c]{c}%
f_{n}(z),z\in\Delta\backslash\mathcal{A}_{n,\delta,\varepsilon},\\
F_{n,\delta,\varepsilon},z\in\mathcal{A}_{n,\delta,\varepsilon}.
\end{array}
\right.
\]
Then we obtain a sequence of surfaces
\begin{equation}
\Sigma_{n,\delta,\varepsilon}^{\ast}=\left(  f_{n,\delta,\varepsilon}^{\ast
},\overline{\Delta}\right)  \in\mathcal{F}\left(  L,m\right)  , \label{inF}%
\end{equation}
with%
\begin{equation}
\partial\Sigma_{n,\delta,\varepsilon}^{\ast}=\left(  f_{n,\delta,\varepsilon
}^{\ast},\partial\Delta\right)  =\left(  f_{0},\partial\Sigma\right)
=\Gamma_{0}. \label{eq-boundary}%
\end{equation}
It is possible that $\Sigma_{n,\delta,\varepsilon}^{\ast}\not \in
\mathcal{F}_{r}\left(  L,m\right)  ,$ which happens only if, for some
$a_{0j},$ $a_{0j}\notin f_{n,\delta,\varepsilon}^{\ast-1}\left(  E_{q}\right)
$ and the integer $v_{j}\geq1.$ But we can show at last that this can not happen.

By Claim \ref{CL6} (ii), we have
\begin{equation}
\left(  f_{n,\delta,\varepsilon}^{\ast-1}(E_{q})\cap\mathcal{A}_{n,\delta
,\varepsilon}\right)  \backslash\partial\Delta=\emptyset, \label{za2}%
\end{equation}
which implies%
\[
\overline{n}\left(  \Sigma_{n,\delta,\varepsilon}^{\ast}\right)
=\#f_{n,\delta,\varepsilon}^{\ast-1}(E_{q})\cap\Delta_{n,\delta,\varepsilon
}=\#f_{n}^{-1}(E_{q})\cap\Delta_{n,\delta,\varepsilon}\leq\overline{n}%
(\Sigma_{n})\leq qd^{\ast},
\]
and then, taking subsequence if necessary, we have

\begin{claim}
\label{CL7}$\overline{n}\left(  \Sigma_{n,\delta,\varepsilon}^{\ast}\right)  $
is a constant for all $n=1,2,...$
\end{claim}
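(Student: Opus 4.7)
The plan is to reduce Claim \ref{CL7} to a simple pigeonhole argument, using only the uniform bound on $\deg_{\max}f_n$ guaranteed by the precise extremal sequence being chosen in $\mathcal{F}_r^\prime(L,m)$.

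First I would rewrite $\overline{n}(\Sigma_{n,\delta,\varepsilon}^\ast)$ explicitly. By (\ref{za2}), the preimage $f_{n,\delta,\varepsilon}^{\ast-1}(E_q)$ meets $\mathcal{A}_{n,\delta,\varepsilon}$ only on $\partial\Delta$, so
\[
\overline{n}(\Sigma_{n,\delta,\varepsilon}^\ast)=\#\bigl(f_{n,\delta,\varepsilon}^{\ast-1}(E_q)\cap\Delta_{n,\delta,\varepsilon}^\circ\bigr).
\]
Since $f_{n,\delta,\varepsilon}^\ast\equiv f_n$ on $\Delta\setminus\mathcal{A}_{n,\delta,\varepsilon}$, and $\Delta_{n,\delta,\varepsilon}^\circ\subset\Delta\setminus\mathcal{A}_{n,\delta,\varepsilon}$, this equals $\#(f_n^{-1}(E_q)\cap\Delta_{n,\delta,\varepsilon}^\circ)\le \overline{n}(\Sigma_n)$.

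Next I would invoke Conclusion \ref{co1}, which gives $\deg_{\max}f_n\le d^\ast$ for every $n$. Since $\overline{n}(\Sigma_n)=\sum_{v=1}^{q}\#f_n^{-1}(\mathfrak{a}_v)\cap\Delta\le q\deg_{\max}f_n\le qd^\ast$, the sequence $\{\overline{n}(\Sigma_{n,\delta,\varepsilon}^\ast)\}_{n\ge 1}$ is a sequence of non-negative integers uniformly bounded by $qd^\ast$, and hence takes at most $qd^\ast+1$ distinct values.

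Finally, by the pigeonhole principle at least one value in $\{0,1,\dots,qd^\ast\}$ is assumed infinitely often; extracting the corresponding subsequence (and relabelling it as $\{\Sigma_n\}$, which is compatible with the previous subsequence choices made in Steps 1--4) yields a subsequence along which $\overline{n}(\Sigma_{n,\delta,\varepsilon}^\ast)$ is identically equal to a single constant. This proves Claim \ref{CL7}. There is no real obstacle; the content of the claim is purely combinatorial and relies only on the uniform degree bound already secured by the choice of precise extremal sequence in $\mathcal{F}_r^\prime(L,m)$.
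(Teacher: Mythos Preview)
Your argument is correct and is exactly the paper's approach: the paper derives the same bound $\overline{n}(\Sigma_{n,\delta,\varepsilon}^\ast)=\#f_n^{-1}(E_q)\cap\Delta_{n,\delta,\varepsilon}\le \overline{n}(\Sigma_n)\le qd^\ast$ from (\ref{za2}) and Conclusion \ref{co1}, and then says ``taking subsequence if necessary'' to make it constant. Your write-up just spells out the pigeonhole step a bit more explicitly.
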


For each $j\leq M_{0},$ by definition of $F_{n,\delta,\varepsilon}$ and by
(\ref{za10}) we have
\[
A(F_{n,\delta,\varepsilon},\Delta_{nj,\delta})\leq\left(  v_{j}+1\right)
A(D(q_{0j},\delta))=2\pi\left(  v_{j}+1\right)  \left(  1-\cos\delta\right)
\leq\pi\left(  d^{\ast}+1\right)  \delta^{2},
\]
and
\[
A(F_{n,\delta,\varepsilon},\Delta_{nj,\delta,\varepsilon}^{\prime
})=A(R_{0j,\delta,\varepsilon}^{L})<2\pi\varepsilon L(c_{0j,\delta}%
^{2})<L(c_{0j,\delta}^{2})\delta<L\delta.
\]
Thus, by (\ref{co59}) and (\ref{za2}), we have $\delta<1$ and
\begin{equation}
0<R(f_{n,\delta,\varepsilon}^{\ast},\mathcal{A}_{n,\delta,\varepsilon
})=R(F_{n,\delta,\varepsilon},\mathcal{A}_{n,\delta,\varepsilon})=\left(
q-2\right)  A(F_{n,\delta,\varepsilon},\mathcal{A}_{n,\delta,\varepsilon
})<C\delta, \label{za5}%
\end{equation}
where $C=\left[  \pi\left(  d^{\ast}+1\right)  +L\right]  M_{0},$ which is
independent of $\delta,\varepsilon$ and $n.$ Then, by Conclusion \ref{co1}, we
may choose $\delta$ small enough such that%
\begin{equation}
A(\Sigma_{n,\delta,\varepsilon}^{\ast})=A(f_{n},\Delta_{n,\delta,\varepsilon
})+A(F_{n,\delta,\varepsilon},\mathcal{A}_{n,\delta,\varepsilon})\leq4\pi
d^{\ast}+1. \label{out*<c}%
\end{equation}
Hence by Corollary \ref{A1} and (\ref{eq-boundary}), we may assume that
$\Sigma_{n,\delta,\varepsilon}^{\ast}$ have the same area and by Claim
\ref{CL7}, we have the following.

\begin{claim}
\label{CL8}$A(\Sigma_{n,\delta,\varepsilon}^{\ast})$, $\overline{n}%
(\Sigma_{n,\delta,\varepsilon}^{\ast})$ and $R(\Sigma_{n,\delta,\varepsilon
}^{\ast})$ are constants for all $n=1,2,\dots,$ respectively.
\end{claim}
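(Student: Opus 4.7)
My plan is to chain together three ingredients that are already in place: Claim~\ref{CL7}, which gives the constancy of $\overline{n}(\Sigma_{n,\delta,\varepsilon}^{\ast})$; Corollary~\ref{A1}, which forces two surfaces with the same piecewise-smooth boundary to have areas differing by an integer multiple of $4\pi$; and the uniform bound \eqref{out*<c}, which caps $A(\Sigma_{n,\delta,\varepsilon}^{\ast})$ by $4\pi d^{\ast}+1$. Together these reduce the constancy of $A$ to a pigeonhole statement on a finite set of admissible values.

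The first step I would carry out is to record that, by \eqref{eq-boundary}, every surface in the sequence $\{\Sigma_{n,\delta,\varepsilon}^{\ast}\}_{n\geq 1}$ has the same boundary curve $\Gamma_{0}=(f_{0},\partial\Delta)$ on $S$, and this curve is a finite concatenation of circular arcs, hence piecewise smooth. Thus Corollary~\ref{A1} applies to any pair $\Sigma_{n_{1},\delta,\varepsilon}^{\ast}$, $\Sigma_{n_{2},\delta,\varepsilon}^{\ast}$ and yields
\[
A(\Sigma_{n_{1},\delta,\varepsilon}^{\ast})-A(\Sigma_{n_{2},\delta,\varepsilon}^{\ast})\in 4\pi\mathbb{Z}.
\]

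Next, I would invoke \eqref{out*<c}, which tells us $A(\Sigma_{n,\delta,\varepsilon}^{\ast})\in[0,4\pi d^{\ast}+1]$ for every $n$. Combined with the previous step, the entire sequence $\{A(\Sigma_{n,\delta,\varepsilon}^{\ast})\}_{n\geq 1}$ lies in a finite set of the form $\{a_{0},a_{0}+4\pi,a_{0}+8\pi,\ldots\}\cap[0,4\pi d^{\ast}+1]$, which has at most $d^{\ast}+1$ elements. Passing to a subsequence (which is legitimate since $\Sigma_{n}$ is a precise extremal sequence and subsequences of extremal sequences remain extremal), I may therefore assume $A(\Sigma_{n,\delta,\varepsilon}^{\ast})$ is constant in $n$, which is the promised statement for the area.

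Finally, the constancy of $R(\Sigma_{n,\delta,\varepsilon}^{\ast})$ is immediate from the defining identity
\[
R(\Sigma_{n,\delta,\varepsilon}^{\ast})=(q-2)A(\Sigma_{n,\delta,\varepsilon}^{\ast})-4\pi\overline{n}(\Sigma_{n,\delta,\varepsilon}^{\ast}),
\]
since both $A$ and $\overline{n}$ are constant in $n$ after the subsequence extraction. There is no real obstacle here: the whole content of the claim is a bookkeeping consequence of Corollary~\ref{A1} plus the a priori area bound that was already exploited in \eqref{out*<c}; the genuinely substantive work — constructing the modification $F_{n,\delta,\varepsilon}$ on $\mathcal{A}_{n,\delta,\varepsilon}$ with no interior preimages of $E_{q}$, controlling its area by $C\delta$, and ensuring $\partial\Sigma_{n,\delta,\varepsilon}^{\ast}=\Gamma_{0}$ — was already completed in Claim~\ref{CL6} and \eqref{za5}.
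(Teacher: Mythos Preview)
Your proposal is correct and follows essentially the same route as the paper: the paper's argument is stated in the sentence preceding Claim~\ref{CL8}, where it invokes Corollary~\ref{A1} together with \eqref{eq-boundary} to force the areas into a single $4\pi\mathbb{Z}$-coset, uses the bound \eqref{out*<c} to restrict to finitely many values (hence a subsequence makes the area constant), and then combines with Claim~\ref{CL7} to get constancy of $R$.
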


By (\ref{eq-boundary}) and Claim \ref{CL8} implies

\begin{claim}
\label{CL12}$H(\Sigma_{n,\delta,\varepsilon}^{\ast})=R(\Sigma_{n,\delta
,\varepsilon}^{\ast})/L(\partial\Sigma_{n,\delta,\varepsilon}^{\ast})$ is a
constant $H$ for all $n=1,2,\dots\medskip$
\end{claim}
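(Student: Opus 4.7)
The plan is to observe that Claim \ref{CL12} is an immediate consequence of Claim \ref{CL8} together with the boundary identity (\ref{eq-boundary}), so the "proof" is a one-line bookkeeping step rather than a new argument. First I would invoke Claim \ref{CL8} to assert that the numerator $R(\Sigma_{n,\delta,\varepsilon}^{\ast})$ is independent of $n$; this in turn rests on Claim \ref{CL7} (the counting function $\overline{n}(\Sigma_{n,\delta,\varepsilon}^{\ast})$ is eventually constant along the subsequence because it is bounded by $qd^{\ast}$ via Conclusion \ref{co1} and (\ref{za2})) and on the area normalization provided by Corollary \ref{A1} (two surfaces with the same boundary curve have areas differing by a multiple of $4\pi$, and (\ref{out*<c}) bounds the area uniformly, forcing the value to stabilize along a subsequence).

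Next I would handle the denominator. By the very construction of $f_{n,\delta,\varepsilon}^{\ast}$, one has (\ref{eq-boundary}), namely $\partial\Sigma_{n,\delta,\varepsilon}^{\ast}=\Gamma_{0}$ as parametrized curves on $S$. Hence $L(\partial\Sigma_{n,\delta,\varepsilon}^{\ast})=L(\Gamma_{0})=L_{1}$ for every $n$, and this length is in particular independent of $n$ (and, once $\delta,\varepsilon$ are fixed, independent of those parameters too). Combining these two facts, the ratio $H(\Sigma_{n,\delta,\varepsilon}^{\ast})=R(\Sigma_{n,\delta,\varepsilon}^{\ast})/L(\partial\Sigma_{n,\delta,\varepsilon}^{\ast})$ is a quotient of two $n$-independent quantities, hence equals a constant $H$ for all $n$, completing the claim.

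There is no serious obstacle here; the only mild subtlety (already absorbed into Claim \ref{CL8}) is that "constant for all $n$" really means "constant along the extracted subsequence", which is legitimate because we have been passing to subsequences throughout the argument and the desired precise extremal surface will be built from this stabilized subsequence.
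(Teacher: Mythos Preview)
Your proposal is correct and matches the paper's approach exactly: the paper also derives Claim \ref{CL12} in one line from (\ref{eq-boundary}) and Claim \ref{CL8}, with the latter supplying constancy of the numerator and the former constancy of the denominator $L(\partial\Sigma_{n,\delta,\varepsilon}^{\ast})=L(\Gamma_0)$. Your additional unpacking of how Claim \ref{CL8} itself was established (via Claim \ref{CL7}, Corollary \ref{A1}, and the bound (\ref{out*<c})) is accurate but goes beyond what is needed here.
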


\begin{center}
\textbf{Step 5 Complete the Proof of Theorem \ref{LK}}\medskip
\end{center}

We will show
\begin{equation}
H=H(\Sigma_{n,\delta,\varepsilon}^{\ast})=\lim_{n\rightarrow\infty}%
H(\Sigma_{n})=H_{L,m},\mathrm{\ for\mathrm{\ }}n=1,2,\dots\label{za9}%
\end{equation}

Since $\Sigma_{n}$ is an extremal sequence in $\mathcal{F}\left(  L,m\right)
\ $and $L(\partial\Sigma_{n})\rightarrow L(\partial\Sigma_{n,\delta
,\varepsilon}^{\ast})=L(f_{0},\partial\Delta),$ by (\ref{inF}), we have
\[
H=H(\Sigma_{n,\delta,\varepsilon}^{\ast})\leq\lim_{n\rightarrow\infty}%
H(\Sigma_{n}).
\]
So, by (\ref{eq-boundary}), to prove (\ref{za9}), it suffices to prove that
for any number $\mu>0$%
\begin{equation}
R(\Sigma_{n})<R(\Sigma_{n,\delta,\varepsilon}^{\ast})+\mu. \label{s<s*}%
\end{equation}
Since $\Sigma_{n}$ and $\Sigma_{n,\delta,\varepsilon}^{\ast}$ coincide on
$\Delta_{n,\delta,\varepsilon}\ $and by (\ref{za5}) $R(f_{n,\delta
,\varepsilon}^{\ast},\mathcal{A}_{n,\delta,\varepsilon})>0$, to prove
(\ref{s<s*}) it suffices to prove
\begin{equation}
R\left(  f_{n},\mathcal{A}_{n,\delta,\varepsilon}\right)  <\mu
,\mathrm{\ for\mathrm{\ large\ enough}\ }n. \label{out<out}%
\end{equation}

We will prove the following Claim, which implies (\ref{out<out}) by taking
$\delta$ small enough.

\begin{claim}
\label{CL9}There exists a constant $C_{1}$ independent of $n,$ $\delta,$ and
$\varepsilon,$ such that
\[
R(f_{n},\mathcal{A}_{n,\delta,\varepsilon})\leq C_{1}\delta
,\mathrm{\ for\mathrm{\ large\ enough}\ }n.
\]

\end{claim}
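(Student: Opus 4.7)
\medskip

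\noindent\textbf{Plan of proof for Claim \ref{CL9}.}
Since $R(f_n,\mathcal{A}_{n,\delta,\varepsilon}) = (q-2)A(f_n,\mathcal{A}_{n,\delta,\varepsilon}) - 4\pi\,\overline{n}(f_n,\mathcal{A}_{n,\delta,\varepsilon}^\circ) \leq (q-2)A(f_n,\mathcal{A}_{n,\delta,\varepsilon})$, it suffices to bound the total $f_n$-area of $\mathcal{A}_{n,\delta,\varepsilon}$ by $C\delta$ with a constant $C$ independent of $n,\delta,\varepsilon$. I will use the decomposition
\[
\mathcal{A}_{n,\delta,\varepsilon}=\bigcup_{j\in M_0}\overline{\Delta_{nj,\delta}}\;\cup\;\bigcup_{j\in M_0}\overline{\Delta_{nj,\delta,\varepsilon}^{\prime}}
\]
supplied after Claim \ref{CL5} and estimate each summand separately.

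\medskip

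\noindent\emph{Strip pieces $\Delta_{nj,\delta,\varepsilon}^{\prime}$.}
By Claim \ref{globallift}, $f_n|_{\overline{\Delta_{nj,\delta,\varepsilon}^{\prime}}}$ is a homeomorphism onto the quadrilateral $R_{nj,\delta,\varepsilon}^{L}\subset R_{j,\delta,\varepsilon}\subset\overline{D(c_{0j},\varepsilon)}$. Because $R_{j,\delta,\varepsilon}$ is an $\varepsilon$-tubular slab of length at most $L(c_{0j,\delta}^{2})\leq L$, we have $A(f_n,\Delta_{nj,\delta,\varepsilon}^{\prime})=A(R_{nj,\delta,\varepsilon}^{L})\leq 2\pi\varepsilon L(c_{0j,\delta}^{2})\leq 2\pi L\varepsilon$. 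Summing over $j\in M_0$ and using $\varepsilon<\delta$ from \eqref{co59} gives $\sum_{j\in M_0}A(f_n,\Delta_{nj,\delta,\varepsilon}^{\prime})\leq 2\pi L m\,\delta$.

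\medskip

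\noindent\emph{Corner pieces $\Delta_{nj,\delta}$.}
This is the main step. I will show
\begin{equation}\label{plancontain}
f_n(\overline{\Delta_{nj,\delta}})\subset \overline{D(q_{0j},2\delta)}\qquad\text{and}\qquad \deg\bigl(f_n|_{\Delta_{nj,\delta}}\bigr)\leq d^{\ast}+2,
\end{equation}
for all sufficiently large $n$, which together with $A(D(q_{0j},2\delta))=2\pi(1-\cos 2\delta)\leq 4\pi\delta^{2}$ would yield $A(f_n,\Delta_{nj,\delta})\leq 4\pi(d^{\ast}+2)\delta^{2}$. Recall from \eqref{za7-1} that $\partial\Delta_{nj,\delta}=-\eta_{nj,\delta}^{t_{nj}}+I_{nj,\delta}$, so by Claim \ref{CL2},
\[
f_n(\partial\Delta_{nj,\delta})=-\zeta_{nj,\delta}^{t_{nj}}+(f_n,I_{nj,\delta}).
\]
The first summand lies on $\partial D(q_{0j},\delta)$ by construction, and the second is the concatenation $c_{n,\varphi_1(j),\delta}^{3}+c_{n,\varphi_1(j)+1}+\dots+c_{n,j-1}+c_{nj,\delta}^{1}$, whose two endpieces lie in $\overline{D(q_{0j},\delta)}$ by Conclusion \ref{co46}, while the interior pieces converge uniformly to the point $q_{0j}$ by Conclusion \ref{co4}. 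Thus for large $n$, $f_n(\partial\Delta_{nj,\delta})\subset\overline{D(q_{0j},2\delta)}$. The argument principle for OPCOFOMs then forces $f_n(\Delta_{nj,\delta})\subset\overline{D(q_{0j},2\delta)}$, because any regular value $w$ outside this disk would have covering number equal to the winding number of $f_n(\partial\Delta_{nj,\delta})$ around $w$, which vanishes. For the degree bound, a generic regular value $w$ inside $D(q_{0j},2\delta)\setminus f_n(\partial\Delta_{nj,\delta})$ has covering number equal to the winding of $-\zeta_{nj,\delta}^{t_{nj}}$ plus that of $(f_n,I_{nj,\delta})$; the former is $v_j+1\leq d^{\ast}+1$ by \eqref{za10}, while the latter contributes at most $1$ because $(f_n,I_{nj,\delta})$ is close to the point $q_{0j}$ and does not enclose $w$ in a nontrivial way. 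Combining the two pieces yields \eqref{plancontain}.

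\medskip

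\noindent\emph{Summing up.} Adding the two contributions, for all sufficiently large $n$,
\[
A(f_n,\mathcal{A}_{n,\delta,\varepsilon})\leq 2\pi L m\,\delta + 4\pi m (d^{\ast}+2)\delta^{2}\leq \bigl[2\pi L m+4\pi m(d^{\ast}+2)\bigr]\delta,
\]
where we have used $\delta<1$. Multiplying by $(q-2)$ gives $R(f_n,\mathcal{A}_{n,\delta,\varepsilon})\leq C_1\delta$ with $C_1=(q-2)[2\pi Lm+4\pi m(d^{\ast}+2)]$ independent of $n$, $\delta$, and $\varepsilon$. The only delicate step is the verification of \eqref{plancontain}: the image inclusion and the winding-number bound rely on the careful control of $\zeta_{nj,\delta}^{t_{nj}}$ from Theorem \ref{key3} and on the uniform convergence of the point-arc components of $I_{nj,\delta}$ to $q_{0j}$; once \eqref{plancontain} is in hand, everything else is routine book-keeping.
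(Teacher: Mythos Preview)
Your proof is correct, but it differs from the paper's on the corner pieces $\Delta_{nj,\delta}$. Your strip estimate is exactly the paper's argument for \eqref{R<<}. For the corners, however, the paper does not use image containment or a degree bound. Instead it bounds the \emph{perimeter}: from \eqref{za7-1}, \eqref{za7}, \eqref{506-4} and Theorem~\ref{key3} one gets $L(f_n,\partial\Delta_{nj,\delta})<C_1''\delta$ with $C_1''=4\pi+2\pi(d^{\ast}+1)$; since $C_1''\delta<\delta_{E_q}<2\delta_{E_q}$ by \eqref{co49}, Theorem~\ref{l<2dt} and Lemma~\ref{hd} give directly $R(f_n,\Delta_{nj,\delta})\le (q-2)L(f_n,\partial\Delta_{nj,\delta})\le (q-2)C_1''\delta$. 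This is cleaner because it recycles the already-established isoperimetric-type bound and avoids any discussion of winding numbers.

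Your route is more self-contained, but the degree step you give via winding decomposition is a bit loose. You can bypass it entirely: Conclusion~\ref{co1} already gives $\deg_{\max}f_n\le d^{\ast}$ on all of $\overline{\Delta}$, hence on $\overline{\Delta_{nj,\delta}}$, so once you have the containment $f_n(\overline{\Delta_{nj,\delta}})\subset\overline{D(q_{0j},2\delta)}$ (your argument-principle step, which is sound), the area bound $A(f_n,\Delta_{nj,\delta})\le d^{\ast}A(D(q_{0j},2\delta))\le 4\pi d^{\ast}\delta^{2}$ follows immediately with no winding computation needed.
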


\begin{proof}
We first show that for each $j\in M_{0}$ and sufficiently large $n$
\begin{equation}
R(f_{n},\Delta_{nj,\delta})<C_{1}^{\prime}\delta, \label{za6}%
\end{equation}%
\begin{equation}
R(f_{n},\Delta_{nj,\delta,\varepsilon}^{\prime})<C_{2}^{\prime}\delta,
\label{R<<}%
\end{equation}
for some $C_{1}^{\prime}$ and $C_{2}^{\prime}$ independent of $n,$ $\delta$
and $\varepsilon$.

By the assumption that $\eta_{nj,\delta}$ is parameterized by length and
$t_{nj}<2\pi\left(  d^{\ast}+1\right)  \delta,$ we have $L(f_{n}%
,\eta_{nj,\delta})<2\pi\left(  d^{\ast}+1\right)  \delta.$ By (\ref{506-4})
and (\ref{za7}) we have%
\[
L(f_{n},I_{nj,\delta})\rightarrow L(c_{0,j-1,\delta}^{3})+0+L(c_{0j,\delta
}^{1})<4\pi\delta.
\]
Thus we have
\begin{equation}
L(f_{n},\partial\Delta_{nj,\delta})=L(f_{n},I_{nj,\delta}-\eta_{nj,\delta
})<4\pi\delta+2\pi\left(  d^{\ast}+1\right)  \delta=C_{1}^{\prime\prime}%
\delta\label{L<C1d}%
\end{equation}
where $C_{1}^{\prime\prime}=\left(  4\pi+2\pi\left(  d^{\ast}+1\right)
\right)  $.

By (\ref{co51}) and (\ref{co49}) we have $C_{1}^{\prime\prime}\delta
<\delta_{E_{q}},$ which with (\ref{L<C1d}), Theorem \ref{l<2dt} and Lemma
\ref{hd}, implies%
\[
R(f_{n},\Delta_{nj,\delta})=H(f_{n},\Delta_{nj,\delta})L(f_{n},\partial
\Delta_{nj,\delta})\leq\left(  q-2\right)  L(f_{n},\partial\Delta_{nj,\delta
})\leq\left(  q-2\right)  C_{1}^{\prime\prime}\delta,
\]
and then, putting $C_{1}^{\prime}=\left(  q-2\right)  C_{1}^{\prime\prime},$
we have (\ref{za6}).

By (\ref{za4}) and the fact that $R_{nj,\delta,\varepsilon}\rightarrow
R_{0j,\delta,\varepsilon}$ we have
\[
R(f_{n},\Delta_{nj,\delta,\varepsilon}^{\prime})=\left(  q-2\right)
A(R_{nj,\delta,\varepsilon})\leq\left(  q-2\right)  \pi L(c_{nj,\delta}%
^{2})\varepsilon<\left(  q-2\right)  \left(  \pi L\right)  \delta
\]
when $n$ is large enough by (\ref{co59}). This implies (\ref{R<<}).

(\ref{za6}) and (\ref{R<<}) imply Claim \ref{CL9}, for $\mathcal{A}%
_{n,\delta,\varepsilon}=\cup_{j\in M_{0}}\left[  \Delta_{nj,\delta
,\varepsilon}^{\prime}\cup\Delta_{nj,\delta,\varepsilon}\right]  $ and
$\mathcal{A}_{n,\delta,\varepsilon}\backslash\partial\Delta$ contains no point
of $f_{n,\delta,\varepsilon}^{\ast-1}(E_{q})$.
\end{proof}

Now, (\ref{za9}) is proved completely, and we have reach the position to
complete the proof of Theorem \ref{LK}.

Let $f_{L_{1}}\ $be any one of the sequence $f_{n,\delta,\varepsilon}^{\ast},$
so that $n$ is large enough, $\delta$ is small enough, and $\varepsilon
\ $satisfying (\ref{co59}) further small enough, and $\Sigma_{L_{1}}=\left(
f_{L_{1}},\overline{\Delta}\right)  .$ Then (\ref{za9}) implies $H(\Sigma
_{L_{1}})=H_{L,m},$ and then by (\ref{inF}), $\Sigma_{L_{1}}$ is an extremal
surface of $\mathcal{F}(L,m).$ Since $\Sigma_{n}$ is a precise extremal
sequence, $L_{1}=\lim_{n\rightarrow\infty}\inf L(\partial\Sigma_{n}%
)=L(\Gamma_{0})$ and $L(\partial\Sigma_{L_{1}})=L(\Gamma_{0}),$ $\Sigma
_{L_{1}}$ is a precise extremal surface in $\mathcal{F}\left(  L,m\right)  .$

If $f_{L_{1}}$ has no branch point outside $f^{-1}(E_{q}),$ then
$\Sigma_{L_{1}}\in\mathcal{F}_{r}(L,m)$ and thus $\Sigma_{L_{1}}$ is a precise
extremal surface in $\mathcal{F}_{r}(L,m).$

As it is pointed out, $f_{L_{1}}$ may be not in $\mathcal{F}_{r}\left(
L,m\right)  .$ But when this is happen, by Theorem \ref{re} there exists a
surface $\Sigma^{\prime}=\left(  f,\overline{\Delta}\right)  $ such that
$H(\Sigma^{\prime})\geq H(\Sigma_{L_{1}})$, $L(\partial\Sigma^{\prime})\leq
L(\partial\Sigma_{L_{1}})\ $and $\Sigma^{\prime}\in\mathcal{F}_{r}(L,m).$ Then
$\Sigma^{\prime}$ is again an extremal surface of $\mathcal{F}_{r}(L,m)$ and
by the definition of $L_{1}$ we have $L(\partial\Sigma^{\prime})\geq
L(\partial\Sigma_{L_{1}})$ and thus $L(\partial\Sigma^{\prime})=L(\partial
\Sigma_{L_{1}})$ and $\Sigma^{\prime}$ is also a precise extremal surface of
$\mathcal{F}_{r}(L,m).$ This completes the proof of Theorem \ref{LK}.\medskip

By Lemma \ref{Fr'FrF}, Theorem \ref{LK} and (\ref{>2d}), we have the following:

\begin{corollary}
\label{FFprime}Let $L\in\mathcal{L}$ be a positive number and $m$ be a
sufficiently large positive integer. Then there exists a precise extremal
surface $\Sigma_{L_{1}}\ $of $\mathcal{F}_{r}^{\prime}(L,m),\mathcal{F}%
_{r}(L,m)$ and $\mathcal{F}(L,m),$ such that $L(\partial\Sigma_{L_{1}}%
)=L_{1}\leq L.$ For any precise extremal surface $\Sigma$ of $\mathcal{F}%
(L,m),$ $L(\partial\Sigma)\geq2\delta_{E_{q}}$ if $L\geq2\delta_{E_{q}}.$
\end{corollary}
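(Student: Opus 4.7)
The plan is to assemble the corollary from three ingredients already established: Theorem \ref{LK}, Lemma \ref{Fr'FrF}, and Lemma \ref{idex}(3). Theorem \ref{LK} directly supplies a precise extremal surface $\Sigma_{L_1}\in\mathcal{F}_r(L,m)$ with $L(\partial\Sigma_{L_1})=L_1\le L$, so the existence half of the statement is immediate in the space $\mathcal{F}_r(L,m)$. The remaining work is to propagate this precise extremality to $\mathcal{F}_r^\prime(L,m)$ and $\mathcal{F}(L,m)$, and to establish the lower bound $L(\partial\Sigma)\ge 2\delta_{E_q}$.

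First I would apply Lemma \ref{ideatoo} to the surface $\Sigma_{L_1}\in\mathcal{F}_r(L,m)$ produced by Theorem \ref{LK}: this yields a precise extremal surface $\Sigma_1\in\mathcal{F}_r^\prime(L,m)$ with $\partial\Sigma_1=\partial\Sigma_{L_1}$, so $L(\partial\Sigma_1)=L_1$ and $H(\Sigma_1)=H_{L,m}$. Next I would invoke Lemma \ref{Fr'FrF}, which tells us that any precise extremal surface of $\mathcal{F}_r^\prime(L,m)$ is automatically precise extremal in $\mathcal{F}_r(L,m)$ and $\mathcal{F}(L,m)$. Thus the single surface $\Sigma_1$ simultaneously serves as a precise extremal surface in all three spaces, which is what the first sentence of the corollary requires. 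Notice that the chain of inclusions $\mathcal{F}_r^\prime(L,m)\subset\mathcal{F}_r(L,m)\subset\mathcal{F}(L,m)$ together with Corollary \ref{FF'} guarantees, for large enough $m$, that the three suprema $H_{L,m}$ agree, which is precisely what makes Lemma \ref{Fr'FrF} applicable here.

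For the lower bound, let $\Sigma$ be any precise extremal surface of $\mathcal{F}(L,m)$ when $L\ge 2\delta_{E_q}$. Then the constant sequence $\Sigma_n:=\Sigma$ is an extremal sequence of $\mathcal{F}(L,m)$ (since $H(\Sigma_n)\equiv H_{L,m}$), and Lemma \ref{idex}(3) gives $L(\partial\Sigma)=\liminf_{n\to\infty}L(\partial\Sigma_n)\ge 2\delta_{E_q}$. This completes the bound and, combined with the first paragraph, the entire corollary.

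No step here is a genuine obstacle—every ingredient is already stated in the excerpt—so the proposal is essentially a bookkeeping task. The only point deserving care is ensuring the chosen $m$ is large enough that (i) $H_{L,m}>H_L-\pi/(2L)$ so that extremal sequences are automatically precise extremal (Remark \ref{HH1}), and (ii) the conclusions of Theorem \ref{LK}, Lemma \ref{ideatoo} (which implicitly uses Theorem \ref{sim}), and Lemma \ref{Fr'FrF} all hold simultaneously; this is handled by taking $m$ larger than the thresholds produced by Corollary \ref{FF'} and Theorem \ref{sim}.
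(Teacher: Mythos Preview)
Your proposal is correct and follows essentially the same approach as the paper, which simply cites Lemma \ref{Fr'FrF}, Theorem \ref{LK}, and inequality (\ref{>2d}) (the latter coming from Lemma \ref{idex}). You are in fact more careful than the paper's one-line justification: you explicitly invoke Lemma \ref{ideatoo} to pass from the precise extremal surface in $\mathcal{F}_r(L,m)$ produced by Theorem \ref{LK} to one in $\mathcal{F}_r'(L,m)$, a step the paper leaves implicit.
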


\section{Relation of precise extremal surfaces of $\mathcal{F}(L,m)$ and
$\mathcal{F}(L,m-1)$}

The goal of this long section is to prove the following theorem, followed by
some applications.

\begin{theorem}
\label{cat2}Let $L\in\mathcal{L}$ be a positive number with $L\geq
2\delta_{E_{q}}.$ Then for sufficiently large integer $m$ and any precise
extremal surface $\Sigma_{L_{1}}=\left(  f,\overline{\Delta}\right)  $ of
$\mathcal{F}_{r}(L,m)$ \label{Fr,r needed since we use Lemma tangent}with
$L(\partial\Sigma_{L_{1}})=L_{1}\leq L,$ $\Sigma_{L_{1}}$ is a precise
extremal surface of $\mathcal{F}_{r}\left(  L,m-1\right)  .$
\end{theorem}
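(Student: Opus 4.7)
The plan is to reduce the theorem to two sub-claims: (i) $\Sigma_{L_1}\in\mathcal{F}_r(L,m-1)$, and (ii) $H_{L,m-1}=H_{L,m}$. Granted both, precise extremality at level $m-1$ is automatic: for any extremal $\tilde\Sigma\in\mathcal{F}_r(L,m-1)\subset\mathcal{F}_r(L,m)$ the equality $H_{L,m-1}=H_{L,m}$ makes $\tilde\Sigma$ extremal at level $m$ as well, so precise extremality of $\Sigma_{L_1}$ at level $m$ forces $L(\partial\Sigma_{L_1})\le L(\partial\tilde\Sigma)$. For sub-claim (ii) I would invoke Theorem \ref{LK} at level $m-1$ (valid since we take $m$ large enough that $m-1$ also satisfies the hypothesis of Theorem \ref{LK}) to produce a precise extremal $\Sigma'\in\mathcal{F}_r(L,m-1)\subset\mathcal{F}_r(L,m)$, which yields $H_{L,m-1}\le H_{L,m}$; the reverse inequality is immediate from (i).

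The heart of the proof is sub-claim (i). Fix an $\mathcal{F}_r(L,m)$-partition $\partial\Sigma_{L_1}=c_1(q_1,q_2)+\cdots+c_m(q_m,q_1)$ with vertices $a_1,\ldots,a_m\subset\partial\Delta$. Call a vertex $a_j$ \emph{removable} if either one of the adjacent arcs $c_{j-1},c_j$ is a point, or the two adjacent arcs lie on a common circle and meet smoothly (angle $\pi$) at $a_j$; deleting a removable vertex exhibits an $\mathcal{F}_r(L,m-1)$-partition. Suppose for contradiction that every vertex is non-removable. Two ingredients combine to yield a contradiction. First, Deformation \ref{Deform1} together with Corollary \ref{2-curvature} shows that in an $H$-extremal surface of $\mathcal{F}_r(L,m)$ all boundary arcs $c_j$ with $L(c_j)<\pi$ and $c_j^\circ\cap E_q=\emptyset$ must share a common curvature $k$ and at most one can be a major arc: any pair violating this would admit a length-preserving bulging strictly increasing $A(\Sigma_{L_1})$ without altering $\overline{n}(\Sigma_{L_1})$, contradicting $H(\Sigma_{L_1})=H_{L,m}$. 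Second, at every non-removable corner vertex $a_j$ with $q_j\notin E_q$, a local corner-rounding variation — built from the simple lens neighborhoods of the adjacent arcs supplied by Lemma \ref{int-arg1} and Remark \ref{Riemann}(iv) — replaces a tiny neighborhood of $a_j$ by an arc of curvature $k$ bridging the two incident arcs, strictly decreasing $L(\partial\Sigma_{L_1})$ while preserving $A(\Sigma_{L_1})$ and $\overline{n}(\Sigma_{L_1})$; this contradicts precise extremality. Consequently every non-removable vertex must satisfy $q_j\in E_q$. Finally, an analysis of how many essential $E_q$-corners the boundary can carry without forcing $\Sigma_{L_1}$ to be decomposable in $\mathcal{F}_r(L,m-1)$ (in the sense of Lemma \ref{undec}, which is forbidden by Lemma \ref{undec-seq} via the precise extremal sequence constructed in Theorem \ref{LK}) bounds the number of non-removable vertices by a quantity depending only on $L$ and $E_q$; choosing $m$ larger than this bound produces at least one removable vertex, contradiction.

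The main obstacle will be the corner-rounding step and the bookkeeping needed to ensure that its effect on $\overline{n}(\Sigma_{L_1})$ is controlled — a careless perturbation near $a_j$ could in principle create or destroy an $f$-preimage of some $\mathfrak{a}_i\in E_q$. Precisely here the hypothesis $L\in\mathcal{L}$ with $L\ge 2\delta_{E_q}$ enters: it activates Lemma \ref{nobo}, guaranteeing a uniform separation $d_f(f^{-1}(E_q)\cap\Delta,\partial\Delta)\ge\delta_0>0$, so any perturbation of diameter below $\delta_0$ around $a_j$ leaves $\overline{n}(\Sigma_{L_1})$ unchanged. A secondary technical point is that the rounded replacement must preserve the $\mathcal{F}_r$ property (no interior branch points along the new boundary); this is automatic when the bridging arc has curvature $k$ matching the neighbors and Lemma \ref{inj} is applied in a neighborhood of the new arc.
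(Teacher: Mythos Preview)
Your overall structure (reduce to showing $\Sigma_{L_1}\in\mathcal{F}_r(L,m-1)$, then derive $H_{L,m-1}=H_{L,m}$ and precise extremality at level $m-1$) is sound, and matches the paper's strategy. The gap is in your corner-rounding step and the subsequent bookkeeping.

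You claim that at a non-removable vertex $a_j$ with $q_j\notin E_q$ one can ``round the corner with an arc of curvature $k$, strictly decreasing $L$ while preserving $A$.'' This is not correct as stated: any local surgery at a corner changes both area and length, and there is no reason the area should be exactly preserved. The paper's actual mechanism (Lemma~\ref{circular}) is different and more delicate: assuming $c_j\in\mathfrak{C}^2$ (distinct endpoints, $c_j^\circ\cap E_q=\emptyset$, $L(c_j)<\pi$), one replaces $c_j+c_{j+1}(q_{j+1},q_{j+1+\varepsilon})$ by a \emph{single} convex circular arc of the \emph{same total length}, and invokes the isoperimetric-type Lemma~\ref{for-circular} to show this strictly increases area --- contradicting extremality, not precise extremality. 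The hypothesis $c_j\in\mathfrak{C}^2$ is essential here; your argument does not address what happens at a vertex flanked by an arc with an $E_q$-point in its interior, or by a closed arc, or by an arc of length $\ge\pi$.

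This omission is serious because such arcs cannot be dismissed cheaply. The paper spends three substantial lemmas on exactly this: Lemma~\ref{mykey} (a short arc cannot be a full circle), Lemma~\ref{Eq-end} (under the no-$(m-1)$-partition hypothesis, every $c_j\in\mathfrak{C}^2$ has both endpoints in $E_q$, hence $L(c_j)\ge\delta_{E_q}$), and Lemma~\ref{mykey2} (two consecutive short arcs cannot enclose an $E_q$-point in a particular way). Each is proved by an intricate rotational deformation of a piece of $\partial\Sigma_{L_1}$, tracking when the moving arc first hits $\partial\Delta$ and using indecomposability (Lemma~\ref{undec}) at that moment. Only after these are in place does the pigeonhole argument (take $m>10L/\delta_{E_q}$, find two consecutive arcs of length $<\delta_{E_q}/4$) yield a contradiction. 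Your final paragraph gestures at decomposability but does not supply any of this structure, and in particular your claim that the number of non-removable vertices is bounded ``by a quantity depending only on $L$ and $E_q$'' via decomposability has no visible mechanism.
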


To prove this theorem we fix a number $L\ $of $\mathcal{L}$ with
\begin{equation}
L\geq2\delta_{E_{q}}, \label{512}%
\end{equation}
and let $\Sigma_{L_{1}}=\left(  f,\overline{\Delta}\right)  $ be a precise
extremal surface of $\mathcal{F}_{r}(L,m)$ with $L(\partial\Sigma_{L_{1}%
})=L_{1}\leq L,$ and assume $m$ is large enough,
\begin{equation}
m>10L/\delta_{E_{q}}, \label{jj}%
\end{equation}
and $\partial\Sigma_{L_{1}}$ is parametrized by length. Then $\partial
\Sigma_{L_{1}}$ has an $\mathcal{F}(L,m)$-partition%
\begin{equation}
\partial\Sigma_{L_{1}}=c_{1}\left(  q_{1},q_{2}\right)  +c_{2}\left(
q_{2},q_{3}\right)  +\cdots+c_{m}\left(  q_{m},q_{1}\right)  , \label{zz1}%
\end{equation}
which in fact means that $\partial\Delta$ has an $\mathcal{F}(L,m)$-partition
for $\partial\Sigma_{L_{1}}:$%
\begin{equation}
\partial\Delta=\alpha_{1}\left(  a_{1},a_{2}\right)  +\alpha_{2}\left(
a_{2},a_{3}\right)  +\cdots+\alpha_{m}\left(  a_{m},a_{1}\right)  ,
\label{zza1}%
\end{equation}
such that $f$ restricted to a neighborhood of $\alpha_{j}^{\circ}$ in
$\overline{\Delta}$ is a homeomorphism onto a left hand side neighborhood of
$c_{j}^{\circ}$ and $c_{j}=\left(  f,\alpha_{j}\right)  $ is a convex circular
arc for $j=1,2,\dots,m,$ and by Remark \ref{ap11} we may assume%
\[
a_{1}=1.
\]
As in Remark \ref{parameter}, we assume $a_{j}=e^{\sqrt{-1}\psi_{j}},$
$0=\psi_{1}<\psi_{2}<\dots<\psi_{m}<2\pi$ and introduce the continuous
subscript $x$ in $a_{x}\in\partial\Delta$ and $q_{x}\in\partial\Sigma_{L_{1}}$
with $q_{x}=f(a_{x}),$ but when the letters $i,j,k$ appear as subscripts, they
are always integers.

Under the above assumptions, we will first prove Lemmas \ref{nonf},
\ref{home2}, \ref{samecur}, \ref{circular}, \ref{mykey}, \ref{Eq-end}, and
\ref{mykey2}. Then we will prove Theorem \ref{cat2} easily from Lemmas
\ref{circular}, \ref{mykey} and \ref{mykey2}.

\begin{lemma}
\label{nonf}$\partial\Sigma_{L_{1}}$ cannot folded at any $a_{i}\in
\{a_{j}\}_{j=1}^{m}\backslash f^{-1}(E_{q}),$ and thus, for each $i$ with
$a_{i}\in\{a_{j}\}_{j=1}^{m}\backslash f^{-1}(E_{q}),$ $c_{i-1}\ $and $c_{i}$
intersect only at $q_{i}$ in a neighborhood of $q_{i}\ $on $S.$
\end{lemma}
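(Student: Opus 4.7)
The plan is to derive a contradiction by showing that any such fold could be sewn away, producing a surface in $\mathcal{F}(L,m)$ with strictly larger $H$-value than $H_{L,m}$. Suppose, for contradiction, that $\partial\Sigma_{L_1}$ is folded at some $a_i$ with $q_i = f(a_i) \notin E_q$. Then in a neighborhood of $a_i$, the arcs $c_{i-1}$ and $c_i$ must coincide as sets in opposite orientations, which forces them to lie on a common circle $C$; since each is a simple (monotone) arc on $C$, the fold extends for the full length $s_0 = \min(L(c_{i-1}), L(c_i))$. Because $q_i \notin E_q$ and $E_q$ is finite, I can pick $s^{*} \in (0, s_0]$ to be the largest value such that the circular sub-arc $\gamma$ of $C$ of length $s^{*}$ starting at $q_i$ satisfies $\gamma^{\circ} \cap E_q = \emptyset$ (so either $s^{*} = s_0$, or $s^{*}$ terminates exactly at the first point of $E_q$ met along $C$ from $q_i$). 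Let $b_1 \in \alpha_{i-1}^{\circ}$ and $b_2 \in \alpha_i^{\circ}$ be the corresponding preimages, so $P := f(b_1) = f(b_2)$ is the far endpoint of $\gamma$ and
\[
(f,\alpha_{i-1}(b_1,a_i)) = \gamma = -(f,\alpha_i(a_i,b_2)).
\]

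The key step is to apply Lemma \ref{glue}(ii) with $x_1 = b_1$, sewing $\Sigma_{L_1}$ along $\gamma$ to produce a surface $\Sigma_1 = (f_1,\overline{\Delta}) \in \mathbf{F}$ with $L(\partial\Sigma_1) = L_1 - 2s^{*} < L_1$. Formula \eqref{RR1} gives
\[
R(\Sigma_1) = R(\Sigma_{L_1}) - 4\pi\,\#\bigl([\gamma \setminus \{P\}] \cap E_q\bigr) = R(\Sigma_{L_1}),
\]
since $\gamma \setminus \{P\} = \{q_i\} \cup \gamma^{\circ}$ and both $q_i \notin E_q$ and $\gamma^{\circ} \cap E_q = \emptyset$ by construction. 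The boundary $\partial\Sigma_1$ decomposes into the unchanged arcs $c_j$ for $j \notin \{i-1,i\}$ together with the trimmed sub-arcs of $c_{i-1}$ and $c_i$ meeting at $P$ (some of which may degenerate if $s^{*} = L(c_{i-1})$ or $s^{*} = L(c_i)$). Each piece is SCC, and its interior contains no branch points of $f_1$ since it is a sub-arc of a term of the original $\mathcal{F}(L,m)$-partition; Definition \ref{circu}(c) is therefore satisfied and $\Sigma_1 \in \mathcal{F}(L,m)$.

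By Lemma \ref{H>0}, $H_{L,m} > 0$, so $R(\Sigma_{L_1}) = H_{L,m}\,L_1 > 0$. Combined with $R(\Sigma_1) = R(\Sigma_{L_1})$ and $L(\partial\Sigma_1) < L_1$ this gives $H(\Sigma_1) > H_{L,m}$. But by Lemma \ref{Fr'FrF}, $\Sigma_{L_1}$ is also precise extremal in $\mathcal{F}(L,m)$, so $H_{L,m} = \sup_{\Sigma \in \mathcal{F}(L,m)} H(\Sigma)$, which contradicts $H(\Sigma_1) > H_{L,m}$. Hence no such fold exists, which is the first assertion. The second assertion (that $c_{i-1} \cap c_i = \{q_i\}$ on a sufficiently small neighborhood of $q_i$) follows at once: two SCC arcs meeting at a common point without folding cannot coincide on any neighborhood of that point, whether they lie on different circles or on the same circle with compatible orientation at $q_i$.

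The main delicacy I anticipate is verifying that the sewn surface $\Sigma_1$ really is an element of $\mathcal{F}(L,m)$; one has to check in particular that when $s^{*} < s_0$ the new vertex $P \in E_q$ is an allowable partition point (it may itself be a fold point of $\partial\Sigma_1$, but this only concerns the vertex and not the open arcs, so conditions (ii)–(iii) of Definition \ref{circu} remain intact), and that the degenerate case $L(c_{i-1}) = L(c_i) = s_0$ (in which both arcs are absorbed and $q_{i-1} = q_{i+1}$) still leaves a valid Jordan-domain surface in $\mathbf{F}$, which is guaranteed by Lemma \ref{glue}(ii) itself.
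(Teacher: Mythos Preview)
Your proof is correct and follows essentially the same approach as the paper's: assume a fold at $a_i$ with $q_i\notin E_q$, sew along a short folded arc disjoint from $E_q$ to obtain a surface in $\mathcal{F}(L,m)$ with the same $R$ but strictly smaller perimeter, and contradict extremality. The paper's proof is terser (it simply takes a small arc $c_i'$ with $c_i'\cap E_q=\emptyset$ rather than your maximal $s^{*}$, and does not spell out the invocation of $H_{L,m}>0$), but the idea is identical; your added care in citing Lemma~\ref{glue}(ii), Lemma~\ref{H>0}, and Lemma~\ref{Fr'FrF} and in checking that $\Sigma_1\in\mathcal{F}(L,m)$ is sound, though the maximal-$s^{*}$ choice introduces boundary cases you could have avoided by taking any small $s^{*}>0$.
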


\begin{proof}
Assume $a_{i}\in\{a_{j}\}_{j=1}^{m}\backslash f^{-1}(E_{q})$ and
$\partial\Sigma_{L_{1}}$ is folded at $a_{i}.$ Then $c_{i-1}+c_{i}$ contains
an arc of the form $c_{i}^{\prime}+c_{i+1}^{\prime}=c_{i}^{\prime}%
-c_{i}^{\prime}$ such that $c_{i}^{\prime}\cap E_{q}=\emptyset.$ Therefore we
can sew\label{sew12} $\Sigma_{L_{1}}$ along $c_{i}^{\prime}$ to obtain a new
surface $\Sigma^{\prime}\in\mathcal{F}(L,m)$ so that $R(\Sigma^{\prime
})=R(\Sigma_{L_{1}})$ and $L(\partial\Sigma^{\prime})<L(\partial\Sigma_{L_{1}%
}),$ say $H(\Sigma^{\prime})>H(\Sigma_{L_{1}}),$ which contradicts the
maximality of $\Sigma_{L_{1}}.$
\end{proof}

\begin{lemma}
\label{home2}(i) The precise extremal surface $\Sigma_{L_{1}}$ of
$\mathcal{F}_{r}\left(  L,m\right)  $ is also a precise extremal surface of
$\mathcal{F}\left(  L,m\right)  $ and $L_{1}=L\left(  \partial\Sigma_{L_{1}%
}\right)  \geq2\delta_{E_{q}}.$

(ii) $f$ is locally homeomorphic in $\left[  \Delta\backslash f^{-1}%
(E_{q})\right]  \cup\left[  \left(  \partial\Delta\right)  \backslash\left[
f^{-1}(E_{q})\cap\{a_{j}\}_{j=1}^{m}\right]  \right]  .$
\end{lemma}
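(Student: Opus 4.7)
\textbf{Proof proposal for Lemma \ref{home2}.}

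The plan is to handle (i) by a direct comparison argument using Theorem \ref{re}, and (ii) by combining the defining property of $\mathcal{F}_{r}(L,m)$ with Lemma \ref{nonf} and Lemma \ref{inj}.

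For (i), I would first show $H_{L,m}$ is unchanged when computed over $\mathcal{F}(L,m)$ versus $\mathcal{F}_{r}(L,m)$. Since $\mathcal{F}_{r}(L,m)\subset\mathcal{F}(L,m)$ we automatically have $\sup_{\mathcal{F}_{r}(L,m)}H \leq \sup_{\mathcal{F}(L,m)}H$. For the reverse direction, given any $\Sigma\in\mathcal{F}(L,m)$ with $H(\Sigma)$ close enough to the supremum so that $H(\Sigma)>H_{L}-\pi/(2L(\partial\Sigma))$, Theorem \ref{re} produces $\Sigma'\in\mathcal{F}_{r}(L,m)$ with $H(\Sigma')\geq H(\Sigma)$ and $L(\partial\Sigma')\leq L(\partial\Sigma)$. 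Hence the two suprema coincide, and $\Sigma_{L_{1}}$, being extremal in $\mathcal{F}_{r}(L,m)$, is extremal in $\mathcal{F}(L,m)$. The preciseness (minimal boundary length) transfers by the same Theorem \ref{re} applied to any precise extremal surface $\Sigma^{\ast}$ of $\mathcal{F}(L,m)$: it yields $\Sigma^{\ast\ast}\in\mathcal{F}_{r}(L,m)$ with $H(\Sigma^{\ast\ast})\geq H(\Sigma^{\ast})=H_{L,m}$ and $L(\partial\Sigma^{\ast\ast})\leq L(\partial\Sigma^{\ast})$, so $\Sigma^{\ast\ast}$ is extremal in $\mathcal{F}_{r}(L,m)$ and the preciseness of $\Sigma_{L_{1}}$ in $\mathcal{F}_{r}(L,m)$ gives $L(\partial\Sigma_{L_{1}})\leq L(\partial\Sigma^{\ast\ast})\leq L(\partial\Sigma^{\ast})$. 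The bound $L_{1}\geq 2\delta_{E_{q}}$ then follows from Corollary \ref{FFprime} together with the standing assumption $L\geq 2\delta_{E_{q}}$.

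For (ii), the set in question splits naturally into three pieces. At points of $\Delta\setminus f^{-1}(E_{q})$, local homeomorphism is immediate from the definition of $\mathcal{F}_{r}$: such points are regular for $f$, and Lemma \ref{inj} (A1) applies. At points $p\in(\partial\Delta)\setminus\{a_{j}\}_{j=1}^{m}$, we have $p\in\alpha_{j}^{\circ}$ for some $j$, and condition (iii) in the definition of $\mathcal{F}(L,m)$-partitions (Definition \ref{circu}) supplies a neighborhood of $\alpha_{j}^{\circ}$ in $\overline{\Delta}$ on which $f$ is homeomorphic.

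The remaining case is a vertex $a_{i}\in\{a_{j}\}_{j=1}^{m}$ with $a_{i}\notin f^{-1}(E_{q})$, and this is the only step where something must actually be combined rather than quoted. Because $\Sigma_{L_{1}}\in\mathcal{F}_{r}(L,m)$ and $a_{i}\notin f^{-1}(E_{q})$, $a_{i}$ is a regular point of $f$. By Lemma \ref{nonf}, $\partial\Sigma_{L_{1}}$ is not folded at $a_{i}$, so the two incident circular arcs $c_{i-1}$ and $c_{i}$ meet only at $q_{i}=f(a_{i})$ in a small neighborhood of $q_{i}$; equivalently, $(f,\partial\Delta)$ is simple in a neighborhood of $a_{i}$ on $\partial\Delta$. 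The hypothesis of Lemma \ref{inj} (A2) is therefore met, and $f$ restricted to some neighborhood of $a_{i}$ in $\overline{\Delta}$ is a homeomorphism, completing (ii). The only subtlety worth guarding against is the possibility of a \emph{folded} vertex with $a_{i}\notin f^{-1}(E_{q})$, and this is precisely what Lemma \ref{nonf} rules out by the sewing/shortening argument based on the maximality of $\Sigma_{L_{1}}$; apart from that, every piece is a direct invocation.
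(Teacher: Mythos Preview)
Your proof is correct and follows essentially the same approach as the paper. For (i) the paper simply invokes Lemma~\ref{Fr'FrF} (whose proof rests on Corollary~\ref{FF'} and hence on Theorem~\ref{re}), so your direct argument via Theorem~\ref{re} amounts to unrolling that lemma; for (ii) the paper argues via the interior angle at $a_{j_0}$ (which is $\leq 2\pi$ by regularity, with the $=2\pi$ case forcing a fold) before appealing to Lemma~\ref{nonf}, while you go straight to Lemma~\ref{inj}~(A2) after Lemma~\ref{nonf}, but the two routes coincide in substance.
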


\begin{proof}
\label{ok20220107}(i) follows from Lemma \ref{Fr'FrF}.

By definition of $\mathcal{F}_{r}(L,m),$ each $x\in\left(  \partial
\Delta\right)  \backslash\{a_{j}\}_{j=1}^{m}$, is a simple point of $f$ (see
Definition \ref{simple}), say $f$ is a homeomorphism in a neighborhood of $x$
in $\overline{\Delta}.$ Assume for some $j_{0}\leq m$, $f(a_{j_{0}})\notin
E_{q}.$ Then the interior angle $\theta_{j_{0}}$ of $\Sigma_{L_{1}}$ at
$a_{j_{0}}\ $is strictly less than or equal to $2\pi.$ If $\theta_{j_{0}}%
<2\pi,$ then $a_{j_{0}}$ is a simple point of $f$ (Definition \ref{simple}
(a)). If $\theta_{j_{0}}=2\pi$, then $a_{j_{0}}$ is a simple point of $f$ if
$\partial\Sigma_{L_{1}}$ is simple in a neighborhood of $a_{j_{0}}$ in
$\partial\Delta.$ Thus $a_{j_{0}}$ is not a simple point of $f$ iff
$\partial\Sigma_{L_{1}}$ is folded at $a_{j_{0}},$ contradicting Lemma
\ref{nonf}. Thus every point $a_{j}\in\{a_{j}\}_{j=1}^{m}\ $outside
$f^{-1}(E_{q})\ $is a simple point of $f.$ Since all branch points of $f$ are
contained in $f^{-1}(E_{q}),$ the conclusion (ii)
holds.\label{ok20220107 copy(1)}
\end{proof}

\begin{definition}
\label{C^1}$\mathfrak{C}^{1}=\mathfrak{C}^{1}\left(  \Sigma_{L_{1}}\right)  $
is the collection of all subarcs of $\partial\Sigma_{L_{1}}$ such that for
each $c=\left(  f,\alpha\right)  \in\mathfrak{C}^{1},$ the following (a)--(c) hold:

(a) $c$ is an SCC arc and every point of $c^{\circ}$ is a simple point of
$\Sigma_{L_{1}},$ say, $f$ restricted to a neighborhood of $\alpha^{\circ}$ is
a homeomorphism.

(b) $c^{\circ}\cap E_{q}=\emptyset,$ say, $c\cap E_{q}\subset\partial c$
($\partial c$ is the set of endpoints of $c$).

(c) $L(c)<\pi.$

$\mathfrak{C}^{2}=\mathfrak{C}^{2}\left(  \Sigma_{L_{1}}\right)  $ is the
subset of $\mathfrak{C}^{1}$ such that each $c\in\mathfrak{C}^{2}$ has two
distinct endpoints.
\end{definition}

\begin{lemma}
\label{samecur}\label{1-cir}(i) All arcs in $\left\{  c_{j}\right\}
_{j=1}^{m}\cap\mathfrak{C}^{1}$ have the same curvature.

(ii) $\left\{  c_{j}\right\}  _{j=1}^{m}\cap\mathfrak{C}^{1}$ contains at most
one major circular arc (a simple circle is regarded as a major circular arc).

(iii) $\left\{  c_{j}\right\}  _{j=1}^{m}\cap\mathfrak{C}^{1}\ $contains at
most one closed arc, say, $\left\{  c_{j}\right\}  _{j=1}^{m}\cap\left[
\mathfrak{C}^{1}\backslash\mathfrak{C}^{2}\right]  $ is either empty or
contains only one element.
\end{lemma}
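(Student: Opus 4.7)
The plan is to derive all three statements from a single source, namely the deformation procedure recorded in Deformation~\ref{Deform1}, by pitting any violator against the extremality of $\Sigma_{L_{1}}$. The key preliminary observation is that every arc $c=c_{j}\in\{c_{j}\}_{j=1}^{m}\cap\mathfrak{C}^{1}$ is, by the very definition of $\mathfrak{C}^{1}$, an SCC arc satisfying $L(c)<\pi$ and $c^{\circ}\cap E_{q}=\emptyset$; these are precisely the two hypotheses that Deformation~\ref{Deform1} imposes on each of its two selected arcs. Furthermore, Lemma~\ref{home2}(i) upgrades the extremality of $\Sigma_{L_{1}}$ from $\mathcal{F}_{r}(L,m)$ to $\mathcal{F}(L,m)$, so that any competitor $\Sigma'\in\mathcal{F}(L,m)$ with $H(\Sigma')>H(\Sigma_{L_{1}})$ gives a contradiction.

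For part (i), I argue by contradiction: suppose $c_{j_{1}},c_{j_{2}}\in\{c_{j}\}_{j=1}^{m}\cap\mathfrak{C}^{1}$ satisfy $k(c_{j_{1}})\neq k(c_{j_{2}})$. Then hypothesis (a) of Deformation~\ref{Deform1} is met for the pair $j_{1},j_{2}$, and the procedure produces $\Sigma'\in\mathcal{F}(L,m)$ with $L(\partial\Sigma')=L(\partial\Sigma_{L_{1}})$ and $H(\Sigma')>H(\Sigma_{L_{1}})$, contradicting the extremality of $\Sigma_{L_{1}}$ in $\mathcal{F}(L,m)$.

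For part (ii), once (i) is known all arcs of $\{c_{j}\}_{j=1}^{m}\cap\mathfrak{C}^{1}$ share a common curvature. Suppose two of them, $c_{j_{1}}$ and $c_{j_{2}}$, are both major circular arcs. Then hypothesis (b) of Deformation~\ref{Deform1} holds and the same procedure again yields a $\Sigma'\in\mathcal{F}(L,m)$ with $L(\partial\Sigma')=L(\partial\Sigma_{L_{1}})$ but $H(\Sigma')>H(\Sigma_{L_{1}})$, a contradiction. Part (iii) is then an immediate corollary: by the stated convention in (ii) a closed circular path (a full circle) is counted as a major circular arc, so two closed arcs in $\{c_{j}\}_{j=1}^{m}\cap\mathfrak{C}^{1}$ would in particular violate (ii).

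The main obstacle, if any, is purely bookkeeping: verifying that Deformation~\ref{Deform1} really applies here, i.e.\ that the surface $\Sigma_{L_{1}}\in\mathcal{F}_{r}(L,m)\subset\mathcal{F}(L,m)$ and the two chosen arcs satisfy all the hypotheses needed (convexity of the small simple closed lens-neighborhoods used in the deformation, and the fact that the modified partition is still an $\mathcal{F}(L,m)$-partition with the same number $m$ of terms). Both points are built into the statement of Deformation~\ref{Deform1}, which explicitly records the output $\Sigma'\in\mathcal{F}(L,m)$ with the $\mathcal{F}(L,m)$-partition obtained by replacing $c_{j_{i}}$ with the perturbed arc $\mathfrak{c}_{j_{i}}$, so no further work is required. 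Hence the proof of Lemma~\ref{samecur} reduces to three direct invocations of Deformation~\ref{Deform1} against the extremality of $\Sigma_{L_{1}}$.
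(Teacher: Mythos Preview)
Your proposal is correct and follows essentially the same approach as the paper: invoke Deformation~\ref{Deform1} (cases (a) and (b)) against the extremality of $\Sigma_{L_{1}}$ in $\mathcal{F}(L,m)$ (via Lemma~\ref{home2}(i)) to obtain (i) and (ii), and then read off (iii) from (ii) using the convention that a simple circle counts as a major arc.
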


\begin{proof}
If $\#\left\{  c_{j}\right\}  _{j=1}^{m}\cap\mathfrak{C}^{1}\leq1,$ then there
is nothing to prove. So we assume $\#\left\{  c_{j}\right\}  _{j=1}^{m}%
\cap\mathfrak{C}^{1}\geq2.$ Assume that (i) or (ii) of the lemma fails. Then
there exist distinct arcs $c_{j_{1}}$ and $c_{j_{2}}$ in $\left\{
c_{j}\right\}  _{j=1}^{m}\cap\mathfrak{C}^{1}$ such (a) or (b) in Deformation
\ref{Deform1} holds. Then there exists a new surface $\Sigma^{\prime}%
\in\mathcal{F}\left(  L,m\right)  $ such that $H(\Sigma^{\prime}%
)>H(\Sigma_{L_{1}}).$ Thus $\Sigma_{L_{1}}$ is not extremal in $\mathcal{F}%
(L,m), $ contradicting Lemma \ref{home2} (i), and so (i) and (ii) hold, and
(iii) follows from (ii).
\end{proof}

\begin{lemma}
\label{circular}Assume that for some $j\leq m,$
\begin{equation}
c_{j}\in\mathfrak{C}^{2}. \label{829-1}%
\end{equation}
Then $c_{j}+c_{j+1}$ is circular at $q_{j+1}$ if $q_{j+1}\notin E_{q},$ and
$c_{j-1}+c_{j}$ is circular at $q_{j}$ if $q_{j}\notin E_{q}$. The term
"circular at $q_{i}$" means that $f$ restricted to a neighborhood of $a_{i}$
in $\alpha_{i-1}+\alpha_{i}$ is a homeomorphism onto a simple circular arc,
for $i=j$ or $j+1.$
\end{lemma}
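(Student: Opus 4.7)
The plan is to argue by contradiction via a local corner-smoothing deformation, modelled on Deformation \ref{Deform1} together with the cut/sew operations of Lemmas \ref{cut-off} and \ref{patch}. I treat the claim at $q_{j+1}$; the claim at $q_j$ follows by the symmetric argument with indices shifted. Suppose $q_{j+1}\notin E_q$ and, for contradiction, that $c_j+c_{j+1}$ is not circular at $q_{j+1}$. Since $c_j\in\mathfrak{C}^2$ and $q_{j+1}\notin E_q$, Lemma \ref{home2}(ii) guarantees that $f$ is locally homeomorphic at $a_{j+1}$ and Lemma \ref{nonf} rules out folding of $\partial\Sigma_{L_1}$ at $a_{j+1}$. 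By Lemma \ref{cov-1} and Definition \ref{nod}, for every sufficiently small $\delta>0$ there is a disk $(a_{j+1},U)$ of $\Sigma_{L_1}$ of radius $\delta$ on which $f$ is a homeomorphism onto a closed Jordan domain $f(\overline U)$ lying in an open hemisphere, with $f(\overline U)\cap E_q=\emptyset$. Write the old boundary of $U$ as $c_j^{\,*}(p,q_{j+1})+c_{j+1}^{\,*}(q_{j+1},r)$ (terminal/initial sub-arcs of $c_j$ and $c_{j+1}$) and the new boundary as $c_3(r,p)$, a circular arc on $\{d(\,\cdot\,,q_{j+1})=\delta\}$.

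I then define $c^{**}$ as the unique SCC arc from $p$ to $r$, on the same side of the chord $\overline{pr}$ as $c_j^{\,*}+c_{j+1}^{\,*}$, with length $L(c^{**})=L(c_j^{\,*})+L(c_{j+1}^{\,*})$. The assumption that $c_j+c_{j+1}$ is not circular at $q_{j+1}$ forces $c_j^{\,*}+c_{j+1}^{\,*}\neq c^{**}$ as paths for all sufficiently small $\delta$, so Lemma \ref{for-circular} applied to the closed curves $c_j^{\,*}+c_{j+1}^{\,*}+c_3$ and $c^{**}+c_3$ (both contained in an open hemisphere when $\delta$ is small) yields the strict inequality
\[
A(c_j^{\,*}+c_{j+1}^{\,*}+c_3)<A(c^{**}+c_3).
\]
Thus $c^{**}+c_3$ bounds a Jordan domain $T^{**}\subset S$ with $A(T^{**})>A(f(\overline U))$ and the same perimeter. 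Cutting $\Sigma_{L_1}$ along the $f$-lift of $c_3$ via Lemma \ref{cut-off}, replacing the simple corner $(f,\overline U)$ by the simple closed surface $\overline{T^{**}}$, and sewing back along $c_3$ via Lemma \ref{patch} produces $\Sigma^{**}$ whose boundary is obtained from $\partial\Sigma_{L_1}$ by substituting $c^{**}$ for $c_j^{\,*}+c_{j+1}^{\,*}$; hence
\[
L(\partial\Sigma^{**})=L(\partial\Sigma_{L_1}),\qquad \overline n(\Sigma^{**})=\overline n(\Sigma_{L_1}),\qquad A(\Sigma^{**})>A(\Sigma_{L_1}),
\]
so $H(\Sigma^{**})>H(\Sigma_{L_1})=H_{L,m}$. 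This contradicts the extremality of $\Sigma_{L_1}$ in $\mathcal{F}(L,m)$ guaranteed by Lemma \ref{home2}(i), provided $\Sigma^{**}$ can be realised inside $\mathcal{F}(L,m)$.

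The main obstacle is precisely this last qualification. The natural partition of $\partial\Sigma^{**}$ uses the unchanged arcs $c_i$ ($i\neq j,j+1$), the two truncations $c_j\setminus c_j^{\,*}$ and $c_{j+1}\setminus c_{j+1}^{\,*}$, and the new arc $c^{**}$, so a priori $\Sigma^{**}\in\mathcal{F}(L,m+1)$ rather than $\mathcal{F}(L,m)$. I plan to bridge this by taking $m$ large enough that the saturation $H_{L,m}=H_{L,m+1}$ holds, which follows from the bounded-degree reduction of Theorem \ref{sim} combined with the partition bound $m>10L/\delta_{E_q}$ of (\ref{jj}) fixed at the start of the section, so that any surface in $\mathcal{F}(L,m+1)$ with $H>H_L-\pi/(2L)$ reduces by Theorem \ref{re} to a surface in $\mathcal{F}_r(L,m)$ of equal or greater $H$. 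A subsidiary and essentially routine technicality is showing that $c^{**}+c_3$ is a Jordan curve bounding $T^{**}$ on the correct side for all small $\delta$; this is immediate from $f$ being a homeomorphism on $\overline U$ and from $c_3$ lying on the small sphere of radius $\delta$ about $q_{j+1}$.
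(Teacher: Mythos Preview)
Your overall strategy---contradict extremality by replacing a non-circular boundary piece near $q_{j+1}$ with an SCC arc of the same length, using Lemma \ref{for-circular} for the strict area gain---is exactly the paper's idea. The use of Lemmas \ref{home2}(ii), \ref{nonf}, \ref{cut-off}, \ref{patch} and the disk $(a_{j+1},U)$ is all sound, and your application of Lemma \ref{for-circular} to the pair $c_j^*+c_{j+1}^*+c_3$ versus $c^{**}+c_3$ is correct.

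The genuine gap is precisely the one you flag as ``the main obstacle'', and your proposed bridge does not close it. Theorem \ref{re} applied with parameter $m+1$ takes $\mathcal{C}^*(L,m+1)$ into $\mathcal{F}_r(L,m+1)$, not into $\mathcal{F}_r(L,m)$; it never reduces the partition count. Theorem \ref{sim} controls $\deg_{\max}$ within a fixed $\mathcal{F}_r(L,m)$ and says nothing about $H_{L,m}$ versus $H_{L,m+1}$. The equality $H_{L,m}=H_{L,m+1}$ for large $m$ is exactly the content of Theorem \ref{cat2} (via Theorem \ref{okok}), and Lemma \ref{circular} is one of the lemmas used to \emph{prove} Theorem \ref{cat2}. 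So invoking saturation here is circular: at this point in the argument you only know that $\Sigma_{L_1}$ is extremal in $\mathcal{F}(L,m)$ (Lemma \ref{home2}(i)), and $H(\Sigma^{**})>H_{L,m}$ with $\Sigma^{**}\in\mathcal{F}(L,m+1)$ is not yet a contradiction.

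The paper avoids this by a different choice of the piece to be replaced. Instead of a small symmetric corner $c_j^*+c_{j+1}^*$ around $q_{j+1}$, it replaces the \emph{entire} arc $c_j$ together with a short initial segment $c_{j+1}(q_{j+1},q_{j+1+\varepsilon})$ by a single SCC arc $\gamma'_{j,\varepsilon}$ from $q_j$ to $q_{j+1+\varepsilon}$ of the same length. The hypothesis $c_j\in\mathfrak{C}^2$ is used in full strength here: $L(c_j)<\pi$ ensures $L(\gamma_{j,\varepsilon})<\pi$ so that $\gamma'_{j,\varepsilon}$ exists as an SCC arc, and the distinct-endpoints condition ($\#\partial c_j=2$) ensures $\gamma'_{j,\varepsilon}\to c_j$ as $\varepsilon\to0$, so the interior angle at $q_j$ tends to $\theta$ and the sewing along the new boundary $\tau_j$ goes through. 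The resulting partition
\[
\partial\Sigma'=c_1+\cdots+c_{j-1}+\gamma'_{j,\varepsilon}+c_{j+1}(q_{j+1+\varepsilon},q_{j+2})+c_{j+2}+\cdots+c_m
\]
has exactly $m$ terms, so $\Sigma'\in\mathcal{F}(L,m)$ and the contradiction with extremality is immediate. The simple domain needed for this is produced by Lemma \ref{int-arg1} (Claim \ref{cl1}), applied along the whole of $\alpha_j\setminus\{a_j\}$ plus a bit of $\alpha_{j+1}$, rather than by a small disk at $a_{j+1}$. Your symmetric-corner construction can be rescued by making this asymmetric choice $c_j^*=c_j$, but then you need Lemma \ref{int-arg1} instead of the disk $(a_{j+1},U)$ to get a simple domain with old boundary containing all of $c_j$.
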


\begin{remark}
\label{cir-2}By Lemmas \ref{home2} and \ref{circular}, for $i=j,j+1$, when
$q_{i}\notin E_{q},$ we have that $f$ is not only circular at $q_{i}$ but also
homeomorphic in a neighborhood of $a_{i}$ in $\overline{\Delta}.$
\end{remark}

\begin{proof}
Assume $c_{j}\in\mathfrak{C}^{2}$ and
\[
q_{j+1}\notin E_{q}.
\]
Then $c_{j}$ is not closed and by Lemma \ref{home2} $f$ is homeomorphic in a
neighborhood of $\alpha_{j}\backslash\{a_{j}\}=\alpha_{j}^{\circ}\cup\left\{
a_{j+1}\right\}  $ in $\overline{\Delta}.$ Then by Lemma \ref{int-arg1} we
have the following claim.

\begin{claim}
\label{cl1}For sufficiently small $\varepsilon_{0}>0,$ and for the arc
\[
C_{j}=c_{j}\left(  q_{j},q_{j+1}\right)  +c_{j+1}\left(  q_{j+1}%
,q_{j+1+\varepsilon_{0}}\right)  ,
\]
there exist a number $\theta\in(0,\pi/2)$ and a closed simple Jordan domain
$\left(  T_{j,\varepsilon_{0},\theta},C_{j}\right)  =\left(  f,\overline
{D_{j,\varepsilon_{0},\theta}}\right)  $ of $\Sigma_{L_{1}}$ with the old
boundary $C_{j},$ such that the following hold.

(1) $D_{j,\varepsilon_{0},\theta}\ $is a Jordan domain in $\Delta$ with
$\partial D_{j,\varepsilon_{0},\theta}=A_{j}+\beta_{j},$ in which
\[
A_{j}=\alpha_{j}\left(  a_{j},a_{j+1}\right)  +\alpha_{j+1}\left(
a_{j+1},a_{j+1+\varepsilon_{0}}\right)  ,
\]
$\beta_{j}$ is a simple arcs in $\overline{\Delta}$ with $\beta_{j}^{\circ
}\subset\Delta.$

(2) $T_{j,\varepsilon_{0},\theta}$ is a closed Jordan domain in some open
hemisphere $S_{1}$ on $S$ with
\[
\left(  T_{j,\varepsilon_{0},\theta}\backslash\{q_{j}\}\right)  \cap
E_{q}=\emptyset,
\]%
\[
\partial T_{j,\varepsilon_{0},\theta}=C_{j}+\tau_{j},
\]
in which $\tau_{j}$ is a polygonal path from $q_{j+1+\varepsilon_{0}}$ to
$q_{j}.$

(3) The interior angle of $T_{j,\varepsilon_{0},\theta}$ at $q_{j}$ and
$q_{j+1}$ are both $\theta.$
\end{claim}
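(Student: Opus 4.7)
The plan is to construct $T_{j,\varepsilon_{0},\theta}$ by gluing two lune-like neighborhoods produced by Lemma \ref{int-arg1} along the point $q_{j+1}$, using the local homeomorphism of $f$ at $a_{j+1}$ as the matching data. First I would apply Lemma \ref{int-arg1} to $c_{j}\in\mathfrak{C}^{2}$: since $c_{j}$ is an SCC arc with distinct endpoints $q_{j}\neq q_{j+1}$ and $f$ has no branch points on $\alpha_{j}^{\circ}$ (because $\Sigma_{L_{1}}\in\mathcal{F}_{r}(L,m)$), Lemma \ref{int-arg1} yields a closed Jordan domain $\overline{T^{(1)}}$ on $S$ with $\partial T^{(1)}=c_{j}-c^{(1)\prime}$, where $c^{(1)\prime}$ is polygonal with $c_{j}\cap c^{(1)\prime}=\{q_{j},q_{j+1}\}$, and such that $(T^{(1)},\alpha_{j})$ is a simple closed domain of $\Sigma_{L_{1}}$ whose interior angles at $q_{j}$ and $q_{j+1}$ are both positive.

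Second, since $q_{j+1}\notin E_{q}$, Lemma \ref{home2}(ii) gives a closed Jordan neighborhood $V$ of $a_{j+1}$ in $\overline{\Delta}$ on which $f$ is a homeomorphism onto a closed Jordan neighborhood of $q_{j+1}$. For $\varepsilon_{0}>0$ small enough, the short arc $\alpha_{j+1}(a_{j+1},a_{j+1+\varepsilon_{0}})$ lies in $V$, so Lemma \ref{int-arg1} applied to $c_{j+1}(q_{j+1},q_{j+1+\varepsilon_{0}})$ produces a second closed Jordan domain $\overline{T^{(2)}}$ with $\partial T^{(2)}=c_{j+1}(q_{j+1},q_{j+1+\varepsilon_{0}})-c^{(2)\prime}$ again having positive interior angles at its two cusps. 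I would then choose a common angle $\theta>0$ smaller than all four of the angles just obtained and smaller than $\angle(\Sigma_{L_{1}},a_{j+1})/2$ (positive by Lemma \ref{int-ang}), and replace $c^{(1)\prime},c^{(2)\prime}$ by polygonal arcs $\tau_{j}^{(1)},\tau_{j}^{(2)}$ realizing exactly the angle $\theta$ at their endpoints; this is allowed by the flexibility in Lemma \ref{int-arg1}, since the polygonal complement can be chosen as small as we wish.

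Third, because $f$ is a homeomorphism on $V$ and the two polygonal arcs meet $c_{j}$ and $c_{j+1}$ transversally at $q_{j+1}$ with combined angle $2\theta<\angle(\Sigma_{L_{1}},a_{j+1})$, the union $\overline{T^{(1)}}\cup\overline{T^{(2)}}$ together with the wedge near $q_{j+1}$ lifts under $f^{-1}$ to a single closed Jordan domain $\overline{D_{j,\varepsilon_{0},\theta}}$ in $\overline{\Delta}$ whose $\Delta$-boundary $\beta_{j}$ is the concatenation of the two $f$-lifts of $\tau_{j}^{(1)}$ and $\tau_{j}^{(2)}$; this is the required $(T_{j,\varepsilon_{0},\theta},C_{j})$. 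To secure the remaining properties I would shrink $\varepsilon_{0}$ and $\theta$ further: hemisphere containment follows from $L(C_{j})<L(c_{j})+\varepsilon_{0}<\pi$ by Rad\'o's theorem (Lemma \ref{ber}(ii)) together with the thinness of $T_{j,\varepsilon_{0},\theta}$, while $(T_{j,\varepsilon_{0},\theta}\setminus\{q_{j}\})\cap E_{q}=\emptyset$ follows because $c_{j}^{\circ}\cap E_{q}=\emptyset$, $c_{j+1}^{\circ}\cap E_{q}=\emptyset$, and $q_{j+1}\notin E_{q}$ give a positive spherical distance from $C_{j}\setminus\{q_{j}\}$ to $E_{q}\setminus\{q_{j}\}$, so a sufficiently thin $T_{j,\varepsilon_{0},\theta}$ avoids $E_{q}\setminus\{q_{j}\}$.

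The main obstacle is the step of matching the two domains at $q_{j+1}$ into a single Jordan domain with a common angle $\theta$: this requires simultaneously controlling the polygonal complements produced by Lemma \ref{int-arg1} on both sides of $q_{j+1}$ and verifying that the $f$-preimage of the glued region remains a Jordan domain. This is handled by the local homeomorphism at $a_{j+1}$, which transports the wedge $\{0<\arg<2\theta\}$ (in appropriate local coordinates) faithfully to the surface. The claim itself is the technical ingredient needed in Lemma \ref{circular} to argue that, if $f$ failed to be circular at $q_{j+1}$, one could deform $T_{j,\varepsilon_{0},\theta}$ to strictly increase $H(\Sigma_{L_{1}})$, contradicting extremality.
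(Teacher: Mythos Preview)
Your proposal is correct but more elaborate than the paper's approach. The paper applies Lemma~\ref{int-arg1} in a single stroke to the entire arc $C_j = c_j + c_{j+1}(q_{j+1}, q_{j+1+\varepsilon_0})$, rather than to $c_j$ and the short arc separately followed by a gluing. This works because the hypothesis $q_{j+1}\notin E_q$, combined with $\Sigma_{L_1}\in\mathcal{F}_r(L,m)$ and Lemma~\ref{home2}(ii), guarantees that $a_{j+1}$ itself is not a branch point of $f$; hence the full interior $A_j^\circ$ (which contains $a_{j+1}$) is free of branch points, and $C_j$ is a simple arc for small $\varepsilon_0$ by Lemma~\ref{nonf}. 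Lemma~\ref{int-arg1} then directly produces the Jordan domain $T_{j,\varepsilon_0,\theta}$ with the polygonal complement $\tau_j$ and positive angles at the endpoints; shrinking $\theta$ and $\varepsilon_0$ secures the hemisphere containment and the disjointness from $E_q\setminus\{q_j\}$ exactly as you argue. Your two-piece construction with a wedge-filling at $q_{j+1}$ recovers the same object, but the gluing step is unnecessary once one observes that the branch-point obstruction at $a_{j+1}$ is already absent, so Lemma~\ref{int-arg1} applies to the combined arc in one go.
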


To prove the lemma, we will deduce a contradiction under the assumption:

\begin{condition}
\label{cond}$c_{j}+c_{j+1}$ is not circular at $q_{j+1}.$
\end{condition}

Let $\varepsilon\ll\varepsilon_{0}\ $be a positive number which is so small
that for the arc
\[
\gamma_{j,\varepsilon}=\gamma_{j,\varepsilon}\left(  q_{j},q_{j+1+\varepsilon
}\right)  =c_{j}\left(  q_{j},q_{j+1}\right)  +c_{j+1}\left(  q_{j+1}%
,q_{j+1+\varepsilon}\right)  ,
\]%
\[
L(\gamma_{j,\varepsilon})<\pi.
\]
We first show the following.

\begin{Assertion}
\label{as1}For sufficiently small $\varepsilon<\varepsilon_{0},$ there exists
a surface $F_{\varepsilon}=\left(  f_{\varepsilon},\overline{D_{j,\varepsilon
_{0},\theta}}\right)  $ in $S_{1}$ such that $f_{\varepsilon} $ agree with $f$
in a neighborhood of $(\beta_{j}\left(  a_{j+1+\varepsilon_{0}},a_{j}\right)
)\backslash\{a_{j}\}\ $in $\overline{D_{j,\varepsilon_{0},\theta}},$
\begin{equation}
\partial F_{\varepsilon}=\gamma_{j,\varepsilon}^{\prime}+c_{j+1}\left(
q_{j+1+\varepsilon},q_{j+1+\varepsilon_{0}}\right)  +\tau_{j}, \label{1}%
\end{equation}
where $\gamma_{j,\varepsilon}^{\prime}=\gamma_{j,\varepsilon}^{\prime}\left(
q_{j},q_{j+1+\varepsilon}\right)  $ is the SCC arc from $q_{j}$ to
$q_{j+1+\varepsilon}$ with $L\left(  \gamma_{j,\varepsilon}^{\prime}\right)
=L(\gamma_{j,\varepsilon}),$ and moreover,
\begin{equation}
A(F_{\varepsilon})>A(T_{j,\varepsilon_{0},\theta}), \label{2}%
\end{equation}%
\begin{equation}
F_{\varepsilon}\backslash\{q_{j}\}\subset S_{1}\backslash E_{q}, \label{3}%
\end{equation}
and $q_{j+1+\varepsilon}$ is the only possible branch value of $f_{\varepsilon
}.$
\end{Assertion}

In fact, $F_{\varepsilon}$ is a deformation of $\overline{T_{j,\varepsilon
_{0},\theta}}=\left(  f,\overline{D_{j,\varepsilon_{0},\theta}}\right)  $ so
that the new boundary $\tau_{j}^{\circ}$ and the part $c_{j+1}\left(
q_{j+1+\varepsilon},q_{j+1+\varepsilon_{0}}\right)  $ of the old boundary of
$T_{j,\varepsilon_{0},\theta}$ remain unchanged, while the part $\gamma
_{j,\varepsilon}$ of the old boundary is changed into $\gamma_{j,\varepsilon
}^{\prime}.$

It is clear that as $\varepsilon\rightarrow0,$ $\gamma_{j,\varepsilon}%
^{\prime}$ converges to $c_{j}$ and thus the left hand side angle of $\tau
_{j}+\gamma_{j,\varepsilon}^{\prime}$ at $q_{j}$ tends to $\theta$ (this may
fail when $c_{j}\in\mathfrak{C}^{1}\backslash\mathfrak{C}^{2},$ the assumption
$c_{j}\in\mathfrak{C}^{2}$ is used here). Thus, when $\varepsilon$ is small
enough, the circular arc $\gamma_{j,\varepsilon}^{\prime}$ does not intersects
$\tau_{j}\backslash\{q_{j}\}$ as sets in $S_{1},$ and it is clear that for
sufficiently small $\varepsilon>0,$
\[
\gamma_{j,\varepsilon}^{\prime}+c_{j+1}\left(  q_{j+1+\varepsilon
},q_{j+1+\varepsilon_{0}}\right)  +\tau_{j}%
\]
encloses a surface $F_{\varepsilon}=\left(  f_{\varepsilon},\overline
{D_{j,\varepsilon_{0},\theta}}\right)  $ in $S_{1},$ which is just a simple
closed domain in $S_{1}$ when $\gamma_{j,\varepsilon}^{\prime}+c_{j+1}\left(
q_{j+1+\varepsilon},q_{j+1+\varepsilon_{0}}\right)  $ is simple. It is clear
that by Lemma \ref{for-circular} and Condition \ref{cond} we have (\ref{2}).
When $\varepsilon\rightarrow0,$ as sets in $S_{1},F_{\varepsilon}$ converges
to $T_{j,\varepsilon_{0},\theta}$ and so (\ref{3}) holds.

When $\gamma_{j,\varepsilon}^{\prime}+c_{j+1}\left(  q_{j+1+\varepsilon
},q_{j+1+\varepsilon_{0}}\right)  $ is not simple, $\gamma_{j,\varepsilon
}^{\prime}+c_{j+1}\left(  q_{j+1+\varepsilon},q_{j+1+\varepsilon_{0}}\right)
$ contains a small closed arc, which is consisted of two short subarcs of
$\gamma_{j,\varepsilon}^{\prime}$ and $c_{j+1}\left(  q_{j+1+\varepsilon
},q_{j+1+\varepsilon_{0}}\right)  $ near $q_{1+j+\varepsilon},$ and which
tends to $q_{j+1}$ as $\varepsilon\rightarrow0,$ and we may make
$q_{j+1+\varepsilon}$ to be the only branch value of $F_{\varepsilon}.$ It is
clear that $\left(  f,\beta_{j}\right)  $ is equivalent to $\left(
f_{\varepsilon},-\beta_{j}\right)  $ and since the interior angle of
$F_{\varepsilon}=\left(  f_{\varepsilon},\overline{D_{j,\varepsilon_{0}%
,\theta}}\right)  \ $at $a_{j}$ tends to that of $\left(  f,\overline
{D_{j,\varepsilon_{0},\theta}}\right)  =T_{j,\varepsilon_{0},\theta},$ we can
make $F_{\varepsilon}$ and $f$ agree in a neighborhood of $\beta
\backslash\{a_{j}\}$ in $\overline{D_{j,\varepsilon_{0},\theta}}.$ The
assertion is proved.

Now we can deform $\Sigma_{L_{1}}$ by replacing the part $T_{j,\varepsilon
_{0},\theta}$ of $\Sigma_{L_{1}}$ with $F_{\varepsilon},$ that is, we cut
$\left(  T_{j,\varepsilon_{0},\theta},C_{j}\right)  $ from $\Sigma_{L_{1}}$
along the new boundary $\tau_{j}^{\circ}$ to obtain a surface $\Sigma_{1},$
and then we sew\label{sew25} $F_{\varepsilon}$ and $\Sigma_{1}$ also along the
new boundary $\tau_{j}^{\circ}.$ Then by (\ref{zz1}) and (\ref{1}) we see that
$\Sigma_{L_{1}}$ becomes a new surface $\Sigma^{\prime}=\left(  f^{\prime
},\overline{\Delta}\right)  $ such that
\begin{equation}
\partial\Sigma^{\prime}=c_{1}+\cdots+c_{j-1}+\gamma_{j}^{\prime}%
+c_{j+1}(q_{j+1+\varepsilon},q_{j+2})+c_{j+2}+\cdots+c_{m}. \label{(4)}%
\end{equation}

We have to show $\Sigma^{\prime}\in\mathcal{F}\left(  L,m\right)  .$ Since
$f_{\varepsilon}$ agree with $f$ in a neighborhood of $\beta_{j}%
\backslash\{a_{j}\}$ in $\overline{D_{j,\varepsilon_{0},\theta}}\ $and the
partition (\ref{1}) is an $\mathcal{F}\left(  L,3\right)  $ partition of
$\partial F_{\varepsilon}$, $f^{\prime}\ $can be defined by $f$ on
$\overline{\Delta}\backslash\overline{D_{j,\varepsilon_{0},\theta}}$ and
$f_{\varepsilon}$ on $\overline{D_{j,\varepsilon_{0},\theta}},$ and so
(\ref{(4)}) is an $\mathcal{F}\left(  L,m\right)  $-partition of
$\partial\Sigma^{\prime},$ by (\ref{1}) and (\ref{(4)}), and so $\Sigma
^{\prime}\in\mathcal{F}\left(  L,m\right)  .$ By Assertion \ref{as1} we also
have
\[
L(\partial\Sigma_{L_{1}})=L(\partial\Sigma^{\prime}),\bar{n}\left(
\Sigma_{L_{1}}\right)  =\bar{n}\left(  \Sigma^{\prime}\right)  ,A(\Sigma
^{\prime})>A(\Sigma_{L_{1}}),
\]
which implies $H(\Sigma^{\prime})>H(\Sigma_{L_{1}}).$ Then $\Sigma_{L_{1}}$ is
not an extremal surface of $\mathcal{F}\left(  L,m\right)  ,$ which
contradicts Lemma \ref{home2} (i).\label{ok2002-01-08-2}
\end{proof}

\begin{lemma}
\label{mykey}Assume that%
\begin{equation}
L(c_{1})<\delta_{E_{q}}/2. \label{2022-01-03}%
\end{equation}
Then $c_{1}\ $is not closed, say, $q_{1}\neq q_{2}.$
\end{lemma}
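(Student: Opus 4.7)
\medskip

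\emph{Plan.} Assume for contradiction that $c_1$ is closed, so $q_1 = q_2$ with $a_1 \neq a_2$. Then $c_1$ is a strictly convex simple circle with perimeter $L(c_1) < \delta_{E_q}/2 < \pi$, bounding a convex spherical disk $T_{c_1}$ satisfying
\[
A(T_{c_1}) \;=\; 2\pi - \sqrt{4\pi^2 - L(c_1)^2} \;=\; \frac{L(c_1)^2}{2\pi + \sqrt{4\pi^2 - L(c_1)^2}} \;\le\; \frac{L(c_1)^2}{2\pi},
\]
and of spherical diameter strictly less than $\delta_{E_q}$, so that $\#(T_{c_1}\cap E_q) \le 1$.

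\emph{Decomposition step.} I would apply Corollary~\ref{glue-1} with $\gamma := c_1$ and $\alpha := \alpha_1(a_1,a_2)$, sewing $T^c = S \setminus T_{c_1}^\circ$ to $\Sigma_{L_1}$ along $c_1$ to produce a surface $\Sigma_2 = (f_2,\overline\Delta)$ with
\[
\partial\Sigma_2 = c_2 + c_3 + \cdots + c_m, \qquad L(\partial\Sigma_2) = L(\partial\Sigma_{L_1}) - L(c_1),
\]
and, by (\ref{8pi}),
\[
\Delta R \;:=\; R(\Sigma_{L_1}) - R(\Sigma_2) \;=\; (q-2)A(T_{c_1}) - 4\pi\bar{n}(T_{c_1}) + 8\pi - 4\pi\chi_{E_q}(q_1) \;\ge\; 0.
\]
Parts (ii)-(iii) of Corollary~\ref{glue-1} combined with $\Sigma_{L_1} \in \mathcal{F}_r(L,m)$ show $f_2$ inherits no extraneous branch points, so $\Sigma_2 \in \mathcal{F}_r(L,m-1) \subset \mathcal{F}_r(L,m)$ and hence $H(\Sigma_2) \le H_{L,m} = H(\Sigma_{L_1})$.

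\emph{Reduction to a ratio estimate and the main case.} By Lemma~\ref{ratio}, $H(\Sigma_2) \ge H(\Sigma_{L_1})$ is equivalent to $\Delta R / L(c_1) \le H(\Sigma_{L_1})$. If this inequality holds, either $H(\Sigma_2) > H_{L,m}$ (contradicting $\Sigma_2 \in \mathcal{F}(L,m)$) or $H(\Sigma_2) = H_{L,m}$, so $\Sigma_2$ is another extremal surface in $\mathcal{F}_r(L,m)$ of strictly smaller perimeter, contradicting the \emph{preciseness} of $\Sigma_{L_1}$. Thus the whole lemma reduces to showing $\Delta R / L(c_1) \le H_{L,m}$. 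In the main configuration $\bar{n}(T_{c_1}) = 1$ and $q_1 \in E_q$, one has $\Delta R = (q-2)A(T_{c_1})$, hence
\[
\frac{\Delta R}{L(c_1)} \;\le\; \frac{(q-2)L(c_1)}{2\pi} \;<\; \frac{(q-2)\delta_{E_q}}{4\pi}.
\]
Combined with the lower bound $H_{L,m} \ge H_{2\delta_{E_q},1} \ge (q-2)\delta_{E_q}/(4\pi)$ coming from Corollary~\ref{FFprime} (which gives $L(\partial\Sigma_{L_1}) \ge 2\delta_{E_q}$) and the explicit disk formula of Lemma~\ref{H>0}, this yields $\Delta R/L(c_1) < H_{L,m}$ and the desired contradiction.

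\emph{Main obstacle and remaining configurations.} The hard part is the three remaining configurations ($\bar{n}(T_{c_1}) = 0$ or $q_1 \notin E_q$), where the bracket $8\pi - 4\pi\chi_{E_q}(q_1) - 4\pi\bar{n}(T_{c_1})$ equals $4\pi$ or $8\pi$, so $\Delta R/L(c_1) \ge 4\pi/L(c_1)$ is large and the direct ratio bound fails. To treat these I would exploit the geometric rigidity established earlier in the section: since $c_1 \in \mathfrak{C}^1 \setminus \mathfrak{C}^2$, Lemma~\ref{samecur}(i)-(iii) forces every other arc of $\{c_j\}\cap\mathfrak{C}^1$ to have the same (large) curvature $\cot r_1$ with $\sin r_1 = L(c_1)/(2\pi)$; and if $q_1 \notin E_q$, Lemma~\ref{circular} applied to $c_2$ (and to $c_m$) combined with Lemma~\ref{nonf} forces $c_2$ to continue on the very same circle as $c_1$. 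A short rearrangement along $\partial\Sigma_{L_1}$ then lets one replace the closed loop $c_1$ with the adjacent continuation arc, producing a surface in $\mathcal{F}_r(L,m)$ with $R$ unchanged and perimeter strictly smaller, again contradicting preciseness. In the parallel case $q_1 \in E_q$ with $\bar{n}(T_{c_1}) = 0$, the loop can be translated to enclose the nearby required $E_q$-point, reducing to the main case. Carrying out these rearrangements rigorously is where the technical work lies.
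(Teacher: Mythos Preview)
Your approach has a fatal flaw: the ``main configuration'' you analyze in detail---namely $\bar n(T_{c_1})=1$ together with $q_1\in E_q$---is geometrically impossible. Since $L(c_1)<\delta_{E_q}/2$, the spherical radius $r$ of $T_{c_1}$ satisfies $\sin r=L(c_1)/(2\pi)<\delta_{E_q}/(4\pi)$, hence the diameter $2r$ of $\overline{T_{c_1}}$ is strictly less than $\delta_{E_q}$. Consequently $\overline{T_{c_1}}$ can contain \emph{at most one} point of $E_q$ in total, so $\bar n(T_{c_1})+\chi_{E_q}(q_1)\le 1$. Plugging this into your formula gives
\[
\Delta R \;=\;(q-2)A(T_{c_1})-4\pi\bar n(T_{c_1})+8\pi-4\pi\chi_{E_q}(q_1)\;\ge\;(q-2)A(T_{c_1})+4\pi\;\ge\;4\pi
\]
in \emph{every} case that can actually occur. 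Thus $\Delta R/L(c_1)>8\pi/\delta_{E_q}$, which is far too large to be bounded by $H_{L,m}$ via the disk estimate you quote; the clean ratio comparison never gets off the ground.

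This means that what you label the ``hard cases'' are in fact \emph{all} the cases, and your sketched workarounds for them do not hold up. For $q_1\notin E_q$ you invoke Lemma~\ref{circular} on $c_2$, but that lemma requires $c_2\in\mathfrak{C}^2$ (distinct endpoints, $c_2^\circ\cap E_q=\emptyset$, $L(c_2)<\pi$), none of which is known here. The phrase ``replace the closed loop $c_1$ with the adjacent continuation arc, producing a surface \ldots\ with $R$ unchanged and perimeter strictly smaller'' has no clear meaning: re-partitioning the boundary does not change $L(\partial\Sigma_{L_1})$. For the translation idea when $q_1\in E_q$ and $\bar n(T_{c_1})=0$, you must show the translated configuration remains a surface in $\mathcal{F}_r(L,m)$ and track what happens when the lifted arc of the moving circle first meets $\partial\Delta$ or a branch point---this is precisely the delicate rotation argument the paper carries out (rotating $T_0$ about $q_1$, locating the maximal $\theta_0$, and using Lemma~\ref{tangent} to show the lift hits $\{a_j\}$, yielding a decomposition that contradicts Lemma~\ref{undec}). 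That machinery is the real content of the proof, and your proposal does not supply it.
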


\begin{proof}
Though the proof is complicated, the idea is quite simpler, which is implied
in the proof of Case 1 and the discussion for other cases are essentially the
same, with a little difference.

To prove the result, we assume that the opposite holds, say, $c_{1}\ $is a
whole circle. Then (\ref{2022-01-03}) implies the following.

\begin{condition}
\label{(A)}$c_{1}$ is a strictly convex circle from $q_{1}$ to $q_{2}=q_{1},$
the length and diameter of $c_{1}$ are both less than $\delta_{E_{q}}/2,$ and
$c_{1}\cap E_{q}$ is either empty or a singleton.
\end{condition}

We denote by $T_{0}$ the domain enclosed by $c_{1}.$ First of all we have by
Lemma \ref{b-in} the following.

\begin{claim}
\label{c1}$\Sigma_{L_{1}}$ contains no closed simple domain of the form
$\left(  \overline{T_{0}},c_{1}\right)  =\left(  \overline{T_{0}},\alpha
_{1}\right)  $ (as in Remark \ref{Riemann} (iii), $c_{1}$ should be understood
as $\left(  f,\alpha_{1}\right)  $). This in fact means that there is no
subdomain $D$ of $\Delta$ such that $\alpha_{1}\subset\partial D$ and $f$ is a
homeomorphism from $\overline{D}\backslash\{a_{1},a_{2}\}$ onto $\overline
{T_{0}}\backslash\{q_{1}\}.$
\end{claim}

We let $h_{\theta,q_{1}}\left(  w\right)  $ be the rotation%
\begin{equation}
h_{\theta,q_{1}}\left(  w\right)  =\varphi_{q_{1}}^{-1}\circ\varphi_{\theta
}\circ\varphi_{q_{1}}(w),w\in S, \label{ag43}%
\end{equation}
of $S,$ where $\varphi_{q_{1}}$ is a rotation of $S$ putting $q_{1}$ into $0,
$ and $\varphi_{\theta}$ is the rotation $w\mapsto e^{-i\theta}w$ of
$S,\theta\in\lbrack0,\pi].$ Recall that, by Remark \ref{parameter},
$q_{1+\frac{1}{2}}$ is the middle point of $c_{1}.$ Write
\[
c_{1,\theta}=h_{\theta,q_{1}}\left(  c_{1}\right)  ,
\]
let $q_{2,\theta}^{\prime}\in S$ be the intersection of $c_{1,\theta}^{\circ
}=c_{1,\theta}\backslash\{q_{1}\}$ and $c_{1}^{\circ}=c_{1}\backslash
\{q_{1}\}$ with $q_{2,0}^{\prime}=q_{1+\frac{1}{2}},$ let
\begin{align*}
c_{1}  &  =c_{11,\theta}+c_{12,\theta},\\
c_{1,\theta}  &  =c_{11,\theta}^{\prime}+c_{12,\theta}^{\prime}%
\end{align*}
be partitions of $c_{1}$ and $c_{1,\theta}$ with%
\[
c_{11,\theta}=c_{1}\left(  q_{1},q_{2,\theta}^{\prime}\right)  ,c_{12,\theta
}=c_{1}\left(  q_{2,\theta}^{\prime},q_{2}\right)  ,
\]%
\[
c_{11,\theta}^{\prime}=c_{1,\theta}\left(  q_{1},q_{2,\theta}^{\prime}\right)
,c_{12,\theta}^{\prime}=c_{1,\theta}\left(  q_{2,\theta}^{\prime}%
,q_{2}\right)  ,
\]
let $T_{\theta}$ be the disk enclosed by $c_{1,\theta},$ $T_{\theta}^{\prime
}=T_{0}\backslash\overline{T_{\theta}}$ and let $T_{\theta}^{\prime\prime
}=T_{\theta}\backslash\overline{T_{0}}.$ Then it is clear that $c_{12,\theta
}^{\prime}$ divides $T_{0}$ into two Jordan domains $T_{\theta}^{\prime}$ and
$T_{\theta}^{\prime\prime\prime},$ and $c_{11,\theta}$ divides $T_{\theta}$
into two Jordan domains $T_{\theta}^{\prime\prime\prime}$ and $T_{\theta
}^{\prime\prime}.$ By Lemma \ref{int-arg1} we have

\begin{claim}
\label{(h)}For sufficiently small $\theta>0,$ $\left(  \overline{T_{\theta
}^{\prime}},c_{12,\theta}\right)  $ is a simple closed Jordan domain of
$\Sigma_{L_{1}}$ (see Remark \ref{Riemann} (ii) and (iii)) with the new
boundary $c_{12,\theta}^{\prime\circ}\ $and old boundary $c_{12,\theta}.$ That
is to say, there exist two simple paths $\alpha_{12,\theta}$ and
$\alpha_{12,\theta}^{\prime}$ in $\overline{\Delta}$, both are from a point
$a_{2,\theta}^{\prime}\in\alpha_{1}^{\circ}$ to $a_{2},$ such that%
\begin{equation}
\alpha_{12,\theta}=\alpha_{1}\left(  a_{2,\theta}^{\prime},a_{2}\right)
\subset\alpha_{1},\alpha_{12,\theta}^{\prime\circ}=\alpha_{12,\theta}%
^{\prime\circ}\left(  a_{2,\theta}^{\prime},a_{2}\right)  \subset\Delta,
\label{(1}%
\end{equation}
$\alpha_{12,\theta}-\alpha_{12,\theta}^{\prime}$ encloses a Jordan domain
$D_{\theta}^{\prime}$ in $\Delta$,
\[
c_{12,\theta}=\left(  f,\alpha_{12,\theta}\right)  ,c_{12,\theta}^{\prime
}=\left(  f,\alpha_{12,\theta}^{\prime}\right)  ,
\]
and $f$ restricted to $\overline{D_{\theta}^{\prime}}$ is a homeomorphism onto
$\overline{T_{\theta}^{\prime}}.$
\end{claim}

We let
\[
\alpha_{11,\theta}=\alpha_{1}\left(  a_{1},a_{2,\theta}^{\prime}\right)  .
\]
Then we can cut $\overline{T_{\theta}^{\prime}}\backslash c_{12,\theta
}^{\prime}$ from $\Sigma_{L_{1}}$ along $c_{12,\theta}^{\prime},$ and
\label{sew24}sew $\overline{T_{\theta}^{\prime\prime}}$ to $\Sigma_{L_{1}%
}\backslash T_{\theta}^{\prime}$ along $c_{11,\theta},$ to obtain a surface
$\Sigma_{\theta}=\left(  f_{\theta},\overline{\Delta}\right)  .$ This
$\Sigma_{\theta}$ can be obtained in another way as follows.

By Lemma \ref{int-arg1}, there exists a closed path $c_{1}^{\prime}$ in
$\overline{T_{0}}$ from $q_{1}$ to $q_{1},$ oriented anticlockwise, such that
$c_{1}^{\prime\circ}\subset T_{0},$ $c_{1}^{\prime}$ is a polygonal path and
for the domain $T_{0}^{\prime}$ enclosed by $c_{1}-c_{1}^{\prime},$ the two
interior angles of $T_{0}^{\prime}$ at $q_{1}$ are both positive, and that
$\left(  \overline{T_{0}^{\prime}},c_{1}\right)  $ is a simple closed domain
of $\Sigma_{L_{1}},$ say, there exists a Jordan domain $D_{0}^{\prime}$ in
$\Delta$ such that $\partial D_{0}^{\prime}\cap\partial\Delta=\alpha_{1}$ and
$f:\overline{D_{0}^{\prime}}\backslash\{a_{1},a_{2}\}\rightarrow
\overline{T_{0}^{\prime}}\backslash\{q_{1}\}$ is a homeomorphism (see Remark
\ref{Riemann} (ii)). We let $\alpha_{1}^{\prime}=\left(  \partial
D_{0}\right)  \backslash\alpha_{1}^{\circ},$ oriented from $a_{1}$ to $a_{2}.$
Then $-c_{1}^{\prime\circ}=\left(  f,-\alpha_{1}^{\prime\circ}\right)  $ is
the new boundary of the domain $\left(  \overline{T_{0}^{\prime}}%
,c_{1}\right)  $ of $\Sigma_{L_{1}}.$ For a small enough number $\theta>0,$
$c_{1,\theta}-c_{1}^{\prime}$ encloses a domain $T_{0,\theta}^{\prime}$ and
$\left(  \overline{T_{0,\theta}^{\prime}},c_{1,\theta}\right)  $ can be
regarded as a deformation of $\left(  \overline{T_{0}^{\prime}},c_{1}\right)
,$ sharing the same new boundary $-c_{1}^{\prime\circ}.$ That is to say, when
we rotate $c_{1}$ by $h_{\theta,q_{1}}\ $to the position $c_{1,\theta},$
$c_{1,\theta}-c_{1}^{\prime}$ still enclosed a simply connected domain
$T_{0,\theta}^{\prime}$ which shares the new boundary $-c_{1}^{\prime\circ}$
of the domain $\left(  \overline{T_{0}^{\prime}},c_{1}\right)  \ $of
$\Sigma_{L_{1}}$ and when we replace $\left(  \overline{T_{0}^{\prime}}%
,c_{1}\right)  $ with $\left(  \overline{T_{0,\theta}^{\prime}},c_{1,\theta
}\right)  ,$ we obtain the new surface $\Sigma_{\theta}=\left(  f_{\theta
},\overline{\Delta}\right)  .$ It is clear that $\left(  \overline
{T_{0,\theta}^{\prime}},c_{1,\theta}\right)  $ is a simple closed domain of
the new surface $\Sigma_{\theta}$. On the other hand, we do not change
$\Sigma_{L_{1}}\backslash\left(  \overline{T_{0}^{\prime}},c_{1}\right)  $ in
the deformation. Therefore $\Sigma_{\theta}\in\mathcal{F}\left(  L,m\right)  $
and the partition (\ref{zz1}) becomes the following $\mathcal{F}\left(
L,m\right)  $-partition of $\partial\Sigma_{\theta}:$%
\begin{equation}
\partial\Sigma_{\theta}=c_{1,\theta}+c_{2}+\cdots+c_{m}. \label{zz1+1}%
\end{equation}
It is clear that
\begin{equation}
A(\Sigma_{\theta})=A(\Sigma_{L_{1}}),L(\partial\Sigma_{\theta})=L(\partial
\Sigma_{L_{1}}). \label{(L)}%
\end{equation}

By Condition \ref{(A)}, the disk $\overline{D(q_{1},\delta_{0})}$ with
$\delta_{0}=d\left(  q_{1},q_{_{1+\frac{1}{2}}}\right)  \ $is contained in
$D(q_{1},\delta_{E_{q}}/2)\subset S$ and contains at most one point of $E_{q}.
$ Then there are only three possibilities:

\noindent\textbf{Case 1. }$\overline{D(q_{1},\delta_{0})}\cap E_{q}\ $is
either empty or is a singleton $\{q^{\ast}\}$ contained in $c_{1},$ and thus
$T_{0}\cap E_{q}=\emptyset.$

\noindent\textbf{Case 2. }$\left(  \overline{D(q_{1},\delta_{0})}%
\backslash\overline{T_{0}}\right)  \cap E_{q}=\{\mathfrak{a}\}$ with
$\mathfrak{a}\in c_{1,\theta_{1}}\cap c_{1,-\theta_{2}},$ where $\theta
_{1}>0,\theta_{2}>0$ and $\pi<\theta_{1}+\theta_{2}<2\pi$. That is to say,
$\theta_{1}$ is the first number in $(0,2\pi)$ so that $\mathfrak{a}\in
c_{1,\theta_{1}}$ and $\theta_{2}$ is the first number in $(0,2\pi)$ so that
$\mathfrak{a}\in c_{1,-\theta_{2}},$ and more over, $\overline{T_{0}}\cap
E_{q}=\emptyset.$

\noindent\textbf{Case 3. }$\left(  \overline{D(q_{1},\delta_{0})}\backslash
T_{0}\right)  \cap E_{q}=\emptyset,$ and $T_{0}$ contains at most one point of
$E_{q}.$

Assume Case 1 occurs. Recall that $c_{12,0}=c_{12,0}^{\prime}=c_{12}\left(
q_{1},q_{1+\frac{1}{2}}\right)  .$ Then we may assume
\begin{equation}
\overline{D(q_{1},\delta_{0})}\cap E_{q}=\{q^{\ast}\}\in c_{12,0}. \label{dd}%
\end{equation}
When it is empty, the proof is the same, and when $\{q^{\ast}\}\in c_{11,0},$
the proof can be proceeded based on consider the rotation $\varphi
_{-\theta,q_{1}}$ in a symmetrical way.

By assumption of Case 1, we have $\overline{n}\left(  \Sigma_{\theta}\right)
=\overline{n}\left(  \Sigma_{L_{1}}\right)  ,$ and then by (\ref{(L)}) we have%
\begin{equation}
H(\Sigma_{L_{1}})=H(\Sigma_{\theta}). \label{ag29}%
\end{equation}

Let $\theta_{0}$ be the maximal positive number in $(0,\pi]$ so that
$\Sigma_{\theta}\in\mathcal{F}\left(  L,m\right)  $ is well defined for all
$\theta\in(0,\theta_{0}).$ This is equivalent to that $\theta_{0}$ is the
maximal number such that $\left(  \overline{T_{\theta}^{\prime}},c_{12,\theta
}\right)  $ is a closed simple Jordan domain of $\Sigma_{L_{1}}$ with the new
boundary $-c_{12,\theta}^{\prime\circ}$ and old boundary $c_{12,\theta}$ for
all $\theta\in(0,\theta_{0})$ (see Remark \ref{Riemann} (ii)), in other words,
for all $\theta\in(0,\theta_{0}),$ $c_{12,\theta}^{\prime\circ}$ is in the
interior of $\Sigma_{L_{1}}^{\circ},$ which just means $\alpha_{12,\theta
}^{\prime\circ}\subset\Delta,$ and $f$ has no branch point in $\alpha
_{12,\theta}^{\prime\circ}.$ Then by Calim \ref{c1} we have
\[
\theta_{0}<\pi,
\]
otherwise, $(\overline{T_{\pi}},c_{12,\pi})=\left(  \overline{T_{0}}%
,c_{1}\right)  $ is a closed simple domain of $\Sigma_{L_{1}},$ by Lemma
\ref{continue0}. Moreover, we can show the following.

\begin{claim}
\label{(j)} $\left(  \overline{T_{\theta_{0}}},c_{12,\theta_{0}}\right)  $ is
still a simple closed Jordan domain of $\Sigma_{L_{1}}$ so that the new
boundary is contained in $c_{12,\theta_{0}}^{\prime\circ}$ and the old
boundary contains $c_{12,\theta_{0}}.$ That is to say, there exist two simple
paths $\alpha_{12,\theta_{0}}$ and $\alpha_{12,\theta_{0}}^{\prime}$ in
$\overline{\Delta}$, both are from the point $a_{2,\theta_{0}}^{\prime}%
\in\alpha_{1}^{\circ}$ to $a_{2},$ such that%
\begin{equation}
\alpha_{12,\theta_{0}}=\alpha_{1}\left(  a_{2,\theta_{0}}^{\prime}%
,a_{2}\right)  \subset\alpha_{1},\alpha_{12,\theta_{0}}^{\prime\circ}%
=\alpha_{12,\theta_{0}}^{\prime\circ}\left(  a_{2,\theta_{0}}^{\prime}%
,a_{2}\right)  \subset\overline{\Delta}, \label{(2}%
\end{equation}
$\alpha_{12,\theta_{0}}-\alpha_{12,\theta_{0}}^{\prime}$ encloses a Jordan
domain $D_{\theta_{0}}^{\prime}$ in $\Delta$,
\[
c_{12,\theta_{0}}=\left(  f,\alpha_{12,\theta_{0}}\right)  ,c_{12,\theta_{0}%
}^{\prime}=\left(  f,\alpha_{12,\theta_{0}}^{\prime}\right)  ,
\]
and $f$ restricted to $\overline{D_{\theta_{0}}^{\prime}}$ is a homeomorphism
onto $\overline{T_{\theta_{0}}^{\prime}}.$
\end{claim}

The difference between Claims \ref{(h)} and \ref{(j)} is just that
$\alpha_{12,\theta_{0}}^{\prime\circ}$ may not be the new boundary, but
contains the new boundary, say, $\Delta$ in (\ref{(1}) should be replaced by
$\overline{\Delta}$ when $\theta=\theta_{0},$ as in (\ref{(2}). In fact by
definition of $\theta_{0},$ considering that $T_{\theta_{0}}^{\prime}%
=\cup_{\theta\in\lbrack0,\theta_{0})}T_{\theta}^{\prime}$ and that $T_{\theta
}^{\prime}$ is an increasing family as $\theta$ increases, $f^{-1}$ has a
univalent branch $g_{\theta_{0}}$ defined on $T_{\theta_{0}}^{\prime}.$ Then
by Lemma \ref{continue0} $g_{\theta_{0}}$ can be extended to a univalent
branch of $f^{-1}$ defined on $\overline{T_{\theta_{0}}^{\prime}}\ $such that
$g_{\theta_{0}}\left(  c_{12,\theta_{0}}\right)  =\alpha_{12,\theta_{0}}.$
Thus $\left(  \overline{T_{\theta_{0}}^{\prime}},c_{12,\theta_{0}}\right)  $
is a closed simple Jordan domain of $\Sigma_{L_{1}}.$ Then $\alpha
_{12,\theta_{0}}^{\prime}=g_{\theta_{0}}\left(  c_{12,\theta_{0}}^{\prime
}\right)  $ is a well defined arc in $\overline{\Delta}$ from $a_{2,\theta
_{0}}^{\prime}$ to $a_{2},$ and $\overline{D_{\theta_{0}}^{\prime}}%
=g_{\theta_{0}}\left(  \overline{T_{\theta_{0}}^{\prime}}\right)  $ is a
closed Jordan domain in $\overline{\Delta}.$ Therefore Claim \ref{(j)} hold.

We let $\Delta_{\theta_{0}}=\Delta\backslash\overline{D_{\theta_{0}}^{\prime}%
}.$ By condition of Case 1, $c_{12,\theta_{0}}^{\prime\circ}\cap
E_{q}=\emptyset$, and so we have

\begin{claim}
\label{(b)}$f$ has no branch value on $c_{12,\theta_{0}}^{\prime\circ}\ $and
thus each component of $\alpha_{12,\theta_{0}}^{\prime}\backslash
\partial\Delta$ has a neighborhood in $\overline{\Delta}$ on which $f$ is a homeomorphism.
\end{claim}

If $\alpha_{12,\theta_{0}}^{\prime\circ}\cap\partial\Delta=\emptyset,$ then
$\left(  f,\overline{\Delta_{\theta_{0}}}\right)  $ is a surface in
$\mathcal{F}\left(  L^{\prime},m+1\right)  ,\mathcal{\ }$with $L^{\prime
}=L-L(c_{12,\theta_{0}})+L(c_{12,\theta_{0}}^{\prime}),$ and thus for small
enough $\rho>0$ and the arc $\mathfrak{c}_{\rho}=c_{1}\left(  q_{2,\theta
_{0}+\rho}^{\prime},q_{2,\theta_{0}}^{\prime}\right)  +c_{12,\theta_{0}%
}^{\prime},$ by Lemma \ref{int-arg1} $\left(  f,\overline{\Delta_{\theta_{0}}%
}\right)  $ contains the simple and closed Jordan domain $\left(
\overline{K_{\rho}},\mathfrak{c}_{\rho}\right)  $ with $K_{\rho}=T_{\theta
_{0}+\rho}^{\prime}\backslash\overline{T_{\theta_{0}}^{\prime}}$ such that
$\mathfrak{c}_{\rho}$ is the old boundary, an arc of $\left(  f,\partial
\Delta_{\theta_{0}}\right)  ,$ and $c_{12,\theta_{0}+\rho}^{\prime\circ}$ is
the new boundary. Then we have
\[
\partial K_{\rho}=\mathfrak{c}_{\rho}-c_{12,\theta_{0}+\rho}^{\prime}%
\]
and $\left(  \overline{T_{\theta_{0}}^{\prime}},c_{12,\theta_{0}}\right)  $
can be extended to the larger simple closed Jordan domain $\left(
\overline{T_{\theta_{0}+\rho}^{\prime}},c_{12,\theta_{0}+\rho}\right)
=\left(  \overline{T_{\theta_{0}}^{\prime}},c_{12,\theta_{0}}\right)
\cup\left(  \overline{K_{\rho}},\mathfrak{c}_{\rho}\right)  $ of
$\Sigma_{L_{1}},$ and so $\Sigma_{\theta}$ is well defined for all $\theta
\in\lbrack0,\theta_{0}+\rho),$ contradicting the maximal property of
$\theta_{0}$. Thus we have

\begin{claim}
\label{(c)} $c_{12,\theta_{0}}^{\prime\circ}=\left(  f,\alpha_{12,\theta_{0}%
}^{\prime}\right)  $ has to intersect $\partial\Sigma_{L_{1}},$ say,
$\alpha_{12,\theta_{0}}^{\prime\circ}\cap\partial\Delta\neq\emptyset.$
\end{claim}

By Condition \ref{(A)}, $c_{12,\theta_{0}}^{\prime}$ is strictly convex, and
it is clear that $\alpha_{12,\theta_{0}}^{\prime}\cap\alpha_{12,\theta_{0}%
}=\{a_{2,\theta_{0}}^{\prime},a_{2}\}.$ On the other hand, by Claim \ref{(c)},
regarding $\left(  \partial\Delta\right)  \backslash\alpha_{12,\theta_{0}%
}^{\circ}$ and $-\alpha_{12,\theta_{0}}^{\prime}$ as $\alpha$ and $\beta$ in
Lemma \ref{tangent}, we conclude that $\alpha_{12,\theta_{0}}^{\prime\circ
}\cap\left(  \partial\Delta\right)  \backslash\alpha_{12,\theta_{0}}^{\circ
}\subset\left\{  a_{j}\right\}  _{j=1}^{m}$ is a finite set $\{a_{i_{1}}%
,\dots,a_{i_{k}}\}.$ Thus $\alpha_{12,\theta_{0}}^{\prime}\cap\partial
\Delta=\{a_{2},a_{i_{1}},\dots,a_{i_{k}},a_{2,\theta_{0}}^{\prime}\},$
arranged anticlockwise on $\partial\Delta.$ Therefore, by Claim \ref{(b)}, we have

\begin{claim}
\label{(d)}$\alpha_{12,\theta_{0}}^{\prime\circ}\cap\left(  \partial
\Delta\right)  \backslash\alpha_{12,\theta_{0}}^{\circ}=\{a_{i_{1}}%
,\dots,a_{i_{k}}\}$ divides $\alpha_{12,\theta_{0}}^{\prime\circ}$ into $k+1$
open arcs, each of which has a neighborhood in $\overline{\Delta}$ on which
$f$ is a homeomorphism.
\end{claim}

Then $\Sigma_{\theta_{0}}$ is no longer a surface, but is consisted of $k+1$
surfaces. We will show that these surfaces are all contained in $\mathcal{F}%
\left(  L,m-1\right)  \subset\mathcal{F}\left(  L,m\right)  .$ We only prove
this in the case that the finite set $\alpha_{12,\theta_{0}}^{\prime\circ}%
\cap\partial\Delta\ $is a singleton $\{a_{i_{1}}\},$ say, $k=1.$ When $k>1$,
the discussion is similar and more simpler. It is clear that we can define the
partition%
\begin{equation}
c_{1,\theta_{0}}=\mathfrak{c}_{1,\theta_{0}}^{\prime}+\mathfrak{c}%
_{1,\theta_{0}}^{\prime\prime}=c_{1,\theta_{0}}\left(  q_{1},q_{i_{1}}\right)
+c_{1,\theta_{0}}\left(  q_{i_{1}},q_{1}\right)  . \label{ag32}%
\end{equation}
Then $\Sigma_{\theta_{0}}$ is consisted of two surfaces $\Sigma_{\theta_{0}%
}^{1}$ and $\Sigma_{\theta_{0}}^{2}$ linked at the point $\left(  f,a_{i_{1}%
}\right)  ,$ so that
\begin{equation}
\partial\Sigma_{\theta_{0}}^{1}=\mathfrak{c}_{1,\theta_{0}}^{\prime}+c_{i_{1}%
}+\cdots+c_{m}, \label{a101}%
\end{equation}%
\begin{equation}
\partial\Sigma_{\theta_{0}}^{2}=\mathfrak{c}_{1,\theta_{0}}^{\prime\prime
}+c_{2}+\cdots+c_{i_{1}-1}. \label{a102}%
\end{equation}
It is clear that the partition (\ref{a101}) contains at least $2$ terms, and
so does (\ref{a102}). Thus by Claim \ref{(d)} we have
\begin{equation}
\Sigma_{\theta_{0}}^{j}\in\mathcal{F}\left(  L,m-1\right)  ,j=1,2, \label{(l)}%
\end{equation}
which implies
\begin{equation}
\left\{  \Sigma_{\theta,}^{1},\Sigma_{\theta_{0}}^{2}\right\}  \subset
\mathcal{F}\left(  L,m\right)  . \label{(1)-1}%
\end{equation}

We still assume $k=2.$ Then we see that (\ref{(L)}) still holds in the
following form
\begin{equation}
\sum_{j=1}^{2}A(\Sigma_{\theta_{0}}^{j})=A(\Sigma_{L_{1}}),\sum_{j=1}%
^{2}L(\partial\Sigma_{\theta_{0}}^{j})=L\left(  \partial\Sigma_{L_{1}}\right)
, \label{a105}%
\end{equation}
and by (\ref{dd}) we have $\sum_{j=1}^{2}\overline{n}(\Sigma_{\theta_{0}}%
^{j})=\overline{n}(\Sigma_{L_{1}}),$ and then $\sum_{j=1}^{2}R(\Sigma
_{\theta_{0}}^{j})=R(\Sigma_{L_{1}}).$ This contradicts Lemma \ref{undec}, and
thus Case can not occur.

Assume Case 2 occurs. If $\theta_{1}\geq\pi,$ then we can obtain a
contradiction as in Case 1.

Assume $\theta_{1}<\pi.$ Then we may find the maximum $\theta_{0}^{\prime}$ in
$(0,\theta_{1}]$ so that $\left(  \overline{T_{\theta_{0}^{\prime}}^{\prime}%
},c_{12,\theta_{0}^{\prime}}\right)  $ is a simple and closed Jordan domain of
$\Sigma_{L_{1}},$ by repeating the argument for Claim (\ref{(j)}). If
$\theta_{0}^{\prime}<\theta_{1},$ or if $\theta_{0}^{\prime}=\theta_{1}$ and
$\alpha_{12,\theta_{0}^{\prime}}^{\prime\circ}\cap\partial\Delta\neq
\emptyset,$ then we can find a contradiction again using the same method of
Case 1.

Assume $\theta_{0}^{\prime}=\theta_{1}\ $and we cannot obtain a contradiction
as in Case 1. Then we have

\begin{claim}
\label{A}$\left(  \overline{T_{\theta_{1}}^{\prime}},c_{12,\theta_{1}}\right)
=\left(  \overline{T_{\theta_{1}}^{\prime}},c_{12,\theta_{0}^{\prime}}\right)
$ is a simple closed Jordan domain of $\Sigma_{L_{1}}$ such that
$c_{12,\theta_{1}}^{\prime\circ}$ is the new boundary, say, $\alpha
_{12,\theta_{1}}^{\prime\circ}\subset\Delta.$
\end{claim}

Then we apply the above argument to obtain $\Sigma_{-\theta}$ for $\theta>0,$
by rotating $T_{0}$ into $T_{-\theta}$ in the other direction in a symmetrical
way. That is, we can construct surfaces $\Sigma_{-\theta},\theta>0,$ such that
the partition (\ref{zz1+1}) works, in which $c_{1,\theta}$ becomes the
rotation $c_{1,-\theta}$ of $c_{1}.$ Then we can either obtain a contradiction
as in the Case 1, or we must have that $\left(  \overline{T_{-\theta_{2}%
}^{\prime}},c_{11,-\theta_{2}}\right)  =\left(  \overline{T_{0}\backslash
T_{-\theta_{2}}},c_{11,-\theta_{2}}\right)  $ is a simple closed Jordan domain
of $\Sigma_{L_{1}},$ as Claim \ref{A}. Then we can see that $\left(
\overline{T_{-\theta_{2}}^{\prime}}\cup\overline{T_{\theta_{1}}^{\prime}%
},c_{1}\right)  =\left(  \overline{T_{0}},c_{1}\right)  $ is a simple Jordan
domain of $\Sigma_{L_{1}},$ contradicting Claim \ref{c1}. Thus in Case 2, we
also have a contradiction. Note that $\theta_{1}+\theta_{2}\geq\pi$ and thus
$\overline{T_{-\theta_{2}}^{\prime}}\cup\overline{T_{\theta_{1}}^{\prime}%
}=\overline{T_{0}}.$ In fact $f_{n}^{-1}$ has branches defined on
$\overline{T_{-\theta_{2}}^{\prime}}\ $and $\overline{T_{\theta_{1}}^{\prime}%
}$ which are agree on $\overline{T_{-\theta_{2}}^{\prime}}\cap\overline
{T_{\theta_{1}}^{\prime}}$ which contains an arc of $c_{1},$ and thus the two
branches are agree on $\overline{T_{-\theta_{2}}^{\prime}}\cap\overline
{T_{\theta_{1}}^{\prime}},$ and so these two branches defines a global branch
on $\overline{T_{0}},$ contradicting Claim \ref{c1}.

Assume Case 3 occurs. Then we may assume
\begin{equation}
\overline{D\left(  q_{1},\delta_{0}\right)  }\cap E_{q}=T_{0}\cap
E_{q}=\{q^{\ast}\}, \label{x13}%
\end{equation}
otherwise we have
\[
\overline{D\left(  q_{1},\delta_{0}\right)  }\cap E_{q}=T_{0}\cap
E_{q}=\emptyset,
\]
and we can obtain a contradiction based on the discussion entirely the same as
in Case 1 in this later case. Then we may assume $q^{\ast}\in c_{12,\theta
^{\ast}}^{\prime\circ}\ $for some $\theta^{\ast}\in(0,\pi).$ If $\theta^{\ast
}\geq\pi,$ we can obtain a contradiction as in Case 1, by Claim \ref{c1}.

Let $\theta_{0}$ be the maximum in $(0,\theta^{\ast}]$ such that
$\Sigma_{\theta}$ is well defined, say, $\alpha_{12,\theta}^{\prime\circ
}\subset\Delta,$ for every $\theta\in(0,\theta_{0}).$ If $\theta_{0}%
<\theta^{\ast},$ then we can obtain a contradiction as in Case 1. So we may
assume that
\begin{equation}
\theta_{0}=\theta^{\ast}. \label{ii}%
\end{equation}

The argument in Case 1 for $\Sigma_{\theta}$ with $\theta\in(0,\theta_{0})$
applies, and (\ref{zz1+1}) and (\ref{(L)}) both hold for $\theta\in
(0,\theta_{0})$, and by (\ref{ii}) we have $\overline{n}\left(  \Sigma
_{\theta}\right)  =\overline{n}\left(  \Sigma_{L_{1}}\right)  $ for all
$\theta\in\lbrack0,\theta_{0})=[0,\theta^{\ast}).$ Therefore (\ref{ag29})
still holds and thus

\begin{claim}
\label{exex}$\Sigma_{\theta}$ is a precise extremal surface of $\mathcal{F}%
\left(  L,m\right)  $ for every $\theta\in\lbrack\theta,\theta_{0}%
)=[\theta,\theta^{\ast}).$
\end{claim}

As in Case 1, we can show that $\alpha_{12,\theta_{0}}^{\prime\circ}%
\cap\partial\Delta$ is a finite set $\{a_{i_{1}},a_{i_{2}},\dots,a_{i_{k}}\}$
in $\{a_{j}\}_{j=1}^{m},$ and then $\Sigma_{\theta_{0}}$ is consisted of $k+1$
surfaces $\left\{  \Sigma_{\theta_{0}}^{j}\right\}  _{j=1}^{k+1}$ of
$\mathcal{F}\left(  L\right)  $ linked at $q_{i_{1}},\dots,q_{i_{k}},$ with
\begin{equation}
\sum_{j=1}^{k+1}A(\Sigma_{\theta_{0}}^{j})=A(\Sigma_{L_{1}}),\sum_{j=1}%
^{k+1}L(\partial\Sigma_{\theta_{0}}^{j})=L(\partial\Sigma_{L_{1}}). \label{ee}%
\end{equation}
Here $k=0$ and $\Sigma_{\theta_{0}}^{j}=\Sigma_{\theta_{0}}$ when
$\alpha_{12,\theta_{0}}^{\prime\circ}\cap\partial\Delta=\emptyset.$ By the
assumption (\ref{ii}), we have
\begin{equation}
q^{\ast}\in c_{12,\theta_{0}}^{\prime\circ}=c_{12,\theta^{\ast}}^{\prime\circ
}. \label{ag28}%
\end{equation}

We show that
\begin{equation}
k\neq0 \label{30}%
\end{equation}
and
\begin{equation}
q^{\ast}\in\{q_{i_{1}},q_{i_{2}},\dots,q_{i_{k}}\}. \label{x14}%
\end{equation}

If $k=0,$ then by (\ref{ag28}) $q^{\ast}$ is in the new boundary
$c_{12,\theta^{\ast}}^{\prime\circ}$ of $\left(  \overline{T_{\theta_{0}%
}^{\prime}},c_{12,\theta_{0}}\right)  $ and%
\begin{equation}
d_{f_{\theta}}\left(  f_{\theta}^{-1}(E_{q}\right)  ,\partial\Delta)\leq
d\left(  q^{\ast},c_{12,\theta}^{\prime}\right)  \rightarrow0 \label{gg}%
\end{equation}
as $\theta\rightarrow\theta_{0}.$ But (\ref{gg}) can't hold by Claim
\ref{exex} and Lemma \ref{nobo}. Thus $k\geq1.$

Assume (\ref{x14}) fails. Then $q^{\ast}$ is again in the new boundary
$c_{12,\theta^{\ast}}^{\prime\circ}$ of $\left(  \overline{T_{\theta_{0}%
}^{\prime}},c_{12,\theta_{0}}\right)  $ and (\ref{gg}) again holds as
$\theta\rightarrow\theta_{0}$, which again contradicts Claim \ref{exex} or
Lemma \ref{nobo}. Thus we have (\ref{x14}). Then $\left(  \alpha
_{12,\theta_{0}}^{\prime\circ}\backslash\left\{  a_{i_{j}}\right\}  _{j=1}%
^{k}\right)  \cap f^{-1}(E_{q})=\emptyset$ and each component of
$\alpha_{12,\theta_{0}}^{\prime\circ}\backslash\partial\Delta=\alpha
_{12,\theta_{0}}^{\prime\circ}\backslash\left\{  a_{i_{j}}\right\}  _{j=1}%
^{k}$ has a neighborhood in $\overline{\Delta}$ on which $f$ is homeomorphic
(by Lemma \ref{home2} (ii)). Then we can show that each $\Sigma_{\theta_{0}%
}^{i}$ is contained in $\mathcal{F}\left(  L,m\right)  $ for $i=1,2,\dots,k+1$
(this is easy to see when $k=1$ as in Case 1, and when $k>1,$ the proof is
similar). On the other hand, (\ref{x13}) and (\ref{x14}) implies that
\[
\sum_{j=1}^{k+1}\overline{n}\left(  \Sigma_{\theta_{0}}^{j}\right)
=\overline{n}\left(  \Sigma_{L_{1}}\right)  ,
\]
which with (\ref{ee}) implies that $\sum_{j=1}^{k+1}R(\Sigma_{\theta_{0}}%
^{j})=R(\Sigma_{L_{1}})\ $and thus by (\ref{ee}) $\Sigma_{L_{1}}$ is
decomposable in $\mathcal{F}\left(  L,m\right)  ,$ contradicting Lemma
\ref{undec}.

We have proved that Cases 1--3 can't occur. Thus Condition \ref{(A)} can't be
satisfied by $\Sigma_{L_{1}}$ and the lemma is proved completely.
\end{proof}

\begin{remark}
The proof of Lemma \ref{mykey} is to deduce contradictions when $c_{1}$ is a
circle with $L(c_{1})<\delta_{E_{q}}/2.$ If the condition that $\Sigma_{L_{1}%
}$ is precise extremal in $\mathcal{F}_{r}\left(  L,m\right)  $ is
strengthened to that $\Sigma_{L_{1}}$ is precise extremal in $\mathcal{F}%
_{r}\left(  L\right)  =\cup_{m=1}^{\infty}\mathcal{F}_{r}\left(  L,m\right)
$, the discussion can be greatly simplified, since we need not to discuss the
number of edges. For example, we can easily prove the following:
\end{remark}

\begin{corollary}
\label{1-cir1}If $\Sigma_{L_{1}}\in\mathcal{F}_{r}\left(  L,m\right)  $ is a
precise extremal surface of $\mathcal{F}_{r}\left(  L\right)  ,$ (\ref{zz1})
is an $\mathcal{F}\left(  L,m\right)  $-partition such that $\{q_{j}%
\}_{j=1}^{m}\subset E_{q}\ $and that $c_{1}$ is a circle with $L(c_{1})<2\pi$
and
\begin{equation}
c_{1}\cap E_{q}=\{q_{1}\}. \label{ag31}%
\end{equation}
Then there exists a precise extremal surface $\Sigma_{L_{1}}^{\prime}\ $of
$\mathcal{F}_{r}\left(  L\right)  $ such that $\partial\Sigma_{L_{1}}^{\prime
}$ has the following $\mathcal{F}\left(  L,m\right)  $-partition%
\[
\partial\Sigma_{L_{1}}^{\prime}=c_{1,\theta_{0}}+c_{2}+\cdots+c_{m}%
\]
where $c_{1,\theta_{0}}$ is the rotation $h_{\theta_{0},q_{1}}\left(
c_{1}\right)  $ of $c_{1}$ with angle $\theta_{0}\in(0,\pi)\ $and
$c_{1,\theta_{0}}^{\circ}\cap E_{q}\neq\emptyset,$ say, $c_{1,\theta_{0}}\cap
E_{q}$ contains not only $q_{1},$ but also a point other than $q_{1}.$
\end{corollary}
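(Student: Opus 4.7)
The strategy is to adapt the rotation construction from the proof of Lemma \ref{mykey}, but to run it in the larger space $\mathcal{F}_r(L) = \cup_{m=1}^\infty \mathcal{F}_r(L,m)$ where the number of boundary arcs is unconstrained, so the only obstruction to continuing a deformation is geometric (intersection of $c_{1,\theta}$ with $E_q$), not combinatorial (violating the $m$-edge bound). Because every endpoint $q_j \in E_q$ by hypothesis and $c_1 \cap E_q = \{q_1\}$, rotating $c_1$ around $q_1$ preserves $\bar n(\Sigma,E_q)$, the area and the perimeter, so the deformation moves inside the set of precise extremal surfaces.

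More precisely, for $\theta \in [0,\pi)$ set $c_{1,\theta} = h_{\theta,q_1}(c_1)$ using the rotation $h_{\theta,q_1}$ of (\ref{ag43}). By Lemma \ref{int-arg1} applied to $\alpha_1$, there is a polygonal arc $c_1'$ with $c_1'^\circ \subset T_0$ (the disk enclosed by $c_1$) such that $(\overline{T_0'},c_1)$ with $T_0' = T_0 \setminus$ (domain enclosed by $c_1'$) is a closed simple Jordan subdomain of $\Sigma_{L_1}$ whose old boundary is $c_1$ and whose new boundary is $-c_1'^\circ$. For small $\theta>0$, the Jordan curve $c_{1,\theta} - c_1'$ still encloses a domain $T_{0,\theta}'$ in the hemisphere containing $c_1 \cup c_1'$, and by cutting $(\overline{T_0'},c_1)$ off $\Sigma_{L_1}$ and sewing $(\overline{T_{0,\theta}'},c_{1,\theta})$ along $-c_1'^\circ$ we obtain $\Sigma_\theta \in \mathcal{F}_r(L)$ with
\[
\partial\Sigma_\theta = c_{1,\theta} + c_2 + \cdots + c_m,\qquad A(\Sigma_\theta)=A(\Sigma_{L_1}),\qquad L(\partial\Sigma_\theta)=L(\partial\Sigma_{L_1}).
\]
Since $h_{\theta,q_1}$ fixes $q_1$ and $c_1 \cap E_q = \{q_1\}$, we have $\bar n(\Sigma_\theta) = \bar n(\Sigma_{L_1})$ whenever $c_{1,\theta}^\circ \cap E_q = \emptyset$. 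Thus $H(\Sigma_\theta) = H(\Sigma_{L_1})$ so $\Sigma_\theta$ is again precise extremal in $\mathcal{F}_r(L)$.

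Let $\theta_0 \in (0,\pi]$ be the supremum of all $\theta$ for which $\Sigma_\vartheta$ is well defined in $\mathcal{F}_r(L)$ and $c_{1,\vartheta}^\circ \cap E_q = \emptyset$ for every $\vartheta \in [0,\theta)$. By Lemma \ref{continue0}, the univalent branches giving $(\overline{T_{0,\vartheta}'},c_{1,\vartheta})$ extend to the closed limit at $\theta_0$, so $\Sigma_{\theta_0}$ itself is defined. By maximality, one of two things happens at $\theta_0$: either $c_{1,\theta_0}^\circ$ contains a point of $E_q$ (which is what we want), or the $f_{\theta_0}$-preimage arc $\alpha_{1,\theta_0}^\prime$ corresponding to the new boundary hits $\partial\Delta$ at an interior point $a^*$. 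In the second alternative, Lemma \ref{tangent}, applied with $\alpha=\partial\Delta \setminus \alpha_1^\circ$ and $\beta = \alpha_{1,\theta_0}^\prime$, forces $a^* \in \{a_j\}_{j=1}^m$; but then $f(a^*) \in \{q_j\}_{j=1}^m \subset E_q$ by hypothesis, and $f(a^*) \in c_{1,\theta_0}^\circ$ because $a^*$ lies in the interior of the arc mapped onto $c_{1,\theta_0}^\circ$. Either way, $c_{1,\theta_0}^\circ \cap E_q \neq \emptyset$, and we set $\Sigma_{L_1}' = \Sigma_{\theta_0}$.

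\textbf{Main obstacle.} The delicate point is justifying that the deformation can actually be continued (with $\Sigma_\theta \in \mathcal{F}_r(L)$) across all $\vartheta \in (0,\theta_0)$ and extended to the critical value $\theta_0$, and then showing that the obstruction at $\theta_0$ necessarily produces a genuinely new point of $E_q$ on $c_{1,\theta_0}^\circ$ rather than just re-attaching $c_{1,\theta_0}$ to itself near $q_1$. This requires combining the Lemma \ref{continue0} extension with the strict convexity of the rotated circle (ruling out spurious tangencies that would merely shuffle the edge structure) and with the rigidity provided by $\{q_j\}_{j=1}^m \subset E_q$, which converts any ``boundary-meeting'' incident into an $E_q$-meeting incident. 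The hypothesis $L(c_1)<2\pi$ is needed to keep $c_{1,\theta}$ inside an open hemisphere throughout the deformation, so that the enclosed disk $T_{0,\theta}$ continues to make sense and the area computation $A(\Sigma_\theta)=A(\Sigma_{L_1})$ remains valid.
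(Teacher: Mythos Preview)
Your overall strategy---rotate $c_1$ about $q_1$ until the deformation is obstructed---matches the paper's approach, and your observation that any point $a^*\in\partial\Delta$ hit by the lift must be some $a_j$ with $q_j=f(a_j)\in E_q\cap c_{1,\theta_0}^\circ$ (using $\{q_j\}_{j=1}^m\subset E_q$) is a genuinely useful shortcut. But there is a real gap: you never verify that the terminal object $\Sigma_{\theta_0}$ is a precise extremal surface of $\mathcal{F}_r(L)$, and in two of the three possible scenarios it is not.

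The issue is that you do not distinguish whether the new $E_q$-point on $c_{1,\theta_0}$ is approached from \emph{outside} the disk $T_0$ enclosed by $c_1$, or from \emph{inside} $T_0$. In the paper's proof these are the separate conditions (1) and (2), with suprema $\theta_1$ and $\theta_2$. If the point comes from inside (your $\theta_0=\theta_1<\theta_2$), then it has a preimage in the simple subdomain $(\overline{T'_{\theta_0}},c_{12,\theta_0})$ of $\Sigma_{L_1}$, and this preimage moves from the interior to the boundary when you pass from $\Sigma_{L_1}$ to $\Sigma_{\theta_0}$. Hence $\overline{n}(\Sigma_{\theta_0})<\overline{n}(\Sigma_{L_1})$ and $H(\Sigma_{\theta_0})>H(\Sigma_{L_1})$, contradicting the extremality of $\Sigma_{L_1}$: this case must be \emph{excluded}, not used as the construction. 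Likewise, in your second alternative the limiting object is not a single surface but a finite collection linked at $a^*$, and the paper disposes of this via the decomposability contradiction of Lemma~\ref{undec}; you cannot simply ``set $\Sigma_{L_1}'=\Sigma_{\theta_0}$'' here.

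Your observation does imply $\theta_1\le\theta_3$ in the paper's notation (since the lift hitting $\partial\Delta$ at $a_j$ forces $q_j\in E_q\cap T_0^\circ\cap c_{1,\theta_3}$), which streamlines the case analysis. But you still need to argue that both bad cases lead to contradictions, leaving only the case $\theta_0=\theta_2<\theta_1\le\theta_3$, where the new $E_q$-point arrives from outside $T_0$, $\overline{n}$ is unchanged, and $\Sigma_{\theta_0}$ is indeed a single precise extremal surface with the desired partition.
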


In Lemma \ref{mykey} we assumed $L(c_{1})<\delta_{E_{q}}/2$ and permitted
$c_{1}^{\circ}$ contains a point of $E_{q}.$ But in the Corollary we assume
$c_{1}^{\circ}\cap E_{q}=\emptyset$ but $L(c_{1})<2\pi,$ and require that the
closed arc $c_{1,\theta_{0}}$ of the new surface boundary contains not only
the point $q_{1}$ of $E_{q},$ but also another point of $E_{q}$.

\begin{proof}
Since $L(c_{1})<2\pi,$ $c_{1}$ is strictly convex. We still use the notations
in the above proof. For sufficiently small $\theta>0$, by (\ref{ag31}) we have
the following conclusion (1)--(3):

(1) $\overline{T_{\theta}^{\prime}}\cap E_{q}=\{q_{1}\}$.

(2) $\overline{T_{\theta}^{\prime\prime}}\cap E_{q}=\{q_{1}\}$.

(3) $\left(  \overline{T_{\theta}^{\prime}},c_{12,\theta}\right)  $ is a
simple closed Jordan domain of $\Sigma_{L_{1}}.$

Then $\Sigma_{\theta}$ is a well defined surface in $\mathcal{F}\left(
L,m\right)  .$ Let $\theta_{1},\theta_{2},\theta_{3}$ be the supremum of
$\theta$ in $(0,\pi)$ such that (1), (2), (3) hold on $[0,\theta]$ respectively.

We first consider the case $\theta_{1}\leq\theta_{2}.$ This means that the
moving circle $c_{\theta}$ first meets $T_{0}\cap E_{q}$ before $c_{\theta}$
first meets $\left(  S\backslash\overline{T_{0}}\right)  \cap E_{q}.$

If $\theta_{3}\leq\theta_{1},$ then we can show that Claims \ref{(c)} and
\ref{(d)} hold for $c_{12,\theta_{3}}^{\prime\circ}=\left(  f,\alpha
_{12,\theta_{3}}^{\prime}\right)  $ and $\alpha_{12,\theta_{3}}^{\prime\circ
},$ and thus $\Sigma_{\theta_{3}}$ is consisted of a finite number of
$k+1,k\geq1,$ surfaces $\Sigma_{\theta_{3}}^{j}$ in $\mathcal{F}\left(
L\right)  $ with%
\begin{equation}
\sum_{j=1}^{k+1}A(\Sigma_{\theta_{3}}^{j})=A(\Sigma_{L_{1}}),\sum_{j=1}%
^{k+1}L(\partial\Sigma_{\theta_{3}}^{j})=L(\partial\Sigma_{L_{1}}),
\label{ag35}%
\end{equation}
and%
\begin{equation}
\sum_{j=1}^{k+1}\overline{n}(\Sigma_{\theta_{3}}^{j})\leq\overline{n}%
(\Sigma_{L_{1}}), \label{ag34}%
\end{equation}
inequality holding if and only if $\theta_{3}=\theta_{1}$ and $f^{-1}%
(E_{q})\cap\alpha_{12,\theta_{3}}^{\prime\circ}\backslash\{a_{i_{1}},a_{i_{2}%
},\dots,a_{i_{k}}\}\neq\emptyset,$ which implies
\begin{equation}
\sum_{j=1}^{k+1}R(\Sigma_{\theta_{3}}^{j})\geq R(\Sigma_{L_{1}}),\sum
_{j=1}^{k+1}L(\partial\Sigma_{\theta_{3}}^{j})=L(\partial\Sigma_{L_{1}}),
\label{ag33}%
\end{equation}
contradicting Lemma \ref{undec} for $\mathcal{F}\left(  L\right)  $.

If $\theta_{3}>\theta_{1},$ then $\Sigma_{\theta_{1}}$ is a surface of
$\mathcal{F}\left(  L\right)  $ with (\ref{zz1+1}) and (\ref{(L)}) holding for
$\theta=\theta_{1}$. But $\overline{n}\left(  \Sigma_{L_{1}}\right)
>\overline{n}\left(  \Sigma_{\theta_{1}}\right)  $. Thus $H\left(
\Sigma_{\theta_{1}}\right)  >H(\Sigma_{L_{1}})$ and $\Sigma_{L_{1}}$ is not
extremal in $\mathcal{F}\left(  L\right)  ,$ contradicting the hypothesis. We
have proved the result for the case $\theta_{1}\leq\theta_{2}.$

Assume $\theta_{1}>\theta_{2}.$ This means that the moving circle $c_{\theta}
$ first meets $E_{q}$ from outside of $c_{\theta}$ before $c_{\theta}$ first
meets $E_{q}\ $from inside of $c_{\theta}$.

If $\theta_{3}>\theta_{2},$ then $\Sigma_{\theta_{2}}\in\mathcal{F}\left(
L\right)  $, (\ref{ag35}) holds for $\theta_{2}$ and $\overline{n}\left(
\Sigma_{L_{1}}\right)  =\overline{n}\left(  \Sigma_{\theta_{2}}\right)  $, and
thus $R(\Sigma_{L_{1}})=R(\Sigma_{\theta_{2}}).$ It is clear that
$c_{1,\theta_{2}}$ contains more than one points of $E_{q}$ and $q_{1}\in
c_{1,\theta_{2}}.$ Therefore $\Sigma_{\theta_{2}}$ satisfies the corollary.

Assume $\theta_{3}\leq\theta_{2}.$ Then we can obtain a contradiction as the
discussion for the case $\theta_{1}\leq\theta_{2}$ and $\theta_{3}\leq
\theta_{1}$.
\end{proof}

Recall that our goal in this section is to prove Theorem \ref{cat2}. We will
in fact deduce a contradiction from the opposite of the conclusion, say under
the condition that $\Sigma_{L_{1}}\notin\mathcal{F}_{r}\left(  L,m-1\right)
.$ This means we assume the following condition, before we prove Theorem
\ref{cat2}.

\begin{condition}
\label{nom-1}$\partial\Sigma_{L_{1}}$ has no $\mathcal{F}\left(  L,m-1\right)
$ partition, say, $\partial\Sigma_{L_{1}}\in\mathcal{F}\left(  L,m\right)
\backslash\mathcal{F}\left(  L,m-1\right)  $.
\end{condition}

We first show the following.

\begin{lemma}
\label{Eq-end}Under Condition \ref{nom-1}, every $c_{j}$ of (\ref{zz1}) which
is contained in $\mathfrak{C}^{2}$ satisfies%
\begin{equation}
c_{j}\cap E_{q}=\partial c_{j}, \label{aaa}%
\end{equation}
and%
\begin{equation}
L\left(  c_{j}\right)  \geq\delta_{E_{q}}. \label{aaa1}%
\end{equation}

\end{lemma}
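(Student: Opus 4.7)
The plan is to prove the two conclusions in order, with the length bound reducing immediately to the first. Assume $c_j=c_j(q_j,q_{j+1})\in\mathfrak{C}^2$, so in particular $q_j\neq q_{j+1}$, $L(c_j)<\pi$, $c_j^{\circ}\cap E_q=\emptyset$, and every point of $c_j^{\circ}$ is a simple point of $\Sigma_{L_1}$. Once $\partial c_j\subset E_q$ is established, the inequality $L(c_j)\geq d(q_j,q_{j+1})\geq\delta_{E_q}$ is automatic, since $c_j$ is a path joining two distinct points of $E_q$ and $L(c_j)$ bounds the spherical distance from above.

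For the inclusion $\partial c_j\subset E_q$ I argue by contradiction: suppose, relabelling if needed, that $q_{j+1}\notin E_q$. Because $c_j\in\mathfrak{C}^2$ and $q_{j+1}\notin E_q$, Lemma \ref{circular} together with Remark \ref{cir-2} shows that $c_j+c_{j+1}$ is circular at $q_{j+1}$ and that $f$ is an OPH in some neighbourhood of $a_{j+1}$ in $\overline{\Delta}$. Consequently $c_j$ and $c_{j+1}$ lie on a common oriented circle $C$ on $S$. The plan is to amalgamate them into a single arc $\tilde c=c_j+c_{j+1}$ and exhibit the $\mathcal{F}(L,m-1)$-partition
\[
\partial\Sigma_{L_1}=c_1+\cdots+c_{j-1}+\tilde c+c_{j+2}+\cdots+c_m
\]
of $\partial\Sigma_{L_1}$, contradicting Condition \ref{nom-1}.

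To complete that argument one has to check that $\tilde c$ is an SCC arc and that the three defining properties in Definition \ref{circu} (c) hold at the merged piece. Convexity of $\tilde c$ follows once both arcs are known to lie on $C$, using Lemma \ref{samecur} (i) and the matching tangent direction at $q_{j+1}$; the interior of the new preimage $\alpha_j+\alpha_{j+1}$ contains no branch points of $f$, since $\alpha_j^{\circ}$ and $\alpha_{j+1}^{\circ}$ already do not and, with $q_{j+1}\notin E_q$, the assumption $\Sigma_{L_1}\in\mathcal{F}_r(L,m)$ forces $a_{j+1}\notin C_f$; and $f$ is an OPH in a neighbourhood of $(\alpha_j+\alpha_{j+1})^{\circ}$ by Lemma \ref{home2} (ii). The symmetric subcase $q_j\notin E_q$ is treated identically with $c_{j-1}+c_j$ in place of $c_j+c_{j+1}$.

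The main technical obstacle is the simplicity of $\tilde c$. One must rule out that $c_j+c_{j+1}$ wraps past the circumference of $C$. The hardest situation is when $c_{j+1}$ is a closed arc on $C$, or more generally when $L(c_j)+L(c_{j+1})$ exceeds the length of $C$; in these subcases the direct amalgamation produces a non-simple path. I plan to dispose of such subcases by a surgical modification of $\Sigma_{L_1}$ in the spirit of the rotation argument in the proof of Lemma \ref{mykey}, combined with Lemma \ref{samecur} (iii) (at most one closed arc of $\mathfrak{C}^1$ appears among the $c_i$) and the precise extremality of $\Sigma_{L_1}$ in $\mathcal{F}_r(L,m)$ guaranteed by Lemma \ref{home2} (i), so as to contradict either the maximality of $H(\Sigma_{L_1})$ or Lemma \ref{undec}.
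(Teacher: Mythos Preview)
Your setup and the reduction of the length bound to the endpoint claim are correct, and the argument for the case where the merged arc $c_j+c_{j+1}$ (or symmetrically $c_{j-1}+c_j$) is simple is exactly the paper's: one obtains an $\mathcal{F}(L,m-1)$-partition and contradicts Condition~\ref{nom-1}. The gap is in the non-simple case: your last paragraph is only a plan, not a proof, and the surgical/rotation idea you outline is not what the paper does and would be much harder to pin down (Lemma~\ref{samecur}~(iii) concerns arcs already in $\mathfrak{C}^1$, which $c_{j+1}$ need not be, and the rotation machinery of Lemma~\ref{mykey} relies on the arc being a full circle of small length).

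The paper's device for the non-simple case is a re-partitioning trick followed by iteration, with no surgery at all. Writing $j_0$ for your $j$ and working backward (the paper assumes $q_{j_0}\notin E_q$), if $c_{j_0-1}+c_{j_0}$ is locally simple on the common circle $C$ but not globally simple, then necessarily $q_{j_0+1}\in c_{j_0-1}^{\circ}$ and $q_{j_0-1}\in c_{j_0}\setminus\{q_{j_0+1}\}$. Splitting at $q_{j_0+1}$ instead of $q_{j_0}$ rewrites $c_{j_0-1}+c_{j_0}=C'_{j_0-1}+C_{j_0}$, where $C_{j_0}$ is the full circle $C$ and $C'_{j_0-1}=c_{j_0-1}(q_{j_0-1},q_{j_0+1})$; this is still an $\mathcal{F}(L,m)$-partition. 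The key point is that $C'_{j_0-1}$, as a set, is contained in $c_{j_0}$, hence again lies in $\mathfrak{C}^2$, and its initial point $q_{j_0-1}\in c_{j_0}\setminus\{q_{j_0+1}\}$ is again outside $E_q$. So the same dichotomy applies to $c_{j_0-2}+C'_{j_0-1}$. Iterating $m-1$ times converts every term into the full circle $C$, whence $L(C)=L_1/m\le L/m<\delta_{E_q}/10$ by~(\ref{jj}); then $C$ meets $E_q$ in at most one point, each copy lies in $\mathfrak{C}^1\setminus\mathfrak{C}^2$, and Lemma~\ref{1-cir}~(iii) forces $m=1$, contradicting~(\ref{jj}). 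This iteration is the missing idea in your proposal.
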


\begin{proof}
\label{20220118 copy(1)}\label{20220420}\label{20220908}\label{20230220}It is
trivial that (\ref{aaa}) implies (\ref{aaa1}). Assume that (\ref{aaa}) fails
for some $c_{j_{0}}\ $contained in $\mathfrak{C}^{2}.$ Then, by definition of
$\mathfrak{C}^{2}$, $c_{j_{0}}$ is not closed and one endpoint of $c_{j_{0}}$
is not in $E_{q},$ and we may assume $q_{j_{0}}$ is not in $E_{q}.$ Thus by
Lemma \ref{circular} $c_{j_{0}-1}+c_{j_{0}}$ is circular at $q_{j_{0}}$ and
$f$ restricted to a neighborhood of $a_{j_{0}}$ in $\overline{\Delta}$ is
homeomorphic, and so $\left(  f,\alpha_{j_{0}-1}+\alpha_{j_{0}}\right)  $ is a
locally simple circular arc, and $c_{j_{0}-1}$ and $c_{j_{0}}$ are both in the
same circle. If $c_{j_{0}-1}+c_{j_{0}}$ is simple, then $c_{j_{0}-1}+c_{j_{0}%
}$ in (\ref{zz1}) can be merged into one edge so that (\ref{zz1}) becomes an
$\mathcal{F}\left(  L,m-1\right)  $-partition of $\partial\Sigma_{L_{1}},$
contradicting Condition \ref{nom-1}. Then $c_{j_{0}-1}+c_{j_{0}}$ is locally
simple but not globally simple. Thus, considering that both $c_{j_{0}-1}$ and
$c_{j_{0}} $ are simple and $c_{j_{0}}\in\mathfrak{C}^{2}$, we have
$q_{j_{0}-1}\in c_{j_{0}}\backslash\left\{  q_{j_{0}+1}\right\}  \ $and
$q_{j_{0}+1}\in c_{j_{0}-1}^{\circ},$ and thus we can write
\[
c_{j_{0}-1}+c_{j_{0}}=C_{j_{0}-1}^{\prime}+C_{j_{0}},
\]
in which $C_{j_{0}-1}^{\prime}=C_{j_{0}-1}^{\prime}\left(  q_{j_{0}%
-1,}q_{j_{0}+1}\right)  =c_{j_{0}-1}\left(  q_{j_{0}-1,}q_{j_{0}+1}\right)  $
and $C_{j_{0}}=C_{j_{0}}\left(  q_{j_{0}+1,}q_{j_{0}+1}\right)  $ is the
circle $c_{j_{0}-1}\left(  q_{j_{0}+1},q_{j_{0}}\right)  +c_{j_{0}}\left(
q_{j_{0}},q_{j_{0}+1}\right)  .$ Therefore we have the $\mathcal{F}\left(
L,m\right)  $-partition%
\begin{equation}
\partial\Sigma_{L_{1}}=c_{1}+\cdots+c_{j_{0}-2}+C_{j_{0}-1}^{\prime}+C_{j_{0}%
}+c_{j_{0}+1}+\cdots+c_{m}, \label{(n)}%
\end{equation}
and as a set on $S,$ $C_{j_{0}-1}^{\prime}\subset c_{j_{0}}$ and the initial
point $q_{j_{0}-1}$ of $C_{j_{0}-1}^{\prime}$ is not in $E_{q}$ (we have
assumed $q_{j_{0}}\notin E_{q}$ and thus $q_{j_{0}-1}\in c_{j_{0}}%
\backslash\left\{  q_{j_{0}+1}\right\}  \subset S\backslash E_{q}$). We
conclude this by

\begin{claim}
\label{mk1}$C_{j_{0}-1}^{\prime}\in\mathfrak{C}^{2}\ $and the initial point
$q_{j_{0}-1}$ of $C_{j_{0}-1}^{\prime}\ $is not in $E_{q}.$
\end{claim}

By Claim \ref{mk1}, the above discussion about $c_{j_{0}-1}+c_{j_{0}}$ applies
to $c_{j_{0}-2}+C_{j_{0}-1}^{\prime}$. Then we can repeat the same argument
$m-1$ times to obtain an $\mathcal{F}\left(  L,m\right)  $-partition%
\begin{equation}
\partial\Sigma_{L_{1}}=C_{1}\left(  q_{j_{0}+1},q_{j_{0}+1}\right)
+C_{2}\left(  q_{j_{0}+1},q_{j_{0}+1}\right)  +\cdots+C_{m}\left(  q_{j_{0}%
+1},q_{j_{0}+1}\right)  \label{kk}%
\end{equation}
such that all $C_{1},\dots,C_{m}$ are simple circles contained in the same
circle $C_{j}=C,$ and by Lemma \ref{home2} (ii) each point $p\in
C_{j}\backslash E_{q}$ is a simple point of $\Sigma_{L_{1}}\ $for
$j=1,\dots,m$ (see Definition \ref{simple} (a) and (b)). Then (\ref{jj}) and
(\ref{kk}) imply%
\[
L(C_{j})=L(C)=\frac{L(\partial\Sigma_{L_{1}})}{m}=\frac{L_{1}}{m}\leq\frac
{L}{m}\leq\frac{\delta_{E_{q}}}{10},
\]
which implies that $C$ contains at most one point of $E_{q}.$ Then we may
assume $C_{j}=C_{j}\left(  q_{j_{0}+1}^{\prime},q_{j_{0}+1}^{\prime}\right)  $
such that $C_{j}^{\circ}=C_{j}\backslash\{q_{j_{0}+1}^{\prime}\}\subset
S\backslash E_{q},$ say, $C_{j}\in\mathfrak{C}^{1}\backslash\mathfrak{C}^{2}.
$ Then by Lemma \ref{1-cir}, we have $m=1.$ This contradicts (\ref{jj}) and
Condition \ref{nom-1}.\label{20220118}\label{20220420 copy(1)}%
\label{20220908 copy(1)}
\end{proof}

\begin{lemma}
\label{mykey2}Assume that Condition \ref{nom-1} holds and
\begin{equation}
L(c_{1}+c_{2})<\delta_{E_{q}}/2. \label{2022-01-03-2}%
\end{equation}
Then the following hold.

(i) $q_{1}\neq q_{3}.$

(ii) $q_{1}\notin c_{2}^{\circ}$ and $q_{3}\notin c_{1}^{\circ}.$

(iii) $c_{1}+c_{2}$ cannot contain a closed subarc $c_{1}^{\prime}%
+c_{2}^{\prime}=c_{1}\left(  q_{1}^{\prime},q_{2}\right)  +c_{2}\left(
q_{2},q_{1}^{\prime}\right)  $ such that
\begin{equation}
q_{1}^{\prime}\in E_{q}\mathrm{\ and\ }q_{1}^{\prime}\notin\{q_{1},q_{2}%
,q_{3}\}. \label{2023-02-20}%
\end{equation}
\bigskip
\end{lemma}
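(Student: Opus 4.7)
The plan is to prove each of (i)--(iii) by contradiction, in the spirit of Lemmas \ref{mykey} and \ref{Eq-end}: each degenerate configuration must either permit the merging of two adjacent edges of the partition (\ref{zz1}), producing an $\mathcal{F}(L,m-1)$-partition that violates Condition \ref{nom-1}, or permit a sewing/cutting modification yielding a new surface $\Sigma' \in \mathcal{F}_r(L,m)$ with $L(\partial\Sigma') < L_1$ and $H(\Sigma') \geq H(\Sigma_{L_1})$, violating the precise extremal property. The hypothesis $L(c_1+c_2) < \delta_{E_q}/2$ forces the relevant loops to be small, so by Rado's theorem (Lemma \ref{ber}(ii)) they bound Jordan domains $T_\gamma$ in a hemisphere, making the sewing of Corollary \ref{glue-1} available.

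For (iii), I would first refine the partition (\ref{zz1}) by splitting $c_1$ at $q_1'$ and $c_2$ at $q_1'$, so that the middle part $c_1(q_1',q_2)+c_2(q_2,q_1')$ becomes a closed Jordan subpath $\gamma$ of length $< \delta_{E_q}/2$ with basepoint $q_1' \in E_q$. Apply Corollary \ref{glue-1} to sew $\Sigma_{L_1}$ with $T^c = S\setminus T_\gamma^{\circ}$ along $\gamma$, producing $\Sigma_2$ with $\partial\Sigma_2 = c_1(q_1,q_1')+c_2(q_1',q_3)+c_3+\cdots+c_m$, satisfying $L(\partial\Sigma_2)=L(\partial\Sigma_{L_1})-L(\gamma)$ and $R(\Sigma_{L_1})=R(\Sigma_2)+R(T_\gamma)+4\pi$ by (\ref{8pi}) (using $q_1'\in E_q$). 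Since $q_1'\notin\{q_1,q_2,q_3\}$, the two flanking arcs $c_1(q_1,q_1')$ and $c_2(q_1',q_3)$ meet at a vertex distinct from $q_1,q_3$, so $\Sigma_2\in\mathcal{F}_r(L,m)$. It then remains to verify $H(\Sigma_2)\geq H(\Sigma_{L_1})$, equivalently $H(\Sigma_{L_1})\cdot L(\gamma) \geq R(T_\gamma)+4\pi$, which reduces to the desired contradiction with minimum-perimeter extremality. When this direct comparison is insufficient, a rotation of the closed loop $\gamma$ around $q_1'$ in the spirit of Lemma \ref{mykey} (Cases 1--3 in that proof), combined with Lemma \ref{undec} applied to decompositions arising at the terminal rotation angle, furnishes the contradiction.

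For (i), assume $q_1 = q_3$, so $\gamma = c_1+c_2$ is a closed subarc of length $< \delta_{E_q}/2$. Split into cases: if $q_2\notin E_q$, Lemma \ref{circular} forces $c_1+c_2$ to be circular at $q_2$, hence $c_1$ and $c_2$ lie on a common circle and can be merged into a single SCC arc of $\mathfrak{C}^1$ (or $\mathfrak{C}^1\setminus\mathfrak{C}^2$ since $q_1=q_3$), producing an $\mathcal{F}(L,m-1)$-partition and contradicting Condition \ref{nom-1}; if $q_2\in E_q$, treat $\gamma$ as a closed Jordan path with vertex in $E_q$ and apply Corollary \ref{glue-1} with rotation arguments as in the proof of Lemma \ref{mykey}, considering the subcases $q_1\in E_q$ or $q_1\notin E_q$ (formula (\ref{8pi}) gives $R(\Sigma_{L_1})=R(\Sigma_2)+R(T_\gamma)+8\pi-4\pi\chi_{E_q}(q_1)$). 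For (ii), suppose $q_1\in c_2^{\circ}$ (the case $q_3\in c_1^{\circ}$ is symmetric); splitting $c_2$ at $q_1$ produces a closed subarc $c_1 + c_2(q_2,q_1)$ with basepoint $q_1$, and the argument reduces to (i) applied to this closed subloop (with $q_1$ playing the role of $q_1=q_3$).

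The principal obstacle is in the sewing-comparison step: the additive $+4\pi$ (or $+8\pi$) term in (\ref{8pi}) is not directly dominated by $H(\Sigma_{L_1})\cdot L(\gamma)$ when $L(\gamma)$ is small, so a naive $H$-comparison after cutting off the loop fails. The resolution, as in Lemma \ref{mykey}, is to rotate the offending closed loop around a vertex in $E_q$, follow the family of deformed surfaces $\Sigma_\theta\in\mathcal{F}(L,m)$ until the maximal rotation angle $\theta_0$ at which $\Sigma_\theta$ ceases to be globally defined, and then analyze the resulting decomposition of $\Sigma_{\theta_0}$ into finitely many subsurfaces linked at points of $\{a_i\}\cap f^{-1}(E_q)$; the undecomposability of $\Sigma_{L_1}$ in $\mathcal{F}_r(L,m)$ (Lemma \ref{undec}), together with Lemma \ref{nobo} ruling out that a point of $f^{-1}(E_q)$ escapes to $\partial\Delta$, yields the desired contradictions in all cases.
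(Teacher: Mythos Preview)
Your overall architecture is sound for (iii) --- the rotation of the closed loop around $q_1'\in E_q$, tracked until the rotated boundary first touches $\partial\Delta$ at some $\{a_{i_j}\}$, followed by Lemma \ref{undec}, is exactly the paper's Case A. But you miss a genuine bifurcation: the paper splits on whether $c_1+c_2$ is convex or concave at $q_2$. In Case B ($\pi<\angle(\Sigma_{L_1},a_2)<2\pi$) the rotation scheme is not applied to $c_1'+c_2'$ directly. Instead one \emph{first} varies the curvatures of $c_1',c_2'$ (fixing total length and endpoints) via Corollary \ref{2-curvature2}, sliding along the family $\mathfrak{D}(\overline{q_1'q_2},x,l-x)$ until either the lifted arc $\alpha'_{l-x^\ast}$ hits $\partial\Delta$ (and one argues as in Case A, but now in $\mathcal{F}_r(L,m+1)$) or $x^\ast=x_0$ and the loop becomes a single circle with strictly larger $R$; the latter reduces to Lemma \ref{mykey}. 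Your proposal does not anticipate this concave case, and a pure rotation of $c_1'+c_2'$ around $q_1'$ does not obviously work there.

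For (i) and (ii) you are working much harder than necessary and your reduction has a gap. The paper's proofs are short applications of Lemma \ref{Eq-end}: under Condition \ref{nom-1}, any $c_j\in\mathfrak{C}^2$ already has $\partial c_j\subset E_q$ and $L(c_j)\ge\delta_{E_q}$. Since $L(c_1+c_2)<\delta_{E_q}/2$ forces $(c_1+c_2)\cap E_q$ to be at most a singleton, one of $c_1,c_2$ lies in $\mathfrak{C}^2$ (after ruling out the folded case via Lemma \ref{nonf}), and Lemma \ref{Eq-end} immediately contradicts the length bound. You never invoke Lemma \ref{Eq-end}. Your alternative route for (i) via merging at $q_2\notin E_q$ can be made to work but silently needs $c_1$ or $c_2\in\mathfrak{C}^2$ to invoke Lemma \ref{circular}; your $q_2\in E_q$ subcase defers to an unspecified rotation. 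More seriously, your reduction of (ii) to (i) by splitting $c_2$ at $q_1$ changes the partition from $m$ to $m+1$ edges; the standing hypothesis for (i) is that $\Sigma_{L_1}$ is precise extremal in $\mathcal{F}_r(L,m)$ \emph{with Condition \ref{nom-1}}, i.e.\ $\Sigma_{L_1}\notin\mathcal{F}_r(L,m-1)$, and after refinement the analogous condition would be $\Sigma_{L_1}\notin\mathcal{F}_r(L,m)$, which is false. So (i) does not transfer to the refined partition as stated.
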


\begin{proof}
The proof of (i) and (ii) is relatively easy, while the proof of (iii) is
quite complicated, but it is very similar to the proof of Case 1 in Lemma
\ref{mykey}.

By Lemma \ref{mykey} and (\ref{2022-01-03-2}) we have

\begin{claim}
\label{(a')}Neither $c_{1}$ nor $c_{2}$ is a closed arc, and $\left(
c_{1}+c_{2}\right)  \cap E_{q}$ contains at most one point.
\end{claim}

To prove (i) we assume $q_{1}=q_{3}.$ If $q_{1}\ $or $q_{2}\ $is contained in
$E_{q},$ then by Claim \ref{(a')}, we have $c_{1}^{\circ}\cap E_{q}%
=\emptyset,$ say, $c_{1}\in\mathfrak{c}^{2},$ and then by Lemma \ref{Eq-end}
$L(c_{1})\geq\delta_{E_{q}},$ contradicting (\ref{2022-01-03-2}). So we may
assume that neither $q_{1}$ nor $q_{2}$ is contained in $E_{q}.$ If
$c_{1}^{\circ}\cap c_{2}^{\circ}\neq\emptyset,$ then $c_{1}$ and $c_{2}$
contains three common points, which implies $c_{1}=-c_{2}$ and $c_{1}+c_{2}$
is folded at $q_{1}\notin E_{q}$, contradicting Lemma \ref{nonf}. Thus by
Claim \ref{(a')}, either $c_{1}$ or $c_{2}$ is contained in $\mathfrak{C}%
^{2},$ but this, together with Lemma \ref{Eq-end}, implies $L(c_{1}+c_{2}%
)\geq\delta_{E_{q}},$ contradicting (\ref{2022-01-03-2}) once more. (i) is proved.

To prove (ii) assume that it fails. Then we may assume $q_{3}\in c_{1}^{\circ
}.$ We first show that

\begin{claim}
\label{nonboth}Either $c_{1}^{\circ}\cap E_{q}$ or $c_{2}^{\circ}\cap E_{q}$
is empty.
\end{claim}

Assume neither $c_{1}^{\circ}\cap E_{q}$ nor $c_{2}^{\circ}\cap E_{q}$ is
empty. Then by Claim \ref{(a')} $c_{1}^{\circ}\cap c_{2}^{\circ}\cap E_{q}$ is
a singleton $\{q^{\ast}\}$ in $E_{q},$ $q_{2}\notin E_{q},$ and thus $c_{1}$
and $c_{2}$ contain three common points, but $c_{1}+c_{2}$ is not folded at
$q_{2}$ by Lemma \ref{nonf}. Then by Claim \ref{(a')}, neither $c_{1}\ $nor
$c_{2}$ is closed but $c_{1}+c_{2}$ is contained in the same circle and
$c_{1}+c_{2}$ is more than that circle since $q_{3}\in c_{1}^{\circ}$, and by
Lemma \ref{home2}, every point of $\left(  \alpha_{1}+\alpha_{2}\right)
^{\circ}\ $is a simple point of $f.$ Then we can write%
\[
c_{1}+c_{2}=C_{1}^{\prime}+C_{2},
\]
where $C_{1}^{\prime}=c_{1}\left(  q_{1},q_{3}\right)  \ $is the subarc of
$c_{1}$ and $C_{2}=c_{1}\left(  q_{3},q_{2}\right)  +c_{2}\left(  q_{2}%
,q_{3}\right)  \ $is a circle, and moreover, $C_{1}^{\prime\circ}$ and
$C_{2}^{\circ}$ have neighborhoods in $\Sigma_{L_{1}}$ which are simple
domains of $\Sigma_{L_{1}}$ (see Remark \ref{Riemann} (ii)). Hence the new
partition
\[
\partial\Sigma_{L_{1}}=C_{1}^{\prime}+C_{2}+c_{3}+c_{4}+\cdots+c_{m}%
\]
is still an $\mathcal{F}\left(  L,m\right)  $-partition of $\partial
\Sigma_{L_{1}}$. But this contradicts Lemma \ref{mykey}, since $L(C_{2}%
)<L(c_{1}+c_{2})<\delta_{E_{q}}/2.$ Thus Claim \ref{nonboth} holds.

By Claim \ref{nonboth} we may assume $c_{1}^{\circ}\cap E_{q}=\emptyset.$ Then
by Claim \ref{(a')} we have $c_{1}\in\mathfrak{C}^{2}$ and then by Lemma
\ref{Eq-end} $c_{1}$ contains two distinct endpoints in $E_{q}$. But this
contradicts Claim \ref{(a')} and (ii) is proved.

To prove (iii), assume it fails, say, that $c_{1}^{\prime}$ and $c_{2}%
^{\prime}$ satisfying the condition of (iii) exist. By (i) and (ii),
$c_{1}+c_{2}$ cannot be contained in a circle and can not be folded at $q_{2}.
$ Hence, by Lemma \ref{nonf}, $c_{1}^{\prime}+c_{2}^{\prime}$ is a simple
closed arc.

Let $\alpha_{1}^{\prime}=\alpha_{1}^{\prime}\left(  a_{1}^{\prime}%
,a_{2}\right)  $ and $\alpha_{2}^{\prime}=\alpha_{2}^{\prime}\left(
a_{2},a_{1}^{\prime\prime}\right)  $ be subarcs of $\alpha_{1}$ and
$\alpha_{2}$ such that $c_{1}^{\prime}=\left(  f,\alpha_{1}^{\prime}\right)  $
and $c_{2}^{\prime}=\left(  f,\alpha_{2}^{\prime}\right)  \ $and let $T_{0}$
be the domain on $S$ enclosed by $c_{1}^{\prime}+c_{2}^{\prime}.$ Then by
(\ref{2023-02-20}) we have%
\begin{equation}
D(q_{1}^{\prime},\delta_{E_{q}}/2)\cap E_{q}=\overline{T_{0}}\cap
E_{q}=\left\{  q_{1}^{\prime}\right\}  , \label{2022-01-03-1}%
\end{equation}
and that $c_{1}$ or $c_{2}$ is strictly convex. On the other hand, $c_{1}$ and
$c_{2}$ cannot externally tangent at $q_{2}$ since they both contain the two
common points $q_{1}^{\prime}$ and $q_{2}.$ Hence, by definition of
$\mathcal{F}\left(  L,m\right)  $-partition in Definition \ref{circu}, we have

\begin{claim}
\label{B}$f$ is homeomorphic in a neighborhood of $\left(  \alpha_{1}^{\prime
}+\alpha_{2}^{\prime}\right)  ^{\circ}$ in $\overline{\Delta},$ $0<\angle
\left(  \Sigma_{L_{1}},a_{2}\right)  <2\pi,$ and we may assume $c_{2}$ is
strictly convex.
\end{claim}

Similar to the discussion of Claim \ref{c1}, we have the following.

\begin{claim}
\label{c2}$\left(  \overline{T_{0}},c_{1}^{\prime}+c_{2}^{\prime}\right)  $
cannot be a simple closed domain of $\Sigma_{L_{1}},$ say, there is no
univalent branch $g$ of $f^{-1}$ defined on $\overline{T_{0}}\backslash
\{q^{\prime}\}$ such that $g\left(  c_{1}^{\prime}+c_{2}^{\prime}\right)
=\alpha_{1}^{\prime}+\alpha_{2}^{\prime}.$
\end{claim}

Now we have only two cases to discuss.

\noindent\textbf{Case A. }$c_{1}+c_{2}$ is convex at $q_{2},$ say
$0<\angle\left(  \Sigma_{L_{1}},a_{2}\right)  \leq\pi.$

\noindent\textbf{Case B. }$c_{1}+c_{2}$ is concave at $q_{2},$ say $\pi
<\angle\left(  \Sigma_{L_{1}},a_{2}\right)  <2\pi.$

The discussion of Case A is essentially a duplication of that for Case 1 (in
the proof of Lemma \ref{mykey}), with a little difference, but we will write
it down for completeness.

Assume Case A occurs and let $h_{\theta,q_{1}^{\prime}}=\varphi_{q_{1}%
^{\prime}}^{-1}\circ\varphi_{\theta}\circ\varphi_{q_{1}^{\prime}},$ where
$\varphi_{q_{1}^{\prime}}$ is a rotation of $S$ moving $q_{1}^{\prime}$ to $0$
and $\varphi_{\theta}$ is the rotation $w\mapsto e^{-i\theta}w$ of $S.$ Then
the rotation $h_{\theta,q_{1}^{\prime}}\left(  c_{1}^{\prime}+c_{2}^{\prime
}\right)  $ of $c_{1}^{\prime}+c_{2}^{\prime}$ never meets any point of
$E_{q}$ other than $q_{1}^{\prime},$ and then we can obtain a contradiction as
in Case 1 (in the proof of Lemma \ref{mykey}). But notations here may have
different meaning. For example, here $T_{0}$ is the domain on $S$ enclosed by
$c_{1}^{\prime}+c_{2}^{\prime},\ $while in the proof of Lemma \ref{mykey},
$T_{0}$ is the disk enclosed by the circle $c_{1}.$ On the other hand one of
the key points in Case 1 of Lemma \ref{mykey} is (\ref{(1)-1}), which follows
from \ref{(l)}\label{lowerL}, but here \ref{(l)} may no longer hold for
$\Sigma_{\theta_{0}}^{i}\ $(the surfaces in (\ref{a103}) and (\ref{a104})).
But we can prove (\ref{(1)-1}) still holds after (\ref{a104})).

The interior angle of $T_{0}$ at $q_{1}^{\prime}$ and $q_{2}$ are both equal
to $\angle\left(  \Sigma_{L_{1}},a_{2}\right)  .$ For $\theta\in
(0,\angle\left(  \Sigma_{L_{1}},a_{2}\right)  )$, we introduce more
notations:
\[
T_{\theta}=h_{\theta,q_{1}^{\prime}}\left(  T_{0}\right)  ,T_{\theta}^{\prime
}=T_{0}\backslash\overline{T_{\theta}},T_{\theta}^{\prime\prime}=T_{\theta
}\backslash\overline{T_{0}},
\]%
\[
c_{1,\theta}^{\prime}=h_{\theta,q_{1}^{\prime}}\left(  c_{1}^{\prime}\right)
,c_{2,\theta}^{\prime}=h_{\theta,q_{1}^{\prime}}\left(  c_{2}^{\prime}\right)
,
\]%
\[
q_{2,\theta}=h_{\theta,q_{1}^{\prime}}\left(  q_{2}\right)  ,q_{2,\theta
}^{\prime}=c_{2,\theta}^{\prime\circ}\cap c_{1}^{\prime\circ}.
\]
Then $q_{2,\theta}^{\prime}$ gives the following partitions of $c_{1}^{\prime
}$ and $c_{2,\theta}^{\prime}:$%
\[
c_{1}^{\prime}=c_{11,\theta}^{\prime}+c_{12,\theta}^{\prime}=c_{1}^{\prime
}\left(  q_{1}^{\prime},q_{2,\theta}^{\prime}\right)  +c_{1}^{\prime}\left(
q_{2,\theta}^{\prime},q_{2}\right)  ,
\]%
\[
c_{2,\theta}^{\prime}=c_{21,\theta}^{\prime}+c_{22,\theta}^{\prime
}=c_{2,\theta}^{\prime}\left(  q_{2,\theta},q_{2,\theta}^{\prime}\right)
+c_{2,\theta}^{\prime}\left(  q_{2,\theta}^{\prime},q_{1}^{\prime}\right)  ;
\]
$\alpha_{1}^{\prime}$ has a partition
\[
\alpha_{1}^{\prime}=\alpha_{11,\theta}^{\prime}\left(  a_{1}^{\prime
},a_{2,\theta}^{\prime}\right)  +\alpha_{12,\theta}^{\prime}\left(
a_{2,\theta}^{\prime},a_{2}\right)  =\alpha_{1}^{\prime}\left(  a_{1}^{\prime
},a_{2,\theta}^{\prime}\right)  +\alpha_{1}^{\prime}\left(  a_{2,\theta
}^{\prime},a_{2}\right)
\]
such that
\begin{align*}
c_{11,\theta}^{\prime}\left(  q_{1}^{\prime},q_{2,\theta}^{\prime}\right)   &
=\left(  f,\alpha_{11,\theta}^{\prime}\left(  a_{1}^{\prime},a_{2,\theta
}^{\prime}\right)  \right)  ,\\
c_{12,\theta}^{\prime}\left(  q_{2,\theta}^{\prime},q_{2}\right)   &  =\left(
f,\alpha_{12,\theta}^{\prime}\left(  a_{2,\theta}^{\prime},a_{2}\right)
\right)  ;
\end{align*}
and $\alpha_{2}=\alpha_{2}(a_{2},a_{3})$ has a partition
\[
\alpha_{2}=\alpha_{2}^{\prime}\left(  a_{2},a_{1}^{\prime\prime}\right)
+\alpha_{2}^{\prime\prime}\left(  a_{1}^{\prime\prime},a_{3}\right)
=\alpha_{2}\left(  a_{2},a_{1}^{\prime\prime}\right)  +\alpha_{2}\left(
a_{1}^{\prime\prime},a_{3}\right)
\]
such that
\[
c_{2}^{\prime}=c_{2}^{\prime}\left(  q_{2},q_{1}^{\prime}\right)  =\left(
f,\alpha_{2}^{\prime}\left(  a_{2},a_{1}^{\prime\prime}\right)  \right)  .
\]

The reader should be aware of that $c_{1j,\theta}^{\prime}$ are subarcs of
$c_{1}^{\prime}$ (not $c_{1,\theta}^{\prime}$), but $c_{2j,\theta}^{\prime}$
are subarcs of $c_{2,\theta}^{\prime}$ (not $c_{2}^{\prime}$). It is clear
that by Lemma \ref{home2} (ii) and Claim \ref{B} when $\theta>0$ and $\theta$
is small enough, $f$ restricted to a neighborhood of $\alpha_{12,\theta
}^{\prime}+\alpha_{2}^{\prime}$ in $\overline{\Delta}$ is a homeomorphism,
since $\alpha_{12,\theta}^{\prime}+\alpha_{2}^{\prime}$ is a subarc in
$\alpha_{1}^{\prime}+\alpha_{2}^{\prime}$ which tends to $\alpha_{2}^{\prime}$
as $\theta\rightarrow0.$ Thus for small enough $\theta>0$ we have the
following claim similar to Claim \ref{(h)}:

\begin{claim}
\label{(h')}$\left(  \overline{T_{\theta}^{\prime}},c_{12,\theta}^{\prime
}+c_{2}^{\prime}\right)  $ is a simple closed Jordan domain of $\Sigma_{L_{1}}
$ such that $-c_{22,\theta}^{\prime\circ}=-c_{2,\theta}^{\prime\circ}\left(
q_{2,\theta}^{\prime},q_{1}^{\prime}\right)  \ $is the new boundary and
$c_{12,\theta}^{\prime}+c_{2}^{\prime}$ is the old boundary. That is to say,
there exist a Jordan domain $D_{\theta}^{\prime}\subset\Delta\ $and an arc
$\alpha_{22,\theta}^{\prime}=\alpha_{22,\theta}^{\prime}\left(  a_{2,\theta
}^{\prime},a_{1}^{\prime\prime}\right)  $ in $\overline{\Delta}$ with
$\alpha_{22,\theta}^{\prime\circ}\subset\Delta$, such that
\[
\partial D_{\theta}^{\prime}=\alpha_{12,\theta}^{\prime}\left(  a_{2,\theta
}^{\prime},a_{2}\right)  +\alpha_{2}^{\prime}\left(  a_{2},a_{1}^{\prime
\prime}\right)  -\alpha_{22,\theta}^{\prime}\left(  a_{2,\theta}^{\prime
},a_{1}^{\prime\prime}\right)  ,
\]%
\[
c_{22,\theta}^{\prime}\left(  q_{2,\theta}^{\prime},q_{1}^{\prime}\right)
=\left(  f,\alpha_{22,\theta}^{\prime}\left(  a_{2,\theta}^{\prime}%
,a_{1}^{\prime\prime}\right)  \right)  ,
\]
and that $f$ restricted to $\overline{D_{\theta}^{\prime}}$ is a homeomorphism
onto $\overline{T_{\theta}^{\prime}}.$
\end{claim}

Let $\theta_{0}$ be the maximal number in $(0,\angle\left(  \Sigma_{L_{1}%
},a_{2}\right)  ]$ such that all $\theta\in\left(  0,\theta_{0}\right)  $
satisfy Claim \ref{(h')}. Then by Claim \ref{c2}, $\theta_{0}<\angle\left(
\Sigma_{L_{1}},a_{2}\right)  .$ Repeating the the argument for Claims
\ref{(j)}--\ref{(d)}, with a little difference, we will show the following
Claims \ref{(j')}--\ref{(d')}:

\begin{claim}
\label{(j')}Except for that $\alpha_{22,\theta_{0}}^{\prime\circ}\subset
\Delta$ may fail, all other conclusions in Claim \ref{(h')} hold for
$\theta_{0}$: $\left(  \overline{T_{\theta_{0}}^{\prime}},c_{12,\theta_{0}%
}^{\prime}+c_{2}^{\prime}\right)  $ is still a simple closed Jordan domain of
$\Sigma_{L_{1}}$ such that $-c_{22,\theta_{0}}^{\prime\circ}$ contains the the
new boundary and $c_{12,\theta_{0}}^{\prime}+c_{2}^{\prime}$ is contained in
the old boundary. That is to say, there exist a Jordan domain $D_{\theta_{0}%
}^{\prime}\subset\Delta\ $and an arc $\alpha_{22,\theta_{0}}^{\prime}%
=\alpha_{22,\theta_{0}}^{\prime}\left(  a_{2,\theta_{0}}^{\prime}%
,a_{1}^{\prime\prime}\right)  $ in $\overline{\Delta},$ such that $\partial
D_{\theta_{0}}^{\prime}=\alpha_{12,\theta_{0}}^{\prime}+\alpha_{2}^{\prime
}-\alpha_{22,\theta_{0}}^{\prime},$ $c_{22,\theta_{0}}^{\prime}=\left(
f,\alpha_{22,\theta_{0}}^{\prime}\right)  ,$ and $f$ restricted to
$\overline{D_{\theta_{0}}^{\prime}}$ is a homeomorphism onto $\overline
{T_{\theta_{0}}^{\prime}}.$
\end{claim}

\begin{claim}
\label{(b')}$f$ has no branch value on $c_{22,\theta_{0}}^{\prime\circ}\ $and
thus each component of $\alpha_{22,\theta_{0}}^{\prime}\backslash
\partial\Delta$ has a neighborhood in $\overline{\Delta}$ on which $f$ is a homeomorphism.
\end{claim}

\begin{claim}
\label{(c')}$c_{22,\theta_{0}}^{\prime\circ}=\left(  f,\alpha_{22,\theta_{0}%
}^{\prime}\right)  $ has to intersect $\partial\Sigma_{L_{1}},$ say,
$\alpha_{22,\theta_{0}}^{\prime\circ}\cap\partial\Delta\neq\emptyset.$
\end{claim}

\begin{claim}
\label{(d')}$\alpha_{22,\theta_{0}}^{\prime\circ}\cap\partial\Delta\ $is a
nonempty finite set $\left\{  a_{i_{1}},\dots,a_{i_{k}}\right\}  $ in
$\{a_{j}\}_{j=1}^{m},$ $a_{1}^{\prime\prime},a_{i_{1}},\dots,a_{i_{k}%
},a_{2,\theta_{0}}^{\prime}\ $are arranged on $\partial\Delta$ anticlockwise
and divide $\alpha_{22,\theta_{0}}^{\prime\circ}$ into $k+1$ open arcs, each
of which has a neighborhood in $\overline{\Delta}$ on which $f$ is a homeomorphism.
\end{claim}

We repeat the argument for completeness. It is obvious by Claim \ref{(h')}
that $f^{-1}$ has a univalent branch $g$ defined on $\overline{T_{\theta_{0}%
}^{\prime}}\backslash c_{22,\theta_{0}}^{\prime\circ}=\cup_{\theta\in
(0,\theta_{0})}\overline{T_{\theta}^{\prime}}$ with $\alpha_{12,\theta_{0}%
}^{\prime}\left(  a_{2,\theta_{0}}^{\prime},a_{2}\right)  =g\left(
c_{12,\theta_{0}}^{\prime}\right)  \subset\alpha_{1}\ $and $\alpha_{2}%
^{\prime}=g\left(  c_{2}^{\prime}\right)  \subset\alpha_{2}.$ By Lemma
\ref{continue0}, $g$ can be extended to be a univalent branch of $f^{-1}$
defined on $\overline{T_{\theta_{0}}^{\prime}},$ and thus Claim \ref{(j')}
holds for $\overline{D_{\theta_{0}}^{\prime}}=g\left(  \overline{T_{\theta
_{0}}^{\prime}}\right)  =\overline{g\left(  T_{\theta_{0}}^{\prime}\right)  }$.

By (\ref{2022-01-03-1}), $c_{22,\theta_{0}}^{\prime\circ}\cap E_{q}%
=\emptyset,$ which together with Lemma \ref{home2} implies Claim \ref{(b')}.

Let $\Delta_{\theta_{0}}=\Delta\backslash\overline{D_{\theta_{0}}^{\prime}}.$
If $\alpha_{22,\theta_{0}}^{\prime\circ}\cap\partial\Delta=\emptyset,$ then
$\left(  f,\overline{\Delta_{\theta_{0}}}\right)  $ is a surface in
$\mathcal{F}\left(  L^{\prime},m+2\right)  ,\mathcal{\ }$with
\[
L^{\prime}=L-L(c_{12,\theta_{0}}^{\prime}+c_{2}^{\prime})+L(c_{22,\theta_{0}%
}^{\prime}),
\]
and thus by Claim \ref{(b')} and Lemma \ref{int-arg1}, for every small enough
$\rho>0$, $\left(  f,\overline{\Delta_{\theta_{0}}}\right)  $ contains the
simple and closed Jordan domain $\left(  \overline{T_{\theta_{0}+\rho}%
^{\prime}}\backslash T_{\theta_{0}}^{\prime},\left(  c_{12,\theta_{0}+\rho
}^{\prime}\backslash c_{12,\theta_{0}}^{\prime}\right)  +c_{22,\theta_{0}%
}^{\prime}\right)  $ such that $\left(  c_{12,\theta_{0}+\rho}^{\prime
}\backslash c_{12,\theta_{0}}^{\prime}\right)  +c_{22,\theta_{0}}^{\prime}$ is
the old boundary, say, an arc of $\left(  f,\partial\Delta_{\theta_{0}%
}\right)  ,$ and $c_{22,\theta_{0}+\rho}^{\prime\circ}$ is the new boundary.
Then we have that $\left(  \overline{T_{\theta_{0}}^{\prime}},c_{12,\theta
_{0}}^{\prime}+c_{2}^{\prime}\right)  $ can be extended to a larger simple
closed Jordan domain $\left(  \overline{T_{\theta_{0}+\rho}^{\prime}%
},c_{12,\theta_{0}+\rho}^{\prime}+c_{2}^{\prime}\right)  $ of $\Sigma_{L_{1}%
},$ and that Claim \ref{(h')} holds for all $\theta\in\lbrack0,\theta_{0}%
+\rho),$ contradicting the maximal property of $\theta_{0}.$ Thus Claim
\ref{(c')} holds.

It is clear that $\alpha_{22,\theta_{0}}^{\prime}\cap\left(  \alpha
_{12,\theta_{0}}^{\prime}+\alpha_{2}^{\prime}\right)  =\{a_{2,\theta_{0}%
}^{\prime},a_{1}^{\prime\prime}\},$ which together with that $c_{22,\theta
_{0}}^{\prime}$ is strictly convex and Lemma \ref{tangent}, implies that
$\alpha_{22,\theta_{0}}^{\prime\circ}\cap\left(  \left(  \partial
\Delta\right)  \backslash\left(  \alpha_{12,\theta_{0}}^{\prime}+\alpha
_{2}^{\prime}\right)  \right)  $ is a subset of $\{a_{j}\}_{j=1}^{m},$ and so
is $\alpha_{22,\theta_{0}}^{\prime\circ}\cap\partial\Delta.$ This, together
with Claims \ref{(b')} and \ref{(c')}, implies Claim \ref{(d')}.

For simplicity, we assume that $\alpha_{22,\theta_{0}}^{\prime\circ}%
\cap\partial\Delta=\{a_{i_{1}}\}$ is a singleton. Then $q_{i_{1}}$ gives
partitions
\[
c_{22,\theta_{0}}^{\prime}=c_{221,\theta_{0}}^{\prime}\left(  q_{2,\theta_{0}%
}^{\prime},q_{i_{1}}\right)  +c_{222,\theta_{0}}^{\prime}\left(  q_{i_{1}%
},q_{1}^{\prime}\right)  ,
\]%
\[
c_{2,\theta_{0}}^{\prime}=\mathfrak{c}_{21,\theta_{0}}\left(  q_{2,\theta_{0}%
},q_{i_{1}}\right)  +c_{222,\theta_{0}}^{\prime}\left(  q_{i_{1}}%
,q_{1}^{\prime}\right)  ,
\]
where
\[
\mathfrak{c}_{21,\theta_{0}}=c_{21,\theta_{0}}^{\prime}\left(  q_{2,\theta
_{0}},q_{2,\theta_{0}}^{\prime}\right)  +c_{221,\theta_{0}}^{\prime}\left(
q_{2,\theta_{0}}^{\prime},q_{i_{1}}\right)  .
\]
We can cut $\left(  \overline{T_{\theta_{0}}^{\prime}}\backslash
c_{22,\theta_{0}}^{\prime},c_{2,\theta_{0}}^{\prime}\right)  ,$ the simple
closed Jordan domain of $\Sigma_{L_{1}}$ with new boundary $c_{2,\theta_{0}%
}^{\prime\circ},$ from $\Sigma_{L_{1}}$ and sew\label{23} $\left(
\overline{T_{\theta_{0}}^{\prime\prime}},c_{11,\theta_{0}}^{\prime}\right)  ,$
to $\Sigma_{L_{1}}\backslash\left(  \overline{T_{\theta_{0}}^{\prime}%
}\backslash c_{22,\theta_{0}}^{\prime},c_{2,\theta_{0}}^{\prime}\right)  $
along $c_{11,\theta_{0}}^{\prime}=c_{1}\cap\overline{T_{\theta_{0}}},$ to
obtain two surfaces $\Sigma_{\theta_{0}}^{1}$ and $\Sigma_{\theta_{0}}^{2},$
linked at $q_{i_{1}},$ such that%
\begin{equation}
\partial\Sigma_{\theta_{0}}^{1}=\left(  c_{1}\backslash c_{1}^{\prime}\right)
+c_{1,\theta_{0}}^{\prime}+\mathfrak{c}_{21,\theta_{0}}+c_{i_{1}}+\cdots
+c_{m}, \label{a103}%
\end{equation}%
\begin{equation}
\partial\Sigma_{\theta_{0}}^{2}=c_{222,\theta_{0}}^{\prime}+\left(
c_{2}\backslash c_{2}^{\prime}\right)  +c_{3}+\cdots+c_{i_{1}-1}. \label{a104}%
\end{equation}

It is clear that the total number of terms in the above two partitions is
$m+3,$ $q_{i_{1}}$ is contained in $T_{0}$, and $q_{1},q_{2,\theta_{0}},q_{3}$
are outside $T_{0}$ since $T_{0}$ is convex. Therefore $i_{1}\neq1,2,3,\ $and
$i_{1}\geq4$, say, the first partition contains at least four terms and the
second partition contains at least three terms, which implies that each of the
partitions has at most $m$ terms. Hence the above two partitions are both
$\mathcal{F}\left(  L,m\right)  $ partitions, by Claim \ref{(b')}. It is clear
that here (\ref{a105}) still holds and, by (\ref{2022-01-03-1}), we have%
\[
\overline{n}\left(  \Sigma_{L_{1}}\right)  =\sum_{j=1}^{2}\overline{n}\left(
\Sigma_{\theta_{0}}^{j}\right)  .
\]
Then $\Sigma_{L_{1}}$ is decomposable in $\mathcal{F}\left(  L,m\right)  ,$
contradicting Lemma \ref{undec}, and (iii) is proved in Case A.

Now we assume Case B occurs. By (\ref{2022-01-03-2}), Claim \ref{B} and the
assumption of Case B, both $c_{1}^{\prime}$ and $c_{2}^{\prime}$ are strictly
convex. Then we may further assume
\[
L(c_{1}^{\prime})\geq L(c_{2}^{\prime}).
\]
Let%
\[
l=L(c_{1}^{\prime})+L(c_{2}^{\prime}),
\]
and let $T_{0}$ be still the domain enclosed by $c_{1}^{\prime}+c_{2}^{\prime
}.$ Let $I=\overline{q_{1}^{\prime}q_{2}}$ and let $C$ be the strictly convex
circle passing through $q_{1}^{\prime}$ and $q_{2}$ whose length is $l\ $and
whose arc $c_{x_{0}}$ from $q_{1}^{\prime}$ to $q_{2}$ is longer than its
complementary, with $L(c_{x_{0}})=x_{0},$ and let $c_{l-x_{0}}^{\prime
}=C\backslash c_{x_{0}}^{\circ}.$ Then $c_{x_{0}}$ is on the right hand side
of the great circle determined by $I=\overline{q_{1}^{\prime}q_{2}}.$ Recall
Definition \ref{lune-lens} of lens and let
\[
\mathfrak{D}_{x}=\mathfrak{D}\left(  I,x,l-x\right)  =\mathfrak{D}\left(
I,c_{x},c_{l-x}^{\prime}\right)
\]
be the lens with $\partial\mathfrak{D}_{x}=c_{x}-c_{l-x}^{\prime},$ where
$c_{x}=c_{x}\left(  q_{1}^{\prime},q_{2}\right)  $ and $c_{l-x}^{\prime}%
=c_{x}^{\prime}\left(  q_{2},q_{1}^{\prime}\right)  $ are convex circular arcs
with $L(c_{x})=x$ and $L(c_{l-x}^{\prime})=l-x.$ By Corollary
\ref{2-curvature2}, we have the following Claim \ref{(c} and \ref{(d}.

\begin{claim}
\label{(c} The area $A\left(  \mathfrak{D}\left(  I,x,l-x\right)  \right)  $
strictly increases for $x\in\lbrack l/2,x_{0}].$
\end{claim}

\begin{claim}
\label{(d} The lune $\mathfrak{D}_{x}^{\prime}=\mathfrak{D}^{\prime}%
(I,c_{x})=\mathfrak{D}^{\prime}(I,x)$ strictly increases, and the lune
$\mathfrak{D}_{x}^{\prime\prime}=\mathfrak{D}^{\prime}(-I,c_{l-x}^{\prime
})=\mathfrak{D}^{\prime}(-I,l-x)$ strictly decreases, for all $x\in\lbrack
l/2,x_{0}]$ (see Definition \ref{lune-lens} for the notation $\mathfrak{D}%
^{\prime}(I,\cdot)$). That is to say, $\overline{\mathfrak{D}_{x}^{\prime}%
}\backslash I\subset\mathfrak{D}_{x^{\prime}}^{\prime}\ $and $\overline
{\mathfrak{D}_{x^{\prime}}^{\prime\prime}}\backslash I\subset\mathfrak{D}%
_{x}^{\prime\prime}$ when $l/2\leq x<x^{\prime}\leq x_{0}.$
\end{claim}

Since $c_{1}^{\prime}$ and $c_{2}^{\prime}$ are the circular arcs with the
same endpoints, we have

\begin{claim}
\label{(e}For any circular arc $\gamma$ contained in $\overline{T_{0}}$ from
$q_{1}^{\prime}$ to $q_{2},$ $q_{1}\in\gamma$ if and only if $\gamma$ is
contained in the circle determined by $c_{1}$ (three points determine a unique
circle on $S$).
\end{claim}

Assume $L(c_{1}^{\prime})=x_{0}^{\prime}.$ Since we assumed $L(c_{1}^{\prime
})\geq L(c_{2}^{\prime}),$ we have $x_{0}^{\prime}\in\lbrack l/2,x_{0}].$ For
$x\in(x_{0}^{\prime},x_{0}]$ let
\[
T_{x}^{\prime}=\mathfrak{D}_{x_{0}^{\prime}}^{\prime\prime}\backslash
\overline{\mathfrak{D}_{x}^{\prime\prime}}\mathrm{\ and\ }T_{x}^{\prime\prime
}=\mathfrak{D}_{x}^{\prime}\backslash\overline{\mathfrak{D}_{x_{0}^{\prime}%
}^{\prime}}.
\]
Then by Lemma \ref{int-arg1} we have the following result similar to Claims
\ref{(h)} and \ref{(h')}:

\begin{claim}
\label{(h'')}For every $x\in(x_{0}^{\prime},x_{0}]$ so that $x-x_{0}^{\prime}
$ is small enough, there exist a simple arc $\alpha_{l-x}^{\prime}%
=\alpha_{l-x}^{\prime}\left(  a_{2},a_{1}^{\prime\prime}\right)  $ in
$\overline{\Delta},$ with $\alpha_{l-x}^{\prime\circ}\subset\Delta$ and
$c_{l-x}^{\prime}=\left(  f,\alpha_{l-x}^{\prime}\right)  ,$ and a Jordan
domain $D_{x}^{\prime}$ in $\Delta$ with $\partial D_{x}^{\prime}=\alpha
_{2}^{\prime}-\alpha_{l-x}^{\prime}$, such that $f$ restricted to
$\overline{D_{x}^{\prime}}$ is a homeomorphism onto $\overline{T_{x}^{\prime}%
}\ $($\alpha_{2}^{\prime}$ is defined just before (\ref{2022-01-03-1})). In
other words, for each $x\in(x_{0}^{\prime},x_{0}]$ so that $x-x_{0}^{\prime}$
is small enough, $\left(  \overline{T_{x}^{\prime}},c_{2}^{\prime}\right)  $
is a simple closed Jordan domain of $\Sigma_{L_{1}}$ with new boundary
$c_{l-x}^{\prime\circ}$ and old boundary $c_{2}^{\prime}=c_{l-x_{0}^{\prime}%
}^{\prime}.$
\end{claim}

Then for every $x$ satisfying Claim \ref{(h'')}, we can cut $\left(
\overline{T_{x}^{\prime}},c_{l-x_{0}^{\prime}}^{\prime}\right)  $ from
$\Sigma_{L_{1}}$ and sew\label{sew22} $\left(  \overline{T_{x}^{\prime\prime}%
},c_{x}\right)  $ to $\Sigma_{L_{1}}\backslash\left(  \overline{T_{x}^{\prime
}},c_{l-x_{0}^{\prime}}^{\prime}\right)  $ along $c_{x_{0}^{\prime}}$ to
obtain a surface $\Sigma_{x}=\left(  f_{x},\overline{\Delta}\right)  $ in
$\mathcal{F}_{r}\left(  L,m+2\right)  .$

It is clear that there exists a maximum $x^{\ast}\in(x_{0}^{\prime},x_{0}]$
such that for every $x\in\lbrack x_{0}^{\prime},x^{\ast}),$ $\Sigma_{x}$ is a
well defined surface in $\mathcal{F}_{r}\left(  L,m+2\right)  $. Then either
$x^{\ast}=x_{0}$ or $x^{\ast}<x_{0}.$ As the argument for Claims \ref{(j')}
and \ref{(b')}, with a little difference, we can show the following Claims
\ref{(j'')} and \ref{(b'')}:

\begin{claim}
\label{(j'')}Except for that $\alpha_{l-x^{\ast}}^{\prime\circ}\subset\Delta$
may fail, all other conclusions of Claim \ref{(h'')} hold for $x^{\ast}:$
There exist a simple arc $\alpha_{l-x^{\ast}}^{\prime}=\alpha_{l-x^{\ast}%
}^{\prime}\left(  a_{2},a_{1}^{\prime\prime}\right)  $ in $\overline{\Delta},$
with $c_{l-x^{\ast}}^{\prime}=\left(  f,\alpha_{l-x^{\ast}}^{\prime}\right)
,$ and a Jordan domain $D_{x^{\ast}}^{\prime}$ in $\Delta$ with $\partial
D_{x^{\ast}}^{\prime}=\alpha_{2}^{\prime}-\alpha_{l-x^{\ast}}^{\prime},$ such
that $f$ restricted to $\overline{D_{x^{\ast}}^{\prime}}$ is a homeomorphism
onto $\overline{T_{x^{\ast}}^{\prime}}.$ In other words, $\left(
\overline{T_{x^{\ast}}^{\prime}},c_{2}^{\prime}\right)  $ is still a simple
closed Jordan domain of $\Sigma_{L_{1}}$ with the new boundary \emph{contained
in} $c_{1-x^{\ast}}^{\prime\circ}$ and the old boundary \emph{containing}
$c_{l-x_{0}^{\prime}}^{\prime}.$
\end{claim}

\begin{claim}
\label{(b'')}$f$ has no branch value on $c_{l-x^{\ast}}^{\prime\circ}\ $and
thus $\alpha_{l-x^{\ast}}^{\prime\circ}\backslash\partial\Delta$ has a
neighborhood in $\overline{\Delta}$ on which $f$ is a homeomorphism.
\end{claim}

By Claim \ref{(h'')} $f^{-1}$ has a univalent branch $g$ defined on
$\overline{T_{x^{\ast}}^{\prime}}\backslash\alpha_{l-x^{\ast}}^{\prime\circ
}=\cup_{x\in(x_{0}^{\prime},x^{\ast})}\overline{T_{x}^{\prime}}$ with
$g\left(  c_{2}^{\prime}\right)  =g\left(  c_{l-x_{0}^{\prime}}^{\prime
}\right)  =\alpha_{2}^{\prime}.$ By Lemma \ref{continue0}, $g$ can be extended
to $\overline{T_{x^{\ast}}^{\prime}},$ and then Claim \ref{(j'')} follows.
Claim \ref{(b'')} is obvious, since $c_{l-x^{\ast}}^{\prime\circ}\cap
E_{q}=\emptyset$, both $\alpha_{l-x^{\ast}}^{\prime}$ and $c_{l-x^{\ast}%
}^{\prime}$ are simple arcs and $c_{l-x^{\ast}}$ is circular.

Now, there are only three possibilities:

\noindent\textbf{Case BA. }$x^{\ast}<x_{0}.$

\noindent\textbf{Case BB. }$x^{\ast}=x_{0}\ $and $\alpha_{l-x^{\ast}}%
^{\prime\circ}\cap\partial\Delta\neq\emptyset.$

\noindent\textbf{Case BC. }$x^{\ast}=x_{0}\ $and $\alpha_{l-x^{\ast}}%
^{\prime\circ}\cap\partial\Delta=\emptyset.$

Assume Case BA occurs. Then as the discussion for Claims \ref{(c)} and
\ref{(d)}, we can show

\begin{claim}
\label{(c'')}$c_{l-x^{\ast}}^{\prime\circ}=\left(  f,\alpha_{l-x^{\ast}%
}^{\prime\circ}\right)  $ has to intersect $\partial\Sigma_{L_{1}},$ say,
$\alpha_{l-x^{\ast}}^{\prime\circ}\cap\partial\Delta\neq\emptyset.$
\end{claim}

\begin{claim}
\label{(d'')}$\alpha_{l-x^{\ast}}^{\prime\circ}\cap\partial\Delta\ $is a
nonempty finite set $\left\{  a_{i_{1}},\dots,a_{i_{k}}\right\}  $ in
$\{a_{j}\}_{j=1}^{m},$ $a_{1}^{\prime\prime},a_{i_{1}},\dots,a_{i_{k}}%
,a_{2}\ $are arranged on $\partial\Delta$ anticlockwise and divide
$\alpha_{l-x^{\ast}}^{\prime\circ}$ into $k+1$ open arcs, each of which has a
neighborhood in $\overline{\Delta}$ on which $f$ is a homeomorphism.
\end{claim}

But the proof of Claim \ref{(c'')} is simpler: Let $\Delta_{x^{\ast}}%
=\Delta\backslash\overline{D_{x^{\ast}}^{\prime}}.$ If $\alpha_{l-x^{\ast}%
}^{\prime\circ}\cap\partial\Delta=\emptyset$, then by Claim \ref{(b'')},
$\left(  f,\overline{\Delta_{x^{\ast}}}\right)  $ is a surface in
$\mathcal{F}\left(  L^{\prime},m+1\right)  \ $with $L^{\prime}=L_{1}%
-L(c_{2}^{\prime})+L(c_{l-x^{\ast}}^{\prime}),$ and thus by Lemma
\ref{int-arg1}, $f$ restricted to a neighborhood of $\alpha_{l-x^{\ast}%
}^{\prime}$ in $\overline{\Delta_{x^{\ast}}}$ is a homeomorphism, in other
words, $\left(  f,\overline{\Delta_{x^{\ast}}}\right)  $ contains a simple
closed Jordan domain $\left(  \overline{K_{\varepsilon}},c_{l-x^{\ast}%
}^{\prime}\right)  $ with old boundary $c_{l-x^{\ast}}^{\prime},$ say,
$c_{l-x^{\ast}}^{\prime}$ is an arc of $\left(  f,\partial\Delta_{x^{\ast}%
}\right)  ,$ where $K_{\varepsilon}=\mathfrak{D}_{x^{\ast}}^{\prime\prime
}\backslash\overline{\mathfrak{D}_{x^{\ast}+\varepsilon}^{\prime\prime}}$ for
every small enough $\varepsilon>0.$ Then $\left(  \overline{T_{x^{\ast
}+\varepsilon}^{\prime}},c_{2}^{\prime}\right)  =\left(  \overline{T_{x^{\ast
}}^{\prime}}\cup\overline{K_{\varepsilon}},c_{2}^{\prime}\right)  $ is a
closed and simple Jordan domain of $\Sigma_{L_{1}},$ contradicts the
maximality of $x^{\ast}.$ Thus Claim \ref{(c'')} holds.

It is clear that $\alpha_{l-x^{\ast}}^{\prime}\cap\alpha_{1}^{\prime}%
=\{a_{2},a_{1}^{\prime\prime}\}$ and, on the other hand, $c_{l-x^{\ast}%
}^{\prime}$ is strictly convex, since $x_{0}^{\prime}<x^{\ast}<x_{0}.$
Therefore, by Lemma \ref{tangent}, we have $\alpha_{l-x^{\ast}}^{\prime\circ
}\cap\partial\Delta\subset\{a_{j}\}_{j=1}^{m}.$ Thus $\alpha_{l-x^{\ast}%
}^{\prime\circ}\cap\partial\Delta$ is consisted of some points of
$\{a_{j}\}_{j=1}^{m},$ and then Claim (\ref{(d'')} holds.

When $\alpha_{2,\theta_{0}}^{\prime\circ}\cap\partial\Delta=\{a_{i_{1}}%
\}\in\{a_{j}\}_{j=1}^{m}\ $is a singleton, we can obtain a contradiction as in
Case A, and the same argument applies to Case BB. When $\alpha_{2,\theta_{0}%
}^{\prime\circ}\cap\partial\Delta$ contains more than one point, the argument
is similar. We have obtained a contradiction in Cases BA and Case BB.

Assume Case BC occurs. Then $c_{x^{\ast}}+c_{l-x^{\ast}}^{\prime}=c_{x_{0}%
}+c_{2}^{\prime}$ is the circle $C$ and it is clear that $\Sigma_{x_{0}%
}=\Sigma_{x^{\ast}}$ is a well defined surface with
\begin{equation}
L(\partial\Sigma_{x_{0}})=L(\partial\Sigma_{L_{1}}). \label{(m)}%
\end{equation}
For all $x\in\lbrack0,\theta_{0}],$ since $L(c_{x}+c_{l-x})=l<\frac
{\delta_{E_{q}}}{2},$ (\ref{2022-01-03-1}) implies that $\left(  c_{x}%
+c_{l-x}\right)  \backslash\{q_{1}^{\prime}\}$ never meets $E_{q},$ and then
we have $\overline{n}\left(  \Sigma_{x_{0}}\right)  =\overline{n}\left(
\Sigma_{L_{1}}\right)  .$ On the other hand, by Claim \ref{(c}, we have
$A(\Sigma_{x_{0}})>A(\Sigma_{L_{1}}).$ Therefore
\begin{equation}
R(\Sigma_{x_{0}})>R(\Sigma_{L_{1}}), \label{x15}%
\end{equation}
and moreover $\partial\Sigma_{x_{0}}$ has the partition%
\begin{equation}
\partial\Sigma_{x_{0}}=C_{1}+C_{2}+C_{3}+c_{3}+\cdots+c_{m}, \label{(k)}%
\end{equation}
where $C_{1}=c_{1}\backslash c_{1}^{\prime},C_{2}=c_{x_{0}}^{\prime
}+c_{l-x_{0}}^{\prime},C_{3}=c_{2}\backslash c_{2}^{\prime}.$ It is clear that
$C_{2}$ is a simple circle such that $C_{2}^{\circ}$ is the old boundary of a
simple domain of $\Sigma_{x_{0}}.$Thus (\ref{(k)}) is an $\mathcal{F}\left(
L,m+1\right)  $ partition of $\partial\Sigma_{x_{0}}.$ It is clear that
$\Sigma_{x_{0}}$ has no branch value outside $E_{q},$ thus
\begin{equation}
\Sigma_{x_{0}}\in\mathcal{F}_{r}\left(  L,m+1\right)  . \label{m+1}%
\end{equation}
It is clear that
\begin{align}
L(C_{1})+L(C_{2})  &  =L(c_{1}\backslash c_{1}^{\prime})+l=L(c_{1}\backslash
c_{1}^{\prime})+L(c_{1}^{\prime})+L(c_{2}^{\prime})\label{ll}\\
&  <L(c_{1})+L(c_{2})<\frac{\delta_{E_{q}}}{2}.\nonumber
\end{align}

By (\ref{2022-01-03-1}), (\ref{(k)}) and (\ref{m+1}), we can repeat the
argument for Case 1 to show that $\Sigma_{x_{0}},$ as a surface in
$\mathcal{F}_{r}\left(  L,m+1\right)  $ is decomposable in $\mathcal{F}\left(
L,m\right)  $ (in Case 1 we in fact proved $\Sigma_{L_{1}}\in\mathcal{F}%
\left(  L,m\right)  $ is decomposable in $\mathcal{F}\left(  L,m-1\right)  $,
by (\ref{(l)})). This implies that $\Sigma_{L_{1}}$ is also decomposable in
$\mathcal{F}\left(  L,m\right)  $ by (\ref{(m)}) and (\ref{x15}). But this
contradicts Lemma \ref{undec}, and thus Case BC can't occur. We have proved
(iii) in any case and the lemma has been proved completely.
\end{proof}

Now we can easily prove Theorem \ref{cat2}.

\begin{proof}
[\textbf{Proof of Theorem \ref{cat2}}]It is clear that there are at most
$\left[  L/\left(  \delta_{E_{q}}/4\right)  \right]  +1=\left[  4L/\delta
_{E_{q}}\right]  +1$ terms in (\ref{zz1}) which have length $\geq\delta
_{E_{q}}/4.$ Thus, we may assume, after a permutation of the subscripts like
$\left(  1,2,\dots,m\right)  \mapsto\left(  j_{0},j_{0}+1,\dots,m,1,2,\dots
,j_{0}-1\right)  ,$ that
\begin{equation}
L(c_{j})<\delta_{E_{q}}/4,j=1,2. \label{(aa)}%
\end{equation}
Then, by Lemma \ref{mykey}, we have the following

(A) Neither $c_{1}$ nor $c_{2}$ is closed.

Assume $\Sigma_{L_{1}}\notin\mathcal{F}_{r}\left(  L,m-1\right)  .$ For $j=1$
or $2,$ if $c_{j}$ is contained in $\mathfrak{C}^{2},$ then by Lemma
\ref{Eq-end} we have $\partial c_{j}=E_{q}\ $and thus $L(c_{1})\geq
\delta_{E_{q}},$ which contradicts (\ref{(aa)}). Thus neither $c_{1}$ nor
$c_{2}$ is contained in $\mathfrak{C}^{2},$ which, together with (A), implies
$c_{j}^{\circ}\cap E_{q}\neq\emptyset$ for $j=1$ and $2.$ Then by (\ref{(aa)})
we have
\[
c_{1}\cap E_{q}=c_{2}\cap E_{q}=c_{1}^{\circ}\cap E_{q}=c_{2}^{\circ}\cap
E_{q}=\{q_{1}^{\prime}\}
\]
for some $q_{1}^{\prime}\in E_{q}.$ This contradicts Lemma \ref{mykey2} (iii).
This contradiction comes from Condition \ref{nom-1} which assumes
$\Sigma_{L_{1}}\notin\mathcal{F}_{r}\left(  L,m-1\right)  $, and so Condition
\ref{nom-1} can't be satisfied. Thus we have $\Sigma_{L_{1}}\in\mathcal{F}%
_{r}\left(  L,m-1\right)  $, and Theorem \ref{cat2} is proved.
\end{proof}

\section{\label{very good}Proof of Theorem \ref{main1}}

We first prove the following result.

\begin{lemma}
\label{inhalf1}Let $\Sigma$ be a surface of $\mathcal{F}\left(  L\right)  $
and assume that $\partial\Sigma$ has a partition $\partial\Sigma=\Gamma+C$
such that $C=C\left(  p_{1},p_{2}\right)  $ is a simple circular arc with
$C\cap E_{q}\subset\{p_{1},p_{2}\}$ (it is permitted that $p_{1}=p_{2}$). If
$C\left(  p_{1},p_{2}\right)  $ cannot be contained in any open hemisphere on
$S $, then there exists a surface $\Sigma^{\prime}$ in $\mathcal{F}\left(
L\right)  $ such that $\partial\Sigma^{\prime}$ has the partition
\begin{equation}
\partial\Sigma^{\prime}=\Gamma+\gamma, \label{ag41}%
\end{equation}
where $\gamma$ is a simple polygonal path $\gamma=\overline{p_{1}%
\mathfrak{a}_{1}\mathfrak{a}_{2}\dots\mathfrak{a}_{s}p_{2}}$ with
\begin{equation}
\gamma^{\circ}\cap E_{q}=\{\mathfrak{a}_{1},\mathfrak{a}_{2},\dots
,\mathfrak{a}_{s}\}, \label{ag36}%
\end{equation}%
\begin{equation}
L(\gamma)\leq L(C), \label{ag37}%
\end{equation}%
\begin{equation}
H(\Sigma^{\prime})>H(\Sigma), \label{ag40}%
\end{equation}%
\begin{equation}
\frac{R(\Sigma^{\prime})+4\pi}{L(\partial\Sigma^{\prime})}>\frac
{R(\Sigma)+4\pi}{L(\partial\Sigma)}, \label{ag38}%
\end{equation}
and
\begin{equation}
\gamma^{\circ}\cap E_{q}\neq\emptyset\mathrm{\ if\ }d\left(  p_{1}%
,p_{2}\right)  =\pi. \label{ag39}%
\end{equation}

\end{lemma}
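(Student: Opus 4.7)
The plan is to replace the boundary arc $C$ with a polygonal path $\gamma$ through suitable $E_q$-points and then sew onto $\Sigma$ the (untangled) region $T$ on $S$ lying between $C$ and $\gamma$. The hypothesis that $C$ fits in no open hemisphere is what forces $T$ to have positive area, which is the source of the strict inequalities. I would first handle the generic case $d(p_1,p_2)<\pi$: here the geodesic $I=\overline{p_1 p_2}$ is well defined, $C-I$ bounds a closed region $T_0$ on $S$, and $T_0$ necessarily has area $>0$ (in fact $\geq 2\pi$, since $C$ lies in no open hemisphere while $I$ has length $<\pi$). Let $\{\mathfrak{a}_1,\ldots,\mathfrak{a}_s\}=E_q\cap T_0\setminus\{p_1,p_2\}$.

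Next I would construct $\gamma$ as the boundary (on the side facing $C$) of the spherical convex hull $K$ of $\{p_1,p_2,\mathfrak{a}_1,\ldots,\mathfrak{a}_s\}$ taken inside $T_0$, after reordering the $\mathfrak{a}_i$'s along $\partial K$ so that $\gamma=\overline{p_1\mathfrak{a}_1\cdots\mathfrak{a}_s p_2}$ is a simple polygonal path with $\gamma^{\circ}\cap E_q=\{\mathfrak{a}_1,\ldots,\mathfrak{a}_s\}$ and $\gamma\subset T_0\cup\{p_1,p_2\}$. By spherical convexity applied to the convex hull $K$ together with the fact that $C$ is a convex arc on the opposite side of $\gamma$, the perimeter inequality gives $L(\gamma)\leq L(C)$, with equality only if $s=0$ and $\gamma=C$ — which is excluded because $C$ fits in no open hemisphere while $\gamma$ is polygonal of length $<2\pi$. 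The region $T$ between $C$ and $\gamma$ is a union of closed (topological) triangular domains and by construction $T^{\circ}\cap E_q=\emptyset$.

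I would then form $\Sigma'$ by sewing $\overline{T}$ (viewed as a trivial surface whose boundary traces $-\gamma+C$) onto $\Sigma$ along $C$, using Lemma \ref{patch-1} or the sewing formalism of Section \ref{Sect3}. By construction $\partial\Sigma'=\Gamma+\gamma$, $A(\Sigma')=A(\Sigma)+A(T)$, $\overline{n}(\Sigma')=\overline{n}(\Sigma)$ (all $E_q\cap T$ lie on $\gamma\subset\partial\Sigma'$, hence do not count in $\overline n$), and $L(\partial\Sigma')=L(\partial\Sigma)-L(C)+L(\gamma)\leq L(\partial\Sigma)\leq L$, so $\Sigma'\in\mathcal{F}(L)$. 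Then
\[
R(\Sigma')-R(\Sigma)=(q-2)A(T)>0,\qquad L(\partial\Sigma')\leq L(\partial\Sigma),
\]
which immediately yields both (\ref{ag40}) and (\ref{ag38}) by the standard ``numerator up, denominator down'' argument (Lemma \ref{ratio}), and the $+4\pi$ cushion in (\ref{ag38}) only strengthens things since $L(\partial\Sigma')\leq L(\partial\Sigma)$.

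For the antipodal case $d(p_1,p_2)=\pi$, the geodesic is not unique, so I would first rotate a semi‐great‐circle $\gamma_0$ from $p_1$ to $p_2$ around $S$; as it sweeps, it covers every point on $S$ except $\{p_1,p_2\}$, hence passes through at least one $\mathfrak{a}\in E_q\setminus\{p_1,p_2\}$ for some position. Fix such a $\gamma_0$; it serves as a substitute for the geodesic, bounds a well‐defined region $T_0$ with $C$ on one side, and ensures $\gamma^{\circ}\cap E_q\neq\emptyset$ as required by (\ref{ag39}). The rest of the construction proceeds exactly as above. The main obstacle I anticipate is the spherical convexity/perimeter comparison $L(\gamma)\leq L(C)$: one must verify carefully that ordering the $\mathfrak{a}_i$ along $\partial K$ produces a \emph{simple} polygonal path from $p_1$ to $p_2$ and that the chord-vs-arc monotonicity on $S$ is strict under our hypothesis; a secondary subtlety is showing $A(T)>0$ when $s=0$ (i.e.\ ruling out the degenerate equality $\gamma=I=C$), which is where the ``no open hemisphere'' assumption is essential.
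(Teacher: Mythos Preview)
Your approach is essentially the paper's: both recognize that $C$ must be at least half of a great circle, take the convex hull $K$ of the relevant $E_q$ points together with $\{p_1,p_2\}$ in the closed hemisphere $S'$ on the right of $C$ (your $T_0$ is exactly the paper's $S'$ when $d(p_1,p_2)<\pi$, since $C-I$ is the full great circle), set $\gamma$ to be the hull boundary opposite $\overline{p_1p_2}$, and sew the region $T=S'\setminus K^\circ$ onto $\Sigma$ along $C$. The only cosmetic difference is the antipodal case: the paper directly takes the \emph{maximal} biangular domain $T$ on the right of $C$ with $(T^\circ\cup C^\circ)\cap E_q=\emptyset$, so that $\gamma$ is automatically a single half-great-circle through an $E_q$ point with $L(\gamma)=\pi=L(C)$; your convex-hull-in-a-lune detour reaches the same $\gamma$ once you note that the convex hull of antipodal $p_1,p_2$ together with interior points is itself a lune whose boundary arcs are half-great-circles---but your earlier clause ``equality only if $s=0$'' does not carry over there, and not every $\mathfrak a_i\in E_q\cap T_0$ lands on $\gamma^\circ$ (only the extreme ones do), so the labeling $\gamma^\circ\cap E_q=\{\mathfrak a_1,\dots,\mathfrak a_s\}$ should be adjusted.
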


\begin{proof}
We first show the following claim.

\begin{claim}
\label{ag30}There exists a closed polygon $T$ on $S$ with
\[
\partial T=-C+\gamma
\]
such that $\gamma$ satisfies (\ref{ag36}), (\ref{ag37}) and (\ref{ag39}), and
moreover,%
\begin{equation}
\left(  T\backslash\gamma\right)  \cap E_{q}=\left(  T^{\circ}\cup C^{\circ
}\right)  \cap E_{q}=\emptyset. \label{ag47}%
\end{equation}

\end{claim}

First assume $C=C\left(  p_{1},p_{2}\right)  $ is not contained in any open
hemisphere on $S$. Then $C\left(  p_{1},p_{2}\right)  $ is half, or a major
arc, of a great circle $c$ on $S,$ oriented by $C$.

Assume $C\left(  p_{1},p_{2}\right)  $ is half of a great circle on $S$. Then
$p_{1}$ and $p_{2}$ are antipodal. Let $T$ be the largest closed biangular
domain on $S$ so that, $\partial T=-C+\gamma$, $-C$ is one of the two edges of
$T,$ say, $T$ is on the right hand side of $C,$ $\gamma$ is the other edge of
$T$ from $p_{1}$ to $p_{2}$, and (\ref{ag47}) holds. Since $\#E_{q}=q\geq3,$
we have $\gamma\cap C=\{p_{1},p_{2}\}$ and $\gamma^{\circ}\cap E_{q}%
\neq\emptyset.$ Then $T$ and $\gamma$ satisfies Claim \ref{ag30} with
$L(\gamma)=L(C).$

Assume that $C\left(  p_{1},p_{2}\right)  $ is a major arc of the great circle
$c$ and let $S^{\prime}$ be the closed hemisphere enclosed by $-c$. Then
$d\left(  p_{1},p_{2}\right)  <\pi.$ Since $C\cap E_{q}\subset\{p_{1}%
,p_{2}\},$ the convex hull $K$ of $\left(  S^{\prime}\cap E_{q}\right)
\cup\{p_{1},p_{2}\}\ $is a polygon in $S^{\prime}$ with
\[
\overline{p_{1}p_{2}}=\left(  \partial K\right)  \cap c\subset K\cap
S^{\prime}=K\subset S^{\prime\circ}\cup\overline{p_{1}p_{2}}%
\]
and the vertices of $K$ are all in $E_{q},$ except the two points $p_{1}$ and
$p_{2}.$ But $K$ is just the line segment $\overline{p_{1}p_{2}}$ on $S$ when
$S^{\prime\circ}\cap E_{q}=\emptyset.$ Then we have two possibilities to discuss.

First consider the case $S^{\prime\circ}\cap E_{q}=\emptyset$ and let
$T=S^{\prime}.$ Then $K=\overline{p_{1}p_{2}},$ $T$ and $\gamma=\overline
{p_{1}p_{2}}=\overline{p_{1}\mathfrak{a}_{1}\dots\mathfrak{a}_{s}p_{2}}$ with
$\{\mathfrak{a}_{1},\dots,\mathfrak{a}_{s}\}=\gamma^{\circ}\cap E_{q}$
satisfies Claim \ref{ag30}. It is possible that $\gamma^{\circ}\cap
E_{q}=\emptyset.$

Second consider the case $S^{\prime\circ}\cap E_{q}\neq\emptyset.$ In this
case $K^{\circ}$ is a Jordan domain with $\partial K=-\gamma+\overline
{p_{1}p_{2}}$, $\gamma=\overline{p_{1}\mathfrak{a}_{1}\dots\mathfrak{a}%
_{s}p_{2}}$ and
\[
\{\mathfrak{a}_{1},\dots,\mathfrak{a}_{s}\}=\gamma^{\circ}\cap E_{q}=\left[
\left(  \partial K\right)  \backslash\overline{p_{1}p_{2}}\right]  \cap
E_{q}\neq\emptyset.
\]
On the other hand, $\gamma$ is a concave polygonal path in $S^{\prime}$ whose
two endpoints are on $\partial S^{\prime}=-c,$ which implies $L(\gamma)<L(C).$
Then $T=\left(  S^{\prime}\backslash K\right)  \cup\gamma$ satisfies Claim
\ref{ag30}.

By Claim \ref{ag30}, we can sew\label{sew11} $\Sigma$ and $T$ along $C$ to
obtain a surface $\Sigma^{\prime}$ satisfying (\ref{ag41})--(\ref{ag37}) and
(\ref{ag39}). On the other hand, (\ref{ag47}) implies
\[
\overline{n}\left(  \Sigma^{\prime}\right)  =\overline{n}\left(
\Sigma\right)  +\overline{n}(T^{\circ})+\#C^{\circ}\cap E_{q}=\overline
{n}\left(  \Sigma\right)
\]
and we have
\[
A(\Sigma^{\prime})=A(\Sigma)+A(T)>A(\Sigma).
\]
Therefore we have (\ref{ag40}) and (\ref{ag38}). It is clear that
$\Sigma^{\prime}\in\mathcal{F}\left(  L\right)  $. We have proved the lemma completely.
\end{proof}

Lemma \ref{inhalf1} has a direct corollary:

\begin{corollary}
\label{inhalf}Let $\Sigma$ be an extremal surface of $\mathcal{F}\left(
L\right)  $ and assume that $\partial\Sigma$ contains an arc $C=C\left(
p_{1},p_{2}\right)  $ such that $C$ is an SCC arc with $C\cap E_{q}%
\subset\{p_{1},p_{2}\}$ (it is permitted that $p_{1}=p_{2}$), then $C$ is
contained in some open hemisphere on $S.$
\end{corollary}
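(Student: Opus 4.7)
The plan is to prove the corollary by contradiction, leveraging Lemma \ref{inhalf1} directly. Suppose, for contradiction, that $\Sigma$ is an extremal surface of $\mathcal{F}(L)$ and that the arc $C=C(p_1,p_2)\subset\partial\Sigma$ with $C\cap E_q\subset\{p_1,p_2\}$ is \emph{not} contained in any open hemisphere on $S$. Write the corresponding partition $\partial\Sigma=\Gamma+C$. Then the hypotheses of Lemma \ref{inhalf1} are satisfied, so we obtain a surface $\Sigma'$ with $\partial\Sigma'=\Gamma+\gamma$ where $\gamma$ is a polygonal path from $p_1$ to $p_2$ satisfying (\ref{ag36})--(\ref{ag39}).

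Next I would verify that $\Sigma'\in\mathcal{F}(L)$. By construction $\Sigma'\in\mathcal{F}$, and by (\ref{ag37}) we have
\[
L(\partial\Sigma')=L(\Gamma)+L(\gamma)\leq L(\Gamma)+L(C)=L(\partial\Sigma)\leq L,
\]
so indeed $\Sigma'\in\mathcal{F}(L)$. The key punchline is then the strict inequality (\ref{ag40}), namely $H(\Sigma')>H(\Sigma)$, which directly contradicts the assumption that $\Sigma$ is extremal in $\mathcal{F}(L)$ (so that $H(\Sigma)=H_L\geq H(\Sigma')$). This contradiction forces $C$ to lie in some open hemisphere, completing the proof.

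There is no real obstacle here beyond the bookkeeping check that $\Sigma'$ still lies in the class $\mathcal{F}(L)$ where extremality is being used; all the geometric substance (construction of the polygonal replacement path $\gamma$, the strict gain in area without loss of perimeter, and the handling of the subcases according to whether $C$ is half of a great circle, a major arc over an empty or nonempty hemisphere of $E_q$) has already been absorbed into Lemma \ref{inhalf1}. Thus the corollary is essentially an immediate extremality-against-improvement argument.
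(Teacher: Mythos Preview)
Your proof is correct and follows exactly the paper's intended approach: the paper simply states that this is a ``direct corollary'' of Lemma \ref{inhalf1}, and your contradiction argument is precisely the way to unpack that. One minor remark: your verification that $\Sigma'\in\mathcal{F}(L)$ is redundant, since Lemma \ref{inhalf1} already asserts $\Sigma'\in\mathcal{F}(L)$ as part of its conclusion.
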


Instead of proving Theorem \ref{main1}, we prove the following theorem which
implies Theorem \ref{main1} directly.

\begin{theorem}
\label{okok}Let $L\in\mathcal{L}$ be given. Then the following conclusions
(A)--(C) hold.

(A) There exists a precise extremal surface of $\mathcal{F}_{r}\left(
L\right)  ,$ and there exists a positive integer $m_{0}=m_{0}\left(
L,q\right)  ,$ depending only on $L$ and $q,$ such that every precise extremal
surface of $\mathcal{F}_{r}\left(  L\right)  $ is precise extremal in
$\mathcal{F}\left(  L\right)  ,\mathcal{F}_{r}\left(  L,m\right)  ,$ and
$\mathcal{F}\left(  L,m\right)  ,$ respectively, for every integer $m\geq
m_{0}.$

(B) For any precise extremal surface $\Sigma_{0}=\left(  f_{0},\overline
{\Delta}\right)  $ of $\mathcal{F}_{r}\left(  L\right)  ,$ there exists a
positive integer $n_{0}$ such that $\partial\Sigma_{0}$ has an $\mathcal{F}%
\left(  L,n_{0}\right)  $-partition
\begin{equation}
\partial\Sigma_{0}=C_{1}\left(  q_{1},q_{2}\right)  +C_{2}\left(  q_{2}%
,q_{3}\right)  +\cdots+C_{n_{0}}\left(  q_{n_{0}},q_{1}\right)  \label{ac5}%
\end{equation}
satisfying the following (B1)--(B4):

(B1) If $n_{0}>1,$ then, for $j=1,2,\dots,n_{0},$ $C_{j}^{\circ}\cap
E_{q}=\emptyset\ $and $\partial C_{j}=\{q_{j},q_{j+1}\}\subset E_{q}.$ If
$n_{0}=1,$ then either $C_{1}\cap E_{q}=\emptyset$ or $C_{1}\cap E_{q}$ is the
singleton $\{q_{1}\}.$

(B2) Each $C_{j}$ is contained in an open hemisphere $S_{j}$ on $S,$
$j=1,2,\dots,n_{0}.$

(B3) At most one of $C_{j},j=1,\dots,n_{0},$ is a major circular arc (a closed
circular arc is regarded major).

(B4) All $C_{j},j=1,\dots,n_{0},$ have the same curvature.

(C) There exists an integer $d^{\ast}=d_{L,q}$ depending only on $L$ and
$q\ $and there exists a precise extremal surface $\Sigma^{\ast}$ of
$\mathcal{F}\left(  L\right)  $ such that
\[
\deg_{\max}\Sigma^{\ast}\leq d^{\ast}%
\]
(see (\ref{degm}) for $\deg_{\max}$), and either $\Sigma^{\ast}$ is a simple
closed disk in $S\backslash E_{q},$ or $\partial\Sigma^{\ast}$ has a partition%
\begin{equation}
\partial\Sigma^{\ast}=C_{1}^{\prime}(q_{1}^{\prime},q_{2}^{\prime}%
)+C_{2}^{\prime}\left(  q_{2}^{\prime},q_{3}^{\prime}\right)  +\dots
+C_{n_{0}^{\prime}}^{\prime}\left(  q_{n_{0}^{\prime}}^{\prime},q_{1}^{\prime
}\right)  , \label{ag48}%
\end{equation}
with $n_{0}^{\prime}>1,$ such that
\begin{equation}
\partial C_{j}^{\prime}=\{q_{j}^{\prime},q_{j+1}^{\prime}\}\subset
E_{q},\mathrm{\ }q_{j}^{\prime}\neq q_{j+1}^{\prime},\mathrm{\ }C_{j}^{\circ
}\cap E_{q}=\emptyset, \label{ag45}%
\end{equation}
for all $j=1,2,\dots,n_{0}^{\prime}.$
\end{theorem}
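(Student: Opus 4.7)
My plan is to obtain the theorem in three stages, leveraging the results already established in the paper (most importantly Theorem~\ref{LK}, Theorem~\ref{cat2}, Theorem~\ref{sim}, Lemma~\ref{samecur}, Lemma~\ref{Eq-end}, Lemma~\ref{circular} and Corollary~\ref{inhalf}) to first secure a precise extremal surface in $\mathcal{F}_r(L)$, then extract its geometric structure, and finally trim its covering degree.

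For part (A), the plan is to start with an integer $m$ so large that Theorem~\ref{LK} produces a precise extremal surface $\Sigma_{L,m}\in\mathcal{F}_r(L,m)$ with $L(\partial\Sigma_{L,m})\ge 2\delta_{E_q}$. I would then iterate Theorem~\ref{cat2} downward in $m$: as long as $m$ exceeds a threshold $m_0=m_0(L,q)$ that I define from the quantitative constraints below, the precise extremal surface in $\mathcal{F}_r(L,m)$ is automatically precise extremal in $\mathcal{F}_r(L,m-1)$. The threshold will be chosen so that no admissible $\mathcal{F}(L,m)$-partition of an extremal boundary can have more than $m_0$ terms: by Lemma~\ref{Eq-end}, every non-closed edge of the partition that belongs to $\mathfrak{C}^2$ has length at least $\delta_{E_q}$, so there are at most $\lfloor L/\delta_{E_q}\rfloor$ such edges; by Lemma~\ref{samecur}(iii) there is at most one closed edge; and by Lemma~\ref{1-cir} edges must share the same curvature, which together with the adjacency conditions from Lemma~\ref{circular} and the non-folding property of Lemma~\ref{nonf} force any edge with an endpoint outside $E_q$ to be absorbed into a longer circular arc. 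This bounds $n_0$ above by a constant depending only on $L$ and $q$, giving $m_0$. Then a standard comparison using Corollary~\ref{FF'} and Lemma~\ref{Fr'FrF} yields the coincidence of precise extremality across $\mathcal{F}(L)$, $\mathcal{F}_r(L,m)$ and $\mathcal{F}(L,m)$ for all $m\ge m_0$.

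For part (B), I take any precise extremal surface $\Sigma_0$ of $\mathcal{F}_r(L)$ and use the result of (A) to view it as precise extremal in $\mathcal{F}_r(L,m_0)$, so that $\partial\Sigma_0$ has an $\mathcal{F}(L,m_0)$-partition. Passing to the coarsest such partition produces (\ref{ac5}) with some $n_0\le m_0$. Property (B1) follows from Lemma~\ref{Eq-end}, which forces every edge of $\mathfrak{C}^2$ in the coarsest partition to have both endpoints in $E_q$ and empty interior intersection with $E_q$; the case $n_0=1$ corresponds to a single closed arc, which by Lemma~\ref{samecur}(iii) and Lemma~\ref{mykey} can contain at most one point of $E_q$. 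Property (B2) is Corollary~\ref{inhalf} applied to each $C_j$, since $C_j^\circ\cap E_q=\emptyset$ and $C_j$ is an SCC arc on the boundary of an extremal surface (when $C_j$ is a full circle, $n_0=1$ and the conclusion is immediate). Property (B3) is Lemma~\ref{samecur}(ii), and (B4) is Lemma~\ref{samecur}(i), both invoked with the coarsest partition so that every $C_j$ lies in $\mathfrak{C}^1$.

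For part (C), I feed the precise extremal surface $\Sigma_0$ from (B) into Theorem~\ref{sim}, which yields a surface $\Sigma^{\ast}\in\mathcal{F}_r(L,m_0)$ with $\partial\Sigma^{\ast}=\partial\Sigma_0$, $H(\Sigma^{\ast})=H(\Sigma_0)$ and $\deg_{\max}\Sigma^{\ast}\le d^{\ast}(m_0,E_q)=:d_{L,q}$; since $L(\partial\Sigma^{\ast})=L(\partial\Sigma_0)$, $\Sigma^{\ast}$ is again precise extremal in $\mathcal{F}(L)$. If $n_0=1$ and $C_1\cap E_q=\emptyset$, Theorem~\ref{l<2dt} together with Lemma~\ref{hd} forces $\Sigma^{\ast}$ to be a simple closed disk in $S\setminus E_q$. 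Otherwise Lemma~\ref{inhalf1} rules out the possibility that some $C_j$ fails to sit in an open hemisphere, and the argument of (B) with $n_0^{\prime}>1$ supplies (\ref{ag48}) with (\ref{ag45}); the case $n_0=1$ with $C_1\cap E_q=\{q_1\}$ is excluded by Corollary~\ref{LSZ2} combined with the maximality of $\Sigma^{\ast}$, which would otherwise decompose $\Sigma^{\ast}$ along a circular arc through $q_1$ contrary to Lemma~\ref{undec}.

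The main obstacle I anticipate is the iterative descent in (A): proving that the bound $m_0$ is genuinely a function of $L$ and $q$ alone, independent of the particular extremal surface, requires a careful bookkeeping of how edges in $\mathfrak{C}^1\setminus\mathfrak{C}^2$ interact with the adjacency hypothesis of Lemma~\ref{circular} and the non-folding conclusion of Lemma~\ref{nonf}, and in particular ruling out that an arbitrarily long closed circular edge could absorb all the relevant structure. Handling the borderline case $n_0=1$ in (B) and (C)—where $\partial\Sigma_0$ is a single circle possibly meeting $E_q$ at one point—is a secondary delicate point, since Lemma~\ref{samecur} and Lemma~\ref{mykey} together must be invoked to extract the dichotomy stated in (B1) and in the alternative of (C).
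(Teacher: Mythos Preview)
Your high-level architecture for (A) matches the paper (Theorem~\ref{LK} plus the downward iteration of Theorem~\ref{cat2}), but your detailed justification of $m_0$ via edge-counting invokes Lemmas~\ref{Eq-end}, \ref{samecur}, \ref{circular} before you have a precise extremal surface in hand; the paper simply takes $m_0$ large enough for Theorems~\ref{LK} and~\ref{cat2} to apply and then observes that $L_m$ and $H(\Sigma_m)$ stabilize for $m\ge m_0$.

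There is a genuine gap in your argument for (B1). You invoke Lemma~\ref{Eq-end}, but that lemma is proved \emph{under Condition~\ref{nom-1}}, namely the hypothesis that $\partial\Sigma_{L_1}$ has no $\mathcal{F}(L,m-1)$-partition. A precise extremal surface of $\mathcal{F}_r(L)$ is precise extremal in every $\mathcal{F}_r(L,m)$ for $m\ge m_0$, so Condition~\ref{nom-1} fails and Lemma~\ref{Eq-end} is unavailable. The paper's route is different: for any vertex $q_j\notin E_q$, it refines the partition by inserting a point $q_{j-\varepsilon}$ close to $q_j$, producing a short edge $c_{j-1}''\in\mathfrak{C}^2$, and then applies Lemma~\ref{circular} (which does not require Condition~\ref{nom-1}) to conclude that $c_{j-1}+c_j$ is circular at $q_j$. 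This yields Claim~\ref{ac3}: the boundary is locally a single circular arc away from $f_0^{-1}(E_q)$, and the partition by $f_0^{-1}(E_q)$ then gives (B1). Your direct appeal to Lemma~\ref{samecur} for (B3) and (B4) also overlooks that $\mathfrak{C}^1$ requires $L(c)<\pi$; the paper again refines into short subarcs before invoking Lemma~\ref{samecur}, and then reassembles.

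Your treatment of (C) has two errors. First, Theorem~\ref{l<2dt} applies only when $L\le 2\delta_{E_q}$, so it cannot force $\Sigma^\ast$ to be a simple disk in the case $n_0=1$, $C_1\cap E_q=\emptyset$. The paper instead sews $\Sigma_0$ to $S\setminus T$ (Corollary~\ref{loop2}), uses Lemma~\ref{RH} on the resulting closed surface to get $H(\Sigma_0)\le H(T)$, and then moves $T$ congruently to lie in $S\setminus E_q$ or to pick up two points of $E_q$ on its boundary. Second, the case $n_0=1$ with $C_1\cap E_q=\{q_1\}$ is not excluded as you claim; it is handled identically (the disk $T$ is moved), and in Case~3 when some $C_j$ is a full circle the paper applies Corollary~\ref{1-cir1} to rotate $C_j$ until it acquires a second point of $E_q$, before invoking Theorem~\ref{sim} to bound the degree.
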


\begin{proof}
Let $L\in\mathcal{L}$. For sufficiently large $m_{0}$ and each $m\geq m_{0},$
by Theorem \ref{LK} there exists a precise extremal surface $\Sigma_{m}$ of
$\mathcal{F}_{r}\left(  L,m\right)  .$ Assume $L(\partial\Sigma_{m})=L_{m}.$
Then by Theorem \ref{cat2}, we have $\{\Sigma_{m}\}_{m=1}^{\infty}%
\subset\mathcal{F}_{r}\left(  L,m_{0}\right)  ,$ and$\ $for every $m\geq
m_{0},$ since $\mathcal{F}_{r}\left(  L,m_{0}\right)  \subset\mathcal{F}%
_{r}\left(  L,m\right)  ,$ $\Sigma_{m}$ is an extremal surface of
$\mathcal{F}_{r}\left(  L,m_{0}\right)  .$ Therefore we have, for
every$\mathrm{\ }m\geq m_{0},$
\[
L_{m}\geq L_{m_{0}},\mathrm{\ }H(\Sigma_{m})=H(\Sigma_{m_{0}}),
\]
which with the relation $\Sigma_{m_{0}}\in\mathcal{F}_{r}\left(  L,m\right)
\ $implies that $\Sigma_{m_{0}}$ is an extremal surface of $\mathcal{F}%
_{r}\left(  L,m\right)  $ as well, and thus $L_{m_{0}}\geq L_{m}.$ Therefore
we have
\[
L_{m}=L_{m_{0}}\mathrm{\ and\ }H\left(  \Sigma_{m}\right)  =H\left(
\Sigma_{m_{0}}\right)  ,\mathrm{\ }m=m_{0},m_{0}+1,\dots
\]

For each $\Sigma\in\mathcal{F}_{r}(L),$ there exists an integer $m>m_{0}$ such
that $\Sigma\in\mathcal{F}_{r}(L,m).$ Then $H(\Sigma)\leq H(\Sigma
_{m})=H(\Sigma_{m_{0}}),$ and in consequence $\Sigma_{m_{0}}$ is an extremal
surface of $\mathcal{F}_{r}(L).$ Assume that $\Sigma^{\prime}$ is any other
extremal surface of $\mathcal{F}_{r}(L).$ Then for some positive integer
$m^{\prime}>m_{0}$, $\Sigma^{\prime}$ is an extremal surface of $\mathcal{F}%
_{r}(L,m^{\prime})$ and thus we have $L(\partial\Sigma^{\prime})\geq
L_{m}=L_{m_{0}},$ and therefore $\Sigma_{m_{0}}$ is precise extremal in
$\mathcal{F}_{r}\left(  L\right)  .$

We in fact proved that $\Sigma_{m}$ is precise extremal in $\mathcal{F}%
_{r}\left(  L\right)  $ for every $m\geq m_{0}.$ By Corollary \ref{FF'}, each
$\Sigma_{m}$ is precise extremal in $\mathcal{F}\left(  L,m\right)  $ as well
for each $m\geq m_{0}.$ On the other hand we have $\mathcal{F}\left(
L\right)  =\cup_{m=1}^{\infty}\mathcal{F}\left(  L,m\right)  $, $\mathcal{F}%
_{r}\left(  L\right)  =\cup_{m=1}^{\infty}\mathcal{F}_{r}\left(  L,m\right)
$, $\mathcal{F}_{r}\left(  L,m\right)  $ increases as $m$ increases, and so
does $\mathcal{F}\left(  L,m\right)  .$ Thus every precise extremal surface of
$\mathcal{F}_{r}\left(  L\right)  $ is precise extremal in $\mathcal{F}\left(
L\right)  $, $\mathcal{F}_{r}\left(  L,m\right)  $ and $\mathcal{F}\left(
L,m\right)  ,$ for each $m\geq m_{0};$ and (A) is proved.

Let $\Sigma_{0}=\left(  f_{0},\overline{\Delta}\right)  $ be any precise
extremal surface of $\mathcal{F}_{r}\left(  L\right)  $ with $L(\partial
\Sigma_{0})=L_{0}.$ Then (A) implies:

\begin{claim}
\label{ac1}$\Sigma_{0}=\left(  f_{0},\overline{\Delta}\right)  $ is a precise
extremal surface of every $\mathcal{F}_{r}\left(  L,m\right)  $ and every
$\mathcal{F}\left(  L,m\right)  $ for $m\geq m_{0}$ and $L_{0}=L\left(
\partial\Sigma_{0}\right)  =L_{m_{0}}.$
\end{claim}

Then $\partial\Delta$ and $\partial\Sigma_{0}$ have corresponding
$\mathcal{F}\left(  L,m_{0}\right)  $-partitions%
\[
\partial\Delta=\alpha_{1}\left(  a_{1},a_{2}\right)  +\alpha_{2}\left(
a_{2},a_{3}\right)  +\cdots+\alpha_{m_{0}}\left(  a_{m_{0}},a_{1}\right)  ,
\]%
\[
\partial\Sigma_{0}=c_{1}\left(  q_{1},q_{2}\right)  +c_{2}\left(  q_{2}%
,q_{3}\right)  +\cdots+c_{m_{0}}\left(  q_{m_{0}},q_{1}\right)  ,
\]
with $c_{j}=\left(  f,\alpha_{j}\right)  ,j=1,\dots,m_{0}.$ We will show that
$\partial\Sigma_{0}$ is circular at each $q_{j}\in\{q_{j}\}_{j=1}^{m_{0}%
}\backslash E_{q},$ say, $c_{j-1}+c_{j}$ is circular at $q_{j}$ if
$q_{j}\notin E_{q}.$

For sufficiently small $\varepsilon>0,$ $q_{j-\varepsilon}$ is a point in
$c_{j-1}^{\circ}$, which tends to $q_{j}$ as $\varepsilon\rightarrow0.$ Then
for sufficiently small $\varepsilon>0,$ we have an $\mathcal{F}(L,m_{0}%
+1)$-partition
\begin{equation}
\partial\Sigma_{0}=c_{1}+\cdots+c_{j-2}+c_{j-1}^{\prime}+c_{j-1}^{\prime
\prime}+c_{j}+\cdots+c_{m_{0}}, \label{ac2}%
\end{equation}
where $c_{j-1}^{\prime}=c_{j-1}(q_{j-1},q_{j-\varepsilon}),$ $c_{j-1}%
^{\prime\prime}=c_{j-1}(q_{j-\varepsilon},q_{j}).$ Then $c_{j-1}^{\prime
}+c_{j-1}^{\prime\prime}=c_{j-1}$ and for sufficiently small $\varepsilon>0,$
$c_{j-1}^{\prime\prime}\in\mathfrak{C}^{2}=\mathfrak{C}^{2}\left(  \Sigma
_{0}\right)  \ $(see Definition \ref{C^1} for the notation $\mathfrak{C}^{j}%
$). Since, by Claim \ref{ac1}, $\Sigma_{0}$ is also a precise extremal surface
of $\mathcal{F}_{r}(L,m_{0}+1)$ and (\ref{ac2}) is an $\mathcal{F}\left(
L,m_{0}+1\right)  $ partition of $\partial\Sigma_{0},$ Lemma \ref{circular}
implies that $c_{j-1}^{\prime\prime}+c_{j}$ is circular at $q_{j}$ if
$q_{j}\notin E_{q}$ and thus $c_{j-1}+c_{j}$ is circular at $q_{j}\ $if
$q_{j}\notin E_{q}.$ Therefore, we conclude that $\partial\Sigma_{0}$ is
circular everywhere outside $E_{q}.$ By Lemma \ref{home2}, $f_{0}$ is locally
homeomorphic in $\left[  \Delta\backslash f_{0}^{-1}(E_{q})\right]
\cup\left[  \left(  \partial\Delta\right)  \backslash\left[  f_{0}^{-1}%
(E_{q})\cap\{a_{j}\}_{j=1}^{m_{0}}\right]  \right]  .$ Thus we have:

\begin{claim}
\label{ac3}For each $a\in\partial\Delta$, if $a\notin f_{0}^{-1}(E_{q}),$ then
$a$ has a neighborhood $\alpha_{a}$ in $\partial\Delta$ such that $\left(
f_{0},\alpha_{a}\right)  $ is an SCC arc and $f_{0}$ restricted to a
neighborhood of $\alpha_{a}$ in $\overline{\Delta}$ is a homeomorphism.
\end{claim}

Then for sufficiently large $m,$ $\partial\Sigma_{0}$ has an $\mathcal{F}%
(L,m)$-partition $\partial\Sigma_{0}=\sum_{j=1}^{m}\mathfrak{c}_{j}$ such that
$\mathfrak{c}_{j}\in\mathfrak{C}^{1}\left(  \Sigma_{0}\right)  \ $(see
Definition \ref{C^1}). Since $\Sigma_{0}$ is also precise extremal in
$\mathcal{F}_{r}(L,m),$ by Lemma \ref{samecur} all terms $\mathfrak{c}_{j}$
have the same curvature. Thus we have

\begin{claim}
\label{ag46}The curvature of $\partial\Sigma_{0}=\left(  f_{0},\partial
\Delta\right)  $ is a constant function of $z\in\left(  \partial\Delta\right)
\backslash f_{0}^{-1}(E_{q})$
\end{claim}

Let $n_{1}=\#\left(  \partial\Delta\right)  \cap f_{0}^{-1}(E_{q}).$ Then
there are three possibility.

\noindent\textbf{Case 1. }$n_{1}=\#\left(  \partial\Delta\right)  \cap
f_{0}^{-1}(E_{q})=\emptyset.$

\noindent\textbf{Case 2. }$n_{1}=\#\left(  \partial\Delta\right)  \cap
f_{0}^{-1}(E_{q})=1.$

\noindent\textbf{Case 3. }$n_{1}=\#\left(  \partial\Delta\right)  \cap
f_{0}^{-1}(E_{q})\geq2.$

Assume Case 1 occurs. Then $C=f(\partial\Delta)$ is a circle with $C\cap
E_{q}=\emptyset$ and $f_{0}:\partial\Delta\rightarrow C$ is a CCM of degree
$k\ $for some integer $k\geq1.$ In this case, $\partial\Sigma_{0}$ has a
partition $\partial\Sigma_{0}=C_{1}+C_{2}+\dots+C_{k},$ such that every
$C_{k}$ is a simple closed path and $C_{k}=C.$ Then by Corollary \ref{inhalf}
$C$ is contained in an open hemisphere on $S.$ Thus $C$ contains a major
circular arc with length $<\pi.$

We will prove $k=1.$ We cannot use Lemma \ref{1-cir} directly, since it is for
terms of $\mathcal{F}\left(  L,m\right)  $-partitions in $\mathfrak{C}^{1},$
and every arc of $\mathfrak{C}^{1}$ has length $<\pi.$ But it applies in this
way: If $k>1,$ then $\partial\Sigma_{0}$ has an $\mathcal{F}_{r}\left(
L,k+2\right)  $ partition
\begin{equation}
\partial\Sigma_{0}=C_{1}^{\prime}+c_{1}^{\prime}+C_{2}^{\prime}+c_{2}^{\prime
}+C_{3}+\dots+C_{k} \label{mk-2major}%
\end{equation}
so that $C_{1}^{\prime}$ and $C_{2}^{\prime}$ are major circular arcs, each of
which has length $<\pi,$ thus $C_{1}^{\prime}$ and $C_{2}^{\prime}$ are both
in $\mathfrak{C}^{1}\left(  \Sigma_{0}\right)  ,$ contradicting Lemma
\ref{1-cir}. Thus (B) is proved in Case 1.

Assume Case 2 occurs and let $\left(  \partial\Delta\right)  \cap f_{0}%
^{-1}(E_{q})=\{a_{1}\}.$ Then by Claim \ref{ac3}, $\partial\Sigma
_{0}=(f,\partial\Delta)$ is a simple circle $C_{1}=C_{1}\left(  p_{1}%
,p_{1}\right)  $ with $q_{1}=f(a_{1}),$ and by Corollary \ref{inhalf} $C_{1}$
is contained in an open hemisphere on $S.$ Thus (B) also holds in Case 2.

Assume that Case 1 or 2 occurs. Then we have proved that $\partial\Sigma_{0}$
is a simple circle $C_{1}$ with $\#C_{1}\cap E_{q}\leq1.$ In this situation,
we in fact can show that (C) holds. Let $T$ be the closed disk enclosed by
$C_{1}.$ Then we can sew \label{sew10}$\Sigma_{0}$ and $S\backslash T$ along
$C_{1}$ to obtain a closed surface $F=\left(  f,S\right)  .$ Then we have
$L(\partial T)=L(\Sigma_{0})$, $A(\Sigma_{0})=A(F)-A(S\backslash
T)=A(F)+A(T)-4\pi$ and%
\[
\overline{n}\left(  \Sigma_{0}\right)  =\overline{n}\left(  F\right)
-\overline{n}\left(  S\backslash T^{\circ}\right)  =\overline{n}\left(
F\right)  -\overline{n}\left(  S\right)  +\overline{n}\left(  T\right)
=\overline{n}\left(  F\right)  -q+\overline{n}\left(  T\right)  .
\]
Note that $\overline{n}\left(  T\right)  =\#T^{\circ}\cap E_{q},$ not $\#T\cap
E_{q}.$ Therefore we have
\[
R(\Sigma_{0})=R(F)+R(T)+8\pi.
\]
By Lemma \ref{RH} we have $R(F)\leq-8\pi$, and thus $H\left(  \Sigma
_{0}\right)  \leq H(T).$ Then $H\left(  \Sigma_{0}\right)  =H(T)$, say, both
$T$ and $\Sigma_{0}$ are precise extremal surface of $\mathcal{F}\left(
L\right)  .$ If the diameter of $T$ \text{is equal to, or less than }%
$\delta_{E_{q}},$ then there is another disk $\Sigma^{\ast}$ in $S\backslash
E_{q}$ congruent to $T,$ with $H(T)\leq H(\Sigma^{\ast}).$ But $T$ is extremal
in $\mathcal{F}\left(  L\right)  .$ We have $H(T)=H(\Sigma^{\ast}),$ and thus
both $\Sigma^{\ast}$ and $T$ are precise extremal in $\mathcal{F}\left(
L\right)  .$ If the diameter of $T$ is larger than $\delta_{E_{q}},$ then by
moving $T$ continuously we can show that there is also another disk
$\Sigma^{\ast}$ on $S$ congruent to $T$ such that $\overline{n}\left(
\Sigma^{\ast}\right)  \leq\overline{n}\left(  T\right)  $ but $\partial
\Sigma^{\ast}$ contains at least two points of $E_{q},$ and then
$\partial\Sigma^{\ast}$ has a partition (\ref{ag48}) satisfying (\ref{ag45})
and $H(T)\leq H(\Sigma^{\ast}),$ which implies that $\Sigma^{\ast}$ and $T$
are both precise extremal in $\mathcal{F}\left(  L\right)  $. Therefore (C)
holds. We have proved (B) and (C) in Cases 1 and 2.

Assume Case 3 occurs. Then $f_{0}^{-1}(E_{q})$ divides $\partial\Delta$ into
$n_{1}$ arcs and thus $\partial\Sigma_{L_{0}}$ has an $\mathcal{F}\left(
L,n_{1}\right)  $-partition
\begin{equation}
\partial\Sigma_{0}=C_{1}+C_{2}+\cdots+C_{n_{1}} \label{ag44}%
\end{equation}
which satisfies (B1) and (B4) for $n_{0}=n_{1}.$ By Corollary \ref{inhalf},
(B2) holds true.

Assume that (\ref{ag44}) does satisfies (B3). Then by (B2), as the argument
for (\ref{mk-2major}), the partition (\ref{ag44}) has a refined $\mathcal{F}%
\left(  L,n_{1}+2\right)  $-partition$\ $which contains two terms of length
$<\pi$ which are major arcs and of class $\mathfrak{C}^{1}$. But this
contradicts Lemma \ref{1-cir} again. Thus (B3) holds with the partition
(\ref{ag44}), and (B) is proved completely.

Now we begin to prove (C) for Case 3. By (B3) we may assume

\begin{condition}
\label{ag50}$C_{2},C_{3},\dots,C_{n1}$ all satisfy (\ref{ag45}), $C_{1}$ may
or may not satisfy (\ref{ag45}).
\end{condition}

We first show the following.

\begin{claim}
\label{ag49}In Case 3, $\Sigma_{0}$ can be deformed to be a surface $F_{1}%
\in\mathcal{F}\left(  L,n_{1}+k\right)  $ such that $F_{1}$ has a partition of
the form (\ref{ag48}) satisfying (\ref{ag45}), with $n_{0}^{\prime}=n_{1}+k$
for some integer $k\geq0$.
\end{claim}

If $C_{1}$ also satisfies (\ref{ag45}), then there is nothing to prove. So we
assume $C_{1}$ is closed. Then by (B2) $L(C_{1})<2\pi,$ and by (B1) and
Corollary \ref{1-cir1} we can find a precise extremal surface $F_{1}$ in
$\mathcal{F}_{r}\left(  L\right)  ,$ so that $\partial F_{1}$ has an
$\mathcal{F}\left(  L,n_{1}\right)  $ partition
\[
\partial F_{1}=C_{1}^{\prime}+C_{2}+\cdots+C_{n_{1}}%
\]
which is the same as (\ref{ag44}), except that $C_{1}$ is replaced by a
rotation $C_{1}^{\prime}$ of $C_{1}\ $with $C_{1}^{\prime}\cap E_{q}%
=\{q_{1},q_{2}^{\prime},q_{3}^{\prime},\dots,q_{k}^{\prime}\}$ containing $k$
points arranged on $C_{1}^{\prime}$ anticlockwise, for some $k\geq2.$ Then
$C_{1}^{\prime}$ can be divided into $k$ arcs by $\left\{  q_{j}^{\prime
}\right\}  _{j=1}^{k}$ with $q_{1}^{\prime}=q_{1},$ and thus $\partial F_{1}$
has an $\mathcal{F}\left(  L,n_{0}^{\prime}\right)  $-partition
\[
\partial\Sigma=\partial F_{2}=C_{1}^{\prime}\left(  q_{1},q_{2}^{\prime
}\right)  +C_{1}^{\prime}\left(  q_{2}^{\prime},q_{3}^{\prime}\right)
+\cdots+C_{1}^{\prime}\left(  q_{k}^{\prime},q_{1}\right)  +C_{2}\left(
q_{1},q_{2}\right)  +\cdots+C_{n_{1}}\left(  q_{n_{1}},q_{1}\right)
\]
satisfying (\ref{ag45}) with $n_{0}^{\prime}=n_{1}+k.$

By Theorem \ref{sim}, there exists a positive integer $d^{\ast}$ depending
only on $n_{0}^{\prime}=n_{1}+k$ and $q,$ which in fact depends only on $L$
and $q$ since $n_{0}^{\prime}<L_{0}/\delta_{E_{q}},$ and there exists a
surface $\Sigma^{\ast}$ in $\mathcal{F}_{r}\left(  L,n_{0}^{\prime}\right)  $
such that
\[
H(\Sigma^{\ast})\geq H(F_{1}),L(\partial\Sigma^{\ast})\leq L(\partial F_{1}),
\]
$\label{ling-tian-run}$the second equality holding only if $\partial
\Sigma^{\ast}=\partial F_{1}.$ Then $\Sigma^{\ast}$ is a precise extremal
surface of $\mathcal{F}_{r}\left(  L,n_{0}^{\prime}\right)  $ and thus
$L(\partial\Sigma^{\ast})\geq L(\partial F_{1}),$ which implies $L(\partial
\Sigma^{\ast})=L(\partial F_{1})$ and thus $\partial\Sigma^{\ast}=\partial
F_{1},$ which has the $\mathcal{F}\left(  L,n_{0}^{\prime}\right)  $-partition
satisfying (\ref{ag45}), and (C) is proved in Case 3.
\end{proof}

\section{Proof of Theorems \ref{main2}, \ref{2,3} and \ref{q3}}

In this section we complete the proof of our second and third main theorems.

\begin{definition}
\label{S15678}Let $\mathcal{S}_{1}$ be the space that satisfies Definition
\ref{S-surface} (1)--(4). We denote by $\mathcal{S}_{1}^{\left(  5\right)
},\mathcal{S}_{1}^{(6)},\mathcal{S}_{1}^{(7)},\mathcal{S}_{1}^{(8)}$ the
subspaces of $\mathcal{S}_{1}$ satisfying (5), (6), (7) and (8) of Definition
\ref{S-surface} respectively.
\end{definition}

Then we have $\mathcal{S}_{0}=\mathcal{S}_{1}^{\left(  5\right)  }%
\cap\mathcal{S}_{1}^{(6)}\cap\mathcal{S}_{1}^{(7)}\cap\mathcal{S}_{1}^{(8)}.$

We first introduce a procedure to construct a surface for given boundary.

\begin{solution}
\label{sol}(Standard solution of surfaces with known boundary) Let $\Gamma$ be
a closed curve on $S$ with the partition
\[
\Gamma=\left(  f,\partial\Delta\right)  =C_{1}\left(  p_{1},p_{2}\right)
+\cdots+C_{q^{\prime}}\left(  p_{q^{\prime}},p_{1}\right)  ,q^{\prime}\leq q.
\]
satisfying Definition \ref{S-surface} (1)--(3). Then $d\left(  p_{j}%
,p_{j+1}\right)  <\pi$ and $\overline{p_{j}p_{j+1}}$ is well defined. We can
construct a surface $\Sigma_{\Gamma}$ satisfying the following condition.

\begin{condition}
\label{S1Fr}$\Sigma_{\Gamma}$ is contained in $\mathcal{S}_{1}\cap
\mathcal{F}_{r}$ and that $\partial\Sigma_{\Gamma}=\Gamma.$
\end{condition}

The surface $\Sigma_{\Gamma}$ can be obtained as the output when we input
$\Gamma$ and execute the following procedure.

For each $j=1,2,\dots,q^{\prime},$ let $K_{j}=\overline{\mathfrak{D}^{\prime
}\left(  \overline{p_{j}p_{j+1}},C_{j}\right)  }$ (see Definition
\ref{lune-lens}), which is the closed convex lune enclosed by the arc
$C_{j}=C_{j}\left(  p_{j},p_{j+1}\right)  $ and its chord $\overline
{p_{j+1}p_{j}},$ say $\partial K_{j}=C_{j}-\overline{p_{j}p_{j+1}}.$ If $C_{j}
$ is the line segment $\overline{p_{j}p_{j+1}}$ for some $j$, then
$K_{j}^{\circ}=\emptyset$ and we just set $K_{j}=\overline{p_{j}p_{j+1}}.$ If
$C_{j}$ is a major arc of a great circle on $S$ for some $j,$ then by
definition $K_{j}$ is the closed hemisphere enclosed by the great circle
$C_{j}-\overline{p_{j}p_{j+1}}=C_{j}+\overline{p_{j+1}p_{j}}.$ Note that
$K_{q^{\prime}}=\overline{\mathfrak{D}^{\prime}\left(  \overline{p_{q^{\prime
}}p_{1}},C_{q}\right)  },$ say $p_{q^{\prime}+1}=p_{1}.$

Let $l_{2}=\overline{p_{1}p_{2}}$ and $l_{q^{\prime}}=\overline{p_{1}%
p_{q^{\prime}}},$ which are well defined since $d\left(  p_{j},p_{j+1}\right)
<\pi$ for $j=1,\dots,q^{\prime},p_{q^{\prime}+1}=p_{1}$. For each
$j=3,\dots,q^{\prime}-1,$ let $l_{j}=\overline{p_{1}p_{j}}$ if $\overline
{p_{1}p_{j}}$ is well defined, say, $p_{j}$ is not the antipodal point
$p_{1}^{\ast}$ of $p_{1},$ or let $l_{j}\ $be the straight line segment
$\overline{p_{1}p_{j-1}p_{j}}$ if $p_{j}=p_{1}^{\ast}, $ which is half of the
great circle from $p_{1}$ to $p_{1}^{\ast}$ passing through $p_{j-1}.$ Since
$p_{1},\dots,p_{q^{\prime}}$ are distinct each other, if $p_{j}=p_{1}^{\ast}$
for some $j\ $of $3,\dots,q^{\prime}-1,$ we have that $l_{i}=\overline
{p_{1}p_{i}}$ is well defined for each $i\neq j$ with $3\leq i\leq q^{\prime
}-1$ and $\overline{p_{j-1}p_{j}}$ is also well defined.

When $q^{\prime}=2,$ let $T_{2}$ be the surface whose interior is the simple
domain $S\backslash l_{2}$ and boundary is $l_{2}-l_{2}.$

When $q^{\prime}\geq3,$ for each $j=2,\dots,q^{\prime}-1,$ we define $T_{j}$
to be the closed triangular domain enclosed by
\[
l_{j}+\overline{p_{j}p_{j+1}}-l_{j+1},j=2,\dots,q^{\prime}-1.
\]
Each $T_{j},j=2,\dots,q^{\prime}-1\left(  q^{\prime}\geq3\right)  ,$ is a
simple surface in the sense that its interior is a simple nonempty domain on
$S.$ But it is possible that for some $j,$ $l_{j}+\overline{p_{j}p_{j+1}%
}-l_{j+1}$ may be contained in a line segment, and in this case the interior
of $T_{j}$ is $S\backslash l_{j}$ when $p_{j+1}\in l_{j}^{\circ},$ or
$S\backslash l_{j+1}$ when $p_{j}\in l_{j+1}^{\circ}.$ Recall Remark
\ref{tri}, when we regard $T_{j}$ as a surface, $\partial T_{j}$ is a
\emph{simple} closed curve, and thus $l_{j}$ and $l_{j+1}$, regarded as arcs
in the surface, only intersect at $p_{1}$ in the surface $T_{j},$ even if
$\partial T_{j}$ is just a line segment as a set on $S.$

We sew\label{sew21} $T_{2},\dots,T_{q^{\prime}-1}$ along $l_{3},\dots
,l_{q^{\prime}-2}$ to obtain a surface $P$ with boundary $\overline{p_{1}%
p_{2}\dots p_{q^{\prime}}p_{1}}.$ Then we can sew \label{sew9}$P$ and $K_{j}$
along $\overline{p_{j}p_{j+1}}$ for each $j=1,2,\dots,q^{\prime}$ to obtain a
surface $\Sigma_{\Gamma}$ with $\partial\Sigma_{\Gamma}=\Gamma$ and
\[
\deg_{\max}\Sigma_{\Gamma}\leq q^{\prime}-2+q^{\prime}\leq2q^{\prime}-2.
\]
Thus we have $\Sigma_{\Gamma}\in\mathcal{S}_{1}.$

It is clear that all branch values of the surface $P$ are contained in
$\{p_{1},\dots,p_{q^{\prime}}\}\ $and thus when we patch all lunes $K_{j}$ to
$P$ along $\overline{p_{j}p_{j+1}},$ no other branch values appeared. Thus all
branch values of $\Sigma_{\Gamma}$ are contained in $\{p_{1},\dots
,p_{q^{\prime}}\}\subset E_{q},$ and so $\Sigma_{\Gamma}\in\mathcal{F}_{r}%
\cap\mathcal{S}_{1}.$

We will write $P=P(p_{1},p_{2},\dots,p_{q^{\prime}}).$ Then it is clear that
$P(p_{1},p_{2},$ $\dots,$ $p_{q^{\prime}})$ is determined by the ordered
points $p_{1},p_{2},\dots,p_{q^{\prime}}$ uniquely, and thus $\Sigma_{\Gamma}
$ is determined by $\Gamma$ uniquely, when we execute the above procedure.
Uniqueness here is in the sense described in Remark \ref{finite}.

If $q^{\prime}\geq4,$ we can regard $l_{j},j=3,\dots,q^{\prime}-1$, as simple
arcs in the surface $P$ with $l_{j}^{\circ}\subset P^{\circ}\ $and $\partial
l_{j}\subset\partial P$ so that every pair $l_{i}$ and $l_{j}$ with $3\leq
i<j\leq q^{\prime}-1$ only intersect at $p_{1}\ $in $P$ (see Remark \ref{tri}
for the convention). Then it is clear that $l_{3},\dots,l_{q^{\prime}-1}$
divide $P$ into the surfaces $T_{2},\dots,T_{q^{\prime}-1}.$ Then it is clear
that when $q^{\prime}\geq4$%
\begin{equation}
R\left(  P\right)  =\sum_{j=2}^{q^{\prime}-1}R(T_{j})-4\pi\sum_{j=2}%
^{q^{\prime}-2}\#\left(  l_{j+1}^{\circ}\cap E_{q}\right)  . \label{ag51}%
\end{equation}

The polygon $\overline{p_{1}p_{2}\dots p_{q^{\prime}}p_{1}}$ divides the
surface $\Sigma_{\Gamma}$ into the surface $P\left(  p_{1},p_{2}%
,\dots,p_{q^{\prime}}\right)  $ and the $q^{\prime}$ lunes $K_{j}%
,j=1,\dots,q^{\prime}.$ Write
\begin{equation}
J=J(\Gamma)=\{j:1\leq j\leq q^{\prime}\text{ \textrm{and}}\mathrm{\ }%
K_{j}^{\circ}\neq\emptyset\}. \label{ag12}%
\end{equation}
Note that the condition $K_{j}^{\circ}\neq\emptyset$ means that $\partial
K_{j}=C_{j}\left(  p_{j},p_{j+1}\right)  -\overline{p_{j}p_{j+1}}$ is a Jordan
curve on $S$ and $\overline{p_{j}p_{j+1}}^{\circ}$ (regarded as in the surface
$\Sigma_{\Gamma}$) is in the interior of $\Sigma_{\Gamma}.$ Then we have
\begin{equation}
R(\Sigma_{\Gamma})=R(P)+\sum_{j\in J}\left[  R(K_{j})-4\pi\#\left(
\overline{p_{j}p_{j+1}}^{\circ}\cap E_{q}\right)  \right]  . \label{ag22}%
\end{equation}
Therefore, by (\ref{ag51}) we have for $q^{\prime}\geq4$ that%
\begin{equation}
R(\Sigma_{\Gamma})=\sum_{j=2}^{q^{\prime}-1}R\left(  T_{j}\right)  +\sum_{j\in
J}\left[  R(K_{j})-4\pi\#\left(  \overline{p_{j}p_{j+1}}^{\circ}\cap
E_{q}\right)  \right]  -4\pi\sum_{j=2}^{q^{\prime}-2}\#\left(  l_{j+1}^{\circ
}\cap E_{q}\right)  . \label{bd4}%
\end{equation}

For the cases $q^{\prime}=2,3,$ by the definition of $T_{2}$ and $\left\{
K_{j}\right\}  _{j=1}^{q^{\prime}},$ we have
\begin{equation}
R(\Sigma_{\Gamma})=R(T_{2})+\sum_{j\in J}\left[  R(K_{j})-4\pi\#\overline
{p_{j}p_{j+1}}^{\circ}\cap E_{q}\right]  . \label{ag6}%
\end{equation}

\end{solution}

\begin{lemma}
\label{makes}Let $\Sigma=\left(  f,\overline{\Delta}\right)  $ be a surface in
$\mathcal{F}$ such that
\[
\partial\Sigma=\Gamma_{1}+\Gamma_{2}%
\]
and $\Gamma_{1}$ is a closed arc of $\partial\Sigma$ satisfying Definition
\ref{S-surface} (1)--(3) with the corresponding partition
\[
\Gamma_{1}=\left(  f,\partial\Delta\right)  =C_{1}\left(  p_{1},p_{2}\right)
+\cdots+C_{q^{\prime}}\left(  p_{q^{\prime}},p_{1}\right)  .
\]
Let $\Sigma_{\Gamma_{1}}\in\mathcal{S}_{1}\cap\mathcal{F}_{r}$ be the surface
given by Solution \ref{sol}. Then there exists a surface $\Sigma_{2}=\left(
f_{2},S\right)  $ without boundary or $\Sigma_{2}=\left(  f_{2},\overline
{\Delta}\right)  $ with boundary $\Gamma_{2}$ such that the following holds.

(i) If $\partial\Sigma=\Gamma_{1},$ say, $\Gamma_{2}$ reduces to the point
$p_{1}$, then $\Sigma_{2}=\left(  f_{2},S\right)  $ is a closed surface and%
\begin{equation}
R(\Sigma)=R(\Sigma_{\Gamma_{1}})+R(\Sigma_{2})+8\pi\leq R(\Sigma_{\Gamma_{1}%
}), \label{ag19}%
\end{equation}
with equality holding if and only if $CV_{f_{2}}\subset E_{q}.$

(ii) If $\Gamma_{2}$ is not a point, then $\Sigma_{2}=\left(  f_{2}%
,\overline{\Delta}\right)  \in\mathcal{F}$ and%
\begin{equation}
R(\Sigma)+4\pi=\left[  R\left(  \Sigma_{\Gamma_{1}}\right)  +4\pi\right]
+\left[  R(\Sigma_{2})+4\pi\right]  . \label{ag20}%
\end{equation}

(iii) $CV_{f_{2}}\subset E_{q}$ iff $\Sigma\in\mathcal{F}_{r}.$
\end{lemma}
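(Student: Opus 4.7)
The plan is to construct $\Sigma_2$ by iteratively peeling off from $\Sigma$ the constituent pieces of $\Sigma_{\Gamma_1}$ built in Solution~\ref{sol} --- namely the lunes $\{K_j\}_{j\in J}$ and the polygonal surface $P(p_1,\dots,p_{q'})$ --- using the sewing operations of Section~\ref{Sect3}. The identities (\ref{ag19}) and (\ref{ag20}) will then fall out of a careful accounting of the Ahlfors error corrections produced at each sewing step, combined with the formulas (\ref{ag6}) and (\ref{bd4}) that already express $R(\Sigma_{\Gamma_1})$ in terms of those pieces. Conclusion (iii) will follow because every sewing is performed along an arc whose endpoints lie in $E_q$, so the set of branch values outside $E_q$ is preserved throughout.

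First I would treat each lune $K_j$ with $j\in J$: applying Lemma~\ref{ps-dm} with $\gamma=C_j$, $T=K_j$, and $\gamma'=\overline{p_jp_{j+1}}$ sews $\Sigma$ with $K_j^c=S\setminus K_j^{\circ}$ along $C_j$, yielding a surface in which the arc $C_j$ of $\partial\Sigma$ is replaced by the chord $\overline{p_jp_{j+1}}$, at the cost of an Ahlfors error decrease of $R(K_j)-4\pi\#(E_q\cap\overline{p_jp_{j+1}}^{\circ})$. Iterating over all $j\in J$ (and noting that $\overline{p_jp_{j+1}}=C_j$ when $j\notin J$), I obtain a surface $\tilde\Sigma$ with
\[
\partial\tilde\Sigma=\overline{p_1p_2}+\overline{p_2p_3}+\cdots+\overline{p_{q'}p_1}+\Gamma_2
\]
and $R(\Sigma)-R(\tilde\Sigma)=\sum_{j\in J}\bigl[R(K_j)-4\pi\#(E_q\cap\overline{p_jp_{j+1}}^{\circ})\bigr]$.

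Next I would collapse the polygonal portion of $\partial\tilde\Sigma$ by gluing $\tilde\Sigma$ to $-P(p_1,\dots,p_{q'})$ edge by edge. For $q'=2$ this is a single application of Lemma~\ref{glue}(i) in case~(i) or Lemma~\ref{glue}(ii) via equation~(\ref{RR2}) in case~(ii), since the polygonal part is the $\gamma-\gamma$ configuration $\overline{p_1p_2}-\overline{p_1p_2}$. For $q'\geq 3$ I would iteratively cut $\tilde\Sigma$ along $f$-lifts of the triangulation chords $l_j=\overline{p_1p_j}$, $3\leq j\leq q'-1$, whose endpoints lie in $E_q$; each cut (by Lemma~\ref{cut-off}) peels off one of the triangles $T_j$ and reduces the polygonal degree by one, and each subsequent collapse of an edge $\overline{p_jp_{j+1}}$ via Lemma~\ref{glue} contributes $+4\pi\#(E_q\cap\overline{p_jp_{j+1}})-4\pi$ by (\ref{RR2}). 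After all cuts and collapses, the residual surface is $\Sigma_2$: a closed surface $(f_2,S)$ in case~(i) and a disk $(f_2,\overline{\Delta})$ with $\partial\Sigma_2=\Gamma_2$ in case~(ii). In case~(i) the inequality $R(\Sigma)\leq R(\Sigma_{\Gamma_1})$ then follows from Lemma~\ref{RH}, since $R(\Sigma_2)\leq-8\pi$ with equality iff $CV_{f_2}\subset E_q$.

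The main obstacle will be organizing the bookkeeping so that the accumulated $\pm 4\pi$ corrections reassemble exactly to match (\ref{ag6}) or (\ref{bd4}) and thereby produce the precise constants $+8\pi$ in (\ref{ag19}) and $+4\pi$ in (\ref{ag20}): each peeled triangle contributes $R(T_j)$, each collapsed polygonal edge contributes $+4\pi\#(E_q\cap\overline{p_jp_{j+1}})-4\pi$, and each internal chord cut contributes $-4\pi\#(E_q\cap l_{j+1}^{\circ})$, and these must telescope correctly. A secondary technical obstacle is the degenerate case in which some $p_j$ is antipodal to $p_1$, forcing the alternative convention $l_j=\overline{p_1p_{j-1}p_j}$ from Solution~\ref{sol} and a correspondingly modified cut, together with verifying that all $f$-lifts of the chords $l_j$ actually exist in $\overline{\Delta}$ (via Lemma~\ref{Ri} and the fact that their endpoints map into $E_q$ and $\tilde\Sigma$ is regular along its polygonal boundary). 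Finally, (iii) is immediate since every sewing or cutting in the construction occurs along an arc whose endpoints belong to $E_q$; by the preservation statements (\ref{2023-07-05-2}) and (\ref{CVCV2}) in Lemmas~\ref{ps-dm} and \ref{glue}, one obtains $CV_{f_2}(S\setminus E_q)=CV_f(S\setminus E_q)$ at every stage, giving $CV_{f_2}\subset E_q\iff\Sigma\in\mathcal{F}_r$.
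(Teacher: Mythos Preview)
Your first step (peeling off the lunes $K_j$ via Lemma~\ref{ps-dm} to obtain a surface with polygonal boundary $\overline{p_1p_2\cdots p_{q'}p_1}+\Gamma_2$) matches the paper exactly, as does the $q'=2$ case. The difficulty is in your treatment of $q'\geq 3$.

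You propose to cut $\tilde\Sigma$ along $f$-lifts of the diagonals $l_j=\overline{p_1p_j}$ using Lemma~\ref{cut-off}. This has a genuine gap: there is no reason such lifts exist with the required form (a simple arc in $\overline{\Delta}$ with interior in $\Delta$ and endpoints at the specified preimages of $p_1$ and $p_j$ on $\partial\Delta$). Lemma~\ref{Ri} only guarantees that a lift extends until it hits $\partial\Delta$ somewhere --- it need not terminate at the designated preimage of $p_j$, and it may hit $\partial\Delta$ earlier in its interior. Even if a lift did exist, the piece it cuts off need not be the simple triangle $T_j$: it could be any surface in $\mathcal{F}$ with boundary $\partial T_j$, so ``peeling off $T_j$'' is not what Lemma~\ref{cut-off} delivers. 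You flagged the lift issue as a ``secondary technical obstacle,'' but it is in fact the main one, and the tools you cite do not resolve it.

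The paper sidesteps this entirely by working purely with boundary sewing operations, never needing interior lifts. After obtaining $G_1'=\tilde\Sigma$, it builds a sequence $G_1',G_2',\dots,G_{q'-1}'$ where each step sews $G_k'$ with the \emph{complement} $S\setminus T_{k+1}^\circ$ along the boundary arc $l_{k+1}+\overline{p_{k+1}p_{k+2}}$ via Lemma~\ref{ps-dm}, replacing that arc by $l_{k+2}$ (with the degenerate cases $l_{k+2}=l_{k+1}+\overline{p_{k+1}p_{k+2}}$ and $l_{k+1}=l_{k+2}+\overline{p_{k+2}p_{k+1}}$ handled separately, the latter via Lemma~\ref{glue}). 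This exterior approach requires no knowledge of the interior of $G_k'$, and the $R$-bookkeeping is then a direct telescoping against (\ref{bd4}) or (\ref{ag6}). Your part~(iii) argument via (\ref{2023-07-05-2}) and (\ref{CVCV2}) is correct and is exactly what the paper does.
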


\begin{proof}
In the proof the notations $K_{j},T_{j},l_{j},J$ are defined in Solution
\ref{sol}. For all $j\in J,$ we sew\label{sew20} the surface $\Sigma$ and the
surface $K_{j}^{c}=S\backslash K_{j}^{\circ}$ along $C_{j}$ to obtain a
surface $G_{1}^{\prime}$ such that
\begin{equation}
\partial G_{1}^{\prime}=\overline{p_{1}p_{2}p_{3}\cdots p_{q^{\prime}}p_{1}%
}+\Gamma_{2}. \label{bd1}%
\end{equation}
By Lemma \ref{ps-dm} (regarding $K_{j},$ $C_{j}$ and $\overline{p_{j}p_{j+1}}$
as $T,\gamma$ and $\gamma^{\prime}$ in the lemma, and applying the lemma $\#J$
times) we have
\begin{equation}
R(\Sigma)=R(G_{1}^{\prime})+\sum_{j\in J}\left[  R(K_{j})-4\pi\#\overline
{p_{j}p_{j+1}}^{\circ}\cap E_{q}\right]  , \label{ko9-1}%
\end{equation}
and $\Sigma\in\mathcal{F}_{r}$ iff $G_{1}^{\prime}\in\mathcal{F}_{r}.$

We first consider the case $q^{\prime}=2.$ Then we have by (\ref{bd1})
\[
\partial G_{1}^{\prime}=\overline{p_{1}p_{2}p_{1}}+\Gamma_{2}.
\]
Since $\partial T_{2}=\overline{p_{1}p_{2}}+\overline{p_{2}p_{1}},$ we have by
Lemma \ref{-path}
\begin{equation}
R(T_{2})=4\pi\#\overline{p_{1}p_{2}}^{\circ}\cap E_{q}. \label{ag17}%
\end{equation}

If $\Gamma_{2}$ is a point, then we can sew \label{sew8}$G_{1}^{\prime}$ along
$\overline{p_{1}p_{2}}$ to obtain a surface $\Sigma_{2}=\left(  f_{2}%
,S\right)  $ which is closed, say, a complete covering of $S$ onto itself. By
Lemma \ref{glue} (i)
\[
R(G_{1}^{\prime})=R(\Sigma_{2})+4\pi\#\left[  \overline{p_{1}p_{2}}\cap
E_{q}\right]  =R(\Sigma_{2})+4\pi\#\left[  \overline{p_{1}p_{2}}^{\circ}\cap
E_{q}\right]  +8\pi,
\]
and $G_{1}^{\prime}\in\mathcal{F}_{r}$ iff $CV_{f_{2}}\subset E_{q}.$ Then by
(\ref{ag17}) we have
\[
R(G_{1}^{\prime})=R(\Sigma_{2})+R(T_{2})+8\pi,
\]
and then by Lemma \ref{RH}, (\ref{ko9-1}) and (\ref{ag6}) we have
\begin{align*}
R(\Sigma)  &  =R(G_{1}^{\prime})+\sum_{j\in J}\left[  R(K_{j})-4\pi
\#\overline{p_{j}p_{j+1}}^{\circ}\cap E_{q}\right] \\
&  =R(T_{2})+R(\Sigma_{2})+8\pi+\sum_{j\in J}\left[  R(K_{j})-4\pi
\#\overline{p_{j}p_{j+1}}^{\circ}\cap E_{q}\right] \\
&  =R(\Sigma_{\Gamma})+R(\Sigma_{2})+8\pi\leq R(\Sigma_{\Gamma}),
\end{align*}
equality holding iff $CV_{f_{2}}\subset E_{q},$ and so (i) holds when
$q^{\prime}=2.$ Note that the three relations $CV_{f_{2}}\subset E_{q}$,
$G_{1}^{\prime}\in\mathcal{F}_{r}\ $and $\Sigma\in\mathcal{F}_{r}$ are equivalent.

If $\Gamma_{2}$ is not a point, then by Lemma \ref{glue} (ii), for the surface
$\Sigma_{2}=\left(  f_{2},\overline{\Delta}\right)  $ obtained from
$G_{1}^{\prime}$ by sewing it \label{sew19} along $\overline{p_{1}p_{2}},$ we
have%
\[
R(G_{1}^{\prime})=R(\Sigma_{2})+4\pi\#\overline{p_{1}p_{2}}\cap E_{q}%
-4\pi=R(\Sigma_{2})+4\pi\#\overline{p_{1}p_{2}}^{\circ}\cap E_{q}+4\pi,
\]
and $G_{1}^{\prime}\in\mathcal{F}_{r}$ iff $\Sigma_{2}\in\mathcal{F}_{r}.$
Thus by (\ref{ko9-1}) (\ref{ag6}), and (\ref{ag17}) we have%
\[
R(\Sigma)=R(\Sigma_{\Gamma})+R(\Sigma_{2})+4\pi,
\]
which implies (\ref{ag20}). On the other hand, the three relations $\Sigma
_{2}\in\mathcal{F}_{r}$, $G_{1}^{\prime}\in\mathcal{F}_{r}\ $and $\Sigma
\in\mathcal{F}_{r}$ are equivalent again. We have prove the lemma when
$q^{\prime}=2.$

From now on we assume $q^{\prime}\geq3$. We will construct a sequence
$G_{1}^{\prime},G_{2}^{\prime},\dots,G_{q^{\prime}-1}^{\prime}$ in
$\mathcal{F}$ such that,%
\begin{equation}
\partial G_{j}^{\prime}=l_{j+1}+\overline{p_{j+1}p_{j+2}\dots p_{q^{\prime}%
}p_{1}}+\Gamma_{2},j=1,\dots,q^{\prime}-1, \label{ag2}%
\end{equation}
and
\begin{equation}
R(G_{j}^{\prime})=R(G_{j+1}^{\prime})+R(T_{j+1})-4\pi\#\left(  l_{j+2}^{\circ
}\cap E_{q}\right)  ,j=1,\dots,q^{\prime}-2. \label{ag3}%
\end{equation}

By (\ref{bd1}), $G_{1}^{\prime}$ already satisfies (\ref{ag2}). Assume
$G_{1}^{\prime},\dots,G_{k}^{\prime},$ satisfying (\ref{ag2}) for
$j=1,\dots,k,$ and (\ref{ag3}) for $j=1,\dots,k-1,$ are already obtained for
some $k$ with $1\leq k\leq q^{\prime}-2.$

When $\partial T_{k+1}=l_{k+1}+\overline{p_{k}p_{k+1}}-l_{k+2}\ $is a Jordan
curve on $S$, we sew\label{sew18} $G_{k}^{\prime}$ and $S\backslash
T_{k+1}^{\circ}$ along $l_{k+1}+\overline{p_{k+1}p_{k+2}}$ to obtain a surface
$G_{k+1}^{\prime}\ $so that (\ref{ag2}) holds for $j=k+1$. Then by Lemma
\ref{ps-dm}%
\[
R(G_{k}^{\prime})=R(G_{k+1}^{\prime})+R(T_{k+1})-4\pi\#E_{q}\cap
l_{k+2}^{\circ},
\]
say, (\ref{ag3}) holds for $j=k,$ and moreover, $G_{k}^{\prime}\in
\mathcal{F}_{r}$ iff $G_{k+1}^{\prime}\in\mathcal{F}_{r}.$

Consider the case that $l_{k+1}=l_{k+1}\cup\overline{p_{k+1}p_{k+2}},$ say%
\begin{equation}
l_{k+2}=l_{k+1}+\overline{p_{k+1}p_{k+2}}. \label{ag13}%
\end{equation}
In this case we just put $G_{k+1}^{\prime}=G_{k}^{\prime}.$ Then (\ref{ag2})
holds for $j=k+1$, by the equality
\[
l_{k+1}+\overline{p_{k+1}p_{k+2}\dots p_{q^{\prime}}p_{1}}=l_{k+2}%
+\overline{p_{k+2}\dots p_{q^{\prime}}p_{1}}.
\]
(\ref{ag13}) implies%
\[
\partial T_{k+1}=l_{k+1}+\overline{p_{k+1}p_{k+2}}-l_{k+2}=l_{k+2}-l_{k+2},
\]
and thus we have by Lemma \ref{-path} $R(T_{k+1})=4\pi\#l_{k+2}^{\circ}\cap
E_{q},$ say, (\ref{ag3}) holds for $j=k$ as well.

Consider the case
\begin{equation}
l_{k+1}=l_{k+2}+\overline{p_{k+2}p_{k+1}}. \label{ag14}%
\end{equation}
Then%
\begin{equation}
\partial T_{k+1}=l_{k+1}+\overline{p_{k+1}p_{k+2}}-l_{k+2}=l_{k+1}-l_{k+1},
\label{ag10}%
\end{equation}
and
\begin{align*}
\partial G_{k}^{\prime}  &  =l_{k+1}+\overline{p_{k+1}p_{k+2}p_{k+3}\dots
p_{q^{\prime}}p_{1}}+\Gamma_{2}\\
&  =l_{k+2}+\overline{p_{k+2}p_{k+1}}+\overline{p_{k+1}p_{k+2}p_{k+3}\dots
p_{q^{\prime}}p_{1}}+\Gamma_{2}\\
&  =l_{k+2}+\overline{p_{k+2}p_{k+1}}+\overline{p_{k+1}p_{k+2}}+\overline
{p_{k+2}p_{k+3}\dots p_{q^{\prime}}p_{1}}+\Gamma_{2}.
\end{align*}
Thus we can sew\label{sew7} $G_{k}^{\prime}$ itself along $\overline
{p_{k+2}p_{k+1}}$ to obtain $G_{k+1}^{\prime}$ satisfying (\ref{ag2}) for
$j=k+1$. By Lemma \ref{glue} (ii)
\[
R(G_{k}^{\prime})=R(G_{k+1}^{\prime})+4\pi\#\overline{p_{k+2}p_{k+1}}\cap
E_{q}-4\pi,
\]
and moreover, $G_{k}^{\prime}\in\mathcal{F}_{r}$ iff $G_{k+1}^{\prime}%
\in\mathcal{F}_{r}.$ On the other hand%
\[
\#\overline{p_{k+2}p_{k+1}}\cap E_{q}=\#\overline{l_{k+1}}^{\circ}\cap
E_{q}-\#\overline{l_{k+2}}^{\circ}\cap E_{q}+1.
\]
Hence we have%
\begin{equation}
R(G_{k}^{\prime})=R(G_{k+1}^{\prime})+4\pi\#\overline{l_{k+1}}^{\circ}\cap
E_{q}-4\pi\#\overline{l_{k+2}}^{\circ}\cap E_{q}. \label{ag4}%
\end{equation}
By (\ref{ag10}) and Lemma \ref{-path}%
\[
R(T_{k+1})=4\pi\#l_{k+1}^{\circ}\cap E_{q}.
\]
Hence by (\ref{ag4}) we have (\ref{ag3}) for $j=k.$ We have proved the
existence of the surfaces $G_{j}^{\prime},j=1,\dots,q^{\prime}-1,$ in any
case, and moreover, $G_{j}^{\prime}\in\mathcal{F}_{r}$ iff $G_{j+1}^{\prime
}\in\mathcal{F}_{r}$ for $j=1,\dots,q-1.$

By (\ref{ag2}), we have
\begin{equation}
G_{q^{\prime}-1}^{\prime}=l_{q^{\prime}}+\overline{p_{q^{\prime}}p_{1}}%
+\Gamma_{2}=l_{q^{\prime}}-l_{q^{\prime}}+\Gamma_{2}, \label{ag5}%
\end{equation}
and, by (\ref{ko9-1}) and (\ref{ag3}), we have
\begin{equation}%
\begin{tabular}
[c]{l}%
$R(\Sigma)=R(G_{q^{\prime}-1}^{\prime})+\sum_{j=2}^{q^{\prime}-1}\left[
R(T_{j})-4\pi\#l_{j+1}^{\circ}\right]  $\\
$\;\;\;\mathrm{\ \ \ \ \ \ \ \ \ }+\sum_{j\in J}\left[  R(K_{j})-4\pi
\#\overline{p_{j}p_{j+1}}^{\circ}\cap E_{q}\right]  ,$%
\end{tabular}
\ \ \ \ \ \ \ \ \label{ag7}%
\end{equation}
which, together with (\ref{ag6}) for $q^{\prime}=3,$ or (\ref{bd4}) for
$q^{\prime}>3,$ implies that%
\begin{equation}
R(\Sigma)=R(G_{q^{\prime}-1}^{\prime})+R(\Sigma_{\Gamma})-4\pi\#l_{q^{\prime}%
}^{\circ}\cap E_{q}. \label{ag11}%
\end{equation}
Note that the term $4\pi\#l_{q^{\prime}}^{\circ}\cap E_{q}=4\pi\#\overline
{p_{1}p_{q^{\prime}}}^{\circ}\cap E_{q}$ in (\ref{ag7}) never appeared in
(\ref{bd4}).

Assume $\Gamma_{2}$ is a point. Then by (\ref{ag5}) we can sew\label{sew6} the
surface $G_{q^{\prime}-1}^{\prime}$ along $l_{q^{\prime}}=\overline
{p_{1}p_{q^{\prime}}}$ so that $G_{q^{\prime}-1}^{\prime}$ becomes a closed
surface $\Sigma_{2}=\left(  f_{2},S\right)  .$ Then by Lemma \ref{glue} (i) we
have
\begin{equation}
R\left(  G_{q^{\prime}-1}^{\prime}\right)  =R(\Sigma_{2})+4\pi\#l_{q^{\prime}%
}\cap E_{q}=R(\Sigma_{2})+4\pi\#l_{q^{\prime}}^{\circ}\cap E_{q}+8\pi,
\label{ag15}%
\end{equation}
and thus by (\ref{ag11})
\begin{equation}
R(\Sigma)=R(\Sigma_{\Gamma_{1}})+R(\Sigma_{2})+8\pi. \label{ko14}%
\end{equation}
By Lemma \ref{RH} we have $R(\Sigma_{2})\leq-8\pi,$ with equality holding if
and only if $CV_{f_{2}}\subset E_{q}.$ On the other hand it is clear that
\begin{equation}
\Sigma\in\mathcal{F}_{r}\Leftrightarrow G_{1}^{\prime}\in\mathcal{F}%
_{r}\Leftrightarrow G_{2}^{\prime}\in\mathcal{F}_{r}\Leftrightarrow
\dots\Leftrightarrow G_{q^{\prime}-1}^{\prime}\in\mathcal{F}_{r}%
\Leftrightarrow CV_{f_{2}}\subset E_{q}, \label{equok}%
\end{equation}
(i) is proved completely.

Assume that $\Gamma_{2}\ $is not a point. Then by (\ref{ag5}) we can
sew\label{sew5} $G_{q^{\prime}-1}^{\prime}$ itself along $l_{q^{\prime}%
}=\overline{p_{1}p_{q^{\prime}}}$ to obtain a surface $\Sigma_{2}=\left(
f_{2},\overline{\Delta}\right)  \in\mathcal{F}$ such that $\partial\Sigma
_{2}=\Gamma_{2}.$ By Lemma \ref{glue} (ii) we have
\[
R(G_{q^{\prime}-1}^{\prime})=R(\Sigma_{2})+4\pi\#l_{q^{\prime}}\cap E_{q}%
-4\pi=R(\Sigma_{2})+4\pi\#l_{q^{\prime}}^{\circ}\cap E_{q}+4\pi,
\]
which with (\ref{ag11}) implies
\[
R(\Sigma)=R(\Sigma_{\Gamma})+R(\Sigma_{2})+4\pi.
\]
In consequence we have (\ref{ag20}). On other hand, (\ref{equok}) also holds
and so (ii) is proved completely.
\end{proof}

For simplicity, we introduce the following condition:\medskip

\begin{definition}
A curve $\Gamma=\left(  f,\partial\Delta\right)  $ is called satisfying
$\left(  p_{1},\dots,p_{m}\right)  $-\textbf{Condition, }if\textbf{\ }%
$\Gamma=\left(  f,\partial\Delta\right)  $ has a partition%
\begin{equation}
\partial\Sigma=C_{1}\left(  p_{1},p_{2}\right)  +\cdots+C_{m}\left(
p_{m},p_{1}\right)  \label{bd9}%
\end{equation}
and for each $j\leq m,$ $C_{j}$ is an SCC arc and the endpoints $p_{j}$ and
$p_{j+1}$ are distinct and contained in $E_{q},$ and moreover $d\left(
p_{j},p_{j+1}\right)  <\pi.$
\end{definition}

\begin{remark}
If $p_{1},\dots,p_{m}$ are distinct each other, then $\Gamma$ satisfies
Definition \ref{S-surface} (1)--(3) with partition (\ref{bd9}). In general it
is clear that the following holds.
\end{remark}

\begin{claim}
\label{(a)}Assume $\Sigma\in\mathcal{F}$ such that $\partial\Sigma$ satisfies
$\left(  p_{1},\dots,p_{m}\right)  $-Condition with partition (\ref{bd9}).
Then $\partial\Sigma$ has a partition $\partial\Sigma=\Gamma_{1}+\Gamma_{2}$
satisfying Lemma \ref{makes}, say,
\[
\Gamma_{1}=C_{j}\left(  p_{j},p_{j+1}\right)  +C_{j+1}\left(  p_{j+1}%
,p_{j+2}\right)  +\dots+C_{j+k}\left(  p_{j+k},p_{j+k+1}\right)
\]
is a closed arc of $\partial\Sigma$ satisfying Definition \ref{S-surface}
(1)--(3) and $\Gamma_{2}$ is either a point when $p_{1},\dots,p_{m}$ are
distinct each other with $j=1$ and $j+k=m$, or
\begin{align*}
\Gamma_{2}  &  =C_{1}\left(  p_{1},p_{2}\right)  +C_{2}\left(  p_{2}%
,p_{3}\right)  +\dots+C_{j-1}\left(  p_{j-1},p_{j}\right) \\
&  +C_{j+k+1}\left(  p_{j+k+1},p_{j+k+2}\right)  +\dots+C_{m}\left(
p_{m},p_{1}\right)
\end{align*}
which satisfies $\left(  p_{1},p_{2},\dots,p_{j},p_{j+k+1},p_{j+k+2}%
,\dots,p_{m}\right)  $-Condition.
\end{claim}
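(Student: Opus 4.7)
\textbf{Proof plan for Claim \ref{(a)}.} The plan is to reduce to a purely combinatorial selection problem on the cyclic sequence of labels $p_{1},\dots,p_{m}$ (with $p_{m+1}\equiv p_{1}$), and then read off $\Gamma_{1}$ and $\Gamma_{2}$ directly from the partition (\ref{bd9}). Since the $(p_{1},\dots,p_{m})$-Condition only forces consecutive labels to be distinct, the idea is to locate the shortest cyclic subarc of $\partial\Sigma$ whose initial and terminal labels agree, and take that as $\Gamma_{1}$.

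First I would consider the trivial case: if $p_{1},\dots,p_{m}$ are already distinct each other, then (\ref{bd9}) is itself a partition of $\partial\Sigma$ satisfying Definition \ref{S-surface} (1)--(3) (conditions (1) and (2) are part of the $(p_{1},\dots,p_{m})$-Condition, and (3) is exactly distinctness). Setting $j=1$, $k=m-1$, $\Gamma_{1}=\partial\Sigma$ and $\Gamma_{2}=\{p_{1}\}$, we obtain the first alternative.

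Next, in the general case, I would choose a pair $(j,j')$ with $1\le j<j'\le m+1$, $p_{j}=p_{j'}$, and $j'-j$ minimal; such a pair exists because $p_{m+1}=p_{1}$. Setting $k=j'-j-1$, the condition $d(p_{i},p_{i+1})<\pi$ forces $p_{i}\ne p_{i+1}$, so $k\ge 1$. I would then take
\[
\Gamma_{1}=C_{j}(p_{j},p_{j+1})+C_{j+1}(p_{j+1},p_{j+2})+\dots+C_{j+k}(p_{j+k},p_{j+k+1}),
\]
which is a closed arc since $p_{j+k+1}=p_{j'}=p_{j}$. The minimality of $j'-j$ guarantees that $p_{j},p_{j+1},\dots,p_{j+k}$ are pairwise distinct (otherwise some $p_{i}=p_{i'}$ with $j\le i<i'\le j+k$ would give a pair with smaller index gap), so $\Gamma_{1}$ satisfies Definition \ref{S-surface} (1)--(3). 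The complementary part
\[
\Gamma_{2}=C_{1}+\dots+C_{j-1}+C_{j+k+1}+\dots+C_{m}
\]
then runs through the labels $p_{1},p_{2},\dots,p_{j},p_{j+k+2},\dots,p_{m},p_{1}$; using $p_{j+k+1}=p_{j}$ the concatenation at index $j$ is consistent, each $C_{i}$ remains an SCC arc with endpoints in $E_{q}$ at distance less than $\pi$, and consecutive labels along $\Gamma_{2}$ stay distinct (in particular $p_{j}\ne p_{j+k+2}$ because $p_{j+k+1}\ne p_{j+k+2}$ in (\ref{bd9})). Hence $\Gamma_{2}$ satisfies the $(p_{1},\dots,p_{j},p_{j+k+1},\dots,p_{m})$-Condition, which is the second alternative in the claim.

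The only real point to watch is the cyclic bookkeeping: using the convention $p_{m+1}=p_{1}$ so that the trivial-case splitting $j=1$, $j+k+1=m+1$ is handled on the same footing as genuine internal splittings, and verifying that the resulting decomposition fits the set-up of Lemma \ref{makes} (i.e.\ $\Gamma_{1}$ is a genuine closed subarc of $\partial\Sigma$ in the sense of Remark \ref{tri}(B)). Since the statement is purely structural and requires no geometry beyond what is already encoded in the $(p_{1},\dots,p_{m})$-Condition, no additional estimates or deformations are needed, and I do not anticipate any substantial obstacle beyond this indexing.
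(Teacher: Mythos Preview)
Your proposal is correct. The paper itself does not give a proof of this claim at all: it merely states ``In general it is clear that the following holds'' and then asserts Claim~\ref{(a)}. Your minimal-gap argument (pick $j<j'$ with $p_{j}=p_{j'}$ and $j'-j$ smallest, so that $p_{j},\dots,p_{j+k}$ are automatically distinct) is exactly the natural way to justify what the paper leaves as obvious, and your verification that the leftover $\Gamma_{2}$ again satisfies the required condition---in particular that $p_{j}=p_{j+k+1}\neq p_{j+k+2}$ at the new junction---is clean.
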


\begin{corollary}
\label{make2}Let $\Sigma$ be a surface in $\mathcal{F}$ such that
$\partial\Sigma$ satisfying $\left(  p_{1},\dots,p_{m}\right)  $-Condition
with partition (\ref{bd9}). Then there exist surfaces $\Sigma_{j}%
\in\mathcal{S}_{1}\cap\mathcal{F}_{r}$ with partitions%
\begin{equation}
\partial\Sigma_{j}=C_{j1}\left(  p_{j1},p_{j2}\right)  +\cdots+C_{jq_{j}%
^{\prime}}\left(  p_{jq_{j}^{\prime}},p_{j1}\right)  \label{bd10}%
\end{equation}
satisfying Definition \ref{S-surface} (1)--(4) for $j=1,\dots,k,$ $k\geq1,$
such that%
\begin{equation}
L(\partial\Sigma)=\sum_{j=1}^{k}L\left(  \partial\Sigma_{j}\right)  ,
\label{bd13}%
\end{equation}
and
\begin{equation}
R(\Sigma)+4\pi\leq\sum_{j=1}^{k}\left(  R(\Sigma_{j}\right)  +4\pi),
\label{bd14}%
\end{equation}
with equality holding if and only if $\Sigma\in\mathcal{F}_{r}.$ Moreover,
each $\Sigma_{j}$ is the solution of $\partial\Sigma_{j}$ in Solution
\ref{sol} with respect to $\partial\Sigma_{j},$ say, $\Sigma_{j}%
=\Sigma_{\partial\Sigma_{j}}$,
\[
q_{1}^{\prime}+q_{2}^{\prime}+\dots+q_{k}^{\prime}=q^{\prime},
\]%
\[
\{C_{jl}:j=1,\dots,k,l=1,\dots,q_{j}^{\prime}\}=\{C_{j}:j=1,\dots,q^{\prime
}\},
\]
and%
\begin{equation}
\max_{j=1,\dots,k}\frac{R(\Sigma_{j})+4\pi}{L(\partial\Sigma_{j})}\geq
\frac{R(\Sigma)+4\pi}{L(\partial\Sigma)}. \label{bd15}%
\end{equation}

\end{corollary}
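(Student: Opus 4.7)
The plan is to induct on the number $m$ of arcs in the partition $\partial\Sigma=\sum_{j=1}^{m}C_j(p_j,p_{j+1})$, combining Claim \ref{(a)} with Lemma \ref{makes} to peel off one ``elementary'' piece at each step and recurse on the remainder.

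Base case: suppose the vertices $p_1,\dots,p_m$ are already distinct (so Definition \ref{S-surface} (3) holds). Then $\partial\Sigma$ itself satisfies Definition \ref{S-surface} (1)--(3), and I would apply Lemma \ref{makes}(i) with $\Gamma_1=\partial\Sigma$ and $\Gamma_2=\{p_1\}$. This produces a single surface $\Sigma_1:=\Sigma_{\partial\Sigma}\in \mathcal{S}_1\cap\mathcal{F}_r$ via Solution \ref{sol} (which also certifies $\deg_{\max}\Sigma_1\le 2m-2$, i.e.\ Definition \ref{S-surface} (4)); and (\ref{ag19}) gives
\[
R(\Sigma)+4\pi\le R(\Sigma_1)+4\pi,
\]
with equality iff the auxiliary closed surface $\Sigma_2$ produced by Lemma \ref{makes} has $CV_{f_2}\subset E_q$, which by Lemma \ref{makes}(iii) is equivalent to $\Sigma\in\mathcal{F}_r$. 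Since $\partial\Sigma_1=\partial\Sigma$, the length identity (\ref{bd13}) is immediate with $k=1$.

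Inductive step: if some vertices coincide, Claim \ref{(a)} produces a partition $\partial\Sigma=\Gamma_1+\Gamma_2$ with $\Gamma_1$ a closed sub-arc satisfying Definition \ref{S-surface} (1)--(3) and $\Gamma_2$ a path of strictly fewer arcs satisfying $(p_1,\dots,p_j,p_{j+k+1},\dots,p_m)$-Condition. Lemma \ref{makes}(ii) then yields an auxiliary $\Sigma_2\in\mathcal{F}$ with $\partial\Sigma_2=\Gamma_2$ and
\[
R(\Sigma)+4\pi=\bigl[R(\Sigma_{\Gamma_1})+4\pi\bigr]+\bigl[R(\Sigma_2)+4\pi\bigr],
\]
together with the crucial equivalence $\Sigma\in\mathcal{F}_r\iff\Sigma_2\in\mathcal{F}_r$ from Lemma \ref{makes}(iii) (note that $\Sigma_{\Gamma_1}\in\mathcal{F}_r$ automatically by Solution \ref{sol}). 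Applying the inductive hypothesis to $\Sigma_2$ produces $\Sigma_{2,1},\dots,\Sigma_{2,k'}\in \mathcal{S}_1\cap\mathcal{F}_r$; together with $\Sigma_{\Gamma_1}$ these are the desired $\Sigma_1,\dots,\Sigma_k$. The arc-set identity $\{C_{jl}\}=\{C_j\}$ is transparent from the index partition provided by Claim \ref{(a)}, and the length identity (\ref{bd13}) follows from $L(\partial\Sigma)=L(\Gamma_1)+L(\Gamma_2)$ combined with induction. Finally, (\ref{bd15}) is a one-line application of Lemma \ref{ratio} to $L_j=L(\partial\Sigma_j)$ and $R_j=R(\Sigma_j)+4\pi$ via (\ref{bd13}) and (\ref{bd14}).

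The only delicate point, and the one to watch, is the bookkeeping of the equality case in (\ref{bd14}): the inequality is introduced only at the base case, and the equivalence $\Sigma\in\mathcal{F}_r\iff\Sigma_2\in\mathcal{F}_r$ at every recursive step lets the tower of iff's collapse to ``$\Sigma\in\mathcal{F}_r$''. This is precisely why Lemma \ref{makes}(iii) was stated in the sharp form it is, so the transfer causes no loss.
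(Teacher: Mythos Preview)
Your proposal is correct and is essentially the paper's own argument made explicit: the paper's proof simply says ``All the conclusions can be obtained by repeating the previous lemma and Claim \ref{(a)} several times, except (\ref{bd15}). (\ref{bd15}) follows from (\ref{bd13}), (\ref{bd14}) and Lemma \ref{ratio}.'' Your induction on the number of arcs is exactly this iteration spelled out, and your handling of the equality case via Lemma \ref{makes}(iii) is the intended mechanism.
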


\begin{proof}
All the conclusions can be obtained by repeating the previous lemma and Claim
\ref{(a)} several times, except (\ref{bd15}). (\ref{bd15}) follows from
(\ref{bd13}), (\ref{bd14}) and Lemma \ref{ratio}.
\end{proof}

\begin{lemma}
\label{keykey} (i) There exists a surface $\Sigma\in\mathcal{F}_{r}$ which is
a $4\pi$-extremal surface $\mathcal{S}_{1}$, say,
\begin{equation}
H_{\mathcal{S}_{1}}=\sup_{\Sigma^{\prime}\in\mathcal{S}_{1}}\frac
{R(\Sigma^{\prime})+4\pi}{L(\partial\Sigma^{\prime})}=\frac{R(\Sigma)+4\pi
}{L(\partial\Sigma)}. \label{ko4}%
\end{equation}

(ii) For each surface $\Sigma$ in $\mathcal{S}_{1}$ satisfying (\ref{ko4}),
there exists a surface $\Sigma^{\prime}\ $in $\mathcal{S}_{0}\cap
\mathcal{F}_{r}$ such that%
\begin{equation}
\frac{R(\Sigma^{\prime})+4\pi}{L(\partial\Sigma^{\prime})}\geq\frac
{R(\Sigma)+4\pi}{L(\partial\Sigma)}=H_{\mathcal{S}_{1}}. \label{ko7}%
\end{equation}

(iii)
\[
H_{\mathcal{S}_{1}}=\sup_{\Sigma\in\mathcal{S}_{1}}\frac{R(\Sigma)+4\pi
}{L(\partial\Sigma)}=\sup_{\Sigma\in\mathcal{S}_{0}\cap\mathcal{F}_{r}}%
\frac{R(\Sigma)+4\pi}{L(\partial\Sigma)}=\sup_{\Sigma\in\mathcal{S}_{0}}%
\frac{R(\Sigma)+4\pi}{L(\partial\Sigma)}=H_{\mathcal{S}_{0}}.
\]

(iv) For each simplest $4\pi$-extremal surface $\Sigma$ of $\mathcal{S}_{0},$
its boundary $\partial\Sigma$ is consisted of $Q(\Sigma)$ strictly SCC arcs
(see Definitions \ref{S-surface} and \ref{4pi-extr}).
\end{lemma}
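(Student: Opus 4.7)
The plan is to establish the four statements in the order (iii), (i), (ii), (iv), each building on the previous. For (iii), the chain $\mathcal{S}_0\cap\mathcal{F}_r\subseteq\mathcal{S}_0\subseteq\mathcal{S}_1$ gives the trivial inequalities $H_{\mathcal{S}_0\cap\mathcal{F}_r}\le H_{\mathcal{S}_0}\le H_{\mathcal{S}_1}$. For the reverse direction, given any $\Sigma\in\mathcal{S}_1$, its boundary satisfies the $(p_1,\dots,p_{q'})$-Condition with distinct $p_j\in E_q$, so Corollary \ref{make2} produces a finite decomposition into surfaces $\Sigma_j\in\mathcal{S}_1\cap\mathcal{F}_r$ with some $\Sigma_j$ satisfying $\frac{R(\Sigma_j)+4\pi}{L(\partial\Sigma_j)}\ge\frac{R(\Sigma)+4\pi}{L(\partial\Sigma)}$ by (\ref{bd15}). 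I would then apply a finite sequence of ratio-nondecreasing deformations to land in $\mathcal{S}_0\cap\mathcal{F}_r$: splitting arcs at any interior $E_q$ points to enforce (5); invoking Lemma \ref{inhalf1} on any arc not lying in an open hemisphere to enforce (6); applying Deformation \ref{Deform1} to any pair of arcs of distinct curvature, or two concurrent major arcs of equal curvature, to enforce (7); and finally using the strict monotonicity results of Corollaries \ref{2-curvature} and \ref{2-curvature2} to reduce any remaining major arc to a minor arc or half-circle for (8). Each step preserves $\mathcal{F}_r$ and the previously achieved conditions, yielding (iii).

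For (i), I exploit the finite-dimensional structure of $\mathcal{S}_0$ modulo the equivalence of Remark \ref{finite}. A surface in $\mathcal{S}_0$ is determined, up to equivalence, by a cyclic ordered tuple $(p_1,\dots,p_{q'})$ of distinct points of $E_q$ (finitely many choices), the common curvature $k$ required by condition (7), and finitely many discrete topological choices constrained by the degree bound (4). Since each $C_j$ lies in an open hemisphere and is a minor arc or half-circle, $k$ ranges over a compact interval $[0,\min_j k_{\max,j}]$, and boundary length satisfies $L(\partial\Sigma)\ge q'\,\delta_{E_q}>0$ throughout. The ratio $\frac{R+4\pi}{L}$ is continuous on this compact parameter space, hence attains its supremum in $\mathcal{S}_0\cap\mathcal{F}_r$ (the surface produced by Solution \ref{sol} for the maximizing boundary). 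Combining with (iii), this supremum equals $H_{\mathcal{S}_1}$, proving (i).

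Statement (ii) then follows at once: any $\Sigma\in\mathcal{S}_1$ with $\frac{R(\Sigma)+4\pi}{L(\partial\Sigma)}=H_{\mathcal{S}_1}$ is fed through the reduction of (iii), producing $\Sigma'\in\mathcal{S}_0\cap\mathcal{F}_r$ with $\frac{R(\Sigma')+4\pi}{L(\partial\Sigma')}\ge H_{\mathcal{S}_1}$, and the reverse inequality from (iii) forces equality in (\ref{ko7}). For (iv), suppose for contradiction that a simplest $4\pi$-extremal $\Sigma\in\mathcal{S}^{\ast}$ has some straight edge $C_j$. Condition (7) forces all $C_k$ to have curvature $0$, so $\partial\Sigma$ is a closed geodesic polygon and $\Sigma$ coincides with the planar piece $P(p_1,\dots,p_{q'})$ of Solution \ref{sol}. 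A uniform small positive perturbation of the common curvature, preserving endpoints, keeps all conditions (1)--(8) and stays inside $\mathcal{S}_0$; by Lemma \ref{must-strict-con} the ratio $(R+4\pi)/L$ strictly increases at $k=0$. The resulting surface is still a $4\pi$-extremal surface of $\mathcal{S}_0$ with the same $Q(\Sigma)$ but strictly smaller perimeter, contradicting the minimality clause (2) of Definition \ref{4pi-extr}.

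The principal obstacle I anticipate is the bookkeeping needed to verify that each elementary reduction in (iii), and the curvature perturbation in (iv), respects the degree bound (4) of Definition \ref{S-surface}; this requires tracking how the topological sewing pattern of Solution \ref{sol} changes when an arc is split, straightened, or rebent, and ensuring that the resulting surface still admits a representation with $\deg_{\max}\le 2q'-2$ for its (possibly new) value of $q'$.
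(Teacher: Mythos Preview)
Your overall architecture has a real gap in the step that ``enforces (7) and (8)'' in part (iii). Deformation \ref{Deform1} and Corollary \ref{2-curvature} are \emph{infinitesimal} perturbations: given two arcs of unequal curvature, or two major arcs of equal curvature, they produce a nearby surface with \emph{slightly} larger ratio, but they do not equalize curvatures or eliminate major arcs in any finite number of steps. So your ``finite sequence of ratio-nondecreasing deformations to land in $\mathcal{S}_0\cap\mathcal{F}_r$'' does not terminate, and you cannot conclude $H_{\mathcal{S}_1}\le H_{\mathcal{S}_0\cap\mathcal{F}_r}$ this way from an arbitrary $\Sigma\in\mathcal{S}_1$. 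The paper avoids this by reversing your order: it first proves (i) directly (a maximizing sequence in $\mathcal{S}_1$, pass to a subsequence with fixed vertex tuple, let the arcs $C_{nj}\to C_j$, and compare $R(\Sigma_n)\le R(\Sigma_{\Gamma_n})$ via Solution \ref{sol} to see the limit $\Sigma_\Gamma$ attains $H_{\mathcal{S}_1}$). Only \emph{after} having a genuine maximizer does the paper argue by contradiction that it must already satisfy (5)--(8); the perturbation lemmas are then used once, to derive $>H_{\mathcal{S}_1}$, not iterated.

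Your argument for (iv) is also confused at the end. Bending each straight edge $\overline{p_jp_{j+1}}$ outward to a strictly convex arc \emph{increases} perimeter (the arc is longer than its chord), so ``strictly smaller perimeter'' is false, and the perturbed surface cannot be $4\pi$-extremal with higher ratio. The correct contradiction is simply that $\Sigma_{\theta_1}\in\mathcal{S}_1$ has ratio strictly greater than $H_{\mathcal{S}_1}$, which is impossible. Moreover, Lemma \ref{must-strict-con} concerns a \emph{single} symmetric lens $\mathfrak{D}(\delta,\theta,\theta)$; to apply it when the edges $\overline{p_jp_{j+1}}$ have different lengths $\delta_j$ you need a device for combining the $q'$ edges into a single ratio. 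The paper does this by choosing the angles $\theta_j$ so that all lunes $K_j'=\overline{\mathfrak{D}'(\overline{p_jp_{j+1}},\theta_j)}$ have \emph{equal area}, which lets the sum $\frac{L(\partial\Sigma_{\theta_1})}{R(\Sigma_{\theta_1})+4\pi}$ split as an average of terms of the form treated by Lemma \ref{must-strict-con}. A ``uniform curvature'' perturbation does not give this structure.
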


\begin{proof}
(i) Let
\[
H_{\mathcal{S}_{1}}=\sup_{\Sigma^{\prime}\in\mathcal{S}_{1}}\frac
{R(\Sigma^{\prime})+4\pi}{L(\partial\Sigma^{\prime})}.
\]
Then there exists a sequence $\Sigma_{n}$ in $\mathcal{S}_{1}$ such that
\[
\lim_{n\rightarrow\infty}\frac{R(\Sigma_{n})+4\pi}{L(\partial\Sigma_{n}%
)}=H_{\mathcal{S}_{1}}.
\]
By Definition of $\mathcal{S}_{1},$ we may assume $\Gamma_{n}=\partial
\Sigma_{n}$ has the partition
\[
\Gamma_{n}=\partial\Sigma_{n}=C_{n1}\left(  p_{1},p_{2}\right)  +C_{n2}\left(
p_{2},p_{3}\right)  +\dots+C_{nq^{\prime}}\left(  p_{q^{\prime}},p_{1}\right)
\]
such that $p_{1},\dots,p_{q^{\prime}}$ are distinct each other, the endpoints
set $\{p_{j}\}_{j=1}^{q^{\prime}}$ of all $C_{nj}$ are independent of $n\ $and
$\partial\Sigma_{n}$ with this partition satisfy Definition \ref{S-surface}
(1)--(4). Thus we may assume $C_{nj}\left(  p_{j},p_{j+1}\right)  $ converges
to an SCC arc $C_{j}\left(  p_{j},p_{j+1}\right)  $ for each $j=1,\dots
,q^{\prime}.$ It is clear that
\begin{equation}
\Gamma=C_{1}\left(  p_{1},p_{2}\right)  +\dots+C_{q^{\prime}}\left(
p_{q^{\prime}},p_{1}\right)  \label{bd6}%
\end{equation}
satisfies Definition \ref{S-surface} (1)--(3). Then for the surface
$\Sigma_{\Gamma}\in\mathcal{S}_{1}\cap\mathcal{F}_{r}$ and its partition
$\{K_{j}\}_{j=1}^{q^{\prime}}\cup\{T_{j}\}_{j=2}^{q^{\prime}-1}$ given by
Solution \ref{sol}, we have (\ref{bd4}) for $q^{\prime}\geq4,$ or (\ref{ag6})
for $q^{\prime}=2,3.$

For the surface $\Sigma_{\Gamma_{n}}$ and its partition $\{K_{nj}%
\}_{j=1}^{q^{\prime}}\cup\{T_{j}\}_{j=2}^{q^{\prime}-1}$ given by Solution
\ref{sol}, we may assume that for each $j,$ either $K_{nj}^{\circ}=\emptyset$
for all $n,$ or $K_{nj}^{\circ}\neq\emptyset$ for all $n.$ Then
\[
J_{n}=J\left(  \Sigma_{n}\right)  =\{j:K_{nj}^{\circ}\neq\emptyset
,j\in\{1,\dots,q^{\prime}\}\}=J
\]
is independent of $n,$ and Lemma \ref{makes}, (\ref{bd4}) and (\ref{ag6})
imply that\footnote{Note that $T_{j}$ is the same for each $\Sigma_{n},$ since
$p_{1},\dots,p_{q^{\prime}}$ are independent of $n.$}$\ $%
\begin{equation}%
\begin{tabular}
[c]{lll}%
$R(\Sigma_{n})$ & $\leq R(\Sigma_{\Gamma_{n}})$ & $=\sum\limits_{j=2}%
^{\max\{2,q^{\prime}-1\}}R\left(  T_{j}\right)  -4\pi\varphi\left(  q^{\prime
}\right)  $\\
&  & $+\sum_{j\in J}\left[  R(K_{nj})-4\pi\#\left(  \overline{p_{j}p_{j+1}%
}^{\circ}\cap E_{q}\right)  \right]  ,$%
\end{tabular}
\ \ \ \ \ \label{bd5}%
\end{equation}
where $\varphi(2)=\varphi(3)=0$, $\varphi(q^{\prime})=\sum_{j=2}^{q^{\prime
}-2}\#\left(  l_{j+1}^{\circ}\cap E_{q}\right)  $ for all $q^{\prime}\geq4,$ and

For each $j\in J_{n},$ we may assume $K_{nj}$ converges to the closed lune
$K_{j}$ enclosed by $C_{j}+\overline{p_{j+1}p_{j}}$, which as a limit may be
just the segment $\overline{p_{j}p_{j+1}}$ and we have
\begin{equation}
J_{n}\supset J=J(\Gamma)=\{j:K_{j}^{\circ}\neq\emptyset,j\in\{1,\dots
,q^{\prime}\}\}. \label{JnJ}%
\end{equation}
Then it is clear that $A(K_{nj})\rightarrow A(K_{j})$ and for each
sufficiently large $n$
\[
\overline{n}(K_{nj})\geq\overline{n}(K_{j}),
\]
which implies
\[
\lim_{n\rightarrow\infty}R(K_{nj})\leq R(K_{j}).
\]
Therefore by (\ref{bd5}) and (\ref{JnJ}), for the solution $\Sigma_{\Gamma}%
\in\mathcal{S}_{1}\cap\mathcal{F}_{r}$ to $\Gamma$, we have
\begin{align*}
\lim_{n\rightarrow\infty}R(\Sigma_{n})  &  \leq R(\Sigma_{\Gamma})\\
&  =\sum\limits_{j=2}^{\max\{2,q^{\prime}-1\}}R\left(  T_{j}\right)
-4\pi\varphi\left(  q^{\prime}\right)  +\sum_{j\in J}\left[  R(K_{j}%
)-4\pi\#\left(  \overline{p_{j}p_{j+1}}^{\circ}\cap E_{q}\right)  \right]  .
\end{align*}
On the other hand we have $\lim_{n\rightarrow\infty}L(\partial\Sigma
_{n})\rightarrow L(\partial\Sigma_{\Gamma}).$ Thus we have
\[
H_{\mathcal{S}_{1}}=\lim_{n\rightarrow\infty}\frac{R(\Sigma_{n})+4\pi
}{L(\partial\Sigma_{n})}\leq\frac{R(\Sigma_{\Gamma})+4\pi}{L(\partial
\Sigma_{\Gamma})}.
\]
Since $\Sigma_{\Gamma}\in\mathcal{S}_{1},$ we have
\[
H_{\mathcal{S}_{1}}=\frac{R(\Sigma_{\Gamma})+4\pi}{L(\partial\Sigma_{\Gamma}%
)}.
\]
(i) is proved.

(ii) Let $\Sigma$ be any surface in $\mathcal{S}_{1}\cap\mathcal{F}_{r}$
satisfying (i). Then $\partial\Sigma$ has a partition
\begin{equation}
\partial\Sigma=C_{1}\left(  p_{1},p_{2}\right)  +\dots+C_{q^{\prime}}\left(
p_{q^{\prime}},p_{1}\right)  \label{ko6}%
\end{equation}
satisfying Definition \ref{S-surface} (1)--(4). But (5) may fails.

$E_{q}$ divides each $C_{j}$ into subarcs each of which contains no point of
$E_{q}$ in its interior. We will show

\begin{claim}
\label{part-1}The partition (\ref{ko6}) has a refinement
\begin{equation}
\partial\Sigma=C_{1}^{\prime}\left(  p_{1}^{\prime},p_{2}^{\prime}\right)
+\cdots+C_{q^{\prime\prime}}^{\prime}\left(  p_{q^{\prime\prime}}^{\prime
},p_{1}^{\prime}\right)  , \label{bd11}%
\end{equation}
such that for each $j\leq q^{\prime\prime},$ $C_{j}^{\prime\circ}\cap
E_{q}=\emptyset,\partial C_{j}^{\prime}\subset E_{q}$ and the two endpoints of
$C_{j}^{\prime}$ are distinct for each $j=1,\dots,q^{\prime\prime}$ (note that
$q^{\prime\prime}$ may be larger than $q$, though $q^{\prime}\leq q$). In
other words, $\partial\Sigma$ with partition (\ref{bd11}) satisfies
$(p_{1}^{\prime},p_{2}^{\prime},\cdots,p_{q^{\prime\prime}}^{\prime}%
)$-Condition\footnote{It is clear that $d\left(  p_{j},p_{j+1}\right)  <\pi$
for all $j=1,\dots,q^{\prime}.$ But this cannot implies $d\left(
p_{j}^{\prime},p_{j+1}^{\prime}\right)  <\pi$ for all $j=1,\dots
,q^{\prime\prime}.$ So Claim \ref{part-1} needs a proof.}.
\end{claim}

We first show that

\begin{claim}
\label{ko5}All $C_{j}^{\prime},j=1,2,\dots,q^{\prime\prime},$ is contained in
an open hemisphere on $S$.
\end{claim}

Assume that Claim \ref{ko5} fails. Then for some $j_{0}\in\{1,\dots
,q^{\prime\prime}\},$ $C_{j_{0}}^{\prime}$ is not contained in any open
hemisphere and we may assume $j_{0}=1.$ Then by Lemma \ref{inhalf1} there
exists a surface $\Sigma^{\prime}$ such that
\[
\partial\Sigma^{\prime}=\overline{p_{1}^{\prime}\mathfrak{a}_{2}^{\prime}%
}+\overline{\mathfrak{a}_{2}^{\prime}\mathfrak{a}_{3}^{\prime}}+\cdots
+\overline{\mathfrak{a}_{s}^{\prime}p_{2}^{\prime}}+C_{2}^{\prime}%
+\dots+C_{q^{\prime\prime}}^{\prime},
\]%
\[
L(\partial\Sigma^{\prime})<L(\partial\Sigma),\frac{R(\Sigma^{\prime})+4\pi
}{L(\partial\Sigma^{\prime})}>\frac{R(\Sigma)+4\pi}{L(\partial\Sigma
)}=H_{\mathcal{S}_{1}}.
\]
We can repeat this method at most $q^{\prime\prime}-1$ times for every edge
$C_{j}^{\prime}$ which is not contained in any open hemisphere on $S,$ to
obtain a surface $\Sigma^{\prime\prime}$ such that $\partial\Sigma
^{\prime\prime}$ satisfy $(\mathfrak{a}_{1}^{\prime},\mathfrak{a}_{2}^{\prime
},\dots,\mathfrak{a}_{m}^{\prime})$-Condition and
\[
L(\partial\Sigma^{\prime\prime})\leq L(\partial\Sigma^{\prime}),\frac
{R(\Sigma^{\prime\prime})+4\pi}{L(\partial\Sigma^{\prime\prime})}\geq
\frac{R(\Sigma^{\prime})+4\pi}{L(\partial\Sigma^{\prime})}>\frac
{R(\Sigma)+4\pi}{L(\partial\Sigma)}.
\]
By Corollary \ref{make2}, there exists a surface $\Sigma^{\prime\prime\prime
}\in\mathcal{S}_{1}\cap\mathcal{F}_{r}\ $satisfying
\[
H_{\mathcal{S}_{1}}\geq\frac{R(\Sigma^{\prime\prime\prime})+4\pi}%
{L(\partial\Sigma^{\prime\prime\prime})}\geq\frac{R(\Sigma^{\prime\prime
})+4\pi}{L(\partial\Sigma^{\prime\prime})}>\frac{R(\Sigma)+4\pi}%
{L(\partial\Sigma)}=H_{\mathcal{S}_{1}}.
\]
This contradiction implies Claim \ref{ko5}.

By Claim \ref{ko5}, we have $d\left(  p_{j}^{\prime},p_{j+1}^{\prime}\right)
<\pi$ for all $j=1,\dots,q^{\prime\prime}.$ Thus Claim \ref{part-1} holds.

By Claim \ref{part-1} and Corollary \ref{make2}, there exists a surface
$\Sigma_{1}\in\mathcal{S}_{1}\cap\mathcal{F}_{r}$ such that
\begin{equation}
\frac{R(\Sigma_{1})+4\pi}{L(\partial\Sigma_{1})}\geq\frac{R(\Sigma)+4\pi
}{L(\partial\Sigma)}=H_{\mathcal{S}_{1}}, \label{ag53}%
\end{equation}
and $\partial\Sigma_{1}$ has a partition satisfying Definition \ref{S-surface}
(1)--(4), and each term of this partition is one of the arc in (\ref{bd11}),
and thus we have $\Sigma_{1}\in\mathcal{S}_{1}^{(5)}.$ Since $\Sigma_{1}%
\in\mathcal{S}_{1},$ (\ref{ag53}) in fact implies%
\begin{equation}
\frac{R(\Sigma_{1})+4\pi}{L(\partial\Sigma_{1})}=H_{\mathcal{S}_{1}}.
\label{ag54}%
\end{equation}
Let
\begin{equation}
\partial\Sigma_{1}=\mathcal{C}_{1}\left(  \mathfrak{p}_{1},\mathfrak{p}%
_{2}\right)  +\mathcal{C}_{2}\left(  \mathfrak{p}_{2},\mathfrak{p}_{3}\right)
+\cdots+\mathcal{C}_{\mathfrak{q}^{\prime}}\left(  \mathfrak{p}_{\mathfrak{q}%
^{\prime}},\mathfrak{p}_{1}\right)  \label{bd12}%
\end{equation}
be the partition of $\partial\Sigma_{1}$ such that each term is a term in
(\ref{bd11}). Then by Claim \ref{ko5} $\Sigma_{1}$ with this partition
satisfies Definition \ref{S-surface} (1)--(6), and thus $\Sigma_{1}%
\in\mathcal{S}_{1}^{(6)}$.

(\dag) Assume that $\Sigma_{1}\notin\mathcal{S}_{1}^{(7)},$ say, (\ref{bd12})
does not satisfies Definition \ref{S-surface} (7). Then there exists a pair
$\left(  i,j\right)  \ $with $i\neq j,$ such that the curvature of
$\mathcal{C}_{i}$ is not the same as that of $\mathcal{C}_{j}.$ By Corollary
\ref{2-curvature} we can change $\mathcal{C}_{i}$ and $\mathcal{C}_{j}$ a
little so that $\Sigma_{1}$ becomes another surface $\Sigma_{1}^{\prime}$ in
$\mathcal{S}_{1}$ such that $R(\Sigma_{1}^{\prime})>R(\Sigma_{1})$ and
$L(\partial\Sigma_{1}^{\prime})=L(\partial\Sigma_{1}),$ which deduce the
contradiction $H(\Sigma_{1}^{\prime})>H_{\mathcal{S}_{1}}.$ We have proved
that $\Sigma_{1}\in\mathcal{S}_{1}^{(7)}.$

Assume $\Sigma_{1}\notin\mathcal{S}_{1}^{(8)}.$ Then in the partition
(\ref{bd12}), there is an edge $\mathcal{C}_{j_{0}}$ which is a major circular
arc. Then by Lemma \ref{makes}, we may assume that $\Sigma_{1}$ is the surface
$\Sigma_{\partial\Sigma_{1}}$ given by Solution \ref{sol} and is obtained by
sewing\label{sew17} $\{\mathcal{K}_{j}\}$ and $\{\mathcal{T}_{j}\},$ where
$\mathcal{K}_{j}$ and $\mathcal{T}_{j}$ are defined by the partition
(\ref{bd12}) as in Solution \ref{sol} (as $K_{j}$ and $T_{j}$ there). Let
$\Sigma_{1}^{\prime}$ and $\Sigma_{2}^{\prime}$ be two surface in
$\mathcal{S}_{1}$ both obtained by deform $\Sigma_{1}^{\prime}$ via pushing
$\mathcal{C}_{j_{0}}$ to the right hand side a little, respectively, so that
the endpoints of $\mathcal{C}_{j_{0}}$ remain unchanged, $\overline{n}%
(\Sigma_{1}^{\prime})=\overline{n}\left(  \Sigma_{2}^{\prime}\right)
=\overline{n}\left(  \Sigma_{1}\right)  $ and $L(\partial\Sigma_{1}^{\prime
})+L(\partial\Sigma_{2}^{\prime})=2L(\partial\Sigma_{1}).$ Then we have
$A(\Sigma_{1}^{\prime})+A(\Sigma_{2}^{\prime})>2A\left(  \Sigma_{1}\right)
\ $by Corollary \ref{2-curvature} (B). Thus we have%
\[
\frac{R(\Sigma_{1}^{\prime})+4\pi+R(\Sigma_{2}^{\prime})+4\pi}{L(\partial
\Sigma_{1}^{\prime})+L\left(  \partial\Sigma_{2}^{\prime}\right)  }%
>\frac{2\left(  R(\Sigma_{1})+4\pi\right)  }{2L(\partial\Sigma_{1})}.
\]
which, with Lemma \ref{ratio} and (\ref{ag54}), implies
\[
\max\left\{  \frac{R(\Sigma_{1}^{\prime})+4\pi}{L(\partial\Sigma_{1}^{\prime
})},\frac{R(\Sigma_{2}^{\prime})+4\pi}{L\left(  \partial\Sigma_{2}^{\prime
}\right)  }\right\}  >\frac{R(\Sigma_{1})+4\pi}{L(\partial\Sigma_{1}%
)}=H_{\mathcal{S}_{1}},
\]
contradicting definition of $H_{\mathcal{S}_{1}}$ again. We have proved
$\Sigma_{1}\in\mathcal{S}_{1}^{(8)}.$

Up to now, we have proved $\Sigma_{1}$ satisfies (1)--(8) of Definition
\ref{S-surface}, say $\Sigma_{1}\in\mathcal{S}_{0}.$ Then we have by
(\ref{ag54})
\[
H_{\mathcal{S}_{1}}\geq H_{\mathcal{S}_{0}}\geq\frac{R(\Sigma_{1})+4\pi
}{L(\partial\Sigma_{1})}=H_{\mathcal{S}_{1}}.
\]
and (ii) is proved, since $\Sigma_{1}\in\mathcal{F}_{r}$. (iii) follows from
(ii) directly.

To prove (iv), let $\Sigma\in\mathcal{S}_{0}$ be a simplest $4\pi$-extremal
surface of $\mathcal{S}_{0}$ with the corresponding partition (\ref{ko12})
satisfying Definition \ref{S-surface} (1)--(8). If (iv) fails for $\Sigma$,
then all arcs $C_{j}$ in the partition (\ref{ko12}) are straight, say,
$C_{j}=\overline{p_{j}p_{j+1}},j=1,\dots,q^{\prime}.$ We may assume $\Sigma$
is the solution of Solution \ref{sol} from the partition (\ref{ko12}). Then
$\Sigma$ is obtained by sewing\label{sew16} $P=P\left(  p_{1},p_{2}%
,\dots,p_{q^{\prime}}\right)  $ and $\{K_{j}\}_{j=1}^{q^{\prime}}$ given by
Solution \ref{sol}, each $K_{j}$ is just the line segment $\overline
{p_{j}p_{j+1}}$ and we have, by Definition \ref{S-surface} (5),%
\begin{equation}
\overline{p_{j}p_{j+1}}^{\circ}\cap E_{q}=C_{j}^{\circ}\cap E_{q}=\emptyset.
\label{ag55}%
\end{equation}
Then by (\ref{ag22})%
\begin{equation}
\frac{1}{H_{\mathcal{S}_{1}}}=\frac{L(\partial\Sigma)}{R(P)+4\pi}=\frac
{2\sum_{j=1}^{q^{\prime}}L(\overline{p_{j}p_{j+1}})}{2R(P)+8\pi}. \label{ag25}%
\end{equation}

It is clear that $R(P)+4\pi>0$. Let $\theta_{1},\theta_{2},\dots
,\theta_{q^{\prime}}$ be sufficiently small positive angles such that the
closed lunes $K_{j}^{\prime}=\overline{\mathfrak{D}^{\prime}(\overline
{p_{j}p_{j+1}},\theta_{j})}$ (see Definition \ref{lune-lens}) have the same
area, say,
\begin{equation}
A(K_{j}^{\prime})=A(K_{1}^{\prime}),j=1,2,\dots,q^{\prime}, \label{ag23}%
\end{equation}
and let $C_{j}^{\prime}$ be the circular boundary of $\mathfrak{D}^{\prime
}\left(  \overline{p_{j}p_{j+1}},\theta_{j}\right)  $ from $p_{j}$ to
$p_{j+1}.$ Then all $\theta_{j}$ are determined by $\theta_{1}$ and we can sew
\label{sew4}$P$ and the lunes $K_{j}^{\prime}$ along $\partial P$ to obtain a
surface $\Sigma_{\theta_{1}}$ such that
\[
\partial\Sigma_{\theta_{1}}=C_{1}^{\prime}\left(  p_{1},p_{2}\right)
+C_{2}^{\prime}\left(  p_{2},p_{3}\right)  +\cdots+C_{q^{\prime}}^{\prime
}\left(  p_{q^{\prime}},p_{1}\right)  .
\]

By (\ref{ag55}) when $\theta_{1}$ is small enough we have that
\[
R(K_{j}^{\prime})=\left(  q-2\right)  A(K_{j}^{\prime})
\]
and that%
\[
R(\Sigma_{\theta_{1}})=R(P)+\sum_{j=1}^{q^{\prime}}R(K_{j}^{\prime}%
)=R(P)+\sum_{j=1}^{q^{\prime}}\left(  q-2\right)  A(K_{j}^{\prime}).
\]
Then we have by (\ref{ag23})%
\begin{align*}
R(\Sigma_{\theta_{1}})+4\pi &  =R(P)+4\pi+q^{\prime}\left(  q-2\right)
A(K_{1}^{\prime})\\
&  =q^{\prime}\left(  q-2\right)  \left[  \frac{R(P)+4\pi}{q^{\prime}\left(
q-2\right)  }+A\left(  K_{1}^{\prime}\right)  \right]  ,
\end{align*}
and then by (\ref{ag23})%
\begin{align*}
\frac{L\left(  \partial\Sigma_{\theta_{1}}\right)  }{R(\Sigma_{\theta_{1}%
})+4\pi}  &  =\frac{\sum_{j=1}^{q^{\prime}}L\left(  C_{j}^{\prime}\right)
}{q^{\prime}\left(  q-2\right)  \left[  \frac{R(P)+4\pi}{q^{\prime}\left(
q-2\right)  }+A\left(  K_{1}^{\prime}\right)  \right]  }\\
&  =\frac{1}{q^{\prime}\left(  q-2\right)  }\sum_{j=1}^{q^{\prime}}%
\frac{2L\left(  C_{j}^{\prime}\right)  }{\frac{2R(P)+8\pi}{q^{\prime}\left(
q-2\right)  }+2A\left(  K_{1}^{\prime}\right)  }\\
&  =\frac{1}{q^{\prime}\left(  q-2\right)  }\sum_{j=1}^{q^{\prime}}%
\frac{2L\left(  C_{j}^{\prime}\right)  }{\frac{2R(P)+8\pi}{q^{\prime}\left(
q-2\right)  }+2A\left(  K_{j}^{\prime}\right)  },
\end{align*}
which, with $2L\left(  C_{j}^{\prime}\right)  =L\left(  \partial
\mathfrak{D}\left(  \overline{p_{j}p_{j+1}},\theta_{j},\theta_{j}\right)
\right)  $ and $2A\left(  K_{j}^{\prime}\right)  =A\left(  \mathfrak{D}\left(
\overline{p_{j}p_{j+1}},\theta_{j},\theta_{j}\right)  \right)  ,$ implies%
\[
\frac{L\left(  \partial\Sigma_{\theta_{1}}\right)  }{R(\Sigma_{\theta_{1}%
})+4\pi}=\frac{1}{q^{\prime}\left(  q-2\right)  }\sum_{j=1}^{q^{\prime}}%
\frac{L\left(  \partial\mathfrak{D}\left(  \overline{p_{j}p_{j+1}},\theta
_{j},\theta_{j}\right)  \right)  }{\frac{2R(P)+8\pi}{\left(  q-2\right)
q^{\prime}}+A\left(  \mathfrak{D}\left(  \overline{p_{j}p_{j+1}},\theta
_{j},\theta_{j}\right)  \right)  }.
\]
Therefore by Lemma \ref{must-strict-con} (i) we have for small enough
$\theta_{j}=\theta_{j}\left(  \theta_{1}\right)  ,$%
\begin{align*}
\frac{L\left(  \partial\Sigma_{\theta_{1}}\right)  }{R(\Sigma_{\theta_{1}%
})+4\pi}  &  <\frac{1}{q^{\prime}\left(  q-2\right)  }\sum_{j=1}^{q^{\prime}%
}\frac{L\left(  \partial\mathfrak{D}\left(  \overline{p_{j}p_{j+1}%
},0,0\right)  \right)  }{\left[  \frac{2R(P)+8\pi}{\left(  q-2\right)
q^{\prime}}+A\left(  \mathfrak{D}\left(  \overline{p_{j}p_{j+1}},0,0\right)
\right)  \right]  }\\
&  =\frac{1}{\left(  q-2\right)  q^{\prime}}\sum_{j=1}^{q^{\prime}}%
\frac{2L\left(  \overline{p_{j}p_{j+1}}\right)  }{\left[  \frac{2R(P)+8\pi
}{\left(  q-2\right)  q^{\prime}}+0\right]  }\\
&  =\frac{\sum_{j=1}^{q^{\prime}}L\left(  \overline{p_{j}p_{j+1}}\right)
}{R(P)+4\pi}=\frac{L(\partial\Sigma)}{R(\Sigma)+4\pi}.
\end{align*}
Then by (\ref{ag25}) we have
\[
H_{\mathcal{S}_{1}}=\frac{R(\Sigma)+4\pi}{L(\partial\Sigma)}<\frac
{R(\Sigma_{\theta_{1}})+4\pi}{L\left(  \partial\Sigma_{\theta_{1}}\right)  }.
\]
But this contradicts the definition of $H_{\mathcal{S}_{1}},$ since
$\Sigma_{\theta_{1}}\in\mathcal{S}_{1}.$ Thus (iv) holds.
\end{proof}

Using the argument of the paragraph (\dag) in the above proof, we in fact have
proved Theorem \ref{main2} (iii):

\begin{corollary}
\label{same}Assume $F_{1}$ and $F_{2}$ are any two $4\pi$-extremal surfaces in
$\mathcal{S}_{0}.$ Then
\[
k\left(  F_{1},E_{q}\right)  =k\left(  F_{2},E_{q}\right)  .
\]

\end{corollary}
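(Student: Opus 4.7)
The plan is to lift the argument from paragraph $(\dag)$ in the proof of Lemma \ref{keykey}, which handles a single surface, up to the coupled system $\{F_1,F_2\}$. Suppose for contradiction that $k_1:=k(F_1,E_q)\neq k(F_2,E_q)=:k_2$; without loss of generality $k_1>k_2$. By Definition \ref{S-surface} (1)--(8) applied to each $F_i$, its boundary has a partition
\[
\partial F_i=C^i_1(p^i_1,p^i_2)+\cdots+C^i_{q'_i}(p^i_{q'_i},p^i_1),
\]
in which every edge $C^i_j$ is an SCC arc of common curvature $k_i$ contained in an open hemisphere, with $C^{i\circ}_j\cap E_q=\emptyset$. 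Fix any single edge $C^1:=C^1_{j_1}$ of $\partial F_1$ and any single edge $C^2:=C^2_{j_2}$ of $\partial F_2$, and write $I_i$ for the chord of $C^i$; by Definition \ref{S-surface} (2) we have $L(I_i)<\pi$.

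Next I would apply Corollary \ref{2-curvature} (A) to the pair $(I_1,I_2)$ with total length $l=L(C^1)+L(C^2)$ and initial lengths $l_1^0=L(C^1)$, $l_2^0=L(C^2)$. Since $k_1>k_2$, there exist SCC arcs $C^1_\varepsilon,C^2_\varepsilon$ sharing the chords $I_1,I_2$, with $L(C^1_\varepsilon)+L(C^2_\varepsilon)=l$ and
\[
A\bigl(\mathfrak{D}'(I_1,C^1_\varepsilon)\bigr)+A\bigl(\mathfrak{D}'(I_2,C^2_\varepsilon)\bigr)>A\bigl(\mathfrak{D}'(I_1,C^1)\bigr)+A\bigl(\mathfrak{D}'(I_2,C^2)\bigr).
\]
Construct $F_i'$ by detaching the closed lune $\overline{\mathfrak{D}'(I_i,C^i)}$ from $F_i$ along $I_i$ and sewing in $\overline{\mathfrak{D}'(I_i,C^i_\varepsilon)}$ along the same chord. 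For small enough $\varepsilon$, the polygonal core $P(p^i_1,\dots,p^i_{q'_i})$ of Solution \ref{sol} and the remaining $q'_i-1$ attached lunes are untouched, so $F_i'$ still satisfies Definition \ref{S-surface} (1)--(3); the degree bound $\deg_{\max}F_i'\le 2q'_i-2$ of (4) also persists because only one lune has been altered and its chord is unchanged, hence $F_i'\in\mathcal{S}_1$.

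By Definition \ref{S-surface} (5), $C^{i\circ}\cap E_q=\emptyset$, so the same holds for $C^i_\varepsilon$ after the small perturbation; consequently $\overline{n}(F_i')=\overline{n}(F_i)$, while by construction $L(\partial F_1')+L(\partial F_2')=L(\partial F_1)+L(\partial F_2)$ and $A(F_1')+A(F_2')>A(F_1)+A(F_2)$. Writing $H_0=H_{\mathcal{S}_0}^{4\pi}=(R(F_i)+4\pi)/L(\partial F_i)$ for $i=1,2$, we therefore obtain
\[
\bigl(R(F_1')+4\pi\bigr)+\bigl(R(F_2')+4\pi\bigr)>H_0\bigl(L(\partial F_1')+L(\partial F_2')\bigr),
\]
so Lemma \ref{ratio} forces at least one of $(R(F_i')+4\pi)/L(\partial F_i')$ to strictly exceed $H_0$. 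But $F_i'\in\mathcal{S}_1$ and $H_{\mathcal{S}_1}=H_{\mathcal{S}_0}=H_0$ by Lemma \ref{keykey} (iii), a contradiction. The main obstacle is the bookkeeping confirming $F_1',F_2'\in\mathcal{S}_1$ after the surgery — specifically that the degree bound in Definition \ref{S-surface} (4) survives — but this reduces to the observation that only a single attached lune on each surface is replaced, while the underlying core polygon and the sewing pattern remain intact, so for sufficiently small $\varepsilon$ the combinatorics and hence $\deg_{\max}$ are preserved.
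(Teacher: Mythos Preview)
Your proof is correct and follows essentially the same route as the paper: assume two $4\pi$-extremal surfaces have edges of different curvature, perturb one edge on each via Corollary \ref{2-curvature} so that total boundary length is preserved but total area (hence total $R$) strictly increases, then invoke Lemma \ref{ratio} together with $H_{\mathcal{S}_1}=H_{\mathcal{S}_0}$ from Lemma \ref{keykey} (iii) to reach a contradiction.

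One small point of presentation: your surgery ``detach the closed lune $\overline{\mathfrak{D}'(I_i,C^i)}$ from $F_i$ along $I_i$'' and the subsequent appeal to ``the polygonal core $P(p^i_1,\dots,p^i_{q'_i})$ of Solution \ref{sol}'' tacitly assume that $F_i$ is the standard surface $\Sigma_{\partial F_i}$ of Solution \ref{sol}. An arbitrary $F_i\in\mathcal{S}_0$ need not carry that lune-and-core decomposition globally. This is not a genuine gap, since (a) by Lemma \ref{makes} (i) one may replace $F_i$ by $\Sigma_{\partial F_i}$ without loss (both are $4\pi$-extremal, hence have equal $R$), or equivalently (b) for small $\varepsilon$ only a thin simple strip near $C^i$ is needed, and such a strip exists for any surface in $\mathcal{F}$ by Remark \ref{Riemann} (iv), exactly as in Deformation \ref{Deform1}. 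Either remark closes the bookkeeping you flagged about the degree bound in Definition \ref{S-surface} (4).
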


\begin{proof}
Since $H_{\mathcal{S}_{0}}=H_{\mathcal{S}_{1}},$ both $F_{1}$ and $F_{2}$
satisfy (\ref{ag54}) and each $\partial F_{j}$ has s partition $\partial
F_{j}=\mathcal{C}_{j1}+\mathcal{C}_{j2}+,\dots,\mathcal{C}_{jk_{j}}$
satisfying (1)--(8) of Definition \ref{S-surface}. If for some $i_{1}$ and
$i_{2},$ $\mathcal{C}_{1i_{1}}$ and $\mathcal{C}_{2i_{2}}$ have different
curvature, then applying the discussion in the paragraph (\dag) for
$\Sigma_{1}\notin\mathcal{S}_{1}^{(7)},$ we can construct two surfaces
$F_{1}^{\prime}$ and $F_{2}^{\prime}$ contained in $\mathcal{S}_{1}$ by
deforming $\mathcal{C}_{1i_{1}}$ and $\mathcal{C}_{2i_{2}},$ as we do for
$\mathcal{C}_{i}$ and $\mathcal{C}_{j}$ there, so that $R(F_{1}^{\prime
})+R(F_{2}^{\prime})>R(F_{1})+R(F_{2})$ and $L(\partial F_{1}^{\prime
})+R(\partial F_{2}^{\prime})=L(\partial F_{1})+R(\partial F_{2}).$ Then we
can use Lemma \ref{ratio} to show that $\frac{R(F_{j}^{\prime})+4\pi
}{L(\partial F_{j}^{\prime})}>H_{\mathcal{S}_{1}}$ for $j=1$ or $2.$ This is a contradiction.
\end{proof}

\begin{lemma}
\label{ag16}For any closed convex disk $T$ on $S,$
\[
H_{\mathcal{S}_{0}}=\sup_{\Sigma\in\mathcal{S}_{0}}\frac{R(\Sigma)+4\pi
}{L(\partial\Sigma)}>H(T).
\]

\end{lemma}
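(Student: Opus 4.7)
The strategy is to exhibit, for each closed convex disk $T$, an explicit member of $\mathcal{S}_0$ (equivalently, by Lemma \ref{keykey}(iii), of $\mathcal{S}_1$) whose $4\pi$-ratio $(R(\Sigma)+4\pi)/L(\partial\Sigma)$ strictly exceeds $H(T)=R(T)/L(\partial T)$. The underlying intuition is that $H(T)<(R(T)+4\pi)/L(\partial T)$ holds trivially since $L(\partial T)>0$, so one only needs to convert the gratuitous $+4\pi$ into an honest member of $\mathcal{S}_0$ after paying a small price. I will split on whether $T^{\circ}$ meets $E_q$.

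If $T^{\circ}\cap E_q\neq\emptyset$, then $T$ viewed as a surface in $\mathbf{F}$ satisfies hypothesis (1) of Lemma \ref{LSZ}, and Corollary \ref{LSZ2} furnishes a sequence $\Sigma_n\in\mathbf{F}$ with $H(\Sigma_n)\to(R(T)+4\pi)/L(\partial T)>H(T)$. I approximate each $\Sigma_n$ inside $\mathcal{F}$ by Remark \ref{FF}, then apply Corollary \ref{make2} and Lemma \ref{keykey}(ii) to produce $\Sigma_n'\in\mathcal{S}_0$ whose $4\pi$-ratio is at least $H(\Sigma_n)$; letting $n\to\infty$ yields $H_{\mathcal{S}_0}\geq(R(T)+4\pi)/L(\partial T)>H(T)$.

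If $T^{\circ}\cap E_q=\emptyset$, then $R(T)=(q-2)A(T)$ and Lemma \ref{hd} gives $H(T)\leq q-2$. Fix $p_1,p_2\in E_q$ with $d(p_1,p_2)=\delta_{E_q}$ and consider the closed lens $\Sigma_{\theta}=\overline{\mathfrak{D}(\overline{p_1p_2},\theta,\theta)}$ for small $\theta>0$. Conditions (1)--(8) of Definition \ref{S-surface} hold for such $\theta$: (1)--(3), (6), (7), (8) are immediate from the symmetry and smallness, (4) is trivial since $\deg_{\max}\Sigma_{\theta}=1\leq 2q'-2$, and (5) follows from the minimality of $\delta_{E_q}$. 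By Lemma \ref{mid} the $4\pi$-ratio of $\Sigma_{\theta}$ equals $h(4\pi,\delta_{E_q},\theta,\theta)$, and by Lemma \ref{must-strict-con} this is strictly greater than the limiting value $h(4\pi,\delta_{E_q},0,0)=2\pi/\delta_{E_q}$ for suitable small $\theta$. Since $\delta_{E_q}\leq 2\pi/3$, this already yields $H_{\mathcal{S}_0}>q-2\geq H(T)$ in the low-$q$ regime.

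The main obstacle will be the residual range where $q$ is large and $\delta_{E_q}>2\pi/(q-2)$, so the thin-lens bound $2\pi/\delta_{E_q}$ no longer beats the a priori bound $q-2$ for $H(T)$. To close that gap I intend to upgrade the test surface from the two-cusp lens to a richer $\Sigma\in\mathcal{S}_0$ with $Q(\Sigma)>2$, produced by feeding a convex geodesic polygon $\overline{p_1p_2\cdots p_{q'}p_1}$ (with each $p_j\in E_q$) into Solution \ref{sol} and then adding small equal-curvature bulges on each edge; Corollary \ref{2-curvature} controls the effect of those bulges, and the cumulative $+4\pi$ together with the several interior segments $l_j$ in (\ref{bd4}) should push the $4\pi$-ratio strictly above $q-2$. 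Alternatively, one may invoke Lemma \ref{keykey}(i) to start from a $4\pi$-extremal surface in $\mathcal{S}_1\cap\mathcal{F}_r$ and argue by contradiction that its ratio cannot be $\leq H(T)$ without violating the strict monotonicity given by Lemma \ref{must-strict-con}. The delicate point in either route is verifying the strict inequality uniformly in $T$, rather than merely up to the limiting value $q-2$.
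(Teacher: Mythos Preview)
Your proposal has genuine gaps in both cases, and it misses a much simpler route.

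\textbf{Case 1 ($T^{\circ}\cap E_q\neq\emptyset$).} The chain breaks at the step ``apply Corollary \ref{make2}.'' That corollary requires $\partial\Sigma_n$ to satisfy a $(p_1,\dots,p_m)$-Condition, meaning it is partitioned into SCC arcs with endpoints in $E_q$. The surfaces $\Sigma_n$ produced by Corollary \ref{LSZ2} (namely $f_n(z)=f(z^n)$) have boundary equal to $n$ copies of $\partial T$, a circle that need not touch $E_q$ at all. So \ref{make2} does not apply, and you cannot manufacture $\Sigma_n'\in\mathcal{S}_0$ from them. Nor can you shortcut through $H_0$: the identity $H_0=H_{\mathcal{S}_0}$ is Theorem \ref{main2}, whose proof uses Lemma \ref{ag16}, so that would be circular.

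\textbf{Case 2 ($T^{\circ}\cap E_q=\emptyset$).} You yourself identify the gap: the thin-lens bound $2\pi/\delta_{E_q}$ need not exceed $q-2$. Concretely, for $q=6$ with $E_q$ the vertices of a regular octahedron one has $\delta_{E_q}=\pi/2$ and $2\pi/\delta_{E_q}=4=q-2$, and for larger $q$ the gap widens. Your proposed fixes (higher-$q'$ polygonal test surfaces, or contradiction via Lemma \ref{must-strict-con}) are programmatic rather than proofs.

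\textbf{The paper's argument.} The key idea you are missing is that a \emph{disk} with two points of $E_q$ on its boundary already lies in $\mathcal{S}_1$ (with $q'=2$), and this suffices. Lemma \ref{dDisk} supplies a disk $T_1\subset S\setminus E_q$ with $p_1,p_2\in\partial T_1$, $d(p_1,p_2)=\delta_{E_q}$; hence $T_1\in\mathcal{S}_1$ and $H_{\mathcal{S}_0}=H_{\mathcal{S}_1}\geq (R(T_1)+4\pi)/L(\partial T_1)$. Now split on perimeter. If $L(\partial T)\leq L(\partial T_1)$, then by Lemma \ref{hd} (monotonicity of $H$ for convex disks avoiding $E_q$) $H(T)\leq H(T_1)<(R(T_1)+4\pi)/L(\partial T_1)\leq H_{\mathcal{S}_0}$. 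If $L(\partial T_1)\leq L(\partial T)\leq 2\pi$, rotate $T$ to a congruent disk $T'$ with $\overline{n}(T')\leq\overline{n}(T)$ and at least two points of $E_q$ on $\partial T'$ at distance $<\pi$; then $T'\in\mathcal{S}_1$ and $H(T)\leq H(T')<(R(T')+4\pi)/L(\partial T')\leq H_{\mathcal{S}_1}=H_{\mathcal{S}_0}$. No case analysis on $T^{\circ}\cap E_q$, no asymptotics in $\theta$, no appeal to \ref{make2}.
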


\begin{proof}
Since $T$ is convex, it is contained in a hemisphere $S_{1}$ on $S.$ It is
clear that there exists a disk $T_{1}$ contained in $S\backslash E_{q}$ whose
boundary is the circumcircle of a regular triangle of edge length
$\delta_{E_{q}}.$ Then we may assume that two points $p_{1}$ and $p_{2}$ of
$E_{q}$ are contained in $\partial T_{1}$ with $d\left(  p_{1},p_{2}\right)
=\delta_{E_{q}},$ and then we have $T_{1}\in\mathcal{S}_{1}.$ Thus by Lemma
\ref{keykey} (iii) we have%
\[
H_{\mathcal{S}_{0}}=H_{\mathcal{S}_{1}}=\sup_{\Sigma\in\mathcal{S}_{1}}%
\frac{R(\Sigma)+4\pi}{L(\partial\Sigma)}\geq\frac{R(T_{1})+4\pi}{L(\partial
T_{1})}=\frac{\left(  q-2\right)  A(T_{1})+4\pi}{L(\partial T_{1})}.
\]
Then in the case $L(\partial T)\leq L(\partial T_{1}),$ by Lemma \ref{hd} we
have%
\[
H_{\mathcal{S}_{0}}\geq\frac{\left(  q-2\right)  A(T_{1})+4\pi}{L(\partial
T_{1})}>\frac{\left(  q-2\right)  A(T_{1})}{L(\partial T_{1})}=H(T_{1})\geq
H(T).
\]
In the case $L(\partial T_{1})\leq L(\partial T)\leq2\pi,$ we may rotate $S$
to move $T$ to another congruent disk $T^{\prime}$ such that $\overline
{n}\left(  T^{\prime}\right)  \leq\overline{n}(T)$ and $\partial T^{\prime}%
\ $contains at least two points of $E_{q}$ with distance $<\pi,$ and thus
\[
H(T)\leq H(T^{\prime})=\frac{R(T^{\prime})}{L(\partial T^{\prime})}%
<\frac{R(T^{\prime})+4\pi}{L(\partial T^{\prime})}\leq H_{\mathcal{S}_{1}%
}=H_{\mathcal{S}_{0}}.
\]

\end{proof}

We are in the position to complete the proof of our second main theorem.

\begin{proof}
[Proof of Theorem \ref{main2}]The conclusion (iii) is obtained by Corollary
\ref{same} directly.

Let $\Sigma$ be any surface in $\mathcal{F}$. We first show
\begin{equation}
H(\Sigma)\leq H_{\mathcal{S}_{0}}. \label{ag166}%
\end{equation}

By Theorem \ref{l<2dt} and Lemma \ref{ag16}, \ref{ag166} holds when
$L\leq2\delta_{E_{q}}.$ Thus to prove (\ref{ag166}) we may assume
$L(\partial\Sigma)>2\delta_{E_{q}}.$

Let $L$ be any positive number in $\mathcal{L}$ with $L\geq L(\partial
\Sigma).$ Then by Theorem \ref{okok} (A) there exists a precise extremal
surface $\Sigma_{L_{1}}$ of $\mathcal{F}\left(  L\right)  $ such that
$L(\partial\Sigma_{L_{1}})=L_{1}.$ Then we have $H(\Sigma)\leq H(\Sigma
_{L_{1}})$ and thus by definition of $H$%
\begin{equation}
H(\Sigma)\leq H(\Sigma_{L_{1}})=\frac{R(\Sigma_{L_{1}})}{L(\partial
\Sigma_{L_{1}})}<\frac{R(\Sigma_{L_{1}})+4\pi}{L(\partial\Sigma_{L_{1}})}.
\label{ag57}%
\end{equation}
By Theorem \ref{okok} (B), for some positive integer $n_{0},$ $\partial
\Sigma_{L_{1}}$ has an $\mathcal{F}\left(  L,n_{0}\right)  $-partition%
\begin{equation}
\partial\Sigma_{L_{1}}=C_{1}\left(  p_{1},p_{2}\right)  +C_{2}\left(
p_{2},p_{3}\right)  +\cdots+C_{n_{0}}\left(  p_{n_{0}},p_{1}\right)
\label{ko1}%
\end{equation}
satisfying (B1)--(B4) of Theorem \ref{okok}, which implies that

\begin{claim}
\label{S(1)(2)} $\Sigma_{L_{1}}\ $with partition (\ref{ko1}) satisfies
$\left(  p_{1},p_{2},\cdots,p_{n_{0}}\right)  $-Condition when $n_{0}>1.$
\end{claim}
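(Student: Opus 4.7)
The claim is essentially a bookkeeping verification: it asserts that the partition \eqref{ko1} of $\partial\Sigma_{L_1}$ provided by Theorem \ref{okok} meets each of the three requirements of the $(p_1,\dots,p_{n_0})$-Condition. My plan is simply to unpack that definition and check each requirement against conclusions (B1), (B2), (B3) of Theorem \ref{okok}.

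First, I would observe that \eqref{ko1} is stated to be an $\mathcal{F}(L,n_0)$-partition of $\partial\Sigma_{L_1}$. By Definition \ref{circu}(c), any $\mathcal{F}(L,n_0)$-partition has the property that each term $C_j$ is an SCC arc (this is condition (i) of \ref{circu}(c)). That yields the SCC requirement of the $(p_1,\dots,p_{n_0})$-Condition with $p_j=q_j$.

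Next, for the endpoint condition: since $n_0>1$, part (B1) of Theorem \ref{okok} asserts $\partial C_j=\{q_j,q_{j+1}\}\subset E_q$ and $C_j^\circ\cap E_q=\emptyset$, giving immediately that both endpoints lie in $E_q$. For distinctness, I would argue as follows: if $q_j=q_{j+1}$ for some $j$, then $C_j$ is a closed circular arc, which by the convention preceding (B3) is counted as a major circular arc. By (B3), at most one $C_j$ can be major. Now if exactly one $C_j$, say $C_{j_0}$, is closed, then its single endpoint $q_{j_0}=q_{j_0+1}$ is where $C_{j_0-1}$ terminates and $C_{j_0+1}$ begins, so the sequence of endpoints still traces a genuine polygonal cycle on $E_q$ and the remaining edges have distinct endpoints — but I still need to rule out this single closed edge. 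Here I would invoke Lemma \ref{mykey} together with Theorem \ref{cat2}: since $\Sigma_{L_1}$ is precise extremal in every $\mathcal{F}_r(L,m)$ for $m\geq m_0$ (by Claim \ref{ac1}), the argument used in Lemma \ref{mykey} applies to any edge that happens to be a closed circular arc of sufficiently short length, after possibly relabelling to bring such an edge into the first position. Combined with (B2), which forces $L(C_j)<2\pi$, this will rule out closed $C_j$ when $n_0>1$.

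Finally, for $d(q_j,q_{j+1})<\pi$: (B2) of Theorem \ref{okok} tells us that each $C_j$ is contained in an open hemisphere $S_j$. Two antipodal points of $S$ cannot both lie in any open hemisphere, so the endpoints $q_j,q_{j+1}\in C_j\subset S_j$ satisfy $d(q_j,q_{j+1})<\pi$, which is exactly what the $(p_1,\dots,p_{n_0})$-Condition demands. The only delicate step is the distinctness argument above; the rest is a direct translation from (B1)--(B3) into the language of the $(p_1,\dots,p_{n_0})$-Condition, so I would expect the proof to occupy at most a few lines once the distinctness issue is settled.
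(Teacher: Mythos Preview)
Your verification of three of the four requirements is correct and matches the paper's approach exactly: the paper states the claim as an immediate consequence of (B1)--(B4) of Theorem~\ref{okok} (the sentence preceding the claim is literally ``which implies that''), and your unpacking of the SCC condition, the $E_q$-endpoint condition, and the condition $d(q_j,q_{j+1})<\pi$ from (B2) is precisely what is intended.

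Your distinctness argument, however, has a gap. Lemma~\ref{mykey} carries the hypothesis $L(c_1)<\delta_{E_q}/2$, and nothing in (B1)--(B4) forces a hypothetical closed arc $C_j$ to be that short; (B2) only gives $L(C_j)<2\pi$. So the appeal to Lemma~\ref{mykey} does not close the case. In fact no such machinery is needed: distinctness is already built into (B1). By Definition~\ref{interior}, $\partial C_j$ is a singleton when $C_j$ is closed and a \emph{two points set} otherwise; the paper's writing $\partial C_j=\{q_j,q_{j+1}\}$ in (B1) is therefore asserting $q_j\neq q_{j+1}$, and this is made fully explicit in the equivalent statement Theorem~\ref{main1}(iii), which reads $\#\partial C_j=2$. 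So you should simply cite (B1) (or Theorem~\ref{main1}(iii)) for distinctness and drop the Lemma~\ref{mykey} detour; with that change your proof is complete and agrees with the paper.
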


Consider the case that $\partial\Sigma_{L_{1}}$ contains at most one point of
$E_{q}.$ Then $C_{1}$ is an SCC circle and $n_{0}=1,$ and thus for the closed
disk $T$ enclosed by $C_{1}$ we have by Lemma \ref{loop2} that%
\[
H(\Sigma_{L_{1}})=\frac{R(\Sigma_{L_{1}})}{L(\partial\Sigma_{L_{1}})}\leq
H(T),
\]
and thus by Lemma \ref{ag16} and (\ref{ag57}) we have%
\[
H(\Sigma)\leq H(\Sigma_{L_{1}})\leq H(T)<H_{\mathcal{S}_{0}}.
\]
Therefore (\ref{ag166}) holds when $n_{0}=1.$

Assume $n_{0}\geq2.$ Then Claim \ref{S(1)(2)} and Corollary \ref{make2} imply
that there exists a surface $\Sigma_{1}\in\mathcal{S}_{1}$ such that
\[
\frac{R(\Sigma_{L_{1}})+4\pi}{L(\partial\Sigma_{L_{1}})}\leq\frac{R(\Sigma
_{1})+4\pi}{L(\partial\Sigma_{1})}\leq H_{\mathcal{S}_{1}},
\]
which, with (\ref{ag57}) and Lemma \ref{keykey}, implies%
\[
H\left(  \Sigma\right)  <\frac{R(\Sigma_{L_{1}})+4\pi}{L(\partial\Sigma
_{L_{1}})}\leq\frac{R(\Sigma_{1})+4\pi}{L(\partial\Sigma_{1})}\leq
H_{\mathcal{S}_{1}}=H_{\mathcal{S}_{0}},
\]
and (\ref{ag166}) follows.

By Lemma \ref{keykey}, there exists a surface $\Sigma_{0}\in\mathcal{S}_{0}$
such that
\[
\frac{R(\Sigma_{0})+4\pi}{L(\partial\Sigma_{0})}=H_{\mathcal{S}_{0}}.
\]
Then $\partial\Sigma_{0}$ has a partition
\[
\partial\Sigma_{0}=C_{1}\left(  p_{1},p_{2}\right)  +C_{2}\left(  p_{2}%
,p_{3}\right)  +\cdots+C_{q^{\prime}}\left(  p_{q^{\prime}},p_{1}\right)
\]
satisfying Definition \ref{S-surface} (1)--(8). Then by Corollary \ref{LSZ2}
there exists a sequence $\Sigma_{n}$ in $\mathcal{F}$ such that
\[
\lim_{n\rightarrow\infty}H(\Sigma_{n})=\frac{R(\Sigma_{0})+4\pi}%
{L(\partial\Sigma_{0})}=H_{\mathcal{S}_{0}}.
\]
Thus by (\ref{ag58}) we have proved $H_{0}=\sup_{\Sigma\in\mathbf{F}}%
H(\Sigma)=\sup_{\Sigma\in\mathcal{F}}H(\Sigma)=H_{\mathcal{S}_{0}},$ and
Theorem \ref{main2} (i) is proved. Theorem \ref{main2} (ii) follows from Lemma
\ref{keykey} (iv) directly.
\end{proof}

\begin{proof}
[Proof of Theorem \ref{4pi-sim}]Let $\mathfrak{S}$ be the space of all $4\pi
$-extremal surfaces of $\mathcal{S}_{0}.$ By Theorem \ref{main2} (i),
$\mathfrak{S}\neq\emptyset$ and $Q\left(  \Sigma\right)  \leq q$ for all
$\Sigma\in\mathfrak{S}.$ Then there exists $F\in\mathfrak{S}$ such that
$q^{\prime}=Q(F)=\min_{\Sigma\in\mathfrak{S}}Q(\Sigma).$ Let $\mathfrak{S}%
_{q^{\prime}}$ be the subspace of $\mathfrak{S}$ such that $Q(\Sigma
)=q^{\prime}$ for all $\Sigma\in\mathfrak{S}_{q^{\prime}}.$ Then for every
surface $\Sigma\in\mathfrak{S}_{q^{\prime}},$%
\[
q^{\prime}\delta_{E_{q}}\leq L(\partial\Sigma)<2\pi q^{\prime},
\]
and then $L=\inf_{\Sigma\in\mathfrak{S}_{q^{\prime}}}L(\partial\Sigma)\geq
q^{\prime}\delta_{E_{q}}\geq2\delta_{E_{q}}.$ Then there exists a sequence
$\Sigma_{n}$ in $\mathfrak{S}_{q^{\prime}}$ such that%
\[
L(\partial\Sigma_{n})=L,
\]
and $\Sigma_{n}$ has the partition
\begin{equation}
\partial\Sigma_{n}=C_{n1}\left(  p_{1},p_{2}\right)  +C_{n2}\left(
p_{2},p_{3}\right)  +\cdots+C_{nq^{\prime}}\left(  p_{q^{\prime}}%
,p_{1}\right)  \label{ag27}%
\end{equation}
satisfying Definition \ref{S-surface} (1)--(8) and Theorem \ref{main2} (ii),
where $p_{1},\dots,p_{q^{\prime}}$ are independent of $n\ $and distinct each other.

If $\Sigma_{n}\notin\mathcal{F}_{r}$ for some $n$, then by Lemma \ref{makes}
(i), for the surface $\Sigma_{\partial\Sigma_{n}}\in\mathcal{S}_{1}$ given by
Solution \ref{sol} from $\partial\Sigma_{n}$ and the partition (\ref{ag27}),
we have $R(\Sigma_{n})<R(\Sigma_{\partial\Sigma_{n}}),$ which implies
\[
H_{\mathcal{S}_{0}}=H(\Sigma_{n})<H(\Sigma_{\partial\Sigma_{n}})\leq
H_{\mathcal{S}_{1}}.
\]
This contradicts $H_{\mathcal{S}_{0}}=H_{\mathcal{S}_{1}}.$ Thus $\Sigma
_{n}\in\mathcal{F}_{r}$ for all $n.$

By Corollary \ref{same}, $k\left(  \Sigma_{n},E_{q}\right)  =k$ is a constant
for all $n=1,2,\dots,$ say, all $C_{nj}$ have the same curvature, and then
$C_{nj}=C_{n1}$ for all $n$ and all $j\leq q^{\prime}.$ Thus, $\partial
\Sigma_{n}=\partial\Sigma_{1}$ and $L(\partial\Sigma_{1})=L$ and so
$\Sigma_{1}$ $\in\mathfrak{S}_{q^{\prime}}$ satisfies Definition
\ref{4pi-extr} (1) and (2). Since $\deg_{\max}\Sigma^{\prime}\leq2q^{\prime
}-2$ for all $\Sigma^{\prime}\in\mathfrak{S}_{q^{\prime}},$ there exists
$\Sigma\in\mathfrak{S}_{q^{\prime}}\emph{\ }$such that $\deg_{\max}\Sigma
\leq\deg_{\max}\Sigma^{\prime}$ for all $\Sigma^{\prime}\in\mathfrak{S}%
_{q^{\prime}}.$ Therefore $\mathcal{S}^{\ast}\neq\emptyset.$
\end{proof}

\begin{proof}
[\textbf{Proof of Theorem \ref{2,3}}]We first prove (i). Let $\Sigma
\in\mathcal{S}^{\ast}$ and assume $Q(\Sigma)=2.$ Then $\partial\Sigma$ has a
partition $\partial\Sigma=C_{1}\left(  p_{1},p_{2}\right)  +C_{2}\left(
p_{2},p_{1}\right)  $ satisfying Definition \ref{S-surface} (8), and $C_{1}$
and $C_{2}$ are both strictly convex, by Theorem \ref{main2} (ii). Then
$\partial\Sigma$ encloses a closed lens $\Sigma^{\prime}=\overline
{\mathfrak{D}(\overline{p_{1}p_{2}},\theta_{0},\theta_{0})}$ with $\theta
_{0}\in(0,\frac{\pi}{2}]$, and by Corollary \ref{loop2} we have%
\[
R(\Sigma)\leq R(\Sigma^{\prime})
\]
and thus $R(\Sigma)=R(\Sigma^{\prime})$ and $\Sigma^{\prime}\in\mathcal{S}%
^{\ast}.$ Therefore $\Sigma\ $and $\Sigma^{\prime}\ $both equal the lens
$\overline{\mathfrak{D}(\overline{p_{1}p_{2}},\theta_{0},\theta_{0})},\ $by
Corollary \ref{same}, and we have
\[
H_{0}=H_{\mathcal{S}_{0}}=\frac{R(\Sigma)+4\pi}{L(\partial\Sigma)}%
=\frac{R(\Sigma^{\prime})+4\pi}{L(\partial\Sigma^{\prime})}=\frac{R\left(
\mathfrak{D}(\overline{p_{1}p_{2}},\theta_{0},\theta_{0})\right)  +4\pi
}{L\left(  \partial\mathfrak{D}\left(  \overline{p_{1}p_{2}},\theta_{0}%
,\theta_{0}\right)  \right)  }.
\]

We will show%
\begin{equation}
d\left(  p_{1},p_{2}\right)  =\delta_{E_{q}}. \label{d=d}%
\end{equation}
Assume this fails. We will deduce a contradiction.

We first show that $\theta_{0}<\pi/2$ when $d\left(  p_{1},p_{2}\right)
>\delta_{E_{q}}.$ Assume $\theta_{0}=\frac{\pi}{2}.$ Then $T_{p_{1},p_{2}%
}=\overline{\mathfrak{D}(\overline{p_{1}p_{2}},\pi/2,\pi/2)}$ is a disk. Let
$\delta_{0}=L(\overline{p_{1}p_{2}}).$ Then it is clear that for an open
neighborhood $I_{\delta_{0}}^{\circ}=\left(  \delta_{0}-\varepsilon,\delta
_{0}+\varepsilon\right)  $ of $\delta_{0}$ with $\delta_{0}-\varepsilon
>\delta_{E_{q}},$ there exists a family
\[
\mathcal{T}_{I_{\delta_{0}}^{\circ}}=\left\{  \overline{\mathfrak{D}%
(\overline{p_{1}p_{2,\delta}},\pi/2,\pi/2)}:\delta=d\left(  p_{1},p_{2,\delta
}\right)  \in I_{\delta_{0}}^{\circ},\right\}
\]
of convex disks on $S$ such that $\overline{n}\left(  T_{p_{1},p_{2,\delta}%
}\right)  =\overline{n}\left(  T_{p_{1},p_{2}}\right)  $ is a constant for
each $\delta\in I_{\delta_{0}}^{\circ}.$ In fact, $\partial T_{p_{1},p_{2}}$
contains only two points $p_{1},p_{2}$ in $E_{q}.$ Then rotating
$T_{p_{1},p_{2}}$ a little about $p_{1}$ we can obtain the desired family.
Then by Lemma \ref{disk} (ii4) there exists $\delta_{1}\in I_{\delta_{0}%
}^{\circ}$ and a disk in the family which can be write as $T_{p_{1}%
,p_{2,\delta_{1}}}=\overline{\mathfrak{D}(\overline{p_{1}p_{2,\delta_{1}}}%
,\pi/2,\pi/2)}$ with $\delta_{1}=d\left(  p_{1},p_{2,\delta_{1}}\right)  \in
I_{\delta_{0}}^{\circ}$ such that
\begin{equation}
H_{\mathcal{S}_{0}}=\frac{R\left(  T_{p_{1},p_{2}}\right)  +4\pi}{L\left(
\partial T_{p_{1},p_{2}}\right)  }<\frac{R\left(  T_{p_{1},p_{2,\delta_{1}}%
}\right)  +4\pi}{L\left(  \partial T_{p_{1},p_{2,\delta_{1}}}\right)  }.
\label{>HS0}%
\end{equation}
Since $\delta_{1}>\delta_{0}-\varepsilon>\delta_{E_{q}},$ the diameter of
$\partial T_{p_{1},p_{2,\delta_{1}}}$ is larger than $\delta_{E_{q}},$ and
then we may move the disk $T_{p_{1},p_{2,\delta_{1}}}$ congruently to another
closed disk $T_{\delta_{1}}$ so that its boundary contains at least two points
of $E_{q}\ $and
\[
\overline{n}\left(  T_{\delta_{1}}\right)  \leq\overline{n}\left(
T_{p_{1},p_{2,\delta}}\right)  .
\]
Then we have $T_{\delta_{1}}\in\mathcal{S}_{1}$ and
\[
\frac{R\left(  T_{p_{1},p_{2,\delta_{1}}}\right)  +4\pi}{L\left(  \partial
T_{p_{1},p_{2,\delta_{1}}}\right)  }\leq\frac{R\left(  T_{\delta_{1}}\right)
+4\pi}{L\left(  \partial T_{\delta_{1}}\right)  }\leq H_{\mathcal{S}_{1}},
\]
which with (\ref{>HS0}) implies $H_{\mathcal{S}_{1}}>H_{\mathcal{S}_{0}}.$
This is a contradiction by Lemma \ref{keykey} (iii). We have proved
$\theta_{0}<\pi/2\ $when $d\left(  p_{1},p_{2}\right)  >\delta_{E_{q}}.$

Now that we have proved $\theta_{0}<\pi/2,$ there exists a point
$p_{2}^{\prime}\in\overline{p_{1}p_{2}}^{\circ}$ near $p_{2}$ such that
\[
d\left(  p_{1},p_{2}^{\prime}\right)  >\delta_{E_{q}},
\]%
\[
L(\partial\mathfrak{D}\left(  \overline{p_{1}p_{2}^{\prime}},\theta
_{p_{2}^{\prime}},\theta_{p_{2}^{\prime}}\right)  )=L(\mathfrak{D}%
(\overline{p_{1}p_{2}},\theta_{0},\theta_{0})),
\]%
\[
\overline{n}\left(  \mathfrak{D}\left(  \overline{p_{1}p_{2}^{\prime}}%
,\theta_{p_{2}^{\prime}},\theta_{p_{2}^{\prime}}\right)  \right)
=\overline{n}\left(  \mathfrak{D}\left(  \overline{p_{1}p_{2}},\theta
_{0},\theta_{0}\right)  \right)  ,
\]
and%
\[
\theta_{0}<\theta_{p_{2}^{\prime}}<\frac{\pi}{2}.
\]
Then by Lemma \ref{area2}, putting $A_{0}=4\pi-4\pi\overline{n}\left(
\mathfrak{D}(\overline{p_{1}p_{2}},\theta_{0},\theta_{0})\right)  ,$ we have%
\begin{equation}
H_{\mathcal{S}_{0}}=\frac{R\left(  \mathfrak{D}(\overline{p_{1}p_{2}}%
,\theta_{0},\theta_{0})\right)  +4\pi}{L\left(  \partial\mathfrak{D}\left(
\overline{p_{1}p_{2}},\theta_{0},\theta_{0}\right)  \right)  }<\frac{R\left(
\mathfrak{D}(\overline{p_{1}p_{2}^{\prime}},\theta_{p_{2}^{\prime}}%
,\theta_{p_{2}^{\prime}})\right)  +4\pi}{L\left(  \partial\mathfrak{D}\left(
\overline{p_{1}p_{2}^{\prime}},\theta_{p_{2}^{\prime}},\theta_{p_{2}^{\prime}%
}\right)  \right)  }. \label{>HS0-1}%
\end{equation}

Now we let $F$ be the closed domain $\overline{\mathfrak{D}(\overline
{p_{1}p_{2}^{\prime}},\theta_{p_{2}^{\prime}},\theta_{p_{2}^{\prime}})}.$ Then
$F$ has diameter larger than $\delta_{E_{q}}\ $and $\partial F\cap
E_{q}=\{p_{1}\},$ but we can rotate $F$ on $S$ to another domain $F^{\prime}$
so that $H(F^{\prime})\geq H(F)$ and $\partial F^{\prime}$ contains two points
$q_{1}$ and $q_{2}$ of $E_{q}.$ Thus $\partial F^{\prime}$ satisfies (2) of
Lemma \ref{LSZ} and thus we have by (\ref{>HS0-1}) that%
\[
H_{0}\geq\frac{R(F^{\prime})+4\pi}{L(\partial F^{\prime})}\geq\frac{R(F)+4\pi
}{L(\partial F)}=\frac{R\left(  \mathfrak{D}(\overline{p_{1}p_{2}^{\prime}%
},\theta_{p_{2}^{\prime}},\theta_{p_{2}^{\prime}})\right)  +4\pi}{L\left(
\partial\mathfrak{D}\left(  \overline{p_{1}p_{2}^{\prime}},\theta
_{p_{2}^{\prime}},\theta_{p_{2}^{\prime}}\right)  \right)  }>H_{\mathcal{S}%
_{0}}.
\]
This contradicts Theorem \ref{main2} again, and thus we have (\ref{d=d}) and
Corollary \ref{2,3} (i) is proved.

Assume $q=3$ and $E_{q}=E_{3}$ is contained in a great circle on $S.$ We will
show $q^{\prime}=Q(\Sigma)=2.$ It suffices to show a contradiction when we
assume $q^{\prime}=3.$ When $q^{\prime}=3,$ $\partial\Sigma$ has a partition
\[
\partial\Sigma=C_{1}\left(  p_{1},p_{2}\right)  +C_{2}\left(  p_{2}%
,p_{3}\right)  +C_{3}\left(  p_{3},p_{1}\right)  ,
\]
satisfies Definition \ref{S-surface} (1)--(8) and Definition \ref{4pi-extr}
(1)--(3), and by Lemma \ref{makes} we have%
\[
R(\Sigma)\leq R(\Sigma_{\partial\Sigma}),\partial\Sigma=\partial
\Sigma_{\partial\Sigma}%
\]
where $\Sigma_{\partial\Sigma}\in\mathcal{S}_{1}$ is given by Solution
\ref{sol}, say, $\Sigma$ is the surface obtained by gluing the closed Jordan
domain $T_{2},K_{1},K_{2},K_{3}$ in Solution \ref{sol}. But we have proved
$H_{\mathcal{S}_{1}}=H_{\mathcal{S}_{0}}=H_{0}$, and then we have
\[
R\left(  \Sigma\right)  =R(\Sigma_{\partial\Sigma}).
\]
By Definition \ref{S-surface} (8) and by Theorem \ref{main2} (ii), each
$C_{j}$ is not a major circular arc and is strictly convex. Thus for each
$K_{j}$ we may write $K_{j}=\mathfrak{D}^{\prime}\left(  \overline
{p_{j}p_{j+1}},\theta_{j},\theta_{j}\right)  $ with $\theta_{j}\in(0,\pi/2].$

If $\overline{p_{1}p_{2}p_{3}p_{1}}$ is a Jordan curve, then $\overline
{n}\left(  \mathfrak{D}\left(  \overline{p_{j}p_{j+1}},\theta_{j},\theta
_{j}\right)  \right)  =0$ for $j=1,2,3,$ $A(T_{2})=2\pi$, since $E_{q}$ is on
a great circle, and by (\ref{ag6}), we have%
\begin{align*}
R(\Sigma)+4\pi &  =4\pi+R(\Sigma_{\partial\Sigma})=4\pi+A(T_{2})+\sum
_{j=1}^{3}\left[  R(K_{j})-4\pi\#\overline{p_{j}p_{j+1}}^{\circ}\cap
E_{q}\right] \\
&  =6\pi+\sum_{j=1}^{3}R(K_{j})=\frac{1}{2}\sum_{j=1}^{3}\left[  R\left(
\mathfrak{D}\left(  \overline{p_{j}p_{j+1}},\theta_{j},\theta_{j}\right)
\right)  +4\pi\right]  ,
\end{align*}
and%
\[
H_{\mathcal{S}_{0}}=\frac{R(\Sigma)+4\pi}{L(\partial\Sigma)}=\frac{\frac{1}%
{2}\sum_{j=1}^{3}R\left(  \mathfrak{D}\left(  \overline{p_{j}p_{j+1}}%
,\theta_{j},\theta_{j}\right)  \right)  +4\pi}{\frac{1}{2}\sum L\left(
\partial\mathfrak{D}\left(  \overline{p_{j}p_{j+1}},\theta_{j},\theta
_{j}\right)  \right)  }.
\]
We may assume $\frac{R\left(  \mathfrak{D}\left(  \overline{p_{1}p_{2}}%
,\theta_{1},\theta_{1}\right)  \right)  +4\pi}{L\left(  \partial
\mathfrak{D}\left(  \overline{p_{1}p_{2}},\theta_{1},\theta_{1}\right)
\right)  }\geq\frac{R\left(  \mathfrak{D}\left(  \overline{p_{j}p_{j+1}%
},\theta_{j},\theta_{j}\right)  \right)  +4\pi}{L\left(  \partial
\mathfrak{D}\left(  \overline{p_{j}p_{j+1}},\theta_{j},\theta_{j}\right)
\right)  },j=2,3.$ It is clear that $\overline{\mathfrak{D}\left(
\overline{p_{1}p_{2}},\theta_{1},\theta_{1}\right)  }\in\mathcal{S}_{0},$ and
then we have%
\[
H_{\mathcal{S}_{0}}=\frac{R(\Sigma)+4\pi}{L(\partial\Sigma)}\leq\frac{R\left(
\mathfrak{D}\left(  \overline{p_{1}p_{2}},\theta_{1},\theta_{1}\right)
\right)  +4\pi}{L\left(  \partial\mathfrak{D}\left(  \overline{p_{1}p_{2}%
},\theta_{1},\theta_{1}\right)  \right)  }\leq H_{\mathcal{S}_{0}}.
\]
Then $\overline{\mathfrak{D}\left(  \overline{p_{1}p_{2}},\theta_{1}%
,\theta_{1}\right)  }\in\mathcal{S}^{\ast}$ and $L\left(  \partial
\mathfrak{D}\left(  \overline{p_{1}p_{2}},\theta_{1},\theta_{1}\right)
\right)  <L\left(  \partial\Sigma\right)  .$ This is a contradiction since we
assumed $q^{\prime}=3\ $and $\Sigma\in\mathcal{S}^{\ast}.$

If $\overline{p_{1}p_{2}p_{3}p_{1}}$ is not a Jordan domain, then
$A(T_{2})=4\pi$ and we may assume $p_{2}\in\overline{p_{1}p_{3}}^{\circ}$.
Then we have $\overline{n}\left(  \mathfrak{D}\left(  \overline{p_{j}p_{j+1}%
},\theta_{j},\theta_{j}\right)  \right)  =0$ for $j=1,2,$ and $\overline
{n}\left(  \mathfrak{D}\left(  \overline{p_{3}p_{1}},\theta_{3},\theta
_{3}\right)  \right)  =1,$ in other words we have $\overline{p_{j}p_{j+1}%
}^{\circ}\cap E_{q}=\emptyset$ for $j=1,2$ and $\overline{p_{1}p_{3}}^{\circ
}\cap E_{q}=1.$ Therefore we have $R\left(  \mathfrak{D}\left(  \overline
{p_{j}p_{j+1}},\theta_{j},\theta_{j}\right)  \right)  =2R(K_{j})$ for $j=1,2$
and $R\left(  \mathfrak{D}\left(  \overline{p_{3}p_{1}},\theta_{3},\theta
_{3}\right)  \right)  =2R(K_{3})-4\pi,$ and so, by (\ref{ag6}), we have%
\begin{align*}
R(\Sigma)+4\pi &  =4\pi+R(\Sigma_{\partial\Sigma})=4\pi+A\left(  T_{2}\right)
+\sum_{j=1}^{3}\left[  R(K_{j})-4\pi\#\overline{p_{j}p_{j+1}}^{\circ}\cap
E_{q}\right] \\
&  =4\pi+\sum_{j=1}^{3}R(K_{j})=4\pi+2\pi+\frac{1}{2}\sum_{j=1}^{3}\left[
R\left(  \mathfrak{D}\left(  \overline{p_{j}p_{j+1}},\theta_{j},\theta
_{j}\right)  \right)  \right] \\
&  =\frac{1}{2}\sum_{j=1}^{3}\left[  R\left(  \mathfrak{D}\left(
\overline{p_{j}p_{j+1}},\theta_{j},\theta_{j}\right)  \right)  +4\pi\right]
\end{align*}
and%
\[
H_{\mathcal{S}_{0}}=\frac{R(\Sigma)+4\pi}{L(\partial\Sigma)}=\frac{\frac{1}%
{2}\sum_{j=1}^{3}R\left(  \mathfrak{D}\left(  \overline{p_{j}p_{j+1}}%
,\theta_{j},\theta_{j}\right)  \right)  +4\pi}{\frac{1}{2}\sum L\left(
\partial\mathfrak{D}\left(  \overline{p_{j}p_{j+1}},\theta_{j},\theta
_{j}\right)  \right)  }.
\]
We may assume $\frac{R\left(  \mathfrak{D}\left(  \overline{p_{j_{0}}%
p_{j_{0}+1}},\theta_{j_{0}},\theta_{j_{0}}\right)  \right)  +4\pi}{L\left(
\partial\mathfrak{D}\left(  \overline{p_{j_{0}}p_{j_{0}+1}},\theta_{j_{0}%
},\theta_{j_{0}}\right)  \right)  }\geq\frac{R\left(  \mathfrak{D}\left(
\overline{p_{j}p_{j+1}},\theta_{j},\theta_{j}\right)  \right)  +4\pi}{L\left(
\partial\mathfrak{D}\left(  \overline{p_{j}p_{j+1}},\theta_{j},\theta
_{j}\right)  \right)  },j=1,2,3.$ It is clear that $T=\overline{\mathfrak{D}%
\left(  \overline{p_{j_{0}}p_{j_{0}+1}},\theta_{j_{0}},\theta_{j_{0}}\right)
}\in\mathcal{S}_{0},$ and then by Lemma \ref{ratio} we have%
\[
H_{\mathcal{S}_{0}}=\frac{R(\Sigma)+4\pi}{L(\partial\Sigma)}\leq\frac{R\left(
\mathfrak{D}\left(  \overline{p_{j_{0}}p_{j_{0}+1}},\theta_{j_{0}}%
,\theta_{j_{0}}\right)  \right)  +4\pi}{L\left(  \partial\mathfrak{D}\left(
\overline{p_{j_{0}}p_{j_{0}+1}},\theta_{j_{0}},\theta_{j_{0}}\right)  \right)
}\leq H_{\mathcal{S}_{0}}.
\]
Then we have $\frac{R(T)+4\pi}{L(\partial T)}=H_{\mathcal{S}_{0}}=H_{0}$ and
$L(\partial T)<L(\partial\Sigma).$ Thus $T\in\mathcal{S}_{0}$ is a $4\pi
$-extremal surface in $\mathcal{S}_{0}$ and so $q^{\prime}=Q(\Sigma)\leq
Q(T)=2.$ This contradicts the assumption $q^{\prime}=Q(E_{q})=3.$

Then we in fact proved $q^{\prime}=Q(\Sigma)=2\ $when $q=3\ $and $E_{q}$ lies
on a great circle, and so (ii) is proved.

Now we assume that $q=q^{\prime}=Q(\Sigma)=3$ and $p_{1},p_{2},p_{3}$ are not
on a great circle. Then the triangle $\overline{p_{1}p_{2}p_{3}p_{1}}$ is in
an open hemisphere on $S.$ And it is clear that $\partial\Sigma$ has a
partition $\partial\Sigma=C_{1}\left(  p_{1},p_{2}\right)  +C_{2}\left(
p_{2},p_{3}\right)  +C_{3}\left(  p_{3},p_{1}\right)  $ satisfying Definition
\ref{S-surface} (1)--(8) and Definition \ref{4pi-extr} (1)--(3). By Theorem
\ref{main2} (ii) each $C_{j}$ is strictly convex. By Definition of
$\mathcal{S}^{\ast}$ and Corollary \ref{same}, there is no other surface
$\Sigma^{\prime}$ in $\mathcal{S}_{0}$ such that $\partial\Sigma^{\prime
}=\partial\Sigma\ $and $\deg_{\max}\Sigma^{\prime}<\deg_{\max}\Sigma.$ Thus
$\Sigma=\Sigma_{\partial\Sigma},$ which is given in Solution \ref{sol}.
Therefore we have proved Corollary \ref{2,3} (iii).

To prove (iv) of the corollary, we will show%
\begin{equation}
\overline{n}\left(  \Sigma\right)  =\#f^{-1}(E_{3})\cap\Delta=0, \label{=0,}%
\end{equation}
whether $q^{\prime}=2$ or $3.$

If $q^{\prime}=2,$ then, by Corollary \ref{2,3} (i), we may assume
$\Sigma=\overline{\mathfrak{D}(\overline{p_{1}p_{2}},\theta_{0},\theta_{0})}$
for some $\theta_{0}\in(0,\pi/2],$ with $d\left(  p_{1},p_{2}\right)
=\delta_{E_{3}},$ and then (\ref{=0,}) holds and $\partial\Sigma$ has the
partition
\begin{equation}
\partial\Sigma=C_{1}\left(  p_{1},p_{2}\right)  +C_{2}\left(  p_{2}%
,p_{1}\right)  =\partial\mathfrak{D}(\overline{p_{1}p_{2}},\theta_{0}%
,\theta_{0}), \label{lens-1}%
\end{equation}
such that $C_{j}$ are symmetric strictly convex circular arcs which are not major.

If $E_{3}$ is contained in a great circle on $S,$ then $q^{\prime}=2$ by
Corollary \ref{2,3} (ii), and thus we may again choose $\Sigma=\overline
{\mathfrak{D}(\overline{p_{1}p_{2}},\theta_{0},\theta_{0})}$ for some
$\theta_{0}\in(0,\pi/2]$ satisfying (\ref{lens-1}) and (\ref{=0,}).

Assume $q^{\prime}=3=q$. Then $\{p_{1},p_{2},p_{3}\}$ are not on a great
circle and
\begin{equation}
\partial\Sigma_{0}=C_{1}\left(  p_{1},p_{2}\right)  +C_{2}\left(  p_{2}%
,p_{3}\right)  +C_{3}\left(  p_{3},p_{1}\right)  , \label{3-edge}%
\end{equation}
with $C_{j}^{\circ}\cap E_{q}=\emptyset.$ Then $\overline{p_{1}p_{2}p_{3}%
p_{1}}$ is a triangle which is either strictly convex at all vertices or
concave at all vertices.

If $\overline{p_{1}p_{2}p_{3}p_{1}}$ is convex, then (\ref{=0,}) holds clearly.

Assume $\overline{p_{1}p_{2}p_{3}p_{1}}$ is concave and (\ref{=0,}) fails.
Then $\overline{n}\left(  \Sigma\right)  \geq1$ and for the closed domain $T$
enclosed by $\overline{p_{1}p_{3}p_{2}p_{1}}=-\overline{p_{1}p_{2}p_{3}p_{1}%
},$ $T$ is contained in some open hemisphere on $S$ containing $\overline
{p_{1}p_{3}p_{2}p_{1}},$ we have $T\in\mathcal{S}_{0}$ and thus we have the
the contradiction (note that $q-2=1$)
\begin{align}
H_{\mathcal{S}_{0}}  &  =H_{0}=\frac{R(\Sigma)+4\pi}{L(\partial\Sigma)}%
=\frac{A(\Sigma)-4\pi\overline{n}\left(  \Sigma\right)  +4\pi}{L(\partial
\Sigma)}\leq\frac{A(\Sigma)}{L(\partial\Sigma)}\label{oo1}\\
&  <\frac{4\pi}{L(\partial T)}\leq\frac{A(T)+4\pi}{L(\partial T)}.\nonumber
\end{align}
It is clear that $T\in\mathcal{S}_{0}$. Since the three edges of $T$ is not
strictly convex, by Theorem \ref{main2} (ii) we have $\frac{A(T)+4\pi
}{L(\partial T)}<H_{\mathcal{S}_{0}}.$ Then by (\ref{oo1}) we have the
contradiction $H_{\mathcal{S}_{0}}<H_{\mathcal{S}_{0}}.$ We have proved
(\ref{=0,}), whether $\overline{p_{1}p_{2}p_{3}p_{1}}$ is convex or concave,
and (iv) has been proved.
\end{proof}

Continuing the above argument, we can prove Theorem \ref{q3}.

\begin{proof}
[\textbf{Proof of Theorem \ref{q3}}]Assume $q=3,E_{3}=\{p_{1},p_{2},p_{3}\}$
and $\Sigma_{0}=\left(  f_{0},\overline{\Delta}\right)  \in\mathcal{S}^{\ast
}\left(  E_{3}\right)  .$ Then we may further assume%
\[
\delta_{E_{3}}=d\left(  p_{1},p_{2}\right)  \leq d\left(  p_{1},p_{3}\right)
\leq d\left(  p_{2},p_{3}\right)  ,
\]
and then $\overline{p_{1}p_{3}}^{\circ}$ does not contain $p_{2}.$ Therefore
$\partial\Sigma_{0}$ has the partition (\ref{lens-1}) (or (\ref{3-edge})),
(\ref{=0,}) holds, and $C_{j}^{\circ}\cap E_{q}=\emptyset$ for each term
$C_{j}$ in the partition. Then by Definition of $\mathcal{S}_{0}$ and by
Theorem \ref{main2} we have the following:%
\[
H_{0}\left(  E_{3}\right)  =H_{\mathcal{S}_{0}}\left(  E_{3}\right)
=\frac{R(\Sigma_{0})+4\pi}{L(\partial\Sigma_{0})}\geq h_{0}\left(
E_{3}\right)  .
\]

Now, inspired by the method on page 215 in \cite{Zh1}, we construct a sequence
of surfaces $\left\{  \Sigma_{n}\right\}  \subset\mathbf{F}^{\ast}%
=\mathbf{F}^{\ast}(Eq)$, such that $H(\Sigma_{n})\rightarrow H_{0}\left(
E_{3}\right)  .$ Let $S^{\prime}$ be the surface with interior $S\backslash
C_{1}$ and boundary $C_{1}-C_{1}.$ Then we can sew\label{sew1} $\Sigma_{0}$
and the surface $S^{\prime}$ along $C_{1}$ to obtain a surface $\Sigma
_{0}^{\prime}=\left(  f_{0}^{\prime},\overline{\Delta}\right)  .$ It is clear
that $A(\Sigma_{0}^{\prime})=A(\Sigma_{0})+4\pi,L(\partial\Sigma_{0}^{\prime
})=L(\partial\Sigma_{0}),$ and $f_{0}^{\prime-1}(E_{3})\cap\Delta$ contains
only one point $a$ with $f_{0}^{\prime}\left(  a\right)  =p_{3}.$ So we may
assume $a=0.$ Then the line segment $\overline{p_{1}p_{3}}$ has an
$f_{0}^{\prime}$-lift $\alpha=\alpha\left(  a_{1},0\right)  $ in
$\overline{\Delta}$ such $\alpha\cap\partial\Delta=\{a_{1}\}$, $f(a_{1}%
)=p_{1}$ and $f\left(  0\right)  =p_{3},$ and we may assume that
$-\alpha=[0,1].$

Let $\Sigma_{n}=\left(  f_{n},\overline{\Delta^{+}}\right)  $ with
$f_{n}=f_{0}^{\prime}\left(  z^{2n}\right)  ,z\in\overline{\Delta^{+}}.$ Then
we have
\[
\overline{n}\left(  \Sigma_{n},E_{3}\right)  =0,\mathrm{\ }L(\partial
\Sigma_{n})=nL\left(  \partial\Sigma_{0}\right)  +2L(\overline{p_{1}p_{3}%
}),\mathrm{\ }A\left(  \Sigma_{n}\right)  =n\left(  A\left(  \Sigma
_{0}\right)  +4\pi\right)  ,
\]
and so $\Sigma_{n}\in\mathbf{F}^{\ast}\left(  E_{3}\right)  $ (note that
$q=3$) and we have%
\[
h_{0}\left(  E_{3}\right)  \geq\frac{R(\Sigma_{n},E_{3})}{L(\partial\Sigma
_{n})}=\frac{nA\left(  \Sigma_{0}\right)  +4\pi n}{nL(\partial\Sigma
_{0})+2L(\overline{p_{1}p_{3}})}\rightarrow\frac{A\left(  \Sigma_{0}\right)
+4\pi}{L(\partial\Sigma_{0})}=H_{0}\left(  E_{3}\right)
\]
and so we have (\ref{q=3})$\ $when $q=3,$ since $H_{0}\left(  E_{3}\right)
\geq h_{0}\left(  E_{3}\right)  .$

(\ref{q=3}) may fail when $q\geq4,$ and the following is a counter example for
(\ref{q=3}).

Let $q\geq4,$ let $E_{q}$ be the set $\{p_{1},p_{2},\dots,p_{q}\}$ with
\[
0=p_{1}<p_{2}<\dots<p_{q}=1,
\]
\[
E_{3}=\{p_{2},p_{3},p_{4}\}\
\]
and
\[
\delta=d\left(  p_{1},p_{2}\right)  <\frac{1}{q^{3}}\delta_{\{p_{2}%
,\cdots,p_{q}\}},
\]
and let $D$ be the small disk with diameter $\overline{p_{1}p_{2}}$. Then
$\overline{D}\in\mathcal{S}_{0}(E_{q})$ and thus we have%
\[
H_{0}(E_{q})=H_{\mathcal{S}_{0}}(E_{q})\geq\frac{\left(  q-2\right)
A(D)+4\pi}{L(\partial D)}\geq\frac{4\pi}{2\pi\delta}=\frac{2}{\delta}.
\]

It is clear that we have $\mathbf{F}^{\ast}\left(  E_{q}\right)
\subset\mathbf{F}^{\ast}\left(  E_{3}\right)  .$ Then for each $\Sigma
\in\mathbf{F}^{\ast}\left(  E_{q}\right)  ,$ by the result $H_{0}(E_{3}%
)=h_{0}\left(  E_{3}\right)  $ just proved we have%
\[
H(\Sigma)=H(\Sigma,E_{q})=\frac{\left(  q-2\right)  A(\Sigma)}{L(\partial
\Sigma)}\leq\left(  q-2\right)  h_{0}(E_{3})=\left(  q-2\right)  H_{0}%
(E_{3}).
\]
Thus we have%
\[
h_{0}\left(  E_{q}\right)  =\sup_{\Sigma\in\mathbf{F}^{\ast}\left(
E_{q}\right)  }H(\Sigma,E_{q})\leq\left(  q-2\right)  H_{0}(E_{3}).
\]

By Corollary \ref{2,3}, there exists a surface $\Sigma_{0}\in\mathcal{S}%
^{\ast}\left(  E_{3}\right)  $ such that $\Sigma_{0}=\overline{\mathfrak{D}%
(\delta_{E_{3}},\theta_{0},\theta_{0})}$ and%
\[
H_{0}(E_{3})=\frac{R(\Sigma_{0},E_{3})+4\pi}{L(\partial\Sigma_{0})}.
\]
On the other hand we have, by $q\geq4,$ that%
\begin{align*}
\frac{R(\Sigma_{0},E_{3})+4\pi}{L(\partial\Sigma_{0})}  &  \leq\frac
{(3-2)A(\overline{\mathfrak{D}(\delta_{E_{3}},\theta_{0},\theta_{0})})+4\pi
}{2\delta_{E_{3}}}\\
&  \leq\frac{6\pi}{2\delta_{E_{3}}}\leq\frac{3\pi}{q^{3}d\left(  p_{1}%
,p_{2}\right)  }=\frac{3\pi}{q^{3}\delta}<\frac{1}{q\delta}.
\end{align*}
Therefore we have $H_{0}(E_{3})<\frac{1}{q\delta},$ and thus
\[
h_{0}(E_{q})\leq\left(  q-2\right)  H_{0}(E_{3})<\frac{q-2}{q\delta}<\frac
{1}{\delta}<H_{0}(E_{q}).
\]

\end{proof}

\section{Notations and terminology}

-----a circle $C$ determined by a circular arc $c$: $C$ is the circle
containing $c$ and oriented by $c$, p. \pageref{determine}

-----boundary radius, Remark \ref{nod-1}, \pageref{bdr}

-----CCM: Complete covering mapping, p. \pageref{CCM}

-----BCCM: branched complete covering mapping, p. \pageref{BCCM}

-----CTTD: closed topological triangular domain, Convention \ref{conv-1}, p.
\pageref{conv-1}

-----closed domain: closure of a domain, \pageref{AhlforsS}

-----convex (path, arc, curve): Definition \ref{in}, p. \pageref{in}

-----decomposable sequence, Definition \ref{undec-seqr}, p.
\pageref{undec-seqr}

-----decomposable surface, Lemma \ref{undec}, p. \pageref{undec}

-----domain: connected open set, \pageref{AhlforsS}

-----extremal surface, precise extremal surfaces, Definition \ref{HL}, p.
\pageref{HL}

-----interior angle of a surface at a boundary point, Definition
\ref{interiorangle}, p. \pageref{interiorangle}

-----OPH: orientation preserving homeomorphism, p. \pageref{OPH}

-----OPCOFO(M): orientation-preserving, continuous, open and finite-to-one
(mapping), p. \pageref{OPCOFOM}

-----SCC arc: simple, convex circular arc, Definition \ref{F}, p. \pageref{F}

-----$B_{f},B_{f}\left(  A\right)  ,B_{f}^{\ast},B_{f}^{\ast}\left(  A\right)
,\ $p. \pageref{BBC}

-----$C_{f},C_{f}\left(  A\right)  ,C_{f}^{\ast},C_{f}^{\ast}\left(  A\right)
,\ $p. \pageref{BBC}

-----$CV_{f},CV_{f}\left(  K\right)  \ $p. \pageref{BBC}

-----$\mathcal{C}\left(  L,m\right)  \supset\mathcal{C}^{\ast}\left(
L,m\right)  \supset\mathcal{F}(L,m)\supset\mathcal{F}_{r}(L,m):$ subspaces of
$\mathcal{F}\left(  L\right)  ,$ Definition \ref{circu}, p. \pageref{circu}

-----$d\left(  \cdot,\cdot\right)  $ the spherical distance on the Riemann
sphere $S$,

-----$d_{f}\left(  \cdot,\cdot\right)  $ the distance defined on the surface
$\left(  f,\overline{\Delta}\right)  :$ Definition \ref{df}, p. \pageref{df}

-----$D(p,\delta),$ the disk on $S$ with center $p$ and spherical radius
$\delta:$ p. \pageref{cov-1}

-----$\mathfrak{D}^{\prime}\left(  I,\theta\right)  ,\mathfrak{D}^{\prime
}\left(  I,k\right)  ,\mathfrak{D}^{\prime}\left(  I,c\right)  :\ $lune,
Definition \ref{lune-lens}, p. \pageref{lune-lens}

-----$\mathfrak{D}\left(  I,\theta_{1},\theta_{2}\right)  ,\mathfrak{D}\left(
I,k_{1},k_{2}\right)  ,\mathfrak{D}\left(  I,c_{1},c_{2}\right)  :$ lens
Definition \ref{lune-lens}, p. \pageref{lune-lens}

-----$\Delta:$ the open disk $\{z:\left\vert z\right\vert <1,z\in
\mathbb{C}\},$ p. \pageref{Delta}

-----$\Delta^{\pm}:$ $\Delta^{\pm}=\Delta\cap H^{\pm},$ p. \pageref{Del+-}

-----$\left(  \partial\Delta\right)  ^{\pm}=\left(  \partial\Delta\right)
\cap\overline{H^{\pm}},$ p. \pageref{+-arc}

-----$E_{q}=\{\mathfrak{a}_{1},\mathfrak{a}_{2},\cdots,\mathfrak{a}_{q}\}$: a
set of distinct $q$ points on the Riemann sphere $S,$ p. \pageref{Eq}

-----$\mathbf{F:}$ Definition \ref{family F,D}, p. \pageref{family F,D}

-----$\mathbf{F}^{\ast}=\mathbf{F}^{\ast}(E_{q}):$ subspace of $\mathbf{F},$
p. \pageref{F*}

-----$\mathcal{F},\mathcal{F}\left(  L\right)  :$ subspace of $\mathbf{F:}$
Definition \ref{F}, p. \pageref{F}

-----$\mathcal{F}_{r},\mathcal{F}_{r}\left(  L\right)  ,\mathcal{F}%
(L,m),\mathcal{F}_{r}\left(  L,m\right)  :$ subspace of $\mathcal{F}%
\mathbf{,}$ Definition \ref{circu} (c) and (d), p. \pageref{FrFLMr}

-----$\mathcal{F}_{r}^{\prime}(L,m):$ subspace of $\mathcal{F}_{r}\left(
L,m\right)  :$ Definition \ref{FR'}, p. \pageref{FR'}

-----$h_{0}=h_{0}(E_{q}),$ p. \pageref{h0}

-----$H_{L}=H_{L}(E_{q}),$ p. \pageref{HL(E)}

-----$H_{0}=H_{0}(E_{q}),$ p. \pageref{H_0}

-----$H\left(  \Sigma\right)  =H(\Sigma,E_{q}):$ $H(\Sigma)=R(\Sigma
)/L(\partial\Sigma)$, p. \pageref{DH}

-----$H^{+}:$ the upper half plane $\operatorname{Im}z>0,$ p. \pageref{H+-}

-----$H^{-}:$ the lower half plane $\operatorname{Im}z<0,$ p. \pageref{H+-}

-----$\overline{H^{\pm}}:$ the closure of $H^{\pm}\ $on $S,$ p. \pageref{H+-}

-----$\mathcal{L}:$ the continuous point set of the function $H_{L}$ of $L,$
Definition \ref{L}, p. \pageref{L}

-----$\overline{n}(\Sigma)=\overline{n}(\Sigma,E_{q}),\overline{n}%
(\Sigma,\mathfrak{a}_{v}),$ (\ref{a70}) in p. \pageref{a70}

-----$\mathbb{N}:$ the set of natural numbers $\{1,2,\dots,\},$ p.
\pageref{nature}

-----$\mathbb{N}^{0}:\mathbb{N\cup}\{0\},$ p. \pageref{nature}

-----$P:$ stereographic projection \pageref{SP}

-----$R(\Sigma)=R(\Sigma,E_{q}):$ Ahlfors error term, (\ref{Ahero}) p.
\pageref{Ahero}

-----$S:$ unit Riemann sphere with area $4\pi,$ p. \pageref{RS}

-----$\mathcal{S}_{1},\mathcal{S}_{1}^{(5)},\mathcal{S}_{1}^{(6)}%
,\mathcal{S}_{1}^{(7)},\mathcal{S}_{1}^{(8)}:$ subspace of $\mathcal{F}:$
Definition \ref{S15678}, p. \pageref{S15678}

-----$\mathcal{S}_{0}=\mathcal{S}_{1}^{(5)}\cap\mathcal{S}_{1}^{(6)}%
\cap\mathcal{S}_{1}^{(7)}\cap\mathcal{S}_{1}^{(8)}:$ Definition
\ref{S-surface}, p. \pageref{S-surface}

-----$\mathcal{S}^{\ast}:$ subspace of $\mathcal{S}_{0}$: Definition
\ref{4psim}, p. \pageref{4psim}

-----$\left(  \cdot\right)  ^{\circ}:$ interior of an arc (path), p.
\pageref{boundary}, interior of a surface, Definition \ref{interior}, p.
\pageref{interior}

-----$\partial\left(  \cdot\right)  :$ in Definition \ref{interior}: set of
end points of an arc, p. \pageref{boundaryarc} or boundary of a domain on $S$
or a surface, p. \pageref{boundary}.

-----$\overline{\left(  \cdot\right)  }:$ closure of a set, p. \pageref{cl}

-----$\#\left(  \cdot\right)  :$ the cardinality of a set, p. \pageref{card}

-----$\sim:$ in Remark \ref{finite}: equivalence of surfaces, or curves,
Remark \ref{finite}, p. \pageref{finite}

\label{aaaa}

\bigskip

\end{document}